\documentclass[11pt,openright,twoside,a4paper,reqno]{amsbook}
\usepackage[a4paper,top=3cm,bottom=3cm,inner=4cm,outer=3cm]{geometry}
\usepackage[english]{babel}
\usepackage{mathrsfs,amsmath,amsfonts,amsthm,amssymb,graphicx,pdfpages}
\usepackage{setspace,fancyhdr,calc,verbatim,bbm,enumerate}
\usepackage{hyperref}
\usepackage[all,2cell]{xy}\UseAllTwocells\SilentMatrices

\usepackage{longtable}
\usepackage{upref}
\usepackage{mathtools}

\newtheorem{thm}{Theorem}[chapter]
\newtheorem{lem}[thm]{Lemma}
\newtheorem{prp}[thm]{Proposition}
\newtheorem{cor}[thm]{Corollary}

\newtheorem{teo}{Theorem}
\newtheorem{pro}[teo]{Proposition}
\newtheorem{crl}[teo]{Corollary}
\newtheorem{con}[teo]{Conjecture}

\theoremstyle{definition}
\newtheorem{dfn}[thm]{Definition}
\newtheorem{exa}[thm]{Example}

\theoremstyle{remark}
\newtheorem{rmk}[thm]{Remark}

\hyphenation{pa-ra-me-ter de-ve-lo-ped ma-ni-folds pri-mi-ti-ve pro-per-ties}
\numberwithin{equation}{chapter}

\newcommand{\R}{\mathbb R}
\newcommand{\Z}{\mathbb Z}
\newcommand{\N}{\mathbb N}
\newcommand{\T}{\mathbb T}
\newcommand{\pri}{\mathcal P}
\newcommand{\Pro}{\mathbb P}
\newcommand{\C}{\mathbb C}
\newcommand{\Q}{\mathbb Q}
\newcommand{\Si}{\Sigma}
\newcommand{\op}[1]{\operatorname{#1}}

\makeatletter
\@addtoreset{thm}{chapter}
\makeatother

\onehalfspace

\setcounter{tocdepth}{1}
\makeatletter
\renewcommand{\l@chapter}{\@tocline{0}{12pt}{0pt}{}{\bfseries}}
\renewcommand{\l@subsection}{\@tocline{2}{0pt}{2pc}{5pc}{}}
\makeatother

\mainmatter

\begin{document}

\frontmatter

\thispagestyle{empty}

\begin{center}
\mbox{}
{\Large{\bf THE CONTACT PROPERTY\smallskip

FOR MAGNETIC FLOWS ON SURFACES}}
\vskip 0.7in
{\Large{Gabriele Benedetti}}
\vskip 0.4in
{\small Trinity College \\
and \\
Department of Pure Mathematics and Mathematical Statistics \\
University of Cambridge}
\vskip 1.2in
\includegraphics[height=2in]{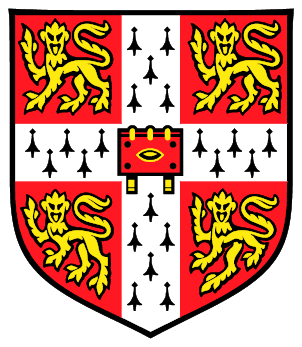}
\vskip 1.2in
This dissertation is submitted for the degree of \\
\emph{Doctor of Philosophy}
\vskip 0.4in
July 2014
\end{center}

\cleardoublepage

\pagestyle{fancy}
\fancyhf{} 
\renewcommand{\headrulewidth}{0pt} 
\pagenumbering{roman}
\begin{flushright}
\null \vspace {\stretch {1}}
\itshape A David e Marisa
\vspace {\stretch {2}}\null
\end{flushright}
\cleardoublepage

\newpage
\mbox{}
\vskip 2in
\begin{quote}
This dissertation is the result of my own work and includes nothing that is the outcome of work done in collaboration except 
where specifically indicated in the text.

This dissertation is not substantially the same as any that I have
submitted for a degree or diploma or any other qualification at any
other university.
\vskip 1in

\hfill Gabriele Benedetti

\hfill \today

\end{quote}

\cleardoublepage
\newpage
\begin{center}
{\large{\bfseries The contact property for magnetic flows on surfaces\\}}
\vskip 0.1in
Gabriele Benedetti
\vskip 0.3in
{\sc Summary}
\vskip 0.1in
\end{center}

\begin{singlespace}
This work investigates the dynamics of \textit{magnetic flows} on closed orientable Riemannian surfaces. These flows are determined by triples $(M,g,\sigma)$, where $M$ is the surface, $g$ is the metric and $\sigma$ is a $2$-form on $M$. Such dynamical systems are described by the Hamiltonian equations of a function $E$ on the tangent bundle $TM$ endowed with a symplectic form $\omega_\sigma$, where $E$ is the kinetic energy. Our main goal is to prove existence results for
\begin{center}
a) periodic orbits\quad\quad and \quad\quad b) Poincar\'e sections
\end{center}
for motions on a fixed energy level $\Sigma_m:=\{E=m^2/2\}\subset TM$.

We tackle this problem by studying the contact geometry of the level set $\Sigma_m$. This will allow us to
\begin{enumerate}[a)]
 \item count periodic orbits using algebraic invariants such as the Symplectic Cohomology
$SH$ of the sublevels $(\{E\leq m^2/2\},\omega_\sigma)$;
 \item find Poincar\'e sections starting from pseudo-holomorphic foliations, using the techniques developed by Hofer, Wysocki and Zehnder in 1998.
\end{enumerate}

In Chapter $3$ we give a proof of the invariance of $SH$ under deformation in an abstract setting, suitable for the applications.

In Chapter $4$ we present some new results on the energy values of contact type. First, we give explicit examples of exact magnetic systems on $\T^2$ which are of contact type at the strict critical value. Then, we analyse the case of non-exact systems on $M\neq\T^2$ and prove that, for large $m$ and for small $m$ with symplectic $\sigma$, $\Sigma_m$ is of contact type. Finally, we compute $SH$ in all cases where $\Sigma_m$ is convex. 

On the other hand, we are also interested in non-exact examples where the contact property fails. While for surfaces of genus at least two, there is always a level not of contact type for topological reasons, this is not true anymore for $S^2$. In Chapter $5$, after developing the theory of magnetic flows on surfaces of revolution, we exhibit the first example on $S^2$ of an energy level not of contact type. We also give a numerical algorithm to check the contact property when the level has positive magnetic curvature.

In Chapter $7$ we restrict the attention to low energy levels on $S^2$ with a symplectic $\sigma$ and we show that these levels are of dynamically convex contact type. Hence, we prove that, in the non-degenerate case, there exists a Poincar\'e section of disc-type and at least an elliptic periodic orbit. In the general case, we show that there are either $2$ or infinitely many periodic orbits on $\Sigma_m$ and that we can divide the periodic orbits in two distinguished classes, \textit{short} and \textit{long}, depending on their period. Then, we look at the case of surfaces of revolution, where we give a sufficient condition for the existence of infinitely many periodic orbits. Finally, we discuss a generalisation of dynamical convexity introduced recently by Abreu and Macarini, which applies also to surfaces with genus at least two. 
\end{singlespace}

\cleardoublepage
\newpage
\mbox{}
\vskip 1in
\begin{center}
{\sc Acknowledgements}
\vskip 0.1in
\end{center}
\begin{flushright}
\vspace{10pt}

{\itshape Sabr\`as que no te amo y que te amo\\
puesto que de dos modos es la vida,\\
la palabra es un ala del silencio,\\
el fuego tiene una mitad de fr\'io.}\\
\smallskip

Pablo Neruda, Soneto XLIV
\end{flushright} 
\vspace{20pt}

\begin{center}
I express my deep gratitude to the following people, things and places. 
\end{center}

To my advisor Gabriel Paternain, for many helpful discussions about the dynamics of the magnetic field and for his invaluable support in preparing this thesis.

To Peter Albers and Ivan Smith, for kindly agreeing to be the examiners of this thesis.

To Alex Ritter, for several conversations that have been fundamental to develop the material contained in Chapter \ref{cha_sh}.

To Jungsoo Kang, for a nice conversation about multiplicity results for Reeb orbits using equivariant Symplectic Cohomology. 

To Leonardo Macarini, for pointing out to me a mistake in Inequality \eqref{gendyn} contained in a previous version of this work.

To Viktor Ginzburg, for his helpful and kind comments on Proposition \ref{progin}.

To Marco Golla, for the simple and nice proof about the parity of the linking number contained in Lemma \ref{lemlin} and for his genuine interest in my work. 

To Joe Waldron, for sharing office and food with me every day for a year. 

To Jan Sbierski, for squash, badminton and basketball and a memorable formal dinner at Magdalene with \textit{Paris Breast} served for dessert.

To Paul Wedrich, for the delicious lunches at Churchill.

To Christian Lund, for coming to my office every day after lunch for push-ups training and for bringing me from a series of 5 to a series of 20 in one month.

To Nick Shepherd-Barron, because his disembodied and charming voice coming from the first floor has been a familiar presence for all the offices on the ground floor.

To the neighbouring Office E0.19, for all the moments of inspiration.

To Carmelo Di Natale, for his mighty hugs and his sincere blessings.

To Enrico Ghiorzi, for popping up in my office to say \textit{``Hello!''} at the end of the working day with the helmet already worn.

To Volker Schlue, for sending from Toronto a correct proof that $0=1$ and very accurate weather forecasts written in Korean.

To Peter Herbrich, for his crucial help during my first days in Cambridge, for bringing me with his red Polo around Scotland on a hiking trip in spite of rain and blisters and for inviting me to the \textit{Polterabend} and the fabulous wedding with Melina (\textit{``Cuscino!''}) in Lunow.

To Gareth, for helping me hiking my way through the PhD, from the green and wet valleys of the Isle of Skye to the windy and rocky peak of the Innerdalst\aa rnet, and for reminding me that \textit{``Power is nothing without control''}.

To John Christian Ottem, for all the good music, for climbing up a mountain only to rescue a plastic bag I thought I had left on top (it was in my backpack, in the end!), for still believing that I can arrive on time, for the beautiful drawing on the blackboard and for all the mathematical jokes and puzzles.

To Anna Gurevich, for the dancing in Lunow and the nice chats about feminism and particles moving on spheres.

To Achilleas Kryftis
, for all the runs we went for together, and for teaching me Greek in the meanwhile. I will miss the three llamas in Coton, the hill and a rocambolesque escape from the shooting range.

To Christina Vasilakopoulou
, for waving at me from the window every day as I entered Pavilion E from the back, for the movie-nights full of \textit{``nibbly things''} she has organised with Mitsos
\ at their place. 

To the inhabitants of CMS, present, past and future. Especially to Li Yan, Guido Franchetti, Micha\l\ Przykucki, Tamara Von Glehn, Guilherme Frederico Lima, Ha-Thu Nguyen, Sebastian Koch, Kirsty Wan, Kyriakos Leptos
, Bhargav Narayanan, Gunnar Peng, Rahul Jha, Ashok and Arun Thillaisundaram, Cangxiong Chen, Shoham Letzter, Dmitry Tonkonog, Claudius Zibrowius, Tom Gillespie, Vittoria Silvestri and Sara Ricc\`{o}.

To all the students I supervised in Cambridge, for their energy and their patience.

To Elena Giusti, because now she knows why the prime numbers are infinite.

To Thomas Bugno, and his sheer passion for knowledge. I am sorry that he got really upset to find out that $0!=1$. 

To Laura Moudarres, for her excellent potato bake and for sharing with me the passion for the salad with apple and cumin.

To Barbara Del Giovane, because she never talked about the weather, but preferred to listen to my doubts and speak about hers with
naturalness and empathy

To Andrew Chen, for making me proofread all the very formal messages he sent to Italian libraries for getting access to ancient books.

To Francesca Pariti, because her smile and good mood has been the perfect substitute for the English sun.

To Flavia Licciardello, for making me feel accepted and independent and an adult.

To Daniele Dorigoni, for his extraordinary generosity counterbalanced by his punting skills.

To Marco Mazzucchelli and Ana Lecuona, for all the wise advice about the future and for making my stay in Marseille the most heart-warming experience.

To Alfonso Sorrentino, for being a wonderful mentor and dear friend in my life and for bringing me to eat the best hummus on Earth.

To Giulio Codogni, because he is not afraid of asking ``Can I don't do the English test?''. He has always been an example to me and the protagonist of the funniest moments in Cambridge. Cheers a lot! 

To Fano \textit{``bello da vicino e pure da lontano''}, for being a really exquisite person and for having organised the best \textit{Mercante in Fiera} of my life.

To Luca Sbordone, because he is the most talented and entertaining storyteller. It doesn't make much difference to me whether what he says really happened or not. 

To Ugo Siciliani De Cumis, who shared with me the darkest hours, because he can cook amazing food (and suffer mine), fix all sort of things (which I had broken, of course) and build a mixer between the cold and hot tap. 

To the thirty-five guests of 24 St.\ Davids House (\textit{Lin}, you will be soon the thirty-sixth, last and long-waited!), for making this place a home to me.

To Mara Barucco and Simone Di Marino, for getting married.

To Pisa, Scuola Normale and Collegio Faedo, for all the magic encounters and because a part of me (as well as my shoes) has remained there in these three years.

To Pietro Vertechi, for his genuine hospitality in Lisbon and for chasing a bar up the stairs of a multi-storey car park.

To Valerio Melani and Maria Beatrice Pozzetti, for staying connected in spite of the distance.

To Anna Bell\'e, for bringing me the most unexpected \textit{schiacciatine} of my entire life when I was ill in bed.

To Claudia Flandoli, for visiting with me the cold and empty rooms of Palazzo Ducale in Urbino with a pair of crutches e \textit{``un sorriso coinvolgente''}.

To Alessandro Di Sciullo, Marco Cucchi, Jacopo Preziosi Standoli, Luca Grossi, Paolo Magnani, Luca Borelli and Ermanno Finotti, for countless \textit{goodbye dinners}. 

To Claudio Baglioni, Alessandro Mannarino, Giorgio Gaber, Samuele Bersani, Tool, Pain of Salvation, Banco del Mutuo Soccorso, Pink Floyd, Rachmaninov for being the soundtrack of these years.

To Il Nido Del Cuculo, for all the silly laughs in front of the screen.

To Psychodynamic therapy, for making me discover a part of me, which I didn't know.

To Cognitive Behavioural Therapy, for reminding me of a part of me that I know too well.

To Margherita, Eleonora, Andrea, Federica, Marisa, David, Agnese, Delvina, Placido and Riccardo for all their love: it has been shaped in the form of a thesis.
\cleardoublepage
\null \vspace {\stretch {1}}
\begin{flushright}
 
\begin{minipage}{10cm}
{\itshape Malo mortuum impendere quam vivum occidere.}

\medskip

\begin{flushright}
Petronius, Satyricon
\end{flushright} 

\vspace{2cm}

{\itshape Then there is an intellectual or a student or two, very few of them though, here and there, with ideas in their heads that are often vague or twisted. Their \textquotedblleft country\textquotedblright\ consists of words, or at the most of some books. But as they fight they find that those words of theirs no longer have any meaning, and they make new discoveries about men's struggles, and they just fight on without asking themselves questions, until they find new words and rediscover the old ones, changed now, with unsuspected meanings.}

\medskip

\begin{flushright}
Italo Calvino, The Path to the Spiders' Nests
\end{flushright}
\vspace{2cm}

{\itshape Every two or three days, at the moment of the check, he told me: "I've finished that book. Have you another one to lend me? But not in Russian: you know that I have difficulty with Russian." Not that he was a polyglot: in fact, he was practically illiterate. But he still \textquoteleft read\textquoteright\ every book, from the first line to the last, identifying the individual letters with satisfaction, pronouncing them with his lips and laboriously reconstructing the words without bothering about their meaning. That was enough for him as, on different levels, others take pleasure in solving crossword puzzles, or integrating differential equations or calculating the orbits of the asteroids.}

\medskip

\begin{flushright}
Primo Levi, The Truce
\end{flushright}

\end{minipage}
\end{flushright}

\vspace {\stretch {4}}\null
\cleardoublepage
\tableofcontents
\cleardoublepage

\pagestyle{headings}
\pagenumbering{arabic}

\chapter{Introduction}\label{cha_int}
\section{Overview of the problem}
Let the triple $(M,g,\sigma)$ represent a magnetic system, where $M$ is a closed manifold, $g$ is a Riemannian metric and $\sigma$ is a closed $2$-form on $M$. We say that a magnetic system is \textit{exact}, \textit{non-exact} or \textit{symplectic} if $\sigma$ is such. Such data give rise to a Hamiltonian vector field $X^{\sigma}_E$ on the symplectic manifold $(TM,\omega_\sigma:=d\lambda-\pi^*\sigma)$, where $\pi$ is the footpoint projection from $TM$ to $M$ and $\lambda$ is the pull-back of the Liouville $1$-form on the cotangent bundle via the duality isomorphism given by $g$. The kinetic energy $E(x,v)=\frac{1}{2}g_x(v,v)$ is the Hamiltonian function associated to $X^{\sigma}_E$. These systems model the motion of a charged particle on the manifold $M$ under the influence of a magnetic force, which enters the evolution equations via the term $-\pi^*\sigma$ in the symplectic form. In the absence of a magnetic force, namely when $\sigma=0$, the particle is free to move and we get back the geodesic flow of 
the Riemannian manifold $(M,g)$. The zero section is the set of rest points for the flow and all the smooth hypersurfaces $\Sigma_m:=\{E=\frac{m^2}{2}\}$, with $m>0$, are invariant sets. Hence, we can analyse the vector fields $X^\sigma_E\big|_{\Sigma_m}$ separately for every $m$ and compare the dynamics for different value of such parameter. In the geodesic case, no interesting phenomenon arises from this point of view. Even if the dynamics of the geodesic flow has a very rich structure and its understanding requires a deep study, the flows on $\Sigma_m$ and $\Sigma_{m'}$ are conjugated up to a constant time factor and, therefore, the parameter $m$ does not play any role in the analysis.

On the other hand, the situation for a general magnetic term $\sigma$ is quite different. As the parameter $m$ varies and passes through some distinguished values, the dynamics on $\Sigma_m$ can undergo significant transformations, in the same way as the topology of the levels of a Morse function changes when we cross a critical value \cite{cmp}. How do we detect and measure these differences?  Since the flows $\Phi^{X^\sigma_E}\big|_{\Sigma_m}$ can exhibit a very complicated behaviour, we have to focus our analysis on some simple property of these systems. In the present work we have chosen to look at the \textit{periodic orbits}. In particular, we examine \begin{enumerate}[a)]
 \item existence and multiplicity of periodic orbits on a given free homotopy class;
 \item knottedness of a single periodic orbit and linking of pairs, through the existence of global Poincar\'e sections and periodic points of the associated return map;
 \item local properties of the flow at a periodic orbit (e.g.\ stability), that are related to the linearisation of the flow (elliptic/hyperbolic orbits).
\end{enumerate}
Periodic orbits of magnetic flows received much attention in the last thirty years and here we seize the occasion to mention some of the relevant literature on the subject. 

When $M$ is a surface, the two classical approaches that have been pursued are Morse-Novikov theory and symplectic topology (see Ta{\u\i}manov's \cite{tai4} and Ginzburg's \cite{gin2} surveys for details and further references). Refinements of these old techniques and completely new strategies have been developed recently. Some authors work with (weakly) exact magnetic forms \cite{bahtai,pol1,mac1,cmp,osu,con,pat2,frasch2,mer1,tai5,amp1,ammp,ab}. Others seek periodic solutions with low kinetic energy \cite{schl}, the majority of them assuming further that $\sigma$ is symplectic \cite{ker,ginker1,ginker2,mac2,cgk,gg1,ker3,lu,gg2,ush}. Schneider's approach \cite{sch1,sch2,sch3} for orientable surfaces and symplectic $\sigma$ uses a suitable index theory for vector fields on a space of loops and shows in a very transparent way how the Riemannian geometry of $g$ influences the problem. Finally, we point out \cite{koh} where heat flow techniques are employed and \cite{mer2,fmp1,fmp2} which construct a Floer 
theory for particular classes of magnetic fields.

This thesis would like to give its contribution to the understanding of magnetic fields on \textit{closed orientable surfaces} by studying the contact geometry of the level sets $\Sigma_m$. In reference to point a), b), and c) presented above, this will allow us to
\begin{enumerate}[a)]
 \item count periodic orbits in a given free homotopy class $\nu$ using algebraic invariants such as the Symplectic Cohomology $SH_\nu$ of the sublevels $(\{E\leq \frac{m^2}{2}\},\omega_\sigma)$;
 \item find Poincar\'e sections starting from pseudo-holomorphic foliations \cite{hwz1,hls};
 \item determine whether a periodic orbit is elliptic or hyperbolic by looking at its Conley-Zehnder index \cite{dadoe,am}. 
\end{enumerate}

\section{Contact hypersurfaces and Symplectic Cohomology}
We say that a closed hypersurface $\Sigma$ in a symplectic manifold $(W,\omega)$ is of \textit{contact type} if there exists a primitive of $\omega$ on $\Sigma$ which is a contact form. Hypersurfaces of contact type have been intensively studied in relation to the problem of the existence of closed orbits. Indeed, in this case the Hamiltonian dynamics on $\Sigma$ is the dynamics of the Reeb flow associated to the contact form, up to a time reparametrisation. After some positive results in particular cases \cite{wei1,rab1,rab2}, in 1978 Alan Weinstein conjectured that every closed hypersurface of contact type (under some additional homological condition now thought to be unnecessary) carries a periodic orbit \cite{wei2}. The conjecture is still open in its full generality, but for magnetic systems on orientable surfaces is a consequence of the solution to the conjecture for every closed $3$-manifolds by Taubes \cite{tau1} (see also \cite{hut} for an expository account). Such proof uses Embedded Contact 
Homology (another kind of algebraic invariant) to count periodic orbits. Recently, Cristofaro-Gardiner and Hutchings have refined Taubes' approach and raised the lower bound on the number of periodic orbits in dimension $3$ to two \cite{crihut}. The case of irrational ellipsoids in $\C^2$ shows that their estimate is sharp, at least for lens spaces \cite{huttau1}.

When $\Sigma$ bounds a compact region $W^\Sigma\subset W$, we can also compare the orientation induced by the ambient manifold and the one induced by the contact form. We say that the hypersurface is of \textit{positive} contact type if these two orientations agree and it is of \textit{negative} contact type otherwise. The importance of such distinction relies on the fact that in the positive case, and under some additional assumption on the Chern class of $\omega$, we can define the Symplectic cohomology $SH^*$ of $(W^\Sigma,\omega)$. Its defining complex is generated by the cohomology of $W^\Sigma$ and the periodic orbits on $\Sigma$. Since the cohomology of the interior is known, if we can compute $SH^*$, we gain information about the periodic orbits on the boundary. As is typical in Floer theory, such computation is divided into two steps:
\begin{enumerate}
 \item finding explicitly $SH^*$ in simple model cases;
 \item proving that $SH^*$ is invariant under a particular class of deformations, that bring the case of interest to one of the models.
\end{enumerate}

We prove such invariance in Chapter \ref{cha_sh} in a setting that will be useful for the applications to magnetic flows on surfaces.  This result has been inspired to us by reading \cite{ar}, where a similar invariance is proven in the setting of ALE spaces (see \cite[Theorem 33 and Lemma 50]{ar}. We warmly thank Alexander Ritter for several useful discussions on this topic. Let us now give the precise statement. 
\begin{teo}\label{thminv_int}
Let $W$ be an open manifold and let $\omega_s$ be a family of symplectic forms on $W$, with $s\in[0,1]$. Suppose $W_s\subset W$ is a family of zero-codimensional embedded compact submanifold in $W$, which are all diffeomorphic to a model $W'$. Let $(W_s,\omega_s\big|_{W_s},j_s)$ be a convex deformation. Fix a free homotopy class of loops $\nu$ in $W'\simeq W_s$ such that the contact forms $\alpha_0$ and $\alpha_1$ are both $\nu$-non-degenerate. If $c_1(\omega_s)$ is $\nu$-atoroidal for every $s\in[0,1]$ and $s\mapsto\omega_s$ is projectively constant on $\nu$-tori, then
\begin{equation*}
SH^*_\nu(W_0,\omega_0,j_0)\simeq SH^*_\nu(W_1,\omega_1,j_1). 
\end{equation*}
\end{teo}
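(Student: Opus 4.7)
The strategy is the standard one in Floer theory: construct continuation chain maps in both directions and show that they are mutually chain-homotopy inverse. What makes the statement non-trivial is that both the symplectic form and the domain vary with the parameter $s$, which forces us to combine a $C^0$ compactness argument (from the convexity of the deformation) with an energy/action argument (from the projective constancy on $\nu$-tori and the atoroidality of $c_1$).

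First I would fix admissible Floer data at the endpoints: a pair of linear-at-infinity Hamiltonians $H_0,H_1$ whose $1$-periodic orbits in the class $\nu$ correspond to closed Reeb orbits of $\alpha_0,\alpha_1$ (plus critical points of a Morse function on the interior), and almost complex structures that agree with $j_0,j_1$ near $\partial W_0,\partial W_1$. The $\nu$-non-degeneracy assumption guarantees that the associated Floer complexes $CF^*_\nu(H_i,j_i)$ are well-defined, and the $\nu$-atoroidal condition on $c_1(\omega_s)$ makes the Conley--Zehnder grading well-defined modulo a global shift independent of $s$. Next I would interpolate: choose a one-parameter family $(H_s,J_s)$ of Floer data connecting the two endpoints, subordinated to the convex deformation $(W_s,\omega_s|_{W_s},j_s)$. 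The continuation maps $\Phi^{10}$ and $\Phi^{01}$ are then defined, as usual, by counting isolated solutions of the $s$-dependent Floer equation for these interpolating data.

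The two compactness issues are where the hypotheses enter. For $C^0$ confinement of continuation cylinders inside $W_s$, I would invoke the convex deformation hypothesis: since $j_s$ is adapted to the symplectisation collar of $\partial W_s$ and the Hamiltonian is linear in the collar coordinate, a maximum principle of the Abouzaid--Seidel type applies along the entire homotopy, ruling out escape of Floer cylinders to infinity. For energy bounds I would use that the action gap between two generators in the class $\nu$ equals the symplectic area of the connecting cylinder; by the projective constancy of $[\omega_s]$ on $\nu$-tori, this area is controlled uniformly in $s$, and hence the total energy of any continuation solution is bounded a priori by the topological energy plus a term of order $\int_0^1 \|\partial_s H_s\|\,ds$. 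Combining the two gives Gromov--Floer compactness for the moduli spaces of continuation cylinders and of parametrised homotopies-of-homotopies, from which the usual chain-level identities $\Phi^{10}\circ\Phi^{01}\simeq \mathrm{id}$ and $\Phi^{01}\circ\Phi^{10}\simeq \mathrm{id}$ follow by a gluing-plus-homotopy-of-homotopies argument.

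The main obstacle, in my view, is the interaction between the two compactness requirements. The convexity of the deformation must be compatible with the non-exact nature of $\omega_s$, so one has to be careful that the linear Hamiltonian and the collar structure used to run the maximum principle are constructed consistently across the family $W_s$, including at parameters where $\alpha_s$ might fail to be non-degenerate. The standard fix is to perform generic perturbations of $(H_s,J_s)$ away from $s\in\{0,1\}$, which does not affect the endpoint complexes but gives transversality in the parametrised moduli. Once this bookkeeping is settled, the projective constancy on $\nu$-tori neutralises the only source of uncontrolled flux, and the invariance follows by the general machinery described in Chapter~\ref{cha_sh}.
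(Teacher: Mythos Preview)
Your outline has a genuine gap in the energy estimate, and it misidentifies the role of the hypothesis ``projectively constant on $\nu$-tori''.

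For an $s$-dependent Floer equation with varying symplectic form, the energy identity picks up an extra term
\[
E(u)=-\mathcal A^{\omega_+}_H(u)+\int_{\R}\dot{\omega}_S\big(u|_{Z^S}\big)\,dS,
\]
and the last integral is \emph{not} controlled by the cohomology class of $\omega_s$ on $\nu$-tori: it depends on the geometric area of the piece of $u$ sitting over the region where $\omega_s$ is genuinely moving, and that piece need not be a $\nu$-torus or bounded by $1$-periodic orbits. Your sentence ``the action gap equals the symplectic area of the connecting cylinder; by projective constancy this area is controlled uniformly in $s$'' conflates the endpoint action gap (which \emph{is} topological) with this extra running term (which is not). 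The paper in fact warns explicitly that the ``obvious'' maximum-principle/energy argument fails here when $\omega_s$ varies.

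The paper's fix is structurally different from what you sketch. First, via Gray stability (Proposition~\ref{prpinv1}) it reduces to a deformation in which $\omega'_s-\omega'_0$ has compact support \emph{disjoint from the collar}, so the convex end and the maximum principle are literally $s$-independent. Second (Proposition~\ref{isoprop}), it restricts to Hamiltonians equal to a fixed small $\varepsilon_K K$ on $\op{supp}\{\sigma_s\}$, so there are no $1$-periodic orbits there; a Palais--Smale lemma then shows that any Floer cylinder spends Lebesgue-time at most $E(u)/\varepsilon_H^2$ in that region. Plugging this Chebyshev-type bound into the extra term gives
\[
\int_{\R}\dot{\omega}_S(u|_{Z^S})\,dS\ \le\ \mu_H\Big(1+\tfrac{\varepsilon_K}{\varepsilon_H}\Vert X_K\Vert\Big)E(u),
\]
which closes to an honest a priori bound only after subdividing the deformation so that $\mu_H$ is small. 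None of this comes from projective constancy. That hypothesis is used elsewhere: it guarantees that the local systems $\underline{\Lambda}_{\tau(\omega_s)}$ are isomorphic, so that the weighted continuation maps (with weights $t^{\eta(x)-\mathcal A^{\omega_+}_H(u)}$) actually intertwine the Floer differentials, i.e.\ are chain maps (see equation~\eqref{chain}). In short: projective constancy is an \emph{algebraic} compatibility condition, not an energy bound, and your proposal is missing the Gray-reduction/Palais--Smale mechanism that supplies the latter.
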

Before commenting on this result, we clarify the terminology used. We refer to Chapter \ref{cha_sh} for a more thorough discussion. The map $j_s$ denotes a collar of the boundary. The normal vector field associated to $j_s$ induces the contact form $\alpha_s$ on $\partial W_s$. By \textit{convex} deformation, we mean a deformation for which the boundary stays of positive contact type. A contact form is \textit{$\nu$-non-degenerate}, if all its periodic orbits in the class $\nu$ are transversally non-degenerate. A two-form $\rho$ on $W'$ is said to be \textit{$\nu$-atoroidal} if the cohomology class of its transgression $[\tau(\rho)]\in H^1(\mathscr L_\nu W')$ is zero. A family of two-forms $\{\omega'_s\}$ on $W'$ is said to be \textit{projectively constant on $\nu$-tori}, if the class $[\tau(\omega'_s)]$ is independent of $s$ up to a positive factor. 

Symplectic Cohomology for manifolds with boundary of positive contact type was introduced by Viterbo in \cite{vit}. In \cite[Theorem 1.7]{vit} (see also \cite[Theorem 2.2]{oan}) he proves the invariance of $SH^*$ for contractible loops when all the $\omega_s$ are $0$-atoroidal (in other words, they are aspherical). However, we do not understand his proof. In particular, when he deals with the ``Generalized maximum principle'' (Lemma 1.8), the coordinate $z$ is treated in the differentiation as if it did not depend explicitly on the variable $s$ in contrast to what happens in the general case.

In \cite{bf}, Bae and Frauenfelder prove a similar result for the Symplectic Cohomology of contractible loops on a closed manifolds and for Rabinowitz Floer Homology of twisted cotangent bundles. In their setting all the $\omega_s$ are aspherical and the primitives of $\omega_s$ to the universal cover of $W$ grow at most linearly at infinity. It would interesting to find out whether the deformation invariance that we get in Theorem \ref{thminv_int} and the one Bae and Frauenfelder get in \cite{bf} fit into the long exact sequence between Symplectic Homology, Symplectic Cohomology and Rabinowitz Floer Homology \cite{cfo}. 

As far as the hypotheses on the symplectic forms are concerned we observe two things. First, the fact that $c_1(\omega_s)$ is $\nu$-atoroidal has two consequences: Symplectic Cohomology is well-defined since $c_1(\omega_s)$ is also aspherical and, hence, no holomorphic sphere can bubble off \cite{hofsal}; Symplectic Cohomology is $\Z$-graded. Second, the fact that $\omega_s$ is projectively constant on $\nu$-tori implies that the local system of coefficients associated to $\tau(\omega_s)$, which appears as a weight in the definition of the Floer differential, is independent of $s$ up to isomorphism.

We construct the isomorphism mentioned in Theorem \ref{thminv_int} in two steps. First, in virtue of Gray's Theorem we find an auxiliary family of symplectic manifolds $(W_s',\omega_s')$ such that $(W_s',\omega_s')$ is isomorphic to $(W_s,\omega_s)$ and the support of $\omega_{s}'-\omega_0'$ is disjoint from the boundary. Then, we define a class of admissible paths of pairs $\{(H_s,J_s)\}$ such that $H_s$ is uniformly small and $J_s$ is uniformly bounded on the support of $\omega_s'-\omega_0'$. In this way the $1$-periodic orbits of $X_{H_s}$ does not depend on $s$. The moduli spaces of Floer cylinders corresponding to $\{(H_s,J_s)\}$ satisfy uniform bounds on the energy, since by a Palais-Smale Lemma, the time they spend on the support of $\omega_s'-\omega_0'$ is uniformly bounded. This implies that the moduli spaces are compact and, therefore, we can use them to define continuation homomorphisms between the chain complexes $SC^*_\nu(W_s,\omega_s',j_s',H_s,J_s)$. Such homomorphisms will be weighted using the local system of coefficients associated to $\tau(\omega_s)$, so that they will commute with the Floer differentials, yielding maps in cohomology. Two homotopies of homotopies arguments show that these maps are isomorphisms and that they commute with the direct limit.
\medskip

In Chapter \ref{cha_exa} we apply the general theory of contact hypersurfaces to magnetic systems $(M,g,\sigma)$, in order to prove the existence of periodic orbits. We denote by $\op{Con}^+(g,\sigma)$, respectively $\op{Con}^-(g,\sigma)$, the set of all $m$ such that $\Sigma_m$ is of positive, respectively negative, contact type. 

Historically, the first examples that have been studied are exact systems, since these can be equivalently described using tools from Lagrangian mechanics. In this case we can define $m_0(g,\sigma)$ and $m(g,\sigma)$ the \textit{Ma\~n\'e critical values} of the abelian and universal cover, respectively (after the reparametrisation $m\mapsto \frac{m^2}{2}$). As far as the contact property is concerned, it is known that
\begin{itemize}
 \item for $m>m_0(g,\sigma)$, $m\in\op{Con}^+(g,\sigma)$ and, up to time reparametrisation, the dynamics is given by the geodesic flow of a Finsler metric \cite{cipp};
 \item for $m\leq m_0(g,\sigma)$, $\Sigma_m$ is not of restricted contact type, namely the contact form cannot be extended to the interior, since there exists $M_m\subset M$ a compact manifold with non-empty boundary such that $\partial M_m$ is the union of supports of periodic orbits whose total action is negative \cite{tai2,cmp}. Each of these orbits is a \textit{waist}: namely a local minimiser for the action functional. If $M\neq\T^2$, this implies that $\op{Con}^+(g,\sigma)=(m_0(g,\sigma),+\infty)$, while, if $M=\T^2$, there are examples such that $m_0(g,\sigma)\in\op{Con}^+(g,\sigma)$ \cite{cmp}.
\end{itemize}

As far as the periodic orbits are concerned, it is known that
\begin{itemize}
 \item for $m>m(g,\sigma)$ there exists at least a periodic orbit in every non-trivial free homotopy class;
 \item for almost every $m\leq m(g,\sigma)$ there is a contractible periodic orbit \cite{con} and infinitely many periodic orbits in the same homotopy class of a waist \cite{amp1,ammp}. Each of these orbits is a mountain pass, namely is obtained by a minimax argument on a $1$-dimensional family of loops.
\end{itemize}

We give a survey of the results about the contact property in Section \ref{sec_exa}. The only original element we add is an explicit construction of the contact structures at $m_0(g,\sigma)$ for the two-torus. Indeed, the argument in the original paper \cite{cmp} was not constructive. As in the supercritical case, the Symplectic Cohomology of the filling is isomorphic to the singular homology of the loop space. In particular, $SH^*_0\neq 0$. For restricted contact type hypersurfaces, Ritter proved in \cite[Theorem 13.3]{ritqft} that the non-vanishing of Symplectic Cohomology implies the non-vanishing of Rabinowitz Floer Homology (see \cite{cfo} for the relation between these two homology theories). By Theorem 1.2 in \cite{cf}, the non-vanishing of $RFH$ implies the non-displaceability of the boundary. Hence, if Ritter's theorem could be generalised to the filling of arbitrary hypersurfaces of contact type, we would have proven that the critical energy level is non-displaceable (see \cite{cfp} for a 
discussion of displaceability of hypersurfaces in twisted tangent bundles).    

We proceed to the non-exact case in Section \ref{sec_ne}. The first condition we need is for the $2$-form $\omega_\sigma$ to be exact on $\Sigma_m$. This happens if and only if $M\neq\T^2$. Then, we look for a contact form within the class of primitives defined in \eqref{fun_nexa}. In Corollary \ref{cor_negen} and Corollary \ref{cor_s2}, we find that $m\in\op{Con}^+(g,\sigma)$ for $m$ large enough and we compute the Symplectic Cohomology of the filling in terms of the Symplectic Cohomology of the model cases. 
\begin{pro}
Let $(M,g,\sigma)$ be a magnetic system on a surface different from the two-torus. If $m$ belongs to the unbounded component of $\op{Con}^+(g,\sigma)$, then 
\begin{equation*}
SH^*\left(\left\{E\leq \frac{m^2}{2}\right\},\omega_\sigma\right)=
\begin{cases}
H_{-*}(\mathscr L M,\Z)&\mbox{ if }M \mbox{ has positive genus},\\
0&\mbox{ if }M=S^2.
\end{cases}
\end{equation*}
\end{pro}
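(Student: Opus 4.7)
The plan is to apply the deformation invariance of Symplectic Cohomology (Theorem \ref{thminv_int}) to reduce the computation from the given system $(g,\sigma,m)$ to a tractable model. The dichotomy in the stated answer reflects whether the second homotopy group of $M$ is trivial.

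Since $m$ lies in the unbounded component of $\op{Con}^+(g,\sigma)$, I may first deform $m$ upward to an arbitrarily large value while remaining in that component; Theorem \ref{thminv_int} preserves $SH^*$ along this move. For $M$ of positive genus (so $M\neq S^2,\T^2$), I next take the linear path of symplectic forms $\omega_s := d\lambda - (1-s)\pi^*\sigma$ for $s\in[0,1]$. Positive contact type on $\Sigma_m$ is preserved for $m$ large. Moreover, because $\pi_2(M)=0$ and every map $T^2\to M$ has degree zero (by Gromov's inequality on simplicial volume, or directly by a comparison of Euler characteristics), the transgression of any closed two-form pulled back from $M$ to every free loop space component $\mathscr L_\nu TM$ vanishes identically. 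Hence $c_1(\omega_s)$ is $\nu$-atoroidal and $\omega_s$ is (trivially) projectively constant on $\nu$-tori for every $\nu$. At $s=1$ we reach the standard Liouville disc cotangent bundle $(D^*M,d\lambda)$, and Viterbo's isomorphism yields
\[
SH^*(D^*M,d\lambda)\cong H_{-*}(\mathscr L M,\Z),
\]
proving the positive-genus case.

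For $M=S^2$ this linear deformation is inadmissible. Because $\pi_2(S^2)=\Z$, the transgression $[\tau(\omega_s)]$ on the (unique, contractible) loop-space component is a non-zero multiple of $[\pi^*\sigma]$ for $s<1$ but collapses to zero at $s=1$, violating projective constancy. Instead, I deform along a path of magnetic systems $(g_s,\sigma_s)$ that keeps $[\sigma_s]$ fixed up to positive scaling—so the transgression stays proportional throughout—terminating in a standard model with $g$ the round metric and $\sigma$ a positive constant multiple of the round area form. In this model the magnetic flow on $\Sigma_m$ is a free $S^1$-action (a Seifert fibration), and the filling is symplectomorphic to a disc subbundle of a negative holomorphic line bundle over $\C P^1\cong S^2$. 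By the known vanishing of $SH^*$ for such negative line bundles, one concludes $SH^*=0$ as claimed.

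The principal difficulty is the $S^2$ case: one must exhibit an admissible deformation that preserves projective constancy, recognise the terminal model as (a disc subbundle of) a negative line bundle, and invoke the vanishing of $SH^*$ for that model—a non-trivial input typically established via a Gromov--Witten computation on the projective compactification. The positive-genus case by contrast reduces cleanly to Viterbo's theorem after deformation to the geodesic flow.
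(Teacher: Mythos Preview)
Your positive-genus argument matches the paper's (Corollary~\ref{cor_negen} via Proposition~\ref{prp_def}(b)): deform within the unbounded component to $s=0$, where Viterbo's isomorphism applies. The justification that maps $T^2\to M$ have degree zero is the same content as the paper's citation of \cite[Lemma~2.2]{mer1}.

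For $S^2$ you and the paper diverge after reaching the round model. The paper (Section~\ref{secsph}, Proposition~\ref{prp_vanst}) computes $SH^*(DS^2,\omega_{s\sigma_{g_0}})$ directly: it exhibits the explicit Liouville coordinate $R_s=\sqrt{(2E+s^2)/(1+s^2)}$, takes the cofinal family of radial Hamiltonians $H^k_s=k\sqrt{1+s^2}\,R_s$ whose flows are $2\pi k$-periodic $S^1$-actions, and shows the Conley--Zehnder indices of the Morse--Bott critical manifold (the zero section) satisfy $\mu^l_{\op{CZ}}=2\lfloor ks/2\pi\rfloor\to+\infty$, forcing the direct limit to vanish in every degree. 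Your route---identifying the filling with a disc subbundle of $\mathcal O(-2)\to\C P^1$ and invoking the known vanishing of $SH^*$ for negative line bundles---is legitimate, but it outsources two nontrivial steps: the symplectic identification (which is true but not immediate; the twisted form must be matched to a specific K\"ahler class on the ALE space) and the vanishing itself (Ritter's result in \cite{ar}). The paper's computation is self-contained and, in substance, \emph{is} the line-bundle vanishing argument specialised to this case: both exploit the same periodic Reeb flow and the resulting unbounded index shift. What your phrasing buys is a conceptual framing; what the paper's buys is that nothing external need be quoted.
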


If the magnetic form is symplectic we also show in Section \ref{sub_s2} and Section \ref{sub_lg} that low energy levels are
\begin{itemize}
 \item of positive contact type, if $M=S^2$;
 \item of negative contact type, if $M$ is a surface of higher genus. 
\end{itemize}
In the first case, we compute $SH$ and we again find that it vanishes. In the second case, we cannot define $SH$ of the filling, since the Liouville vector field points inwards at the boundary. However, it would be interesting to find a compact symplectic manifold $(W',\omega')$ with boundary of positive contact type that can be glued to $\Sigma_m$ in such a way that $(\{E\leq \frac{m^2}{2}\},\omega_\sigma)\sqcup_{\Sigma_m}(W',\omega')$ is a closed symplectic manifold. 

Finally, in Section \ref{sec_lb}, we use the computation of Symplectic Cohomology to reprove some known lower bounds on the number of periodic orbits in the exact and non-exact case. We collect them in the following proposition.
\begin{pro}
Let $(M,g,\sigma)$ be a magnetic system and suppose that $m$ belongs either to the unbounded component of $\op{Con}^+(g,\sigma)$, or, if $M=S^2$ and $\sigma$ is symplectic, to the component of $\op{Con}^+(g,\sigma)$ containing $0$. Then,
\begin{itemize}
 \item if $M=S^2$, there exists a periodic orbit on $\Sigma_m$. If all the iterates of this periodic orbit are non-degenerate, then there exists another geometrically distinct periodic orbit;
 \item if $M=\T^2$, there exists a periodic orbit on $\Sigma_m$ in every non-trivial free homotopy class. If such orbit is non-degenerate, then there exists another geometrically distinct periodic orbit in the same class;
 \item if $M$ is a surface of higher genus, there exists one periodic orbits on $\Sigma_m$ in every non-trivial free homotopy class. 
\end{itemize}
\end{pro}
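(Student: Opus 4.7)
The plan is to exploit the Viterbo-type long exact sequence
$$H^{*}(W^{\Sigma_m}) \longrightarrow SH^{*}(W^{\Sigma_m},\omega_\sigma) \longrightarrow SH^{*,+}(W^{\Sigma_m},\omega_\sigma) \longrightarrow H^{*+1}(W^{\Sigma_m}),$$
together with the fact that it respects the decomposition along the free homotopy classes of loops in $\Sigma_m$. After approximating the given contact form by a $\nu$-non-degenerate one (and invoking the lower semicontinuity of the number of geometrically distinct periodic orbits), the chain complex computing $SH^{*,+}_\nu$ is freely generated by the periodic Reeb orbits of $\Sigma_m$ lying in the class $\nu$, so a lower bound on the rank of $SH^{*,+}_\nu$ will translate into a lower bound on the number of such orbits.

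For a non-trivial class $\nu$, the singular cohomology $H^*(W^{\Sigma_m})$ is concentrated in the constant-loop sector $\nu=0$, so that the exact sequence collapses to an isomorphism $SH^*_\nu\simeq SH^{*,+}_\nu$. Feeding in the computation of the preceding proposition, in the higher genus case $SH^*_\nu\simeq H_{-*}(\mathscr L_\nu M)$ is non-zero for every conjugacy class in $\pi_1(M)$, since $\mathscr L_\nu M$ is homotopy equivalent to $S^1$; this already settles the higher genus part. On $\T^2$ the same identification gives $SH^*_\nu\simeq H_{-*}(\mathscr L_\nu\T^2)\simeq H_{-*}(\T^2)$, whose rank is strictly greater than one. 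A primitivity bookkeeping for $\nu$ shows that the iterates of a simple orbit in class $\nu$ lie in the classes $k\nu$ and meet $\nu$ itself in at most one level, so that a single non-degenerate orbit contributes a single generator to the $\nu$-sector of $SC^{*,+}$; consequently a second geometrically distinct orbit in the class $\nu$ must exist.

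For $M=S^2$ the preceding proposition gives $SH^*_0=0$, and the exact sequence turns the non-vanishing $H^*(W^{\Sigma_m})\simeq H^*(S^2)$ into a non-zero $SH^{*,+}_0$, producing at least one contractible Reeb orbit on $\Sigma_m$. Under the additional hypothesis that all iterates of this orbit are non-degenerate, the second-orbit claim will follow from a Conley--Zehnder iteration analysis of the type developed in Chapter 7: the parity pattern of the indices of the iterates $\gamma^k$ is incompatible with the rank profile that the exact sequence forces on $SH^{*,+}_0$, so one more simple orbit is needed.

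The main obstacle is precisely this last step. Since all iterates of a contractible orbit sit in the same (trivial) free homotopy class, they can interact through the Floer differential, and extracting a second geometrically distinct orbit from a rank bound on $SH^{*,+}_0$ alone requires a common index jump inequality for iterated orbits to rule out algebraic cancellations. For the $\T^2$ and higher genus statements, instead, the class $\nu\ne 0$ automatically separates iterates of a single orbit into distinct sectors of the chain complex, and the argument reduces to a clean rank count from Step~1.
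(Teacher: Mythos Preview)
Your treatment of the higher-genus and $\T^2$ cases is essentially the paper's: compute $SH^*_\nu\simeq H_{-*}(\mathscr L_\nu M)$ via Proposition~\ref{prp_exacon} and Corollary~\ref{cor_negen}, then count ranks. The paper does not set up the long exact sequence with $SH^{*,+}$; it works instead with the action/period filtrations of Section~\ref{sec:fil}, but for non-trivial $\nu$ the two packagings are equivalent. One correction on $\T^2$: a single transversally non-degenerate Reeb orbit $\gamma$ in class $\nu$ contributes \emph{two} generators $\gamma_{\min},\gamma_{\max}$ to the chain complex after the perturbation of Section~\ref{pert_aut}, not one. Your ``rank strictly greater than one'' should be ``greater than two''; since $\op{rk}H_*(\T^2)=4$, the conclusion survives.

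There are two genuine gaps on $S^2$. First, you assume $SH^*_0=0$, but this holds only in the non-exact case (Corollary~\ref{cor_van}); for exact $\sigma$ on the unbounded component one has $SH^*_0\simeq H_{-*}(\mathscr LS^2)$, which is non-zero in infinitely many degrees, and the argument for both existence and multiplicity runs differently (Proposition~\ref{per_exasp}). Second, and more seriously, the second-orbit claim is the entire content of the proof, and your pointer to ``Chapter~7'' misfires: the index analysis you need is in Section~\ref{sec_lb} (Propositions~\ref{per_exasp} and~\ref{prp_bounds2}), not in the dynamical-convexity chapter. Concretely, the paper assumes $\gamma$ is the only prime orbit, builds a cofinal family $H^k\in\hat{\mathcal H}'_0$ with slope past $kT$, and splits on $\mu_{\op{CZ}}(\gamma)$. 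For large index, the iteration formula forces index gaps between $\gamma^k$ and $\gamma^{k+1}$ that leave some $\gamma^{k_0}_{\op{Max}}$ (or a degree of $SH^*$) unmatched. For $\mu_{\op{CZ}}(\gamma)\in\{1,2\}$, one shows that a specific $\gamma^{k_0}_{\op{min}}$ is a permanent cocycle that is not a coboundary, using the good/bad dichotomy together with the action filtration (exact case) or the period filtration of Corollary~\ref{matvan2} (non-exact case) to kill the potential coboundary terms. The vanishing of $SH^*$ alone does not rule out that the iterates $\gamma^k_{\min},\gamma^{k'}_{\max}$ cancel pairwise; it is precisely the filtration argument that obstructs this, and your proposal stops short of supplying it.
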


\section{The contact property on surfaces of revolution}

In Chapter \ref{cha_rev} we look at the contact property on surfaces of revolution, in order to test the general results contained in Chapter \ref{cha_exa} in a concrete case. We construct the surface $S^2_\gamma\subset\R^3$ by rotating a profile curve $(\gamma,\delta)\subset\R^2$ parametrised by arc-length. We study the magnetic system $(S^2_\gamma,g_\gamma,\mu_\gamma)$ where $g_\gamma$ is the restriction of the Euclidean metric on $\R^3$ to $S^2_\gamma$ and $\mu_\gamma$ is the area form associated to this metric. Up to a homothety, we also assume that the area of the surface is $4\pi$. From the previous discussion, we know that there exists two positive values $m_{-,\gamma}<m_{+,\gamma}$ such that $[0,m_{-,\gamma})\cup(m_{+,\gamma},+\infty)\subset\op{Con}^+(g_\gamma,\mu_\gamma)=:\op{Con}_\gamma$. These can be taken to be the two roots of the quadratic equation $m^2-m_\gamma m+1=0$ (where $m_\gamma\geq0$ is defined below) if such roots are real. In this case $m_\gamma\geq2$ and the length of the gap $m_{+,\
gamma}-m_{-,\gamma}$ is $\sqrt{m_\gamma^2-4}$ and, hence, it increases with $m_\gamma$. If the roots are not real, or, in other words, $m_\gamma<2$, then we simply have $\op{Con}_\gamma=[0,+\infty)$. The number $m_\gamma$ depends on the Riemannian geometry of $S^2_\gamma$. It is defined by
\begin{equation*}
m_\gamma:=\inf_{\beta\in\pri^{(1-K)\mu_\gamma}}\Vert\beta\Vert,
\end{equation*}
where $K$ is the Gaussian curvature and $\pri^{(1-K)\mu_\gamma}$ is the set of primitives of $(1-K)\mu_\gamma$. We give an explicit formula for $m_\gamma$ in terms of the function $\gamma$ in Proposition \ref{estm}. It can be used to get the following estimate on the contact property.

\begin{pro}
If $S^2_\gamma$ is symmetric with respect to the equator and the curvature increases from the poles to the equator, then $m_\gamma\leq1$. Therefore, $\op{Con}_\gamma=[0,+\infty)$.

On the other hand, for every $C>0$, there exists a convex $S^2_\gamma$ such that $m_\gamma>C$.
\end{pro}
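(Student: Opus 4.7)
\emph{Plan.} By the rotational invariance of the two-form $(1-K)\mu_\gamma$, averaging over the $S^1$-action can only decrease the sup norm, so the infimum defining $m_\gamma$ is attained on rotationally invariant primitives. Writing such a primitive as $\beta = a(t)\,dt + h(t)\,d\theta$ in the arc-length coordinate $t$ on the profile, the condition $d\beta = (\gamma + \gamma'')\,dt \wedge d\theta$ forces $h'(t) = \gamma(t) + \gamma''(t)$. Smoothness at the two poles, combined with the area normalisation $\int_0^L \gamma\,dt = 2$, fixes $h$ uniquely as $h(t) = \Gamma(t) + \gamma'(t) - 1$ with $\Gamma(t) = \int_0^t \gamma\,ds$, and the pointwise norm $|\beta|^2 = a^2 + h^2/\gamma^2$ is clearly minimised by $a \equiv 0$. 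This reproduces the formula of Proposition~\ref{estm}:
\[
m_\gamma = \sup_{t \in [0,L]} \frac{|h(t)|}{\gamma(t)}.
\]

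For the first statement, the symmetry $\gamma(L-t) = \gamma(t)$ implies $h(L-t) = -h(t)$, reducing the estimate to $[0, L/2]$. Gauss--Bonnet together with the area normalisation give $\int_0^{L/2} K\gamma\,dt = 1 = \int_0^{L/2} \gamma\,dt$, so the monotonicity of $K$ forces a unique $t_0 \in (0, L/2)$ with $K(t_0) = 1$; consequently $h' = \gamma(1-K)$ is positive on $[0, t_0)$ and negative on $(t_0, L/2)$, so $h \ge 0$ on $[0, L/2]$ with its single maximum at $t_0$. The ratio $\phi = h/\gamma$ vanishes at the pole (by L'H\^opital, using $\gamma'(0) = 1$ and $h'(0) = 0$) and at the equator, so $\sup \phi$ is attained at an interior critical point $t_*$, where
\[
\phi(t_*) = \frac{h'(t_*)}{\gamma'(t_*)} = \frac{\gamma(t_*)(1 - K(t_*))}{\gamma'(t_*)}.
\]
If $t_* \ge t_0$ the right-hand side is non-positive, so $\phi(t_*) \le 0 \le 1$. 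If $t_* < t_0$, I substitute the identity $\gamma'(t_*) = 1 - \int_0^{t_*} K\gamma\,ds$ and exploit the monotonicity $K(s) \le K(t_*)$ on $[0, t_*]$ to bound the denominator from below and the numerator from above, arriving at $\phi(t_*) \le 1$; this yields $m_\gamma \le 1 < 2$, hence $\op{Con}_\gamma = [0, +\infty)$.

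For the second statement, consider the convex surfaces of revolution $S^2_{\gamma_c}$ obtained by rotating the semi-ellipse $x^2 + z^2/c^2 = 1$ and rescaling to area $4\pi$. These are strictly convex, the equatorial radius $\gamma_c(L_c/2)$ tends to $0$ as $c \to \infty$, while $\max h_c$ remains bounded below by a positive constant determined only by the area normalisation and the equator symmetry. A direct asymptotic estimate then gives $m_{\gamma_c} = \sup |h_c|/\gamma_c \to +\infty$, so any prescribed $C$ is eventually exceeded.

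The main obstacle is the interior estimate of part~(1) when $t_* < t_0$: the inequality $\phi(t_*) \le 1$ must combine the monotonicity of $K$, the Gauss--Bonnet identity, and the area normalisation simultaneously, since any two of these alone admit configurations with $\phi > 1$. Part~(2) reduces to a standard asymptotic analysis once a suitable family of elongated convex profiles is identified.
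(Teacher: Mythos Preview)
Your setup in both parts matches the paper's, but each has a gap at the decisive estimate.

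\textbf{Part 1.} Up to the critical-point formula $\phi(t_*) = \gamma(t_*)\bigl(1-K(t_*)\bigr)/\gamma'(t_*)$ you follow the paper exactly. The problem is the case $t_* < t_0$: substituting $\gamma'(t_*) = 1 - \int_0^{t_*} K\gamma$ and using $K(s) \le K(t_*)$ gives only $\gamma'(t_*) \ge 1 - K(t_*)\Gamma(t_*)$, and this alone does not force $\phi(t_*) \le 1$; you would still need $\gamma(t_*)\bigl(1-K(t_*)\bigr) \le 1 - K(t_*)\Gamma(t_*)$, which is not a consequence of the hypotheses, and there is no evident way to ``bound the numerator from above'' using monotonicity of $K$. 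The paper bypasses this with a one-line observation that does \emph{not} invoke the monotonicity of $K$ a second time: since $\Gamma(t_*) < 1$ for $t_* \in (0,L/2)$ (in your convention), one has directly $h(t_*) = \Gamma(t_*) + \gamma'(t_*) - 1 < \gamma'(t_*)$. Combined with the critical-point identity $1 - K(t_*) = h(t_*)\gamma'(t_*)/\gamma(t_*)^2$ this gives $\bigl(\gamma'(t_*)/\gamma(t_*)\bigr)^2 > 1 - K(t_*) \ge 0$, hence
\[
\phi(t_*) \;=\; \bigl(1-K(t_*)\bigr)\,\frac{\gamma(t_*)}{\gamma'(t_*)} \;<\; \sqrt{1-K(t_*)} \;\le\; 1.
\]

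\textbf{Part 2.} Elongated prolate ellipsoids do give $m_\gamma \to \infty$, so your choice of family is viable, but your stated reason is incorrect: the claim that $\max |h|$ is bounded below ``by a positive constant determined only by the area normalisation and the equator symmetry'' is false, since the round sphere (area $4\pi$, symmetric) has $h \equiv 0$. What actually makes $m_\gamma$ blow up is curvature concentration at the poles, so that at some small $t = \delta$ one has $\gamma(\delta)$ small, $\Gamma(\delta)$ negligible, yet $\gamma'(\delta)$ already well below $1$; then $|h(\delta)|/\gamma(\delta)$ is large. The paper isolates exactly this mechanism with an explicit construction rather than an asymptotic argument: for any small $\delta, \varepsilon > 0$ it produces a convex normalised profile with $\dot\gamma(\delta) < \varepsilon$ (take a round sphere of small radius, for which this holds automatically, then stretch it away from the pole to restore the area while keeping $\ddot\gamma \le 0$), and reads off $m_\gamma \ge |h(\delta)|/\gamma(\delta) \ge (1 - \varepsilon - \delta^2)/\delta$.
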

The second part of the proposition above relies on the fact that $m_\gamma$ can be arbitrarily big provided the curvature is sufficiently concentrated around at least one of the poles. Thus, for these surfaces, the contact property can be proved only for a small set of parameters.
 
Hence, we are led to ask, in general, how good is the set $[0,m_{-,\gamma})\cup(m_{+,\gamma},+\infty)$ in approximating $\op{Con}_\gamma$, the actual set of energies where the contact property holds. For this purpose we employ McDuff's criterion \cite{mcd}, which says that $\Sigma_m$ is of contact type provided all the invariant measures supported on this hypersurface have positive action. Finding the actions of an invariant measure is usually a difficult task. However, for surfaces of revolution there are always some latitudes that are the supports of periodic orbits. We compute the action of such latitudes in Proposition \ref{paract}. If the magnetic curvature $K_m:\Sigma_m\rightarrow\R$, defined as $K_m:=m^2K+1$ is positive, we only have two periodic orbits which are latitudes (see Proposition \ref{reddyn}). By Proposition \ref{paract}, their action is positive. Therefore, they do not represent an obstruction to the contact property. In addition, under the same curvature assumption, we have a simple 
description of the dynamics of the system after reduction by the rotational symmetry. In particular, this allows us to devise a numerical strategy to compute the action of all the ergodic invariant measures as we explain in Section \ref{sub_act}. The data we have collected suggest that all such actions are positive, hinting, therefore, at the following conjecture.
\begin{con}\label{conj}
Let $(S^2,g,\sigma)$ be a symplectic magnetic system and suppose that for some $m>0$, the magnetic curvature is positive. Then, $\Sigma_m$ is of contact type.
\end{con}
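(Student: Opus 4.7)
The plan is to apply McDuff's criterion, which characterises contact hypersurfaces as those on which every invariant Borel probability measure has strictly positive action with respect to some, equivalently any, primitive of $\omega_\sigma|_{\Sigma_m}$. Such a primitive exists because $\Sigma_m\simeq\mathbb{RP}^3$ and therefore $H^2(\Sigma_m;\R)=0$. Since exact $1$-forms integrate to zero against invariant measures, the action functional is independent of the chosen primitive, so we are free to pick a particularly convenient one and then to argue only on the level of invariant measures.

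To construct this primitive, I would identify $\Sigma_m$ with the unit sphere bundle $S(TM)$ via $v\mapsto v/m$ and use the connection $1$-form $\eta$ of the $S^1$-fibration $\pi\colon S(TM)\to M$, which satisfies $d\eta=-\pi^*(K_g\mu_g)$. Writing $\sigma=f\mu_g$ with $f>0$, I would look for $\alpha$ in the form
\begin{equation*}
\alpha=m\lambda_0+F\,\eta+\pi^*\tau,
\end{equation*}
with $F\in C^\infty(M)$ and $\tau$ a $1$-form on $M$, chosen via Hodge decomposition of $f\mu_g$ on $(M,g)$ so that $d\alpha=\omega_\sigma|_{\Sigma_m}$. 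Since $\lambda_0(X^\sigma_E)=m$, $\eta(X^\sigma_E)=m f$ and $\pi_*X^\sigma_E=v$, one obtains
\begin{equation*}
\alpha(X^\sigma_E)(x,v)=m^2+m\,F(x)f(x)+\tau_x(v),
\end{equation*}
and the goal of the Hodge step is to arrange this expression so that its average against any invariant measure is controlled by $K_m$.

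With $\alpha$ fixed, I would establish positivity of $\int\alpha\,d\mu$ in two stages. \emph{Stage one: periodic orbits.} For an embedded closed magnetic geodesic $\gamma$ on $\Sigma_m$ bounding a capping disc $D$ in $TM$, Stokes applied to $d\alpha=\omega_\sigma$ together with a magnetic Gauss--Bonnet computation should express $\int_\gamma\alpha$ as an integral of $K_m$ over $D$ plus boundary geometric terms, reducing strict positivity to the hypothesis $K_m>0$ and generalising the latitude computation of Proposition~\ref{paract}. \emph{Stage two: general ergodic measures.} By the ergodic decomposition theorem, it suffices to treat ergodic invariant measures; these should be approximated by periodic orbits via a closing-lemma argument adapted to the smooth Reeb-like flow on $\Sigma_m$, with the action passing to the limit by continuity of $\alpha$.

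The principal obstacle is precisely Stage two: closing-lemma-type approximations notoriously fail to preserve delicate integral quantities, and passing from pointwise $K_m>0$ to a \emph{uniform} lower bound on action requires a rigidity input absent from the surface-of-revolution calculation of Chapter~\ref{cha_rev}, where the $S^1$-reduction yields a $2$-dimensional phase portrait amenable to numerical integration. A more promising alternative, in light of Chapter~\ref{cha_sh}, would be to bypass Stage two by constructing a Birkhoff-type global surface of section for $X^\sigma_E|_{\Sigma_m}$ directly from $K_m>0$: every invariant measure would then have positive flux through such a section, and pairing this flux with the cohomology class $[\alpha]$ would yield positive action automatically. Building such a section away from the low-energy regime of Chapter~\ref{cha_int} (where smallness of $m$ and proximity to the Hopf fibration were essential) is where I expect the real analytic difficulty to concentrate.
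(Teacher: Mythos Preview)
This statement is a \emph{conjecture} in the paper, not a theorem: the paper does not prove it. The only evidence offered is numerical (Section~\ref{sub_act}, for surfaces of revolution) together with the analytic verification for the two latitude orbits (Proposition~\ref{paract}). So there is no ``paper's own proof'' to compare against; what you have written is a research programme toward an open problem, and should be assessed as such.

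The overall framework you set up is the right one and matches the paper's philosophy: McDuff's criterion (Proposition~\ref{mcdcri}) reduces the question to positivity of the action on invariant measures, and since $H^1(SS^2;\R)=0$ every invariant measure is null-homologous, so the action is well defined independently of the primitive. Your primitive $\alpha=m\lambda_0+F\eta+\pi^*\tau$ is essentially the family $\lambda^g_{s,\beta}$ of \eqref{inj_pri}, so you are working with the same objects as the paper.

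However, both stages of your argument have genuine gaps, and you have correctly flagged the second one yourself. For Stage one, your Gauss--Bonnet computation is only sketched for \emph{embedded} closed magnetic geodesics bounding a disc, but a general periodic orbit on $\Sigma_m$ projects to a curve on $S^2$ that can have arbitrarily many self-intersections (cf.\ Lemma~\ref{interel} and Theorem~\ref{alaban}), and the capping disc in $TM$ is not canonical. Extending the latitude calculation of Proposition~\ref{paract} to this generality is already non-trivial and not addressed. For Stage two, as you say, approximating an arbitrary ergodic measure by periodic orbits while controlling the action requires a closing-type statement that is not available in this setting; the paper itself resorts to numerics precisely because this step is missing even in the symmetric case. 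Your proposed alternative via a global surface of section would indeed bypass the issue, but constructing one from $K_m>0$ alone, outside the dynamically convex low-energy regime of Chapter~\ref{cha_le}, is itself an open problem of comparable difficulty.
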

The numerical computations, and possibly an affirmative answer to the conjecture, would then indicate that the system $(S^2_\gamma,g_\gamma,\mu_\gamma)$ associated to a \textit{convex} surface would be of contact type at every energy level. This shows that the inclusion $[0,m_{-,\gamma})\cup(m_{+,\gamma},+\infty)\subset\op{Con}_\gamma$ will be strict, in general. Establishing the conjecture would also yield another proof of Corollary 1.3 in \cite{sch2} about the existence of two closed orbits on every energy level, when $K\geq0$ and $f>0$.

To complete the picture, we see in Proposition \ref{nonnec} that positive magnetic curvature is not necessary for having the contact property. Moreover, using again Proposition \ref{paract}, we give the first known example of an energy level on a non-exact magnetic system on $S^2$ which is not of contact type (Proposition \ref{noncon}). This shows that the inclusion $\op{Con}_\gamma\subset[0,+\infty)$ can be strict, as well.

\section{Low energy levels of contact type on the two-sphere}

In the last chapter of the thesis we focus on low energy levels of general symplectic magnetic systems on $S^2$. We can write $\sigma=f\mu$, where $\mu$ is the Riemannian area and $f:S^2\rightarrow\R$ is a function. Without loss of generality we assume that $\int_{S^2}\sigma=4\pi$ and that $f$ is positive. By the previous discussion, we know that these levels are of contact type. Hence, there exists a family of primitives $m\mapsto\lambda_m$ of $\omega_\sigma|_{\Sigma_m}$ made of contact forms. In Lemma \ref{lemconb} we see that, using Gray Stability Theorem, one can find diffeomorphisms $F_m:\Sigma_1\rightarrow\Sigma_m$ such that
\begin{equation*}
\check{\lambda}_m:=F_m^*\lambda_m=\frac{\check{\lambda}_0}{\rho_m}+o(m^2),\quad\quad\mbox{where }\ \rho_m:=1-\frac{m^2}{2f}    
\end{equation*}
The form $\check{\lambda}_0$ is an $S^1$-connection on $SS^2:=\Sigma_1$ with curvature $\sigma$. In Lemma \ref{lem_intexp} we show that the Reeb vector field of $\check{\lambda}_0/\rho_m$ can be written as the composition of the $\sigma$-Hamiltonian flow with Hamiltonian $\rho_m$ on the base $S^2$ and a rotation in the fibres with angular speed $\rho_m$. This allows us to expand Ginzburg's action function $S_m:SS^2\rightarrow\R$ for the form $\check{\lambda}_m$, defined in Section \ref{sec_gin}, in the parameter $m$ around zero. This function was introduced for the first time in \cite{gin1} using local Poincar\'e sections. Its critical points are those periodic orbits of the Reeb flow of $\check{\lambda}_m$ which are close to a curve that winds once around a fibre of $SS^2\rightarrow S^2$.
\begin{pro}\label{progin}
The following expansion holds
\begin{equation}
S_m(x,v)=2\pi+\frac{\pi}{f(x)}m^2+o(m^2).
\end{equation}
As a consequence, if $x\in S^2$ is a non-degenerate critical point of $f$, then there exists a family of loops $m\mapsto\gamma_m$, such that $\gamma_0$ winds uniformly once around $S_xS^2$ in the positive sense and the support of $F_m(\gamma_m)$ is a periodic orbit on $\Sigma_m$. 
\end{pro}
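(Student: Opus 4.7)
The strategy is to read off the expansion of $S_m$ directly from the explicit decomposition of the Reeb flow of $\check{\lambda}_0/\rho_m$ given in Lemma \ref{lem_intexp}, and then to extract critical points by Morse-theoretic persistence. Recall from Section \ref{sec_gin} that the Ginzburg action function is, up to the $o(m^2)$ correction built into $\check{\lambda}_m$, the first-return time of the Reeb flow of $\check{\lambda}_m$ to a transverse slice after one fibre winding: $S_m(x,v)=\int_{\gamma_{m,(x,v)}}\check{\lambda}_m$, and since $\iota_{R_m}\check{\lambda}_m\equiv 1$ this equals the elapsed time $T_m(x,v)$. The proof reduces to expanding $T_m$.

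At $m=0$ we have $\rho_0\equiv 1$, and Lemma \ref{lem_intexp} tells us the Reeb flow is pure vertical rotation at unit angular speed: every fibre is a closed orbit of period $2\pi$ and $\check{\lambda}_0$ integrates to $2\pi$ along it, giving $S_0\equiv 2\pi$. For small $m>0$, the same lemma gives horizontal motion equal to the $\sigma$-Hamiltonian vector field of $\rho_m$ on $S^2$, and vertical motion with angular speed $\rho_m(x(t))$. Since $d\rho_m=(m^2/2)\,d(1/f)$ is of order $m^2$, the horizontal drift over one winding is of order $m^2$, so $\rho_m(x(t))=\rho_m(x)+O(m^4)$ along the orbit. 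Integrating $\dot\theta=\rho_m(x(t))$ from $0$ to $2\pi$ in the fibre variable yields
\begin{equation*}
T_m(x,v)=\frac{2\pi}{\rho_m(x)}+O(m^4)=2\pi+\frac{\pi m^2}{f(x)}+o(m^2),
\end{equation*}
after expanding $1/(1-m^2/(2f(x)))$. The $o(m^2)$ error built into $\check{\lambda}_m$ produces an $o(m^2)$ perturbation of the Reeb field, hence of the first-return integral, yielding the claimed expansion.

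For the consequence, observe that $S_m$ is invariant under the fibrewise $S^1$-action, since both the connection $\check{\lambda}_0$ and the multiplier $\rho_m$ (pulled back from $S^2$) are $S^1$-invariant; it therefore descends to a function $\bar S_m$ on $S^2$ satisfying $\bar S_m=2\pi+\pi m^2/f+o(m^2)$. After rescaling by $m^{-2}$ and passing to the limit, the critical points of $\bar S_m$ concentrate on critical points of $1/f$, equivalently of $f$. A non-degenerate critical point $x_*$ of $f$ persists, by the implicit function theorem applied to $d\bar S_m=(\pi m^2)d(1/f)+o(m^2)$, as a smooth family $m\mapsto x_m\to x_*$ of critical points of $\bar S_m$; lifted to $SS^2$ these give a family of critical circles of $S_m$, which by construction are supports of periodic Reeb orbits of $\check{\lambda}_m$ close to $S_{x_*}S^2$. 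Pushing forward by $F_m$ produces the periodic orbits on $\Sigma_m$, and the loop $\gamma_0$ is the fibre $S_{x_*}S^2$ traversed positively once.

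The main obstacle I expect is the rigorous control of the $o(m^2)$-perturbation step: one must show that the $o(m^2)$ correction in $\check{\lambda}_m$ propagates, via the Reeb vector field and the return-time function, to an $o(m^2)$ correction of $S_m$ that is furthermore uniform in $(x,v)$ and compatible with differentiation (so that the implicit function theorem still applies). This requires smooth dependence of the first-return map on the contact form together with uniform $C^1$-control, and is the essentially analytic point around which the statement revolves.
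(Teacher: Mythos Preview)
Your expansion of $S_m$ follows the paper's line: both use Lemma~\ref{lem_intexp} to decompose the Reeb flow of $\check{\lambda}_0/\rho_m$ into a slow $\sigma$-Hamiltonian drift on the base and a fibre rotation at speed $\rho_m$, then integrate to obtain $T_m=2\pi/\rho_m(x)+o(m^2)$. The paper is more careful about what ``one winding'' means---it fixes the local section via parallel transport along geodesics and bounds the resulting holonomy correction by the curvature over an $O(m^4)$-area disc, and it uses that $\rho_m$ is \emph{exactly} conserved along its own Hamiltonian flow rather than your weaker $O(m^4)$ estimate---but the outcome at order $m^2$ is the same.

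There is a genuine gap in your argument for the consequence. You assert that $S_m$ is invariant under the fibrewise $S^1$-action and therefore descends to $S^2$. This is not justified: the $S^1$-invariant object is the approximation $\check{\lambda}_0/\rho_m$, whereas $\check{\lambda}_m=e^{q_{m,\beta}}\check{\lambda}_0$ is not $V$-invariant, since $q_{m,\beta}$ is obtained (see~\eqref{equ_q}) by integrating $m/(h_{m,\beta}\circ F_{m,\beta})$ and $h_{m,\beta}(x,v)=m^2-m\beta_x(v)+f(x)$ depends on $v$ through $\beta_x(v)$. Hence $S_m$ does not descend to $S^2$, and your implicit function theorem on the base cannot be run; naive Morse--Bott persistence on $SS^2$ would only yield isolated critical points near $S_xS^2$, not a circle. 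The paper bypasses this as follows: in a tubular neighbourhood $\T_{2\pi}\times B_\delta$ of $S_xS^2$ it applies the implicit function theorem \emph{slice by slice} to get, for each fibre angle $\psi$, a unique transverse critical point $z_m(\psi)$; a minimax on $\psi\mapsto S_m(\psi,z_m(\psi))$ then produces one honest critical point of $S_m$ on the curve $\gamma_m(\psi)=(\psi,z_m(\psi))$. The ingredient that replaces your $S^1$-invariance is that critical points of $S_m$ are supports of periodic Reeb orbits and therefore automatically come in $S^1$-families; combined with the slice-wise uniqueness of $z_m(\psi)$, this forces the entire curve $\gamma_m$ to be critical.
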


The existence of periodic orbits close to non-degenerate critical points is stated without proof in \cite{gin2}.

Now we move to analyse the Conley-Zehnder indices of the Reeb flow of $\lambda_m$ and present some dynamical corollaries of this analysis. As $M=S^2$, $\Sigma_m$ is diffeomorphic to the lens space $L(2,1)\simeq \R\Pro^3$. On lens spaces we can identify the distinguished class of \textit{dynamically convex} contact forms. We say that a contact form is dynamically convex if the Conley-Zehnder index $\mu_{\op{CZ}}$ of every contractible periodic orbit of the Reeb flow is at least $3$. Such forms were introduced by Hofer, Wysocki and Zehnder \cite{hwz1} on $S^3$ as a contact-invariant generalisation of convex hypersurfaces in $\C^2$. If $\tau\in\Omega^1(L(p,q))$ is a dynamically convex contact form, we have the following two implications on the dynamics of the associated Reeb flow $R^\tau$.
\begin{enumerate}[(i)]
 \item\label{pun1} There exists a global Poincar\'e section of disc-type for $R^\tau$, under the condition that, when $p>1$ (namely $L(p,q)\neq S^3$), all periodic orbits are non-degenerate. 
 \item\label{pun2} When $p>1$ and all periodic orbits are non-degenerate, there exists an elliptic periodic orbit for $R^\tau$.
\end{enumerate}
Point (\ref{pun1}) was proven in \cite{hwz1} for $p=1$. The proof for $p>1$ is contained in \cite{hls}. Probably the non-degeneracy assumption can be removed by running the same lengthy approximation argument contained in \cite{hwz1}.
Point (\ref{pun2}) was proven for convex hypersurfaces in $\C^2$ symmetric with respect to the origin in \cite{dadoe}. A proof of the general case stated above was recently announced in \cite{am}.

Dynamical convexity was proven in the context of the \textit{standard} tangent bundle by Harris and G. Paternain \cite{pathar}. They showed that, if $(S^2,g)$ is a Riemannian two-sphere with $1/4$-pinched curvature, the geodesic flow is dynamically convex on every energy level. In this thesis we prove dynamical convexity for \textit{twisted} tangent bundles.
\begin{teo}\label{teo_dc}
Let $(S^2,g,\sigma)$ be a symplectic magnetic system. If $m$ is low enough, there exists a primitive $\lambda_m$ of $\omega_\sigma$ on $\Sigma_m$ which is a dynamically convex contact form. 
\end{teo}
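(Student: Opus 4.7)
My strategy is to reduce to the model contact form $\check\lambda_0/\rho_m$ on $SS^2$ given by Lemma~\ref{lemconb} and to analyze its Reeb dynamics via the explicit description in Lemma~\ref{lem_intexp}. Since $SS^2$ is an $S^1$-bundle over $S^2$ with Euler number $\chi(S^2)=2$, it is diffeomorphic to $L(2,1)\simeq\R\Pro^3$; in particular, contractible loops are exactly those that wind an even total number of times around the fibers of $\pi\colon SS^2\to S^2$. The $C^1$-closeness of $\check\lambda_m$ to the model form, combined with the stability of the Conley--Zehnder index at non-degenerate orbits of bounded period, will reduce Theorem~\ref{teo_dc} to establishing $\mu_{\op{CZ}}\geq 3$ for every contractible periodic orbit of the Reeb vector field $R_m$ of $\check\lambda_0/\rho_m$.

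\textbf{Classification of contractible orbits.} The horizontal part of $R_m$ covers the base vector field $X_{\rho_m}=\frac{m^2}{2f^3}X_f^{\sigma}$, which is $O(m^2)$. Hence a contractible periodic orbit $\gamma$ of period $T$ falls into one of two regimes. Either (i) $\pi\circ\gamma$ is a constant critical point $x_0$ of $f$ and $\gamma$ is an even cover of the fiber over $x_0$, whose shortest period is $4\pi/\rho_m(x_0)=4\pi+O(m^2)$; or (ii) $\pi\circ\gamma$ is a nonconstant closed orbit of $X_{\rho_m}$, hence has base period at least $c/m^2$, which forces $T\gtrsim 1/m^2$ on $SS^2$ as well.

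\textbf{Short orbits.} For a doubly-covered fiber $\gamma_{x_0}$ over a non-degenerate critical point $x_0$ of $f$, the linearized Reeb flow in the horizontal frame of $\xi=\ker\check\lambda_0$ is generated, to leading order in $m$, by
\begin{equation*}
M=\frac{m^2}{2f(x_0)^3}\,J_0\,\op{Hess}_{x_0}(f),
\end{equation*}
where $J_0$ is the complex structure associated with $(g,\sigma)$. Over the period $T_{x_0}$ this flow is elliptic with rotation angle $O(m^2)$ at extrema of $f$ and hyperbolic at saddles. To compute $\mu_{\op{CZ}}(\gamma_{x_0})$ I compare the horizontal frame with a symplectic trivialization of $\xi|_{\gamma_{x_0}}$ extending over a disk capping $\gamma_{x_0}$. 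Using $\xi\simeq\pi^*TS^2$ as complex line bundles and the Gysin structure of $SS^2\to S^2$, the twist between the two frames equals the Euler number of $TS^2$ over a hemisphere centered at $x_0$, i.e. $+1$ per loop, hence $+2$ for the double cover. Combined with the small infinitesimal contribution from $M$, this yields $\mu_{\op{CZ}}(\gamma_{x_0})=3$ at every non-degenerate critical point, in both the elliptic and hyperbolic cases.

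\textbf{Long orbits and perturbation.} In regime (ii), the fiber-rotation part of $R_m$ contributes approximately $T/(2\pi)\gtrsim 1/(\pi m^2)$ full turns to the linearized flow in any trivialization of $\xi|_\gamma$, so that $\mu_{\op{CZ}}(\gamma)\gtrsim 2/(\pi m^2)\gg 3$ for $m$ small. The two estimates together prove dynamical convexity for $\check\lambda_0/\rho_m$; the $o(m^2)$ correction in $\check\lambda_m$ is then absorbed by the stability of $\mu_{\op{CZ}}$ under $C^1$-small perturbations preserving non-degeneracy. The main obstacle is the trivialization computation in the short-orbit case: one must identify a concrete bounding disk for $\gamma_{x_0}$ in $L(2,1)$, control the discrepancy between the horizontal frame and the radial trivialization on that disk, and combine the resulting topological twist with the infinitesimal elliptic or hyperbolic data to land exactly on $\mu_{\op{CZ}}=3$. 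This extends the Hofer--Wysocki--Zehnder computation for convex hypersurfaces in $\C^2$ to the twisted bundle setting on the lens space.
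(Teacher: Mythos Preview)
Your proposal has a genuine gap in the perturbation step, and it also relies on an unstated non-degeneracy hypothesis on $f$.

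The main problem is the reduction from $\check\lambda_m$ to the model form $\check\lambda_0/\rho_m$. Dynamical convexity is a condition on \emph{every} contractible periodic orbit, and the set of such orbits is not stable under $C^1$-small perturbations of the contact form. There is no canonical correspondence between the periodic orbits of $\check\lambda_m$ and those of $\check\lambda_0/\rho_m$: new orbits can appear, and their indices are not controlled by ``stability of $\mu_{\op{CZ}}$ at non-degenerate orbits of bounded period''. Even for the long orbits, an $o(m^2)$ perturbation acting over a time interval of order $1/m^2$ is only $o(1)$ at the level of the linearised flow, but making this quantitative requires exactly the kind of uniform estimate that your argument is trying to avoid. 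Separately, your short-orbit analysis assumes the critical points of $f$ are non-degenerate, which is not part of the hypothesis of the theorem.

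The paper circumvents all of this by working directly with $\lambda^g_{m,\beta}$ rather than with an approximating model. It gives two independent proofs. The first constructs an explicit double cover $p_{m,\beta}\colon S^3\to SS^2$ with $p_{m,\beta}^*\lambda^g_{m,\beta}=-\upsilon_{m,\beta}^*\lambda_{\op{st}}$ for an embedding $\upsilon_{m,\beta}\colon S^3\hookrightarrow\C^2$ that tends in $C^2$ to the round sphere of radius $2$ as $m\to 0$; convexity for small $m$ then yields dynamical convexity by the classical Hofer--Wysocki--Zehnder result. The second proof is closer in spirit to what you attempt, but the key point you are missing is that $\xi^{m,\beta}$ admits a \emph{global} symplectic trivialisation $\Upsilon^{m,\beta}$ (Lemma~\ref{lem_tri}), in which the linearised Reeb flow satisfies $B^{m,\beta}_z=J_{\op{st}}+O(m)$ uniformly in $z$. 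This gives $\Delta\theta\geq(1-Cm)T/(2\pi)$ along \emph{every} orbit of $R^{m,\beta}$, with no need to classify orbits or to pass through the model form. One then only needs the elementary fact that the minimal period of a contractible orbit is lower semicontinuous and equals $4\pi$ at $m=0$, so $(1-Cm)T/(2\pi)>1$ for small $m$, whence $\mu^l_{\op{CZ}}\geq 3$.
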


We give two different arguments to show this result.
\begin{itemize}
 \item  In Section \ref{conve}, we construct a hypersurface $\hat{\Sigma}_m\subset\C^2$ and a double cover $p_m:\hat{\Sigma}_m\rightarrow \Sigma_m$ such that $p_m^*\lambda_m=-\lambda_{\op{st}}|_{\hat{\Sigma}_m}$. The hypersurface $\hat{\Sigma}_m$ is convex for small $m$, since it tends to the sphere of radius $2$ when $m$ goes to zero.
 \item In Section \ref{sec_ies}, we give a direct estimate of the Conley-Zehnder index of contractible periodic orbits of $R^{\lambda_m}$.
\end{itemize}
The latter argument follows closely the strategy of proof of \cite{pathar}: the fact that the magnetic form is symplectic and the energy is low, plays the same role as the pinching condition on the curvature. Moreover, this second proof can be adapted to surfaces of higher genus to show that, if $\gamma$ is a periodic solution of $R^{\lambda_m}$ which is free homotopic to $|e_M|$ times a vertical fibre, then $\mu_{\op{CZ}}(\gamma)\leq 2e_M+1$ (here $e_M$ denotes the Euler characteristic). This inequality fits into a notion of \textit{generalised} dynamical convexity which is currently being developed by Abreu and Macarini \cite{am}. They claim that they can prove the existence of an elliptic periodic orbit in this wider setting (see Point (\ref{pun2})). However, it is not known so far, if it is possible to extend results on the existence of global Poincar\'e sections to this case (see Point (\ref{pun1})).

Theorem \ref{teo_dc} can be used to obtain the following information on the dynamics.

\begin{crl}
Let $(S^2,g,\sigma)$ be a symplectic magnetic system and let $m$ be low enough. On $\Sigma_m$ there are either two periodic orbits homotopic to a vertical fibre or infinitely many periodic orbits. If $f$ has three distinct critical points $x_{\op{min}}$, $x_{\op{Max}}$ and $x_{\op{nondeg}}$ such that $x_{\op{min}}$ is an absolute minimiser, $x_{\op{Max}}$ is an absolute maximiser and $x_{\op{nondeg}}$ is non-degenerate, then the second alternative holds.  Moreover, if $\Sigma_m$ is non-degenerate
\begin{itemize}
 \item there exists a Poincar\'e section of disc-type for the magnetic flow on $\Sigma_m$;
 \item there exists an elliptic periodic orbit $\gamma$ on $\Sigma_m$ and, therefore, generically, there exists a flow-invariant fundamental system of neighbourhoods for $\gamma$. Hence, the dynamical system is not ergodic with respect to the Liouville measure on $\Sigma_m$.
\end{itemize}
\end{crl}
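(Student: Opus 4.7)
The plan is to combine Theorem \ref{teo_dc}, which produces a dynamically convex contact form $\lambda_m$ on $\Sigma_m$ for $m$ small, with the structural consequences (\ref{pun1}) and (\ref{pun2}) of dynamical convexity on $L(2,1)\simeq\R\Pro^3$, together with the expansion of Ginzburg's action function in Proposition \ref{progin}. First, since $\Sigma_m\simeq\R\Pro^3=L(2,1)$, the Reeb flow of $\lambda_m$ coincides, up to time reparametrisation, with the magnetic flow on $\Sigma_m$, and Theorem \ref{teo_dc} ensures that this Reeb flow is dynamically convex. The dichotomy between two periodic orbits homotopic to a vertical fibre and infinitely many periodic orbits will then be a direct application of the Abreu--Macarini theorem on dynamically convex Reeb flows on lens spaces announced in \cite{am}.

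To verify that the three-critical-points hypothesis forces the second alternative, I will exhibit at least three geometrically distinct periodic orbits homotopic to a vertical fibre. The non-degenerate critical point $x_{\op{nondeg}}$ yields one such orbit directly through Proposition \ref{progin}. For the absolute extrema $x_{\op{min}}$ and $x_{\op{Max}}$, which may be degenerate and hence not immediately covered by that proposition, I will instead exploit the action function $S_m\colon SS^2\to\R$ from Section \ref{sec_gin}, whose critical points are in bijection with the Reeb periodic orbits winding once around a vertical fibre. Since $SS^2$ is compact, $S_m$ attains a global maximum and a global minimum, producing two further critical orbits; the leading-order expansion
\begin{equation*}
S_m(x,v)=2\pi+\frac{\pi}{f(x)}m^2+o(m^2)
\end{equation*}
places these extremal critical orbits, for $m$ small enough, near the fibres over $x_{\op{min}}$ and $x_{\op{Max}}$ respectively. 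The three leading-order values $\pi m^2/f(x_{\op{min}})$, $\pi m^2/f(x_{\op{Max}})$ and $\pi m^2/f(x_{\op{nondeg}})$ will be pairwise distinct in the generic case; when two of them coincide, a short book-keeping argument based on the isolation of $x_{\op{nondeg}}$ (as a non-degenerate, hence Morse, critical point distinct from the extrema) still produces three geometrically distinct critical orbits. The dichotomy then excludes the first alternative.

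In the non-degenerate case, the disc-type Poincar\'e section is immediate from (\ref{pun1}) applied to the dynamically convex form $\lambda_m$, and the existence of an elliptic periodic orbit $\gamma$ is immediate from (\ref{pun2}). For the non-ergodicity claim I will linearise the Poincar\'e return map $P$ of a transverse disc at the fixed point $p_\gamma$ corresponding to $\gamma$: the linearisation is an elliptic rotation, and reducing $P$ to its Birkhoff normal form produces, under the generic non-vanishing of the first Birkhoff twist coefficient, a twist map to which Moser's invariant curve theorem applies. The resulting fundamental system of $P$-invariant circles around $p_\gamma$ suspends to a fundamental system of flow-invariant tori around $\gamma$, splitting a tubular neighbourhood of $\gamma$ into flow-invariant open sets of positive Liouville measure and thus violating ergodicity. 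The main technical obstacle I foresee lies in the book-keeping at possibly coincident critical $f$-values and, on the dynamical side, in the generic twist hypothesis underlying the KAM step, which is precisely what is recorded by the word \emph{generically} in the statement.
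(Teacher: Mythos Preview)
Your overall architecture matches the paper's: dynamical convexity from Theorem~\ref{teo_dc}, the two-versus-infinity dichotomy, three short orbits from the Ginzburg action function to rule out the first alternative under the hypothesis on $f$, and then (\ref{pun1})--(\ref{pun2}) plus KAM for the non-degenerate bullets. The last two bullets and the three-critical-points argument are exactly as the paper indicates (the paper also leaves the edge case $f(x_{\op{nondeg}})\in\{\min f,\max f\}$ as implicit book-keeping; this is Corollary~\ref{cor_infmany}, stated as an immediate combination of Corollary~\ref{cor_nondeg} and Corollary~\ref{dyncon_cor}).

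There is, however, a genuine misattribution in your first step. The dichotomy ``two non-contractible orbits or infinitely many'' is \emph{not} a consequence of the Abreu--Macarini theorem; in the paper \cite{am} only supplies the elliptic orbit (point (\ref{pun2})). The dichotomy is Proposition~\ref{covthe} (restated as Corollary~\ref{dyncon_cor}), and its proof is different: one lifts the dynamically convex form to the double cover $S^3$, applies the Hofer--Wysocki--Zehnder disc-section theorem there (Theorem~\ref{thm_dcg} with $p=1$, which needs no non-degeneracy), and then uses Brouwer--Franks on the resulting area-preserving disc map to get two-or-infinitely-many on $S^3$. The descent to $L(2,1)$ and the claim that in the ``two'' case both orbits are non-contractible require the linking-number parity argument of Lemma~\ref{lemlin}: a contractible prime orbit on $L(2,1)$ lifts to a pair of antipodal knots with \emph{even} linking number, which cannot be the Hopf link produced by Corollary~\ref{hwzcor}. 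Without this step you have neither the dichotomy in the possibly degenerate case nor the information that the two orbits are homotopic to a fibre, so you should replace the appeal to \cite{am} here by the lift-to-$S^3$ argument.
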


We do not have any example where there are exactly two non-contractible periodic orbits on low energy levels. However, by \cite[Assertion 3]{gin1} we know that these two orbits are \textit{short}, namely they are close to curves that wind around a vertical fibre once. In the next theorem, we show that we can make rigorous a dichotomy between short and long orbits.
\begin{teo}\label{alabanc}
Let $(S^2,g,\sigma=f\mu)$ be a symplectic magnetic system. Given $\varepsilon>0$ and a positive integer $n$, there exists $m_{\varepsilon,n}>0$ such that for every $m<m_{\varepsilon,n}$ the projection $\pi(\gamma)$ of a periodic prime solution $\gamma$ on $\Sigma_m$ either is a simple curve on $S^2$ with length in $\big(\frac{2\pi-\varepsilon}{\max f}m,\frac{2\pi+\varepsilon}{\min f}m\big)$ or has at least $n$ self-intersections and length larger than $\frac{m}{\varepsilon}$. 
\end{teo}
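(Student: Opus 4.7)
The plan is to dichotomise prime orbits by whether their projection $c:=\pi(\gamma)$ is simple or not, treat each case separately, and bridge the two by a compactness argument that forbids intermediate turning numbers at small $m$. Throughout, I parametrise $c$ by arc length on $[0,L]$, so that the geodesic curvature on $(S^2,g)$ is $\kappa(s)=f(c(s))/m$ and
\[ \frac{L\min f}{m}\;\leq\;\int_0^L\kappa\,ds\;=\;\frac{1}{m}\int_c f\,ds\;\leq\;\frac{L\max f}{m}. \]
If $c$ is simple, it bounds a smallest disc $D$; Gauss--Bonnet gives $\int\kappa\,ds+\int_D K\,dA=2\pi$, and the isoperimetric inequality on $S^2$ yields $\op{Area}(D)=O(L^2)$, hence $\bigl|\int_D K\,dA\bigr|=O(L^2)$. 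Combined with the curvature sandwich this forces $L=O(m)$, and iterating sharpens the estimate to
\[ \frac{L\min f}{m}\;\leq\;2\pi+O(m^2),\qquad \frac{L\max f}{m}\;\geq\;2\pi-O(m^2), \]
which for $m$ small places $L$ in $\bigl(\tfrac{2\pi-\varepsilon}{\max f}m,\tfrac{2\pi+\varepsilon}{\min f}m\bigr)$.

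For the non-simple case I introduce the turning number $N(c)\in\Z_{\geq 1}$, defined as the degree of the tangent map along $c$ computed in a local chart; the immersed Gauss--Bonnet gives $\int\kappa\,ds=2\pi N(c)+O(L^2)$ on $S^2$, and $N(c)=1$ iff $c$ is simple (for $L=O(m)$ the Gauss correction is negligible next to $2\pi$). The crucial step is the claim: \emph{for every integer $N_0\geq 2$ there is $m^*(N_0)>0$ such that for $m<m^*(N_0)$ no prime periodic orbit $\gamma$ on $\Sigma_m$ has $N(\pi(\gamma))\in[2,N_0]$.} Granting this and setting $N_0:=\max\bigl(n+1,\lceil\max f/(2\pi\varepsilon)\rceil\bigr)$, $m_{\varepsilon,n}\leq m^*(N_0)$, finishes everything, because any non-simple prime orbit then has $N:=N(\pi(\gamma))\geq N_0+1$; hence $L\geq 2\pi(N_0+1)m/\max f>m/\varepsilon$, and Whitney's bound $\#\{\text{self-intersections}\}\geq N-1$ for generic planar immersions with $\kappa>0$ (applicable because $c$ fits in a chart in which $S^2$ is nearly Euclidean on the relevant scale) gives at least $N_0\geq n$ transverse self-intersections.

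I would prove the claim by contradiction and rescaling. Suppose prime orbits $\gamma_k$ on $\Sigma_{m_k}$ with $m_k\to 0$ satisfy $N_k:=N(\pi(\gamma_k))\in[2,N_0]$. Since $L_k=O(m_k)$, $\pi(\gamma_k)$ concentrates (subsequence) at some $x_\star\in S^2$. In normal coordinates at $x_\star$ I rescale by $y=(x-x_\star)/m_k$, so that $\tilde c_k(\tilde s):=m_k^{-1}(c_k(m_k\tilde s)-x_\star)$ has uniformly bounded length $\tilde L_k=L_k/m_k\leq 2\pi N_0/\min f$ and solves the magnetic flow equations for a metric converging to the Euclidean one and magnetic field converging to the constant $f(x_\star)$; by Arzel\`a--Ascoli a subsequence converges to a closed trajectory of the flat Larmor flow with field $f(x_\star)$, necessarily the $N_\infty$-fold traversal ($N_\infty\in[2,N_0]$) of a circle of radius $1/f(x_\star)$. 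If $x_\star$ is a critical point of $f$, Proposition \ref{progin} provides a genuine primitive short orbit $\gamma_k^{(1)}$ close to that Larmor circle, and the implicit function theorem applied to the Poincar\'e return map of $\gamma_k^{(1)}$ forces $\gamma_k=(\gamma_k^{(1)})^{N_\infty}$ for $k$ large, contradicting primeness. If $x_\star$ is not a critical point of $f$, the fast--slow decomposition of the model Reeb flow of $\check\lambda_0/\rho_m$ supplied by Lemma \ref{lem_intexp} shows that the gyrocentre drift at $x_\star$ has magnitude $\asymp m_k^2$ with a fixed non-vanishing leading direction, so that over at most $N_0$ Larmor periods the starting and ending gyrocentres differ by $\Omega(m_k^2)$; an averaging/Gronwall estimate transferring this bound from the model to the true Reeb flow of $\check\lambda_m$ (which is $o(m^2)$-close) contradicts the closure of $\gamma_k$.

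The main obstacle is precisely this non-critical sub-case: it requires a quantitative averaging estimate at the correct order in $m$, ensuring that drift cannot by accident close an orbit in a bounded number of Larmor revolutions. The critical sub-case, by contrast, rests on the clean local structure of short orbits established in Proposition \ref{progin}, and the simple-curve bound is a direct consequence of Gauss--Bonnet and the isoperimetric inequality on $S^2$.
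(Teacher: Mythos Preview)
Your approach diverges substantially from the paper's, and it has two genuine gaps.

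\textbf{The turning-number framework does not cover long orbits.} Your dichotomy is phrased in terms of the turning number $N(c)$, which you define via the degree of the tangent map in a local chart. This is only meaningful when $c$ sits inside a single chart. You then assert that ``any non-simple prime orbit has $N\geq N_0+1$'' and invoke Whitney's bound for planar immersions. But nothing in your argument rules out a prime orbit whose projection has length comparable to or larger than the injectivity radius: such a curve need not lie in any chart, $N$ is simply undefined for it, and Whitney's planar inequality does not apply. So the very orbits you need to show have many self-intersections are exactly the ones your machinery cannot touch. The paper sidesteps this entirely by proving a \emph{global} inductive Gauss--Bonnet estimate (Lemma~\ref{interel}): any closed immersed curve on $S^2$ with at most $n_1$ self-intersections and geodesic curvature $\geq c$ has length at most $C(n_1)/c$. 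This works on all of $S^2$ without charts, and its contrapositive gives the self-intersection lower bound directly from the period being large.

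\textbf{The rescaling argument is incomplete.} You correctly identify the non-critical sub-case as the obstacle, and indeed the averaging/Gronwall step you sketch is not provided. The drift is $O(m^2)$ while the error between $\check\lambda_m$ and $\check\lambda_0/\rho_m$ is $o(m^2)$, so in principle this can work, but it is a nontrivial estimate and you have not carried it out. The critical sub-case is also shakier than you suggest: applying the implicit function theorem to the return map of $\gamma_k^{(1)}$ requires that orbit to be transversally non-degenerate, which Proposition~\ref{progin} guarantees only when $x_\star$ is a non-degenerate critical point of $f$; for a general $f$ you cannot assume this about the limit point $x_\star$.

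\textbf{What the paper does instead.} The paper avoids rescaling and turning numbers altogether. The period dichotomy comes from Bangert's abstract result (Proposition~\ref{prp_ban}) applied to the $2\pi$-periodic flow $R^{0,\beta}=V$: prime periods are either near $2\pi$ or larger than $\varepsilon^{-1}$. The link between period, length, and self-intersections is the global Gauss--Bonnet lemma above. Finally, Lemma~\ref{nosimp} shows by an elementary local coordinate argument (monotonicity of the angle $\varphi_i$ plus second-derivative analysis of a height function) that a single self-intersection already forces period $\geq 4\pi-\varepsilon'$, hence short orbits project to simple curves. This is considerably more elementary than your compactness route and, crucially, requires no quantitative drift estimate.
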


This result is an adaptation to the magnetic settings of \cite[Theorem 1.6]{hrysal1} for Reeb flows of convex hypersurfaces close to $S^3\subset \C^2$. That paper brings in the contact category classical results for pinched Riemannian metrics on $S^2$ \cite{bal} and on spheres of any dimension \cite{ban}. Theorem 1.6 in \cite{hrysal1} also contain a lower bound on the linking number between short and long orbits. This statement is substituted in Theorem \ref{alabanc} by a lower bound on the number of self-intersections for long orbits. It is likely that the estimates on the linking number obtained by Hryniewicz and Salom\~ao can be used as a black box to get our estimates for the self-intersections, but we did not pursue this strategy explicitly. Instead, our proof is based on an application of the Gauss-Bonnet formula for surfaces.

Even if we do not know if it is possible to have an energy level with exactly two periodic orbits, it has been proven by Schneider in \cite[Theorem 1.3]{sch1} that there are examples with exactly two \textit{short} orbits. In Section \ref{sec_twi}, we have a closer look at this problem for surfaces of revolution $(S^2_\gamma,g_\gamma,f\mu_\gamma)$, where $f$ is a rotationally invariant function. We show the existence of a smooth family of embedded tori $C_m:[-\ell,+\ell]/_\sim\times\T_{2\pi}\hookrightarrow \Sigma_m$, where $\ell$ is the length of $\gamma$ and $\sim$ is the equivalence relation which identifies $-\ell$ and $+\ell$. They are defined by the formula
\begin{equation*}
C_m(u,\psi):=
\begin{cases}
(-u,-\pi/2,\psi)&\mbox{if }u<0,\\
(u,\pi/2,\psi+\pi) &\mbox{if }u>0,
\end{cases}
\end{equation*}
where we have put on $\Sigma_m$ the triple of coordinates $(t,\varphi,\theta)$. The first and third one are the latitude and longitude, respectively. The second one is the angle in the vertical fibre counted starting from the longitudinal direction. Each $C_m$ is obtained by gluing two global Poincar\'e sections of cylinder-type, $C_m^-$ and $C^+_m$, along their common boundary. This boundary is made by the two unique periodic orbits that project to latitudes on $S^2_\gamma$. The two \textit{smooth} return maps defined on the interior of $C^-_m$ and $C^+_m$ extend to a global \textit{continuous} map $F^2_m:C_m\rightarrow C_m$ (Proposition \ref{sm_prp}). Denote by $\Omega_f:S^2_\gamma\rightarrow\R$ the extension of the function $-\frac{\dot{f}}{\gamma f^3}$ to the poles and set $\Omega_{f}^-:=\inf|\Omega_f|$. We have the following result for the return map. 
\begin{pro}\label{pro_twi}
The family of maps $F^2_m:[-\ell,+\ell]/_\sim\times\T_{2\pi}\rightarrow [-\ell,+\ell]/_\sim\times\T_{2\pi}$ admits the expansion
\begin{equation*}
F^2_m(u,\psi)=\left(u,\psi+\pi\Omega_fm^2+o(m^2)\right).
\end{equation*}
Hence, if $\Omega_f$ is not constant, there are infinitely many periodic orbits on every low energy level. Such condition is satisfied if, for example, $\frac{\ddot{f}}{f^3}(0)\neq-\frac{\ddot{f}}{f^3}(\ell)$. If $\Omega_f^->0$ (namely $\dot{f}=0$ only at the poles and $\ddot{f}\neq0$ there), the period $T$ of a Reeb orbit of $R^{\lambda_m}$, different from a latitude, satisfies
\begin{equation*}
T\geq \frac{2}{\Omega_f^-m^2}+O\left(\frac{1}{m^3}\right).
\end{equation*}
In particular, there are only two short orbits and their projection to $S^2_\gamma$ is supported on two latitudes.
\end{pro}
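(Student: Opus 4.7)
The strategy is to combine the expansion of the Reeb vector field given by Lemma \ref{lem_intexp} with the rotational symmetry of $f$ to compute $F_m^2$ up to terms of order $o(m^2)$. Since $\rho_m=1-m^2/(2f(t))$ depends only on the latitude $t$, the $f\mu_\gamma$-Hamiltonian vector field of $\rho_m$ on $(S^2_\gamma,f\mu_\gamma)$ has vanishing $\partial_t$-component and moves points along latitudes in the $\theta$-direction with angular speed $-\dot\rho_m/(f\gamma)=(m^2/2)\Omega_f(t)+o(m^2)$. Coupled with the fibre rotation at speed $\rho_m=1+O(m^2)$, this describes the Reeb flow of $\check\lambda_0/\rho_m$ completely, and by Lemma \ref{lem_intexp} also describes that of $\check\lambda_m$ modulo $o(m^2)$.

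To extract the return map I would follow a trajectory starting from $(u,\psi)\in C_m^-$ with $u<0$, which in the $(t,\varphi,\theta)$-coordinates is $(-u,-\pi/2,\psi)$. The fibre coordinate $\varphi$ advances to $\pi/2$ in time $\pi/\rho_m=\pi+O(m^2)$; during this time $t$ stays at $|u|$ to leading order and $\theta$ advances by $(\pi m^2/2)\Omega_f(|u|)+o(m^2)$. Rewriting the arrival point in the $(u,\psi)$-coordinates of $C_m^+$ (which subtracts $\pi$ from $\theta$) and applying the same computation once more to come back to $C_m^-$ yields the claimed expansion
\begin{equation*}
F_m^2(u,\psi)=\bigl(u,\;\psi+\pi\,\Omega_f(|u|)\,m^2+o(m^2)\bigr).
\end{equation*}

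For the second assertion, $F_m^2$ preserves the invariant area form inherited from the Reeb flow, and its rotation number per iteration is $\frac{1}{2}\Omega_f(|u|)m^2+o(m^2)$, which genuinely varies with $u$ whenever $\Omega_f$ is non-constant. The Poincar\'e--Birkhoff theorem applied to any annulus $\{u_1\leq u\leq u_2\}\subset C_m$ across which $\Omega_f(|u|)$ changes then produces, for $m$ small enough, periodic points at every rational rotation number in the relevant range, yielding infinitely many periodic orbits on $\Sigma_m$. The sufficient condition $\ddot f/f^3(0)\neq-\ddot f/f^3(\ell)$ is equivalent, by L'Hopital at the poles (where $\gamma,\dot f\to 0$, $\dot\gamma(0)=1$ and $\dot\gamma(\ell)=-1$), to $\Omega_f(0)\neq\Omega_f(\ell)$.

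For the third assertion, when $\Omega_f^->0$ the $\psi$-shift per iteration is at least $\pi\Omega_f^- m^2+o(m^2)$ while each iteration costs Reeb time $2\pi/\rho_m$. A periodic Reeb orbit which is not one of the two latitudinal boundary orbits of $C_m^\pm$ is an $N$-periodic point of $F_m^2$, and its total $\psi$-shift $N(\pi\Omega_f^- m^2+o(m^2))$ must be a nonzero integer multiple of $2\pi$, hence $N\geq 2/(\Omega_f^- m^2)+O(1/m^3)$; this gives the stated lower bound on the period $T$. Since a short orbit has period $2\pi+O(m^2)$ by Proposition \ref{progin}, for $m$ sufficiently small the only short orbits on $\Sigma_m$ are the two latitudinal ones. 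The main technical point is to control the $o(m^2)$ errors uniformly along Reeb trajectories of time $O(1)$ so that they do not spoil the leading $m^2$ term of $F_m^2$: this reduces to a Gronwall-type estimate for the perturbation $R^{\check\lambda_m}-R^{\check\lambda_0/\rho_m}$ and a check that the twist and area-preservation hypotheses of Poincar\'e--Birkhoff remain valid under this perturbation.
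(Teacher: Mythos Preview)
Your guiding–centre idea is attractive, but there is a genuine gap. Lemma~\ref{lem_intexp} and the expansion~\eqref{exp_q} describe the Reeb flow of $\check{\lambda}_m=F_{m,\beta}^*\lambda^g_{m,\beta}$, which is the pull–back of $\lambda^g_{m,\beta}$ by the Gray diffeomorphism $F_{m,\beta}$ of Lemma~\ref{lemconb}. The map $F_m^2$ in the Proposition, however, is the return map of $X^m=mX+fV$ (equivalently of $R^{\lambda^g_{m,\beta}}$) to the torus $C$ given explicitly in $(t,\varphi,\theta)$–coordinates. Since $F_{m,\beta}\neq\op{Id}$ for $m>0$, the Reeb field $R^{\check{\lambda}_m}$ is a \emph{different} vector field from $X^m/h_{m,\beta}$; what you are computing is the return map of $R^{\check{\lambda}_0/\rho_m}$ to $C$, not of $X^m$ to $C$. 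That these two return maps happen to have the same $m^2$–coefficient is a non-trivial averaging statement which your argument does not justify. To close the gap you would have to track how $F_{m,\beta}$ moves $C$ and show that conjugation by $F_{m,\beta}=\op{Id}+O(m)$ together with the change of section preserves the leading $m^2$–term.

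The paper avoids this issue entirely by working directly with $X^m$. Using~\eqref{relaz1} one has $\dot t=m\cos\varphi$, $\dot\varphi=f-\tfrac{\dot\gamma}{\gamma}m\sin\varphi$, $\dot\theta=\tfrac{m\sin\varphi}{\gamma}$; reparametrising by $\varphi$ gives the exact formula
\[
\theta_m(u)=\int_{-\pi/2}^{\pi/2}\frac{m\sin\varphi}{\gamma f-\dot\gamma\, m\sin\varphi}\,d\varphi,
\]
where $\gamma,f,\dot\gamma$ are evaluated along the trajectory $t=|u|+O_u(m)$. Expanding the denominator by the geometric series and integrating by parts the $O(m)$ term (which has zero average in $\varphi$), one obtains $\theta_m(u)=\tfrac{\pi m^2}{2}\Omega_f(|u|)+o_u(m^2)$; combined with the reflection symmetry $F_m^2(u,\psi)=(u,\psi+2\psi_m(u))$ and Proposition~\ref{sm_prp} this yields the expansion uniformly in $u$. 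Note also that because $F_m^2$ preserves each circle $\{u=\text{const}\}$ exactly, the existence of infinitely many periodic points when $\Omega_f$ is non-constant follows directly (every $u$ with $2\psi_m(u)\in2\pi\Q$ gives a circle of periodic points), so Poincar\'e--Birkhoff is not actually needed.
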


Finally, we observe that this proposition also gives potential candidates for magnetic systems with only two closed orbits. Namely, those for which $\Omega_f$ is constant. 
\chapter{Preliminaries}\label{cha_pre}
In this chapter we set the notation and recall the prerequisites needed in the subsequent discussion. The first section describes the conventions and symbols used in the paper. The second section is devoted to the basic properties of the tangent bundle of an oriented Riemannian surface. The third section introduces magnetic fields. The fourth section deals with Hamiltonian structures on three-manifolds. Magnetic flows on a positive energy level are particular instances of this general class of dynamical systems. 
\section{General notation}\label{sec_gn}
All objects are supposed to be smooth unless otherwise specified.

If $T>0$ is a real number, we set $\T_T:=\R/T\Z$.

If $p,q$ are coprime positive integers, we write $L(p,q)$ for the associated lens space. It is the quotient of $S^3\subset\C^2$ by the $\Z/p\Z$-action given by
\begin{equation*}
[k]\cdot(z_1,z_2)=(e^{2\pi i\frac{k}{p}}z_1,e^{2\pi i\frac{kq}{p}}z_2).  
\end{equation*}

If $\gamma$ and $\gamma'$ are two knots in $S^3$ we denote by $\op{lk}(\gamma,\gamma')$ their linking number.

If $M$ is a manifold we write $\Omega^k(M)$ for the space of $k$-differential forms on $M$ and $\Gamma(M)$ for the space of vector fields on $M$. The interior product between $Z\in\Gamma(M)$ and $\omega\in\Omega^k(M)$ will be written as $\imath_Z\omega$. If $\omega\in\Omega^k(M)$, we denote by $\mathcal P^{\omega}$ the set of its primitives. Namely, $\mathcal P^{\omega}=\{\tau\in\Omega^{k-1}(M)\ |\ \omega=d\tau\}$.

If $Z\in\Gamma(M)$, we denote by $\mathcal L_Z$ the associated Lie derivative and by $\Phi^Z$ the flow of $Z$ defined on some subset of $\mathbb R\times M$. We write its time $t$ flow map as $\Phi^Z_t$. A periodic orbit for $Z$ is a loop $\gamma:\T_T\rightarrow M$, such that $\dot{\gamma}=Z_\gamma$. When we want to make the period of the orbit explicit we use the notation $(\gamma,T)$. We call $\Omega^k_{Z}(M)$ the space of $Z$-invariant $k$-forms: $\tau$ belongs to $\Omega^k_Z(M)$ if and only if $\mathcal L_Z\tau=0$.

If $\nu$ is a free homotopy class of loops in $M$, we denote by $\mathscr L_\nu M$ the space of loops in $\nu$.

Let $\pi:E\rightarrow M$ be an $S^1$-bundle over a surface $M$ with orientation $\mathfrak o_M$. We denote by $V$ the generator of the $S^1$-action and we endow $E$ with the orientation $\mathfrak O_E:=\mathfrak o_M\oplus -V$. An $S^1$-connection form on $E$ is a $\tau\in\Omega^1(E)$ such that $\tau(V)=1$ and $d\tau=-\pi^*\sigma$ for some $\sigma\in\Omega^2(M)$ called the curvature form.

If $(M,\omega)$ is a symplectic manifold and $H:M\rightarrow \R$ is a real function, the Hamiltonian vector field $X_H$ is defined by
\begin{equation}\label{hameq}
\imath_{X_H}\omega=-dH.
\end{equation}

We define now some objects on $\C^n\simeq \R^{2n}$. Denote by $J_{\op{st}}$ the \textit{standard complex structure} and by $g_{\op{st}}$ the \textit{Euclidean inner product}. Define the \textit{standard Liouville form} $\lambda_{\op{st}}\in\Omega^1(\C^n)$ as $(\lambda_{\op{st}})_z(W):=\frac{1}{2}g_{\op{st}}(J_{\op{st}}(z),W)$, for $z\in\C^n$ and $W\in T_z\C^n\simeq \C^n$. Finally, the \textit{standard symplectic form} is defined as $\omega_{\op{st}}:=d\lambda_{\op{st}}$, or in standard real coordinates $\omega_{\op{st}}=\sum_idx^i\wedge dy^i$.

\section{The geometry of an oriented Riemannian surface}\label{sec_geo}
Let $M$ be a closed orientable surface and let $e_M\in\Z$ be its Euler characteristic. Let $\pi:TM\rightarrow M$ be the tangent bundle of $M$, $L^{\mathcal V}_{(x,v)}:T_xM\rightarrow T_{(x,v)}TM$ the associated vertical lift and write $TM_0$ for the complement of the zero section in $TM$. Suppose that we also have fixed an orientation $\mathfrak o$ on $M$. If $\sigma\in\Omega^2(M)$ we use the shorthand $[\sigma]:=\int_M\sigma$ (where the integral is with respect to $\mathfrak o$).

Let $g$ be a Riemannian metric on $M$. It yields an isomorphism $\flat:TM\rightarrow T^*M$ which we use to push forward the metric for tangent vectors to a dual metric $g$ for $1$-forms. Call $\sharp:T^*M\rightarrow TM$ the inverse isomorphism. We write $\vert\cdot\vert$ for the induced norms on each $T_xM$ and $T^*_xM$. From the duality construction we have the identity $\vert l\vert=\sup_{\vert v\vert=1}|l(v)|$, $\forall\,l\in T_x^*M$. We collect this family of norms together to get a supremum norm $\Vert\cdot\Vert$ for sections:
\begin{equation}\label{norms}
\forall Z\in \Gamma(M),\ \ \Vert Z\Vert:=\sup_{x\in M}\vert Z_x\vert;\quad\quad \forall \beta\in \Omega^1(M),\ \ \Vert \beta\Vert:=\sup_{x\in M}\vert \beta_x\vert.
\end{equation}
The Riemannian metric induces a \textit{kinetic energy function} $E:TM\rightarrow \R$ defined by $E\big((x,v)\big):=\frac{1}{2}g_x(v,v)$. The level sets $\Sigma_m:=\{E=\frac{1}{2}m^2\}\subset TM$ are such that
\begin{itemize}
 \item the zero level $\Sigma_0$ is the zero section $\{(x,0)\,|\,x\in M\}$;
 \item for $m>0$, $\pi_|:\Sigma_m\rightarrow M$ is an $S^1$-bundle.
\end{itemize}
Let $\nabla$ be the Levi-Civita connection of $g$ and $\frac{\nabla^\gamma}{dt}$ be the associated covariant derivative along a curve $\gamma$ on $M$. For every $(x,v)\in TM$, $\nabla$ gives rise to a horizontal lift $L^{\mathcal H}_{(x,v)}:T_xM\rightarrow T_{(x,v)}TM$. It has the property that $d_{(x,v)}\pi\circ L^{\mathcal H}_{(x,v)}=\op{Id}_{T_xM}$. Finally, denote by $\mu\in\Omega^2(M)$ the positive Riemannian area form and by $K$ the Gaussian curvature. We combine them to obtain the curvature form $\sigma_g\in\Omega^2(M)$ associated to $g$. It is defined by $\sigma_g:=K\mu$.
\medskip

We introduce a frame of $T(TM_0)$ and a coframe of $T^*(TM_0)$ depending on $(g,\mathfrak o)$.

The first element of the frame is $Y$, which is defined as $Y_{(x,v)}:=L^{\mathcal V}_{(x,v)}(v)$.

The geodesic equation
\begin{equation}\label{geoeq}
\frac{\nabla^\gamma}{dt}\dot{\gamma}=0 
\end{equation}
for curves $\gamma$ in $M$ gives rise to a flow on $TM$. The second element of the frame is the generator $X$ of such flow. It is called the \textit{geodesic vector field} and can be equivalently defined as $X_{(x,v)}=L^{\mathcal H}_{(x,v)}(v)$.

Consider the $2\pi$-periodic flow $\Phi^V_\varphi:TM\rightarrow TM$, which rotates every fibre of $\pi$ by an angle $\varphi$. We choose as the third element of our frame, $V$ the generator of this flow. If we denote by $\jmath:TM\rightarrow TM$ the rotation of $\pi/2$ in every fibre, then $V_{(x,v)}=L^{\mathcal V}_{(x,v)}(\jmath_xv)$ and $V\big|_{\Sigma_m}\in\Gamma(\Sigma_m)$ is the generator of the $S^1$-action on $\Sigma_m$.

Finally, the last element of the frame is $H$, defined by $H_{(x,v)}:=L_{(x,v)}^{\mathcal H}(\jmath_x v)$. 

The first element of the coframe is $dE$, the differential of the kinetic energy.

The second element is $\lambda$, the pull-back under $\flat$ of the standard Liouville form on $T^*M$. It acts by $\lambda_{(x,v)}(\xi):=g_x(v,d_{(x,v)}\pi\,\xi)$ on $\xi\in T_{(x,v)}TM$. The exterior differential $\omega:=d\lambda$ yields a symplectic form on $TM$. We denote by $\mathfrak O$ the orientation associated to $\omega\wedge\omega$. Every $\Sigma_m$ inherits an orientation $\mathfrak O_{\Sigma_m}$ which is obtained from $\mathfrak O$ following the convention of putting the outward normal to $\Sigma_m$ (namely $Y$) first. Observe that such orientation matches with the orientation of an $S^1$-bundle over an oriented surface as defined in Section \ref{sec_gn}.

The third element is $\tau$, the angular component of the Levi-Civita connection. On $\xi\in T_{(x,v)}TM$, it acts by $\tau_{(x,v)}(\xi)=g_x(\jmath_xv,\frac{\hat{x}}{dt}\hat{v})$, where $(\hat{x},\hat{v})$ is any path passing through $(x,v)$ with tangent vector $\xi$.

The fourth element is given by $\eta:=\jmath^*\lambda$.

We now recall the main properties of the frame and the coframe, which are a particular case of the results contained in \cite{guka}.

\begin{prp}\label{prphv} The frame $(X,Y,H,V)$ is $\mathfrak O$-positive with dual coframe
\begin{equation*}
\left(\frac{\lambda}{2E},\frac{dE}{2E},\frac{\eta}{2E},\frac{\tau}{2E}\right).
\end{equation*}
We have the following bracket relations for the frame
\begin{equation}
\left\{\begin{array}{lll}
        \mbox{}\displaystyle[Y,X]\ =\ X&\ \ [Y,H]\ =\ H&\ \ [Y,V]\ =\ 0\\\vspace{5pt}
        \mbox{}\displaystyle[V,X]\ =\ H&\ \ [V,H]\ =\ -X&\ \ [X,H]\, =\ 2EKV. 
       \end{array}\right.
\end{equation}
Accordingly, we get the following structural equations for the coframe
\begin{equation}\label{relstr}
 \left\{\begin{array}{rcl}
\displaystyle d\Big(\frac{\lambda}{2E}\Big)&=&\displaystyle \frac{\lambda}{2E}\wedge\frac{dE}{2E}-\frac{\eta}{2E}\wedge\frac{\tau}{2E},\vspace{.2cm}\\
\displaystyle d\Big(\frac{dE}{2E}\Big)&=&\displaystyle 0,\vspace{.2cm}\\
\displaystyle d\Big(\frac{\eta}{2E}\Big)&=&\displaystyle \frac{\lambda}{2E}\wedge\frac{\tau}{2E}-\frac{dE}{2E}\wedge\frac{\tau}{2E},\vspace{.2cm}\\
\displaystyle d\Big(\frac{\tau}{2E}\Big)&=&\displaystyle -2EK\frac{\lambda}{2E}\wedge\frac{\eta}{2E}.
\end{array}\right.
\end{equation}
The last equation in \eqref{relstr} can be rewritten as
\begin{equation}\label{equifour}
d\Big(\frac{\tau}{2E}\Big)=-\pi^*\big(K\mu\big).
\end{equation}
It implies that $\frac{\tau}{2E}$ is an $S^1$-connection form on every $\Sigma_m$ with curvature $\sigma_g$.
\end{prp}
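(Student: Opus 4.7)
The plan is to verify four things in order: the duality pairing between frame and coframe, the six bracket relations, the four structure equations \eqref{relstr}, and finally the positivity of $(X,Y,H,V)$ for $\mathfrak O$. For the duality I would evaluate the coframe on the frame at an arbitrary $(x,v)\in TM_0$. Using $d\pi\circ L^{\mathcal V}=0$ and $d\pi\circ L^{\mathcal H}=\op{Id}$ one gets $\lambda(X)=g(v,v)=2E$, $\lambda(H)=g(v,\jmath v)=0$, $\lambda(Y)=\lambda(V)=0$. Since parallel transport is an isometry, $dE$ vanishes on horizontal vectors, while $dE(Y)=2E$ ($Y$ is the Euler vertical field) and $dE(V)=0$ ($V$ is tangent to the energy sphere). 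The entries for $\eta=\jmath^*\lambda$ follow because $d\jmath$ exchanges $X\leftrightarrow H$ and $Y\leftrightarrow V$ up to sign, and those for $\tau$ because horizontal lifts are $\nabla$-parallel, so $\tau(X)=\tau(H)=0$, whereas on the vertical $V$ the covariant derivative reduces to ordinary differentiation along the fibre, giving $\tau(V)=2E$.

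The brackets split into a homogeneity block and a curvature block. Since $Y$ is the Euler field along fibres and $X$, $H$ are fibrewise-linear horizontal lifts while $V$ is a fibrewise-linear vertical field, degree-one homogeneity gives $[Y,X]=X$, $[Y,H]=H$, and invariance of the rotation generator $V$ under fibrewise scaling gives $[Y,V]=0$. The flow $\Phi^V_\varphi$ rotates fibres by $\jmath$ and commutes with parallel transport, so it sends the horizontal lift of $v$ at $(x,v)$ to the horizontal lift of $e^{\jmath\varphi}v$ at $(x,e^{\jmath\varphi}v)$; differentiating at $\varphi=0$ yields $[V,X]=H$ and $[V,H]=-X$. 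The remaining relation $[X,H]=2EK\,V$ is the only genuinely geometric input: torsion-freeness of $\nabla$ cancels the horizontal part, and the vertical part is the lift of $R(v,\jmath v)v=K\,g(v,v)\jmath v=2EK\,\jmath v$ by the two-dimensional form of the Riemann tensor.

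Once brackets and dual coframe are in hand, the four identities \eqref{relstr} follow mechanically from Cartan's formula
\begin{equation*}
d\alpha(A,B)=A(\alpha(B))-B(\alpha(A))-\alpha([A,B])
\end{equation*}
applied to each coframe form on the six pairs from the frame; for instance on $(X,Y)$ one reads off $d(\lambda/2E)(X,Y)=1=(\lambda/2E)\wedge(dE/2E)(X,Y)$. The identity $d(dE/2E)=0$ is immediate, and in the last equation the factor $\lambda\wedge\eta=2E\cdot\pi^*\mu$ on the horizontal distribution turns $-2EK(\lambda/2E)\wedge(\eta/2E)$ into $-\pi^*(K\mu)=-\pi^*\sigma_g$, so $\tau/2E$ is an $S^1$-connection with curvature $\sigma_g$. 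Positivity of $(X,Y,H,V)$ for $\mathfrak O$ drops out of the first structure equation rewritten as $\omega=(dE\wedge\lambda-\eta\wedge\tau)/(2E)$, whence $\omega\wedge\omega(X,Y,H,V)=2(2E)^2>0$. The only genuine obstacle in the whole argument is the bracket $[X,H]=2EK\,V$, where the Gauss curvature enters via the two-dimensional Riemann tensor; the rest is a streamlined specialisation of the computation in \cite{guka}.
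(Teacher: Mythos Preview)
Your proof sketch is correct and complete in outline. The paper itself does not give a proof of this proposition: it states the result as a particular case of the computations in \cite{guka} and moves on. What you have written is precisely the direct verification one would carry out to recover those formulas from the definitions, and each of your four blocks (duality, brackets, Cartan structure equations, $\mathfrak O$-positivity via $\omega\wedge\omega$) is sound.

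One small remark on the step you describe as ``mechanical'': if you actually carry out Cartan's formula for $d(\eta/2E)$ on the pair $(Y,H)$, you get $-1$, whereas the third displayed equation in \eqref{relstr} as printed gives $0$ on that pair. The bracket relations force
\[
d\Big(\frac{\eta}{2E}\Big)=\frac{\lambda}{2E}\wedge\frac{\tau}{2E}-\frac{dE}{2E}\wedge\frac{\eta}{2E},
\]
so the last factor in the printed version should be $\eta/2E$ rather than $\tau/2E$. This is a typographical slip in the statement, not a flaw in your argument; your method yields the correct formula automatically.
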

For the following discussion it will be convenient to have also a corresponding statement for the restriction to $SM:=\Sigma_1$ of the frame and coframe defined above. 
\begin{cor}
The triple $(X,V,H)$ is an $\mathfrak O_{SM}$-positive frame of $T(SM)$ with dual coframe $(\lambda,\tau,\eta)$. We have the following relations 
\begin{equation}\label{relsm}
\left\{\begin{array}{ccc}
\mbox{}[V,X]\ =\ H &\ \ [H,V]\ =\ X &\ \ [X,H]\ =\ KV;\\
d\lambda\ =\ \tau\wedge\eta& \ \ d\tau\ =\ K\eta\wedge\lambda\ =\ -\pi^*\sigma_g&\ \ d\eta\ =\ \lambda\wedge\tau.
\end{array}\right.
\end{equation}
\end{cor}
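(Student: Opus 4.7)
The plan is to deduce the corollary by direct restriction of Proposition \ref{prphv} from $TM_0$ to the hypersurface $SM = \Sigma_1 = \{E = 1/2\}$. First I would verify that $X$, $V$, $H$ are all tangent to $SM$: the geodesic field preserves the kinetic energy, so $dE(X) = 0$; the generator $V$ of fibrewise rotations is vertical and rotations are isometries of each fibre, so $dE(V) = 0$; and $H$ is horizontal by construction, so $dE(H) = 0$ as well. Since $\dim SM = 3$, the triple $(X, V, H)$ is then a frame of $T(SM)$.

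For the orientation, $(X, Y, H, V)$ is $\mathfrak O$-positive on $TM_0$ by Proposition \ref{prphv}; two transpositions (swap positions $1$--$2$, then $3$--$4$) bring it to $(Y, X, V, H)$, which is therefore also $\mathfrak O$-positive. Since $Y$ is the outward normal to $SM$ and $\mathfrak O_{SM}$ is defined by placing $Y$ first, this shows that $(X, V, H)$ is $\mathfrak O_{SM}$-positive. The dual-coframe statement is then immediate from $2E \equiv 1$ on $SM$: the ambient dual basis $(\lambda/2E, dE/2E, \eta/2E, \tau/2E)$ restricts to $(\lambda, \tau, \eta)$ paired respectively with $(X, V, H)$, the $dE/2E$ component dropping out since $Y \notin T(SM)$ and $dE$ annihilates tangent vectors to $SM$.

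The bracket identities are just the corresponding ones from Proposition \ref{prphv}: $[V, X] = H$ is unchanged, $[H, V] = -[V, H] = X$, and $[X, H] = 2EKV$ becomes $KV$ on $SM$. For the structural equations I would plug $2E \equiv 1$ into the identities in \eqref{relstr} and use that every wedge involving $dE$ restricts to zero on $T(SM)$. The first equation then collapses to $d\lambda = -\eta \wedge \tau = \tau \wedge \eta$, the third to $d\eta = \lambda \wedge \tau$, and the fourth to $d\tau = -K\lambda \wedge \eta = K\eta \wedge \lambda$; the identification $d\tau = -\pi^*\sigma_g$ on $SM$ follows by pulling back \eqref{equifour}.

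The computation is essentially mechanical once the ambient proposition is in hand. The only point requiring care is the orientation bookkeeping, which reduces to counting the parity of the permutation relating $(X, Y, H, V)$ to $(Y, X, V, H)$; everything else follows by setting $2E = 1$ and discarding contributions that contain the normal direction $dE$.
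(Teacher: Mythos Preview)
Your proposal is correct and is exactly the argument the paper has in mind: the corollary is stated without proof precisely because it follows by restricting Proposition~\ref{prphv} to $SM=\{2E=1\}$, setting $2E\equiv 1$, and discarding every term containing $dE$. Your orientation check via the even permutation $(X,Y,H,V)\mapsto(Y,X,V,H)$ and the observation that $d$ commutes with the inclusion pullback are the only points that need a word, and you handle both.
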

\begin{dfn}\label{dfn_lio}
The volume form $\chi\in\Omega^3(SM)$ defined by
\begin{equation}
\chi:=\lambda\wedge\tau\wedge\eta=-\tau\wedge\pi^*\mu,
\end{equation}
associated to the dual coframe, is called the \textit{Liouville volume form}.
\end{dfn}

We end up the general discussion about the geometry of $M$ by proving some properties related to local sections of $SM$.

Consider an open set $U\subset M$ and denote by $SU$ the unit sphere bundle over $U$. Let $Z:U\rightarrow SU$ be a section and associate to $Z$ an angular function $\varphi_Z:SU\rightarrow \T_{2\pi}$. The value of $\varphi_Z$ at the point $(x,v)$ is the angle between $v$ and $Z_x$. In the next lemma we compute the differential of the angular function. First, observe that, if $v\in T_xU$, we have $g_x\big((\nabla_{v}Z)_x,Z_x\big)=0$. Thus, there exists $\kappa^Z_x\in T^*_xM$ such that
\begin{equation}\label{dfn_kap}
(\nabla_{v}Z)_x=\kappa^Z_x(v)\jmath_xZ_x, 
\end{equation}

\begin{lem}\label{lemfi}
If $Z$ is a section of $SU$, we have
\begin{equation}
\left\{
\begin{aligned}
 d\varphi_Z(X)(x,v)&=-\kappa^Z_x(v),\label{equ_kap}\\
 d\varphi_Z(V)(x,v)&=1,\\
 d\varphi_Z(H)(x,v)&=-\kappa^Z_x(\jmath_xv).
\end{aligned}\right. 
\end{equation}
\end{lem}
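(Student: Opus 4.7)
The plan is to compute each directional derivative by producing an explicit curve through $(x,v)$ whose tangent at $0$ is the relevant vector field, writing $\varphi_Z$ in terms of the inner products $g(v,Z)$ and $g(v,\jmath Z)$, and differentiating. The key algebraic input is that the defining relation of $\varphi_Z$ reads
\begin{equation*}
v=\cos\varphi_Z(x,v)\,Z_x+\sin\varphi_Z(x,v)\,\jmath_xZ_x,
\end{equation*}
so that, by $g_x(Z_x,Z_x)=1$ and $g_x(Z_x,\jmath_xZ_x)=0$, one has $\cos\varphi_Z(x,v)=g_x(v,Z_x)$ and $\sin\varphi_Z(x,v)=g_x(v,\jmath_xZ_x)$.

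The $V$-component is immediate from the definition of $V$: the flow $\Phi^V_t$ fixes the base point and rotates the fibre by angle $t$, hence $\varphi_Z(\Phi^V_t(x,v))=\varphi_Z(x,v)+t$, and differentiating at $t=0$ gives $d\varphi_Z(V)=1$. For $X$ and $H$ the treatment is uniform: in both cases I would produce a curve $t\mapsto(\beta(t),v(t))$ in $SU$, with $v(t)$ parallel along $\beta$, having the prescribed tangent vector at $t=0$. For $X$, take $\beta$ to be the geodesic with $\dot\beta(0)=v$ and $v(t):=\dot\beta(t)$; the curve has tangent $X_{(x,v)}$ by the very definition of the geodesic vector field. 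For $H$, since $H_{(x,v)}=L^{\mathcal H}_{(x,v)}(\jmath_xv)$, one picks any curve $\beta$ on $M$ with $\dot\beta(0)=\jmath_xv$ and lets $v(t)$ be the parallel transport of $v$ along $\beta$; then $t\mapsto(\beta(t),v(t))$ is the horizontal lift of $\beta$, and its tangent at $t=0$ is $H_{(x,v)}$.

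Differentiating the identities $\cos\varphi_Z(\beta(t),v(t))=g_{\beta(t)}(v(t),Z_{\beta(t)})$ and $\sin\varphi_Z(\beta(t),v(t))=g_{\beta(t)}(v(t),\jmath_{\beta(t)}Z_{\beta(t)})$ at $t=0$, and exploiting $\frac{\nabla v}{dt}=0$ together with the fact that $\jmath$ is parallel on an oriented Riemannian surface, the defining relation \eqref{dfn_kap} of $\kappa^Z$ and the identity $\jmath^2=-\op{Id}$ produce
\begin{equation*}
-\sin\varphi_Z\cdot\varphi_Z'=\kappa^Z_x(\dot\beta(0))\sin\varphi_Z,\qquad \cos\varphi_Z\cdot\varphi_Z'=-\kappa^Z_x(\dot\beta(0))\cos\varphi_Z,
\end{equation*}
where $\varphi_Z':=\frac{d}{dt}\big|_{t=0}\varphi_Z(\beta(t),v(t))$. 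Since $\sin\varphi_Z$ and $\cos\varphi_Z$ never vanish simultaneously, these two identities together force $\varphi_Z'=-\kappa^Z_x(\dot\beta(0))$. Specialising $\dot\beta(0)=v$ and $\dot\beta(0)=\jmath_xv$ yields $d\varphi_Z(X)(x,v)=-\kappa^Z_x(v)$ and $d\varphi_Z(H)(x,v)=-\kappa^Z_x(\jmath_xv)$ respectively. There is no serious obstacle in the argument; the only subtle point is keeping track of the sign produced by $\jmath^2=-\op{Id}$ when differentiating the $\sin$-identity, which is precisely what generates the minus sign in the final formulas.
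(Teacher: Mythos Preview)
Your proof is correct. For $V$ and $X$ it matches the paper's argument essentially line by line: the paper likewise writes $(\cos\varphi_Z,\sin\varphi_Z)=(g_x(v,Z_x),g_x(v,\jmath_xZ_x))$, differentiates along $X$, and extracts $d\varphi_Z(X)$ by pairing with $(-\sin\varphi_Z,\cos\varphi_Z)$.

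The one genuine difference is in the treatment of $H$. You handle $X$ and $H$ uniformly by choosing a horizontal curve over a base curve with initial velocity $v$ or $\jmath_xv$ respectively, and rerunning the same differentiation. The paper instead reduces $H$ to the already-established $X$ case via the identity $H_{(x,v)}=d_{(x,\jmath_xv)}\jmath^{-1}(X_{(x,\jmath_xv)})$, together with $\varphi_Z\circ\jmath^{-1}=\varphi_Z-\pi/2$, so that $d\varphi_Z(H)_{(x,v)}=d\varphi_Z(X)_{(x,\jmath_xv)}=-\kappa^Z_x(\jmath_xv)$. Your route is slightly more self-contained and makes the parallel between $X$ and $H$ transparent; the paper's route is shorter once the $X$ case is in hand and highlights the $\jmath$-equivariance of the setup. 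Both are equally valid.
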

\begin{proof}
We can write the angle $\varphi_Z$ as an element of $S^1\subset\C$:
\begin{equation}
(\cos\varphi_Z(x,v),\sin\varphi_Z(x,v))=\big(g_x(v,Z_x),g_x(v,\jmath_x Z_x)\big).
\end{equation}
Differentiating along $X$ this map, we get
\begin{equation*}
\big(d_{(x,v)}(\cos\varphi_Z)(X),d_{(x,v)}(\sin\varphi_Z)(X)\big)=\big(g_x(\left(\nabla_vZ\right)_x,v),-g_x(\left(\nabla_vZ\right)_x,\jmath_x v)\big).
\end{equation*}
On the other hand, we also have
\begin{equation*}
\big(d_{(x,v)}(\cos\varphi_Z)(X),d_{(x,v)}(\sin\varphi_Z)(X)\big)=d\varphi_Z(X)(x,v)\cdot \big(-\sin\varphi_Z(x,v),\cos\varphi_Z(x,v)\big). 
\end{equation*}
Thus, we can express $d\varphi_Z(X)(x,v)$ as the standard inner product in $\mathbb R^2$ between $\big(g_x(\left(\nabla_vZ\right)_x,v),-g_x(\left(\nabla_vZ\right)_x,\jmath v)\big)$ and $\big(-\sin\varphi_Z,\cos\varphi_Z\big)$.
\begin{align*}
d\varphi_Z(X)(x,v)&=\ -\sin\varphi_Z(x,v) g_x\big(\left(\nabla_vZ\right)_x,v\big)+\cos\varphi_Z(x,v) g_x\big(\left(\nabla_vZ\right)_x,\jmath_x v\big)\\
&=\ -\sin\varphi_Z(x,v)\kappa^Z_x(v)g_x(\jmath_xZ,v)-\cos\varphi_Z(x,v)\kappa^Z_x(v)g_x\big(\jmath_xZ,\jmath_x v\big)\\
&=\ -\kappa^Z_x(v).
\end{align*} 
For the second statement we differentiate in $t$ the identity $\varphi_Z\circ\Phi_t^V=\varphi_Z+t$.

The third statement follows by using the identity $H=d_\jmath \jmath^{-1}(X_\jmath)$:
\begin{align*}
d_{(x,v)}\varphi_Z(H_{(x,v)})&=\ d_{(x,v)}\varphi_Z\Big(d_{(x,\jmath_xv)} \jmath^{-1}\big(X_{(x,\jmath_xv)}\big)\Big)\\
&=\ d_{(x,\jmath_xv)}(\varphi_Z\circ\jmath^{-1})\big(X_{(x,\jmath_xv)}\big)\\
&=\ d_{(x,\jmath_xv)}\varphi_Z\big(X_{(x,\jmath_xv)}\big)\\
&=\ -\kappa^Z_x(\jmath_xv)\qedhere
\end{align*}
\end{proof}

We give an explicit formula for $\kappa^Z_x$ in the next lemma. First, define the \textit{geodesic curvature} of $Z$ as the function
\begin{equation}\label{geocur}
\begin{aligned}
k_Z:U&\longrightarrow\ \mathbb R\\
x&\longmapsto\ g_x\big((\nabla_{Z}Z)_x,\jmath_x Z_x\big).
\end{aligned} 
\end{equation}
\begin{lem}
At every $x\in M$, we have $\sharp\,\kappa^Z_x=k_Z(x)Z_x+k_{\jmath Z}(x)\jmath_xZ_x$.
\end{lem}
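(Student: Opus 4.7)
Since $Z$ is a section of $SU$, at each point $x \in U$ the pair $(Z_x, \jmath_x Z_x)$ is an orthonormal $\mathfrak{o}$-positive basis of $T_xM$. Therefore the plan is to expand $\sharp\,\kappa^Z_x$ in this basis, which reduces the statement to the identification of the two components
\begin{equation*}
g_x(\sharp\,\kappa^Z_x, Z_x) = \kappa^Z_x(Z_x), \qquad g_x(\sharp\,\kappa^Z_x, \jmath_x Z_x) = \kappa^Z_x(\jmath_x Z_x),
\end{equation*}
with $k_Z(x)$ and $k_{\jmath Z}(x)$ respectively.

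For the first component, I would plug $v = Z_x$ into the defining equation \eqref{dfn_kap}, obtaining $(\nabla_Z Z)_x = \kappa^Z_x(Z_x)\,\jmath_x Z_x$. Taking the inner product with $\jmath_x Z_x$ and using $|\jmath_x Z_x| = 1$ yields $\kappa^Z_x(Z_x) = g_x((\nabla_Z Z)_x, \jmath_x Z_x) = k_Z(x)$ directly from the definition \eqref{geocur}.

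For the second component, the key point is that the rotation $\jmath$ is parallel with respect to the Levi-Civita connection (this is standard on an oriented Riemannian surface: $\jmath$ is the complex structure compatible with $g$ and $\mathfrak{o}$). Hence $(\nabla_{\jmath Z} \jmath Z)_x = \jmath_x (\nabla_{\jmath Z} Z)_x$. Combining this with \eqref{dfn_kap} applied to $v = \jmath_x Z_x$ gives
\begin{equation*}
(\nabla_{\jmath Z} \jmath Z)_x = \kappa^Z_x(\jmath_x Z_x)\, \jmath_x(\jmath_x Z_x) = -\kappa^Z_x(\jmath_x Z_x)\, Z_x.
\end{equation*}
Taking the inner product with $\jmath_x(\jmath_x Z_x) = -Z_x$ and comparing with the definition of $k_{\jmath Z}(x)$, I obtain $k_{\jmath Z}(x) = \kappa^Z_x(\jmath_x Z_x)$, which finishes the argument.

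The only potentially subtle step is the parallel transport identity for $\jmath$; everything else is a direct unwinding of \eqref{dfn_kap} and \eqref{geocur}. I do not expect any genuine obstacle in this lemma — it is essentially a bookkeeping computation in the orthonormal frame $(Z, \jmath Z)$.
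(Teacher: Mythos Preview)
Your proof is correct and follows essentially the same approach as the paper: both evaluate $\kappa^Z_x$ on the orthonormal basis $(Z_x,\jmath_xZ_x)$, identify the first component directly from the definitions, and obtain the second component via the parallelism of $\jmath$ (which the paper uses in the equivalent form $g_x(\jmath_x\cdot,\jmath_x\cdot)=g_x(\cdot,\cdot)$ together with $\nabla\jmath=0$). The organization differs only cosmetically.
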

\begin{proof}
Let us evaluate $\kappa^Z_x$ on the basis $(Z_x,\jmath_xZ_x)$:\vspace{.2cm}
\[\begin{array}{c|c}
\begin{aligned}
\kappa^Z_x(Z_x)&=g_x\big((\nabla_{Z}Z)_x,\jmath_xZ_x\big)\\
&=k_Z(x);\\
&\\
&
\end{aligned}
&
\begin{aligned}
\kappa^Z_x(\jmath_xZ_x)&=g_x\big((\nabla_{\jmath Z}Z)_x,\jmath_xZ_x\big)\\
&=g_x\big(\jmath_x(\nabla_{\jmath Z}Z)_x,\jmath_x(\jmath_xZ_x)\big)\\
&=g_x\big((\nabla_{\jmath Z}\jmath Z)_x,\jmath_x(\jmath_xZ_x)\big)\\
&=k_{\jmath Z}(x).
\end{aligned}
\end{array}\]
\end{proof}

\section{Magnetic fields}\label{sec_mag}
In the previous section we introduced various geometrical objects associated to $(M,g)$. Historically, the geodesics equation \eqref{geoeq} has been used as the main tool to understand the geometry of Riemannian manifolds \cite{kli,pat}. It is a classical result that the geodesic flow can be studied in the framework of symplectic geometry, since $X$ is the $d\lambda$-Hamiltonian vector field associated to the kinetic energy function (see \cite[Chapter 3]{kli}). In other words,
\begin{equation}
\imath_Xd\lambda=-dE.
\end{equation}
From this equation we see that $\Sigma_0$ is the set of rest points for $X$ and for every positive $m$, $\Sigma_m$ is invariant under the geodesic flow. Hence, we can study $X\big|_{\Sigma_m}$ for a fixed value of $m>0$. By the homogeneity of \eqref{geoeq} (which is translated in the bracket relation $[Y,X]=X$), $X\big|_{SM}$ and $X\big|_{\Sigma_{m}}$ have the same dynamics up to reparametrisation by a constant factor. The map conjugating the two flows is given by the flow of $Y$:
\begin{equation}\label{inty1}
d\Phi^Y_{-\log m}\cdot X\big|_{\Sigma_{m}}=mX\big|_{\Sigma_{SM}}
\end{equation}
One of the guiding principles of this thesis is to see how the introduction of a magnetic perturbation in the geodesic equation breaks the homogeneity and gives rise to a family of systems whose dynamics changes with $m$. 

For any $\sigma\in\Omega^2(M)$ consider the symplectic form $\omega_\sigma:=d\lambda-\pi^*\sigma\in\Omega^2(TM)$. When $\sigma$ is not zero, we refer to the symplectic manifold $(TM,\omega_\sigma)$ as a \textit{twisted} tangent bundle to distinguish it from the \textit{standard} tangent bundle $(TM,\omega_0=d\lambda)$.

We will see later some remarkable differences (in terms of Symplectic Cohomology and displaceability, for example) between the geometry of the standard and twisted tangent bundle. Such phenomena show, in this setting, the contrast between symplectic and volume-preserving geometry. Indeed, we claim that
\begin{equation}\label{vol}
\omega_{\sigma}\wedge\omega_{\sigma}=\omega_0\wedge \omega_0.
\end{equation}
Hence, the twisted and the standard tangent bundle have the same volume form, although they are different in the symplectic category. To prove the claim notice that $\pi^*\sigma\wedge\pi^*\sigma=0$ and that $d\lambda\wedge\pi^*\sigma=d(\lambda\wedge\pi^*\sigma)=0$, since $\lambda\wedge\pi^*\sigma=0$ by Proposition \ref{prphv} above.

In the next proposition we prove that also the first Chern class of the underlying class of compatible almost complex structures is unaffected by the twist.
\begin{prp}\label{che}
If $\sigma$ is a $2$-form on $M$, then $c_1(TM,\omega_\sigma)=0$.
\end{prp}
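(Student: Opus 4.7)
The plan is to proceed in two stages: first, reduce $c_1(TM,\omega_\sigma)$ to the untwisted case $c_1(TM,\omega_0)$ via a deformation argument; then compute the latter using a natural compatible almost complex structure built from the Levi-Civita connection.

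For the first stage, consider the one-parameter family $\omega_t:=d\lambda-\pi^*(t\sigma)$, $t\in[0,1]$. The computation \eqref{vol} in the text goes through unchanged when $\sigma$ is replaced by $t\sigma$, because the only two facts used, namely $\pi^*\sigma\wedge\pi^*\sigma=0$ (a $4$-form pulled back from the $2$-surface $M$) and $\lambda\wedge\pi^*\sigma=0$ (by Proposition \ref{prphv}), depend linearly on $\sigma$. Hence $\omega_t^{\wedge 2}=\omega_0^{\wedge 2}$ is nowhere vanishing for all $t$, so $\{\omega_t\}$ is a smooth path of symplectic forms on $TM$. Since the first Chern class of a symplectic vector bundle is an invariant of the symplectic deformation class (the space of $\omega_t$-compatible almost complex structures on $T(TM)$ is non-empty and contractible, and depends continuously on $t$), we obtain $c_1(TM,\omega_\sigma)=c_1(TM,\omega_0)$.

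For the second stage, I would use the almost complex structure $J$ on $T(TM)$ defined on the frame $(X,Y,H,V)$ of Proposition \ref{prphv} by $J\,X=H$, $J\,H=-X$, $J\,Y=V$, $J\,V=-Y$; equivalently, $J\,L^{\mathcal H}_{(x,v)}w=L^{\mathcal V}_{(x,v)}w$ and $J\,L^{\mathcal V}_{(x,v)}w=-L^{\mathcal H}_{(x,v)}w$ for $w\in T_xM$. A direct check using the dual coframe in Proposition \ref{prphv} shows that $J$ is $\omega_0$-compatible with metric $\omega_0(\cdot,J\cdot)$ tame and positive. Using the splitting $T_{(x,v)}(TM)=L^{\mathcal H}_{(x,v)}(T_xM)\oplus L^{\mathcal V}_{(x,v)}(T_xM)$, the map
\begin{equation*}
L^{\mathcal H}_{(x,v)}a+L^{\mathcal V}_{(x,v)}b\ \longmapsto\ a+ib\ \in\ T_xM\otimes_{\R}\C
\end{equation*}
is a $\C$-linear isomorphism of complex vector bundles $(T(TM),J)\cong \pi^{*}(TM\otimes_{\R}\C)$.

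It remains to show $c_1(TM\otimes_{\R}\C)=0$. Since $M$ is oriented, $TM$ carries the complex-line-bundle structure $\jmath$, and the standard decomposition into $(\pm i)$-eigenbundles of $\jmath\otimes 1$ yields an isomorphism of complex rank-$2$ bundles
\begin{equation*}
TM\otimes_{\R}\C\ \cong\ TM\oplus\overline{TM}.
\end{equation*}
Therefore $c_1(TM\otimes_{\R}\C)=c_1(TM)+c_1(\overline{TM})=c_1(TM)-c_1(TM)=0$. Pulling back along $\pi$ and combining with the first stage gives $c_1(TM,\omega_\sigma)=\pi^{*}c_1(TM\otimes_{\R}\C)=0$.

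The only subtle point, and the one I would expect to have to write out most carefully, is the identification $(T(TM),J)\cong\pi^{*}(TM\otimes_{\R}\C)$: one must verify both that the map defined above is well-defined globally (independent of the choice of local trivialisations of $TM$, which follows because the horizontal/vertical splitting is canonically induced by the Levi-Civita connection) and that it intertwines $J$ with multiplication by $i$. Everything else is essentially formal.
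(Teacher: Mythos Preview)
Your argument is correct in outline, but the paper takes a shorter and more uniform route. Rather than deforming to $\omega_0$ and then building a specific compatible $J$, the paper observes that the vertical distribution $\mathcal V\subset T(TM)$ is Lagrangian for $\omega_\sigma$ for \emph{every} $\sigma$ (both $d\lambda$ and $\pi^*\sigma$ vanish on pairs of vertical vectors). Hence, for any $\omega_\sigma$-compatible $J_\sigma$, the sub-bundle $\mathcal V$ is totally real, and the map $u+iv\mapsto u+J_\sigma v$ gives a complex isomorphism $(\mathcal V_{\C},i)\cong (T(TM),J_\sigma)$. Since $\mathcal V\cong\pi^*TM$ as real bundles, this lands at exactly the same endpoint as your second stage, namely $c_1(\pi^*TM\otimes_{\R}\C)=0$, but without the preliminary deformation. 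The paper's version also makes transparent the general principle at work: any symplectic vector bundle admitting a Lagrangian sub-bundle has vanishing first Chern class.

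One small point to tidy up: your two descriptions of $J$ do not match. From $X=L^{\mathcal H}_{(x,v)}v$ and $Y=L^{\mathcal V}_{(x,v)}v$, the rule $J\,L^{\mathcal H}w=L^{\mathcal V}w$ gives $JX=Y$, not $JX=H$; and in fact a direct computation of $d\lambda$ on the frame shows $d\lambda(X,H)=0$, so the choice $JX=H$ is not even tame. The lift-based description (with the correct sign to make $\omega_0(\cdot,J\cdot)$ positive) is what you actually need for the bundle identification, so you should drop the frame formulas. This does not affect the validity of the overall strategy.
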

\begin{proof}
Consider the vertical distribution $\mathcal V\rightarrow TM$. It is a Lagrangian sub-bundle of the symplectic bundle $(T(TM),\omega_\sigma)\rightarrow TM$. If $J_\sigma$ is an almost complex structure compatible with $\omega_\sigma$, then $\mathcal V$ is totally real with respect to $J_\sigma$. As a consequence, we have the isomorphism of complex bundles
\begin{align*}
(\mathcal V_{\C},i)&\longrightarrow\ (T(TM),J_\sigma)\\
(u+iv)&\longmapsto\ u+J_\sigma v,
\end{align*}
where $(\mathcal V_{\C},i)$ is the complexification of $\mathcal V$. Therefore, $c_1(TM,J_\sigma)=c_1(\mathcal V_{\C},i)$. On the other hand, $c_1(\mathcal V_{\C},i)=0$ since $(\mathcal V_{\C},i)\simeq(\mathcal V_{\C},-i)$, $c_1(\mathcal V_{\C},-i)=-c_1(\mathcal V_{\C},i)$ and $H^2(TM,\Z)$ is torsion-free (see \cite[Lemma 14.9]{milsta}. Alternatively, notice that $\mathcal V\rightarrow TM$ is orientable and therefore $\op{det}(\mathcal V)\rightarrow TM$ is trivial. Since taking the determinant bundle and taking the complexification do commute, we get
\begin{equation*}
c_1(\mathcal V_{\C},i)=c_1(\op{det}(\mathcal V_{\C}),\op{det}(i))=c_1((\op{det}\mathcal V)_{\C},i)=0.\qedhere
\end{equation*}
\end{proof}

\begin{dfn}\label{dfn_magne}
We call $\sigma$ a \textit{magnetic form} and the triple $(M,g,\sigma)$ a \textit{magnetic system}. If we fix $M$, we write $\op{Mag}(M)$ for the space of all pairs $(g,\sigma)$ such that $(M,g,\sigma)$ is a magnetic system. For every $(g,\sigma)\in\op{Mag}(M)$, there exists a unique function $f:M\rightarrow \R$ called the \textit{magnetic strength} such that $\sigma=f\mu$. We say that the magnetic system is \textit{exact}, \textit{non-exact}, or \textit{symplectic}, if such is the magnetic form $\sigma$ in $\Omega^2(M)$. We denote by $\op{Mag}_e(M)\subset\op{Mag}(M)$ the subset of exact magnetic systems.

Finally, we define the \textit{magnetic vector field} $X^{\sigma}_{E}\in\Gamma(TM)$ as the $\omega_\sigma$-Hamiltonian vector field associated to $E$ and we refer to $\Phi^{X^{\sigma}_{E}}$ as the \textit{magnetic flow}. 
\end{dfn}

As the geodesic flow, $X^{\sigma}_{E}$ comes from a second order ODE for curves in $M$: 
\begin{equation}\label{lorsig}
\frac{\nabla^\gamma}{dt}\dot\gamma=F_\gamma\big(\dot{\gamma}\big).
\end{equation}
Here $F:TM\rightarrow TM$ is the \textit{Lorentz force}. It is a bundle map given by \begin{equation}\label{mag_for}
g(F_x(v),w):=\sigma_x(v,w)
\end{equation}
and can be expressed using the magnetic strength as $F_x(v)=f(x)\jmath_xv$. Using the relation between the Levi-Civita connection and the horizontal lifts, one finds that
\begin{equation}
X^{\sigma}_{E}=X+fV. 
\end{equation}

The kinetic energy $E$ is still an integral of motion for the magnetic flow: $\Sigma_0$ is the set of rest points for $X^{\sigma}_E$ and, for $m>0$, $X^{\sigma}_E$ restricts to a nowhere vanishing vector field on $\Sigma_m$.

However, the bracket relations now imply
\begin{equation}\label{inty2}
d\Phi^Y_{-\log m}\cdot X^{\sigma}_E\big|_{\Sigma_{m}}=(mX+fV)\big|_{SM}=mX^{\frac{\sigma}{m}}_E\big|_{SM}.
\end{equation}
From this equality we infer two things. First, that, unlike the geodesic flow, the magnetic flow is not homogeneous. Second, that studying the dynamics of the magnetic system $(M,g,\sigma)$ on $\Sigma_m$ is the same as studying the rescaled magnetic system $(M,g,\frac{\sigma}{m})$ on $SM$. The advantage of the latter formulation is that it describes the dynamics of the magnetic flow by using a $1$-parameter family of flows on a \textit{fixed} closed three-manifold. We also introduce the real parameter $s$, which is related to $m$ by the equation $sm=1$. In the following discussion, we will use both parameters, choosing every time the one that most simplify the notation. As a heuristic rule, the parameter $m$ will be more convenient for low values of $E$ ($m$ close to $0$), while $s$ will be more convenient for high values of $E$ ($s$ close to $0$).
When there is no risk of confusion, we adopt the shorthand $X^s:=X^{s\sigma}_E\big|_{SM}$ and $\omega_s:=\omega_{s\sigma}$, when we use $s$, and $X^m:=mX^{\frac{\sigma}{m}}_E\big|_{SM}=mX+fV$ and $\omega_m:=m\omega_{\frac{\sigma}{m}}=md\lambda-\pi^*\sigma$, when we use $m$. Notice that we have the limits
\begin{equation*}
\begin{array}{ccccccc}
\displaystyle\lim_{s\rightarrow 0}X^s&=& X,&\quad&\displaystyle\lim_{m\rightarrow 0}X^m&=&fV,\vspace{2pt}\\
\displaystyle\lim_{s\rightarrow 0}\omega_s&=& d\lambda;&\quad&\displaystyle\lim_{m\rightarrow 0}\omega_m&=&-\pi^*\sigma.
\end{array}
\end{equation*}

In the standard tangent bundle, the properties of the linearisation of the geodesic flow on $SM$ are influenced by the curvature $K$ (see also the bracket relations above). Analogously, when there is a magnetic term, for every $m>0$, we can define the \textit{magnetic curvature} function $K_m:SM\rightarrow\R$ as $K_m:=m^2K-m(df\circ\jmath)+f$, which is related to the linearisation of $X^m$ \cite{ada1} (see also Chapter \ref{cha_rev}).

\begin{rmk}\label{rmk_tancot}
The duality isomorphism $\flat:TM\rightarrow T^*M$ yields a symplectomorphism between the twisted tangent bundle $(TM,\omega_\sigma)$ and $(T^*M,\omega^*_\sigma:=d\lambda^*-\pi^*\sigma)$, where $\lambda^*$ is the standard Liouville form on the cotangent bundle and we denote with $\pi$ also the footpoint projection $T^*M\rightarrow M$. The $\omega_\sigma$-Hamiltonian system $X^\sigma_E$ is sent through $\flat$ to the $\omega^*_\sigma$-Hamiltonian system associated to $E^*:=E\circ \flat^{-1}$. Observe that $E^*$ is the kinetic energy function on $T^*M$ associated to the dual metric, namely $E^*(x,l)=\frac{1}{2}g_x(l,l)$. In particular $\Sigma^*_m:=\flat(\Sigma_m)=\{E^*=\frac{1}{2}m^2\}$. Unlike $\omega_\sigma$, $\omega^*_\sigma$ depends only on $\sigma$ and not on $g$, which enters in the picture only through the Hamiltonian $E^*$. 

The tangent bundle and the cotangent bundle formulation are equivalent but have different advantages and disadvantages as far as the exposition of the material is concerned. In the tangent bundle we have an intuitive understanding of the flow given by the second order ODE. On the other hand, in the cotangent bundle we have seen that $g$ and $\sigma$ contributes separately to determine the magnetic flow: the former through the Hamiltonian and the latter through the symplectic form.
\end{rmk}

We have seen that looking at different energy levels of a fixed magnetic system $(M,g,\sigma)$ corresponds to a suitable rescaling of the magnetic field: $X^{\sigma}_E\big|_{\Sigma_m}\sim X^{\frac{\sigma}{m}}_E\big|_{\Sigma_1}$. What happens when we rescale the metric $g\mapsto m^2g$? In the cotangent bundle formulation, we have that $E^*_{m^2g}=\frac{E^*_g}{m^2}$. Thus, $X^\sigma_{E^*_{m^2g}}\big|_{\Sigma_1^*}\sim X^\sigma_{E^*_g}\big|_{\Sigma^*_m}$ and, consequently, 
\begin{equation}\label{eq_resc}
X^\sigma_{E_{m^2g}}\big|_{\Sigma_1}\sim X^\sigma_{E_g}\big|_{\Sigma_m}. 
\end{equation}

\subsection{The Maupertuis' principle}\label{sec_mau}
In this subsection we discuss how a magnetic system $(M,g,\sigma)$ is affected by the introduction of a potential $U:M\rightarrow\R$. In this case we speak of a \textit{mechanical system} $(M,g,\sigma,U)$. The Hamiltonian has an additional term $E(x,v)=\frac{1}{2}g_x(v,v)+U(x)$ while the symplectic form remains the same $\omega_\sigma=d\lambda-\pi^*\sigma$. In this case the topology of the energy levels depends on $U$. Consider the map $\pi|:\Sigma_m\rightarrow M$, where $\Sigma_m:=\{E=m^2/2\}$. Its image is the sublevel $M_m:=\{ U\leq m^2/2\}$ and the preimage of a point $x\in M_m$ are the vectors in $T_xM$ with norm $\sqrt{m^2-2U(x)}$. This is either a single point, namely the zero vector of $T_xM$, if $U(x)=m^2/2$, or a circle, if $U(x)<m^2/2$. In particular, $\pi|:\Sigma_m\rightarrow M$ is an $S^1$-bundle if and only if $m^2/2>\max U$. We claim that in this case the dynamics on $\Sigma_m$ is equivalent, up to reparametrisation, to the dynamics of the magnetic flow associated to $(M,g_{m,U}:=(m^2-2U)g,\
sigma)$ on $SM$ (where the sphere bundle is taken with respect to the metric $g_{m,U}$). To prove the claim, we work in the cotangent bundle formulation. We start by observing that $\{E^*=m^2/2\}=\{\frac{1}{2}\frac{g_x(l,l)}{m^2-2U(x)}=\frac{1}{2}\}$. Therefore, the $\omega^*_\sigma$-Hamiltonian flow of $E^*$ and the one of $\widetilde{E}^*(x,l):=\frac{1}{2}\frac{g_x(l,l)}{m^2-2U(x)}$ are the same up to reparametrisation on this common energy level. Finally, we observe that $\widetilde{E}$ is the kinetic energy function for the metric $\frac{g}{m^2-2U}$ which is the dual of $g_{m,U}$.

From the discussion above, we see that the results contained in this thesis can also be applied to magnetic systems with potential, provided we know $U$ and $m$ well enough to understand the properties of $(M,g_{m,U},\sigma)$. For example, fix $m>0$ and take a potential $U$ whose $C^0$-norm is small compared to $m^2$. We claim that the mechanical system $(M,g,\sigma,U)$ and the magnetic system $(M,g,\sigma)$ will be close to each other on $\Sigma_m$. By Equation \eqref{inty2} and \eqref{eq_resc}, we know that the magnetic system associated to $(M,g_{m,U},\sigma)$ is equivalent, up to reparametrisation, to the magnetic system associated to $(M,\frac{g_{m,U}}{m^2},\frac{\sigma}{m})$. Moreover,
\begin{equation}
\left\Vert \frac{g_{m,U}}{m^2}-g\right\Vert\leq \frac{2\Vert U\Vert}{m^2}\Vert g\Vert. 
\end{equation}
Thus, if $\Vert U\Vert$ is small compared to $m^2$, the magnetic flow of $(M,g_{m,U},\sigma)$ on the unit tangent bundle of $g_{m,U}$ is close, up to reparametrisation, to the magnetic flow of $(M,g,\frac{\sigma}{m})$ on the unit tangent bundle of $g$. 

\subsection{Physical interpretation: constrained particles and rigid bodies with symmetry}\label{sub_phy}
The adjective 'magnetic' for the systems we have introduced in the present section is due to the fact that they describe the following simple phenomenon in the theory of classical electromagnetism. Consider a particle $q$ of unit mass constrained to move on a frictionless surface $M$ in the Euclidean space $(\R^3,g_{\op{st}})$. The metric induces by pull-back a metric $g$ on $M$ and a corresponding Levi-Civita connection $\nabla$. With this assumption the motion of $q$ will obey to the geodesic equation \eqref{geoeq}. Suppose now that $q$ has unit charge and that $M$ is immersed in a stationary magnetic field $\overrightarrow{B}$. In this case the particle is subject to a Lorentz force $\overrightarrow{F}=\overrightarrow{v}\times\overrightarrow{B}$. The force $\overrightarrow{F}$ is obtained from the $2$-form $\sigma:=\imath_{\overrightarrow{B}}\op{vol}_{g_{\op{st}}}$ as prescribed by Equation \eqref{mag_for}. Here $\op{vol}_{g_{\op{st}}}$ denotes the Euclidean volume form in $\R^3$. The motion of $q$ will 
obey to the Equation \eqref{lorsig}.

Making the natural assumption that $\overrightarrow{B}$ is defined on the whole $\R^3$, we see that $\sigma$ is the pull-back of a closed $2$-form on $\R^3$ and, hence, it is \textit{exact}. With this physical interpretation, non-exact magnetic systems on $M$ correspond to fields $\overrightarrow{B}$ generated by a Dirac monopole located in a region outside the surface.
\medskip

Even if a magnetic monopole has not been observed so far, non-exact magnetic flows on $S^2$ have, nonetheless, a concrete importance since they are symplectic reductions of certain physical systems with phase space $TSO(3)$, possessing an $S^1$-symmetry. The study of the periodic orbits of these systems was initiated in the early Eighties by Novikov in a series of papers \cite{nov1,nov2,nov3,nov4}. His interest was motivated by the fact that in these cases the Lagrangian action functional is multivalued and, hence, one needs a generalisation of the standard Morse theory (nowadays known as Novikov theory) for proving the existence of critical points.

Among the many examples considered by Novikov, here we only look at the motion of a rigid body with rotational symmetry and a fixed point. We refer the reader to \cite{kha} for a careful explanation of the setting and for the proofs of the statement we make.

A rigid body with a fixed point is described by a positive orthonormal basis $\mathbf e=(e_1,e_2,e_3)$ in $\R^3$. Given a fixed positive orthonormal basis $\mathbf n=(n_1,n_2,n_3)$, to every $\mathbf e$ we can associate a unique element in $SO(3)$ which is the isometry of $\R^3$ sending $\mathbf n$ to $\mathbf e$. Hence, the configuration space of a rigid body with a fixed point can be identified with the group $SO(3)$.

The kinetic energy of the body is obtained from a left-invariant Riemannian metric $g$ on $SO(3)$, which, in the standard basis of $\mathfrak{so}(3)$, is represented by the matrix
\begin{equation}
I=\left(\begin{array}{ccc}
        I_1&0&0\\
	0&I_2&0\\
	0&0&I_3
        \end{array}
\right)
\end{equation}
for some positive numbers $I_1$, $I_2$ and $I_3$. Let us consider the $\T_{2\pi}$-action on the configuration space that rotates the rigid body around the fixed axis $n_3$. In $SO(3)$ this corresponds to left multiplication by the subgroup $G$ of rotations with axis $n_3$. Call $Z\in\Gamma(SO(3))$ the infinitesimal generator of $G$. 

The quotient map for this action is the $S^1$-bundle map
\begin{align*}
p:SO(3)&\longrightarrow\ S^2\subset \R^3\\
(e_1,e_2,e_3)&\longmapsto\ \left(\begin{array}{c}
                                 \alpha_1\\ 
                                 \alpha_2\\
                                 \alpha_3
\end{array}
\right):=\left(\begin{array}{c}
                                 g_{\op{st}}(e_1,n_3)\\ 
                                 g_{\op{st}}(e_2,n_3)\\
                                 g_{\op{st}}(e_3,n_3)
\end{array}
\right),
\end{align*}
which yields the coordinates of the vector $n_3$ in the frame $\mathbf n$. Denote by $\hat{g}$ the metric induced on $S^2$ by $p$. It is obtained pushing forward the restriction of $g$ to the orthogonal of $\ker dp$. Notice that the function $\mathbf e\mapsto \sqrt{g_{\mathbf e}(Z,Z)}$ is invariant under $G$, so that it descends to a function $\nu:S^2\rightarrow(0,+\infty)$ such that $\sqrt{g_{\mathbf e}(Z,Z)}=\nu(p(\mathbf e))$.

Finally, suppose that the rigid body is immersed in a conservative force field, which is invariant under rotations around $n_3$. Thus, the force is described by a potential $U:SO(3)\rightarrow \R$ which is invariant under the $\T_{2\pi}$-action. Namely, there exists $\hat{U}:S^2\rightarrow\R$ such that $U=\hat{U}\circ p$.

The dynamics of the body is given by the Hamiltonian vector field on $TSO(3)$ associated to the mechanical system $(SO(3),g,0,U)$. The $\T_{2\pi}$-symmetry arises from a momentum map $J:TSO(3)\rightarrow \mathfrak g\simeq \R$, which is an integral of motion. For every $k\in \R$, we look at the dynamics on the invariant set $\{J=k\}$ projected to $TS^2$ via the quotient map $p$. It is the dynamics of the Hamiltonian vector field associated to $(S^2,\hat{g},k\sigma_{\hat{g}},\hat{U}_k)$, where $\hat{U}_k=\hat{U}+\frac{k^2}{2\nu^2}$. A computation shows that the Gaussian curvature of $\hat{g}$ is \textit{always positive} and, hence, the magnetic form is \textit{symplectic}.

Applying Maupertuis' principle, we know that, on $\Sigma_m$ such that $m^2/2>\max \hat{U}_k$, the reduced system above is equivalent to the magnetic system $(S^2,\hat{g}_{m,\hat{U}_k},k\sigma_{\hat{g}})$. Moreover, if $\Vert \hat{U}_k\Vert$ is small compared to $m^2$, up to reparametrisation, the dynamics is close to that of $(S^2,\hat{g},\frac{k}{m}\sigma_{\hat{g}})$ on the unit tangent bundle of $\hat{g}$.

\subsection{From dynamics to geometry}
We come back to the general discussion on magnetic flows associated to $(M,g,\sigma)$. We saw that the dynamics of $X^{\sigma}_E$ on $TM$ can be studied by looking at the dynamics of $X^s:=X^{s\sigma}_E\big|_{SM}$, for positive values of the parameter $s$. We are going to look at this new problem from a geometric point of view, by investigating what geometric structure the symplectic manifold $TM$ induces on the three-manifold $SM$.

Consider the restriction $\omega'_\sigma:=\omega_\sigma\big|_{SM}$. It is a closed, nowhere vanishing $2$-form on the unit tangent bundle. Hence, it has an orientable $1$-dimensional kernel $\ker\omega'_\sigma$ at every point. We fix for it an orientation $\mathfrak O_{\ker\omega'_\sigma}$ satisfying the relation
\begin{equation}\label{or_lin}
\mathfrak O_{SM}=\mathfrak O_{\ker \omega'_\sigma}\oplus \mathfrak O^{\omega_\sigma'}, 
\end{equation}
where $\mathfrak O^{\omega_\sigma'}$ is the orientation on $\frac{T(SM)}{\ker\omega'_\sigma}$ induced by $\omega_\sigma'$. Since $\chi=\lambda\wedge d\lambda=\lambda\wedge\omega'_\sigma$, we readily see that
\begin{equation}\label{con_mag}
\imath_{X^\sigma_E\big|_{SM}}\chi=\lambda(X^\sigma_E\big|_{SM})\omega'_\sigma-\lambda\wedge\imath_{X^\sigma_E\big|_{SM}}\omega'_\sigma=\omega'_\sigma.
\end{equation}
Therefore, Equation \eqref{or_lin} implies that the magnetic vector field $X^\sigma_E\big|_{SM}$ is a \textit{positive} nowhere vanishing section of $\ker\omega'_\sigma$.

From the previous discussion, we argue that the real object of interest for understanding the geometric properties of the orbits, and not their actual parametrisation, is $\ker\omega'_\sigma$ rather than $X^\sigma_E\big|_{SM}$. The $2$-form $\omega_\sigma'$ is a particular instance of what is called a \textit{Hamiltonian Structure} (or HS for brevity). We introduce such objects in an abstract setting in the next section.

\section{Hamiltonian structures}
We develop below the notion of HS and we identify the special subclass of HS of contact type. Proving that a HS is of contact type has striking consequences for the dynamics, since in this case we can describe the dynamical system associated to the HS by the means of a Reeb flow. This yields the following two advantages.
\begin{enumerate}[\itshape a)]
 \item We can count periodic orbits of Reeb flows using algebraic invariants, such as Contact Homology \cite{bou} and Embedded Contact Homology \cite{tau1}, since closed orbits are generator for the corresponding chain complexes. When the contact manifold is the \textit{positive} boundary of a symplectic manifold, like in the case of $SM$, one can define a further invariant called Symplectic Homology, which takes into account the interplay between the contact structure on the boundary and the symplectic structure of the filling. In the next chapter we are going to define Symplectic Homology \cite{vit,sei} (or, more precisely, its cohomological version) in detail and prove an abstract invariance result which we will use in Chapter \ref{cha_exa} to compute it in the case of magnetic systems. This will yield a lower bound on the number of periodic orbits for such systems.
 \item The theory of pseudo-holomorphic foliations developed by Hofer, Wysocki and Zehnder \cite{hwz1} allows to find Poincar\'e sections for \textit{dynamically convex} Reeb flows. This allows for a description of the flow in terms of an area-preserving discrete dynamical system on the two-dimensional disc. Information about periodic points of such systems are then obtained applying results by Brouwer \cite{bro} and Franks \cite{fra2}. The abstract setting will be presented in Chapter \ref{cha_dc} and applied to magnetic systems in Chapter \ref{cha_le}
\end{enumerate}

In the rest of this section let $(N,\mathfrak O)$ be a three-manifold $N$ endowed with an orientation $\mathfrak O$.
\begin{dfn}
A closed and nowhere vanishing $2$-form $\omega$ on $N$ is called a \textit{Hamiltonian Structure}. We say that the Hamiltonian Structure is \textit{exact}, if $\omega$ is such. The $1$-dimensional distribution $\ker\omega$ is called the \textit{characteristic distribution} associated to $\omega$. It has an orientation $\mathfrak O_{\ker\omega}$ satisfying the relation $\mathfrak O=\mathfrak O_{\ker \omega}\oplus \mathfrak O^{\omega}$, where $\mathfrak O^{\omega}$ is the orientation on $\frac{TN}{\ker\omega}$ induced by $\omega$. Alternatively, if $\chi$ is any positive volume form, $Z^\chi\in\Gamma(N)$ defined by $\imath_{Z^\chi}\chi=\omega$ is a positive section of $\ker\omega$.
\end{dfn}
\begin{dfn}\label{def_non}
Let $\omega$ be a HS and let $Z\in\Gamma(N)$ be a positive section of the characteristic line bundle associated with $\omega$. We say that a periodic orbit $(\gamma,T)$ for $\Phi^Z$ is \textit{non-degenerate} if the multiplicity of $1$ in the spectrum of $d\Phi^Z_T$ is exactly one. Observe that this property does not depend on the choice of $Z$.
If $(\gamma,T)$ is transversally non-degenerate, call \textit{transverse spectrum} the set made of the two eigenvalues of $d\Phi^Z_T$ different from $1$. We say that $\gamma$ is
\begin{enumerate}
 \item \textit{elliptic}, if the transverse spectrum lies on the unit circle in $\C$;
 \item \textit{hyperbolic}, if the transverse spectrum lies on the real line.
\end{enumerate}

Finally, we say that $\omega$ is \textit{$\nu$-non-degenerate} if all the periodic orbits in the class $\nu$ are non-degenerate. 
\end{dfn}

\begin{rmk}\label{rmk_ham}
If $(W,\omega)$ is a symplectic four-manifold and $N\hookrightarrow W$ is an orientable embedded hypersurface, then $\omega\big|_N$ is a Hamiltonian Structure on $W$. If we take $N$ to be the regular level of a Hamiltonian function $H:W\rightarrow\R$, we can endow $N$ with the orientation induced from $\omega\wedge\omega$ by putting the gradient of $H$ first. Using the Hamilton equation \eqref{hameq}, we see that $X_H\big|_N$ is a positive section of $\ker \omega\big|_N$. Moreover, $\omega\big|_N$ is $\nu$-non-degenerate if and only if all the periodic orbits of $H$ on $N$ with free homotopy class $\nu$ are \textit{transversally non-degenerate} (see next chapter).
\end{rmk}

This observation shows that the dynamics up to reparametrisation of an autonomous Hamiltonian system on a four-dimensional symplectic manifold $(W,\omega)$ at a regular level $N$ can be read off the geometry of the oriented one-dimensional distribution $\ker\omega\big|_N$ associated to the Hamiltonian structure $\omega\big|_N$.

\subsection{Hamiltonian structures of contact type}
We now introduce Hamiltonian structures of contact type for which the characteristic distribution has a section with special properties.

\begin{dfn}
We say that $\omega$ is \textit{of contact type} if it is exact and there exists a contact form $\tau\in\mathcal P^\omega$. Denote by $R^\tau$ the \textit{Reeb vector field} of $\tau$, defined by $\imath_{R^\tau}d\tau=0$ and $\tau(R^\tau)=1$. One among $R^\tau$ and $-R^\tau$ is a positive section of $\ker\omega$. We say that $\omega$ is of \textit{positive} or \textit{negative} contact type accordingly. 
\end{dfn}

\begin{rmk}\label{rmk_con}
If we fix a positive section $Z$ of $\ker\omega$, being of positive (respectively negative) contact type is equivalent to finding $\tau\in\mathcal P^\omega$ such that $\tau(Z):N\rightarrow\R$ is a positive (respectively negative) function. In this case $R^{\tau}=\frac{Z}{\tau(Z)}$.
\end{rmk}

\begin{exa}\label{exa_consm}
If $(M,g)$ is a Riemannian surface as above, then $\omega'_0=d\lambda\big|_{SM}$ is a HS of positive contact type. Indeed, $X$ is a positive section of $\ker \omega'_0$ and $\lambda_{(x,v)}(X)=g_x(v,d_{(x,v)}\pi X)=1$. This implies that $\lambda\big|_{SM}$ is a positive contact form and $X$ is its Reeb vector field.
\end{exa}

\begin{exa}\label{exa_concon}
Suppose that $\pi:E\rightarrow M$ is an $S^1$-bundle over an oriented surface $(M,\mathfrak o_M)$. If $\sigma\in\Omega^2(M)$ is a positive symplectic form, then $\pi^*\sigma$ is a Hamiltonian structure. We claim that $\pi^*\sigma$ is exact if and only if $E$ is non-trivial. To prove necessity, we observe that if $E$ is trivial, the integral of $\pi^*\sigma$ over a section of $\pi$ is non-zero. To prove sufficiency, we define $e\neq0$ the Euler number of the $S^1$-bundle. Then, we use the classical result \cite{kob} saying that a $2$-form on $M$, whose integral is $2\pi e$, is a curvature form for an $S^1$-connection on $E$. Thus, it is exact. This implies that there exists $\tau\in\Omega^1(E)$ such that
\begin{equation}
\bullet\ \ \tau(V)\ =\ 1\,,\quad\quad\quad\bullet\ \ d\tau\ =\ -\frac{2\pi e}{[\sigma]}\pi^*\sigma\,.
\end{equation}
In this case $-\frac{[\sigma]}{2\pi e}\tau$ is a primitive of $\pi^*\sigma$ and a contact form with Reeb vector field $-\frac{2\pi e}{[\sigma]}V$. Therefore, $\pi^*\sigma$ is a Hamiltonian structure of contact type. It is positive if and only if $e$ is positive.
\end{exa}

\begin{exa}
For examples of structures of contact type in the context of the circular planar restricted three-body problem, we refer to \cite{affkp}. 
\end{exa}

The most direct way to detect the contact property is to use Remark \ref{rmk_con}. However, this method could be difficult to apply, especially if we want to prove that a HS is not of contact type, since we should check that every function $\tau(Z)$ vanishes at some point. This problem is overcome by the following necessary and sufficient criterion contained in McDuff \cite{mcd}.
\begin{prp}\label{mcdcri}
Let $\omega'$ be an exact HS and $Z$ be a positive section of $\ker\omega'$. Then, $\omega'$ is of positive (respectively negative) contact type if and only if the action of every null-homologous $Z$-invariant measure is positive (respectively negative).
\end{prp}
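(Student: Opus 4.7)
\medskip

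The plan is to prove the two directions separately. Let $\tau_0 \in \mathcal P^{\omega'}$ be a fixed primitive. For any null-homologous $Z$-invariant (positive) measure $\mu$ and any other primitive $\tau$, the difference $\tau-\tau_0$ is closed, so the null-homologous condition $\int_N\beta(Z)\,d\mu=0$ for all closed $\beta$ shows that the \emph{action} $\int_N\tau(Z)\,d\mu$ is independent of the chosen primitive. For the forward direction, if $\omega'$ is of positive contact type with contact primitive $\tau$, then $\tau(Z)>0$ pointwise, hence $\int_N\tau(Z)\,d\mu>0$ for every nonzero positive Radon measure $\mu$; the negative case is symmetric. I will only spell out the positive case below, the negative one following by switching signs.

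For the converse I would use a Hahn--Banach separation argument in $V:=C^\infty(N)$, which is the standard approach for this kind of criterion. Consider the open convex cone $P:=\{f\in V:f>0\}$ and the affine subspace
\begin{equation*}
A:=\{\tau_0(Z)+\beta(Z)\,:\,\beta\in\Omega^1(N)\text{ closed}\}\subset V.
\end{equation*}
The contact condition (in the positive sense) is precisely that $A\cap P\neq\emptyset$, because $R^\tau=Z/\tau(Z)$ for $\tau\in\mathcal P^{\omega'}$ with $\tau(Z)>0$. Supposing $A\cap P=\emptyset$, the Hahn--Banach separation theorem produces a nonzero continuous linear functional $\ell:V\to\R$ and a constant $c$ with $\ell>c$ on $P$ and $\ell\leq c$ on $A$. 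Exploiting that $P$ is a cone yields $\ell\geq 0$ on $P$ and $c\leq 0$, so by Riesz representation $\ell$ is a nonzero positive Radon measure $\mu$. Exploiting that $A$ is an affine subspace forces $\ell$ to vanish on the linear part $L:=\{\beta(Z):\beta\text{ closed}\}$, i.e.
\begin{equation*}
\int_N\beta(Z)\,d\mu=0\quad\text{for every closed }\beta\in\Omega^1(N),
\end{equation*}
while $\int_N\tau_0(Z)\,d\mu\leq c\leq 0$.

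The final step is the dynamical interpretation of these two properties of $\mu$. Testing against exact $1$-forms $\beta=df$ gives $\int_N Z(f)\,d\mu=0$ for all $f\in C^\infty(N)$, which means exactly that $\mu$ is $Z$-invariant. Testing against all closed $1$-forms and using de~Rham duality between closed forms and $1$-currents modulo boundaries shows that the closed $1$-current $C_\mu\colon\beta\mapsto\int_N\beta(Z)\,d\mu$ is a boundary, i.e.\ $\mu$ is null-homologous in the sense required by the statement. Thus $\mu$ is a nonzero null-homologous $Z$-invariant positive Radon measure with non-positive action, contradicting the hypothesis that every such measure has strictly positive action.

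The main technical obstacle is the last paragraph: checking carefully that positivity of $\ell$ on $P$ truly upgrades to representation by a positive Radon measure in the Fréchet space $C^\infty(N)$ (a standard but not entirely cost-free step), and verifying that the vanishing of $\int \beta(Z)\,d\mu$ on all closed $\beta$ is the correct dual formulation of the null-homologous condition used in the statement. Everything else is a direct application of separation in a locally convex topological vector space and the standard correspondence between invariant measures and closed $1$-currents of the form $Z\mu$.
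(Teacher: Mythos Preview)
The paper does not prove this proposition at all: it is stated as a result ``contained in McDuff \cite{mcd}'' and then only the relevant definitions (invariant measure, rotation vector, action) are recalled. In fact the paper later remarks explicitly that McDuff's criterion ``finally relies on an application of the Hahn--Banach Theorem'', which is exactly the route you take. So your approach coincides with the original one, and there is nothing in the paper to compare against beyond that.

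Your argument is correct. A couple of minor points worth tightening: the positive functional $\ell$ you obtain on $C^\infty(N)$ is a positive distribution, and it is a standard fact that positive distributions have order zero and are therefore represented by positive Radon measures; this disposes of your first ``technical obstacle''. For the second, the paper's definition of null-homologous is precisely $\rho(\zeta)=0$ in $H_1(N,\R)$, where $\langle\rho(\zeta),[\beta]\rangle=\int_N\beta(Z)\,d\zeta$ for closed $\beta$, so vanishing on all $\beta(Z)$ with $\beta$ closed is literally the required condition, no de~Rham duality gymnastics needed. Finally, the measure you produce need not be a probability measure as in the paper's definition, but since $N$ is compact and $\mu\neq 0$ you can normalise without affecting the sign of the action.
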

We end this subsection by recalling the basic notions about invariant measures needed in the statement of the proposition above.

If $Z$ is a vector field on a closed manifold $N$, a \textit{$Z$-invariant measure} $\zeta$ is a Borel probability measure on $N$, such that
\begin{equation}
\forall\, h:N\rightarrow\R,\quad\quad \int_Ndh(Z)\zeta=0.
\end{equation}
We denote the set of $Z$-invariant measure by $\mathfrak M(Z)$. We associate to every $\zeta\in\mathfrak M(Z)$ an element $\rho(\zeta)$ in $H^1(N,\R)^*=H_1(N,\R)$ defined as
\begin{equation}
\forall\, [\beta]\in H^1(N,\R),\quad\quad <\rho(\zeta),[\beta]>\ :=\int_N\beta(Z)\zeta.
\end{equation}
Suppose $Z$ is a positive section of $\ker\omega$, with $\omega$ an exact HS on an oriented three-manifold $N$, $\zeta\in\mathfrak M(Z)$ and $\tau\in\mathcal P^{\omega'}$. The \textit{action} of $\zeta$ is defined as
\begin{align}
\mathcal A^\tau_Z:\mathfrak M(Z)&\longrightarrow\ \R\nonumber\\
\zeta&\longmapsto \int_N\tau(Z)\zeta.
\end{align}
If $\zeta$ is \textit{null-homologous} (namely $\rho(\zeta)=0$), then $\mathcal A^\tau_Z(\zeta)$ is independent of $\tau$ and, therefore, in this case we write $\mathcal A^\omega_Z(\zeta):=\mathcal A^\tau_Z(\zeta)$.

\subsection{The Conley-Zehnder index}
We finish this section by associating an integer, called the \textit{Conley-Zehnder index}, to a periodic orbit of a Hamiltonian structure. It encodes information about the linearisation of the system along the orbit and will play a crucial role in defining the notion of dynamical convexity for HS of contact type. We refer to \cite{hofkri} for proofs and further details.

We start with the definition of the \textit{Maslov index} for a path with values in $\op{Sp}(1)$, the group of $2\times 2$-symplectic matrices. For any $T>0$, we set 
\begin{equation*}
\op{Sp}_T(1):=\{\Psi:[0,T]\rightarrow \operatorname{Sp}(1)\ |\ \Psi(0)=\operatorname{Id}\}. 
\end{equation*}
We call $\Psi\in \op{Sp}_T(1)$ \textit{non-degenerate} if $\Psi(T)$ does not have $1$ as eigenvalue.

Given $\Psi\in\op{Sp}_T(1)$, we associate to every $u\in\mathbb R^2\setminus \{0\}$ a \textit{winding number} $\Delta\theta(\Psi,u)$ as follows. Identify $\R^2$ with $\C$ and let
\begin{equation}\label{teta}
\frac{\Psi(t)u}{|\Psi(t)u|}=e^{i\theta^\Psi_u(t)},
\end{equation}
for some function $\theta^\Psi_u:[0,T]\rightarrow\R$. We define $\Delta\theta(\Psi,u):=\theta^\Psi_u(T)-\theta^\Psi_u(0)$. Let
\begin{equation}
I(\Psi):=\left\{\frac{1}{2\pi}\Delta\theta(\Psi,u)\ \Big|\ u\in\R^2\setminus \{0\}\right\}.
\end{equation}
The interval $I(\Psi)$ is closed and its length is strictly less than $1/2$. We notice that the set $e^{2\pi i I(\Psi)}\subset S^1$ is completely determined by the endpoint $\Psi(T)$. In particular, we see that $\Psi$ is non-degenerate if and only if $\mathbb Z\cap \partial I(\Psi)=\emptyset$. We define the Maslov index for a non-degenerate path as
\begin{equation}
\mu(\Psi):=\left\{\begin{array}{cl}
 2k,&\mbox{if }k\in I(\Psi),\mbox{ for some }k\in\Z;\\
2k+1,&\mbox{if }I(\Psi)\subset(k,k+1),\mbox{ for some }k\in\Z.
\end{array}\right.
\end{equation}
We extend the definition to the degenerate case either by taking the maximal lower semi-continuous extension $\mu^l$ or the minimal upper semi-continuous extension $\mu^u$. This amounts to using the same recipe as in the non-degenerate case, but in the definition of $\mu^l$, respectively of $\mu^u$, we shift the interval $I(\Psi)$ to the left, respectively to the right, by an arbitrarily small amount. For any $k\in\Z$, there hold  
\begin{align}\label{rmkdyn}
\mu^l(\Psi)\geq 2k+1\ \ \Longrightarrow\ \ I(\Psi)\subset(k,+\infty)\ \ \Longrightarrow \ \ \mu^*(\Psi)\geq 2k+1,\\
\mu^u(\Psi)\leq 2k-1\ \ \Longrightarrow\ \ I(\Psi)\subset(-\infty,k)\ \ \Longrightarrow \ \ \mu^*(\Psi)\leq 2k-1,
\end{align}
where we have used $\mu^*$ to denote both $\mu^l$ and $\mu^u$.

We move now to describe the Conley-Zehnder index for a Hamiltonian structure $\omega$ on $(N,\mathfrak O)$. Consider the $2$-dimensional distribution $\xi_\omega:=\frac{TN}{\ker\omega}$ and observe that $(\xi_\omega,\overline{\omega})\rightarrow N$ is a symplectic vector bundle, where $\overline{\omega}$ is the symplectic form on the fibres induced by $\omega$. Let $\nu$ be a free-homotopy class of loops in $N$ and choose $\gamma_\nu$ a reference loop in $\nu$ together with a symplectic trivialisation $\Upsilon_\nu$ of $\xi_\omega$ along $\gamma_\nu$. If $\nu$ is the trivial class, just choose a constant loop with the constant trivialisation. Suppose that $c_1(\xi_\omega)$ is $\nu$-atoroidal. This means that the integral of $c_1(\xi_\omega)$ over a cylinder $i:C:=[0,1]\times\T_1\rightarrow N$ such that $i(0,\cdot)=\gamma_\nu(\cdot)$ depends only on $i(1,\cdot)$.

Let $Z$ be a positive section of $\ker\omega$ and observe that $d\Phi^Z_t$ acts on $(\xi_\omega,\overline{\omega})$ as a symplectic bundle map. Let $(\gamma,T)$ be a periodic orbit of $Z$ in the class $\nu$. Choose a cylinder $i:C\rightarrow N$ connecting $\gamma_\nu$ with $\gamma$ and let $\Upsilon:(i^*\xi_\omega,i^*\overline{\omega})\rightarrow(\epsilon^2_C,\omega_{\op{st}})$ be a $\overline{\omega}$-symplectic trivialisation of $i^*\xi_\omega$ on $C$ extending $\Upsilon_\nu$. Here $\epsilon^2_C$ is the trivial rank $2$-vector bundle over $C$. We form the path of symplectic matrices $\Psi^{C,\Upsilon}_\gamma\in\op{Sp}_T(1)$
\begin{equation}\label{psigamma}
\Psi^{C,\Upsilon}_\gamma(t):=\Upsilon_{\gamma(t)}\circ d_{\gamma(0)}\Phi^{Z}_t\circ \Upsilon^{-1}_{\gamma(0)}\in \op{Sp}(1).
\end{equation}
\begin{dfn}
The \textit{Conley-Zehnder index} of $\gamma$ is $\mu^*_{\op{CZ}}(\gamma,\Upsilon_\nu):=\mu^*(\Psi^{C,\Upsilon}_\gamma)$. This number does not depend on the choice of $Z$ and the hypothesis on the Chern class ensures that it is also independent of the pair $(C,\Upsilon)$. If we choose a different $\gamma_\nu$ with a different trivialisation $\Upsilon_\nu$ the index gets shifted by an integer which is the same for every closed orbit $\gamma$ in $\nu$.
\end{dfn}
\begin{rmk}
If $\omega$ is of contact type, the vector bundle $(\ker\tau,\omega\big|_{\ker\tau})$ is symplectic and isomorphic to $(\xi_\omega,\overline{\omega})$. Hence, we can use the former bundle for the computation of the index.
\end{rmk}

\chapter{Symplectic Cohomology and deformations}\label{cha_sh}
The material we present in this chapter build up from the work contained in \cite{ar}. On the one hand, it grew out of several fruitful discussions with Alexander Ritter and it is likely to appear in a forthcoming paper \cite{alga}. On the other hand, the statement and proof of the crucial Proposition \ref{isoprop}, which enables us to prove the main result of this section (Theorem \ref{invthm}), are due exclusively to the author of the present thesis.

The focus is Symplectic Cohomology for general convex manifolds as first developed by Viterbo in \cite{vit}. In the next chapter, we will apply the abstract results proved here to magnetic systems. 

In Section \ref{csm} we recall the notion of convex symplectic manifold $(W,\omega,j)$ in the open and compact case and we give a notion of isomorphism in such categories. In Section \ref{sc}, we introduce the Symplectic Cohomology of such manifolds, a set of algebraic invariants $SH^*_\nu$ counting $1$-periodic orbits of Hamiltonian flows in the free homotopy class $\nu$. We will work under the hypothesis that $c_1(\omega)$ is $\nu$-atoroidal.

In Section \ref{pert_aut} we see how to define $SH^*_\nu$ perturbing autonomous Hamiltonian functions close to the non-constant periodic orbits.

In Section \ref{sec:fil}, we define $SH^*_\nu$ using a subclass of autonomous Hamiltonians, whose non-constant periodic orbits are reparametrised Reeb orbits for the contact form at infinity. For such Hamiltonians we have filtrations on the complex yielding $SH^*_\nu$ given by the action (when $(W,\omega)$ a Liouville domain) and, in general, by the period of Reeb orbits. These filtrations will be used for the applications in the next chapter.

In Section \ref{sec:inv0} we quote the results contained in \cite{ar}, asserting that $SH^*_\nu$ does not depend on the isomorphism class of a convex manifold.

In Section \ref{inv}, we prove the invariance of $SH^*_\nu$ for compact manifolds under convex deformations $(W_s,\omega_s,j_s)$ which are projectively constant on $\nu$-tori.

\section{Convex symplectic manifolds}\label{csm}

Let us start by treating the non-compact case. First, we define the manifolds we are interested in.
\begin{dfn}
Let $W$ be an open manifold (non-compact and without boundary) of dimension $2m$ and let $\Si$ be a closed manifold of dimension $2m-1$. We say that $W$ is \textit{an open manifold with cylindrical end (modeled on $\Si$)} if there exists a diffeomorphism $j=(r,p):U\stackrel{~}{\rightarrow}(a,+\infty)\times \Si$. Here $U$ is an open subset of $W$, such that $W\setminus j^{-1}((b,+\infty)\times \Si)$ is a compact subset of $W$ with boundary $j^{-1}(\{b\}\times \Si)$, for every (or, equivalently, some) $b>a$. We write $i:\Sigma\rightarrow W$ for the embedding $i:=j^{-1}\big|_{\{b\}\times\Sigma}$.

We call $j$ (or less precisely $U$) a \textit{cylindrical end of $W$} and we denote by $\partial_r\in TU$ the vector field generated by the coordinate $r$.
\end{dfn}

We now fix some additional notation for open manifolds with cylindrical end.
\begin{dfn}\label{dfn_gra}
If $f:\Sigma\rightarrow (a,+\infty)$, define by $\Gamma_f:\Sigma\rightarrow (a,+\infty)\times\Sigma$ the map $\Gamma_f(x)=(f(x),x)$ and by $\Sigma_f$ the image of $\Gamma_f$ inside $W$. Denote by $W^{f}$ and $W_{f}$ the compact and non-compact submanifolds of $W$ with boundary $\Si_{f}$. If we have two functions $f_0,f_1:\Si\rightarrow(a,+\infty)$, with $f_0<f_1$, let $W^{f_1}_{f_0}$ be the compact submanifold between $\Sigma_{f_0}$ and $\Sigma_{f_1}$. 
\end{dfn}

We proceed to consider the subclass of open manifolds for which the symplectic structure at infinity is compatible with some contact structure on the model $\Si$.

\begin{dfn}
Let $(W,\omega)$ be a symplectic manifold having a cylindrical end $j:U\rightarrow (a,+\infty)\times\Sigma$. Suppose that $(W,\omega)$ is \textit{exact at infinity}, meaning that $\omega$ is exact on $U$. We say that the triple $(W,\omega,j)$ is a \textit{convex symplectic manifold} if $\theta:=\imath_{\partial_r}\omega\in\Omega^1(U)$ is a primitive for $\omega$ on $U$, or, equivalently, if $(j^{-1})^*\theta=e^rp^*\alpha$ for some (unique) contact form $\alpha$ on $\Si$. We denote by $R^\alpha\in\Gamma(\Sigma)$ the Reeb vector field of $\alpha$ and we define the auxiliary function $\rho:=e^r$. We call $\partial_r$ the \textit{Liouville vector field} and $\theta$ the \textit{Liouville form}.

We say that a convex symplectic manifold is a \textit{Liouville domain}, if $\theta$ extends to $W$ in such a way that $\omega=d\theta$ on the whole manifold.
\end{dfn}

\begin{rmk}
Since $\omega$ is a symplectic form, an $\alpha$ satisfying the equality above is automatically a contact form on $\Si$ inducing the orientation obtained from $\omega^n\big|_U$ putting the vector $\partial_r$ first. Using the language introduced in the previous chapter we can say that every $\Sigma_f$ is a hypersurface of positive contact type in $(W,\omega)$.
\end{rmk}

We now define isomorphisms of convex symplectic manifolds. 
\begin{dfn}\label{isocon}
Let $(W_i,\omega_i,j_i)$, for $i=0,1$, be two convex symplectic manifolds. An \textit{isomorphism} between $(W_0,\omega_0,j_0)$ and $(W_1,\omega_1,j_1)$ is a diffeomorphism $F:W_0\rightarrow W_1$ with the following properties:
\begin{enumerate}
 \item $F$ is a symplectomorphism, i.e.\ $F^*\omega_1=\omega_0$,
 \item there exist open cylindrical ends $V_i\subset U_i$ such that $F(V_0)\subset V_1$,
 \item $j_1\circ F\circ j^{-1}_0\big|_{V_0}(r_0,p_0)=(r_0-f(p_0),\psi(p_0))$ with $\psi^*\alpha_1=e^f\alpha_0$.
\end{enumerate}
The third condition is equivalent to $(F\big|_{V_0})^*\theta_1=\theta_0$. Furthermore, a simple argument shows that $\psi:\Si_0\rightarrow \Si_1$ is indeed a diffeomorphism.                                                                                                                                                                                                                                                                                                                                                                                                                                                                          
\end{dfn}
\begin{rmk}
Let $(W,\omega)$ be a symplectic manifold with cylindrical end and suppose we are given two convex ends $j_0$ and $j_1$. If $\theta_0=\theta_1$ (or, equivalently, $\partial_{r_0}=\partial_{r_1}$) on some common cylindrical end, then $\op{Id}:(W,\omega,j_0)\stackrel{~}{\longrightarrow}(W,\omega,j_1)$ is an isomorphism. For this reason, we will also use the notation $(M,\omega,\theta)$ to designate any convex symplectic manifold $(M,\omega,j)$ such that $\theta:=\imath_{\partial_r}\omega$.
\end{rmk}

For compact symplectic manifolds we can give an analogous notion of convexity.
\begin{dfn}
Let $(W,\omega)$ be a compact symplectic manifold with boundary $\Sigma$ and let $j=(r,p):U\stackrel{~}{\rightarrow}(-\varepsilon,0]\times\Si$ be a collar of the boundary. We say that $(W,\omega,j)$ is a \textit{(compact) convex symplectic manifold} if $\theta:=i_{\partial_r}\omega\in\Omega^1(U)$ is a primitive for $\omega$ on $U$, or equivalently, if $(j^{-1})^*\theta=e^rp^*\alpha$ for some (unique) contact form $\alpha$ on $\Si$.
\end{dfn}

A standard construction called \textit{completion} yields an open convex symplectic manifold $(\hat{W},\hat{\omega},\hat{j})$ starting from a compact convex symplectic manifold $(W,\omega,j)$:
\begin{equation}
(\hat{W},\hat{\omega},\hat{j}):=(W,\omega,j)\bigsqcup_j\Big((-\varepsilon,+\infty)\times\Si,d(e^r\alpha),\op{Id}\Big),
\end{equation}
The new object is obtained by gluing along $j$ the original convex symplectic manifold with a positive cylindrical end of the symplectisation of $(\Si,\alpha)$.
\begin{dfn}
We say that two convex compact symplectic manifolds are isomorphic if their completions are isomorphic according to Definition \ref{isocon}.
\end{dfn}
\begin{rmk}\label{rmk_com}
Let $(W,\omega)$ be a symplectic manifold with convex cylindrical end $j:U\stackrel{~}{\rightarrow}(a,+\infty)\times\Sigma$. Let $f:\Sigma\rightarrow(a,+\infty)$ be any function and consider $W^f$ (see Definition \ref{dfn_gra}). Then, $(W^f,\omega\big|_{W^f},j\big|_{W^f})$ is a compact convex manifold. We do not lose any information by passing from $W$ to $W^f$. Indeed, we observe that we can get back the whole open manifold by taking the completion as explained above. In other words,
\begin{equation}\label{equ_com}
(\hat{W}^f,\hat{\omega}\big|_{W^f},\hat{j}\big|_{W^f})\simeq (W,\omega,j).
\end{equation}
We can rephrase \eqref{equ_com} by saying that every open convex symplectic manifold $(W,\omega)$ is the completion of a compact convex submanifold $(W',\omega\big|_{W'})\subset (W,\omega)$. This has a natural generalisation for a family of open convex symplectic manifolds $(W,\omega_s,j_s)$. Namely, there exists a corresponding family of zero-codimensional compact submanifolds $W_s\subset W$ such that $\partial W_s$ is $j_s$-convex and
\begin{equation}
(\hat{W}_s,\hat{\omega}_s\big|_{W_s},\hat{j}_s\big|_{W_s})\simeq (W,\omega_s,j_s).
\end{equation}
\end{rmk}

\section{Symplectic Cohomology of convex symplectic manifolds}\label{sc}
Assume for the rest of this section that $(W,\omega,j)$ is an open convex symplectic manifold and $\nu$ is a free homotopy class of loops in $W$.
\subsection{Preliminary conditions}
We now define the Symplectic Cohomology of $(W,\omega,j)$ in the class $\nu$ under the assumption that
\begin{enumerate}
 \item[\bfseries (S1)] all the Reeb orbits of the associated contact form $\alpha$ in the classes $i^{-1}_*(\nu)$ are non-degenerate (in this case we say that $\alpha$ is \textit{$\nu$-non-degenerate});
 \item[\bfseries (S2)] $c_1(\omega)$ is $\nu$-atoroidal.
\end{enumerate}
Denote by $\op{Spec}(\alpha,\nu)$ the set of periods and by $T(\alpha,\nu)>0$ the minimal period of a Reeb orbit of $\alpha$ in the class $\nu$. Assumption \textbf{(S1)} guarantees that $\op{Spec}(\alpha,\nu)$ is a discrete subset of $[T(\alpha,\nu),+\infty)$.

\subsection{Admissible Hamiltonians}

Symplectic Cohomology counts the number of $1$-periodic orbits in the class $\nu$ for a particular kind of $1$-periodic Hamiltonians. To introduce them, we need first the following two general definitions.
\begin{dfn}\label{dfn_nondeg}
Let $(W,\omega)$ be a symplectic manifold and consider a function $H:\T_1\times W \rightarrow \R$. We say that a $1$-periodic orbit $x$ for $X_{H}$ is \textit{non-degenerate} if $1$ does not belong to the spectrum of $d_{x(0)}\Phi^{H}_1$. We say that $H$ is $\nu$-non-degenerate, if all the $1$-periodic orbits in the class $\nu$ are non-degenerate.                                                                                                                                                                                                                                                                                                                                                                              
\end{dfn}
\begin{dfn}
Let $(W,\omega,j)$ be a symplectic manifold with cylindrical end. A function $H:\T_1\times W\rightarrow\R$ is said to have \textit{constant slope at infinity}, if there exist constants $T_{H}\in(0,+\infty)$ and $a_{H}\in\R$ such that $H\circ j^{-1}=T_{H}e^{r}+a_{H}$ on some cylindrical end $V\subset U$. The number $T_{H}$ is called the \textit{slope} of $H$.                                                                                                                                                                                                                                                                                                                                                                                           
\end{dfn}

We are now ready to introduce the class of Hamiltonians that we are going to use on $(W,\omega,j)$.
\begin{dfn}\label{dfn:adm}
A function $H:\T_1\times W\rightarrow \R$ is called \textit{$\nu$-admissible} if it satisfies the following two properties:
\begin{enumerate}
 \item[\bfseries (H1)] it is $\nu$-non-degenerate,
 \item[\bfseries (H2)] it has constant slope at infinity and $T_H$ does not belong to $\op{Spec}(\alpha,\nu)$.
\end{enumerate}
We denote the set of all $\nu$-admissible Hamiltonians by $\mathcal H_\nu$.
\end{dfn}

\subsection{The action 1-form, the grading and the moduli spaces}
Consider $H\in\mathcal H_\nu$. Its $1$-periodic orbits are the zeros of a closed $1$-form $d\mathcal A^{\omega}_H$ on $\mathscr L_\nu W$. If $x\in \mathscr L_\nu W$ and $\xi\in \Gamma(x^*TW)$ is an element in $T_x(\mathscr L_\nu W)$, the $1$-form is defined by
\begin{equation}
d_x\mathcal A^{\omega}_H\cdot\xi=\int_{\T_1} \left(\imath_{\dot{x}(t)}\omega_{x(t)}+d_{x(t)}H(t,x(t))\right)\cdot\xi(t)\,dt.
\end{equation}

The action $1$-form yields a function $\mathcal A^{\omega}_H$ on the set of $\nu$-cylinders by integration. If $u:[a,b]\rightarrow \mathscr L_\nu W$ is a $\nu$-cylinder, we set
\begin{align*}
\mathcal A^{\omega}_H(u):&=\int_a^bd_{u(s)}\mathcal A^{\omega}_H\cdot\frac{du}{ds}(s)\,ds\\
&=-\omega(u)+\int_{\T_1}H(t,u(b,t))\,dt-\int_{\T_1}H(t,u(a,t))\,dt,
\end{align*}
where $\omega(u)$ is the integral of $\omega$ over $u$. Observe that $\mathcal A^{\omega}_H(u)$ does not change if we homotope the cylinder $u$ keeping the end points fixed.

We can associate a degree $|x|$ to every $1$-periodic orbit $x$ in $\nu$ as follows. Choose a reference loop $x_\nu$ in the class $\nu$ and fix a symplectic trivialisation $\Upsilon_{x_\nu}$ of $x_\nu^*(TW)$. Take a connecting cylinder $C_x$ from $x_\nu$ to $x$ and extend the trivialisation over $C_x$. This will induce a symplectic trivialisation $\Upsilon_x$ of $x^*TW$, whose homotopy class depends only on $\Upsilon_{x_\nu}$ since $c_1(\omega)$ is $\nu$-atoroidal. Writing the linearisation of the Hamiltonian flow $d\Phi^{X_H}_t$ using $\Upsilon_x$ yields a path of symplectic matrices along $x$. We call \textit{Conley-Zehnder index} the Maslov index of this path and we denote it by $\mu_{\op{CZ}}(x)\in\Z$. Finally, we set $|x|:=\frac{\op{dim}(W)}{2}-\mu_{\op{CZ}}(x)$.

We now define a moduli space of cylinders connecting two $1$-periodic orbits using a particular kind of almost complex structures compatible with convexity.
\begin{dfn}
Let $(W,\omega,j)$ be a convex manifold. An $\omega$-compatible $1$-periodic almost complex structure $J$ is \textit{convex} if for big $r$, $J$ is independent of time and $(d\rho)\circ J=-\theta$.
\end{dfn}
Take a $1$-periodic $\omega$-compatible convex almost complex structure $J$ and consider Floer's equation for cylinders $u:\R\times\T_1\rightarrow W$
\begin{equation}\label{fleq}
\partial_s u+J(\partial_tu-X_H)=0\,.
\end{equation}
If $u$ is a Floer trajectory we denote its energy by
\begin{equation}\label{ene}
E(u):=\int_\R\Vert\partial_su\Vert^2_J\,ds=\int_{\R\times\T_1}|\partial_su|^2_J\,ds\,dt
\end{equation}
Using Floer's equation \eqref{fleq}, one gets the identity
\begin{equation}\label{eneid}
E(u)=-\mathcal A^{\omega}_H(u).
\end{equation}

Call $\mathcal M'(H,J,x_-,x_+)$ the space of Floer trajectories that converge uniformly to the $1$-periodic orbits $x\pm$ for $s\rightarrow\pm\infty$. Suppose that $J$ is \textit{$H$-regular}, namely that the operator $u\mapsto \partial_s u+J(\partial_tu-X_H)$ on the space of all cylinders is regular at $\mathcal M'(H,J,x_-,x_+)$. This implies that all the moduli spaces are smooth manifolds and, if non-empty, $\dim\mathcal M'(H,J,x_-,x_+)=\mu_{\op{CZ}}(x_+)-\mu_{\op{CZ}}(x_-)=|x_-|-|x_+|$.

Let $\mathcal M(H,J,x_-,x_+)$ be the quotient of $\mathcal M'(H,J,x_-,x_+)$ under the $\R$-action obtained by shifting the variable $s$. If $A$ is a homotopy class of cylinders relative ends, we denote by $\mathcal M^A(H,J,x_-,x_+)$ the subset of the moduli space whose elements belong to $A$.

\subsection{The cochain complex and the differential}
We build a complex $SC^*_\nu(W,\omega,j,H)$ as the free $\Lambda$-module generated by the $1$-periodic orbits $x$ of $H$. Here $\Lambda$ is the Novikov ring defined as
\begin{equation*}
\Lambda:=\left\{\sum_{i=0}^{+\infty}n_it^{a_i}\ \big|\ n_i\in\Z,\ a_i\in \R,\ \lim_{i\rightarrow+\infty}a_i=+\infty\right\}.
\end{equation*}
The differential $\delta_J:SC^*_\nu(W,\omega,j,H)\rightarrow SC^{*+1}_\nu(W,\omega,j,H)$ of such complex is defined on the generators as
\begin{equation*}
\delta_J x:=\sum_{\substack{ u\in\mathcal M(H,J,y,x)\\ |y|-|x|=1}}\!\!\epsilon(u)\,t^{-\mathcal A^\omega_H\!(u)}y,
\end{equation*}
where $\epsilon(u)$ is an orientation sign defined in \cite[Appendix 2]{ritqft}. We extend it on the whole $SC^*_\nu(W,\omega,j,H)$ by $\Lambda$-linearity. If $x$ and $y$ are two periodic orbits, we denote by $<\delta_Jx,y>$, the component of $\delta_J x$ along the subspace generated by $y$.

By a standard argument in Floer theory, in order to prove that $\delta_J$ is a well-defined map of $\Lambda$-modules and that $\delta_J\circ\delta_J=0$, we need to prove that the elements of $\mathcal M^A(H,J,x_-,x_+)$ have:
\begin{enumerate}[\itshape a)]
 \item Uniform $C^0$-bounds; these stem from a maximum principle.
 \item Uniform $C^1$-bounds; these follow from Lemma \ref{lem:bub} below, where we show that $c_1(\omega)$ is aspherical (see \cite{hofsal}).
 \item Uniform energy bounds; these follow readily from \eqref{eneid}.
\end{enumerate}

\begin{lem}\label{lem:bub}
If $\sigma\in\Omega^2(W)$ is $\nu$-atoroidal, then $\sigma$ is aspherical. 
\end{lem}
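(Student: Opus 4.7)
The plan is to turn the asphericity question into a computation over a torus, which the $\nu$-atoroidal hypothesis then kills. Given a smooth sphere $f : S^2 \to W$, I will construct a smooth torus $\Phi : \T_1 \times \T_1 \to W$ whose slices $\Phi(s,\cdot)$ all represent the free homotopy class $\nu$ and for which $\int_{\T_1\times\T_1}\Phi^*\sigma = \int_{S^2} f^*\sigma$. Such a $\Phi$ corresponds to a loop $\Gamma$ in $\mathscr L_\nu W$, and the standard identity $\int_\Gamma \tau(\sigma)=\int_{\T_1\times\T_1}\Phi^*\sigma$ together with $[\tau(\sigma)]=0\in H^1(\mathscr L_\nu W)$ will force both sides to vanish.

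To build the torus, pick a representative $\gamma$ of $\nu$ whose basepoint coincides with $f(N)$ for the north pole $N\in S^2$; this is always possible after conjugating $\gamma$ by a path, an operation that preserves the free homotopy class. Choose a small open disc $D_\epsilon\subset \T_1\times\T_1$ and consider the collapse $\psi : \T_1\times\T_1\to(\T_1\times\T_1)\vee S^2$ obtained by crushing $\partial D_\epsilon$ to a point; since $\partial D_\epsilon$ is null-homotopic in $\T_1\times\T_1$, the quotient is homotopy equivalent to the wedge, and $\psi$ restricts to the identity outside $D_\epsilon$ and to a degree-one map $D_\epsilon/\partial D_\epsilon\to S^2$ inside. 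Define
\[
\Phi:=\bigl((\gamma\circ\pi_2)\vee f\bigr)\circ\psi,
\]
where $\pi_2 : \T_1\times\T_1\to\T_1$ is the projection onto the second factor. For values of $s$ whose horizontal slice $\{s\}\times\T_1$ misses $D_\epsilon$, the loop $\Phi(s,\cdot)$ is literally $\gamma$, hence in class $\nu$. Since $\{\Phi(s,\cdot)\}_{s\in\T_1}$ is a continuous family, every slice lies in the same free homotopy class, so $\Phi$ gives a loop in $\mathscr L_\nu W$.

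It remains to evaluate $\int_{\T_1\times\T_1}\Phi^*\sigma$ by splitting the domain into the complement of $D_\epsilon$ and $D_\epsilon$ itself. On the complement, $\Phi$ factors through the one-dimensional image of $\gamma$, so $\Phi^*\sigma=0$ there for dimensional reasons. On $D_\epsilon$, $\Phi$ is the composition of the degree-one collapse with $f$, so the corresponding integral equals $\int_{S^2}f^*\sigma$. Invoking the $\nu$-atoroidal assumption for the loop $\Gamma$ in $\mathscr L_\nu W$ produced by $\Phi$ then yields $\int_{S^2}f^*\sigma=0$, as required. The main point requiring care is verifying that the slices whose $s$-coordinate falls in the projection of $D_\epsilon$ still lie in $\nu$ once the sphere contribution enters; this is the only nontrivial topological input and is handled automatically by continuity of $\Phi$ together with the fact that wedging a $2$-sphere cannot alter the $\pi_1$-class of a loop, because $\pi_1(S^2)=0$.
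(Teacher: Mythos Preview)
Your approach is essentially the paper's: both attach the given sphere to a degenerate $\nu$-torus (the paper phrases this as the connected-sum action of $\pi_2(W)$ on $H_2(W,\Z)$ and checks it preserves $\nu$-tori; you write it out via the pinch map to $T^2\vee S^2$) and then invoke the $\nu$-atoroidal hypothesis on the resulting $\nu$-torus.

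There is one small slip in your construction. The map $(\gamma\circ\pi_2)\vee f$ is not well-defined as written: collapsing $\partial D_\epsilon$ identifies the $T^2$ summand of the wedge with $(T^2\setminus\mathring D_\epsilon)/\partial D_\epsilon$, and $\gamma\circ\pi_2$ does not descend to this quotient because it is not constant on $\partial D_\epsilon$. The immediate fix is to choose the representative $\gamma$ of $\nu$ to be constant (equal to $f(N)$) on a short interval around its basepoint and to place $D_\epsilon$ inside the corresponding strip $\T_1\times[t_0-\delta,t_0+\delta]$; then $\gamma\circ\pi_2$ is constant on all of $\overline{D_\epsilon}$, the wedge map is well-defined, and your factorisation and integral computation go through verbatim.
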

\begin{proof}
We have an action $\sharp:\pi_2(W)\times H_2(W,\Z)\rightarrow H_2(W,\Z)$ by connected sum. Moreover, if $([S^2],[M])\in \pi_2(W)\times H_2(W,\Z)$, then
\begin{equation*} 
[\sigma]([S^2\sharp M])=[\sigma]([S^2])+[\sigma]([M]).
\end{equation*}
Therefore, the lemma is proven if we show that the action preserves classes represented by $\nu$-tori. Indeed, in this case the above identity would become
\begin{equation*}
0=[\sigma]([S^2])+0. 
\end{equation*}
Suppose $M$ is a $\nu$-torus parametrised by $(s,t)\mapsto \Gamma^M(s,t)=\gamma^M_s(t)$, with $\gamma_s\in\nu$ and $(s,t)\in\T_1\times\T_1$. The connected sum between $M$ and some $S^2$ can be represented by a map $\Gamma^{S^2\sharp M}:\T_1\times\T_1\rightarrow W$, which coincides with $\Gamma^M$ outside a small square $[s_0,s_1]\times[t_0,t_1]$ and inside the square it coincides with the map
\begin{equation*}
[s_0,s_1]\times[t_0,t_1]\stackrel{\sim}{\longrightarrow} S^2\setminus D^2\longrightarrow W,  
\end{equation*}
where $D^2$ is the small disc that we remove from $S^2$ in order to perform the connected sum. Hence, $S^2\sharp M$ is still a torus and the curves $t\mapsto \gamma^{S^2\sharp M}_s(t):=\Gamma^{S^2\sharp M}(s,t)$ are still in the class $\nu$ because $\gamma^{S^2\sharp M}_s=\gamma^M_s$ if $s\notin[s_0,s_1]$. 
\end{proof}

Thanks to the lemma, we have well-defined cohomology groups obtained from $\delta_J$. We denote them by $SH^*_\nu(W,\omega,j,H,J)$.
We put a partial order on pairs $(H,J)$, where $H$ and $J$ are as above, by saying that $(H^+,J^+)\preceq (H^-,J^-)$ if and only if $T_{H^+}\leq T_{H^-}$ (we use the reverse notation for the signs, since in the definition of maps on a generator $x$, we take the moduli space of cylinders arriving at $x$).

When $(H^+,J^+)\preceq(H^-,J^-)$ we can construct continuation maps
\begin{equation*}
\varphi_{H^-,J^-}^{H^+,J^+}:SH^*_\nu(W,\omega,j,H^+,J^+)\longrightarrow SH^*_\nu(W,\omega,j,H^-,J^-) 
\end{equation*}
such that,
\begin{equation*}
(H^+,J^+)\preceq(H^0,J^0)\preceq(H^-,J^-)\quad \Longrightarrow\quad\varphi_{H^-,J^-}^{H^0,J^0}\circ\varphi_{H^0,J^0}^{H^+,J^+}=\varphi_{H^-,J^-}^{H^+,J^+}.
\end{equation*}
Such maps are defined as follows. We consider a homotopy $(H^s,J^s)$ with $s$ ranging in $\R$, which is equal to $(H^-,J^-)$, for $s$ very negative, and to $(H^+,J^+)$, for $s$ very positive. Such homotopy yields an $s$-dependent Floer equation
\begin{equation}\label{fleq2}
\partial_s u+J^s(\partial_tu-X_{H^s})=0\,.
\end{equation}
As before, we denote by $\mathcal M^A(\{H^s\},\{J_s\},x_-,x_+)$ the moduli spaces of solutions in the class $A$, connecting a $1$-periodic orbit of $X_{H^-}$ with a $1$-periodic orbit of $X_{H^+}$. They are smooth manifolds and, when they are non-empty, we have that $\dim\mathcal M^A(\{H^s\},\{J_s\},x_-,x_+)=\mu_{\op{CZ}}(x_+)-\mu_{\op{CZ}}(x_-)=|x_-|-|x_+|$. For these moduli spaces we have to show properties {\itshape a), b), c)} as before. Property \textit{a)} stems again from the maximum principle, which holds provided $\frac{d}{ds}T_{H^s}\leq0$.
Property \textit{b)} is once more a consequence of Lemma \ref{lem:bub}. Property \textit{c)}, stems from the modified energy-action identity
\begin{equation}\label{enact2}
E(u)=-\mathcal A^{\omega}_{H^+}(u)+\int_{\T_1}\!(H^--H^+)(t,x_-(t))dt+\int_{\R\times\T_1}\!\!(\partial_s H^s)(t,u(s,t))dsdt\,,
\end{equation}
which replaces \eqref{eneid} for $s$-dependent cylinders. Notice that the rightmost term of \eqref{enact2} is bounded thanks to the uniform $C^0$-bounds and the fact that $\partial_s H^s\neq0$ only on a finite interval. If we define $\eta(x_-):=\int_{\T_1}(H^+-H^-)(t,x_-(t))dt$, then at the chain level
\begin{equation}
\varphi_{H^-,J^-}^{H^+,J^+}x:=\sum_{\substack{u\in\mathcal M(\{H^s\},\{J_s\},y,x)\\ |y|-|x|=0}}\!\!\epsilon(u)\,t^{\eta(y)-\mathcal A^{\omega}_{H^+}(u)}y\,.
\end{equation}
An argument similar to the one at the end of Section \ref{sub:chain}, shows that such a map intertwines $\delta_{J^-}$ and $\delta_{J^+}$, and, therefore, yields the desired map in cohomology.

The \textit{Symplectic Cohomology (in the class $\nu$)} is defined to be the direct limit of the direct system $\big(SH^*_\nu(W,\omega,j,H,J),\varphi_{H^-,J^-}^{H^+,J^+}\big)$:
\begin{equation*}
SH^*_\nu(W,\omega,j):=\varinjlim_{(H,J)} SH^*_\nu(W,\omega,j,H,J).
\end{equation*}

We define the Symplectic Cohomology of a compact convex symplectic manifold as the Symplectic Cohomology of its completion: $SH^*_\nu(W,\omega,j):=SH^*_\nu(\hat{W},\hat{\omega},\hat{j})$.

\section{Perturbing a non-degenerate autonomous Hamiltonian}\label{pert_aut}
We saw that the Symplectic Cohomology of $(W,\omega,j)$ is defined starting from a Hamiltonian function whose $1$-periodic orbits are non-degenerate. However, if $H:W\rightarrow\R$ is an autonomous Hamiltonian and $x$ is a non-constant $1$-periodic orbit for the flow of $X_H$, $x$ belongs to an $S^1$-worth of periodic orbits. Hence, $x$ is degenerate.
In this setting, we have to look to a weaker notion of non-degeneracy.

\begin{dfn}
Let $x$ be a $1$-periodic orbit of $X_H$, where $H$ is an autonomous Hamiltonian and denote by $S_x$ the connected component of set of $1$-periodic orbits of $X_H$ to which $x$ belongs. We say that $x$ is \textit{transversally non-degenerate}
\begin{itemize}
 \item either if $x$ is a constant orbit and it is non-degenerate according to Definition \ref{dfn_nondeg} (namely, $d_{x(0)}\Phi^{X_H}_1$ does not have $1$ in the spectrum);
 \item or if $x$ is non-constant and $1$ has algebraic multiplicity $2$ in the spectrum of $d_{x(0)}\Phi^{X_H}_1$.
\end{itemize}
We say that $H$ is \textit{transversally $\nu$-non-degenerate} if all $1$-periodic orbits in the class $\nu$ are transversally non-degenerate.
\end{dfn}
\begin{rmk}
When $\dim W=4$ the notion of transversally non-degenerate coincides with the notion given for Hamiltonian structures in Definition \ref{def_non}.
\end{rmk}
\begin{rmk}
If $x$ is transversally non-degenerate, then $S_x=\{x(\cdot+t')\}_{t'\in\T_1}$. Therefore, $S_x=\{x\}$, if $x$ is constant and $S_x\simeq\T_1$, if $x$ is not constant.
\end{rmk}

If $H$ is an autonomous Hamiltonian such that all the $1$-periodic orbits of $X_H$ in the class $\nu$ are \textit{transversally} non-degenerate, we can construct a small perturbation $H_\varepsilon:W\times S^1\rightarrow \R$ such that
\begin{itemize}
 \item $H_\varepsilon$ differs from $H$ only in a small neighbourhood $U_\varepsilon$ of the non-constant periodic orbits of $X_H$ and $\Vert H-H_\varepsilon\Vert$ is small;
 \item the constant periodic orbits of $X_{H_\varepsilon}$ are the same as the constant periodic orbits of $X_H$;
 \item for every non-constant $1$-periodic orbit $x$ of $X_{H}$ there are two periodic orbits $x_{\op{min}}$ and $x_{\op{Max}}$ of $X_{H_\varepsilon}$ supported in $U_\varepsilon$. All the non-constant periodic orbits of $X_{H_\varepsilon}$ arise in this way.
\end{itemize}
Consider a function $H:W\rightarrow \R$ that satisfies the following two properties:
\begin{enumerate}
 \item[\bfseries (H'1)] it is transversally $\nu$-non-degenerate,
 \item[\bfseries (H'2)] it has constant slope at infinity and $T_H$ does not belong to $\op{Spec}(\alpha,\nu)$\,.
\end{enumerate}
Denote the set of such functions by $\mathcal H_\nu'$. We readily see that if we carry out the above perturbation to $H\in\mathcal H_\nu'$, the resulting function $H_\varepsilon$ belongs to $\mathcal H_\nu$ (see Definition \ref{dfn:adm}) and we can use it to compute Symplectic Cohomology.

The degree of the new non-constant orbits is given by
\begin{equation}
\bullet\ |x_{\op{min}}|=\frac{\op{dim}(W)}{2}-\mu_{\op{CZ}}(x),\quad\quad\bullet\ |x_{\op{Max}}|=\frac{\op{dim}(W)}{2}-\mu_{\op{CZ}}(x)-1,
\end{equation}
where $\mu_{\op{CZ}}(x)$ is the transverse Conley-Zehnder index.
It will be interesting for the applications to compute $<\delta_J x_{\op{Max}},x_{\op{min}}>$. Proposition 3.9(ii) in \cite{bo} tells us that
\begin{equation}
 <\delta_J x_{\op{Max}},x_{\op{min}}>\ =
 \begin{cases}
 0&\mbox{if }x \mbox{ is good},\\
 \pm 2t^{a_x}&\mbox{if }x \mbox{ is bad},
 \end{cases}
 \end{equation}
where $a_x$ is a small positive number. Recall that an orbit is \textit{bad} if it is an even iteration of a hyperbolic orbit with odd index and it is \textit{good} otherwise.

\section{Reeb orbits and two filtrations of the Floer Complex}\label{sec:fil}
In this section we restrict the admissible Hamiltonians to a subclass $\hat{\mathcal H}_\nu\subset\mathcal H_\nu$, whose non-constant periodic orbits are in strict relation with Reeb orbits of $\alpha$.

\begin{dfn}\label{dfn:adm2}
Fix some $b$ belonging to the image of the function $r:U\rightarrow\R$ and denote $\rho_b:=e^b$. Fix also a $C^2$-small Morse function $H_b:W^b\rightarrow \R$, such that close to the boundary $H_b=h_b(e^r)$ for some strictly increasing convex function $h_b$. Let $H:W\rightarrow\R$ be an element of $\mathcal H_\nu'$ with the following additional properties
\begin{enumerate}
 \item[\bfseries (\^{H}1)] on $W^b$, $H=H_b$,
 \item[\bfseries (\^{H}2)] on $W_b$, $H=h(e^r)$ for a function $h:[\rho_b,+\infty)\rightarrow$ which is strictly increasing and strictly convex on some interval $[\rho_b,\rho_H)$ and satisfies $h(\rho)=T_H\rho+a_H$, for some $T_H\notin\op{Spec}(\alpha,\nu)$ and $a_H\in\R$, on $[\rho_H,+\infty)$.
\end{enumerate}
We denote by $\hat{\mathcal H}_\nu'$ the subset of all the Hamiltonians in $\mathcal H_\nu'$ satisfying these two properties and by $\hat{\mathcal H}_\nu\subset\mathcal H_\nu$ the corresponding set of perturbations inside $\mathcal H_\nu$. 
\end{dfn}

The $1$-periodic orbits of $H\in\hat{\mathcal H}_\nu'$ fall into two classes: constant periodic orbits inside $W^b$ and non-constant periodic orbits inside $W_b$. Observe that in the region $W_b$, $X_H=h'R^\alpha$ where $h'$ is the derivative of $h$. Hence, $x(t)=(r(t),p(t))$ is a $1$-periodic orbit for $X_H$ if and only if $r(t)$ is constant and equal to some $\log\rho_x$ and there exists a $h'(\rho_x)$-periodic Reeb orbit $\gamma:\T_T\rightarrow\Sigma$ such that $p(t)=\gamma(h'(\rho_x)t)$. Since $h'$ is increasing and $h'(\rho_b)<T(\alpha,\nu)$, the non-constant periodic orbits of $X_H$ are in bijection with all the Reeb orbits with period smaller than $T_H$. 

Consider now the Floer Complex $(SC^*_\nu(W,\omega,j,H_\varepsilon),\delta_{J})$ with $H_\varepsilon\in\hat{\mathcal H}_\nu$ and some $J$ which is $H_\varepsilon$-admissible. In the next subsections we show that we have two ways of filtrating this complex.

The first one is a filtration by the action and it works when $(W,\omega)$ is a Liouville domain. In this case we can define the action functional $\mathcal A^\omega_{H_\varepsilon}:\mathscr L_\nu W\rightarrow\R$ on the space of loops in the class $\nu$.

The second one is a filtration by the period of Reeb orbits. It works for general convex manifolds but we have to restrict the class of admissible complex structures to some $\mathcal J(H)$ (see Definition \ref{dfn:ac}), so that we can apply the maximum principle (see \cite[page 654]{bo2} and Lemma \ref{max-pri}).

When $\nu=0$, both filtrations imply that the singular cohomology complex $(C^{*+n}(W,\Lambda),\delta_0)$ shifted by $n$ is a subcomplex of $(SC^*_\nu(W,\omega,j,H_\varepsilon),\delta_{J})$. 

\subsection{Filtration by the action for Liouville domains}
Suppose $(W,\omega)$ is a Liouville domain. This means that the Liouville form $\theta$ extends to a global primitive for $\omega$ on the whole $W$. If $H:\T_1\times W\rightarrow\R$ is any Hamiltonian function, we define the action functional $\mathcal A^\omega_H:\mathscr L_\nu W\rightarrow\R$ by
\begin{equation*}
\mathcal A^\omega_H(x)=-\int_{\T_1}x^*\theta+\int_{\T_1}H(t,x(t))dt\,,
\end{equation*}

If $u$ is a Floer cylinder connecting $x_-$ and $x_+$, \eqref{eneid} yields
\begin{equation*}
0\leq E(u)=-\mathcal A^\omega_H(u)=\mathcal A^\omega_H(x_-)-\mathcal A^\omega_H(x_+)\,.
\end{equation*}
This implies that $\mathcal A^\omega_H(x_-)\geq \mathcal A^\omega_H(x_+)$. Therefore, if $H$ is $\nu$-non-degenerate and $J$ is $H$-admissible, then $\delta_J$ preserves the superlevels of $\mathcal A^\omega_H$ and we get the following corollary.
\begin{cor}\label{corfil}
If $x$ and $y$ are two different $1$-periodic orbits for $H$ and $\mathcal A^\omega_H(y)\leq\mathcal A^\omega_H(x)$, then
\begin{equation}\label{matvan}
<\delta_Jx,y>=0\,.
\end{equation}
\end{cor}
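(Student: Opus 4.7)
The plan is to deduce the vanishing of the matrix entry directly from the energy--action identity already established for Floer cylinders in a Liouville domain, combined with the fact that the Floer equation with an autonomous metric on the base and $H$-admissible $J$ has no nonconstant cylinders of zero energy.

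First I would unpack what a nonzero matrix coefficient $\langle\delta_J x,y\rangle\neq 0$ means: by the definition of $\delta_J$, it forces the moduli space $\mathcal M(H,J,y,x)$ to contain at least one element, say $[u]$, which is a Floer cylinder with $u(-\infty,\cdot)=y$ and $u(+\infty,\cdot)=x$. Applying the identity
\begin{equation*}
0\leq E(u)=-\mathcal A^\omega_H(u)=\mathcal A^\omega_H(y)-\mathcal A^\omega_H(x),
\end{equation*}
which in the Liouville setting holds literally since $\theta$ is a global primitive, one obtains $\mathcal A^\omega_H(y)\geq \mathcal A^\omega_H(x)$.

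Combining this with the hypothesis $\mathcal A^\omega_H(y)\leq\mathcal A^\omega_H(x)$ forces $\mathcal A^\omega_H(y)=\mathcal A^\omega_H(x)$, hence $E(u)=0$. From the definition of the energy in \eqref{ene} as $\int \|\partial_s u\|^2_J\,ds\,dt$ and the fact that $J$ is $\omega$-compatible (so $\|\cdot\|_J$ is a genuine norm on each tangent space), we conclude $\partial_s u\equiv 0$, so $u$ is $s$-independent; in particular $y=u(-\infty,\cdot)=u(+\infty,\cdot)=x$, contradicting the assumption that $x$ and $y$ are \emph{different} $1$-periodic orbits. Hence no such $[u]$ exists and $\langle\delta_J x,y\rangle=0$.

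I do not foresee any real obstacle: the argument is just the standard observation that the Floer differential strictly decreases the Hamiltonian action along nonconstant cylinders, which is an immediate consequence of the two displayed equalities $E(u)\geq 0$ and $E(u)=\mathcal A^\omega_H(y)-\mathcal A^\omega_H(x)$ derived above. The only subtlety worth flagging is that one must distinguish weak inequality from strict inequality along a single cylinder: equality of actions forces the cylinder to be constant, and it is precisely this rigidity that rules out a nontrivial contribution when $x\neq y$ but $\mathcal A^\omega_H(x)=\mathcal A^\omega_H(y)$.
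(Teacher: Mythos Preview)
Your proof is correct and follows essentially the same approach as the paper: both rely on the energy--action identity $E(u)=\mathcal A^\omega_H(y)-\mathcal A^\omega_H(x)\geq 0$ for Floer cylinders in a Liouville domain. The paper states the corollary immediately after deriving this inequality without spelling out the equality case; your additional observation that $E(u)=0$ forces $\partial_s u\equiv 0$ and hence $x=y$ is exactly the detail needed to handle the borderline $\mathcal A^\omega_H(y)=\mathcal A^\omega_H(x)$, and is the natural completion of the argument.
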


We denote by $SC^{*,>a}_\nu(W,\omega,j,H)$ the subspace of $SC^{*}_\nu(W,\omega,j,H)$ generated by all the orbits with action bigger than some $a\in\R$. Corollary \ref{corfil} shows that $\delta_J$ restricts to a linear operator on this subspace, and the inclusion induces a map in cohomology:
\begin{equation}\label{vitmap}
SH^{*,> a}_\nu(W,\omega,j,H,J)\longrightarrow SH^{*}_\nu(W,\omega,j,H,J)\,.
\end{equation}

Let $(H^+,J^+)\preceq(H^-,J^-)$ and let $(H^s,J^s)$ be a homotopy between the two pairs satisfying $\frac{d}{ds}T_{H^s}\leq0$. If $u$ is an $s$-dependent Floer cylinder, relation \eqref{enact2} can be rewritten as
\begin{equation*}
\mathcal A^\omega_{H^+}(x_+)-\mathcal A^\omega_{H^-}(x_-)=-E(u)+\int_{\R\times\T_1}(\partial_s H^s)(t,u(s,t))dsdt\,.
\end{equation*}
From this equation we see that the continuation map preserves the filtration, provided that $\partial_sH^s\leq0$. Under this additional hypothesis, $\varphi_{H^-,J^-}^{H^+,J^+}$ induces a map
\begin{equation}\label{vitmapcont}
SH^{*,>a}_\nu(W,\omega,j,H^+,J^+)\longrightarrow SH^{*,>a}_\nu(W,\omega,j,H^-,J^-)\,.
\end{equation}

Let us now specialise to Hamiltonians $H\in\hat{\mathcal H}_\nu'$. Call $a_h:[\rho_b,+\infty)\rightarrow\R$, the function $a_h(\rho):=h(\rho)-\rho h'(\rho)$. The absolute value of $a_h(\rho_b)$ is small and
\begin{equation}\label{ah}
a_h'(\rho)=-\rho h''(\rho)\,. 
\end{equation}
Hence, $a_h$ is strictly decreasing on $[\rho_b,\rho_H]$ and $a_h\equiv a_H$ on $[\rho_H,+\infty)$.

If $x$ is a non-constant periodic orbit in the class $\nu$, then $x\subset W_b$ and we can write $x(t)=(\log\rho_x,\gamma_x(h'(\rho_x)t))$, for some Reeb orbit $\gamma_x$. We compute
\begin{equation}
\mathcal A^\omega_H(x)\ =\ -\int_{0}^{h'(\rho_x)}\!\!\rho_x \gamma_x^*\alpha\ +\ h(\rho_x)\ =\ a_h(\rho_x)\,.
\end{equation}

Fix $a\in\R$ and take $H^+$ and $H^-$ in $\hat{\mathcal H}_0'$, such that $T_{H^+}\leq T_{H^-}$ and $H^-=H^+$ on some $W^c$, where $a=a_{h^+}(e^c)=a_{h^-}(e^c)$. In particular, $a> \max\{a_{H^+},a_{H^-}\}$. Take perturbations $H^+_\varepsilon$ and $H^-_\varepsilon$ and let $(H^s_\varepsilon,J^s)$ be a homotopy between them such that $\partial_s H^s_\varepsilon\leq 0$ and which is constant on $W^c$. We readily see that
\begin{equation*}
SC^{*,>a}_\nu(W,\omega,j,H^+_\varepsilon)\ =\ SC^{*,>a}_\nu(W,\omega,j,H^-_\varepsilon)\,.  
\end{equation*}
Consider continuation cylinders between the generators of these two subcomplexes. The maximum principle shows that they are contained inside $W^c$. However, in this region $\partial_s H^s=0$ and the transversality of the moduli spaces shows that we only have the constant cylinders. We have proved the following corollary.
\begin{cor}\label{corfil2}
Under the hypotheses of the paragraph above, the filtered continuation map $SC^{*,>a}_\nu(W,\omega,j,H^+_\varepsilon)\rightarrow SC^{*,> a}_\nu(W,\omega,j,H^-_\varepsilon)$ is the identity.
\end{cor}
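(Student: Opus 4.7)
The plan is to show that for every generator $x$ of the subcomplex, the only continuation cylinder contributing to $\varphi_{H^-,J^-}^{H^+,J^+}(x)$ is the constant cylinder at $x$, with coefficient $+1$. First I would recall, from the paragraph preceding the corollary, that the generators with action above $a$ consist of Morse critical points of $H_b$ inside $W^b$ together with non-constant orbits corresponding to Reeb orbits at levels $\rho<e^c$; all of them are supported in $W^c$, and since $H^+_\varepsilon=H^-_\varepsilon$ on $W^c$, the two filtered cochain complexes are identical as $\Lambda$-modules, so there is a meaningful statement to check.

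Next, I would confine every continuation cylinder $u\in\mathcal{M}(\{H^s_\varepsilon\},\{J^s\},y,x)$ between two such generators to $W^c$ by a maximum-principle argument. On $W_c=\{\rho\geq e^c\}$ the family $H^s_\varepsilon=h^s(e^r)$ has each $h^s$ increasing and convex, and $\partial_sH^s_\varepsilon\leq 0$ by hypothesis; the standard convexity computation for $s$-dependent Floer cylinders then shows that $\rho\circ u$ admits no interior local maximum in this region. Together with the boundary conditions $\rho\circ u(\pm\infty,\cdot)\leq e^c$ inherited from $x,y\in W^c$, this forces $u\subset W^c$ throughout.

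Once $u\subset W^c$, the pair $(H^s_\varepsilon,J^s)$ is $s$-independent along $u$ by the constancy assumption, so $u$ satisfies the autonomous Floer equation for $(H^+_\varepsilon,J)$. If $u$ were not constant in $s$, its $s$-translates $u_r(s,t)=u(s+r,t)$ would still solve the autonomous equation and still lie in $W^c$, hence also solve the $s$-dependent equation and belong to $\mathcal{M}(\{H^s_\varepsilon\},\{J^s\},y,x)$; this would produce a one-parameter family in a moduli space of expected dimension $|y|-|x|=0$, contradicting transversality for generic $J^s$. Hence $u$ is $s$-constant, and the convergence at $s=\pm\infty$ forces $u\equiv x=y$.

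Finally, I would read off the weight of this constant cylinder in the Novikov sum: $\eta(x)=\int_{\T_1}(H^+_\varepsilon-H^-_\varepsilon)(t,x(t))\,dt=0$ since $x\in W^c$, and $\mathcal{A}^\omega_{H^+_\varepsilon}(u)=0$ for a constant cylinder, so the Novikov weight is $t^0=1$; the orientation sign of a trivial cylinder is $+1$ by the standard convention. Therefore $\varphi_{H^-,J^-}^{H^+,J^+}(x)=x$ for every generator $x$ of the subcomplex, as required. The delicate step will be the confinement of $u$ to $W^c$: the convexity of each $h^s$ together with $\partial_sH^s_\varepsilon\leq 0$ is precisely the combination that makes the standard maximum principle extend to the $s$-dependent setting.
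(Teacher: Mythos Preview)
Your proof is correct and follows essentially the same approach as the paper: confine continuation cylinders to $W^c$ via the maximum principle, then use that the homotopy is $s$-independent there together with transversality of the zero-dimensional moduli space to conclude only constant cylinders occur. The paper's own argument is just a terser version of exactly this, and your additional care in checking the Novikov weight and orientation sign is appropriate.
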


Take now $\nu=0$ and assume that $H\in\hat{\mathcal H}_0'$. We want to use the map \eqref{vitmap} to single out the constant orbits of $H$ among all contractible orbits. If $\overline{x}$ is a constant orbit, we readily compute
\begin{equation}
\mathcal A^\omega_H(\overline{x})=H(\overline{x})\geq \min_{W^b} H\,.
\end{equation}
If $x(t)=(\log\rho_x,\gamma_x(h'(\rho_x)t))$ is non-constant, we know that $\mathcal A^\omega_H(x)=a_h(\rho_x)$. Moreover, \eqref{ah} implies that $a_h'\leq-\rho_bh''$ and, therefore,
\begin{equation*}
a_h(\rho_x)\ \leq\  a_h(\rho_b)-\rho_b(h'(\rho_x)-h'(\rho_b))\,. 
\end{equation*}
Since $h'(\rho_x)\geq T(\alpha,\nu)>0$, we see that there exists $a_0<0$ such that
\begin{equation*}
\mathcal A^\omega_H(x)<a_0<\mathcal A^\omega_H(\overline{x})\,.
\end{equation*}
Taking a small perturbation $H_\varepsilon\in\hat{\mathcal H}_0$, this inequality still holds and we find that $SC^{*,> a_0}_0(W,\omega,j,H_\varepsilon)$ is generated by the constant periodic orbits only. A further analysis analogous to the case of closed symplectic manifolds shows that $\delta_J$ restricted to this subcomplex is the differential for the Morse Cohomology of the function $H_\varepsilon|_{W^b}=H_b$ (see \cite[Section 1.2]{vit}), where we identify constant periodic orbits of $X_{H_\varepsilon}$ with critical points of $H_{\varepsilon}$ up to a shifting of degrees by $n$. Thus, we have $SH^{*,> a_0}_0(W,\omega,j,H_\varepsilon,J)\simeq H^{*+n}(W,\Lambda)$. If $(H^+_\varepsilon,J^+)\preceq (H^-_\varepsilon,J^-)$ and $H^+$, $H^-$ belong to $\hat{\mathcal H}_0'$, then $SH^{*,> a_0}_0(W,\omega,j,H^+_\varepsilon,J^+)\simeq SH^{*,> a_0}_0(W,\omega,j,H^-_\varepsilon,J^-)$, by Corollary \ref{corfil2}. Since $\hat{\mathcal H}_0$ is cofinal in $\mathcal H_0$, we can take direct limits and arrive to the following corollary.

\begin{cor}\label{cor:vit1}
If $(W,\omega,j)$ is a Liouville domain, then, for every $k\in\Z$, there exists a map
\begin{equation}
H^{k+n}(W,\Lambda)\longrightarrow SH^k_0(W,\omega,j)\,.  
\end{equation}
If $\alpha$ does not have any Reeb periodic orbit $\gamma$ contractible in $W$ such that
\begin{equation*}
\mu_{\op{CZ}}(\gamma)\in\{\dim W/2-k,\dim W/2-k+1\}\,,  
\end{equation*}
then such map is an isomorphism.
\end{cor}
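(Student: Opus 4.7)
I would build the map $H^{k+n}(W,\Lambda)\to SH^k_0(W,\omega,j)$ by identifying the constant orbits of an autonomous Hamiltonian $H\in\hat{\mathcal H}_0'$ with an action-filtered subcomplex, then passing to the direct limit over the cofinal family $\hat{\mathcal H}_0\subset \mathcal H_0$.

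\textbf{Construction of the map.} Fix $H\in\hat{\mathcal H}_0'$ with small Morse piece $H_b$ and a small time-dependent perturbation $H_\varepsilon\in\hat{\mathcal H}_0$. The action estimate in the paragraph preceding the statement produces a real number $a_0<0$ that lies strictly below $\mathcal A^\omega_{H_\varepsilon}(\overline x)$ for every constant orbit $\overline x$ and strictly above $\mathcal A^\omega_{H_\varepsilon}(x)$ for every non-constant orbit $x$. Consequently $SC^{*,>a_0}_0(W,\omega,j,H_\varepsilon)$ is the free $\Lambda$-module on the constant orbits, its differential agrees with the Morse cochain differential of $H_b$ up to a degree shift of $n$, and therefore
\[SH^{*,>a_0}_0(W,\omega,j,H_\varepsilon,J)\;\simeq\;H^{*+n}(W,\Lambda).\]
Composing with the map \eqref{vitmap} gives a morphism into $SH^k_0(W,\omega,j,H_\varepsilon,J)$. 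To extend this to the direct limit, I would choose for any two Hamiltonians $H^\pm\in\hat{\mathcal H}_0'$ a monotone homotopy $(H^s_\varepsilon,J^s)$ with $\partial_s H^s_\varepsilon\leq 0$ that is constant on a suitable $W^c$ containing the constant orbits. Corollary \ref{corfil2} then asserts that the filtered continuation map is the identity on the constant-orbit subcomplex, so the morphisms are compatible in the direct system and descend to a well-defined map $H^{k+n}(W,\Lambda)\to SH^k_0(W,\omega,j)$.

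\textbf{The isomorphism statement.} For each $H_\varepsilon$ as above I would use the short exact sequence of cochain complexes
\[0\longrightarrow SC^{*,>a_0}_0(W,\omega,j,H_\varepsilon)\longrightarrow SC^{*}_0(W,\omega,j,H_\varepsilon)\longrightarrow\overline{SC}^{*}\longrightarrow 0,\]
whose quotient $\overline{SC}^{*}$ is freely generated by the non-constant orbits. These come as pairs $(x_{\op{min}},x_{\op{Max}})$ indexed by the Reeb orbits $\gamma$ of $\alpha$ contractible in $W$ with period below $T_{H_\varepsilon}$, and have degrees $\tfrac{\dim W}{2}-\mu_{\op{CZ}}(\gamma)$ and $\tfrac{\dim W}{2}-\mu_{\op{CZ}}(\gamma)-1$ respectively. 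The hypothesis rules out any such generator in degrees $k-1$ and $k$, so $H^{k-1}(\overline{SC}^{*})=H^k(\overline{SC}^{*})=0$ and the long exact sequence
\[\cdots\to H^{k-1}(\overline{SC}^{*})\to SH^{k,>a_0}_0\to SH^k_0\to H^k(\overline{SC}^{*})\to\cdots\]
forces the middle arrow to be an isomorphism for every sufficiently large $H_\varepsilon$. Taking the direct limit concludes the argument.

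\textbf{Main obstacle.} The delicate part is the bookkeeping needed to push the identification through the direct limit: one has to verify that $a_0$ remains a valid threshold uniformly as $T_H\to\infty$ (which follows from keeping $H_b$ $C^2$-small, so that the action of constants is bounded below by $\min H_b$ while the action of non-constants stays below $a_h(\rho_b)<0$) and that enough monotone homotopies within $\hat{\mathcal H}_0$ can be arranged to be constant on a common $W^c$ to make Corollary \ref{corfil2} applicable at every step of the cofinal system.
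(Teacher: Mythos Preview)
Your construction of the map $H^{k+n}(W,\Lambda)\to SH^k_0(W,\omega,j)$ follows the paper's argument essentially verbatim: separate constant from non-constant orbits by the action threshold $a_0$, identify the high-action subcomplex with the shifted Morse complex of $H_b$, invoke Corollary~\ref{corfil2} to make the filtered continuation maps trivial, and pass to the direct limit over the cofinal family $\hat{\mathcal H}_0$. This part is correct and matches the paper.

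For the isomorphism statement the paper does not spell out a proof; your long-exact-sequence approach is the natural one, but the index bookkeeping contains a gap. You assert that the hypothesis rules out all generators of the quotient $\overline{SC}^*$ in degrees $k-1$ and $k$. Degree $k-1$ is indeed empty: an $x_{\op{min}}$ there needs $\mu_{\op{CZ}}(\gamma)=n-k+1$ and an $x_{\op{Max}}$ needs $\mu_{\op{CZ}}(\gamma)=n-k$, both excluded. In degree $k$, however, only $x_{\op{min}}$ (requiring $\mu_{\op{CZ}}(\gamma)=n-k$) is forbidden; an $x_{\op{Max}}$ with $\mu_{\op{CZ}}(\gamma)=n-k-1$ is \emph{not} excluded by the stated hypothesis and may sit in $\overline{SC}^k$. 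Since $\overline{SC}^{k-1}=0$, any such $x_{\op{Max}}$ that happens to be a $\bar\delta$-cocycle yields a nonzero class in $H^k(\overline{SC}^*)$, and your long exact sequence no longer forces surjectivity of $SH^{k,>a_0}_0\to SH^k_0$. Concretely, a cocycle $z=z_c+z_M\in SC^k_0$ with nonzero $x_{\op{Max}}$-part $z_M$ cannot be altered by a coboundary to lie in the constant subcomplex, because every $w\in SC^{k-1}_0$ is already constant and hence $\delta w$ is constant as well.

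So either the hypothesis should be read as excluding the three values $\{n-k-1,\,n-k,\,n-k+1\}$ (which makes your argument go through verbatim), or a finer argument is required to dispose of the residual $x_{\op{Max}}$ generators in degree $k$. Note that in the paper's applications the corollary is only used in the contrapositive form ``no contractible Reeb orbits at all implies the map is an isomorphism'', where this subtlety does not arise.
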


\subsection{Filtration by the period}\label{sec:per}

Let $(W,\omega)$ be a general open convex manifold. We restrict the class of admissible almost complex structures as follows.
\begin{dfn}\label{dfn:ac}
Fix an $\omega$-compatible almost complex structure $J_0$ such that $(d\rho)\circ J_0=-\theta$ on $W_b$. For every $H\in\hat{\mathcal H}_\nu'$ let $\mathcal J(H)$ be the set of the $\omega$-compatible almost complex structures $J_\varepsilon$ which are small compact perturbations of $J_0$ and which are $H_\epsilon$-admissible for some $H_\varepsilon\in\hat{\mathcal H}_\nu$.
\end{dfn}
The next lemma, which is taken from \cite[page 654]{bo2}, gives us some information on the behaviour of Floer cylinders for $(H,J_0)$ which are asymptotic to a reparametrisation of a Reeb orbit for $s\rightarrow-\infty$. 
\begin{lem}\label{max-pri}
Take $H\in \hat{\mathcal H}_\nu'$ and suppose that $u:(-\infty,s_*]\times\T_1\rightarrow W_b$ is a non-constant Floer half-cylinder for the pair $(H,J_0)$. If $u$ is asymptotic for $s\rightarrow-\infty$ to $x(t)=(r_x,\gamma_x(h'(e^{r_x})t))$, a non-constant $1$-periodic orbit for $X_H$, then $u$ is not contained in $W^{r_x}$.
\end{lem}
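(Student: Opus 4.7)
The plan is a strong maximum principle argument applied to the real-valued function $f:=\rho\circ u$. The heart of the proof is to show that $f$ is a subsolution of a linear uniformly elliptic operator, so that the combination of $f\leq\rho_x$ with the uniform limit $f\to\rho_x$ as $s\to-\infty$ forces $u$ to coincide with the trivial cylinder over $x$, contradicting the non-constancy hypothesis.

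The first step is to derive the elliptic inequality. Since $u$ takes values in $W_b$, cylindrical coordinates are available. Floer's equation combined with the identities $d\rho\circ J_0=-\theta$, $\theta=\rho\,\pi^*\alpha$, $X_H=h'(\rho)R^\alpha$ and $\omega=d\theta$ gives $\partial_s f=\theta(\partial_t u)-\rho\, h'(\rho)$ and $\partial_t f=-\theta(\partial_s u)$. A short computation, using the Stokes identity $\partial_s(\theta(\partial_t u))-\partial_t(\theta(\partial_s u))=u^*\omega(\partial_s,\partial_t)$ and the Floer-equation identity $\omega(\partial_s u,\partial_t u)=|\partial_s u|^2_{J_0}+h'(\rho)\,\partial_s f$, then yields
\begin{equation*}
\Delta f\,+\,\rho\, h''(\rho)\,\partial_s f\;=\;|\partial_s u|^2_{J_0}\;\geq\;0,
\end{equation*}
where $\Delta=\partial_s^2+\partial_t^2$. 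Hence $f$ is a subsolution of a uniformly linear elliptic operator $L$, with strict inequality wherever $\partial_s u\neq 0$.

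Next I argue by contradiction: suppose $u$ is contained in $W^{r_x}$, so $f\leq\rho_x$ on $(-\infty,s_*]\times\T_1$. The asymptotic condition forces $f(s,t)\to\rho_x$ uniformly in $t$ as $s\to-\infty$. If $f$ attained the value $\rho_x$ at an interior point of $(-\infty,s_*)\times\T_1$, or at some boundary point $(s_*,t_*)$ after extending $u$ slightly past $s_*$ by the local solvability of the elliptic Floer equation, the strong maximum principle for $L$ would force $f\equiv\rho_x$ on the connected half-cylinder. The elliptic identity above would then give $|\partial_s u|^2\equiv 0$, so $u$ is independent of $s$, and Floer's equation reduces to $\partial_t u=X_H(u)$; this yields $u(s,t)=x(t+t_0)$, a trivial cylinder over $x$, contradicting non-constancy. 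Thus $f<\rho_x$ strictly on the closed half-cylinder.

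The remaining case, in which the supremum $\rho_x$ is only approached asymptotically as $s\to-\infty$ without being attained at any finite point, is the main obstacle. To exclude it I invoke the standard exponential convergence estimates for Floer half-cylinders at a transversally non-degenerate periodic orbit: there exist $\lambda>0$ and a nonzero eigenfunction $v(t)$ of the asymptotic operator $A_x$ such that $u(s,t)=\exp_{x(t+t_0)}\bigl(e^{\lambda s}(v(t)+o(1))\bigr)$ as $s\to-\infty$. Substitution gives $f(s,t)-\rho_x=e^{\lambda s}\,d\rho_{x(t)}(v(t))+o(e^{\lambda s})$, so $f\leq\rho_x$ near $-\infty$ forces $d\rho(v)\leq 0$ pointwise. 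A spectral analysis of $A_x$ in cylindrical coordinates---in which the convexity $h''(\rho_x)>0$ produces a definite-sign coupling between the radial direction $\partial_r$ and the Reeb direction $R^\alpha$---shows that no positive-eigenvalue eigenfunction can have an everywhere non-positive $d\rho$-component unless it vanishes identically, contradicting $v\not\equiv 0$. This concluding spectral step is the technical heart of the argument; the preceding steps are a routine application of the maximum principle.
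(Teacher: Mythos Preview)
Your derivation of the elliptic inequality $\Delta f + \rho h''(\rho)\,\partial_s f = |\partial_s u|^2_{J_0}\geq 0$ is correct, and the strong maximum principle cleanly disposes of the case where the maximum $\rho_x$ is attained at an interior point. However, the proof has a genuine gap in the ``remaining case'' where $\sup f=\rho_x$ is approached only as $s\to-\infty$. You defer this to an unproved spectral claim about the asymptotic operator $A_x$: that no positive-eigenvalue eigenfunction can have everywhere non-positive $d\rho$-component. This is not a routine fact, you do not carry out the computation, and it is not clear it holds without further hypotheses. Since you yourself identify this as ``the technical heart of the argument,'' leaving it as an assertion means the proof is incomplete. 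A secondary issue: your treatment of a boundary maximum by ``extending $u$ slightly past $s_*$ by local solvability'' is not legitimate, since the Cauchy problem for an elliptic (Cauchy--Riemann type) equation is ill-posed.

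The paper's proof avoids both difficulties by a more elementary device: it averages in $t$, setting $\overline r(s)=\int_{\T_1} r(s,t)\,dt$ and $\overline a(s)=\int_{\T_1} p(s,\cdot)^*\alpha$. The third component of the Floer equation makes $\overline a$ non-decreasing, and the first component then gives $\overline r'(s)\geq h'(e^{r_x})-\int h'(e^{r(s,t)})\,dt$. If $r\leq r_x$ everywhere, the monotonicity of $h'$ forces $\overline r'\geq 0$; combined with $\overline r(-\infty)=r_x$ and $\overline r\leq r_x$ this yields $\overline r\equiv r_x$, hence $r\equiv r_x$, and then $E(u)=0$. Thus the asymptotic case that defeats your pointwise maximum-principle approach is handled automatically by the integrated quantities, with no need for exponential asymptotics or spectral analysis. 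If you want to salvage your route, the cleanest fix is precisely to integrate the elliptic inequality (or better, the first-order identities you already wrote down) over $\T_1$, which brings you back to the paper's argument.
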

\begin{proof}
Using the cylindrical end $j=(r,p)$, we write $u(s,t)=(r(s,t),p(s,t))$. If we employ the splitting $TW_b\simeq\,<\partial_r>\oplus<R^\alpha>\oplus\,\ker\alpha$, the Floer equation for $u$ takes the following form:
\begin{equation}\label{flcp}
\left\{\begin{aligned}
\partial_sr-\alpha(\partial_t p)+ h'(e^r)&=\ 0\\
\alpha(\partial_sp)+\partial_tr&=\ 0\\
\pi_{\ker\alpha}(\partial_s p)+J_0\pi_{\ker\alpha}(\partial_tp)&=\ 0\,,
       \end{aligned}\right.
\end{equation}
where $\pi_{\ker\alpha}$ is the projection on the third factor of the splitting.

Now define the real functions $\overline{r}$ and $\overline{a}$ on the interval $(-\infty,s_*]$ by integrating in the $t$-direction:
\begin{equation*}
\overline{r}(s):=\int_{\T_1}r(s,t)dt\,,\quad\quad \overline{a}(s):=\int_{\T_1}p(s,\cdot)^*\alpha\,.
\end{equation*}
By Stokes's Theorem we have that
\begin{equation}\label{steq}
\overline{a}(s_1)-\overline{a}(s_0)=\int_{[s_0,s_1]\times\T_1}p^*d\alpha\,.
\end{equation}

Since $J_0$ leaves $\ker\alpha$ invariant and it is $\omega$-compatible, the third equation yields that the rightmost term in \eqref{steq} is non-negative and, therefore, $\overline{a}$ is a non-decreasing function.

We integrate the first equation of \eqref{flcp} in $t$ and use this monotonicity property:
\begin{equation}\label{inerbar}
\frac{d}{ds}\overline{r}(s)\geq -\int_{\T_1}h'(e^{r(s,t)})dt+\overline{a}(-\infty)=h'(e^{r_x})-\int_{\T_1}h'(e^{r(s,t)})dt\,.
\end{equation}
Suppose now, by contradiction that $r(s,t)\leq r_x$ for every $(s,t)\in(-\infty,s_*]\times\T_1$. Since $\rho\mapsto h'(\rho)$ is increasing, we find that the rightmost term in \eqref{inerbar} is non-negative. Hence, $\frac{d}{ds}\overline{r}(s)\geq 0$. We have two possibilities. Either there exists a point $s_{**}<s_*$ such that $\frac{d}{ds}\overline{r}(s_{**})>0$, or $\frac{d}{ds}\overline{r}\equiv0$ on the whole cylinder. In the first case, we must have $\overline{r}(s_{**})> r_x$, which is a contradiction. In the second case, $\overline{r}\equiv r_x$ and, since $r(s,t)\leq r_x$, we see that $r(s,t)\equiv r_x$ on the whole cylinder. By the first equation in \eqref{flcp} $\overline{a}$ is also constant and by \eqref{steq}, we have that
\begin{equation*}
\int_{(-\infty,s_*]\times\T_1}p^*d\alpha=0\,.
\end{equation*}
Putting this together with the fact that the function $r(s,t)$ is constant, we get that $E(u)=0$, which contradicts the fact that $u$ is not constant. Since both cases led to a contradiction, we see that $u$ is not contained in $W^{r_x}$ and the lemma is proven.
\end{proof}
We have the following immediate corollary.
\begin{cor}\label{cor:per}
Suppose $u:\R\times\T_1\rightarrow W$ is a non-constant Floer cylinder for the pair $(H,J_0)$, with $H\in\hat{\mathcal H}'_\nu$, which connects two $1$-periodic orbits for $X_H$. If $x_-=(r_{x_-},\gamma_-(T_-t))$ for some $T_-$-periodic Reeb orbit $\gamma_-$, with $T_-=h'(e^{r_{x_-}})$, then $x_+=(r_{x_+},\gamma_+(T_+t))$ for some $T_+$-periodic Reeb orbit $\gamma_+$, with $T_+=h'(e^{r_{x_+}})$, such that \begin{equation}
T_+\ >\ T_-\,. 
\end{equation}
\end{cor}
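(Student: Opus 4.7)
The plan is to combine Lemma \ref{max-pri} with a maximum principle for $\rho \circ u$ on the cylindrical end. The observation underlying Lemma \ref{max-pri}---that $\overline{a}(s) := \int_{\T_1} p(s,\cdot)^*\alpha$ is non-decreasing because $J_0$ is $\omega$-compatible and preserves $\ker\alpha$---in fact yields a strong maximum principle: on any open region where $u$ takes values in $W_b$, the function $\rho \circ u$ cannot attain a strict interior local maximum.

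First I would rule out that $x_+$ is constant. Since $x_-$ is non-constant, $r_{x_-} > b$, so $u$ lies in $W_b$ on some half-cylinder $(-\infty, s_*]\times\T_1$, and Lemma \ref{max-pri} produces a point $(s_0,t_0)$ with $\rho(u(s_0,t_0)) > \rho_{x_-}$. If $x_+$ were constant, it would lie in $W^b$, hence $\rho(x_+) \leq \rho_b < \rho_{x_-}$. Consider the open region $\Omega := \{(s,t) \in \R\times\T_1 : \rho(u(s,t)) > \rho_b\}$, on which $u$ takes values in $W_b$. The supremum of $\rho \circ u$ on $\overline{\Omega}$ strictly exceeds $\rho_{x_-}$, yet it cannot be attained on $\partial\Omega = \{\rho \circ u = \rho_b\}$ or realized at the asymptotic ends $s \to \pm\infty$, where $\rho \circ u$ tends to $\rho_{x_-}$ and $\rho(x_+) \leq \rho_b$ respectively---both $\leq \rho_{x_-}$. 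Hence the supremum would be attained at an interior point of $\Omega$, contradicting the maximum principle. Therefore $x_+$ is non-constant, and we may write $x_+(t) = (r_{x_+}, \gamma_+(T_+ t))$ with $T_+ = h'(e^{r_{x_+}})$.

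Running the same maximum principle argument once more, now with asymptotic values $\rho_{x_-}$ and $\rho_{x_+}$, the inequality $\sup_{\overline{\Omega}} \rho \circ u > \rho_{x_-}$ forces the supremum to be realized at $s = +\infty$, giving $\rho_{x_+} > \rho_{x_-}$, i.e., $r_{x_+} > r_{x_-}$. To upgrade this to $T_+ > T_-$, I would observe that every non-constant $1$-periodic orbit of $X_H$ has radius in $[\rho_b, \rho_H)$: for $\rho \geq \rho_H$ one has $h'(\rho) = T_H \notin \op{Spec}(\alpha,\nu)$, so no Reeb orbit of $\alpha$ in the class $\nu$ has such period. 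On $[\rho_b,\rho_H)$ the function $h$ is strictly convex, so $h'$ is strictly increasing, and therefore $T_+ = h'(\rho_{x_+}) > h'(\rho_{x_-}) = T_-$.

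The delicate point is the maximum principle itself on the non-compact region $\Omega$. On compact subdomains it is the standard subharmonicity statement for Floer half-cylinders, derivable exactly as in the proof of Lemma \ref{max-pri}; extending it to $\Omega$ relies on controlling $\rho \circ u$ uniformly at the asymptotic ends, which follows from the convergence of $u$ to its limit orbits $x_\pm$.
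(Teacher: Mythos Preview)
Your proof is correct and follows essentially the same route as the paper: invoke Lemma \ref{max-pri} to find a point where $r\circ u$ exceeds $r_{x_-}$, apply the standard maximum principle on the cylindrical end to force $r_{x_+}>r_{x_-}$, and then use strict monotonicity of $h'$ on $[\rho_b,\rho_H)$ to conclude $T_+>T_-$. The paper compresses your two steps (ruling out constant $x_+$, then comparing radii) into the single sentence ``By the maximum principle, $x_+$ is not contained in $W^{r_{x_-}}$,'' but the content is identical. One small expository point: the monotonicity of $\overline{a}$ does not by itself \emph{yield} the strong maximum principle for $\rho\circ u$; rather, both are consequences of the same ingredients (the Floer equation in cylindrical coordinates plus $J_0$-compatibility), and the maximum principle is the standard subharmonicity argument you correctly cite at the end.
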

\begin{proof}
Consider the function $r(s,t):=r(u(s,t))$. By the lemma we just proved, $\max r> r_{x_-}$. By the maximum principle, $x_+$ is not contained in $W^{r_{x_-}}$. Hence, $x_+=(r_{x_+},\gamma_+(T_+t))$ is a non-constant periodic orbit and $r_{x_+}>r_{x_-}$. The thesis follows since $\rho\mapsto h'(\rho)$ is strictly increasing in the interval $[\rho_b,\rho_H]$.
\end{proof}
The previous corollary gives important information on $SH_\nu$, once we take a perturbation in order to achieve transversality. 
\begin{cor}\label{matvan2}
Given $H\in\hat{\mathcal H}'_\nu$, there exist $H_\varepsilon\in\hat{\mathcal H}_\nu$ and $J_\varepsilon\in\mathcal J(H)$ such that the following statement is true. Let $x$ and $y$ be $1$-periodic orbits of $X_H$ with $S_x\neq S_y$ such that $x$ is associated with a Reeb orbit of period $T_x$ and $y$ is either constant or it is associated with a Reeb orbit of period $T_y$, with $T_y\leq T_x$. If $x_\varepsilon$ and $y_\varepsilon$ are $1$-periodic orbits of $X_{H_\varepsilon}$ close to $x$ and $y$, then 
\begin{equation}
<\delta_{J_\varepsilon}x_\varepsilon,y_\varepsilon>\ =\ 0\,.
\end{equation}
\end{cor}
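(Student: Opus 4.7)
The proof is a Gromov--Floer compactness argument rooted in Corollary~\ref{cor:per}. Starting from $H\in\hat{\mathcal H}'_\nu$, construct $H_\varepsilon\in\hat{\mathcal H}_\nu$ as a standard $S^1$-symmetry-breaking time-dependent perturbation of $H$, supported in disjoint tubular neighbourhoods of each circle $S_z$ of non-constant $1$-periodic orbits of $X_H$, so that each $S_z$ splits into the two non-degenerate orbits $z_{\op{min}},z_{\op{Max}}$ of $X_{H_\varepsilon}$, while the constant orbits of $X_H$ remain unchanged. Choose $J_\varepsilon\in\mathcal J(H)$ as a small compact perturbation of $J_0$ making $(H_\varepsilon,J_\varepsilon)$ regular, and argue by contradiction: assume $\langle\delta_{J_\varepsilon}x_\varepsilon,y_\varepsilon\rangle\neq 0$ for all $\varepsilon$ in a sequence $\varepsilon_n\downarrow 0$.

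Then there exist Floer cylinders $u_n\in\mathcal M(H_{\varepsilon_n},J_{\varepsilon_n},y_{\varepsilon_n},x_{\varepsilon_n})$. Uniform $C^0$-bounds come from the maximum principle for $\rho\circ u_n$ used in the proof of Lemma~\ref{max-pri} (valid because $J_{\varepsilon_n}$ and $H_{\varepsilon_n}$ agree with $J_0$ and $H$ outside a compact set); uniform energy bounds come from \eqref{eneid} together with the convergence of the asymptotic orbits; and sphere bubbling is ruled out by Lemma~\ref{lem:bub}. Standard Gromov--Floer compactness then yields a broken Floer trajectory $u_\infty=u^1_\infty\#\cdots\#u^k_\infty$ for the unperturbed pair $(H,J_0)$ with past asymptotic in $S_y$ and future asymptotic in $S_x$. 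Since $S_y\neq S_x$, at least one piece is a non-constant Floer cylinder.

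Applying Corollary~\ref{cor:per} to each non-constant piece with non-constant past asymptotic forces both of its asymptotics to be non-constant and the Reeb period to increase strictly from past to future. Chaining the non-constant pieces of $u_\infty$, the Reeb period at the future end is strictly greater than that at the past end, giving $T_x>T_y$; this contradicts the equality case $T_y=T_x$. For $y$ a constant critical point of $H_b$, the leftmost non-constant piece $u^1_\infty$ has a constant past asymptotic, and a Stokes-type integration of the Floer equation along $u^1_\infty$ restricted to $W_b$ (as in the proof of Lemma~\ref{max-pri}, adapted to a constant past asymptotic with vanishing Reeb period) shows that $\bar a(s)=\int p(s,\cdot)^*\alpha$ cannot rise from $0$ up to the positive Reeb period of its non-constant future asymptotic without violating the sign constraint on the radial component of the equation, closing the contradiction in this case as well.

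\textbf{Main obstacle.} The most delicate step is the Morse--Bott character of the autonomous pair $(H,J_0)$: the limit $u_\infty$ may in principle be a cascade in the sense of Bourgeois--Oancea, involving Morse flow segments along intermediate $S^1$-families $S_{w_i}$ of $X_H$, together with constant cylinder pieces reparametrizing within a family. One must verify that constant cylinder pieces preserve the asymptotic $S^1$-family, so that the chained strict inequalities from Corollary~\ref{cor:per} survive the breaking, and that a generic choice of the $S^1$-symmetry-breaking perturbation cuts out the full cascade moduli spaces transversally. These inputs ensure that the strict period monotonicity from Corollary~\ref{cor:per} is inherited by the perturbed differential uniformly in $\varepsilon\to 0$, and rule out spurious connections between distinct families at a common Reeb period.
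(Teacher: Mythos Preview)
Your overall strategy---contradiction via Gromov--Floer compactness to a broken $(H,J_0)$-trajectory, then apply Corollary~\ref{cor:per} piece by piece---is exactly the paper's. The paper also cites Bourgeois--Oancea for the Morse--Bott limit, so your ``main obstacle'' paragraph is on target.

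There is, however, a genuine gap coming from the orientation of the Floer cylinders. You take the limiting broken trajectory to have $y$ at the negative end and $x$ at the positive end. With that choice Corollary~\ref{cor:per} yields $T_x>T_y$ when $y$ is non-constant; as you yourself note, this only contradicts the equality case $T_y=T_x$ and says nothing when $T_y<T_x$, so the argument does not close. Moreover, when $y$ is constant the hypothesis of Corollary~\ref{cor:per} (non-constant \emph{negative} asymptotic) fails, and the Stokes-type argument you sketch is not a substitute: with a constant past asymptotic the quantity $\bar a(s)$ starts at $0$ and \emph{increases} along the cylinder, so there is no sign obstruction to reaching a non-constant orbit at $+\infty$.

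The paper's proof avoids both problems by taking the cylinders in $\mathcal M'(H_{\varepsilon_k},J_{\varepsilon_k},x_{\varepsilon_k},y_{\varepsilon_k})$, i.e.\ with $x$ at the \emph{negative} end. Since $x$ is non-constant by hypothesis, Corollary~\ref{cor:per} applies to the first non-constant piece and then inductively to every subsequent one: each intermediate asymptotic $x_i$ is forced to be non-constant with $T_{i-1}\le T_i$, and at least one inequality is strict because $S_x\neq S_y$. Hence $y$ is non-constant and $T_y>T_x$, contradicting \emph{both} alternatives of the hypothesis (``$y$ constant'' and ``$T_y\le T_x$'') in one stroke. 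No separate argument for the constant-$y$ case is needed. So the fix is simply to swap the roles of $x_\varepsilon$ and $y_\varepsilon$ in your moduli space; once you do that, your outline becomes the paper's proof.
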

\begin{proof}
We argue by contradiction and suppose that there exists $\varepsilon_k\rightarrow0$ and a sequence of cylinders $u_k\in\mathcal M'(H_{\varepsilon_k},J_{\varepsilon_k},x_{\varepsilon_k},y_{\varepsilon_k})$.

We claim that, up to taking a subsequence, there is an $m\in\N$ such that for every $i=1,\ldots,m$ there exists a shift $s_k^i\in \R$ and for every $i=0,\ldots,m$ there exists a $1$-periodic orbit $x_i$ for $X_H$ with the following properties:
\begin{itemize}
 \item $x_0=x$ and $x_m=y$,
 \item for every $i=1,\ldots,m-1$, $s_k^i<s_k^{i+1}$,
 \item for every $i=1,\ldots,m$, $u_k(\cdot+s_k^i)\rightarrow u_\infty^i$, in the $C^{\infty}_{\op{loc}}$-topology, for some cylinders $u^i_\infty$ satisfying the Floer Equation \eqref{fleq} for the pair $(H,J_0)$ and with asymptotic conditions
\begin{equation}
u_\infty^i(-\infty)\in S_{x_{i-1}}\,,\quad\quad u_\infty^i(+\infty)\in S_{x_i}\,.
\end{equation}
\end{itemize}
The claim follows from Proposition 4.7 in \cite{bo} with the only difference that in our case also the complex structure is allowed to vary with $k$. This however does not represent a problem, since such proposition relies on Floer's compactness theorem \cite[Proposition 3(c)]{flo}, in which the almost complex structure is allowed to vary as well.

Using inductively Corollary \ref{cor:per} and the fact that the positive end of $u_\infty^{i-1}$ coincides with the negative end of $u_\infty^i$, we see that $x_i$ is a non-constant orbit with $x_i(t)=(r_i,\gamma_i(T_it))$ and that $T_i\leq T_{i+1}$. Since $S_x\neq S_y$, there exists a non-constant $u_\infty^{i_*}$. This implies that $T_{i_*-1}<T_{i_*}$ and, therefore, that $T_y>T_x$. This contradiction proves the corollary.
\end{proof}
As happened for the action filtration, Corollary \ref{matvan2} shows that for any $T>0$, $\delta_{J_\epsilon}$ leaves invariant the subspace $SC^*_{\nu,<T} (W,\omega,j,H_\varepsilon)$ generated by the periodic orbits in $\nu$ associate with Reeb orbits with period smaller than $T$ and by the constant periodic orbits (when $\nu=0$). We call the associated cohomology group $SH^*_{\nu,<T}(W,\omega,j,H_\varepsilon,J_\varepsilon)$.

Fix $T>0$ and take $H^+$ and $H^-$ in $\hat{\mathcal H}_0'$, such that $T_{H^+}\leq T_{H^-}$ and $H^-=H^+$ on some $W^c$, where $T=(h^+)'(e^c)=(h^-)'(e^c)$. In particular, $T< \min\{T_{H^+},T_{H^-}\}$. Take perturbations $H^+_\varepsilon$ and $H^-_\varepsilon$ and let $(H^s_\varepsilon,J^s_\varepsilon)$ be a homotopy such that $\partial_s H^s_\varepsilon\leq 0$ and which is constant on $W^c$. We readily see that
\begin{equation*}
SC^*_{\nu,<T} (W,\omega,j,H^+_\varepsilon)\ =\ SC^*_{\nu,<T} (W,\omega,j,H^-_\varepsilon)\,. 
\end{equation*}
Consider continuation cylinders between the generators of these two subcomplexes. The maximum principle shows that they are contained inside $W^c$. Since in this region $\partial_s H^s_\varepsilon=0$, the transversality of the the moduli spaces of such cylinders shows that we only have constant cylinders. We have proved the following corollary.
\begin{cor}\label{corfil2per}
Under the hypotheses of the paragraph above, the filtered continuation map $SC^{*}_{\nu,<T}(W,\omega,j,H^+_\varepsilon)\rightarrow SH^{*}_{\nu,<T}(W,\omega,j,H^-_\varepsilon)$ is the identity.
\end{cor}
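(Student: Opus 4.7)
The plan is to follow the template used for Corollary \ref{corfil2}, replacing the action filtration with the period filtration provided by Corollary \ref{cor:per} and Corollary \ref{matvan2}, and to proceed in three steps.

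\emph{Step 1 (the two chain groups coincide).} By Definition \ref{dfn:adm2}, a non-constant $1$-periodic orbit of $X_{H^{\pm}}$ associated with a Reeb orbit of period $T'<T$ sits at a level $\rho_x$ with $(h^{\pm})'(\rho_x)=T'<T=(h^{\pm})'(e^c)$, hence $\rho_x<e^c$, so the orbit lies in $W^c$. Since $H^+=H^-$ on $W^c$, the two Hamiltonians share all such orbits; the constant orbits coming from the Morse perturbation $H_b$ also coincide. Arranging $H^\pm_\varepsilon$ to agree on $W^c$ (possible because the perturbations are localised near the non-constant orbits), the generators of $SC^*_{\nu,<T}(W,\omega,j,H^+_\varepsilon)$ and $SC^*_{\nu,<T}(W,\omega,j,H^-_\varepsilon)$ match as $\Lambda$-modules.

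\emph{Step 2 (confinement via maximum principle).} Let $u$ be a continuation Floer cylinder for $(H^s_\varepsilon,J^s_\varepsilon)$ whose asymptotes, by Step 1, lie in $W^c$. I would extend the proof of Lemma \ref{max-pri} to the $s$-dependent case: on $W\setminus W^c$ the homotopy is radial, $H^s=h^s(e^r)$, with $\partial_s h^s\le 0$, $h^s$ convex, and $(h^s)'(e^c)\ge T$ for all $s$. Integrating the first two components of the resulting $s$-dependent Floer system \eqref{flcp} in $t$, one still gets that $\overline{a}(s):=\int_{\T_1}p(s,\cdot)^*\alpha$ is non-decreasing (the third equation of \eqref{flcp} is insensitive to the $s$-dependence of $H^s$ as long as $J^s$ preserves $\ker\alpha$), and that any interior maximum of $\rho\circ u$ strictly above $e^c$ would force $\tfrac{d}{ds}\overline{r}\ge 0$ there, leading to the same contradiction as in Lemma \ref{max-pri}. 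Hence $u(\R\times\T_1)\subset W^c$.

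\emph{Step 3 (only constant cylinders, and conclusion).} By arranging $\partial_s H^s_\varepsilon\equiv 0$ \emph{and} $\partial_s J^s_\varepsilon\equiv 0$ on $W^c$, the cylinder $u$ solves the autonomous Floer equation for $(H^+_\varepsilon,J)$ throughout its image, so the equation is $\R$-translation invariant on $u$. The continuation moduli space has Fredholm dimension $|x_-|-|x_+|=0$ because continuation maps have degree zero, so the $\R$-action forces $u$ to be $s$-constant. Such $u$ takes the form $u(s,t)=x(t)$ for a $1$-periodic orbit $x$, contributing the term $+1\cdot t^0\cdot x = x$ to the continuation map. Therefore the chain-level continuation map is the identity, which descends to the identity in cohomology. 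The main obstacle is Step 2: one must check that in the $s$-dependent setting the monotonicity of $\overline a$ survives (i.e.\ $J^s$ may be chosen to preserve $\ker\alpha$ uniformly in $s$ on $W\setminus W^c$) and that the slope inequality $(h^s)'(e^c)\ge T$ persists throughout the homotopy; once these points are established, Steps 1 and 3 are essentially bookkeeping.
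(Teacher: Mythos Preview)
Your three-step outline matches the paper's argument exactly, and Steps~1 and~3 are correct and essentially identical to what the paper does. The issue is in Step~2.

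Lemma~\ref{max-pri} is the wrong tool here: it establishes a \emph{lower} bound on the radial coordinate of a Floer half-cylinder (the cylinder must exit $W^{r_{x_-}}$ above the level of its negative asymptote), whereas what you need is an \emph{upper} bound (the cylinder cannot exit $W^c$). Your sketched adaptation---``an interior maximum of $\rho\circ u$ above $e^c$ would force $\tfrac{d}{ds}\overline r\ge 0$''---does not make sense as written: $\overline r$ is a function of $s$ alone, while an interior maximum of $\rho\circ u$ occurs at a point $(s_0,t_0)$, and the inequality \eqref{inerbar} in the proof of Lemma~\ref{max-pri} goes in the opposite direction to what you want.

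The paper simply invokes ``the maximum principle,'' meaning the standard one already used for the $C^0$-bounds on continuation cylinders (property \textit{a)} in the construction of continuation maps): on $W_c$ the homotopy is radial, $H^s=h^s(e^r)$ with $h^s$ convex, $\partial_s (h^s)'\le 0$, and $J^s$ convex, so $\rho\circ u$ cannot have an interior maximum in $W_c$. Since both asymptotes lie in $W^c$, the cylinder is trapped there. This is a different (and more elementary) argument from Lemma~\ref{max-pri}; no integration in $t$ or monotonicity of $\overline a$ is needed. Once you replace your Step~2 by this standard maximum principle, the proof is complete and coincides with the paper's.
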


Take now $\nu=0$ and assume that $H\in\hat{\mathcal H}_0'$. If $T_0:=T(\alpha,0)$, we find that $SC^{*}_{0,<T_0}(W,\omega,j,H_\varepsilon)$ is generated by the constant periodic orbits only. As before, $SH^{*}_{0,<T_0}(W,\omega,j,H_\varepsilon,J_\varepsilon)\simeq H^{*+n}(W,\Lambda)$. If $H^+$, $H^-$ belong to $\hat{\mathcal H}_0'$ and $(H^+_\varepsilon,J^+_\varepsilon)\preceq (H^-_\varepsilon,J^-_\varepsilon)$, then Corollary \ref{corfil2per} implies that 
\begin{equation*}
SH^{*}_{0,<T_0}(W,\omega,j,H^+_\varepsilon,J^+_\varepsilon)\ =\ SH^{*}_{0,<T_0}(W,\omega,j,H^-_\varepsilon,J^-_\varepsilon)\,. 
\end{equation*}
Taking direct limits we arrive at a generalisation of Corollary \ref{cor:vit1} for general convex manifolds.
\begin{cor}\label{cor:vit2}
If $(W,\omega,j)$ is a convex symplectic manifold, then, for every $k\in\Z$, there exists a map
\begin{equation}
H^{k+n}(W,\Lambda)\longrightarrow SH^k_0(W,\omega,j)\,.  
\end{equation}
If the contact structure at infinity does not have any Reeb periodic orbit $\gamma$ contractible in $W$ such that
\begin{equation*}
\mu_{\op{CZ}}(\gamma)\in\{\dim W/2-k,\dim W/2-k+1\}\,,  
\end{equation*}
then such map is an isomorphism.
\end{cor}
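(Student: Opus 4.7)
The plan is to follow the proof scheme of Corollary \ref{cor:vit1}, with the period filtration from Section \ref{sec:per} replacing the action filtration that is unavailable outside the Liouville setting.

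First, I would fix $H\in\hat{\mathcal H}'_0$ and a pair $(H_\varepsilon,J_\varepsilon)$ with $J_\varepsilon\in\mathcal J(H)$. By the paragraph immediately preceding the corollary, together with the Morse--Floer identification of constant orbits carried out for Corollary \ref{cor:vit1}, the filtered subcomplex $SC^*_{0,<T_0}(W,\omega,j,H_\varepsilon)$ is generated only by the constant orbits of $X_{H_\varepsilon}$ inside $W^b$, and its cohomology is canonically $H^{*+n}(W,\Lambda)$, where $n:=\dim(W)/2$. Since Corollary \ref{matvan2} shows that $\delta_{J_\varepsilon}$ preserves this subcomplex, the inclusion is a chain map and induces
\begin{equation*}
H^{k+n}(W,\Lambda)\;\simeq\;SH^k_{0,<T_0}(W,\omega,j,H_\varepsilon,J_\varepsilon)\;\longrightarrow\;SH^k_0(W,\omega,j,H_\varepsilon,J_\varepsilon).
\end{equation*}

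Next, to pass to the direct limit defining $SH^k_0(W,\omega,j)$, I would invoke Corollary \ref{corfil2per}: along a monotone homotopy $(H^s_\varepsilon,J^s_\varepsilon)$ in $\hat{\mathcal H}_0$ that is constant on some $W^c$ with $T_0<(h^\pm)'(e^c)$, the filtered continuation map is the identity on the low-period subcomplex. Combined with the cofinality of $\hat{\mathcal H}_0$ inside $\mathcal H_0$, this shows that the filtered cohomologies assemble into a constant direct system with value $H^{k+n}(W,\Lambda)$, and the above inclusion-induced maps commute with continuations. Passing to the limit therefore produces a well-defined map $H^{k+n}(W,\Lambda)\to SH^k_0(W,\omega,j)$.

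For the isomorphism assertion, I would analyse the short exact sequence of cochain complexes
\begin{equation*}
0\longrightarrow SC^*_{0,<T_0}(W,\omega,j,H_\varepsilon)\longrightarrow SC^*_0(W,\omega,j,H_\varepsilon)\longrightarrow Q^*_\varepsilon\longrightarrow 0,
\end{equation*}
whose quotient $Q^*_\varepsilon$ is freely generated by the Morse--Bott pairs $x_{\op{min}},x_{\op{Max}}$ associated with the contractible Reeb orbits $\gamma$ of $\alpha$ that are detected by the slope $T_H$, with degrees $n-\mu_{\op{CZ}}(\gamma)$ and $n-\mu_{\op{CZ}}(\gamma)-1$ respectively. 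The long exact sequence in cohomology then shows that the inclusion-induced map is an isomorphism in degree $k$ as soon as $Q^*_\varepsilon$ has no generator in the critical degrees, which is exactly enforced by the hypothesis $\mu_{\op{CZ}}(\gamma)\notin\{n-k,n-k+1\}$; taking direct limits concludes the argument. The main obstacle I expect lies in the bookkeeping at the direct-limit stage: one must check, using Corollary \ref{corfil2per} in an essential way, that the identification $SH^*_{0,<T_0}\simeq H^{*+n}(W,\Lambda)$ is preserved \emph{on the nose} along a cofinal family of continuations in $\hat{\mathcal H}_0$, so that the direct limit of the sources is again $H^{*+n}(W,\Lambda)$ rather than a colimit of isomorphic but chain-level distinct Morse complexes.
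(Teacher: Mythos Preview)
Your proposal is correct and follows essentially the same approach as the paper: replace the action filtration by the period filtration of Section~\ref{sec:per}, identify the low-period subcomplex with the Morse cochains via Corollary~\ref{matvan2}, use Corollary~\ref{corfil2per} to show the filtered continuation maps are the identity along a cofinal family in $\hat{\mathcal H}_0$, and pass to the direct limit. Your short exact sequence argument for the isomorphism claim is slightly more explicit than the paper's treatment (which simply points back to the reasoning of Corollary~\ref{cor:vit1}), but the content is the same.
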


\section{Invariance under isomorphism and rescaling}\label{sec:inv0}
Alexander Ritter proved in \cite{ar} that $SH$ is an invariant of convex symplectic manifolds up to isomorphism.
\begin{thm}\label{mainr}
If $F:(W_0,\omega_0,j_0)\rightarrow (W_1,\omega_1,j_1)$ is an isomorphism of convex symplectic manifolds , then
\begin{equation*}
SH^*_\nu(W_0,\omega_0,j_0)\simeq SH^*_{F(\nu)}(W_1,\omega_1,j_1). 
\end{equation*}
\end{thm}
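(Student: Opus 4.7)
The strategy is to construct a chain-level isomorphism by pulling back all Floer data along $F$, after first normalizing the cylindrical ends so that they become directly comparable. Let $V_0\subset U_0$ be an open cylindrical end on which $F$ satisfies condition $(3)$ of Definition \ref{isocon}. The composition $\tilde{j}_0:=j_1\circ F|_{V_0}$ is itself a convex cylindrical end for $(W_0,\omega_0)$, and the identity $F^*\theta_1=\theta_0$ on $V_0$ says that $\tilde{j}_0$ and $j_0$ induce the same Liouville form near infinity. By the remark following Definition \ref{isocon}, the identity is an isomorphism $(W_0,\omega_0,j_0)\to(W_0,\omega_0,\tilde{j}_0)$, so the first task is to show that replacing $j_0$ by $\tilde{j}_0$ does not alter $SH^*_\nu$. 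After this replacement $F$ literally intertwines the cylindrical coordinates, $j_1\circ F=\tilde{j}_0$, and the induced contact forms on the common model $\Sigma$ coincide.

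With this normalization in place, for any $F(\nu)$-admissible pair $(H_1,J_1)$ on $(W_1,\omega_1,j_1)$ one sets $H_0:=H_1\circ F$ and $J_0:=dF^{-1}\circ J_1\circ dF$. Because $F$ is a symplectomorphism which preserves the Liouville vector field at infinity, $(H_0,J_0)$ is a $\nu$-admissible pair on $(W_0,\omega_0,\tilde{j}_0)$ with the same slope $T_{H_0}=T_{H_1}$, and consequently avoiding $\operatorname{Spec}(\alpha,\nu)$. The map $F$ then induces canonical bijections between $1$-periodic orbits of the respective Hamiltonian vector fields in the classes $\nu$ and $F(\nu)$, via $x_1:=F\circ x_0$, and between moduli spaces of Floer cylinders, via $u_1:=F\circ u_0$. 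The identities $F^*\omega_1=\omega_0$ and $F_*X_{H_0}=X_{H_1}$ identify the two Floer equations and preserve the energy, so $E(u_0)=E(u_1)$, while $c_1(\omega_0)=F^*c_1(\omega_1)$, together with reference loops and symplectic trivializations transported through $F$, makes the Conley--Zehnder gradings match. Combining these, one obtains a grading-preserving $\Lambda$-linear isomorphism of chain complexes
\begin{equation*}
F_{\#}:SC^*_\nu(W_0,\omega_0,\tilde{j}_0,H_0,J_0)\xrightarrow{\;\sim\;}SC^*_{F(\nu)}(W_1,\omega_1,j_1,H_1,J_1)
\end{equation*}
intertwining the Floer differentials, and by the same naturality also the continuation maps of Section \ref{sc}.

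The pullback $\{(H_0,J_0)\}$ of a cofinal system on $W_1$ is cofinal among admissible data on $(W_0,\omega_0,\tilde{j}_0)$, so passing to the direct limit delivers the desired isomorphism $SH^*_\nu(W_0,\omega_0,j_0)\simeq SH^*_{F(\nu)}(W_1,\omega_1,j_1)$. The routine portions of the plan---the orbit bijection, the energy identity, the commutation with continuation maps---follow essentially automatically from the symplectic naturality of the Floer package. The main obstacle lies in the invariance invoked in the first step: one must verify that the class of admissible Hamiltonians, the maximum principle used for $C^0$-compactness, and the resulting moduli spaces are all insensitive to modifications of $j$ that preserve $\theta$ near infinity. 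This is the content where Ritter's argument in \cite{ar} concentrates its technical effort, and any complete proof must either reduce to such a statement or reprove it directly.
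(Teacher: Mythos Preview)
The paper does not give its own proof of this theorem: it simply attributes the result to Ritter \cite{ar} and states it without argument. So there is no paper-proof to compare against in detail.

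Your sketch is the natural transport-of-structure argument and is correct in outline. The reduction in your first paragraph---replacing $j_0$ by $\tilde j_0=j_1\circ F|_{V_0}$ so that $F$ literally intertwines the ends---is exactly the right move, and once that is done the pullback of admissible data, the bijection of orbits and moduli spaces, the matching of gradings, and the cofinality claim are straightforward consequences of symplectic naturality, as you say. You are also right that the only point with genuine content is the one you flag at the end: showing that $SH^*_\nu$ is unchanged when $j$ is replaced by another cylindrical end inducing the same Liouville form $\theta$ on a common neighbourhood of infinity. This is precisely what the remark after Definition~\ref{isocon} asserts (that the identity is an isomorphism in this situation), and verifying that the admissible class, the maximum principle, and the continuation maps are insensitive to such a change is where the work in \cite{ar} sits. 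Your proposal is therefore a faithful outline of the standard argument, with the technical core correctly isolated and deferred to the reference.
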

Moreover, $SH$ is also invariant under multiplication of the symplectic form $\omega$ by a positive constant $c$.
\begin{prp}\label{res}
If $(W,\omega,j)$ is a convex symplectic manifold and $c$ is a positive number, then
\begin{equation*}
SH^*_\nu(W,\omega,j)\simeq SH^*_{\nu}(W,c\omega,j).
\end{equation*}
\end{prp}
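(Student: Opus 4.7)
The strategy is to compare the Floer data for $(W,c\omega,j)$ and $(W,\omega,j)$ directly, using a rescaling of both the Hamiltonian and the Novikov ring. First, $(W,c\omega,j)$ is a convex symplectic manifold whose Liouville form is $c\theta=e^{r}(c\alpha)$, so the contact form at infinity is $c\alpha$; its Reeb vector field is $\tfrac{1}{c}R^{\alpha}$, and $\op{Spec}(c\alpha,\nu)=c\cdot\op{Spec}(\alpha,\nu)$. Since $c\omega$ and $\omega$ share the same class of compatible almost complex structures, $c_{1}(c\omega)=c_{1}(\omega)$ is $\nu$-atoroidal. From $\imath_{X}(c\omega)=-dH\iff\imath_{X}\omega=-d(H/c)$ one has $X^{c\omega}_{H}=X^{\omega}_{H/c}$, and the map $H\mapsto H/c$ is a bijection between $\mathcal{H}_{\nu}$ for $(W,c\omega,j)$ and $\mathcal{H}_{\nu}$ for $(W,\omega,j)$ (slopes $T_H\notin\op{Spec}(c\alpha,\nu)\iff T_H/c\notin\op{Spec}(\alpha,\nu)$). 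In particular the $1$-periodic orbits, their (transverse) non-degeneracy, the moduli spaces $\mathcal{M}(\cdot,J,x_{-},x_{+})$ for any $\omega$-compatible convex $J$ (which is automatically $c\omega$-compatible), and the Conley--Zehnder indices all coincide for the pairs $(c\omega,H,J)$ and $(\omega,H/c,J)$.

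The key computation is that actions rescale by $c$: for any cylinder $u$ one has $\mathcal{A}^{c\omega}_{H}(u)=c\,\mathcal{A}^{\omega}_{H/c}(u)$. Hence the ring automorphism $\phi_{c}\colon\Lambda\to\Lambda$ defined by $\phi_{c}\bigl(\sum n_{i}t^{a_{i}}\bigr)=\sum n_{i}t^{ca_{i}}$ (well defined since $ca_{i}\to+\infty$) induces a $\phi_{c}$-semilinear isomorphism
\begin{equation*}
\Phi_{c}\colon SC^{*}_{\nu}(W,\omega,j,H/c,J)\longrightarrow SC^{*}_{\nu}(W,c\omega,j,H,J),
\end{equation*}
acting as the identity on generators. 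The intertwining $\Phi_{c}\circ\delta^{\omega}_{J}=\delta^{c\omega}_{J}\circ\Phi_{c}$ is immediate from the action rescaling, since the differential weights satisfy $t^{-\mathcal{A}^{c\omega}_{H}(u)}=\phi_{c}\bigl(t^{-\mathcal{A}^{\omega}_{H/c}(u)}\bigr)$. Thus $\Phi_{c}$ passes to a graded isomorphism $SH^{*}_{\nu}(W,\omega,j,H/c,J)\simeq SH^{*}_{\nu}(W,c\omega,j,H,J)$ (graded because Conley--Zehnder indices are unchanged).

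The second step is to check compatibility with continuation maps. A monotone homotopy $(H^{s},J^{s})$ admissible for $(W,c\omega,j)$ corresponds via $H^{s}\mapsto H^{s}/c$ to a monotone homotopy admissible for $(W,\omega,j)$; the same Floer cylinders solve both $s$-dependent equations, and in the weight $t^{\eta(y)-\mathcal{A}^{c\omega}_{H^{+}}(u)}$ of \eqref{enact2} both $\eta(y)=\int_{\T_{1}}(H^{+}-H^{-})(t,y(t))\,dt$ and the action term rescale by $c$. Hence $\Phi_{c}$ intertwines continuations as well. Because $H\mapsto H/c$ is order-preserving and cofinal between the respective directed systems of admissible pairs, passing to direct limits yields the desired isomorphism $SH^{*}_{\nu}(W,\omega,j)\simeq SH^{*}_{\nu}(W,c\omega,j)$.

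The only subtlety, and the main conceptual point of the proof, is that $\Phi_{c}$ is not a $\Lambda$-linear isomorphism but a $\phi_{c}$-semilinear one; however, since $\phi_{c}$ is an automorphism of $\Lambda$, this still produces a canonical isomorphism of graded $\Lambda$-modules via the twist. Everything else is a bookkeeping exercise in transporting Floer data under the simultaneous rescaling $\omega\mapsto c\omega$, $H\mapsto H/c$.
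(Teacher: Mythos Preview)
Your proof is correct and follows essentially the same approach as the paper: both rely on the Novikov ring automorphism $t^{a}\mapsto t^{ca}$ together with the simultaneous rescaling of the Hamiltonian (the paper writes it as $H\mapsto cH$ from $(W,\omega)$ to $(W,c\omega)$, which is the inverse of your $H\mapsto H/c$). The paper only sketches the chain isomorphism $\sum\lambda_k x_k\mapsto\sum\rho_c(\lambda_k)x_k$ and leaves the verifications implicit; your write-up fills in exactly those routine checks (rescaling of the spectrum, invariance of moduli spaces and indices, compatibility with continuation maps and the direct limit), so the two arguments are the same in substance.
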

The isomorphism is constructed from a rescaling $\rho_{c}:\Lambda\rightarrow\Lambda$ of the Novikov ring, defined by $\rho_c\left(\sum_in_it^{a_i}\right)=\sum_in_it^{ca_i}$. The rescaling yields the chain isomorphisms
\begin{align*}
SC^*_\nu(W,\omega,j,H)\ &\longrightarrow\ SC^{*}_\nu(W,c\omega,j,cH)\\
\sum_k\lambda_kx_k\ &\longmapsto\ \sum_k\rho_c(\lambda_k)x_k.
\end{align*}

\section{Invariance under deformations projectively constant on tori}\label{inv}
In this section we address the problem of the invariance of $SH$ under convex deformations $(W_s,\omega_s,j_s)$. We are going to analyse the case of compact manifolds $W_s$ contained in a bigger open manifold $W$. Thanks to Remark \ref{rmk_com}, this is also enough for studying deformations of an open convex symplectic manifold $(W,\omega_s,j_s)$.

From the case of ALE spaces, considered in \cite{ar} and from the discussion in the next section, we know that we cannot expect $SH$ to be invariant under general convex deformations. Our goal will be to prove the invariance of $SH_\nu$ under the additional assumption that the deformation is \textit{projectively constant on $\nu$-tori}. The relevance of this assumption is due to the fact that it ensures that the systems of local coefficients $\underline{\Lambda}_{\tau(\omega_s)}$ on $\mathscr L_\nu W$ are all isomorphic. Let us start by giving the relevant definitions and clarifying the statement we have just made. 

The \textit{transgression} $\tau:\Omega^2(W)\rightarrow \Omega^1(\mathscr L_\nu W)$ is defined by integration on paths $u:[0,1]\rightarrow \mathscr L_\nu W$ as 
\begin{equation*}
\tau(\sigma)(u)=\int_0^1ds\left(\int_{\T_1} \sigma_{u}(\partial_s u,\partial_t u)\,dt\right)=\int_u\sigma=\sigma(u) 
\end{equation*}
where we identify the path $u$ with the underlying cylinder, which is a map from $[0,1]\times\T_1$ to $W$. The transgression yields a map $[\tau]:H^2(W,\R)\rightarrow H^1(\mathscr L_\nu W,\R)$ between the de Rham cohomologies. We call a closed $2$-form on $W$ \textit{$\nu$-atoroidal} if it belongs to $\ker [\tau]$ or, equivalently, if there exists $\eta:\mathscr L_\nu W\rightarrow\R$ such that $\tau(\sigma)=d\eta$. One such primitive can be obtained by setting $\eta(x)=\sigma(u_x)$, where $u_x$ is any cylinder connecting a fixed reference loop $x_\nu$ to $x$.

A family of $2$-forms $\omega_s$ is called \textit{projectively constant on $\nu$-tori} if there exists a family of positive real numbers $c_s$ such that $c_0=1$ and $c_s^{-1}[\tau(\omega_s)]$ is constant in $H^1(\mathscr L_\nu W)$. This is equivalent to the existence of $\nu$-atoroidal forms $\sigma_s$, with $\sigma_0=0$, such that
\begin{equation}\label{ato}\begin{cases}
\tau(\sigma_s)=d\eta_s\\
\omega_s=c_s(\omega_0+\sigma_s).  
\end{cases}
\end{equation}

Define $\underline{\Lambda}_{\tau(\omega_s)}$ as the trivial $\Lambda$-bundle over $\mathscr L_\nu W$ together with the multiplication isomorphism $t^{-\omega_s(u)}:\Lambda_{x}\rightarrow \Lambda_{x'}$ for every cylinder $u$ connecting $x$ to $x'$. Since $\omega_s$ is projectively constant on $\nu$-tori, we use \eqref{ato} to construct the multiplication map $t^{\eta_s}:\underline{\Lambda}_{\tau(\omega_0)}\rightarrow\underline{\Lambda}_{\tau(\omega_0+\sigma_s)}$ and the rescaling map $\rho_{c_s}:\underline{\Lambda}_{\tau(c_s^{-1}\omega_s)}\rightarrow\underline{\Lambda}_{\tau(\omega_s)}$. The fact that these two maps are isomorphisms of local systems follows from the commutativity of the two diagrams below for every cylinder $u$ connecting $x$ to $x'$.
\begin{align*}\xymatrixcolsep{5pc}
\xymatrix{
 \Lambda_x \ar[r]^{t^{-\omega_0(u)}} \ar[d]_{t^{-\eta_s(x)}} & \Lambda_{x'} \ar[d]^{t^{-\eta_s(x')}} \\
 \Lambda_x \ar[r]^{t^{-(\omega_0+\sigma_s)(u)}} & \Lambda_{x'} }
& \quad\quad\xymatrixcolsep{5pc}\xymatrix{
 \Lambda_x \ar[r]^{t^{-c_s^{-1}\omega_s(u)}} \ar[d]_{\rho_{c_s}} & \Lambda_{x'} \ar[d]^{\rho_{c_s}} \\
 \Lambda_x \ar[r]^{t^{-\omega_s(u)}} & \Lambda_{x'} }
\end{align*}

Therefore, we conclude that $\underline{\Lambda}_{\tau(\omega_s)}$ and $\underline{\Lambda}_{\tau(\omega_0)}$ are isomorphic if $\omega_s$ is projectively constant on $\nu$-tori.

One can define a version of Symplectic Cohomology with twisted coefficients denoted by $SH^*_\nu(W,\omega,j;\underline{\Lambda}_\alpha)$ using any local system $\underline{\Lambda}_\alpha$ with structure ring $\Lambda$. The main result in \cite{ar} says that $SH^*_\nu(W,d\lambda+\sigma,j)\simeq SH^*_\nu(W,d\lambda,j;\underline{\Lambda}_{\tau(\sigma)})$, where $\sigma$ is a sufficiently small $2$-form with compact support. In view of this isomorphism, it is natural to study the invariance of Symplectic Cohomology for convex perturbations $(W_s,\omega_s,j_s)$ which preserve the isomorphism class of $\underline{\Lambda}_{\tau(\omega_s)}$. 

After this introductory discussion let us state the theorem we aim to prove. 

\begin{thm}\label{invthm}
Let $W$ be an open manifold and let $\omega_s$ be a family of symplectic forms on $W$. Suppose $W_s\subset W$ is a family of zero-codimensional embedded compact submanifold in $W$, which are all diffeomorphic to a model $W'$. Let $(W_s,\omega_s\big|_{W_s},j_s)$ be a convex deformation. Fix $\nu$ a free homotopy class of loops in $W'\simeq W_s$ such that the contact forms $\alpha_0$ and $\alpha_1$ are both $\nu$-non-degenerate. If $c_1(\omega_s)$ is $\nu$-atoroidal for every $s\in[0,1]$ and $s\mapsto\omega_s$ is projectively constant on $\nu$-tori, then
\begin{equation*}
SH^*_\nu(W_0,\omega_0,j_0)\simeq SH^*_\nu(W_1,\omega_1,j_1). 
\end{equation*}
\end{thm}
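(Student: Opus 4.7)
The plan follows the outline sketched by the author after the statement. I will produce a family of chain-level continuation maps that fit together into the desired isomorphism; the key tension is that the Floer differentials on the two sides are weighted by the transgressions of two different symplectic forms, so one cannot simply copy a standard continuation argument.

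First, by taking completions (Remark \ref{rmk_com}) I may reduce to the case where $W$ itself is open and convex and the $W_s$ are a family of compact sub-domains. I apply Gray's stability theorem to the contact boundaries $(\partial W_s,\alpha_s)$ to produce a family of diffeomorphisms $\psi_s:W'\to W_s$ such that $\psi_s^*\alpha_s=e^{f_s}\alpha_0$ for some smooth functions $f_s$ on $\partial W'$, and a family of collars of $\partial W'$ in $W'$ extending $\psi_s$. Extending $\psi_s$ to an isotopy of $W$ supported in a neighbourhood of the boundary and pulling back everything to $W'$, I obtain a new family $(W',\omega_s',j_0)$, isomorphic to the original one in the sense of Definition \ref{isocon}, and enjoying the crucial property that $\omega_s'-\omega_0'$ is supported in a fixed compact set $K\subset W'$ disjoint from a neighbourhood of $\partial W'$. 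By Theorem \ref{mainr} it is enough to prove the invariance for this modified family.

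Since the deformation is projectively constant on $\nu$-tori and still is so after the above isomorphism, I write $\omega_s'=c_s(\omega_0'+\sigma_s)$ with $c_0=1$, $\sigma_0=0$, and $\tau(\sigma_s)=d\eta_s$ for smooth primitives $\eta_s:\mathscr L_\nu W'\to\R$. By the discussion preceding the statement, the local systems $\underline\Lambda_{\tau(\omega_s')}$ are all isomorphic to $\underline\Lambda_{\tau(\omega_0')}$ via the composition $\rho_{c_s}\circ t^{\eta_s}$. Next I choose a cofinal sequence of pairs $(H_s,J_s)$ with $H_s\in\hat{\mathcal H}_\nu(W',\omega_s',j_0)$ and $J_s\in\mathcal J(H_s)$, designed so that (i) $H_s$ is a uniformly small Morse perturbation on $K$ (in particular the non-constant $1$-periodic orbits of $X_{H_s}$ live in the cylindrical region where all $\omega_s'$ agree, and are therefore independent of $s$ as a set together with their Conley--Zehnder indices), and (ii) $J_s$ is $\omega_s'$-compatible and varies smoothly with $s$ with uniformly bounded norm on $K$.

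Now I define continuation homomorphisms $\Phi_s:SC^*_\nu(W',\omega_0',j_0,H_0,J_0)\to SC^*_\nu(W',\omega_s',j_0,H_s,J_s)$ by counting cylinders $u$ for the $s$-dependent Floer equation associated to a chosen homotopy from $(H_0,J_0)$ to $(H_s,J_s)$, with weights of the form $\varepsilon(u)\,\rho_{c_s}(t^{\eta_s(y)-\mathcal A^{\omega_s'}_{H_s}(u)})$; the presence of the factor $t^{\eta_s(y)}$ is what forces these maps to intertwine the two differentials (the twisted-coefficient identity in the square diagrams recalled before the statement is exactly the cocycle one needs). The hard step is to show the relevant moduli spaces are compact, i.e.\ to obtain uniform $C^0$ and energy bounds even though both $\omega_s'$ and $H_s$ move with $s$. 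The $C^0$ bound is the standard convex-end maximum principle: it is insensitive to the change of symplectic form inside $K$ because $\omega_s'$ agrees with $\omega_0'$ on the cylindrical end, and the almost complex structure is convex there. The energy bound, on the other hand, requires the Palais--Smale mechanism mentioned in the introduction: integrating Floer's equation one obtains, after the twist $t^{\eta_s}$ is absorbed, an action--energy identity of the form $E(u)=-\mathcal A^{\omega_0'}_{H_0}(u)+\int_{\R\times\T_1}(\partial_s\omega_s')\wedge\cdots$, whose error term is controlled by the time the cylinder spends on $K$; a Palais--Smale type lemma bounds this time uniformly in terms of the (a priori) energy, allowing me to close the estimate.

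Once $\Phi_s$ is known to descend to cohomology, two homotopy-of-homotopies arguments, one in the parameter $s$ and one between cofinal sequences, show that $\Phi_1$ is independent of the chosen homotopy and that it commutes with the continuation maps defining the direct limits on each side; running the same construction in the reverse direction produces a candidate inverse, and a further homotopy of homotopies shows that the compositions are chain-homotopic to the identity. I expect the principal obstacle to be precisely the Palais--Smale estimate needed to bound the time spent by $s$-dependent Floer cylinders on $K$, since this is where the interaction between the varying symplectic form and the convex-end structure is most delicate; all the subsequent algebra (commutativity with the direct limit, independence of choices, compatibility with the local system isomorphism $\rho_{c_s}\circ t^{\eta_s}$) is a formal consequence of that compactness result.
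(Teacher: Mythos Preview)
Your sketch follows essentially the same strategy as the paper: reduce via Gray stability to a family $(W',\omega_s',j_0)$ where the $\omega_s'$ all agree near the boundary, use the projective-constancy hypothesis to identify the local systems of coefficients, choose Hamiltonians which are uniformly small on $\op{supp}\{\omega_s'-\omega_0'\}$ so that the periodic orbits are independent of $s$, and then establish the continuation maps via a Palais--Smale lemma bounding the time Floer cylinders spend on that support.

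Two points where the paper is slightly more careful than your sketch. First, the paper reduces to $c_s\equiv 1$ at the outset using Proposition \ref{res}; you instead carry $\rho_{c_s}$ through the weights, which is fine but adds bookkeeping. Second, and more importantly, both the Gray-stability step (Proposition \ref{prpinv1}) and the energy estimate (inequality \eqref{ineqen} in the proof of Proposition \ref{isoprop}) only work for a \emph{small} variation of the parameter: the former because one needs $\Psi_s$ to land inside the collar $U'_s$, and the latter because the a~priori energy bound closes only when $\mu_H\big(1+\frac{\varepsilon_K}{\varepsilon_H}\Vert X_K\Vert\big)<1$. The paper therefore subdivides $[0,1]$ into finitely many short subintervals and composes the resulting isomorphisms; your sketch treats the whole interval at once and does not mention this subdivision. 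This is not a genuine gap, since the subdivision is routine once the local estimate is in hand, but it should be made explicit. A minor further point: the paper uses the \emph{same} Hamiltonian $H$ for all $s$ (only $J_s$ varies), which simplifies matters compared to your $s$-dependent $H_s$.
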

\begin{cor}
Let $(W,\omega_s,j_s)$ be a family of open convex symplectic manifolds. Fix $\nu$ a free homotopy class of loops in $W$ such that the contact forms $\alpha_0$ and $\alpha_1$, associated to $j_0$ and $j_1$, are both $\nu$-non-degenerate. If $c_1(\omega_s)$ is $\nu$-atoroidal for every $s\in[0,1]$ and $s\mapsto\omega_s$ is projectively constant on $\nu$-tori, then
\begin{equation*}
SH^*_\nu(W,\omega_0,j_0)\simeq SH^*_\nu(W,\omega_1,j_1). 
\end{equation*}
\end{cor}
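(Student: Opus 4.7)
The plan is to carry out the strategy sketched in the introduction in three movements. Throughout I pass to completions, so I may and shall treat each $(W_s,\omega_s\big|_{W_s},j_s)$ as an open convex symplectic manifold $(\widehat W_s, \widehat\omega_s, \widehat\jmath_s)$. Writing $\omega_s = c_s(\omega_0 + \sigma_s)$ with $\sigma_s$ $\nu$-atoroidal as in \eqref{ato}, and applying Gray's stability theorem to the family of contact forms $\alpha_s$ at infinity (after absorbing the conformal factor $c_s$ via the rescaling isomorphism of Proposition \ref{res}), I would construct diffeomorphisms $\Phi_s \colon \widehat W_0 \to \widehat W_s$ which are the identity in a fixed cylindrical end of $\widehat W_0$. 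Setting $\omega_s' := \Phi_s^* \widehat\omega_s$ and keeping the original end $\widehat\jmath_0$ produces an isomorphic family $(\widehat W_0, \omega_s', \widehat\jmath_0)$ with the property that $\omega_s' - \omega_0'$ is supported in a fixed compact set $K \subset \widehat W_0$ disjoint from the cylindrical end. By Theorem \ref{mainr} and Proposition \ref{res} it then suffices to prove that $SH^*_\nu(\widehat W_0, \omega_0', \widehat\jmath_0) \simeq SH^*_\nu(\widehat W_0, \omega_1', \widehat\jmath_0)$.

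Next I would fix once and for all a Hamiltonian $H \in \hat{\mathcal H}_\nu$ for $\omega_0'$ and a convex almost complex structure $J$, chosen so that $H$ is sufficiently $C^2$-small on $K$; since $\omega_s'$ agrees with $\omega_0'$ outside $K$, $H$ remains admissible for all $s$. The non-constant $1$-periodic orbits of $X_H$ with respect to $\omega_s'$ live in the cylindrical end where the symplectic form is $s$-independent, and the constant orbits are critical points of $H$ in $K$; hence the generating set of $SC^*_\nu(\widehat W_0,\omega_s',\widehat\jmath_0,H)$ is $s$-independent. For an interpolating path $\{\omega_s'\}$ and the associated $s$-dependent Floer equation, the modified energy-action identity analogous to \eqref{enact2} contains an extra term $\int u^*(\partial_s \omega_s')$ supported on $u^{-1}(K)$. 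A Palais-Smale type bound, valid because $H$ is $C^2$-small on $K$, forces the time a cylinder spends inside $K$ to be controlled by the $s$-uniform action difference at the asymptotic orbits; combined with the maximum principle outside $K$ (where the symplectic forms coincide and $J$ is convex), this yields uniform $C^0$ and energy bounds on the continuation moduli spaces, and hence compactness.

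With compactness in hand I define the continuation map $\Psi_{1,0}$ by weighting each counted cylinder $u$ from $x_+$ to $x_-$ with coefficient $t^{-\omega_0'(u) + \eta_1(x_+) - \eta_1(x_-)}$, where $\eta_1$ is the primitive of $\tau(\sigma_1)$ furnished by $\nu$-atoroidality, and composing with the Novikov rescaling $\rho_{c_1}$; intrinsically this is exactly the local-system isomorphism $\underline{\Lambda}_{\tau(\omega_1)} \to \underline{\Lambda}_{\tau(\omega_0)}$ built in the preamble of Section \ref{inv}. The identity $\eta_s(x_+) - \eta_s(x_-) = \sigma_s(u)$ on any cylinder $u$ from $x_-$ to $x_+$ makes the weights multiplicative under concatenation, so $\Psi_{1,0}$ intertwines the Floer differentials. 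A homotopy-of-homotopies argument on the space of connecting paths $\{\omega_s'\}_{s\in[0,1]}$ then shows that $\Psi_{1,0}$ on cohomology is independent of the path, and that $\Psi_{0,1}\circ\Psi_{1,0}$ is chain-homotopic to the identity; hence $\Psi_{1,0}$ is an isomorphism. A second such argument, run simultaneously over the slope parameter, shows that these isomorphisms commute with the continuation maps $\varphi_{H^-,J^-}^{H^+,J^+}$ of Section \ref{sc}, so they descend to an isomorphism of the direct limits $SH^*_\nu$.

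The crux is the Palais-Smale estimate: uniform bounds on the time a Floer cylinder spends in $K$ are what convert the potentially unbounded term $\int u^*(\partial_s\omega_s')$ into a quantity controlled by the fixed action difference at the asymptotic orbits. The hypothesis that $\omega_s$ is projectively constant on $\nu$-tori enters here decisively, because without it the primitive $\eta_s$ of $\tau(\sigma_s)$ would not exist, and the drift of $\omega_s'(u)$ with $s$ along a $\nu$-torus could not be absorbed into a well-defined weight in the Novikov ring. Once this estimate and the weighting are in place, the remaining steps, including the verification that $\Psi_{1,0}$ is a chain map and the two homotopy-of-homotopies arguments, are routine adaptations of the standard Floer continuation machinery.
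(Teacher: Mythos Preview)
Your proposal is not wrong in substance, but it massively overshoots what is needed. The paper's proof of this corollary is a single sentence: use Remark~\ref{rmk_com} to find a family of compact submanifolds $W_s\subset W$ whose completions recover $(W,\omega_s,j_s)$, and then apply Theorem~\ref{invthm} directly. You have instead sketched, essentially from scratch, the entire proof of Theorem~\ref{invthm} and Proposition~\ref{isoprop} (Gray's theorem to trivialise the deformation at infinity, the Palais--Smale bound on the time a cylinder spends in the perturbed region, the resulting energy estimate, the weighted continuation maps, and the two homotopy-of-homotopies arguments). That machinery is exactly what the paper develops in Section~\ref{inv}, so your outline is sound as a proof strategy --- but the corollary is placed \emph{after} Theorem~\ref{invthm} precisely so that one can invoke it as a black box.

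There is also some confusion in your setup. You write ``treat each $(W_s,\omega_s|_{W_s},j_s)$ as an open convex symplectic manifold'' and ``pass to completions'', but in the corollary $W$ is already a single open manifold (hence its own completion) and there are no $W_s$ in the hypotheses. The reduction runs in the opposite direction to what you suggest: one does not complete compact pieces, one \emph{carves out} compact pieces $W_s\subset W$ from the given open $W$ using Remark~\ref{rmk_com}, and then feeds those into the compact-case Theorem~\ref{invthm}.
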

\begin{proof}
Use Remark \ref{rmk_com} to find a family of compact manifolds $W_s\subset W$ satisfying the hypothesis of Theorem \ref{invthm}.
\end{proof}
The theorem can also be used to prove that in the compact case, Symplectic Cohomology does not depend on the choice of a convex collar.
\begin{cor}\label{cor_indcor}
Let $(W,\omega)$ be a compact symplectic manifold and suppose that $c_1(\omega)$ is $\nu$-atoroidal for some class $\nu$. If $j_0$ and $j_1$ are two $\nu$-non-degenerate convex collars of the boundary, then
\begin{equation}
SH^*_\nu(W,\omega,j_0)\simeq SH^*_\nu(W,\omega,j_1).
\end{equation}
\end{cor}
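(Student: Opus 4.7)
My plan is to reduce the corollary to Theorem \ref{invthm} by producing a convex deformation connecting the two completions on a common open ambient. The key observation is that the space of Liouville vector fields for $\omega$ is convex: if $V_0$ and $V_1$ are Liouville, so is $V_s := (1-s)V_0 + sV_1$, because $\mathcal{L}_{V_s}\omega = (1-s)\omega + s\omega = \omega$. The Liouville vector fields $V_i=\partial_{r_i}$ associated to $j_i$ are both defined on a common neighbourhood $U$ of $\partial W$ and both lie in the convex cone of outward-transverse vectors along $\partial W$. Hence, after shrinking $U$, the family $V_s$ integrates to a smooth family of convex collars $j_s$ of $(W,\omega)$, interpolating between $j_0$ and $j_1$ and producing a smooth family of contact forms $\alpha_s$ on $\Sigma$.

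Next I place all the completions $(\hat{W}_s,\hat\omega_s,\hat j_s)$ of $(W,\omega,j_s)$ on a common smooth open manifold. The Liouville flow of $\partial_{r_s}$ starting at $\partial W$ identifies a neighbourhood of $\partial W$ in $\hat W_s$ with $(-\varepsilon,\infty)\times\Sigma$ depending smoothly on $s$; combining this with the identity on a compact core of $W$ far from $\partial W$, I obtain a smooth family of diffeomorphisms $\Phi_s:\tilde W\to\hat W_s$ with $\Phi_0=\mathrm{id}$, where $\tilde W := W\cup_\partial [0,\infty)\times\Sigma$ is fixed. The pulled-back data $\tilde\omega_s:=\Phi_s^*\hat\omega_s$ and $\tilde j_s:=\hat j_s\circ\Phi_s$ form a smooth family $(\tilde W,\tilde\omega_s,\tilde j_s)$ of open convex symplectic structures on the single open manifold $\tilde W$, which at $s=0,1$ is, via $\Phi_0,\Phi_1$, isomorphic as a convex symplectic manifold to the completions of $(W,\omega,j_0)$ and $(W,\omega,j_1)$, respectively.

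The hypotheses needed to apply the open-manifold corollary of Theorem \ref{invthm} are then easy to verify. The cylindrical end of $\tilde W$ retracts onto $\partial W\subset W$, so $W\hookrightarrow\tilde W$ is a homotopy equivalence, and since $\tilde\omega_s$ restricts to $\omega$ on the common compact core of $W$, we obtain $[\tilde\omega_s]=[\omega]$ and $c_1(\tilde\omega_s)=c_1(\omega)$ in $H^2(\tilde W)\cong H^2(W)$ for every $s$. Thus $c_1(\tilde\omega_s)$ is $\nu$-atoroidal by hypothesis, the transgression class $[\tau(\tilde\omega_s)]$ is independent of $s$ (so the family is projectively constant on $\nu$-tori with $c_s\equiv 1$), and $\alpha_0,\alpha_1$ are $\nu$-non-degenerate by the assumption on $j_0,j_1$. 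Applying the open-manifold corollary (which invokes Remark \ref{rmk_com} to pass to compact submanifolds) and then Theorem \ref{mainr} to transport back to the original completions yields $SH^*_\nu(W,\omega,j_0)\simeq SH^*_\nu(W,\omega,j_1)$.

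The delicate step is the second paragraph: I need the diffeomorphisms $\Phi_s$ to depend smoothly on $s$ and the resulting family $(\tilde\omega_s,\tilde j_s)$ to be a genuine convex deformation in the sense of Theorem \ref{invthm}, meaning that the compact submanifolds $\tilde W_s\subset\tilde W$ produced by Remark \ref{rmk_com} have boundaries of positive contact type for every $s$ and collars that vary smoothly. This reduces to the smooth dependence of the Liouville flow of $V_s$ on the parameter $s$, which holds because $V_s$ is smooth in $s$ and uniformly transverse to $\partial W$.
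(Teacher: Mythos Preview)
Your proof is correct, and the key step is the same as the paper's: both arguments interpolate linearly near the boundary and observe that convexity is preserved. You phrase this via Liouville vector fields $V_s=(1-s)V_0+sV_1$; the paper phrases it via the induced contact forms $\alpha_s=(1-s)\alpha_0+s\alpha_1$ and checks in one line that $\alpha_s\wedge d\alpha_s=(1-s)\alpha_0\wedge i^*\omega+s\alpha_1\wedge i^*\omega>0$ (since $d\alpha_s=i^*\omega$). These are equivalent, as $\alpha_s=\iota_{V_s}\omega|_{\partial W}$.

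The difference is in packaging. The paper applies Theorem~\ref{invthm} directly to the \emph{constant} deformation $(W,\omega,j_s)$: the compact manifold and the symplectic form never change, only the collar does. All the side hypotheses of Theorem~\ref{invthm} are then trivial (the family is vacuously projectively constant on $\nu$-tori, and $c_1$ is fixed). Your detour through completions $\hat W_s$, diffeomorphisms $\Phi_s$ to a common model, pulled-back forms $\tilde\omega_s$, and the open-manifold corollary is valid but adds bookkeeping that the paper's route avoids entirely: you have to glue $\Phi_s$ smoothly between core and collar, and then argue that $[\tilde\omega_s]$ and $c_1(\tilde\omega_s)$ are constant. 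None of that is wrong, but it is unnecessary once you notice that $\omega$ itself can be held fixed throughout.
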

\begin{proof}
Let $\alpha_0$ and $\alpha_1$ be the two contact forms induced on the boundary. Define the family $\alpha_s:=(1-s)\alpha_0+s\alpha_1$ and observe that $d\alpha_s=i^*\omega$. Hence, $\alpha_s\wedge d\alpha_s=(1-s)\alpha_0\wedge\omega+s\alpha_1\wedge\omega$ is a positive contact form. Now apply Theorem \ref{invthm} to the deformation $(W,\omega,j_s)$, where $j_s$ is any family of convex collars inducing $\alpha_s$ on $\partial W$ and connecting $j_0$ to $j_1$. 
\end{proof}
\begin{rmk}
Thanks to the corollary we just proved, if $(W,\omega,j)$ is a convex symplectic manifold with $\nu$-non-degenerate boundary, its Symplectic Cohomology depends only on $\omega$ and not on $j$. Thus, we will use the notation
\begin{equation}
SH^*_\nu(W,\omega):=SH^*_\nu(W,\omega,j).
\end{equation}
We point out that we were not able to prove a statement analogous to Corollary \ref{cor_indcor} for open convex symplectic manifolds, since we ignore if the space of convex cylindrical ends is path connected.
\end{rmk}

We now proceed to the proof of the theorem, by making three preliminary reductions. First, thanks to Proposition \ref{res}, we can assume that $c_s=1$.

Second, it is enough to prove invariance of symplectic cohomology for all the \textit{non-degenerate} parameters belonging to a small interval around any fixed  $s_*\in[0,1]$ (a parameter $s$ is non-degenerate if the corresponding $\alpha_s$ is non-degenerate, so that $SH$ is well defined). Then, the theorem follows by observing that the compactness of $[0,1]$ implies that there is a finite number of non-degenerate $s_i\in[0,1]$ with $0=s_0<s_1<\dots<s_k=1$ such that
\begin{equation}\label{equ_sub}
SH^*_\nu(W_{s_i},\omega_{s_i},j_{s_i})\simeq SH^*_\nu(W_{s_{i+1}},\omega_{s_{i+1}},j_{s_{i+1}}).
\end{equation}
Hence, the isomorphism claimed in Theorem \ref{invthm} is obtained by taking the composition of all the intermediate isomorphisms.

Third, by the second reduction, we can make the following assumptions.
\begin{itemize}
 \item All the $W_s$ are contained in a fixed symplectic manifold $W$ with cylindrical end $j=(r,p):U\stackrel{\sim}{\longrightarrow}\R\times \Sigma$ such that the boundary of $W_s$ is contained in $U$ and transverse to $\partial_r$. Thus, there exists a family of functions $f_s:\Sigma\rightarrow\R$ such that $W_s=W^{f_s}$. To ease the notation we also set $\Gamma_s:=\Gamma_{f_s}$, $\Sigma_s:=\Sigma_{f_s}$ (see Definition \ref{dfn_gra}).
 \item The convex collars $j_s$ of $\partial W_s=\Sigma_s$ extend to a family of convex neighbourhoods $j_s=(r_s,p_s):U_s\stackrel{~}{\rightarrow}(-\varepsilon_s,\varepsilon_s)\times\Sigma$ of $\Sigma_s$ such that $r_s(\Sigma_s)=0$ and $\theta_s:=\imath_{\partial_{r_s}}\omega_s$ is a Liouville form for $\omega_s$. Observe that with this choice, we have $j_s^{-1}(r_s,p_s)=\Phi^{\partial_{r_s}}_{r_s}(\Gamma_s(p_s))$.
\end{itemize}
Thanks to these two assumptions, we can define the family of positive contact forms $\alpha_s=\Gamma_s^*\theta_s\in\Omega^1(\Sigma)$.

To prove the invariance when the variation of the parameter is small and $c_s=1$, we first bring the convex manifolds in a special form through a suitable isomorphism. In the following discussion let $r_0,r_1$ be two real numbers such that $r_0<r_1<f_s(x)$, for every $s$ in $[0,1]$ and $x$ in $\Sigma$.
\begin{prp}\label{prpinv1}
For every parameter $s_*\in[0,1]$, there exists $\delta_{s_*}$ such that for $|s-s_*|<\delta_{s_*}$, there exists a family of symplectic forms $\omega'_s$ on $W_{s_*}$  with the following properties:
\begin{enumerate}[a)]
 \item $\omega'_s=\omega_{s_*}$ on a neighbourhood of $\Sigma_{s_*}$;
 \item there exists a family of isomorphisms $F_s:(\hat{W}_{s_*},\hat{\omega}'_s,\hat{j}_{s_*})\rightarrow(\hat{W}_s,\hat{\omega}_s,\hat{j}_s)$, restricting to the identity on $W^{r_0}$.
\end{enumerate}
\end{prp}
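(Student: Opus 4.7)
The plan is to construct the diffeomorphisms $F_s$ first and then simply set $\omega'_s:=F_s^*\omega_s\big|_{W_{s_*}}$; properties (a) and (b) will follow directly from the construction. A parametric version of Gray's stability theorem applied to the smooth family of contact forms $\{\alpha_s\}$ on the closed manifold $\Si$ (over a sufficiently short interval about $s_*$) yields smooth families $\psi_s:\Si\to\Si$ of diffeomorphisms and $g_s:\Si\to(0,+\infty)$ of positive functions with $\psi_{s_*}=\op{Id}$, $g_{s_*}\equiv 1$, and $\psi_s^*\alpha_s=g_s\alpha_{s_*}$. The associated \emph{Gray map} on the model cylinder,
\[
\Phi_s(r,p):=\bigl(r-\log g_s(p),\psi_s(p)\bigr),
\]
satisfies $\Phi_s^*(e^r\alpha_s)=e^r\alpha_{s_*}$ and hence $\Phi_s^*d(e^r\alpha_s)=d(e^r\alpha_{s_*})$ by a direct computation.

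To build $F_s:\hat W_{s_*}\to\hat W_s$, I transport $\Phi_s$ via the collars. The extended collars $\hat\jmath_{s_*}$ and $\hat\jmath_s$ identify a cylindrical neighborhood $V_{s_*}$ of $\Si_{s_*}$ in $\hat W_{s_*}$ and one $V_s$ of $\Si_s$ in $\hat W_s$ with subsets of the model cylinder; under these identifications, $\tilde F_s:=\hat\jmath_s^{-1}\circ\Phi_s\circ\hat\jmath_{s_*}$ is a well-defined map $V_{s_*}\to V_s$ for $|s-s_*|$ small, converging in $C^\infty$ to the identity of $W$ as $s\to s_*$ on any compact subset of $V_{s_*}$ contained in $W$. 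Pick $\delta_1\in(0,\varepsilon_{s_*})$ small enough that the slab $\{-\delta_1\le r_{s_*}\le 0\}\subset U_{s_*}$ is disjoint from $W^{r_1}$, which is possible because $r_1<f_{s_*}$; pick also a smooth cutoff $\chi:\R\to[0,1]$ equal to $1$ on $[-\delta_1/2,+\infty)$ and to $0$ on $(-\infty,-\delta_1]$. Define $F_s$ to equal $\tilde F_s$ on $\{r_{s_*}\ge-\delta_1/2\}$, to equal $\op{Id}_W$ on $W^{r_1}$ (which is a common subset of $\hat W_{s_*}$ and $\hat W_s$), and on the slab $\{-\delta_1<r_{s_*}<-\delta_1/2\}\subset W$ take the interpolation
\[
F_s(x):=\exp_x\bigl(\chi(r_{s_*}(x))\cdot\exp_x^{-1}\tilde F_s(x)\bigr)
\]
with respect to a fixed background Riemannian metric on $W$. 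For $|s-s_*|$ sufficiently small, $F_s$ is a $C^\infty$-small perturbation of the identity on a fixed compact set, hence a diffeomorphism $\hat W_{s_*}\to\hat W_s$.

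Property (a) holds because in the region where $\chi\equiv 1$, which contains a neighborhood of $\Si_{s_*}$ in $\hat W_{s_*}$, $F_s$ coincides with $\tilde F_s$, so $F_s^*\theta_s=\theta_{s_*}$ there and therefore $\omega'_s=F_s^*\omega_s=\omega_{s_*}$. Property (b) follows because $F_s^*\hat\omega_s=\hat\omega'_s$ by definition of $\omega'_s$, on the cylindrical end $F_s$ has the form $(r,p)\mapsto(r-\log g_s(p),\psi_s(p))$ with $\psi_s^*\alpha_s=e^{\log g_s}\alpha_{s_*}$ (condition (3) of Definition \ref{isocon} with $f=\log g_s$ and $\psi=\psi_s$), and $F_s=\op{Id}$ on $W^{r_1}\supset W^{r_0}$ by construction. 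The main obstacle I anticipate is the interpolation step: pasting the Gray map near the boundary with the identity deep inside must yield a globally smooth diffeomorphism. This is handled by performing the mixing inside the fixed collar chart, where $\tilde F_s\to\op{Id}$ in $C^\infty$ as $s\to s_*$, so that $F_s$ is a small $C^1$-perturbation of the identity and hence remains a diffeomorphism by openness of the space of diffeomorphisms in the $C^1$-topology on compact sets.
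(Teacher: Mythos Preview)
Your proof follows essentially the same strategy as the paper's: apply Gray stability to the family $\{\alpha_s\}$ to get the ``Gray map'' $\Phi_s$ (the paper calls it $\Psi_s$), use it to define $F_s$ near the boundary, set $F_s=\op{Id}$ deep inside, and interpolate in between; then put $\omega'_s:=F_s^*\omega_s$. The difference is purely in the interpolation mechanism. The paper constructs an auxiliary vector field $Y_s$ interpolating between $\partial_{r_s}$ and $\partial_r$, and uses its flow to produce explicit diffeomorphisms $B_s:[-1,0]\times\Sigma\to W_{r_1}^{\Sigma'_s}$; the map $F_s$ in the intermediate region is then $B_s\circ B_{s_*}^{-1}$, with a further $r$-dependent homotopy on $W^{r_1}_{r_0}$ to reach the identity. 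Your approach replaces all of this by a one-step Riemannian-exponential interpolation $F_s(x)=\exp_x\bigl(\chi(r_{s_*}(x))\exp_x^{-1}\tilde F_s(x)\bigr)$, which is considerably shorter and conceptually clean: you exploit directly that $\tilde F_s\to\op{Id}$ in $C^\infty$.

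One point deserves a bit more care. Your justification that $F_s$ is a diffeomorphism $\hat W_{s_*}\to\hat W_s$ (``$C^\infty$-small perturbation of the identity on a fixed compact set, hence a diffeomorphism'') is not quite complete, because the target $\hat W_s$ varies with $s$, so the usual openness-of-diffeomorphisms argument does not apply verbatim. You do get immediately that $F_s$ is a local diffeomorphism and an injection on each of the three regions, but global bijectivity onto $\hat W_s$ needs an extra sentence: for instance, build the analogous map $G_s:\hat W_s\to\hat W_{s_*}$ with the roles of $s$ and $s_*$ reversed and observe that $G_s\circ F_s$ and $F_s\circ G_s$ are $C^1$-close to the identity on fixed manifolds, hence diffeomorphisms, which forces $F_s$ to be bijective. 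The paper's flow-based construction avoids this issue because $B_s$ is visibly a diffeomorphism onto $W_{r_1}^{\Sigma'_s}$ by construction. With this small addition your argument is complete.
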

\begin{proof}
Gray Stability Theorem applied to the family of contact forms $\alpha_s$ on $\Sigma$ yields a family of functions $u_s:\Sigma\rightarrow\R$ and of diffeomorphisms $\psi_s:\Sigma\rightarrow\Sigma$, such that 
\begin{equation}\label{gray}
\psi_s^*\alpha_s=e^{-u_s}\alpha_{s_*},\quad \quad\quad u_{s_*}=0,\ \ \psi_{s_*}=\op{Id}_{\Sigma}.
\end{equation}
Consider the associated map between the symplectisations
\begin{align*}
\Psi_s:(\R\times\Sigma,e^{r_{s_*}}\alpha_{s_*})&\ \ \longrightarrow\ \ (\R\times\Sigma,e^{r_s}\alpha_s)\\
(r_{s_*},p_{s_*})&\ \ \longmapsto\ \ (r_{s_*}+u_s(p_{s_*}),\psi_s(p_{s*})). 
\end{align*}
Equation \eqref{gray} is equivalent to the fact that $\Psi_0=\op{Id}_{\R\times\Sigma}$ and $\Psi_s$ is an exact symplectomorphism: $\Psi^*_s(e^{r_s}\alpha_s)=e^{r_{s_*}}\alpha_{s_*}$. 

As $\Sigma_s$ is the graph of the function $f_s$, the vector fields $\partial_{r_s}$ and $\partial_r$ are both positively transverse to $\Sigma_s$. Hence, we can define a vector field $Y_s$ on a neighbourhood $V_s$ of $W_{r_1}^{f_s}\subset W$ in such a way that
\begin{itemize}
 \item it is equal to $\partial_{r_s}$ on a neighbourhood $U'_s$ of $\Sigma_{s}$ compactly contained in $U_s$;
 \item it is equal to $\partial_r$ close to $\Sigma_{r_1}$;
 \item every flow line hits $\Sigma_{r_1}$ in the past.
\end{itemize}
Such vector field is obtained by considering a convex combination 
\begin{equation*}
Y_s=a_s(r,p)\partial_{r_s}+(1-a_s(r,p))\partial_r,  
\end{equation*}
where $a_s:U_s\rightarrow[0,1]$ is a function with compact support, which is $1$ close $\Sigma_s$. We are now ready to construct the map $F_s$ dividing $\hat{W}_{s_*}$ and $\hat{W}_s$ in four pieces.
\begin{enumerate}[\itshape i)]
 \item Take $\delta_{s_*}$ in such a way that, for $|s-s_*|<\delta_{s_*}$, $\Psi_s((-\frac{\varepsilon_{s_*}}{2},\frac{\varepsilon_{s_*}}{2})\times\Sigma)\subset U'_s$ and set
\begin{equation*}
F_s:=\ {\Psi_s}_|:\{(r_{s_*},p_{s_*})\, |\ r_{s_*}\geq0\}\longrightarrow\{(r_s,p_s)\,|\ r_s\geq u_s(p_s)\}. 
\end{equation*}
\item Define $A_s:\Sigma\rightarrow W$ by $A_s(p_{s_*}):=j^{-1}_s\circ\Psi_s(0,p_{s_*})$. In particular, $A_{s_*}=\Gamma_{f_{s_*}}$. Then, $\partial_{r_s}$ is transverse to $\Sigma'_s:=A_s(\Sigma)$ since it is close to $\Sigma'_{s_*}=\Sigma_{s_*}$.

For every $p_{s_*}$ in $\Sigma$, consider the first negative time $\tau_s(p_{s_*})<0$ such that $\Phi^{Y_s}_{\tau_s(p_{s_*})}(A_s(p_{s_*}))\in\Sigma_{r_1}$. Let $Q_s:\Sigma\!\rightarrow\!\Sigma$ be defined as $Q_s:=p\circ \Phi^{Y_s}_{\tau_s(p_{s_*})}(A_s(p_{s_*}))$. Furthermore, take a diffeomorphism $T_s^{p_{s_*}}:[-1,0]\rightarrow[\tau_s(p_{s_*}),0]$ which is the identity near $0$. We have diffeomorphisms 
\begin{align*}
B_s:[-1,0]\times\Sigma&\ \ \longrightarrow\ \ W_{r_1}^{\Sigma'_s}\\
(t,p_{s_*})&\ \ \longmapsto\ \ \Phi^{Y_s}_{T_s^{p_{s_*}}(t)}(A_s(p_{s_*})).  
\end{align*}
We set $F_s:=B_s\circ B_{s_*}^{-1}:W_{r_1}^{f_{s_*}}\rightarrow W_{r_1}^{\Sigma'_{s}}$. By definition
\begin{equation*}
j_s\circ B_s(t,p_{s_*})=j_s(\Phi^{Y_s}_t(A_s(p_{s_*})))=(t+u_s(p_{s_*}),\psi_s(p_{s_*}))=\Psi_s(t,p_{s_*})
\end{equation*}
for $t$ close to zero. Hence, the two definitions of $F_s$ match along $\Sigma_{s_*}$.
 \item Close to $\Sigma_{r_1}$ the map $B_s$ takes the form $B_s(t,p_{s_*})=(r_1-1-t,Q_s(p_{s_*}))$. Therefore, in that region we have that $F_s=B_s\circ B_{s_*}^{-1}(r,p)=(r,Q_s\circ Q^{-1}_{s_*}(p))$. Now, define
\begin{equation*}
F_s(r,p):=(r,Q_{b(r)s+(1-b(r))s_*}\circ Q^{-1}_{s_*}(p)):W^{r_1}_{r_0}\rightarrow W^{r_1}_{r_0}
\end{equation*}
where $b:[r_0,r_1]\rightarrow [0,1]$ is equal to $0$ near $r_0$ and equal to $1$ near $r_1$.
 \item Close to $\Sigma_{r_0}$ the previous definition yields $F_s(r,p)=(r,Q_{s_*}\circ Q^{-1}_{s_*}(p))=(r,p)$. Thus, we simply let $F_s:=\op{Id}_{W^{r_0}}:W^{r_0}\rightarrow W^{r_0}$.
\end{enumerate}
\end{proof}
By the proposition we just proved, for every non-degenerate $s$ in $B_{s_*}(\delta_{s_*})$, $SH^*_\nu(W_s,\omega_s,j_s)\simeq SH^*_\nu(W_{s_*},\omega'_s,j_{s_*})$ and $\omega'_{s_*}=\omega_{s_*}$. Notice that $[\omega_s']=[\omega_s]$ so that $\omega_s'$ is still constant on $\nu$-tori. Thus, in order to prove the invariance under a small variation of the parameter $s$ around $s_*$, it is enough to show that $SH^*_\nu(W_{s_*},\omega'_s,j_{s_*})\simeq SH^*_\nu(W_{s_*},\omega_{s_*},j_{s_*})$.

The argument is contained in the next proposition.
\begin{prp}\label{isoprop}
Let $(W,\omega_0,j)$ be a compact convex symplectic manifold and $\nu$ a free homotopy class of loops, such that $\alpha_0$ is non-degenerate on $\nu$. Suppose we have a family of $\sigma_s\in\Omega^2(W)$ for $s\in[0,1]$, supported in $W\setminus U$, such that
\begin{enumerate}[a)]
 \item $\sigma_0=0$;
 \item $\sigma_s$ is $\nu$-atoroidal;
 \item $\omega_s:=\omega_0+\sigma_s$ is a path of symplectic forms.
\end{enumerate}
Then, the following isomorphism holds
\begin{equation*}
SH^*_\nu(W,\omega_0,j)\simeq SH^*_\nu(W,\omega_1,j).
\end{equation*}
\end{prp}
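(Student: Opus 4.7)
The plan is to produce a chain--level continuation isomorphism by fixing a single admissible Hamiltonian which is admissible for every $\omega_s$ simultaneously, exploiting the fact that $\sigma_s$ vanishes on the cylindrical end and is small in an appropriate sense on its support.

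\textbf{Step 1: common generators.} Because $\sigma_s$ is supported in $W\setminus U$, the contact form $\alpha$ induced by $j$ on $\partial W$ is the same for all $s$, so its spectrum $\op{Spec}(\alpha,\nu)$ is $s$-independent. Choose a single $H\in\hat{\mathcal H}'_\nu$ of slope $T_H\notin\op{Spec}(\alpha,\nu)$ whose restriction $H_b$ to $W^b$ is $C^2$-small enough that the flow of $X_H^{\omega_s}$ has no non-constant $1$-periodic orbit inside $\bigcup_s\op{supp}(\sigma_s)$ for any $s\in[0,1]$. Since critical points of $H$ are $s$-independent and every non-constant periodic orbit of $X_H^{\omega_s}$ sits in the cylindrical end (where $\omega_s=\omega_0$), the set of $1$-periodic orbits and its Conley--Zehnder grading are independent of $s$. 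Choose a perturbation $H_\varepsilon\in\hat{\mathcal H}_\nu$ with the same property, and a family $J_s$ of convex $\omega_s$-compatible almost complex structures, agreeing with a fixed convex $J_0$ outside a compact set.

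\textbf{Step 2: continuation map and local-system weights.} Fix primitives $\eta_s$ of the transgressions with $\tau(\sigma_s)=d\eta_s$ and $\eta_0=0$, given by hypothesis. For $s_0<s_1$, pick a monotone $\beta\colon\R\to[s_0,s_1]$ and consider the $s$-dependent Floer equation
\begin{equation*}
\partial_s u+J_{\beta(s)}\bigl(\partial_t u-X_{H_\varepsilon}^{\omega_{\beta(s)}}\bigr)=0.
\end{equation*}
A direct computation using $\tau(\sigma_s)=d\eta_s$ and Floer's equation yields the energy-action identity
\begin{equation*}
E(u)=\int_u\omega_0+\eta_{s_1}(x_+)-\eta_{s_0}(x_-)-\int_{\R}\beta'(s)(\partial_\tau\eta_\tau)|_{\tau=\beta(s)}(L_s)\,ds+\int_{\T_1}\!H_\varepsilon(x_-)\,dt-\int_{\T_1}\!H_\varepsilon(x_+)\,dt,
\end{equation*}
where $L_s$ denotes the loop $t\mapsto u(s,t)$. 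Count solutions with the matching Novikov weight so that the differentials $\delta_{J_{s_0}}^{\omega_{s_0}}$ and $\delta_{J_{s_1}}^{\omega_{s_1}}$ are intertwined; the twist by $\eta_s$ is exactly the local-system isomorphism $\underline{\Lambda}_{\tau(\omega_0)}\simeq\underline{\Lambda}_{\tau(\omega_s)}$ discussed before the statement.

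\textbf{Step 3: compactness.} The $C^0$-bound on continuation cylinders comes from the maximum principle applied on the cylindrical end, where $J_{\beta(s)}=J_0$ is convex and $H_\varepsilon$ has constant slope, independently of $s$. Sphere bubbling is ruled out by $\nu$-atoroidality of $c_1(\omega_s)$ together with Lemma \ref{lem:bub}. The energy bound reduces, via the identity above, to controlling the correction
\begin{equation*}
\int_\R\beta'(s)(\partial_\tau\eta_\tau)|_{\tau=\beta(s)}(L_s)\,ds,
\end{equation*}
which is where the $C^2$-smallness of $H_\varepsilon$ on $\bigcup_s\op{supp}(\sigma_s)$ is used: a Palais--Smale-type estimate shows that if $L_s$ stays in this support for a long $s$-interval then $E(u)$ grows correspondingly, giving a uniform a priori bound on the ``time spent inside the deformation region'' and hence on the correction term.

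\textbf{Step 4: invariance and naturality.} Standard homotopy-of-homotopies arguments show (i) that $\varphi_{s_1,s_0}\circ\varphi_{s_0,s_1}$ is chain-homotopic to the identity, so the continuation maps descend to isomorphisms between $SH^*_\nu(W,\omega_{s_0},j,H_\varepsilon,J_{s_0})$ and $SH^*_\nu(W,\omega_{s_1},j,H_\varepsilon,J_{s_1})$, and (ii) that these isomorphisms commute with the usual continuation maps used to form the direct limit over admissible Hamiltonians. Passing to the direct limit gives $SH^*_\nu(W,\omega_0,j)\simeq SH^*_\nu(W,\omega_1,j)$.

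\textbf{Main obstacle.} The substantive difficulty is Step 3: the usual energy-action identity picks up the anomalous term involving $\partial_\tau\eta_\tau$, and unlike the $s$-dependent Hamiltonian term $\partial_sH^s$ (which has compact $s$-support), this term does not obviously vanish away from a compact $s$-set. Establishing the Palais--Smale estimate that controls it uniformly, using the smallness of $H_\varepsilon$ on $\op{supp}(\sigma_s)$ together with the projectively constant hypothesis (which provides the globally defined $\eta_s$ in the first place), is the technical heart of the argument.
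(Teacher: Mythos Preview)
Your plan matches the paper's argument: choose Hamiltonians equal to a fixed small function on $\op{supp}\{\sigma_s\}$ so that the generators are $s$-independent, prove a Palais--Smale lemma bounding the $s$-time a Floer cylinder spends in that support by $E(u)/\varepsilon_H^2$, feed this into the energy identity to get a uniform energy bound, and run homotopy-of-homotopies for invertibility and for naturality with the direct-limit maps.

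One genuine technical step is missing from your write-up. The paper's energy identity is
\[
E(u)\;=\;-\mathcal A^{\omega_+}_H(u)\;+\;\int_0^1\dot\omega_S\!\left(u\big|_{Z^S}\right)dS,\qquad Z^S=(-\infty,S]\times\T_1,
\]
and after the Palais--Smale bound and Cauchy--Schwarz the anomalous term is controlled by $\mu_H\,C_H\,E(u)$, where $\mu_H:=\sup_S\|\dot\omega_S\|$ and $C_H$ depends only on the Hamiltonian. This closes up to an honest a priori bound $E(u)\leq(-\mathcal A^{\omega_+}_H(u))/(1-\mu_H C_H)$ only when $\mu_H C_H<1$, which forces a preliminary subdivision of $[0,1]$ into substeps of length $<\delta_H$ (depending on $H$) before the continuation maps can even be defined. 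Without this subdivision the energy estimate does not close, so it is not optional.

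Your ``main obstacle'' paragraph also slightly mischaracterises the difficulty: in the paper's formulation the anomalous integrand \emph{does} vanish for $S\notin[0,1]$; the issue is rather that each $\dot\omega_S(u|_{Z^S})$ is an integral over a half-infinite cylinder, and is only controlled because $\dot\sigma_S$ has compact \emph{spatial} support in $W$ and the cylinder visits that region for bounded $s$-time.
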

The proof of this proposition requires several steps and will occupy the rest of the present section.

\subsection{Admissible pairs compatible with a symplectic deformation}\label{sub_adm}
Consider $(\hat{W},\hat{\omega}_s,j)$ the completion of the compact symplectic manifolds contained in the statement of the proposition. Observe that $\hat{W}$ does not depend on $s$, since $\op{supp}\sigma_s\cap U$ is empty. In particular, $\alpha_s=\alpha_0$ and hence, $SH_\nu$ is defined for every $s$. Define the total support of the $1$-parameter family $\{\sigma_s\}$ as
\begin{equation*}
\op{supp}\{\sigma_s\}:=\overline{\bigcup_s\op{supp}\sigma_s}. 
\end{equation*}
It is compact and contained in $W\setminus U$.

We define a subclass of admissible Hamiltonians $\mathcal H^{\{\omega_s\}}_\nu\subset\mathcal H_\nu$ on $\hat{W}$, depending on the deformation $\{\omega_s\}$.
\begin{dfn}
Fix once and for all a function $K:W\rightarrow \R$ without critical points on $\op{supp}\{\sigma_s\}$. Then, take a sufficiently small $\varepsilon_K$ in such a way that, for every $s\in[0,1]$, $X^{\omega_s}_{\varepsilon_K K}$ does not have $1$-periodic orbits intersecting $\op{supp}\{\sigma_s\}$.

We say that a function $H\in\mathcal H_\nu$ belongs to $\mathcal H^{\{\omega_s\}}_\nu$ (in the following discussion we drop the subscript $\nu$, since the free homotopy class is fixed), if $H=\varepsilon_K K$ on a neighbourhood of $\op{supp}\{\sigma_s\}$.

Define the sets $\mathcal P_s$ made of pairs $(H,J_s)$ such that
\begin{itemize}
 \item $H\in\mathcal H^{\{\omega_s\}}$;
 \item $J_s$ is an $\omega_s$-compatible convex almost complex structure which is $H$-regular. Namely, it is regular for the solutions $u(s',t)$ of the Floer equation \ref{fleq}, which in this case reads
 \begin{equation*}
\partial_{s'} u+J_s(\partial_tu-X^{\omega_s}_H)=0\,.
 \end{equation*}
\end{itemize}
\end{dfn}
We also assume that the following three properties can be achieved.
\begin{enumerate}[{\itshape i)}]
 \item The projection $\mathcal P_s\rightarrow \mathcal H^{\{\omega_s\}}$ is surjective.
 \item Denote by $\mathcal J_s$ the image of the projection $(H,J_s)\mapsto J_s$ from $\mathcal P_s$ and define $\mathcal J^{\{\omega_s\}}:=\cup_{s\in[0,1]}\mathcal J_s$. Then, $\mathcal J^{\{\omega_s\}}$ is bounded on $\op{supp}\{\sigma_s\}$. This can be rephrased by saying that there exists $B\geq1$ such that, if $J_s$ and $J_{s'}$ belong to $\mathcal J^{\{\omega_s\}}$, then 
\begin{equation}
\frac{1}{B}|\cdot|_{J_{s'}}\leq|\cdot|_{J_{s}}\leq B |\cdot|_{J_{s'}}\quad\mbox{on }\op{supp}\{\sigma_s\}.
\end{equation}
This property will be used below to get a uniform bound on the norm of the Hamiltonian vector field on $\op{supp}\{\sigma_s\}$.
 \item Given $H\in\mathcal H^{\{\omega_s\}}$, denote by $\mathcal J_H^{\{\omega_s\}}$ the set of all $J_s$ such that $(H,J_s)\in\mathcal P_s$ for some $s$. There exists a cylindrical end $U_H$ such that every $J_s\in\mathcal J_H^{\{\omega_s\}}$ is convex on $U_H$, $H$ has constant slope on $U_H$ and $J_s$ is uniformly bounded on $U_H^c$. Namely, there exists $B_H\geq1$ such that, for every $J_s$ and $J_{s'}$ in $\mathcal J_H^{\{\omega_s\}}$,
\begin{equation}\label{uniine}
\frac{1}{B_H}|\cdot|_{J_{s'}}\leq|\cdot|_{J_{s}}\leq B_H |\cdot|_{J_{s'}}\quad\mbox{on }U_H^c.
\end{equation}
This property implies that the distance induced by $J_s$ on $U_H^c$ are all Lipschitz equivalent. This will be used to get a constant $\varepsilon_H$, which is uniform in $J_s$ in Lemma \ref{lemmaPS} below.

To ease the notation, set $V_H:=U_H^c\cap\op{supp}\{\sigma_s\}^c$. It is a relatively compact neighbourhood of the periodic orbits of $X_H$ disjoint from $\op{supp}\{\sigma_s\}$.

\end{enumerate}
We readily see from the construction that each $\mathcal P_s$ is a cofinal set in the set of all admissible pairs for $(\hat{W},\hat{\omega}_s,\hat{j})$. Thus, it is enough to use continuation maps between elements of $\mathcal P_s$ in order to define $SH^*_\nu(\hat{W},\hat{\omega}_s,\hat{j})$.

\subsection{A Palais-Smale lemma}

Now we show that, given $H\in\mathcal H^{\{\omega_s\}}$, a loop $x$ in $U_H^c$ which is almost a critical point for $d\mathcal A^{\omega_s}_H$ cannot  intersect the support of the perturbation. The following lemma is taken from \cite[Exercise 1.22]{sal}. The only difference here is that we require that $\varepsilon_H$ does not depend on $J_s$.

\begin{lem}\label{lemmaPS}
There exists $\varepsilon_H$ such that, if $x\subset U_H^c$, then for every $J_s\in\mathcal J_H^{\{\omega_s\}}$,
\begin{equation}
|d_x\mathcal A_H^{\omega_s}|_{J_s}<\varepsilon_H\quad\Longrightarrow\quad x\subset V_H.
\end{equation}
\end{lem}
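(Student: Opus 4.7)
The plan is to argue by contradiction. If no such $\varepsilon_H$ exists, we obtain sequences $s_n\in[0,1]$, $J_n\in\mathcal J_H^{\{\omega_s\}}$, and loops $x_n\subset U_H^c$ with $x_n\not\subset V_H$ and $|d_{x_n}\mathcal A^{\omega_{s_n}}_H|_{J_n}\to 0$. The goal is to extract a limit loop $x_\infty$ which is a $1$-periodic orbit of some $X^{\omega_{s_\infty}}_H$ that meets $\op{supp}\{\sigma_s\}$, contradicting the choice of $\varepsilon_K$ (which forbids exactly such orbits, since $H=\varepsilon_KK$ on a neighbourhood of $\op{supp}\{\sigma_s\}$).

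The first step is to identify the action gradient. Using $\imath_{X^\omega_H}\omega=-dH$ and that $g_J(\cdot,\cdot):=\omega(\cdot,J\cdot)$ is a Riemannian metric, a direct computation yields
\begin{equation*}
|d_x\mathcal A^\omega_H|_J \;=\; \|\dot x - X^\omega_H(x)\|_{L^2(J)},
\end{equation*}
so the hypothesis becomes $\|\dot x_n - X^{\omega_{s_n}}_H(x_n)\|_{L^2(J_n)}\to 0$. The key technical point is that the $J_n$ need not converge anywhere in the space of almost complex structures, and this is precisely where the uniform equivalence \eqref{uniine} enters: fixing a reference $J^\ast\in\mathcal J_H^{\{\omega_s\}}$, the norms $|\cdot|_{J_n}$ are Lipschitz equivalent to $|\cdot|_{J^\ast}$ on $U_H^c$ with the uniform constant $B_H$, hence
\begin{equation*}
\|\dot x_n - X^{\omega_{s_n}}_H(x_n)\|_{L^2(J^\ast)}\;\longrightarrow\;0.
\end{equation*}

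Next I would run a standard Palais--Smale compactness argument with respect to the \emph{fixed} metric $g_{J^\ast}$. Since $H$ is fixed and $s\mapsto\omega_s$ is smooth on $[0,1]$, the vector field $X^{\omega_s}_H$ is uniformly bounded on the compact set $U_H^c$. Together with $x_n\subset U_H^c$ this gives uniform $L^\infty$ bounds on $x_n$ and uniform $L^2(J^\ast)$ bounds on $\dot x_n$. By Rellich--Kondrachov a subsequence satisfies $x_n\to x_\infty$ in $C^0(\T_1,W)$ and $\dot x_n\rightharpoonup \dot x_\infty$ weakly in $L^2$, while $s_n\to s_\infty$. Passing to the limit in the weak form of Floer's identity shows that $x_\infty\in W^{1,2}(\T_1,W)$ weakly satisfies $\dot x_\infty=X^{\omega_{s_\infty}}_H(x_\infty)$, so by elliptic bootstrap it is a smooth $1$-periodic orbit.

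Finally, the contradiction: since $V_H=U_H^c\cap\op{supp}\{\sigma_s\}^c$ and $x_n\subset U_H^c\setminus V_H$, there exist $t_n\in\T_1$ with $x_n(t_n)\in\op{supp}\{\sigma_s\}$; passing to a further subsequence $t_n\to t_\infty$ and using $C^0$-convergence gives $x_\infty(t_\infty)\in\op{supp}\{\sigma_s\}$. On a neighbourhood of that compact set $H=\varepsilon_KK$, so $x_\infty$ would restrict to a piece of a $1$-periodic orbit of $X^{\omega_{s_\infty}}_{\varepsilon_KK}$ meeting $\op{supp}\{\sigma_s\}$, directly contradicting the defining property of $\varepsilon_K$ chosen in Subsection~\ref{sub_adm}. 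The main obstacle is really only the first half: absorbing the non-convergent $J_n$ into a uniform analytic framework via \eqref{uniine}; once the comparison with $J^\ast$ is in place, the rest is a routine weak-compactness argument.
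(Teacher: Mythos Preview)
Your proof is correct and follows essentially the same route as the paper's: both argue by contradiction, use the uniform equivalence \eqref{uniine} to control the varying $J_n$, obtain $C^0$-compactness from the $L^2$-bound on $\dot x_n$ (the paper via Arzel\`a--Ascoli and $\tfrac12$-H\"older continuity, you via Rellich--Kondrachov), and pass to the limit to produce a $1$-periodic orbit of $X^{\omega_{s_\infty}}_H$ meeting $\op{supp}\{\sigma_s\}$, contradicting the choice of $\varepsilon_K$. One cosmetic point: the limiting regularity is an ODE bootstrap rather than an elliptic one, but the conclusion is the same.
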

\begin{proof}
First, we claim that, for every $C>0$, the set of smooth loops
\begin{equation*}
L=\{x\subset U_H^c\ |\ |d_x\mathcal A_H^{\omega_s}|_{J_s}\leq C, \mbox{ for some }s\mbox{ and }J_s\in\mathcal J_H^{\{\omega_s\}}\}
\end{equation*}
is relatively compact for the $C^0$-topology. By the Arzel\`a-Ascoli Theorem, the claim follows by showing that the elements of $L$ are equicontinuous with respect to the distance induced by some $J_{s_*}$.
We compute
\begin{align*}
C\geq |d_x\mathcal A_H^{\omega_s}|_{J_s}&=\left(\int_{\T_1} |\dot{x}-X_H(x)|^2_{J_s}dt\right)^{\frac{1}{2}}\\
&\geq\left(\int_{\T_1} |\dot{x}|_{J_s}^2dt\right)^{\frac{1}{2}}-\Vert X_H\Vert_{J_s}.
\end{align*}
By \eqref{uniine}, $\Vert X_H\Vert_{J_s}\leq C'$ uniformly in $J_s$. Thus, the $L^2$-norm of $\dot{x}$ with respect to $J_s$ is uniformly bounded and, hence, $x$ is uniformly $1/2$-H\"older continuous for the distance induced by $J_s$. Since all these distances are Lipschitz equivalent thanks to \eqref{uniine} again, the equicontinuity follows.

Now, let us prove the lemma. Arguing by contradiction, we can find a sequence of loops $(x_n)$, such that 
\begin{equation}\label{lim0}
\lim_{n\rightarrow+\infty}|d_{x_n}\mathcal A_H^{\omega_{s_n}}|_{J_{s_n}}=0,\quad\quad x_n\not\subset U_H^c\cap\op{supp}\{\sigma_s\}^c=V_H.
\end{equation}
By the claim, passing to a subsequence, which we denote in the same way, $x_{n}$ converges in the $C^0$-norm to $x_\infty\not\subset V_H$. By compactness of the interval, we can also suppose that $s_{n}\rightarrow s_\infty$. We claim that $x_\infty$ is differentiable and it is a critical point of $d\mathcal A_H^{\omega_{s_\infty}}$. Indeed, $X^{\omega_{s_{n}}}_H(x_{n})\rightarrow X^{\omega_{s_\infty}}_H(x_\infty)$ in the $L^2$-topology. By \eqref{lim0}, $\dot{x}_{n}\rightarrow X^{\omega_{s_\infty}}_H(x_\infty)$ in the $L^2$-topology. Thus, $x_\infty$ is in $W^{1,2}$ and its weak derivative is $X^{\omega_{s_\infty}}_H(x_\infty)$. Since $X^{\omega_{s_\infty}}_H(x_\infty)$ is continuous, we conclude that $\dot{x}_\infty(t)$ exists at every $t$ and $\dot{x}_\infty(t)=X^{\omega_{s_\infty}}_H(x_\infty(t))$. Therefore, $x_\infty$ is a periodic orbit of $H$, contradicting the fact that $x_\infty\not\subset V_H$.
\end{proof}
 
\subsection{Subdividing the deformation in substeps}
We decompose the deformation $\omega_s$ in shorter perturbations by a suitable smooth change of parameters $s^{s_0}_{s_1}:\R\rightarrow [s_0,s_1]$, such that
\[s^{s_0}_{s_1}(S)=\begin{dcases}s_0& \mbox{if }S\leq0\\
            s_1& \mbox{if }S\geq1.
           \end{dcases}
\] 
We set $\omega^{s_0,s_1}_S:=\omega_{s^{s_0}_{s_1}(S)}$. Notice that there exists $C>0$ such that, if $J_s\in\mathcal J^{\{\omega_s\}}$, then
\begin{equation}
\left\Vert\dot{\omega}^{s_0,s_1}_S\right\Vert_{J_{s}}\leq C|s_1-s_0|.
\end{equation} 

To ease the notation, we drop the superscript and we write $\omega_S$ for $\omega^{s_0,s_1}_S$. Moreover, we denote $s_0$ by $-$ and $s_1$ by $+$.

\subsection{Two kind of homotopies and two kind of homotopies of homotopies}
Let $H\in\mathcal H^{\{\omega_s\}}$ and let $(H,J_-)\in \mathcal P_-$ and $(H,J_+)\in\mathcal P_+$. A \textit{symplectic homotopy} or \textit{s-homotopy} $S\mapsto(\omega_S,H,J_S)$ is a path connecting $(H,J_-)$ to $(H,J_+)$ and constant at infinity.

Given $S\in\R$ and $(H^+,J^+_S)\preceq(H^-,J^-_S)$ in $\mathcal P_S$, a \textit{monotone homotopy} or \textit{m-homotopy} is a path $r\mapsto(H^r,J^r_S)$, with $r\in\R$, connecting $(H^-,J^-_S)$ to $(H^+,J^+_S)$ and constant at infinity, such that
\begin{itemize}
 \item each $H^r$ has constant slope at infinity;
 \item $\partial_rT_{H^r}\leq0$;
 \item $(J^r_S)$ is a family of $\omega_S$-compatible convex almost complex structures satisfying bounds analogous to the ones described in condition \textit{ii)} and \textit{iii)} of Section \ref{sub_adm}.
\end{itemize}

Let $H^-$ and $H^+$ be in $\mathcal H^{\{\omega_s\}}$ and let $J^-_-, J^-_+, J^+_-, J^+_+$ be almost complex structures such that $(H^+,J^+_-)\preceq (H^-,J^-_-)$ in $\mathcal P_-$ and $(H^+,J^+_+)\preceq (H^-,J^-_+)$ in $\mathcal P_+$. A \textit{symplectic-monotone homotopy} or \textit{sm-homotopy} is a map $(S,r)\mapsto(\omega_S,H^r,J^r_S)$ with $(S,r)\in\R^2$ such that
\begin{itemize}
 \item restricting it to $r=\pm\infty$, we get two s-homotopies;
 \item restricting it to $S=\pm\infty$, we get two m-homotopies.
\end{itemize}

Consider a path $r\mapsto \sigma^r_S$ of $2$-forms constant at infinity and such that $\sigma^-_S=0$ and $\sigma^+_S=\sigma^+$. Suppose that all the elements of the $2$-parameters family $\omega^r_S:=\omega+\sigma^r_S$ satisfy the hypotheses of Proposition \ref{isoprop}. We can define a class $\mathcal H^{\{\omega^r_S\}}$ of admissible Hamiltonians associated to $\{\omega^r_S\}$ in the same way we defined $\mathcal H^{\{\omega_s\}}$ for a $1$-parameter deformation. As before, this class yields a set of admissible pairs $\mathcal P^r_S$, which enjoys properties similar to \textit{i), ii), iii)} outlined before for $\mathcal P^s$. Notice that when $|S|$ is sufficiently big, $\mathcal P^r_S=\mathcal P_\pm$ does not depend on $r$ (and on $S$). Given $(H,J_-)\in\mathcal P_-$ and $(H,J_+)\in\mathcal P_+$, a \textit{symplectic-symplectic homotopy} or \textit{ss-homotopy} is the $2$-parameter family $(\omega^r_S,H,J^r_S)$ such that $J^r_S=J_\pm$ for big $|S|$ and such that restricting it to $r=\pm\infty$, we get two s-homotopies.

We remark that the Palais-Smale lemma still holds for sm- and ss-homotopies.

\subsection{Moduli spaces for s-homotopies and chain maps}\label{sub:chain}
We know that we can associate chain maps called \textit{continuation maps} to monotone homotopies by considering suitable moduli spaces. We now carry out a similar construction to associate chain maps to symplectic homotopies $(\omega_S,H,J_S)$ as well. For this we will need first to have $|s_1-s_0|<\delta_H$, for some $\delta_H>0$. The general definition of the chain map will then be as the composition of intermediate maps as we did before in \eqref{equ_sub}. More precisely, we choose $\delta_H$ in such a way that
\begin{equation}\label{muh}
\mu_H:=\sup_{(S,J_s)\in\R\times\mathcal J^{\{\omega_s\}}} \left\Vert\dot{\omega}^{s_1,s_0}_S\right\Vert_{J_s}\leq C\delta_H\leq\frac{1}{\frac{\varepsilon_K}{\varepsilon_H}\Vert X_K\Vert+1},
\end{equation}
where we have defined
\begin{equation*}
\Vert X_K\Vert:= \sup_{J_s\in\mathcal J^{\{\omega_s\}}}\sup_{z\in\op{supp}\{\sigma_s\}} |X_K(z)|_{J_s}.
\end{equation*}
Observe that $\Vert X_K\Vert$ is finite thanks to condition \textit{ii)} in Section \ref{sub_adm}.

Thus, let $(\omega_S,H,J_S)$ be a symplectic homotopy satisfying \eqref{muh} and define the moduli spaces $\mathcal M(H,\{J_S\},x_-,x_+)$ of $S$-dependent Floer cylinders relative to such homotopy, connecting two $1$-periodic orbits $x_-$ and $x_+$ of $H$. Choosing the family $\{J_S\}$ in a generic way, we can suppose that $\mathcal M(H,\{J_S\},x_-,x_+)$ is a smooth manifold. If this space is non-empty, we have
\begin{equation*}
\dim \mathcal M(H,\{J_S\},x_-,x_+)\ =\ \mu_{\op{CZ}}(x_+)-\mu_{\op{CZ}}(x_-)\ =\ |x_-|-|x_+|\,.  
\end{equation*}
If $A$ is a homotopy class of cylinders relative ends, let $\mathcal M^A(H,\{J_S\},x_-,x_+)$ be the subset of the moduli space whose elements belong to $A$.

We claim that $\mathcal M^A(H,\{J_S\},x_-,x_+)$ is relatively compact in the $C^\infty_{\op{loc}}$-topology. As before, this is achieved in a standard way once we have the three ingredients below.
\begin{enumerate}[\itshape a)]
 \item Uniform $C^0$-bounds. They stem from the fact that the perturbation $\sigma_S$ is compactly supported and, hence, the maximum principle still applies.
 \item Uniform $C^1$-bounds. As before, they stem from the fact that bubbling off of holomorphic spheres generically does not occur by Lemma \ref{lem:bub}.
 \item Uniform bounds on the energy $E$ as defined in \eqref{ene}. These require a longer argument, which we describe below.
\end{enumerate}

First, observe that Lemma \ref{lemmaPS} allows to bound the amount of time an $S$-dependent Floer cylinder $u$ in $\mathcal M^A(H,\{J_S\},x_-,x_+)$ spends on $\op{supp}\{\sigma_s\}$. Define
\begin{equation*}
S(u,H):=\{S\in\R\, |\, u(S)\cap \op{supp}\{\sigma_s\}\neq\emptyset\},
\end{equation*}
where we have denoted by $u(S)$ the loop $t\mapsto u(S,t)$.
\begin{lem}
The following estimate holds
\begin{equation*}
|S(u,H)|\leq \frac{E(u)}{\varepsilon_H^2},
\end{equation*}
where $|\cdot|$ denotes the Lebesgue measure on $\R$. 
\end{lem}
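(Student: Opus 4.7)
The plan is to bound the integrand of $E(u)$ from below by $\varepsilon_H^2$ at every $S\in S(u,H)$ and then integrate over $\R$.

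First, rewrite Floer's equation as $\partial_S u=-J_S(\partial_t u-X^{\omega_S}_H)$. Since the $L^2$-gradient of $\mathcal A^{\omega_S}_H$ in the metric $g_{J_S}(\cdot,\cdot)=\omega_S(\cdot,J_S\cdot)$ at a loop $x$ equals $J_S(\dot x-X^{\omega_S}_H)$, this gives the pointwise identity
$$
\|\partial_S u(S)\|_{J_S}^2=|d_{u(S)}\mathcal A^{\omega_S}_H|_{J_S}^2.
$$

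Second, fix $S\in S(u,H)$. By construction $V_H=U_H^c\cap\op{supp}\{\sigma_s\}^c$, so the loop $u(S)$, which meets $\op{supp}\{\sigma_s\}$, cannot be contained in $V_H$. At the same time, because $H$ has constant slope on the cylindrical end $U_H$ and every $J_S$ is convex there (property~\textit{iii}) of Section~\ref{sub_adm}), the standard maximum principle applied to the subharmonic function $r\circ u$ confines the entire cylinder to $U_H^c$: the asymptotic orbits $x_\pm$ already lie in $U_H^c$, since the $1$-periodic orbits of $X_H$ live in the compact part. In particular $u(S)\subset U_H^c$, and the contrapositive of Lemma~\ref{lemmaPS} applies to give $|d_{u(S)}\mathcal A^{\omega_S}_H|_{J_S}\geq\varepsilon_H$.

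Third, combining the two facts and integrating,
$$
E(u)=\int_\R\|\partial_S u(S)\|_{J_S}^2\,dS\;\geq\;\int_{S(u,H)}\varepsilon_H^2\,dS\;=\;\varepsilon_H^2\,|S(u,H)|,
$$
which is the stated estimate. The only genuinely delicate step is the verification that the whole cylinder stays in $U_H^c$; this relies on the careful choice of $U_H$ and the convex-$J_S$ maximum principle. Everything else is a direct translation of Floer's equation into gradient-flow language followed by a Lebesgue-measure bookkeeping.
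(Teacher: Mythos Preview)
Your proof is correct and follows the same idea as the paper's one-line argument: for $S\in S(u,H)$ the loop $u(S)$ meets $\op{supp}\{\sigma_s\}$, hence is not contained in $V_H$, so the contrapositive of the Palais--Smale Lemma gives $\|\partial_S u(S)\|_{J_S}\geq\varepsilon_H$, and integrating over $S(u,H)$ yields the bound. The paper compresses this into the observation that $S(u,H)$ is contained in the set where $\|\partial_S u\|\geq\varepsilon_H$ and then invokes Chebyshev's inequality; you additionally spell out the maximum-principle step guaranteeing $u(S)\subset U_H^c$, which the paper leaves implicit.
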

\begin{proof}
Observe that $S(u,H)\times\T_1\subset\{(S,t)\ |\ |\partial_Su|_{J_{S}}\geq\varepsilon_H\}$ and then use Chebyshev's inequality.
\end{proof}

We claim that for an $S$-dependent Floer cylinder $u$, identity \eqref{eneid} must be replaced with the more general
\begin{equation}\label{enerel}
E(u)=-\mathcal A^{\omega_+}_H(u)+\int_\R{\dot{\omega}_S}(u|_{Z^S})\,dS,
\end{equation}
where $Z^S:=(-\infty,S]\times\T_1$. Indeed,
\begin{align*}
E(u)=-\int_{\R}d_{u(S)}\mathcal A^{\omega_S}_H\cdot \frac{du}{dS}(S)\,dS&=-\int_{\R}\left[\frac{d}{dS}\left(\mathcal A^{\omega_S}_H(u|_{Z^S})\right)-\left(\frac{\partial \mathcal A^{\omega_S}_H}{\partial S}\right)(u|_{Z^S})\right]dS\\
&=-\mathcal A^{\omega_+}_H(u)+\int_{\R}\left(\frac{\partial \mathcal A^{\omega_S}_H}{\partial S}\right)(u|_{Z^S})\,dS\,.
\end{align*}
The claim follows by observing that $\left(\frac{\partial \mathcal A^{\omega_S}_H}{\partial S}\right)(u|_{Z^S})=\dot{\omega}_S(u|_{Z^S})$.

The first summand on the right of \eqref{enerel} depends only on the homotopy class of $u$ relative ends, so we need to bound only the second one: 
\begin{equation*}
\int_\R\dot{\omega}_S(u|_{Z^S})\,dS=\int_0^1\dot{\omega}_S(u|_{Z^S})dS\leq\sup_{S\in[0,1]}\dot{\omega}_S\left(u|_{Z^S}\right),
\end{equation*}
where we have used the fact that $\dot{\omega}_S=0$ for $S\notin[0,1]$. We now estimate the last term, remembering that, for every $S\in[0,1]$, $\dot{\omega}_S$ vanishes outside $\op{supp}\{\sigma_s\}$:
\begin{align*}
\dot{\omega}_S\left(u|_{Z^S}\right)&\leq\int_{Z^S}\big|(\dot{\omega}_S)_{u}(\partial_{S'}u,\partial_tu)\big|\,dS'dt\\
&\leq\int_{S(u,H)\times\T_1}\big|(\dot{\omega}_S)_{u}(\partial_{S'}u,\partial_tu)\big|\,dS'dt\\
&\leq \int_{S(u,H)\times\T_1}|\dot{\omega}_S|_{S'}|\partial_{S'}u|_{S'}|\partial_tu|_{S'}\,dS'dt\\
&\leq \left(\sup_{J_s\in\mathcal J}\Vert\dot{\omega}_S\Vert_{J_s}\right)\int_{S(u,H)\times\T_1}|\partial_{S'}u|_{S'}|\partial_tu|_{S'}\,dS'dt.
\end{align*}
Using Floer's equation, the quantity under the integral sign can be bounded by
\begin{equation*}
|\partial_{S'}u|_{S'}|\partial_tu|_{S'}\leq|\partial_{S'}u|_{S'}\Big(|\partial_{S'}u|_{S'}+|X_H(u)|_{S'}\Big)\leq |\partial_{S'}u|_{S'}^2+ \varepsilon_K\Vert X_K \Vert|\partial_{S'}u|_{S'}.
\end{equation*}
Therefore,
\begin{align*}
\int_{S(u,H)\times\T_1}\hspace{-16pt}|\partial_{S'}u|_{S'}|\partial_tu|_{S'}\,dS'dt&\leq \int_{S(u,H)\times\T_1}\hspace{-16pt}|\partial_{S'}u|^2_{S'}\,dS'dt+\varepsilon_K\Vert X_K\Vert\int_{S(u,H)\times\T_1}\hspace{-16pt}|\partial_{S'}u|_{S'}\,dS'dt\\
&\leq E(u)+\varepsilon_K\Vert X_K\Vert\left(\int_{S(u,H)\times\T_1}\hspace{-16pt}|\partial_{S'}u|^2_{S'}\,dS'dt\right)^{\frac{1}{2}}\!\!\!|S(u,H)|^{\frac{1}{2}}\\
&\leq E(u)+\varepsilon_K\Vert X_K\Vert E(u)^{\frac{1}{2}}\frac{E(u)^{\frac{1}{2}}}{\varepsilon_H}\\
&=\left(1+\frac{\varepsilon_K}{\varepsilon_H}\Vert X_K\Vert\right)E(u).
\end{align*}
Putting all the estimates together, we find
\begin{equation*}
\int_\R{\dot{\omega}_S}(u|_{Z^S})\,dS\leq \mu_H\left(1+\frac{\varepsilon_K}{\varepsilon_H}\Vert X_K\Vert\right)E(u).
\end{equation*}
Plugging this inequality into \eqref{enerel}, we finally get the desired bound for the energy
\begin{equation}\label{ineqen}
E(u)\leq \frac{-\mathcal A_H^{\omega_+}(u)}{1-\mu_H\left(1+\frac{\varepsilon_K}{\varepsilon_H}\Vert X_K\Vert\right)}.
\end{equation}

After this preparation, we define $\eta$ by $d\eta=\tau(\omega_+)-\tau(\omega_-)$ and claim that
\begin{align*}
\varphi^{(\{\omega_S\},H,\{J_S\})}:SC^*_\nu(\hat{W},\hat{\omega}_-,\hat{j},H)\ &\longrightarrow\ SC^*_\nu(\hat{W},\hat{\omega}_+,\hat{j},H)\\
x\ &\longmapsto \sum_{\substack {v\in\mathcal M(H,\{J_S\},x',x)\\ |x'|-|x|=0}}\hspace{-16pt}\epsilon(v)t^{\eta(x)-\mathcal A_H^{\omega_+}(v)}x',
\end{align*}
is well-defined and a chain map. The good definition stems from the fact that, thanks to \ref{ineqen}, for every $C\in\R$, there are only finitely many connecting cylinders $v\in\mathcal M(H,\{J_S\},x',x)$ such that $\eta(x)-\mathcal A_H^{\omega_+}(v)\leq C$. Moreover, the properties {\itshape a), b), c)} above implies that the boundary of a component of $M(H,\{J_S\},y',x)$ with $|y'|-|x|=1$ is either empty or consists of two elements $v^x_{x'}\ast u^{x'}_{y'}$ and $u^x_y\ast v^y_{y'}$ where
\[
\left\{\begin{aligned}
        &u^x_y\ \in \mathcal M(H,J_-,y,x),\\
        &u^{x'}_{y'}\in \mathcal M(H,J_+,y',x'),\\
        &v^x_{x'}\in\mathcal M(H,\{J_S\},x',x),\\
        &v^y_{y'}\in\mathcal M(H,\{J_S\},y',y).
       \end{aligned}\right.
\]
The gluing construction in Floer theory shows also that every $v^x_{x'}\ast u^{x'}_{y'}$ with the properties above arises as a boundary element of a component of some $M(H,\{J_S\},y',x)$. Therefore, to see that $\varphi^{(\{\omega_S\},H,\{J_S\})}$ is a chain map, we only have to prove that
\begin{equation}\label{chain}
-\mathcal A_H^{\omega_+}(u^{x'}_{y'})+\big(\eta(x)-\mathcal A_H^{\omega_+}(v^x_{x'})\big)=\big(\eta(y)-\mathcal A_H^{\omega_+}(v^y_{y'})\big)-\mathcal A_H^{\omega_-}(u^x_y).
\end{equation}
First, since $\omega_+-\omega_-$ is $\nu$-atoroidal, we have
\begin{equation*}
\mathcal A_H^{\omega_-}(u^x_y)=\mathcal A_H^{\omega_+}(u^x_y)+(\omega_+-\omega_-)(u^x_y)=\mathcal A_H^{\omega_+}(u^x_y)+\eta(y)-\eta(x). 
\end{equation*}
Plugging this relation into \eqref{chain}, we get
\begin{equation*}
-\mathcal A_H^{\omega_+}(u^{x'}_{y'})-\mathcal A_H^{\omega_+}(v^x_{x'})=-\mathcal A_H^{\omega_+}(v^y_{y'})-\mathcal A_H^{\omega_+}(u^x_y),
\end{equation*}
which is true since $v^x_{x'}\ast u^{x'}_{y'}$ and $u^x_y\ast v^y_{y'}$ are homotopic relative ends and, as we observed before, $\mathcal A_H^{\omega_+}$ is invariant under such homotopies. Thus, we have proved that $\varphi^{(\{\omega_S\},H,\{J_S\})}$ is a chain map. We denote the induced map in cohomology by $\left[\varphi^{(\{\omega_S\},H,\{J_S\})}\right]$.

\subsection{Moduli spaces for sm-homotopies and commutative diagrams}
In the previous subsection, we showed the existence of chain maps associated to s-homotopies. Hence, an sm-homotopy $(S,r)\mapsto(\omega_S,H^r,J^r_S)$ yields a diagram
\begin{equation}
\xymatrixcolsep{5pc}\xymatrix{
 SC^*_\nu(W,\omega_-,j,H^+) \ar[r]^{\varphi^{(\{\omega_S\},H^+,\{J^+_S\})}} \ar[d]_{\varphi^{(H^+,J^+_-)}_{(H^-,J^-_-)}} & SC^*_\nu(W,\omega_+,j,H^+) \ar[d]^{\varphi^{(H^+,J^+_+)}_{(H^-,J^-_+)}} \\
 SC^*_\nu(W,\omega_-,j,H^-)  \ar[r]^{\varphi^{(\{\omega_S\},H^-,\{J^-_S\})}} & SC^*_\nu(W,\omega_+,j,H^-)  }\label{comdia}
\end{equation}
We claim that such diagram is commutative.

The proof follows an argument similar to the one we used to define $\varphi^{(\{\omega_S\},H,\{J_S\})}$. We subdivide the sm-homotopy in rectangles $[S_i,S_{i+1}]\times\R$, so that the corresponding constants $\mu^{S_i,S_{i+1}}_{\{H^r\}}$, defined in the same way as $\mu_H$, are sufficiently small. From the commutativity of the intermediate diagrams, we infer the commutativity of the original one.

\subsection{Moduli spaces for ss-homotopies and bijectivity of cohomology maps}
Since diagram \eqref{comdia} is commutative, the system of maps $\left[\varphi^{(\{\omega_S\},H,\{J_S\})}\right]$ yields a map $[\varphi]:SH^*_\nu(W,\omega_-,j)\rightarrow SH^*_\nu(W,\omega_+,j)$. The purpose of this subsection is to show that $[\varphi]$ is actually an isomorphism, hence proving Proposition \ref{isoprop}.

From the properties of the direct limit of groups, is enough to show that each $\left[\varphi^{(\{\omega_S\},H,\{J_S\})}\right]$ is an isomorphism. We claim that its inverse is the map which is obtained by inverting the s-homotopy. Namely, $\left[\varphi^{(\{\omega_S\},H,\{J_S\})}\right]^{-1}=\left[\varphi^{(\{\omega_{-S}\},H,\{J_{-S}\})}\right]$.

First, we notice that the composition of maps in cohomology intertwines with the concatenation of s-homotopies: \begin{equation*}    
\left[\varphi^{(\{\omega_{-S}\},H,\{J_{-S}\})}\right]\circ\left[\varphi^{(\{\omega_{-S}\},H,\{J_{-S}\})}\right]=\left[\varphi^{(\{\omega_S\}\ast\{\omega_{-S}\},H,\{J_S\}\ast\{J_{-S}\})}\right].
\end{equation*}
We omit the details of this fact. Thus, it is enough to prove that \begin{equation}\label{inveq}
\left[\varphi^{(\{\omega_S\}\ast\{\omega_{-S}\},H,\{J_S\}\ast\{J_{-S}\})}\right]=\op{Id}.
\end{equation}
Consider the standard ss-homotopy that connects the concatenated s-homotopy $(\{\omega_S\},H,\{J_S\})\ast (\{\omega_{-S}\},H,\{J_{-S}\})$ to the constant s-homotopy $(\omega_-,H,J_-)$ and notice that the chain map associated to the latter s-homotopy is the identity. Therefore, to show \eqref{inveq}, we need to prove that if two s-homotopies are connected by an ss-homotopy, they induce the same map in cohomology. This is done by adjusting the argument above for s- and sm-homotopies to ss-homotopies.

The proof of Proposition \ref{isoprop} and, hence of Theorem \ref{invthm}, is completed.
\chapter{Energy levels of contact type}\label{cha_exa}
In this chapter we give sufficient conditions for an energy level of a magnetic system to be of contact type and we apply the results of the previous chapter to establish lower bounds on the number of periodic orbits, when the contact structure is positive.

First, we observe that because of Proposition \ref{che}, condition \textbf{(S2)} on the first Chern class is satisfied for every $\nu$. Thus, if $DM:=\{E\leq \frac{1}{2}\}$, it is possible to define the groups $SH^*_\nu(DM,\omega_s)$, whenever $\omega'_s:=\omega_s\big|_{SM}$ is a $\nu$-non-degenerate HS of positive contact type (see Definition \ref{def_non}).

\begin{dfn}
We say that a parameter $s$ is \textit{$\nu$-non-degenerate} if $\omega_s'$ is transversally $\nu$-non-degenerate. 
\end{dfn}

We saw that the dynamics of the magnetic flow associated to $(M,g,\sigma)$ can be studied by looking at the family of HS $\omega'_s$, where $s$ is a real parameter. We now want to understand when such structures are of contact type so that we can apply techniques from Reeb dynamics and, in particular, use the abstract results proved in the previous chapter for the computation of Symplectic Cohomology. The first step is to determine when $\omega'_{s}$ is exact. The answer is given by the following proposition.
\begin{prp}\label{prp_exa}
The $2$-form $\omega'_{s}$ is exact if and only if one of the two alternatives holds
\begin{enumerate}
 \item[\upshape\bfseries (E)] either $\sigma$ is exact,
 \item[\upshape\bfseries (NE)] or $\sigma$ is not exact and $M$ is not the $2$-torus.
\end{enumerate}
\end{prp}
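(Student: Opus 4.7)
The plan is to reduce exactness of $\omega'_s$ to an exactness statement for $\pi^*\sigma$ on $SM$, and then treat the cases $M\neq\T^2$ and $M=\T^2$ separately using, respectively, the Gauss--Bonnet theorem together with the connection $1$-form $\tau$, and the parallelizability of the torus. Since $\lambda|_{SM}$ is a global $1$-form on $SM$, we have
\begin{equation*}
\omega'_s \;=\; (d\lambda-s\pi^*\sigma)\big|_{SM} \;=\; d(\lambda|_{SM}) \,-\, s\,\pi^*\sigma|_{SM},
\end{equation*}
so for $s\neq 0$ exactness of $\omega'_s$ is equivalent to exactness of $\pi^*\sigma|_{SM}$.

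If $\sigma=d\beta$ is exact on $M$, then $\pi^*\sigma=d(\pi^*\beta)$ is exact on $SM$, covering alternative \textbf{(E)}. Suppose now $M\neq\T^2$, so $e_M\neq 0$, and recall the structural identity $d\tau=-\pi^*(K\mu)$ on $SM$ from Proposition~\ref{prphv}. Since $H^2(M,\R)\cong\R$ via integration and $\int_M K\mu=2\pi e_M\neq 0$ by Gauss--Bonnet, the class $[K\mu]$ generates $H^2(M,\R)$. Hence any closed $\sigma$ can be written as $\sigma=c\,K\mu+d\beta$ for some $c\in\R$ and $\beta\in\Omega^1(M)$, giving the explicit primitive
\begin{equation*}
\pi^*\sigma \;=\; -c\,d\tau + d(\pi^*\beta) \;=\; d\bigl(-c\tau+\pi^*\beta\bigr)
\end{equation*}
on $SM$, irrespective of whether $\sigma$ itself is exact on $M$. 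This settles alternative \textbf{(NE)}.

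It remains to rule out the exceptional case $M=\T^2$ with $\sigma$ not exact. Since $\T^2$ is parallelizable, the bundle $\pi\colon S\T^2\to\T^2$ is trivial and admits a global section $s\colon\T^2\to S\T^2$ with $\pi\circ s=\op{Id}_{\T^2}$. If $\pi^*\sigma=d\eta$ were exact on $S\T^2$, then $\sigma=s^*\pi^*\sigma=d(s^*\eta)$ would be exact on $\T^2$, contradicting the hypothesis; equivalently, $s^*\colon H^2(S\T^2,\R)\to H^2(\T^2,\R)$ is a left inverse to $\pi^*$, so $\pi^*$ is injective. The main technical observation, and the only one that requires the geometry of $SM$ as opposed to a purely topological argument, is the identity $d\tau=-\pi^*(K\mu)$: it is this that turns the vanishing of $\pi^*[\sigma]\in H^2(SM,\R)$ for $M\neq\T^2$ into an explicitly constructed primitive. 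Everything else is formal, and assembling the three steps above yields the full equivalence.
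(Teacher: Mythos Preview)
Your proof is correct and follows essentially the same route as the paper: reduce to exactness of $\pi^*\sigma$ on $SM$, produce an explicit primitive in the nontrivial case using the connection form $\tau$, and obstruct exactness on the torus via a global section. The explicit primitives you obtain coincide with the paper's classes $\lambda_{s,\beta}$ and $\lambda^g_{s,\beta}$ (with $c=\frac{[\sigma]}{2\pi e_M}$), and your section argument for $\T^2$ is the cohomological rephrasing of the paper's direct integration $\int_{\T^2} Z^*\omega'_s=-s[\sigma]$. The one minor difference worth noting is that the paper, via Example~\ref{exa_concon}, invokes the general fact from \cite{kob} that any $2$-form with integral $2\pi e_M$ is the curvature of \emph{some} $S^1$-connection on $SM$, whereas you work entirely with the specific Levi-Civita connection $\tau$ and Gauss--Bonnet; your version is more self-contained and avoids the external reference.
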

\begin{proof}
We are going to prove the proposition by defining suitable classes of primitives for $\omega'_{s}$.
If $\sigma$ is exact on $M$, we just consider the injection
\begin{align}\label{inj_priex}
\mathcal P^{\sigma}&\longrightarrow\ \mathcal P^{\omega'_{s}}\nonumber\\
\beta&\longmapsto\ \lambda_{s,\beta}:=\lambda-s\pi^*\beta.
\end{align}
When $\sigma$ is not exact and $M\neq\T^2$, then $\frac{2\pi e_M}{[\sigma]}\sigma$ is a curvature form for the $S^1$-bundle $\pi:SM\rightarrow M$, since $e_M$ is the Euler number of the bundle \cite{kob}. By the discussion contained in Example \ref{exa_concon} we know that $\pi^*\sigma$, and hence $\omega'_{s}$, is exact. More explicitly, since $\tau$ is also a connection form on $SM$ with curvature $\sigma_g$, we have the injection
\begin{align}\label{inj_pri}
\mathcal P^{\sigma-\frac{[\sigma]}{2\pi e_M}\sigma_g}&\longrightarrow\ \mathcal P^{\omega'_{s}}\nonumber\\
\beta&\longmapsto\ \lambda^g_{s,\beta}:=\lambda+s\left(\frac{[\sigma]}{2\pi e_M}\tau-\pi^*\beta\right)
\end{align}
(notice, that if $M\neq\T^2$ and $\sigma$ is also exact, this formula reduces to \eqref{inj_priex} above).

It only remains to show that if $\sigma$ is not exact and $M=\T^2$, then $\omega'_{s}$ is not exact. This follows from the fact that $\pi:S\T^2\rightarrow \T^2$ is a trivial bundle, as we mentioned in Example \ref{exa_concon}. More precisely, take $Z:\T^2\rightarrow S\T^2$ a section of this bundle. Then, $Z$ is a surface embedded in $S\T^2$ and
\begin{equation*}
\int_{\T^2}Z^*\omega'_{s}=\int_{\T^2}d(Z^*\lambda)-\int_{\T^2}Z^*(s\pi^*\sigma)=-s\int_{\T^2}\sigma\neq0.\qedhere
\end{equation*}
\end{proof}

From now on we restrict our discussion only to case \textbf{(E)} and \textbf{(NE)} contained in the statement of Proposition \ref{prp_exa}. We refer to them as the \textit{exact} and \textit{non-exact} case. Non-exact magnetic form on $\T^2$ will not play any role in the rest of this thesis.

\begin{dfn}
Let $(M,g,\sigma)$ be a magnetic system of type \textbf{(E)} or \textbf{(NE)} and define
\begin{align}
\bullet\ \op{Con}^+(g,\sigma):=&\left\{s\in[0,+\infty)\ \Big|\ \omega_{s}' \mbox{ is of positive contact type}\right\},\\
\bullet\ \op{Con}^-(g,\sigma):=&\left\{s\in[0,+\infty)\ \Big|\ \omega_{s}' \mbox{ is of negative contact type}\right\}.
\end{align}
If we fix the manifold $M$ and we vary the pairs $(g,\sigma)$ defined on $M$, we get the space of all magnetic systems of positive, respectively negative, contact type on $M$:
\begin{align}
\bullet\ \op{Con}^+(M):=&\bigcup_{(g,\sigma)}\left\{(g,\sigma,s)\,|\, s\in \op{Con}^+(g,\sigma)\right\}\subset \op{Mag}(M)\times[0,+\infty)\\
\bullet\ \op{Con}^-(M):=&\bigcup_{(g,\sigma)}\left\{(g,\sigma,s)\,|\, s\in \op{Con}^-(g,\sigma)\right\}\subset \op{Mag}(M)\times[0,+\infty)
\end{align}
\end{dfn}
We now prove the following proposition about the sets we have just introduced.
\begin{prp}\label{prp_def}
The sets $\op{Con}^+(M)$ and $\op{Con}^-(M)$ are open in the space $\op{Mag}(M)\times[0,+\infty)$. As a consequence, the sets $\op{Con}^+(g,\sigma)$ and $\op{Con}^-(g,\sigma)$ are open in $[0,+\infty)$.

If $s\in\op{Con}^+(g,\sigma)$ is $\nu$-nondegenerate, then $SH^*_\nu(DM,\omega_{s})$ is well defined. The isomorphism class of $SH^*_\nu$ is constant on the connected components of
\begin{enumerate}[\itshape a)]
 \item $\op{Con}^+(M)\cap\Big(\op{Mag}_e(M)\times[0,+\infty)\Big)$;
 \item $\op{Con}^+(M)$, if $M$ has positive genus;
 \item $\op{Con}^+(S^2)\setminus \Big(\op{Mag}_e(S^2)\times[0,+\infty)\cup\op{Mag}(S^2)\times\{0\}\Big)$.
\end{enumerate}
\end{prp}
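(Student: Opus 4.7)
The plan is to dispatch openness and well-definedness first, then set up the invariance via Theorem \ref{invthm}, and finally verify its projective-constancy hypothesis case by case. For openness, fix $(g_0,\sigma_0,s_0)\in\op{Con}^+(M)$ and a primitive $\lambda^*$ of $\omega'_{s_0}$ realising the positive contact condition, so that $\lambda^*(X^{s_0\sigma_0}_E|_{SM_{g_0}})\geq\delta>0$ by compactness. I would identify the nearby unit sphere bundles $SM_g$ with $SM_{g_0}$ via the flow of $Y$, use the parametrised formulas \eqref{inj_priex} or \eqref{inj_pri} to construct a continuous family of primitives $\lambda(g,\sigma,s)$, and invoke continuity of the magnetic Reeb field in the parameters to show that the bound $\delta$ persists on a neighbourhood of $(g_0,\sigma_0,s_0)$. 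The argument for $\op{Con}^-(M)$ is identical up to sign, and $\op{Con}^\pm(g,\sigma)$ is open as a slice. Well-definedness of $SH^*_\nu(DM,\omega_s)$ is then immediate: the positive contact condition supplies a convex symplectic structure on $DM$, the $\nu$-non-degeneracy of $s$ gives \textbf{(S1)}, and Proposition \ref{che} gives $c_1(\omega_s)=0$, hence \textbf{(S2)}.

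To prove the invariance I would connect two $\nu$-non-degenerate parameters $(g_0,\sigma_0,s_0)$ and $(g_1,\sigma_1,s_1)$ in the same component $C$ by a smooth path $(g_t,\sigma_t,s_t)\subset C$, pull all the structures back via fibre-preserving diffeomorphisms $\psi_t:DM_{g_0}\to DM_{g_t}$ to a fixed model $W:=DM_{g_0}$, and apply Theorem \ref{invthm} to the resulting family $\tilde\omega_t:=\psi_t^*\omega_{s_t\sigma_t}$; the endpoint identification back to $(DM_{g_1},\omega_{s_1\sigma_1})$ is then supplied by Theorem \ref{mainr}. Since $\pi\circ\psi_t=\pi$, one has $[\tilde\omega_t]=-s_t[\pi^*\sigma_t]\in H^2(W)$, and $c_1(\tilde\omega_t)$ is $\nu$-atoroidal by Proposition \ref{che}, so the only remaining hypothesis of Theorem \ref{invthm} is projective constancy of $\{\tilde\omega_t\}$ on $\nu$-tori.

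This constancy is verified case by case. In case (a) every $\sigma_t$ is exact, so $\tilde\omega_t-\tilde\omega_0$ is itself exact and the choice $c_t\equiv 1$ works. In case (b), non-exact $\sigma$ on $\T^2$ is ruled out of $\op{Con}^+$ by Proposition \ref{prp_exa} (reducing to (a)), while for $M$ of genus at least two every map $T^2\to M$ has degree zero, so $\pi^*\sigma$ is $\nu$-atoroidal for every $\nu$ and every $\sigma$, and again $c_t\equiv 1$ suffices. In case (c), $\pi_1(DM)=0$ makes $\nu=0$ the only class; for a $0$-torus of Hurewicz class $k\in\pi_2(DM)\simeq\Z$ one computes $\int_T\pi^*\sigma_t=k[\sigma_t]$, so $[\tau(\pi^*\sigma_t)]$ is determined by the scalar $[\sigma_t]$, and the choice $c_t:=s_t[\sigma_t]/(s_0[\sigma_0])$ — strictly positive on $C$ because the excluded sets force $s_t>0$ and $[\sigma_t]\neq 0$ with locally constant sign — gives $c_t^{-1}[\tau(\tilde\omega_t)]=[\tau(\tilde\omega_0)]$ as required. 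The main technical obstacle will be producing a genuine convex deformation $(W,\tilde\omega_t,\tilde j_t)$ in the sense of Chapter \ref{cha_sh}, with collars varying smoothly in $t$; this I would handle by the same Gray-stability-based alignment used in Proposition \ref{prpinv1}, adjusting $\psi_t$ near $\partial W$ so that the pulled-back Liouville vector fields become compatible with a smooth family of convex collars.
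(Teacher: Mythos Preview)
Your proposal is correct and follows essentially the same route as the paper: openness via stability of the contact condition, well-definedness via Proposition~\ref{che}, and invariance via Theorem~\ref{invthm} after checking projective constancy of $[\tau(\omega_s)]$ in each of the three cases. The paper is terser---it dispatches openness in one line (``a small perturbation of a contact form is still a contact form'') and does not spell out the identification of the varying disc bundles $DM_{g_t}$ with a fixed model, nor the construction of the smooth family of convex collars, treating the applicability of Theorem~\ref{invthm} as self-evident once the transgression hypothesis is verified. Your case analysis matches the paper's almost exactly; the only visible difference is in case~(b), where the paper cites \cite[Lemma~2.2]{mer1} for the vanishing of $\int_{T^2}u^*\sigma$, while you argue directly that any map $T^2\to M$ with $M$ of genus at least two has degree zero (which in turn rests on the absence of $\Z^2$ subgroups in the hyperbolic surface group $\pi_1(M)$).
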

\begin{proof}
Since a small perturbation of a contact form is still a contact form, the first statement readily follows. 

Then, observe that $(DM,\omega_{s})$ is convex if and only if $s\in\op{Con}^+(g,\sigma)$.  Since $c_1(\omega_{s})=0$ by Proposition \ref{che}, its Symplectic Cohomology is well defined as soon as $s$ is non-degenerate. In order to prove the statement about its isomorphism class, we apply Theorem \ref{invthm}. We only need to check that the cohomology class $[\tau(\omega_{s})]\in H^1(\mathscr L_\nu TM,\R)$ is projectively constant in each of the three cases presented above. The base $M$ is a deformation retraction of $TM$ and under this deformation the cohomology class $[\omega_{s}]\in H^2(TM,\R)$ is sent to $[s\sigma]\in H^2(M,\R)$. Thus, we only need to check the analogous statement for $[\tau(\sigma)]\in H^2(\mathcal L_\nu M,\R)$.

In case \textit{a)}, $\sigma$ is exact on $M$ and, therefore, $[\tau(\sigma)]=0$.

In case \textit{b)}, we consider any $\nu$-torus $u:\T^2\rightarrow M$. Lemma 2.2 in \cite{mer1} implies that $u^*\sigma$ is an exact form. Therefore,  $[\tau(\sigma)]=0$.

In case \textit{c)}, we only have the class of contractible loops. There are isomorphisms $\pi_1(\mathscr LS^2)\stackrel{\sim}{\rightarrow}\pi_2(S^2)\stackrel{\sim}{\rightarrow}H_2(S^2,\Z)\stackrel{\sim}{\rightarrow}\Z$. Hence, $H_1(\mathscr LS^2,Z)\simeq\Z$ and consequently $[\tau]:H^2(S^2,\R)\rightarrow H^1(\mathscr LS^2,\R)$ is an isomorphism. Consider two triples $(g,\sigma,s)$ and $(g',\sigma',s')$ such that $s$,$s'$ are positive and $\sigma$,$\sigma'$ are not exact. Then, $[\tau(\omega_{s\sigma})]$ and $[\tau(\omega_{s'\sigma'})]$ are the same up to a positive factor if and only if the same is true for $[\sigma]$ and $[\sigma']$. This happens if and only if the two magnetic systems lie in the same connected component of $\op{Mag}(S^2)\setminus \Big(\op{Mag}_e(S^2)\times[0,+\infty)\cup\op{Mag}(S^2)\times\{0\}\Big)$.
\end{proof}

In the next two subsections we are going to use the injections \eqref{inj_priex} and \eqref{inj_pri} in order to find \textit{sufficient} conditions for an energy level to be of contact type. Thanks to Remark \ref{rmk_ham} and Remark \ref{rmk_con}, we just need to study the sign of the functions  
\begin{align}\label{fun_exa}
\lambda_{s,\beta}(X^{s})_{(x,v)}&=(\lambda-s\pi^*\beta)(X+sfV)_{(x,v)}\nonumber\\
&=1-\beta_x(v)s
\end{align}
for case \textbf{(E)} and the functions
\begin{align}\label{fun_nexa}
\lambda^g_{s,\beta}(X^{s})_{(x,v)}&=\left(\lambda+\frac{s[\sigma]}{2\pi e_M}\tau-s\pi^*\beta\right)(X+sfV)_{(x,v)}\nonumber\\
&=1-\beta_x(v)s+\frac{[\sigma]f(x)}{2\pi e_M}s^2
\end{align}
for case \textbf{(NE)}.
 
Before performing this task, we single out some \textit{necessary} conditions for $\omega'_{s}$ to be of contact type. Indeed, the next proposition shows that the normalised Liouville measure is a null-homologous invariant measure for $X^{s}$. Combining the computation of its action (see \cite{pat1}) and McDuff's criterion, contained in Proposition \ref{mcdcri}), we find the mentioned necessary conditions in Corollary \ref{cor_act}. 
\begin{prp}
The normalised Liouville measure $\widetilde{\chi}:=\frac{\chi}{2\pi[\mu]}$ belongs to the set $\mathfrak M(X^{s})$. Its rotation vector is the Poincar\'e dual of $\frac{[\omega_{s}']}{2\pi[\mu]}$. Therefore $\rho(\widetilde{\chi})=0$ if and only if $\omega_{s}'$ is exact.
In case \textbf{(E)}, its action is given by
\begin{equation}
\mathcal A^{\omega'_{s}}(\widetilde{\chi})=1.
\end{equation}

In case \textbf{(NE)}, its action is given by
\begin{equation}
\mathcal A^{\omega'_{s}}(\widetilde{\chi})=1+\frac{[\sigma]^2}{2\pi e_M[\mu]}s^2.
\end{equation}
\end{prp}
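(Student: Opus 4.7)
The strategy is to reduce all three claims to identities between top-degree forms on $SM$, together with one easy Fubini computation.

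First I would establish that $\widetilde{\chi}\in\mathfrak{M}(X^s)$ and compute its total mass. Invariance is immediate from \eqref{con_mag}: Cartan's formula gives $\mathcal{L}_{X^s}\chi=d(\imath_{X^s}\chi)+\imath_{X^s}d\chi=d\omega'_s=0$, since $\omega'_s$ is closed and $\chi$ is top-dimensional. For the normalisation, I use $\chi=-\tau\wedge\pi^*\mu$ from Definition \ref{dfn_lio} and apply Fubini along the $S^1$-bundle $\pi|:SM\to M$; recalling that the fibre orientation is $-V$ (Section \ref{sec_gn}) and that $\tau(V)=1$, the fibre integral of $\tau$ is $-2\pi$, whence $\int_{SM}\chi=2\pi[\mu]$, so $\widetilde{\chi}$ is a probability measure.

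Next, for the rotation vector, the key formal identity is
\begin{equation*}
\beta(X^s)\,\chi\;=\;\beta\wedge\omega'_s\qquad\text{for every closed }\beta\in\Omega^1(SM).
\end{equation*}
This follows by contracting $X^s$ into the $4$-form $\beta\wedge\chi$, which vanishes on the $3$-manifold $SM$: $0=\imath_{X^s}(\beta\wedge\chi)=\beta(X^s)\chi-\beta\wedge\imath_{X^s}\chi=\beta(X^s)\chi-\beta\wedge\omega'_s$ by \eqref{con_mag}. Integrating and dividing by $2\pi[\mu]$ gives $\langle\rho(\widetilde{\chi}),[\beta]\rangle=\frac{1}{2\pi[\mu]}\int_{SM}\beta\wedge\omega'_s$, which is exactly the pairing of $[\beta]$ with the Poincar\'e dual of $\frac{[\omega'_s]}{2\pi[\mu]}$. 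Since Poincar\'e duality is an isomorphism $H^2(SM,\mathbb R)\simeq H_1(SM,\mathbb R)$, one has $\rho(\widetilde{\chi})=0$ iff $[\omega'_s]=0$, that is, iff $\omega'_s$ is exact.

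Finally, for the actions, I evaluate $\mathcal{A}^{\omega'_s}(\widetilde{\chi})=\int_{SM}\lambda(X^s)\,\widetilde{\chi}$ on the explicit primitives \eqref{fun_exa} and \eqref{fun_nexa}. The crucial observation is that the linear-in-$v$ term contributes nothing:
\begin{equation*}
\int_{SM}\beta_x(v)\,\chi\;=\;\int_M\!\mu(x)\!\int_{S_xM}\beta_x(v)\,d\theta\;=\;0,
\end{equation*}
because $\beta_x$ is a linear functional on $T_xM$ integrated against the uniform measure on the circle $S_xM$, which is invariant under $v\mapsto-v$. In case \textbf{(E)} this leaves $\mathcal{A}^{\omega'_s}(\widetilde{\chi})=\int_{SM}1\cdot\widetilde{\chi}=1$. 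In case \textbf{(NE)} only the extra term $s^2\frac{[\sigma]f(x)}{2\pi e_M}$ remains, and Fubini gives $\int_{SM}(f\circ\pi)\chi=2\pi\int_M f\mu=2\pi[\sigma]$; dividing by $2\pi[\mu]$ yields $\mathcal{A}^{\omega'_s}(\widetilde{\chi})=1+\frac{[\sigma]^2}{2\pi e_M[\mu]}s^2$.

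The only mild obstacle is bookkeeping of the orientation conventions on $SM$ (the $-V$ convention in the fibre orientation and the positivity of the frame $(X,V,H)$ from Proposition \ref{prphv}), which must be tracked consistently so that $\int_{SM}\chi$ comes out to $+2\pi[\mu]$ rather than with a spurious sign; once that is fixed, the rest is formal.
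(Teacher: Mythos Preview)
Your proof is correct and follows essentially the same route as the paper: the key identity $\beta(X^s)\chi=\beta\wedge\omega'_s$ (obtained by contracting $X^s$ into the vanishing $4$-form $\beta\wedge\chi$) for the rotation vector, and cancellation of the $\beta_x(v)$ term by odd symmetry on the fibre, followed by Fubini for the $f(x)$ term. The only cosmetic difference is that the paper phrases the vanishing of $\int_{SM}\beta_x(v)\chi$ via the global flip $I(x,v)=(x,-v)$ with $I^*\chi=\chi$, whereas you do it fibrewise; and you supply the invariance and normalisation of $\widetilde{\chi}$ explicitly, which the paper leaves implicit.
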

\begin{proof}
If $\zeta$ is any $1$-form on $SM$, then by \eqref{con_mag}
\begin{equation}
\big(\imath_{X^{s}}\zeta\big)\widetilde{\chi}=\zeta\wedge\left(\imath_{X^{s}}\widetilde{\chi}\right)=\zeta\wedge\frac{\omega_{s}'}{2\pi[\mu]}.
\end{equation}
Integrating this equality over $SM$, we see that the $\rho(\widetilde{\chi})$ is the Poincar\'e dual of $\frac{[\omega_{s}']}{2\pi[\mu]}$.

We now proceed to compute the action of $\widetilde{\chi}$. First, define $I:SM\rightarrow SM$ the flip $I(x,v)=(x,-v)$ and observe that $I^*\chi=\chi$. We readily see that
\begin{equation*}
\int_{SM} \beta_x(v)\chi=\int_{SM}\beta_x(-v)I^*\chi=-\int_{SM}\beta_x(v)\chi.
\end{equation*}
Therefore, $\int_{SM}\beta_x(v)\chi=0$.

Consider first case \textbf{(E)} and let $\lambda_{s,\beta}$ be a primitive of $\omega'_{s}$. Then,
\begin{equation*}
\mathcal A^{\omega_{s}'}_{X^{s}}(\widetilde{\chi})=\int_{SM}\lambda_{s,\beta}(X^{s})\,\widetilde{\chi}=\int_{SM}(1-\beta_x(v)s)\,\widetilde{\chi}=1.
\end{equation*}

Consider now case \textbf{(NE)} and let $\lambda_{s,\beta}^g$ be a primitive of $\omega'_{s}$. Then,
\begin{align*}
\mathcal A^{\omega_{s}'}_{X^{s}}(\widetilde{\chi})=\int_{SM}\lambda^g_{s,\beta}(X^{s})\,\widetilde{\chi}&=\int_{SM}\left(1-\beta_x(v)s+\frac{[\sigma]f(x)}{2\pi e_M}s^2\right)\widetilde{\chi}\\
&=1+\frac{[\sigma]}{2\pi e_M}s^2\int_{SM}f(x)\,\widetilde{\chi}\\
&=1+\frac{[\sigma]}{2\pi e_M}s^2\frac{1}{{2\pi[\mu]}}2\pi\int_M f(x)\,\mu\\
&=1+\frac{[\sigma]^2}{2\pi e_M[\mu]}s^2.\qedhere
\end{align*}
\end{proof}

The corollary below relates the action of $\widetilde{\chi}$ and the contact property. For the case \textbf{(NE)} on surfaces of genus at least two, we first need the following definition \cite{mof,arkh,pat1}.
\begin{dfn}
Let $(M,g,\sigma)$ be a non-exact magnetic system on a surface of genus at least $2$. Define its \textbf{helicity} $s_h(g,\sigma)\in(0,+\infty)$ as
\begin{equation}
s_h(g,\sigma):=\sqrt{\frac{2\pi |e_M|[\mu]}{[\sigma]^2}}
\end{equation}
\end{dfn}
\begin{cor}\label{cor_act}
For exact magnetic systems or for non-exact magnetic systems on $S^2$, $\omega'_{s}$ cannot be of negative contact type. Namely, $\op{Con}^-(g,\sigma)=\emptyset$.

For non-exact magnetic systems on a surface of genus at least $2$, we have the inclusions
\begin{align}
\bullet\ \op{Con}^+(g,\sigma)&\subset(0,s_h(g,\sigma))\\
\bullet\ \op{Con}^-(g,\sigma)&\subset(s_h(g,\sigma),+\infty).
\end{align}
In particular, for $s=s_h(g,\sigma)$, $\omega_{s}'$ is not of contact type.
\end{cor}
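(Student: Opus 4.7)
The strategy is a direct application of McDuff's criterion (Proposition \ref{mcdcri}) to the normalised Liouville measure $\widetilde\chi$, whose action has just been computed in the preceding proposition. In every case under consideration ($\sigma$ exact, or $\sigma$ not exact with $M\neq\T^2$), the form $\omega'_s$ is exact by Proposition \ref{prp_exa}, hence $\rho(\widetilde\chi)$, being Poincar\'e dual to $[\omega'_s]/(2\pi[\mu])$, vanishes. Thus $\widetilde\chi$ is a null-homologous $X^s$-invariant probability measure, and its action is an obstruction whose sign is controlled by McDuff's criterion.

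In case \textbf{(E)} we have $\mathcal A^{\omega'_s}_{X^s}(\widetilde\chi)=1>0$, so by McDuff's criterion $\omega'_s$ cannot be of negative contact type; the same conclusion holds for non-exact systems on $S^2$, since there $e_M=2>0$ and
\[
\mathcal A^{\omega'_s}_{X^s}(\widetilde\chi)=1+\frac{[\sigma]^2}{2\pi e_M[\mu]}s^2\ >\ 0.
\]
In both cases this gives $\op{Con}^-(g,\sigma)=\emptyset$.

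For a non-exact system on a surface of genus at least $2$ we have $e_M<0$, so
\[
\mathcal A^{\omega'_s}_{X^s}(\widetilde\chi)=1-\frac{[\sigma]^2}{2\pi|e_M|[\mu]}s^2=1-\frac{s^2}{s_h(g,\sigma)^2}.
\]
This expression is positive for $s<s_h(g,\sigma)$, zero at $s=s_h(g,\sigma)$, and negative for $s>s_h(g,\sigma)$. McDuff's criterion then forces $\op{Con}^+(g,\sigma)\subset(0,s_h(g,\sigma))$ and $\op{Con}^-(g,\sigma)\subset(s_h(g,\sigma),+\infty)$, while at the critical helicity value $s=s_h(g,\sigma)$ the action vanishes, obstructing both positive and negative contact type simultaneously.

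There is no genuine obstacle here: the only substantive input is the action computation of the previous proposition together with McDuff's criterion, and the proof is a routine sign analysis of the quadratic polynomial $1+[\sigma]^2(2\pi e_M[\mu])^{-1}s^2$ in the three cases distinguished by the sign of $e_M$.
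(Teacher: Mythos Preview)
Your proof is correct and is exactly the argument the paper intends: the corollary is stated immediately after the computation of $\mathcal A^{\omega'_s}_{X^s}(\widetilde\chi)$, and the text preceding it already says the necessary conditions follow by ``combining the computation of its action \ldots\ and McDuff's criterion''. The paper gives no separate proof, so your write-up is precisely the omitted routine verification.
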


\section{Contact property for case \textbf{(E)}}\label{sec_exa}
The results of this section are classical and well-known by the experts, except possibly the computation of the subcritical Symplectic Cohomology on $\T^2$. Anyway, we decided to include them here in order to put case \textbf{(NE)} in a wider context. 

From Corollary \ref{cor_act}, we know that we only need to check when $\lambda_{s,\beta}$ is a positive contact form. Identity \eqref{fun_exa} implies that
\begin{equation}
\lambda_{s,\beta}(X^{s})_{(x,v)}=1-\beta_x(v)s\geq1-|\beta_x|s\geq1-\Vert\beta\Vert s.
\end{equation}
Therefore, $\lambda_{s,\beta}(X^{s})$ is positive provided $s<\Vert\beta\Vert^{-1}$. Hence, there exists a $\beta\in\pri^\sigma$ such that $(DM,\omega_{s},\lambda_{s,\beta})$ is convex provided 
\begin{equation*}
s<s_0(g,\sigma):=\left(\inf_{\beta\in\pri^\sigma}\Vert\beta\Vert\right)^{-1}.
\end{equation*}
\begin{rmk}
Observe that combining \cite[Theorem 1.1]{pats} with \cite[Theorem A]{cipp}, one finds that $s_0(g,\sigma)$ is the \textit{Ma\~n\'e critical value of the Abelian cover} of the Lagrangian function $L_{\beta}(x,v):=E(x,v)-\beta_x(v)$ after the reparametrisation $s\mapsto c(s)=\frac{1}{2s^2}$. 
\end{rmk}
If we call $[0,s_1(g,\sigma))$ the connected component of $\op{Con}^+(g,\sigma)$ containing $0$. The above computation shows that $s_0(g,\sigma)<s_1(g,\sigma)$.
By Proposition \ref{prp_def}, for all $\nu$-non-degenerate $s<s_1(g,\sigma)$, $SH^*_\nu(DM,\omega_{s})\simeq SH^*_\nu(DM,\omega_0,j_0)$. Since the latter cohomology is known by the results of Viterbo \cite{vit2}, Salamon-Weber \cite{sawe} and Abbondandolo-Schwarz \cite{absc}, we get the following statement.
\begin{prp}\label{prp_exacon}
If $s<s_1(g,\sigma)$ is $\nu$-non-degenerate, then $(DM,\omega_{s})$ is convex and
\begin{equation}
SH^*_\nu(DM,\omega_{s})\simeq H_{-*}(\mathscr L_\nu M,\Z).
\end{equation}
\end{prp}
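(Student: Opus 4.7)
The plan is to deform the convex symplectic manifold $(DM,\omega_{s})$ to the standard unit disc bundle $(DM,\omega_0=d\lambda)$ through a path of convex exact magnetic systems, invoke the deformation invariance of Symplectic Cohomology, and then quote the known identification of $SH^*_\nu$ of the standard cotangent bundle with the homology of the loop space.

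First I would check convexity: by definition $[0,s_1(g,\sigma))$ is the connected component of $\op{Con}^+(g,\sigma)$ containing $0$, so any $s$ in this interval produces a positive contact form on $\Sigma_{m}$, and hence $(DM,\omega_{s})$ is a convex symplectic manifold in the sense of Section \ref{csm}.

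Next I would set up the deformation $s'\in[0,s]\mapsto(g,\sigma,s')$ inside $\op{Mag}(M)\times[0,+\infty)$. The whole segment is contained in $\op{Con}^+(g,\sigma)$ since $[0,s]\subset[0,s_1(g,\sigma))$, and the magnetic form $\sigma$ stays exact (indeed constant) along the path, so the path lies in $\op{Con}^+(M)\cap\bigl(\op{Mag}_e(M)\times[0,+\infty)\bigr)$. Case $a)$ of Proposition \ref{prp_def} then gives
\begin{equation*}
SH^*_\nu(DM,\omega_{s})\simeq SH^*_\nu(DM,\omega_0).
\end{equation*}
Finally, the right-hand side is the Symplectic Cohomology of the standard unit disc cotangent bundle of $(M,g)$, which by the theorems of Viterbo \cite{vit2}, Salamon--Weber \cite{sawe} and Abbondandolo--Schwarz \cite{absc} is isomorphic to $H_{-*}(\mathscr L_\nu M,\Z)$.

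The main obstacle is the technical hypothesis of $\nu$-non-degeneracy at the reference endpoint: the geodesic flow of $g$ need not be $\nu$-non-degenerate, which is required in order to define $SH^*_\nu(DM,\omega_0)$ on the nose. This will be handled by appending to the deformation a further segment $(g_t,0,0)$, with $g_t$ a small perturbation of $g$ connecting it to a generic metric whose geodesic flow is $\nu$-non-degenerate; this perturbation stays in $\op{Con}^+(M)\cap\bigl(\op{Mag}_e(M)\times[0,+\infty)\bigr)$ because $\omega_0=d\lambda$ is independent of the magnetic form and $\lambda$ is a contact primitive on every unit sphere bundle (Example \ref{exa_consm}), so another application of Proposition \ref{prp_def}$a)$ preserves the isomorphism class and reduces the computation to the standard case.
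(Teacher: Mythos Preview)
Your proposal is correct and follows essentially the same route as the paper: the paper's argument is just the paragraph preceding the proposition, which invokes Proposition \ref{prp_def} (case \textit{a)}) along the segment $[0,s]\subset\op{Con}^+(g,\sigma)$ and then cites Viterbo, Salamon--Weber and Abbondandolo--Schwarz for the computation at $s=0$. Your final paragraph, perturbing the metric to ensure $\nu$-non-degeneracy of the geodesic flow at the endpoint, is a legitimate point that the paper glosses over; it is handled exactly as you say, since varying $g$ (with $s=0$) stays inside $\op{Con}^+(M)\cap(\op{Mag}_e(M)\times[0,+\infty))$ by Example \ref{exa_consm}.
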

The value of $s_1(g,\sigma)$ can be exactly estimated when $M\neq\T^2$ as the following proposition due to Contreras, Macarini and G. Paternain shows (see \cite{cmp}). 
\begin{prp}
If $s\geq s_0(g,\sigma)$, there exists an invariant measure $\zeta_s$ such that
\begin{equation}
\bullet\ \pi_*\rho(\zeta_s)=0\in H_1(M,\mathbb R),\quad\quad\bullet\ \forall \beta\in\mathcal P^\sigma,\ \ \mathcal A^{\lambda_{s,\beta}}_{X^{s}}(\zeta_s)\leq 0. 
\end{equation}
Therefore, if $M\neq\T^2$, $\op{Con}^+(g,\sigma)=[0,s_0(g,\sigma))$ and $s_1(g,\sigma)=s_0(g,\sigma)$.

If $M=\T^2$, $\omega_{s}'$ is not of restricted contact type for $s\geq s_0(g,\sigma)$. However there are examples for which $s_0(g,\sigma)<s_1(g,\sigma)$.
\end{prp}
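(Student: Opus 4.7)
The plan is to recast the non-positivity of $\mathcal A^{\lambda_{s,\beta}}_{X^{s}}(\zeta_s)$ as a statement about the Lagrangian action of the Tonelli Lagrangian $L_\beta(x,v):=\tfrac12|v|_g^2-\beta_x(v)$ and then invoke Ma\~n\'e's critical value theory on the abelian cover. The strict critical value of $L_\beta$ on this cover equals $\tfrac12\Vert\beta\Vert^2$, so $\inf_{\beta\in\mathcal P^\sigma}c_a(L_\beta)=1/(2s_0(g,\sigma)^2)$. A direct pointwise computation using Proposition \ref{prphv} shows that, for every closed orbit $\gamma$ of the magnetic flow on $\Sigma_m$ with $m=1/s$, of period $T_\gamma$,
\begin{equation*}
A_{L_\beta+m^2/2}(\gamma)\ =\ T_\gamma\cdot \mathcal A^{\lambda-\pi^*\beta}_{X^\sigma_E}(\zeta_\gamma),
\end{equation*}
with $\zeta_\gamma$ the normalised invariant measure supported on $\gamma$. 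Conjugating by $\Phi^Y_{-\log m}$ (see \eqref{inty2}) carries this to the contact action $\mathcal A^{\lambda_{s,\beta}}_{X^{s}}$ on $SM$ up to the positive factor $m^2$, so signs are preserved. The hypothesis $s\ge s_0$ is then exactly the subcritical condition $m^2/2\le c_a(L_\beta)$ for the norm-minimising primitive.

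For each such $s$, I would construct $\zeta_s$ as the normalised invariant measure supported on a union of locally minimising periodic orbits $\gamma_1,\ldots,\gamma_k$ (``waists'') at energy $c=m^2/2$, following Ta\u\i manov \cite{tai2} and the variational argument of \cite{cmp}. At subcritical energies one produces a compact subdomain $M_m\subset M$ whose boundary decomposes as $\partial M_m=\bigsqcup_i\gamma_i$, each $\gamma_i$ a periodic magnetic orbit at energy $c$, and whose total $(L_\beta+c)$-action is non-positive. The homology condition $\pi_*\rho(\zeta_s)=0\in H_1(M,\R)$ is automatic because $\bigsqcup_i\gamma_i$ bounds a $2$-chain in $M$. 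Moreover, for measures with $\pi_*\rho=0$ the action is independent of the choice of $\beta\in\mathcal P^\sigma$: two primitives differ by a closed $1$-form on $M$ whose pull-back to $SM$ pairs trivially with $\rho(\zeta_s)$. Combining this with the displayed identity yields $\mathcal A^{\lambda_{s,\beta}}_{X^{s}}(\zeta_s)\leq 0$ for every $\beta$.

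For $M\neq\T^2$ the Euler number $e_M\neq 0$, so the Gysin sequence of the circle bundle $\pi\colon SM\to M$ produces an isomorphism $\pi^*\colon H^1(M,\R)\stackrel{\sim}{\longrightarrow}H^1(SM,\R)$. Any primitive $\tau$ of $\omega'_{s}$ differs from $\lambda_{s,\beta}$ by a closed $1$-form on $SM$, which by Gysin may be written $\pi^*\alpha+df$ with $\alpha$ closed on $M$; hence $\tau=\lambda_{s,\beta+\alpha}+df$, and $\beta+\alpha$ is again in $\mathcal P^\sigma$ since $d\alpha=0$. The previous paragraph then gives $\mathcal A^{\tau}_{X^{s}}(\zeta_s)\leq 0$ for \emph{every} such $\tau$. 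Dually, $\pi_*\colon H_1(SM,\R)\to H_1(M,\R)$ is an isomorphism, so $\zeta_s$ is null-homologous in $H_1(SM,\R)$, and McDuff's criterion (Proposition \ref{mcdcri}) rules out positive contact type, yielding $\op{Con}^+(g,\sigma)=[0,s_0(g,\sigma))$. For $M=\T^2$ the Gysin argument breaks down: the connection form generates an extra class in $H^1(S\T^2,\R)$ that does not come from any primitive of $\omega_{s}$ on the disc bundle $D\T^2$. Hence the measure $\zeta_s$ only obstructs primitives that extend to $D\T^2$, proving that $\omega'_{s}$ is not of \emph{restricted} contact type, but leaving room for ``vertical'' contact primitives, which indeed arise in the explicit examples to be constructed in Section \ref{sec_exa}.

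The principal obstacle is the waist decomposition on a \emph{fixed} energy level $\Sigma_m$ for strictly subcritical $c<c_a$: classical Mather minimisers live at energy exactly $c_a$, whereas here we need minimisers on each energy level below that. The argument proceeds by a Birkhoff--Struwe style minimax for the free-time Lagrangian action on null-homologous loops at prescribed energy, and uses crucially the graph property of subcritical action-minimising invariant sets to ensure smoothness of the waists. The requirement that $\bigsqcup_i\gamma_i=\partial M_m$ bound a domain in $M$ is the geometric input that couples this Ma\~n\'e-theoretic step to the cohomological conclusion in the previous paragraph.
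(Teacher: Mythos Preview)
The paper does not give its own proof of this proposition; it is stated with attribution to Contreras, Macarini and G.~Paternain \cite{cmp} (with the waist construction going back to Ta\u\i manov \cite{tai2}). Your reconstruction follows the same route as that reference: the identification of $s_0(g,\sigma)$ with the Ma\~n\'e critical value of the abelian cover, the production of a null-homologous cycle of waists $\partial M_m=\bigsqcup_i\gamma_i$ with non-positive free-period action at subcritical energies, and the Gysin argument showing that for $M\neq\T^2$ every primitive of $\omega'_s$ lies in the family $\lambda_{s,\beta}$ up to exact forms. This is exactly the mechanism the paper summarises in the introduction and in Section~\ref{sec_exa}.

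Two small points. First, your description of the ``principal obstacle'' slightly misframes the issue: the waists at subcritical energies are not Mather minimisers but local minimisers of the free-period action functional, and Ta\u\i manov's argument produces them directly on each fixed energy level, so no Birkhoff--Struwe minimax is needed. Second, your justification for the $\T^2$ case is correct in spirit but the phrasing ``primitives that extend to $D\T^2$'' should be tightened: the point is that the class $[d\varphi_Z]$ is not in the image of $\pi^*$, hence pairs non-trivially with the vertical fibre class in $H_1(S\T^2,\R)$, so $\pi_*\rho(\zeta_s)=0$ no longer forces $\rho(\zeta_s)=0$, and McDuff's criterion is inconclusive for primitives involving $d\varphi_Z$. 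The explicit examples with $s_0<s_1$ are then built in Section~\ref{ssec_critor}.
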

\begin{rmk}
It is an open question to determine whether on $\T^2$ $\op{Con}^+(g,\sigma)$ is always connected, namely to see whether $[0,s_1(g,\sigma))=\op{Con}^+(g,\sigma)$, or not.
\end{rmk}

In \cite{cmp}, the existence of a magnetic system which is of contact type at $s_0(g,\sigma)$ was proven using McDuff's sufficient criterion, which we stated in Proposition \ref{mcdcri}. The drawback of this method is that the criterion is not constructive, since it finally relies on an application of the Hahn-Banach Theorem. In the subsection below, we outline an explicit construction of the contact form for the kind of systems considered in \cite{cmp}. 

\subsection{Ma\~n\'e Critical values of contact type on $\T^2$}\label{ssec_critor}

The key observation is that when $M=\T^2$, $\pi^*:H^1(M,\R)\rightarrow H^1(SM,\R)$ is not surjective. To pick a class which is not in the image, just consider $Z:\T^2\rightarrow S\T^2$ a section of the $S^1$-bundle $\pi:S\T^2\rightarrow \T^2$. This yields the angular coordinate $\varphi_{Z}:T\T^2_0\rightarrow \T_{2\pi}$. Therefore, $d\varphi_{Z}\in\Omega^1(T\T^2_0)$ is a closed form which is not in the image of $\pi^*$. If $r\in\R$, we consider the family of primitives
\begin{align}\label{inj_pritor}
\mathcal P^{\sigma}&\longrightarrow\ \mathcal P^{\omega'_{s}}\nonumber\\
\beta&\longmapsto\ \lambda_{s,\beta}^{r,Z}:=\lambda-s\pi^*\beta+rd\varphi_Z,
\end{align}
which reduces to the class introduced in \eqref{inj_priex} for $r=0$. Namely, $\lambda_{s,\beta}^{0,Z}=\lambda_{s,\beta}$.

Recall the definition of $\kappa^Z\in\Omega^1(\T^2)$ from Equation \eqref{dfn_kap}.
Exploiting Formula \eqref{equ_kap}, we get on $S\T^2$
\begin{align}
\lambda_{s,\beta}^{r,Z}(X^{s})_{(x,v)}&=1-\beta_x(v)s+r(f(x)s-\kappa^Z_x(v))\nonumber\\
&\geq 1-|\beta_x|s+r(f(x)s-|\kappa^Z_x|)\,.\label{ineq}
\end{align}

Now we are going to define a distinguished class of magnetic forms $\sigma$ for which the right-hand side of \eqref{ineq} is positive for $s=s_0(g,\sigma)$.

Fix $t\mapsto \gamma(t)$ an embedded contractible closed curve on $\T^2$ parametrised by arc length. Suppose that its period is $T_\gamma$ and that its geodesic curvature $k_\gamma$ satisfies
\begin{equation}
k_\gamma(t)-|\kappa^Z_{\gamma(t)}|\geq\varepsilon,
\end{equation}
for some $\varepsilon>0$. Fix a $B\in\Gamma(\T^2)$ such that
\begin{enumerate}
 \item $\gamma$ is an integral curve for $B$;
 \item if $M_B:=\{x\in\T^2\ |\ |B_x|=\Vert B\Vert\}$, then $M_B=\op{supp}\gamma$ and $\Vert B\Vert=1$.
\end{enumerate}
Finally, let $\beta=\flat B$ and set $\sigma:=d\beta$.

We claim that $f(\gamma(t))=k_\gamma(t)$. Indeed,
\begin{align*}
f\vert B\vert^2=f\mu(B,\jmath B)=d(\flat B)(B,\jmath B)&\stackrel{(*)}{=}\,B\left(\flat B(\jmath B)\right)-\jmath B\left(\flat B(B)\right)-(\flat B)([B,\jmath B])\\
&\hspace{-3pt}\stackrel{(**)}{=}0-\jmath B(|B|^2)-g(B,\nabla_B\jmath B-\nabla_{\jmath B}B)\\
&=\;-\jmath B(|B|^2)+g(\nabla_BB,\jmath B)-\frac{1}{2}\jmath B(|B|^2)\\
&=\;|B|^3k_B-\frac{3}{2}\jmath B(|B|^2),
\end{align*}
where we used Cartan formula for the exterior derivative in $(*)$ and the symmetry of the Levi-Civita connection in $(**)$. Remember that $k_B$ is the geodesic curvature of $B$ defined in \eqref{geocur}. The claim follows noticing that on $\op{supp}\gamma$
\begin{itemize}
 \item $|B|=\Vert B\Vert=1$;
 \item $k_B=k_\gamma$ since $\gamma$ is an integral line for $B$;
 \item $\jmath B(|B|^2)=0$, because the function $|B|^2$ attains its global maximum there.
\end{itemize}
As a by-product, we obtain that $t\mapsto(\gamma(t),\dot{\gamma}(t)=B_{\gamma(t)})\in S\T^2$ is a periodic orbit for $X^1$ because $\gamma$ satisfies the magnetic equation \eqref{lorsig}.

We claim that $s_0(g,\sigma)=\Vert B\Vert=1$. If we take any $\beta'\in\pri^\sigma$,
\begin{equation}
\int_\gamma\beta'=\int_\gamma\beta=\int_0^{T_\gamma}\beta_{\gamma(t)}(\dot{\gamma}(t))dt=T_\gamma.
\end{equation}
On the other hand, 
\begin{equation}
\int_\gamma\beta'=\int_0^{T_\gamma}\beta'_{\gamma(t)}(\dot{\gamma}(t))dt\leq T_\gamma\Vert\beta'\Vert,
\end{equation}
therefore, $\Vert \beta'\Vert\geq\Vert \beta\Vert=1$. So, the infimum in the definition of $s_0(g,\sigma)$ is equal to $1$, it is a minimum and it is attained at $\beta$. Hence, the claim is proven.

We can now estimate from below \eqref{ineq}. When $s=1$,
\begin{equation}
 \lambda_{1,\beta}^{r,Z}(X^{\sigma})\geq (1-|\beta_x|)+r(f(x)-|\kappa^Z_x|).
\end{equation}
The right-hand side is the sum of two pieces:
\begin{enumerate}[\itshape i)]
\item $r(f(x)-|\kappa^Z_x|)$. When $r>0$, this quantity is bigger than $r\cdot\frac{\varepsilon}{2}$ on a neighbourhood $U$ of $\op{supp}\gamma$;
\item $1-|\beta_x|$. This quantity is strictly positive outside $\op{supp}\gamma$ and it vanishes on it. In particular, there exists $\delta>0$ such that $1-|\beta_x|\geq\delta$ on $\T^2\setminus U$. 
\end{enumerate}
This means that the right-hand side is positive on $U$ and on $\T^2\setminus U$ as soon as
\begin{equation}
0<r<\frac{\delta}{\max\{0,\sup_{x\notin U}|\kappa^Z_x|-f(x)\}}.
\end{equation}
Therefore, for $r$ in this range, we see that $(D\T^2,\omega_{\sigma},\lambda_{1,\beta}^{r,Z})$ is convex. We conclude that $1\in \op{Con}^+(g,\sigma)$ and $s_1(g,\sigma)>s_0(g,\sigma)=1$. 


\section{Contact property for case \textbf{(NE)}}\label{sec_ne}
To ease the notation, in this section we suppose that the magnetic form $\sigma$ is rescaled by a constant factor, in such a way that $[\sigma]=2\pi e_M$. This operation will only induce a corresponding rescaling of the parameter $s$ and, hence, will not affect our study. We begin with two easy examples and then we move to the general discussion subdivided in the subsections below.

\begin{exa}\label{exa_mod}
Denote by $g_0$ the metric of curvature $|K_{g_0}|=1$ on $M$ and consider the magnetic systems $(M,g_0,\mu_{g_0})$. We see that $\mu_{g_0}-\frac{[\mu_{g_0}]}{2\pi e_M}\sigma_{g_0}=0$. Hence, we can choose $\beta=0$ and the corresponding family of primitives $\lambda^{g_0}_{s,0}$.

If $M=S^2$, then \eqref{fun_nexa} yields $\lambda^{g_0}_{s,0}(X^{s})=1+s^2$. Thus, $\op{Con}^+(g_0,\mu_{g_0})=[0,+\infty)$.

If $M$ is a surface of genus at least $2$, \eqref{fun_nexa} reduces to $\lambda^{g_0}_{s,0}(X^{s})=1-s^2$. Thus, $\op{Con}^+(g_0,\mu_{g_0})=[0,1)$ and $\op{Con}^-(g_0,\mu_{g_0})=(1,+\infty)$. Notice that in this case, $s_h(g_0,\mu_{g_0})=1$ is the only value of the parameter which is not of contact type.
\end{exa}
\begin{exa}\label{exa_conv}
Consider a convex two-sphere. Namely, we endow $S^2$ with a metric $g$ of positive curvature. Take the magnetic system $(S^2,g,\sigma_g=K\mu)$. As in the example above, we can choose $\beta=0$ and the family of primitives $\lambda^{g}_{s,0}$. In this case, \eqref{fun_nexa} reduces to $\lambda^{g}_{s,0}(X^{s})=1+Ks^2>0$. Thus, $\op{Con}^+(g,\sigma_g)=[0,+\infty)$. This is one of the hints to Conjecture \ref{conj}.
\end{exa}

\subsection{High energy levels} 
Let us start by checking when $s\in\op{Con}^+(g,\sigma)$. Identity \eqref{fun_nexa} implies that
\begin{align}\label{f_nexa}
\lambda^g_{s,\beta}(X^{s})_{(x,v)}&=1-\beta_x(v)s+f(x)s^2\geq1-\Vert\beta\Vert s+\inf fs^2
\end{align}
Consider the quadratic equation $1-\Vert\beta\Vert s+\inf fs^2=0$. If $\inf f\leq0$, we have only one positive root. If $\inf f>0$ we either have two positive roots or two non-real roots. In any case, call $s_-(g,\sigma,\beta)$ the smallest positive root and set $s_-(g,\sigma,\beta)=+\infty$ if the equation does not have real roots. Observe that $1-\Vert\beta\Vert s+\inf fs^2>0$ for $s\in[0,s_-(g,\sigma,\beta))$, no matter the sign of $\inf f$. We write
\begin{equation}
s_-(g,\sigma):=\sup_{\beta\in\pri^{\sigma-\sigma_g}}s_-(g,\sigma,\beta).
\end{equation}
Notice that $s_-(g,\sigma)$ is the smallest positive root (with the same convention as before if there are no real roots) of the quadratic equation $1-m(g,\sigma)s+\inf fs^2=0$, where
\begin{equation}
m(g,\sigma):=\inf_{\beta\in\pri^{\sigma-\sigma_g}}\Vert \beta\Vert.
\end{equation}

Thus, we conclude that there exists $\overline{s}_-(g,\sigma)\geq s_-(g,\sigma)$ such that $[0,\overline{s}_-(g,\sigma))$ is the connected component of $\op{Con}^+(g,\sigma)$ containing $0$.

Proposition \ref{prp_def} implies the following corollary for surfaces of high genus. 

\begin{cor}\label{cor_negen}
If $M$ is a surface of genus at least $2$ and $s<\overline{s}_-(g,\sigma)$ is $\nu$-non-degenerate, then $(DM,\omega_{s})$ is convex and
\begin{equation}
SH^*_\nu(DM,\omega_{s})\simeq SH^*_\nu(DM,\omega_0)\simeq H_{-*}(\mathscr L_\nu M,\Z).
\end{equation}
\end{cor}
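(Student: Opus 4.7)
The statement decomposes naturally into three steps, two of which are immediate from results already at our disposal, while the third is the classical computation of symplectic cohomology of a standard disc bundle.

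\textit{Step 1 (convexity).} By the paragraph preceding the corollary, $[0,\overline{s}_-(g,\sigma))$ is by definition the connected component of $\op{Con}^+(g,\sigma)$ containing $0$. Hence $s\in\op{Con}^+(g,\sigma)$, and one can select $\beta\in\pri^{\sigma-\sigma_g}$ for which the primitive $\lambda^g_{s,\beta}$ from \eqref{inj_pri} is a positive contact form on $\Sigma_m$. This realises $(DM,\omega_s)$ as a convex symplectic manifold in the sense of Chapter \ref{cha_sh}.

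\textit{Step 2 (deformation).} I would deform to the geodesic case along the straight path $s'\in[0,s]\mapsto(g,\sigma,s')$. Its image sits entirely in the connected component of $\op{Con}^+(M)$ containing $(g,\sigma,0)$, because $[0,s]\subset[0,\overline{s}_-(g,\sigma))\subset\op{Con}^+(g,\sigma)$. Since $M$ has genus at least $2$, hence positive, item \textit{b)} of Proposition \ref{prp_def}---which is the specialisation of Theorem \ref{invthm} to the magnetic setting and uses crucially that every $\nu$-torus in $M$ has an exact pull-back of $\sigma$ in higher genus---applies along this path and gives
\[
SH^*_\nu(DM,\omega_s)\ \simeq\ SH^*_\nu(DM,\omega_0).
\]

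\textit{Step 3 (base case).} At $s'=0$ the form $\omega_0=d\lambda$ is the standard symplectic form on $TM$, and the system $(M,g,0)$ is exact with $s_1(g,0)=+\infty$ (the choice $\beta=0$ in \eqref{fun_exa} produces a contact form for every $s$). Proposition \ref{prp_exacon} then identifies
\[
SH^*_\nu(DM,\omega_0)\ \simeq\ H_{-*}(\mathscr L_\nu M,\Z),
\]
which is the Viterbo--Salamon-Weber--Abbondandolo-Schwarz theorem. Chaining Steps 2 and 3 yields the claim.

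The only genuine, though mild, obstacle is the $\nu$-non-degeneracy of the endpoints required by Theorem \ref{invthm}: we have it at $s$ by hypothesis but not a priori for the geodesic flow of $g$ at $s'=0$. This I would bypass either by perturbing $g$ slightly---staying in the open set $\op{Con}^+(M)$ by Proposition \ref{prp_def}---to a metric with $\nu$-non-degenerate geodesic flow, or by invoking Corollary \ref{cor_indcor} to alter the convex collar at the intermediate levels without affecting $SH^*_\nu$. Either route makes the chain of isomorphisms unconditional, and no new technical input beyond Chapter \ref{cha_sh} is required.
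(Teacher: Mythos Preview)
Your proof is correct and follows the paper's approach exactly: the paper simply states that ``Proposition~\ref{prp_def} implies the following corollary for surfaces of high genus'' without spelling out the three steps, but your decomposition (convexity from the definition of $\overline{s}_-(g,\sigma)$, invariance via item \textit{b)} of Proposition~\ref{prp_def} along the linear path $s'\in[0,s]$, and the Viterbo--Salamon-Weber--Abbondandolo-Schwarz identification at $s'=0$) is precisely what is meant. Your attention to the $\nu$-non-degeneracy of the geodesic endpoint is a point the paper glosses over; your proposed fix via a bumpy-metric perturbation inside the open set $\op{Con}^+(M)$ is the standard and correct way to handle it.
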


We now deal with the case of $S^2$. If we fix $s<s_-(g,\sigma)$, there exists $\beta\in\pri^{\sigma-\sigma_g}$ such that $s<s_-(g,\sigma,\beta)$. Consider the Riemannian metric $g_0$ with $K^{g_0}=1$ and let $g_r:=rg+(1-r)g_0$ be the linear interpolation between $g$ and $g_0$. Define a corresponding family of $\beta_r\in\pri^{\sigma-\sigma_{g_r}}$ such that $\beta_1=\beta$. By compactness of the interval,
\begin{equation*}
\hat{s}_-(g,\sigma,\beta):=\inf_{r\in[0,1]}s_-(g_r,\sigma,\beta_r) 
\end{equation*}
is positive.

Take $s_*\leq s$ such that $0<s_*<\hat{s}_-(g,\sigma,\beta)$ and consider the deformation
\begin{equation*}
s'\in[s_*,s],\quad s'\mapsto(DS^2,\omega_{s'\sigma},\lambda^{g}_{s',\beta}).
\end{equation*}
Since $s'\leq s$, the boundary stays convex for every $s'$. Moreover, the deformation is projectively constant because $s'>0$. Then, define a second deformation
\begin{equation*}
r\in[0,1],\quad r\mapsto(D^{g_r}S^2,\omega^{g_r}_{s_*\sigma},\lambda^{g_r}_{s_*,\beta_r}),
\end{equation*}
where we have made explicit the dependence of the disc bundle and the symplectic form on the metric using the superscript $g_r$.
Since $s_*<\hat{s}_-(g,\sigma,\beta)$, the boundary remains convex also in this case. Moreover, the cohomology class of $\omega^{g_r}_{s_*\sigma}$ does not change.

Finally, consider a last deformation with a parameter $u\in[0,1]$. Define the $1$-parameter family of $2$-forms $\sigma_u=u\sigma+(1-u)\sigma_{g_0}=\sigma_{g_0}+ud\beta_0$ and take the corresponding family
\begin{equation*}
u\mapsto (D^{g_0}S^2,\omega^{g_0}_{s_*\sigma_u},\lambda^{g_0}_{s_*,u\beta_0}).
\end{equation*}
Let $f^{g_0}_u$ be the function such that $\sigma_u=f^{g_0}_u\mu_{g_0}$. Then, $f^{g_0}_u=uf^{g_0}_1+(1-u)K^{g_0}$ and $\inf f^{g_0}_u=u\inf f_{g_0}+(1-u)K^{g_0}\geq\inf f^{g_0}_1$ (observe that $f^{g_0}_u$ and $K^{g_0}$ have the same integral over $M$). Therefore, $s_*<\hat{s}_-(g,\sigma,\beta)\leq s_-(g_0,\sigma,\beta_0)\leq s_-(g_0,\sigma_u,u\beta_0)$, for every $u\in[0,1]$. Hence, the boundary stays convex also during this last deformation and the cohomology class of $\omega^{g_0}_{s_*\sigma_u}$ does not change since $[s_*\sigma_u]$ is fixed.

Summing up, we have shown that for $s<s_-(g,\sigma)$, $(g,\sigma,s)$ and $(g_0,\sigma_{g_0},s_*)$ lie in the same connected component of $\op{Con}^+(S^2)\setminus \Big(\op{Mag}_e(S^2)\times[0,+\infty)\cup\op{Mag}(S^2)\times\{0\}\Big)$, for any $s_*>0$ (remember that in Example \ref{exa_mod}, we proved that $s_-(g_0,\sigma_{g_0})=+\infty$). We can now apply Proposition \ref{prp_def}.
\begin{cor}\label{cor_s2}
If $M=S^2$ and $0<s<\overline{s}_-(g,\sigma)$ is non-degenerate, then $(DS^2,\omega_{s})$ is convex and
\begin{equation}\label{sh_sph}
SH^*(D^gS^2,\omega^g_{s\sigma})\simeq SH^*(D^{g_0}S^2,\omega^{g_0}_{s_*\sigma_{g_0}}),
\end{equation}
where $g_0$ is the metric with constant curvature $1$ and $s_*$ is any positive number.
\end{cor}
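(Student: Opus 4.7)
The convexity assertion is immediate from the definition of $\overline{s}_-(g,\sigma)$: by construction $[0,\overline{s}_-(g,\sigma))$ is the connected component of $\op{Con}^+(g,\sigma)$ containing $0$, so any $s$ in this interval lies in $\op{Con}^+(g,\sigma)$, meaning $\omega'_s$ is of positive contact type and $(DS^2,\omega_s)$ is convex. To deduce the isomorphism \eqref{sh_sph}, my plan is to invoke Proposition \ref{prp_def}(c): it suffices to exhibit a continuous path connecting $(g,\sigma,s)$ to $(g_0,\sigma_{g_0},s_*)$ inside $\op{Con}^+(S^2)\setminus\big(\op{Mag}_e(S^2)\times[0,+\infty)\cup\op{Mag}(S^2)\times\{0\}\big)$, where $g_0$ is the round metric of curvature $1$.

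The path will be constructed in three consecutive segments, mirroring the three deformations already spelled out in the discussion preceding the corollary, but taking advantage of the fact that we only need to stay in $\op{Con}^+$ rather than exhibit a specific contact primitive along the whole first leg.
\begin{itemize}
\item \emph{First leg} (varying the energy). Fix $s_*>0$ small, to be chosen later, and move $s'$ from $s_*$ up to $s$ along $s'\mapsto(g,\sigma,s')$. Since $[0,\overline{s}_-(g,\sigma))$ is an interval of $\op{Con}^+(g,\sigma)$ containing both endpoints, every intermediate parameter lies in $\op{Con}^+(g,\sigma)$. As $s'>0$ and $\sigma$ is non-exact, this path avoids the excluded sets.
\item \emph{Second leg} (varying the metric). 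Interpolate between $g$ and $g_0$ via $g_r=rg+(1-r)g_0$ with a compatible family of primitives $\beta_r\in\pri^{\sigma-\sigma_{g_r}}$, and look at $r\mapsto(g_r,\sigma,s_*)$. Using the explicit contact form $\lambda^{g_r}_{s_*,\beta_r}$ and the estimate \eqref{f_nexa}, provided $s_*<\hat{s}_-(g,\sigma,\beta):=\inf_{r\in[0,1]}s_-(g_r,\sigma,\beta_r)$ the boundary remains of positive contact type. Compactness of $[0,1]$ guarantees $\hat{s}_-(g,\sigma,\beta)>0$.
\item \emph{Third leg} (varying the magnetic form). At the round metric $g_0$, interpolate $\sigma_u=u\sigma+(1-u)\sigma_{g_0}$ and consider $u\mapsto(g_0,\sigma_u,s_*)$, with primitive $u\beta_0\in\pri^{\sigma_u-\sigma_{g_0}}$. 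The key observation is that the densities satisfy $\inf f^{g_0}_u\geq\inf f^{g_0}_1$ because $f^{g_0}_u$ is a convex combination of $f^{g_0}_1$ with the constant $K^{g_0}=1\geq\inf f^{g_0}_1$ (this constant equals the average of $f^{g_0}_1$). Hence $s_-(g_0,\sigma_u,u\beta_0)\geq s_-(g_0,\sigma,\beta_0)$, so the same choice of $s_*$ keeps the third leg inside $\op{Con}^+$.
\end{itemize}

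All along the concatenated path the parameter stays positive, the magnetic form stays non-exact (since $\sigma_u$ has the fixed cohomology class $2\pi e_{S^2}=4\pi$ up to the chosen normalisation), so the path indeed lies in the punctured set of Proposition \ref{prp_def}(c). Choosing $s_*>0$ smaller than $\hat{s}_-(g,\sigma,\beta)$ therefore realises the desired connection, and Proposition \ref{prp_def}(c) delivers the isomorphism.

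The only subtle point, and really the heart of the argument, is the second leg: the primitives $\beta_r$ must be chosen to depend smoothly on $r$ (for instance by fixing a Hodge-type decomposition with respect to $g_r$ and varying $\beta$ accordingly), so that the uniform infimum $\hat{s}_-(g,\sigma,\beta)$ is strictly positive. This is a routine continuity argument on a compact interval and presents no real obstacle; the remaining two legs are then direct consequences of the estimate \eqref{f_nexa} once $s_*$ is taken sufficiently small.
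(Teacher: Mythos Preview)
Your proof is correct and follows essentially the same three-leg deformation as the paper's discussion preceding the corollary, applying Proposition~\ref{prp_def}(c) once the path in $\op{Con}^+(S^2)\setminus\big(\op{Mag}_e(S^2)\times[0,+\infty)\cup\op{Mag}(S^2)\times\{0\}\big)$ is built. The only (harmless) difference is that on the first leg you appeal directly to the definition of $\overline{s}_-(g,\sigma)$ as a connected component of $\op{Con}^+(g,\sigma)$, whereas the paper works with a specific primitive $\lambda^g_{s',\beta}$ and the bound $s'<s_-(g,\sigma,\beta)$; your version is a slight streamlining, and as you note it makes the choice of $\beta_r$ on the second leg entirely unconstrained beyond continuity.
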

In the next section, we are going to compute the right-hand side of \eqref{sh_sph}. In the remainder of this section we are going to discuss other two cases where we can use Identity \eqref{fun_nexa} to find energy levels of contact type. The original idea is contained in \cite[Remark 2.2]{pat1}.

\subsection{Low energy levels on $S^2$}\label{sub_s2}
Let us go back to Inequality \eqref{f_nexa} and see what happens when the polynomial $1-s\Vert\beta\Vert+s^2\inf f$ has a second positive root $s^+_+(g,\sigma,\beta)$. We observed that this happens if and only if $\inf f>0$. Because of the normalisation we made at the beginning of this section, we see that this can happen only if $\sigma$ is a \textit{symplectic} form on $S^2$. Under these hypotheses, we have that the right-hand side of \eqref{f_nexa} is positive for $s\in s^+_+(g,\sigma,\beta)$. As before we denote
\begin{equation}
s^+_+(g,\sigma)=\inf_{\beta\in\pri^{\sigma-\sigma_g}}s^+_+(g,\sigma,\beta),
\end{equation}
which can also be defined as the biggest positive root of $1-m(g,\sigma)s+\inf f s^2=0$.

We conclude that there exists $\overline{s}^+_+(g,\sigma)\leq s^+_+(g,\sigma)$ such that $(\overline{s}^+_+(g,\sigma),+\infty)$ is the unbounded connected component of $\op{Con}^+(g,\sigma)$.
\begin{cor}
Let $M=S^2$ and $\sigma$ be a symplectic form. If $s>\overline{s}^+_+(g,\sigma)$ is non-degenerate, then $(DS^2,\omega_{s})$ is convex and
\begin{equation}\label{sh_sphtwo}
SH^*(D^gS^2,\omega^g_{s\sigma})\simeq SH^*(D^{g_0}S^2,\omega^{g_0}_{s_*\sigma_{g_0}}),
\end{equation}
where $g_0$ is the metric with constant curvature $1$ and $s_*$ is any positive number.
\end{cor}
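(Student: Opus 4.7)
The strategy mirrors that of Corollary \ref{cor_s2}: we invoke case \textit{c)} of Proposition \ref{prp_def} by exhibiting a path inside
\begin{equation*}
\op{Con}^+(S^2)\setminus\bigl(\op{Mag}_e(S^2)\times[0,+\infty)\cup\op{Mag}(S^2)\times\{0\}\bigr)
\end{equation*}
connecting $(g,\sigma,s)$ to $(g_0,\mu_{g_0},s_*)$, along which the symplectic forms are projectively constant on contractible tori. Since $s>\overline{s}^+_+(g,\sigma)$, we may pick $\beta\in\pri^{\sigma-\sigma_g}$ with $s>s^+_+(g,\sigma,\beta)$, so that the quadratic $1-\Vert\beta\Vert t+(\inf f)\,t^2$ is strictly positive on $[s,+\infty)$.

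Interpolate linearly $g_r:=rg+(1-r)g_0$ for $r\in[0,1]$, and choose a smooth family $\beta_r\in\pri^{\sigma-\sigma_{g_r}}$ with $\beta_1=\beta$. Writing $\sigma=f_r\mu_{g_r}$, the symplectic hypothesis forces $f_r>0$ everywhere, so by compactness of $S^2\times[0,1]$,
\begin{equation*}
c:=\inf_{(r,x)\in[0,1]\times S^2}f_r(x)>0, \qquad C:=\sup_{r\in[0,1]}\Vert\beta_r\Vert<+\infty.
\end{equation*}
Therefore $\hat{s}^+_+(g,\sigma,\beta):=\sup_{r\in[0,1]}s^+_+(g_r,\sigma,\beta_r)\le C/c<+\infty$, using that the larger root of $at^2-bt+1$ is bounded by $b/a$. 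Fix $s_0>\max\bigl\{s,\hat{s}^+_+(g,\sigma,\beta),\Vert\beta_0\Vert/\min(c,1)\bigr\}$. We concatenate four deformations: first, the segment $t\mapsto(g,\sigma,t)$ for $t\in[s,s_0]$, which lies in $(s^+_+(g,\sigma,\beta),+\infty)$; second, $r\mapsto(g_r,\sigma,s_0)$, admissible by the choice of $s_0$; third, $u\mapsto(g_0,\sigma_u:=u\sigma+(1-u)\mu_{g_0},s_0)$ with primitive $u\beta_0\in\pri^{\sigma_u-\mu_{g_0}}$, for which the strength $f^{g_0}_u=uf_0+(1-u)\geq\min(c,1)$ and the norm $\Vert u\beta_0\Vert\le\Vert\beta_0\Vert$ keep $s_0$ above $s^+_+(g_0,\sigma_u,u\beta_0)$; and finally $t\mapsto(g_0,\mu_{g_0},t)$ from $s_0$ to $s_*$, entirely contained in $\op{Con}^+(g_0,\mu_{g_0})=[0,+\infty)$ by Example \ref{exa_mod}.

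At every point of the path the magnetic form satisfies $[\sigma_u]=4\pi\neq0$, hence is non-exact, and the parameter is positive, so the path lies in the required open subset. Projective constancy of $[\tau(\omega_{\cdot})]$ in $H^1(\mathscr L_0 S^2,\R)$ holds because along the first, second and fourth deformations the cohomology class $[\omega]$ varies only by positive scalar multiples of a fixed class, while along the third the class $s_0[\sigma_u]=4\pi s_0$ is constant. Non-degeneracy at the final endpoint can be arranged by an arbitrarily small perturbation of $s_*$ if needed, since this condition is generic. The only delicate point is securing finiteness of $\hat{s}^+_+$ during the metric interpolation, and that follows immediately from the uniform positivity of $f_r$ guaranteed by the symplectic hypothesis on $\sigma$.
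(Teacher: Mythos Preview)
Your strategy is correct and mirrors the paper's implicit argument (no explicit proof is given; one is meant to adapt the argument preceding Corollary~\ref{cor_s2}). There is one logical slip, though: from $s>\overline{s}^+_+(g,\sigma)$ you cannot in general find $\beta$ with $s>s^+_+(g,\sigma,\beta)$, since $\overline{s}^+_+(g,\sigma)\le s^+_+(g,\sigma)=\inf_\beta s^+_+(g,\sigma,\beta)$; if $s\in(\overline{s}^+_+,s^+_+]$ no such $\beta$ exists. The repair is immediate: the first segment $t\mapsto(g,\sigma,t)$ for $t\in[s,s_0]$ already lies in $(\overline{s}^+_+(g,\sigma),+\infty)\subset\op{Con}^+(g,\sigma)$ by the very definition of $\overline{s}^+_+$, so it stays in $\op{Con}^+(S^2)$ without reference to any specific $\beta$. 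You only need the primitives $\beta_r$ and $u\beta_0$ for the second and third segments, where your choice of a sufficiently large $s_0$ handles everything.

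A minor further point: perturbing $s_*$ does not secure non-degeneracy at the round-sphere endpoint, since for \emph{every} $s_*>0$ the Reeb flow of $\lambda^{g_0}_{s_*,0}$ is periodic (see Section~\ref{secsph}). The paper circumvents this by computing $SH^*(D^{g_0}S^2,\omega^{g_0}_{s_*\sigma_{g_0}})$ directly with explicit Hamiltonians (Proposition~\ref{prp_vanst}); the isomorphism should be read as identifying $SH$ at the non-degenerate $(g,\sigma,s)$ with that directly computed value, via any nearby non-degenerate triple in the same connected component.
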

Again, we refer to the next section for the computation of the right-hand side.

\subsection{Low energy levels on a surface of high genus}\label{sub_lg}
We now want to investigate levels of negative contact type. Thanks to Corollary \ref{cor_act}, they can arise only on a surface of genus at least $2$. Let us bound $\lambda_{s,\beta}^g(X^{s})$ from above using \eqref{fun_nexa}:
\begin{equation*}
\lambda^g_{s,\beta}(X^{s})_{(x,v)}=1-\beta_x(v)s+f(x)s^2\leq1+\Vert\beta\Vert s+\sup f s^2. 
\end{equation*}
If $\sup f\geq0$ the right-hand side cannot be less than zero. If $\inf f<0$, there exists a unique positive root of the associated quadratic equation $s^-_+(g,\sigma,\beta)$ such that the right-hand side is less than zero, for every $s>s^-_+(g,\sigma,\beta)$. Denote
\begin{equation}
s^-_+(g,\sigma):=\inf_{\beta\in\pri^{\sigma-\sigma_g}}s^-_+(g,\sigma,\beta),
\end{equation}
which can also be defined as the unique positive root of $1+m(g,\sigma)s-\inf f s^2=0$.
Therefore, we conclude that there exists $\overline{s}^-_+(g,\sigma)\leq s^-_+(g,\sigma)$ such that $(\overline{s}^-_+(g,\sigma),+\infty)$ is the unbounded connected component of $\op{Con}^-(g,\sigma)$.

We cannot compute Symplectic Cohomology for compact concave symplectic manifolds (namely, for symplectic manifolds whose boundary is of negative contact type). A possible solution to this problem would be to consider only invariants of the boundary such as (embedded) contact homology. Another possibility would be to look for a compact convex symplectic manifold $(W,\omega)$ that could be used to cap off $(DM,\omega_{s})$ from outside in order to form a closed symplectic manifold $(DM\sqcup_{SM}W,\omega_{s}\sqcup_{SM}\omega)$. We plan to deal with this second approach in a future research project.

\begin{rmk}
Thanks to Corollary \ref{cor_act}, we have the chain of inequalities
\begin{equation}
0<\overline{s}_-(g,\sigma)\leq s_h(g,\sigma)\leq \overline{s}^-_+(g,\sigma). 
\end{equation}
Moreover, G. Paternain proved in \cite{pat1} that 
\begin{equation}
0<s_c(g,\sigma)\leq s_h(g,\sigma),
\end{equation}
where
\begin{equation*}
s_c(g,\sigma)=\inf_{\widetilde{\theta}\in\pri^{\widetilde{\sigma}}}\Vert\widetilde{\theta}\Vert_{\widetilde{g}} 
\end{equation*}
is the critical value of the universal cover after the reparametrisation $c(s)=\frac{1}{2s^2}$. It is an open problem to study the relation between $\overline{s}_-(g,\sigma)$ and $s_c(g,\sigma)$.
\end{rmk}

\section{Symplectic Cohomology of a round sphere}\label{secsph}
In this subsection we compute the Symplectic Cohomology of $(DS^2,\omega_{s\sigma_{g_0}})$, for $s>0$, when $g_0$ is the metric of constant curvature $1$.

First, we look for a primitive $\hat{\lambda}^{g_0}_{s,0}$ of $\omega_{s\sigma_{g_0}}$ on the whole $TS^2_0$ which extends $\lambda^{g_0}_{s,0}=\lambda+s\tau\in\Omega^1(SS^2)$. Using Identity \eqref{equifour}, we readily find $\hat{\lambda}^{g_0}_{s,0}=\lambda+s\frac{\tau}{2E}$. Integrating the Liouville vector field associated to $\hat{\lambda}^{g_0}_{s,0}$ starting from $SS^2$ yields a convex neighbourhood $j^{g_0}_{s,0}=(r_s,p_s):TS^2_0\hookrightarrow \R\times SS^2$ such that $SS^2=\{r_s=0\}$. Let us differentiate $E$ with respect to this coordinate:
\begin{equation*}
\frac{dE}{dr_s}=dE(\partial_{r_s})=\omega_{s\sigma_{g_0}}(\partial_{r_s},X^{s\sigma_{g_0}}_E)=\hat{\lambda}^{g_0}_{s,0}(X^{s\sigma_{g_0}}_E)=2E+s^2.
\end{equation*}
Dividing both sides by $2E+s^2$ and integrating between $0$ and $r_s$, we get
\begin{equation}
r_s=\log\sqrt{\frac{2E+s^2}{1+s^2}},
\end{equation}
or, using the auxiliary variable, $R_s=e^{r_s}$,
\begin{equation*}
R_s=\sqrt{\frac{2E+s^2}{1+s^2}}.
\end{equation*}
From this equation we see two things. First, that $R_s$ (or equivalently $r_s$) is also smooth at the zero section and, hence, it can be extended to a smooth function on the whole tangent bundle. Second, that the image of $j^{g_0}_{s,0}$ is $\left(\log\left(\frac{s}{\sqrt{1+s^2}}\right),+\infty\right)\times SS^2$. In particular, the flow of $\partial_{r_s}\in\Gamma(TS^2_0)$ is positively complete and, therefore, $j^{g_0}_{s,0}$ is actually a cylindrical end. Thus, $(TS^2,\omega_{s\sigma_{g_0}},\hat{\lambda}^{g_0}_{s,0})$ is the symplectic completion of $(DS^2,\omega_{s\sigma_{g_0}},\lambda^{g_0}_{s,0})$.

Let us look now at the dynamics on $SS^2$. We claim that all orbits are periodic and the prime orbits have all the same minimal period $T_s=\frac{2\pi}{\sqrt{1+s^2}}$. The claim can be proven either by finding explicitly all the curves with constant geodesic curvature (these are the boundaries of geodesic balls) or by using Gray Stability Theorem. The latter strategy yields an isotopy $F_{s'}:SS^2\rightarrow SS^2$, with $s'\in[0,s]$, such that 
\begin{equation*}
F_{s'}^*(\lambda+s'\tau)=\frac{1}{\sqrt{1+(s')^2}}\lambda. 
\end{equation*}
Since the Reeb vector field corresponding to $\lambda$ is $X$, whose orbits are all periodic and with minimal period $2\pi$, the claim follows.

We are ready to compute the Symplectic Cohomology. Take as a family of increasing Hamiltonian, linear at infinity, the functions $H^k_s:=k\sqrt{1+s^2}R_s$, with $k\in\N$. The associated Hamiltonian flow generates an $S^1$-action on $TS^2$ with period $2\pi k$. Hence, the only $1$-periodic orbits of the flow are the constant orbits, which lie on the zero section. Let us compute their Conley-Zehnder index (for the computation in the Lagrangian setting we refer to \cite[Lemma 5.4]{fmp2}).

The linearisation of the flow $d\Phi^{H^k_s}_t:T_{(x,0)}TS^2\rightarrow T_{(x,0)}TS^2$ can be described as follows. First, we choose standard coordinates on $TS^2$, close to the point $(x,0)$. Then, we compute the differential of the vector field $X_{H^k_s}$ at $(x,0)$ using these coordinates. We find the matrix
\begin{equation*}
d_{(x,0)}X_{H^k_s}=k\left( \begin{array}{cc}
0&\op{Id}\\
0&s\jmath_x\end{array} \right)
\end{equation*}
Since $d_{(x,0)}\Phi^{H^k_s}_t=\exp(t\cdot d_{x_0}X_{H^k_s})$ (here $\exp$ denote the exponential of a linear endomorphism), the eigenvalues of the linearisation are $1$ and $e^{ikst}$, both with algebraic multiplicity $2$. Observe that since $1$ is an eigenvalue, $(x,0)$ is degenerate (actually, the zero section form a Morse-Bott component of critical points since the eigenvalue in the normal directions is $e^{ikst}$, which is different from $1$ when $t$ is positive). For this reason, we take the lower semi-continuous extension of the index and find
\begin{equation*}
\mu_{\op{CZ}}^l(x,0)=2\left\lfloor\frac{ks}{2\pi}\right\rfloor+1+({-1})=2\left\lfloor\frac{ks}{2\pi}\right\rfloor,
\end{equation*}
where we used the additivity of the Conley-Zehnder index under direct products.

Consider now a time-dependent compact perturbation $H^k_{s,\delta_k}$ of $H^k_s$ such that all the $1$-periodic orbits of the new system are non-degenerate. Here $\delta_k>0$ is a perturbative parameter that we take arbitrarily small depending on $k$. We claim that the direct limit for $k\rightarrow +\infty$ of the symplectic cohomology groups with Hamiltonian $H^k_{s,\delta_k}$ is zero. Indeed, let $\gamma$ be a $1$-periodic orbit of the Hamiltonian system associated to $H^k_{s,\delta_k}$. Since $\delta_k$ is small, $\gamma$ is close to a constant solution on the zero section. By the lower semicontinuity of the index, $|[\gamma]|=2-\mu_{\op{CZ}}(\gamma)\leq2-2\left\lfloor\frac{ks}{2\pi}\right\rfloor$. Therefore, the symplectic cohomology with Hamiltonian $H^k_{s,\delta_k}$ is zero in degree bigger than $2-2\left\lfloor\frac{ks}{2\pi}\right\rfloor$. Since $2-2\left\lfloor\frac{ks}{2\pi}\right\rfloor\rightarrow-\infty$ as $k\rightarrow+\infty$, the direct limit is zero in every degree. Thus, we have proven 
the following proposition.
\begin{prp}\label{prp_vanst}
For every $s>0$, there holds $\displaystyle SH^*(DS^2,\omega_{s\sigma_{g_0}})=0$.
\end{prp}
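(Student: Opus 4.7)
My plan is to exploit the fact that the Reeb flow on $SS^2$ for the contact form $\lambda^{g_0}_{s,0}$ is periodic of uniform period $T_s = 2\pi/\sqrt{1+s^2}$, which means the natural linear Hamiltonians $H^k_s = k\sqrt{1+s^2}\,R_s$ generate $S^1$-actions on the completion $(TS^2,\omega_{s\sigma_{g_0}})$. The slopes $k\sqrt{1+s^2}$ are a cofinal sequence avoiding $\mathrm{Spec}(\alpha,0)$ (except at integer multiples of $T_s^{-1}$, which we bypass by slightly perturbing $k$), so the associated cohomology groups $SH^*(DS^2,\omega_{s\sigma_{g_0}},H^k_{s,\delta_k},J_k)$ give the full symplectic cohomology in the direct limit.

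First I would observe that because $\Phi^{H^k_s}$ is a $2\pi k$-periodic flow, the only $1$-periodic orbits of $X_{H^k_s}$ are the constant orbits $(x,0)$ on the zero section; this is the crucial point, since a generic nonlinear perturbation would produce many more generators. Next I would compute the transverse Conley--Zehnder index at each such $(x,0)$ by writing the linearisation of $X_{H^k_s}$ in standard coordinates on $TS^2$ near the zero section. The block structure is block-upper-triangular with horizontal block trivial (degenerate, contributing $1$ to the eigenvalue multiplicity as the zero section is a Morse--Bott manifold of critical points) and vertical block of the form $e^{iks t}$. Using the lower semi-continuous extension $\mu^l$, additivity under direct sums gives $\mu^l_{\mathrm{CZ}}(x,0) = 2\lfloor ks/(2\pi)\rfloor$, which grows linearly in $k$.

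Then I would take a small $C^2$-compact perturbation $H^k_{s,\delta_k}$ of $H^k_s$ to achieve transverse non-degeneracy. Any $1$-periodic orbit $\gamma$ of the perturbed system lies close to the zero section, so by lower semicontinuity of the index $\mu_{\mathrm{CZ}}(\gamma) \geq \mu^l_{\mathrm{CZ}}(x,0) = 2\lfloor ks/(2\pi)\rfloor$. Hence the degrees $|\gamma| = n - \mu_{\mathrm{CZ}}(\gamma) \leq 2 - 2\lfloor ks/(2\pi)\rfloor$ of all generators of $SC^*$ are bounded above by a quantity tending to $-\infty$ as $k\to\infty$. Taking the direct limit then kills every fixed degree, yielding $SH^*(DS^2,\omega_{s\sigma_{g_0}})=0$.

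The delicate point, which is where I would have to be careful, is the semi-continuity step: one must ensure the perturbation $\delta_k$ is small enough that the $1$-periodic orbits of $H^k_{s,\delta_k}$ all lie in an arbitrarily small $C^0$-neighbourhood of the zero section where the index inequality holds, and simultaneously achieve Floer-regularity of the almost complex structure $J_k$. This is standard Morse--Bott technology but has to be done uniformly enough that the bound on $|\gamma|$ survives to the limit. The rest of the argument is bookkeeping: the direct limit is computed over any cofinal subfamily, and vanishing in each degree passes to the limit because the generators themselves eventually disappear above any fixed threshold.
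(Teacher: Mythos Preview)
Your proposal is correct and follows essentially the same argument as the paper: the same cofinal family $H^k_s=k\sqrt{1+s^2}\,R_s$, the same observation that only the zero section carries $1$-periodic orbits, the same linearisation computation yielding $\mu^l_{\mathrm{CZ}}(x,0)=2\lfloor ks/(2\pi)\rfloor$, and the same conclusion that the generator degrees tend to $-\infty$ so the direct limit vanishes. The technical caveats you flag (choosing $\delta_k$ small enough for the index bound to survive) are exactly the ones the paper leaves implicit.
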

\begin{cor}\label{cor_van}
If $(S^2,g,\sigma)$ is a non-exact magnetic system and $s$ is a non-degenerate parameter such that $s<\hat{s}_-(g,\sigma)$ or $s>\hat{s}^+_+(g,\sigma)$, then 
\begin{equation}
SH^*(DS^2,\omega_{s})= 0.
\end{equation}
\end{cor}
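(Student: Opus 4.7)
The strategy is simply to combine the two deformation-invariance corollaries (Corollary \ref{cor_s2} for the case $s<\overline{s}_-(g,\sigma)$ and its analogue from Section \ref{sub_s2} for the case $s>\overline{s}^+_+(g,\sigma)$) with the direct computation given in Proposition \ref{prp_vanst}. Both deformation results conclude that for a non-degenerate parameter $s$ in the indicated range, the Symplectic Cohomology of $(DS^2,\omega_{s\sigma})$ is isomorphic to $SH^*(D^{g_0}S^2,\omega^{g_0}_{s_*\sigma_{g_0}})$, where $g_0$ is the round metric of constant curvature $1$ and $s_*$ is \emph{any} positive number. This is the central reduction: no matter which of the two unbounded-contact regimes one sits in, the cohomology does not depend on $(g,\sigma)$ at all, only on the isomorphism class of a model round sphere.

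Granted this reduction, the proof reduces to a one-line invocation: Proposition \ref{prp_vanst} asserts that $SH^*(DS^2,\omega_{s_*\sigma_{g_0}})=0$ for every $s_*>0$, so the isomorphism immediately transports this vanishing back to the original system. Thus the content of the corollary is entirely packaged in the two preceding corollaries plus Proposition \ref{prp_vanst}, and no further analysis is required.

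The only mild subtlety to address is that one must verify that the hypotheses needed to apply the deformation-invariance isomorphism are indeed present in both regimes. In the subcritical regime $s<\overline{s}_-(g,\sigma)$, this is exactly the content of Corollary \ref{cor_s2}. In the supercritical regime $s>\overline{s}^+_+(g,\sigma)$, the same kind of three-step deformation (first move $s$ within the unbounded component of $\op{Con}^+(g,\sigma)$, then interpolate the metric to $g_0$, then interpolate the magnetic form to $\sigma_{g_0}$) needs to stay inside $\op{Con}^+(S^2)$ and preserve the projective class of $[\tau(\omega_s)]$; both are ensured because $\sigma$ is symplectic and positive along the entire deformation, so $s^+_+$ remains finite and the cohomology class $[s\sigma_u]$ changes only by an overall positive scalar, which is exactly the hypothesis of Theorem \ref{invthm}. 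No genuinely new step is required beyond what was already used to prove Corollary \ref{cor_s2}, so I would simply write: ``Apply Corollary \ref{cor_s2} (respectively its supercritical counterpart) together with Proposition \ref{prp_vanst}.''
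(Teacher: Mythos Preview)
Your proposal is correct and matches the paper's approach exactly: the paper states Corollary~\ref{cor_van} immediately after Proposition~\ref{prp_vanst} without any explicit proof, treating it as a direct consequence of Corollary~\ref{cor_s2}, the unnamed corollary in Section~\ref{sub_s2} (Equation~\eqref{sh_sphtwo}), and Proposition~\ref{prp_vanst}. Your write-up spells out precisely this combination, so nothing further is needed.
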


\section{Lower bound on the number of periodic orbits}\label{sec_lb}
The computation of the symplectic cohomology we performed in the previous sections can be used to prove the existence of periodic orbits on $SM$.
\begin{prp}\label{per_exasp}
Let $(S^2,g,\sigma)$ be an exact magnetic system. If $s<s_0(g,\sigma)$, then $X^{s}\in\Gamma(SM)$ has at least one periodic orbit. If the iterates of such orbit are transversally non-degenerate, then there is another geometrically distinct periodic orbit on $SM$.
\end{prp}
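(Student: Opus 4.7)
The plan is to read the dynamical information off the computation of Symplectic Cohomology provided by Proposition \ref{prp_exacon}. Throughout, $n=\dim_\C DS^2=2$, and we focus on the contractible class $\nu=0$, which is the only class we need to control since $\pi_1(S^2)=0$ makes every loop in $DS^2$ contractible.

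\textbf{Step 1 (The computation).} For a non-degenerate $s<s_0(g,\sigma)=s_1(g,\sigma)$, Proposition \ref{prp_exacon} gives
\begin{equation*}
SH^*_0(DS^2,\omega_s)\simeq H_{-*}(\mathscr L S^2,\Z).
\end{equation*}
The right-hand side is non-zero in infinitely many degrees: indeed, since $S^2$ is simply connected and not rationally elliptic with trivial higher rational homotopy, a standard Sullivan-model computation shows that $\dim_{\Q}H^k(\mathscr L S^2,\Q)\geq 1$ for every $k\geq 0$ (equivalently, $SH^{-k}_0\otimes\Q\neq 0$ for every $k\geq 0$). This is the single topological input I will use.

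\textbf{Step 2 (Existence of a periodic orbit).} Working with admissible pairs $(H_\varepsilon,J_\varepsilon)$ built from $H\in\hat{\mathcal H}'_0$ as in Section \ref{pert_aut}, Corollary \ref{cor:vit2} identifies the subcomplex of constant orbits with $H^{*+2}(DS^2,\Lambda)\simeq H^{*+2}(S^2,\Lambda)$, which is concentrated in the two degrees $-2$ and $0$. If $X^s$ had no periodic orbit on $SS^2$, then $\hat{\mathcal H}'_0$ would consist of Hamiltonians with only constant $1$-periodic orbits (as non-constant $1$-periodic orbits of $X_H$ on $W_b$ correspond bijectively to Reeb orbits of $\lambda_{s,\beta}$ with period $<T_H$), and taking a cofinal family would yield $SH^*_0(DS^2,\omega_s)\simeq H^{*+2}(S^2,\Lambda)$, contradicting Step 1. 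Hence at least one prime periodic orbit $\gamma$ of $X^s$ exists on $SS^2$.

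\textbf{Step 3 (A second orbit under non-degenerate iteration).} Assume, for contradiction, that $\gamma$ is the \emph{unique} geometrically distinct prime periodic orbit and that every iterate $\gamma^k$ is transversally non-degenerate. Then for $H_\varepsilon$ with large slope, the $1$-periodic orbits of $X_{H_\varepsilon}$ are: (i) constant orbits on $W^b$, and (ii) for each $k\geq 1$ with $kT(\gamma)<T_H$, a pair $(x^k_{\min},x^k_{\rm Max})$ concentrated near the $S^1$-family of $\gamma^k$. By the formula in Section \ref{pert_aut}, $|x^k_{\min}|=2-\mu_{\op{CZ}}(\gamma^k)$ and $|x^k_{\rm Max}|=1-\mu_{\op{CZ}}(\gamma^k)$. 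Hence, passing to the limit, the Floer complex $SC^*_0$ has rank at most $2$ in each degree (apart from the two degrees where constant generators sit).

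Now invoke the Bott iteration formula: there exists a mean index $\widehat{\mu}(\gamma)\in\R$ with
\begin{equation*}
\mu_{\op{CZ}}(\gamma^k)=k\,\widehat{\mu}(\gamma)+O(1),
\end{equation*}
the error being bounded uniformly in $k$. Consequently, in each fixed degree $-k$ only finitely many (in fact a uniformly bounded number of) iterates contribute a generator. Comparing with Step 1, which requires a non-trivial generator in infinitely many degrees, I must rule out the case $\widehat{\mu}(\gamma)=0$ (where all iterates would crowd into a bounded range of degrees, an immediate contradiction with Step 1) and, in the case $\widehat{\mu}(\gamma)\neq 0$, show that the rank provided by a single orbit cannot absorb the differential coming from the constant orbits and match the rank of $H_{-*}(\mathscr L S^2)$ in every degree of appropriate parity. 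The latter is achieved by observing that a single orbit contributes generators only in the two residue classes of $\{-\mu_{\op{CZ}}(\gamma^k)+1,-\mu_{\op{CZ}}(\gamma^k)+2\}_{k\geq 1}$, which is strictly less than the Poincar\'e series of $\mathscr L S^2$ in sufficiently many degrees, contradicting the isomorphism of Step 1.

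\textbf{Main obstacle.} Step 3 is the delicate one: making the rank comparison quantitatively rigorous requires keeping track of possible cancellations in the Floer differential (the map $<\delta x^k_{\rm Max},x^k_{\min}>$, which is nonzero only for bad iterates as recalled in Section \ref{pert_aut}) and of the interaction with the constant-orbit subcomplex. The cleanest way I would implement this is via the period filtration of Section \ref{sec:per}: for $T$ large, $SH^*_{0,<T}$ realises more and more of $H_{-*}(\mathscr L S^2)$, while the bounded-rank contribution from $\gamma^k$ with $kT(\gamma)<T$ cannot keep up, yielding the desired contradiction.
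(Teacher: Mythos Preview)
Steps 1 and 2 are essentially the paper's argument: the Viterbo map (Corollary~\ref{cor:vit1} in this Liouville setting) together with $SH^*_0(DS^2,\omega_s)\simeq H_{-*}(\mathscr L S^2,\Z)\neq H^{*+2}(S^2,\Lambda)$ forces a Reeb orbit.

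Step 3, however, has a genuine gap. Your rank-counting strategy does not close: over $\Q$ one has $\dim_{\Q} H_k(\mathscr L S^2,\Q)=1$ for every $k\geq 0$, so a \emph{single} orbit, which via the $(\gamma^k_{\min},\gamma^k_{\rm Max})$ mechanism contributes about two generators per degree, can perfectly well support a complex whose cohomology has rank one in each negative degree after cancellation with the constants. Your sentence ``strictly less than the Poincar\'e series of $\mathscr L S^2$'' is therefore unjustified, and the period-filtration sketch does not repair it: the filtration controls which $\gamma^k$ appear, but not that the resulting ranks are too small. The parity/residue-class remark is also not decisive, since $|x^k_{\min}|$ and $|x^k_{\rm Max}|$ already occupy consecutive degrees and, as $k$ varies, sweep both parities.

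The paper avoids this by a case analysis on $\mu_{\op{CZ}}(\gamma)$ rather than an asymptotic mean-index argument. First one observes (from the fact that $SH^*$ is non-zero in arbitrarily low degree) that $\mu_{\op{CZ}}(\gamma^k)>0$ for all $k$. In case (a), $\mu_{\op{CZ}}(\gamma)\geq 2$, the iteration inequalities give $\mu_{\op{CZ}}(\gamma^{k+1})-\mu_{\op{CZ}}(\gamma^{k})>1$, so for degrees $\leq -2$ there is at most one Floer generator per degree; this contradicts $SH^{-4}\simeq H_4(\mathscr L S^2,\Z)=\Z\oplus\Z/2\Z$, i.e.\ the \emph{torsion} is what kills this case. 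In cases (b) and (c), $\mu_{\op{CZ}}(\gamma)=1$ hyperbolic or elliptic, one exhibits an explicit class $\gamma^{k_0}_{\min}$ in degree $1$ which is a cocycle but not a coboundary (using the good/bad dichotomy for $\langle\delta\,\gamma_{\rm Max},\gamma_{\min}\rangle$ together with the action filtration of Section~\ref{sec:fil}), contradicting $SH^1_0\simeq H_{-1}(\mathscr L S^2,\Z)=0$. Your proposal would become correct if you replaced the heuristic rank comparison with this explicit case analysis.
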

\begin{proof}
From \cite{zil2}, we know the singular homology with $\Z$-coefficients of the free loop space of $S^2$. It is zero in negative degree and, for $k\in\N$, we have  
\begin{equation}
H_k(\mathscr LS^2,\Z)=\begin{cases}
                       \Z& \mbox{if }k=0\ \mbox{or }k\mbox{ is odd},\\
                       \Z\oplus\Z/2\Z& \mbox{if }k \mbox{ is even and positive}.
                      \end{cases}
\end{equation}
Since $H_{-*}(\mathscr LS^2,\Z)\neq H^{*+2}(S^2,\Z)$, there exists at least one prime periodic orbit $\gamma$, by Corollary \ref{cor:vit1}. Call $T$ its period.

Suppose that $\gamma$ and all its iterates are transversally non-degenerate. Construct inductively a sequence of $k\mapsto H^k\in\hat{\mathcal H}_0'$ (see Definition \ref{dfn:adm2} on the completion $\widehat{DS^2}$ of the disc bundle, so that
\begin{itemize}
 \item $T_{H^k}<T_{H^{k+1}}$;
 \item $kT<T_{H^k}$;
 \item $H^k=H^{k+1}$ on some ${\widehat{DS^2}}^{c_k}$, where $kT=(h^k)'(e^{c_k})=(h^{k+1})'(e^{c_k})$.
\end{itemize}
Take small perturbations $H^k_\varepsilon \in\hat{\mathcal H}_0$ and let $J^k$ be a $H^k_\varepsilon$-admissible almost complex structure. Define $SC^*(k):=SC^*(DS^2,\omega_s,H^k_\varepsilon)$, let $\delta_k$ the Floer differentials $\delta_k:SC^*(k)\rightarrow SC^*(k)$, let $SH^*(k)$ the associated cohomology groups and let $\varphi_k:SC^*(k)\rightarrow SC^*(k)$ the continuation maps. Denote by $SH^*(\infty)$ the direct limit of these groups. For every $\tilde k=1,\ldots,k$, we have two non-constant generators of $SC^*(k)$: $\gamma^{\tilde k}_{\op{min}}$ and $\gamma^{\tilde k}_{\op{Max}}$. They have degree $2-\mu_{\op{CZ}}(\gamma^{\tilde k})$ and $2-\mu_{\op{CZ}}(\gamma^{\tilde k})-1$, respectively. By Corollary \ref{corfil2} (or Corollary \ref{corfil2per}), we know that $\varphi_k$ is the inclusion map. 

Since $SH^*(\infty)$ is non-zero in arbitrarily low degree, $\mu_{\op{CZ}}(\gamma^k)>0$ for every $k\geq1$, by the properties of the index. We distinguish three cases.
\begin{enumerate}[\itshape a)]
 \item $\mu_{\op{CZ}}(\gamma)\geq 2$. If this happens, then for every $\tilde{k}$, $\mu_{\op{CZ}}(\gamma^{\tilde{k}+1})-\mu_{\op{CZ}}(\gamma^{\tilde{k}})>1$. Hence, for $\tilde{k}\leq -2$, there can be at most one generator for $SC^{\tilde k}(k)$. This contradicts the fact that $SH^{-4}(\infty)\simeq \Z\oplus\Z/2$.
 \item $\mu_{\op{CZ}}(\gamma)=1$ and $\gamma$ is hyperbolic. If $k>2$, then $\gamma_{\op{min}}$ is the only generator in degree $1$ and there are three generators in degree $0$: $\gamma_{\op{Max}}$, $\gamma^2_{\op{min}}$ and $p$ which is the image under the map $C^2(S^2,\Z)\rightarrow SC^2(k)$ of a generating cochain in $C^{2}(S^2,\Z)$. We claim that $\delta_k$ is zero in degree $1$ and $0$. This would imply that the class of $\gamma_{\op{min}}$ is non-trivial in $SH^1(k)$. Together with the fact that $\varphi_k$ is the inclusion, we get a contradiction to the fact that $SH^1(\infty)=0$.
 
 Since there are no generators in degree $2$, it is clear that $\delta_k$ is zero in degree $1$. Since $\gamma$ is a good orbit, $\delta_k\gamma_{\op{Max}}=0$ and since $p$ is the image of a cochain under a chain map, we also have $\delta_k p=0$. Finally, we compute
$0=\delta_k^2\gamma^2_{\op{Max}}=\delta_k(2t^a\gamma^2_{\op{min}})$,
which implies that $\delta_k\gamma^2_{\op{min}}=0$ as well.
\item $\mu_{\op{CZ}}(\gamma)=1$ and $\gamma$ is elliptic. Under these hypotheses, there exists a maximum $k_0\geq1$ such that $\mu_{\op{CZ}}(\gamma^{k_0})=1$. If $k\geq k_0$, $SC^1(k)$ is generated by the elements $\gamma^{\tilde{k}}_{\op{min}}$, with $\tilde{k}\in[1,k_0]$ and $SC^0(k)$ is generated by $p$ and the elements $\gamma^{\tilde{k}}_{\op{Max}}$, with $\tilde{k}\in[1,k_0]$. Consider the action $A^{\omega_s}_{H^k_{\varepsilon}}$ of the periodic orbits. Since the action is decreasing with the period, we know that $\gamma^{k_0}_{\op{min}}$ has action smaller than $\gamma^{\tilde{k}}_{\op{Max}}$, for $\tilde{k}<k_0$. By Corollary \ref{corfil}, $<\delta_k\gamma^{\tilde k}_{\op{Max}},\gamma^{k_0}_{\op{min}}>=0$. Moreover, $\delta_k p=0$ and, since $\gamma^{k_0}$ is a good orbit, also $<\delta_k\gamma^{k_0}_{\op{Max}},\gamma^{k_0}_{\op{min}}>=0$. We conclude that $\gamma^{k_0}_{\op{min}}$ is a cochain, which is not a coboundary in $SC^1(k)$. Together with the fact that $\varphi_k$ is the inclusion, we get a contradiction to the fact that $SH^1(\infty)=0$.
\end{enumerate}
Since each case leads to a contradiction, we have proved that there exists a periodic orbit geometrically distinct from $\gamma$.  
\end{proof}

\begin{prp}
Let $(\T^2,g,\sigma)$ be an exact magnetic system. If $s<s_1(g,\sigma)$, then $X^{s}\in\Gamma(SM)$ has at least one periodic orbit in every non-trivial free homotopy class. If such periodic orbit is transversally non-degenerate, then there is another periodic orbit in the same class.
\end{prp}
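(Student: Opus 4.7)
The plan is to apply Proposition~\ref{prp_exacon} to compute $SH^*_\nu(D\T^2,\omega_{s})$ and then to compare it with the size of the Floer complex that a single transversally non-degenerate orbit can produce.

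First, I invoke Proposition~\ref{prp_exacon}: for $\nu$-non-degenerate $s<s_1(g,\sigma)$ one has
\begin{equation*}
SH^*_\nu(D\T^2,\omega_{s})\ \simeq\ H_{-*}(\mathscr L_\nu \T^2,\Z).
\end{equation*}
Since $\T^2$ is a $K(\Z^2,1)$ with abelian fundamental group, the evaluation fibration $\mathscr L\T^2\to\T^2$ has discrete fiber $\Omega\T^2=\Z^2$, so each component $\mathscr L_\nu\T^2$ is homotopy equivalent to $\T^2$; an explicit section is obtained by translating a reference loop in the class $\nu$ via the Lie group structure of $\T^2$. Therefore $SH^*_\nu(D\T^2,\omega_{s})$ has total $\Z$-rank $4$ and is non-zero in exactly the three degrees $-2,-1,0$, with ranks $1,2,1$ respectively.

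For the existence statement, I observe that if $X^{s}$ had no periodic orbit in the non-trivial class $\nu$, then for every $H\in\hat{\mathcal H}_\nu'$ the complex $SC^*_\nu(D\T^2,\omega_{s},H_\varepsilon)$ would be empty, since no constant loop contributes to a non-trivial free homotopy class. This would force $SH^*_\nu=0$, contradicting the previous paragraph. For the multiplicity statement I argue by contradiction: suppose $\gamma$ is the unique periodic orbit of $X^{s}$ in the class $\nu$ and is transversally non-degenerate. Since $\pi_1(\T^2)=\Z^2$ is torsion-free and $\nu\neq 0$, the equation $k\nu=\nu$ forces $k=1$, so no iterate $\gamma^k$ with $k\ge 2$ returns to the class $\nu$. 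Choosing $H\in\hat{\mathcal H}_\nu'$ with slope $T_H$ slightly above the period of $\gamma$ and perturbing as in Section~\ref{pert_aut}, one obtains a complex $SC^*_\nu(D\T^2,\omega_{s},H_\varepsilon)$ with exactly two generators $\gamma_{\min}$ and $\gamma_{\max}$, of adjacent degrees $2-\mu_{\op{CZ}}(\gamma)$ and $1-\mu_{\op{CZ}}(\gamma)$. Increasing $T_H$ adds no new generators in the class $\nu$, and a Corollary~\ref{corfil2per}-type argument shows the continuation maps are eventually the identity, so $SH^*_\nu$ is already computed by this two-generator complex and is supported in at most two adjacent degrees. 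This contradicts the three-degree support established above.

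The only real subtlety lies in the first step: once the identification $\mathscr L_\nu\T^2\simeq\T^2$ and the resulting three-degree non-vanishing of $SH^*_\nu$ are in hand, the torsion-freeness of $\pi_1(\T^2)$ conveniently prevents iterates from producing additional generators in the fixed class $\nu$, and the rank count $2<3$ closes the argument.
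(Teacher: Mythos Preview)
Your proof is correct and follows the same route as the paper: compute $SH^*_\nu$ via the homotopy equivalence $\mathscr L_\nu\T^2\simeq\T^2$, then observe that a single transversally non-degenerate orbit produces too small a complex. Your version is in fact more careful than the paper's sketch: you spell out why no iterate of $\gamma$ can return to the class $\nu$ (torsion-freeness of $\pi_1(\T^2)=\Z^2$), and you phrase the contradiction in terms of degree support (three degrees versus two adjacent ones) rather than total rank, which is a clean way to close the argument.
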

\begin{proof}
If there are no periodic orbits $SH_\nu^*(D\T^2,\omega_{s})=0$. If we only have one periodic orbit and such orbit is non degenerate, then $\op{rk}SH_\nu^*(D\T^2,\omega_{s})\in\{0,1\}$. In any case $\op{rk}SH_\nu^*(D\T^2,\omega_{s})\leq 1$. On the other hand, we claim that $\mathscr L_\nu \T^2$ is homotopy equivalent to $\T^2$. If $\gamma_\nu$ is a reference loop in the class $\nu$, the two homotopy equivalences are given by
\begin{align*}
\begin{aligned}
\T^2&\longrightarrow\ \mathscr L_\nu\T^2\\
x&\longmapsto\ \gamma_{\nu,x}(t):=\gamma_\nu(t)+x;
\end{aligned}
& \quad\quad\quad
\begin{aligned}
\mathscr L_\nu\T^2&\longrightarrow\ \T^2\\
\gamma&\longmapsto\ \gamma(0).
\end{aligned}
\end{align*}
Thanks to Proposition \ref{prp_exacon}, this leads to the contradiction
\begin{equation*}
2\geq\op{rk}SH_\nu^*(D\T^2,\omega_{s})=\op{rk}H_{-*}(\mathscr L_\nu \T^2,\Z)=\op{rk}H_{-*}(\T^2,\Z)=4.\qedhere
\end{equation*}
\end{proof}
\begin{prp}
Let $(M,g,\sigma)$ be a magnetic system on a surface of genus at least two. If $\sigma$ is exact and $s<s_0(g,\sigma)$ or $\sigma$ is not exact and $s<\hat{s}_-(g,\sigma)$, then $X^{s}\in\Gamma(SM)$ has at least one periodic orbit in every non-trivial free homotopy class.
\end{prp}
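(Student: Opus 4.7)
The plan is to argue by contradiction, following the pattern already used for the two-torus: assume that $X^{s}$ has no periodic orbits in the class $\nu$ and show that this forces $SH^*_\nu(DM,\omega_{s})$ to vanish, contradicting the isomorphism with the homology of the free loop space of $M$ which is non-trivial in any non-trivial class when $M$ has genus at least two.

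First, I would choose a Hamiltonian $H\in\hat{\mathcal H}_\nu'$ on the completion of $(DM,\omega_{s})$ as in Definition \ref{dfn:adm2}. Because $\nu$ is a non-trivial free homotopy class of loops in $M\simeq DM$, the complex $SC^*_\nu$ has no generators coming from constant orbits (those live in the trivial class), so its generators are in bijection with the Reeb orbits of $\lambda_{s,\beta}$ (exact case) or $\lambda^{g}_{s,\beta}$ (non-exact case) in the class $\nu$, i.e.\ with the non-constant periodic orbits of $X^{s}$ in $\nu$. Under the contradiction hypothesis, the latter set is empty, so $SC^*_\nu(DM,\omega_{s},H_\varepsilon)=0$ for every admissible perturbation $H_\varepsilon$ and consequently
\begin{equation*}
SH^*_\nu(DM,\omega_{s})=0.
\end{equation*}

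Next I would invoke the cohomology computations already proved in this chapter: Proposition \ref{prp_exacon} in the exact case (which applies for $s<s_0(g,\sigma)=s_1(g,\sigma)$ since $M\neq\T^2$) and Corollary \ref{cor_negen} in the non-exact case (which applies for $s<\hat s_{-}(g,\sigma)\leq\overline{s}_-(g,\sigma)$). Both give the isomorphism
\begin{equation*}
SH^*_\nu(DM,\omega_{s})\ \simeq\ H_{-*}(\mathscr L_\nu M,\Z),
\end{equation*}
after possibly invoking Theorem \ref{invthm} to deform to a non-degenerate parameter in the same component of $\op{Con}^+$. Combining this with the vanishing just obtained would force $H_{*}(\mathscr L_\nu M,\Z)=0$.

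Finally, the computation of the homotopy type of $\mathscr L_\nu M$ for a closed surface of genus at least two provides the contradiction. Since $M$ is aspherical, $\mathscr L_\nu M$ is a $K(Z_\nu,1)$, where $Z_\nu$ is the centraliser in $\pi_1(M)$ of a representative of $\nu$. As $\pi_1(M)$ is torsion-free and hyperbolic, the centraliser of every non-trivial element is infinite cyclic, hence $\mathscr L_\nu M\simeq S^1$ and in particular $H_0(\mathscr L_\nu M,\Z)=\Z\neq 0$. This contradicts $H_{*}(\mathscr L_\nu M,\Z)=0$ and proves the existence of a periodic orbit. The main technical point to be careful about is ensuring that the class $\nu$ on $M$ indeed lifts uniquely to a free homotopy class of loops in $DM$ used to index Symplectic Cohomology; this follows from the homotopy equivalence $DM\simeq M$ induced by the projection $\pi$, which identifies the two sets of free homotopy classes. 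No multiplicity argument of the kind used in Proposition \ref{per_exasp} is needed, since only existence is asserted.
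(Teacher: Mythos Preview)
Your proof is correct and follows essentially the same approach as the paper: argue by contradiction, observe that the absence of periodic orbits in the class $\nu$ forces $SH^*_\nu(DM,\omega_s)=0$, and contradict this via the isomorphism with $H_{-*}(\mathscr L_\nu M,\Z)$ together with $\mathscr L_\nu M\simeq S^1$. You simply provide more justification (why the complex has no generators, why the loop space is an $S^1$, the identification of free homotopy classes in $DM$ and $M$) than the paper's terse proof.
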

\begin{proof}
If there are no orbits in the class $\nu\neq0$, $SH_\nu^*(DM,\omega_{s})=0$. However, $\mathscr L_\nu M$ is homotopy equivalent to $S^1$, if $\nu$ is non-trivial. Thanks to Proposition \ref{prp_exacon} or Corollary \ref{cor_negen}, this leads to the contradiction 
\begin{equation*}
0=SH_\nu^*(DM,\omega_{s})=H_{-*}(\mathscr L_\nu M,\Z)=H_{-*}(S^1,\Z)\neq0. \qedhere
\end{equation*}
\end{proof}
\begin{prp}\label{prp_bounds2}
Let $(S^2,g,\sigma)$ be a non-exact magnetic system. If the parameter $s$ belongs to $(0,\overline{s}_-(g,\sigma))\cup(\overline{s}^+_+(g,\sigma),+\infty)$, then $X^{s}\in\Gamma(SM)$ has at least one periodic orbit. If the iterates of such periodic orbit are transversally non-degenerate, then there is another geometrically distinct periodic orbit on $SM$.
\end{prp}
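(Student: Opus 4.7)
The argument runs parallel to Proposition \ref{per_exasp}, replacing the non-vanishing of Symplectic Cohomology in the exact case with the vanishing statement $SH^{*}_{0}(DS^{2},\omega_{s})=0$ from Corollary \ref{cor_van}. For existence, observe that $DS^{2}$ deformation-retracts to $S^{2}$, so $H^{*}(DS^{2},\Lambda)$ is non-zero in degrees $0$ and $2$. Since the target vanishes in every degree, the canonical map $H^{k+2}(DS^{2},\Lambda)\to SH^{k}_{0}(DS^{2},\omega_{s})$ from Corollary \ref{cor:vit2} fails to be an isomorphism for $k=-2$ and for $k=0$. By the contrapositive of that corollary there exists a contractible Reeb orbit $\gamma$ of the contact form on $SS^{2}$ with $\mu_{\op{CZ}}(\gamma)\in\{2,3\}\cup\{4,5\}$; reparametrising as in Remark \ref{rmk_ham}, this yields a periodic orbit of $X^{s}$.

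For the second orbit, assume by contradiction that $\gamma$ is the unique prime periodic orbit on $SS^{2}$ and that all its iterates are transversally non-degenerate. Since $(DS^{2},\omega_{s})$ is not a Liouville domain ($\sigma$ is non-exact), the action filtration used in the exact case is unavailable, and I substitute it with the period filtration of Corollary \ref{matvan2} and Corollary \ref{corfil2per}. Choose an increasing sequence $H^{k}\in\hat{\mathcal H}_{0}'$ with diverging slopes and admissible perturbations $H^{k}_{\varepsilon}\in\hat{\mathcal H}_{0}$; the continuation maps between the chain complexes $SC^{*}(k):=SC^{*}_{0}(DS^{2},\omega_{s},H^{k}_{\varepsilon})$ restrict to inclusions on the period-filtered subcomplexes, so $SH^{*}(\infty):=\varinjlim_{k} SH^{*}(k)=0$. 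Each $SC^{*}(k)$ is generated by two constant-type classes $p_{\op{min}}$, $p_{\op{Max}}$ in degrees $-2$ and $0$ (the Morse generators of $H^{0}(S^{2})$ and $H^{2}(S^{2})$ shifted by $n=2$), together with the pairs $\gamma^{\tilde k}_{\op{min}},\gamma^{\tilde k}_{\op{Max}}$ for $\tilde k=1,\ldots,k$ in degrees $2-\mu_{\op{CZ}}(\gamma^{\tilde k})$ and $1-\mu_{\op{CZ}}(\gamma^{\tilde k})$.

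Since $SH^{*}(\infty)=0$, both Morse-type classes must become coboundaries in the colimit, and every iterate generator must participate in non-trivial cancellations. I then perform a case analysis on the dynamical type of $\gamma$ (elliptic or hyperbolic) and on $\mu_{\op{CZ}}(\gamma)\in\{2,3,4,5\}$, using the period filtration to constrain which pairs $\gamma^{\tilde k}_{\op{min}},\gamma^{\tilde k}_{\op{Max}}$ can differentiate onto $p_{\op{min}}$ or $p_{\op{Max}}$, and using the good/bad orbit formula of Proposition 3.9(ii) of \cite{bo} to compute $\langle\delta_{k}\gamma^{\tilde k}_{\op{Max}},\gamma^{\tilde k}_{\op{min}}\rangle$. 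In each sub-case one exhibits either a non-trivial cocycle in a specific degree or an incompatibility between these constraints, contradicting $SH^{*}(\infty)=0$. The main obstacle will be to carry out the index bookkeeping uniformly across the four possible values of $\mu_{\op{CZ}}(\gamma)$ allowed by the existence argument, since, unlike in Proposition \ref{per_exasp}, the vanishing of $SH^{*}$ does not a priori bound $\mu_{\op{CZ}}(\gamma)$ from above and the degree windows in which $p_{\op{min}}$ and $p_{\op{Max}}$ can be cancelled must be tracked carefully against the period filtration.
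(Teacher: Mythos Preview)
Your overall plan is the right one and matches the paper's: replace the action filtration of Proposition~\ref{per_exasp} by the period filtration (Corollaries~\ref{matvan2} and~\ref{corfil2per}), set up the direct system $SC^{*}(k)$, and exploit $SH^{*}(\infty)=0$ via case analysis on the Conley--Zehnder index of the prime orbit. However, there is a genuine gap in how you set up the cases.

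The orbit produced by the existence argument via Corollary~\ref{cor:vit2} is \emph{some} contractible Reeb orbit with index in $\{2,3\}$ or $\{4,5\}$; it need not be prime. When you then assume for contradiction that $\gamma$ is the unique prime orbit, there is no reason for $\mu_{\op{CZ}}(\gamma)$ itself to lie in $\{2,3,4,5\}$: the orbit you found earlier could be any iterate $\gamma^{k}$, and the iteration formulae allow the prime index to be $1$, or arbitrarily large, or even non-positive. So your case list is both unjustified and incomplete (it misses $\mu_{\op{CZ}}(\gamma)=1$, which the paper treats carefully, splitting into elliptic and hyperbolic sub-cases exactly as in Proposition~\ref{per_exasp}).

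The paper organises the dichotomy differently. First it observes that if $\mu_{\op{CZ}}(\gamma^{k})\le 0$ for every $k$, then all non-constant generators sit in degree $\ge 1$, so $SH^{-2}(\infty)\simeq H^{0}(S^{2},\Lambda)\ne 0$, a contradiction. Hence one may assume $\mu_{\op{CZ}}(\gamma^{k})>0$ for all $k$, and the case analysis is on $\mu_{\op{CZ}}(\gamma)\ge 3$, $=2$, $=1$ hyperbolic, and $=1$ elliptic. In the case $\mu_{\op{CZ}}(\gamma)\ge 3$ the paper uses the iteration formula to find a good iterate $\gamma^{k_{0}}$ with $\mu_{\op{CZ}}(\gamma^{k_{0}+1})-\mu_{\op{CZ}}(\gamma^{k_{0}})>2$, forcing $\gamma^{k_{0}}_{\op{Max}}$ to survive; in the case $\mu_{\op{CZ}}(\gamma)=2$ it shows $\gamma_{\op{min}}$ survives in degree $0$; the two $\mu_{\op{CZ}}(\gamma)=1$ cases are handled exactly as in Proposition~\ref{per_exasp}, with the period filtration replacing the action filtration in the elliptic sub-case. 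You should redo your case split along these lines and actually carry out each case rather than deferring it as an ``obstacle''.
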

\begin{proof}
The proof is similar to the one of Proposition \ref{per_exasp} and, therefore, we only discuss the parts where we have to argue differently from there.

Since $SH^*(DS^2,\omega_{s})=0\neq H^{*+2}(S^2,\Z)$, there exists at least one prime periodic orbit $\gamma$, by Corollary \ref{cor:vit2}. Call $T$ its period.

Suppose that $\gamma$ and all its iterates are transversally non-degenerate. Construct the sequences $H^k$ and $H^k_\varepsilon$ as in Proposition \ref{per_exasp} and let $J^k_\varepsilon$ be a sequence in $\mathcal J(H^k)$ (see Definition \ref{dfn:ac}).
Define $SC^*(k)$, $\delta_k$, $SH^*(k)$, $\varphi_k$ and $SH^*(\infty)$ as before.

For every $\tilde k=1,\ldots,k$, we have two non-constant generators of $SC^*(k)$: $\gamma^{\tilde k}_{\op{min}}$ and $\gamma^{\tilde k}_{\op{Max}}$. They have degree $2-\mu_{\op{CZ}}(\gamma^{\tilde k})$ and $2-\mu_{\op{CZ}}(\gamma^{\tilde k})-1$, respectively. By Corollary \ref{corfil2per}, we know that $\varphi_k$ is the inclusion map. 

If $\mu_{\op{CZ}}(\gamma^k)\leq0$ for every $k\geq1$, then the degrees of the non-constant periodic orbits are all positive. This means that $SH^{-2}(\infty)\simeq H^0(S^2,\Z)$, which is a contradiction. Therefore, we assume that $\mu_{\op{CZ}}(\gamma^k)>0$, for every $k\geq1$. We distinguish four cases.
\begin{enumerate}[\itshape a)]
 \item $\mu_{\op{CZ}}(\gamma)\geq3$. We claim that there exists $k_0$ such that $\gamma^{k_0}$ is good and $\mu_{\op{CZ}}(\gamma^{k_0+1})-\mu_{\op{CZ}}(\gamma^{k_0})>2$. This implies that $\gamma^{k_0}_{\op{Max}}$ represents a non-zero class on $SC^*(k)$, for $k\geq k_0$. cocycle but cannot be a coboundary. Taking the direct limit, we conclude that $SH^*(\infty)\neq0$, contradicting our assumption. We only need to prove the claim. 

We use the iteration formula for the index (see \cite{lon}):
\begin{equation*}
\mu_{\op{CZ}}(\gamma^k)=\begin{cases}
                         2\lfloor k\vartheta\rfloor+1,\ \vartheta\in(0,+\infty)\setminus\Q,&\ \mbox{if $\gamma$ is elliptic};\\
                         k\mu_{\op{CZ}}(\gamma),&\ \mbox{if $\gamma$ is hyperbolic}.
                        \end{cases}
\end{equation*}
If $\gamma$ is hyperbolic, then $\mu_{\op{CZ}}(\gamma^{k+1})-\mu_{\op{CZ}}(\gamma^k)=\mu_{\op{CZ}}(\gamma)>2$ and the claim is satisfied by any good iterate of $\gamma$. If $\gamma$ is elliptic, then $\vartheta>1$ and 
\begin{equation*}
\mu_{\op{CZ}}(\gamma^{k+1})-\mu_{\op{CZ}}(\gamma^k)=2+2\left(\lfloor (k+1)(\vartheta-1)\rfloor-\lfloor k(\vartheta-1)\rfloor\right).
\end{equation*}
Therefore, there exists $k_0$ such that $\lfloor (k_0+1)(\vartheta-1)\rfloor-\lfloor k_0(\vartheta-1)\rfloor\geq1$. For such $k_0$ we have $\mu_{\op{CZ}}(\gamma^{k_0+1})-\mu_{\op{CZ}}(\gamma^{k_0})>2$. Since all the iterates of an elliptic orbit are good, the claim is proven also in this case.
 \item $\mu_{\op{CZ}}(\gamma)=2$. In this case $\gamma_{\op{min}}$ has degree zero and $SC^{\tilde{k}}(k)=0$ for positive $\tilde{k}$. Therefore, $\delta_k\gamma_{\op{min}}=0$. Moreover, $SC^{-1}(k)$ is generated only $\gamma_{\op{Max}}$ and since $\gamma$ is a good orbit, we know that $<\delta_k \gamma_{\op{Max}},\gamma_{\op{min}}>=0$. This implies that $\gamma_{\op{min}}$ represents a non-zero class in $SH^0(k)$. Taking the direct limit, we conclude that $SH^0(\infty)\neq0$, contradicting our assumption.
\item $\mu_{\op{CZ}}(\gamma)=1$ and $\gamma$ is hyperbolic. This case leads to a contradiction following the same argument given in Proposition \ref{per_exasp}.
 \item $\mu_{\op{CZ}}(\gamma)=1$ and $\gamma$ is elliptic. Also this case leads to a contradiction following the same argument given in Proposition \ref{per_exasp}, but we have to use the filtration with the period (see Section \ref{sec:per}), instead of the filtration with the action.
\end{enumerate}
Since each case leads to a contradiction, we have proved that there exists a periodic orbit geometrically distinct from $\gamma$. 
\end{proof}
\begin{rmk}
For the case of $S^2$, we do not know whether the orbits we find are homotopic to a fibre or not.
\end{rmk}
\chapter{\texorpdfstring{Rotationally symmetric magnetic systems on $S^2$}{Rotationally symmetric magnetic systems on S2}}\label{cha_rev}
In this chapter we aim at studying the contact property when $(S^2_\gamma,g_\gamma)$ is a surface of revolution with profile function $\gamma$ and $\sigma=\mu_\gamma$ is the area form. We are going to estimate the interval $[s_-(g_\gamma,\mu_\gamma),s_+^+(g_\gamma,\mu_\gamma)]$ in terms of geometric properties of $\gamma$ and try to understand, at least with numerical methods, the gaps $[s_-(g_\gamma,\mu_\gamma),\overline{s}_-(g_\gamma,\mu_\gamma)]$ and $[\overline{s}_+^+(g_\gamma,\mu_\gamma),s_+^+(g_\gamma,\mu_\gamma)]$. To ease the notation we are going to use the parameter $m$ instead of $s$ in this chapter. Hence, we define
\begin{align*}
X^m_\gamma:=mX^{m\mu_\gamma}_{E_\gamma}\big|_{SS^2_\gamma}\,,\quad\quad&\quad\quad \omega_{m,\gamma}:=m\omega_{\frac{\mu_\gamma}{m}}\,,\\
m_{+,\gamma}:=\frac{1}{s_-(g_\gamma,\mu_\gamma)}\,,\quad\quad&\quad\quad m_{-,\gamma}:=\frac{1}{s_+^+(g_\gamma,\mu_\gamma)}\,,\\
\overline{m}_{+,\gamma}:=\frac{1}{\overline{s}_-(g_\gamma,\mu_\gamma)}\,,\quad\quad&\quad\quad \overline{m}_{-,\gamma}:=\frac{1}{\overline{s}_+^+(g_\gamma,\mu_\gamma)}\,,\\
\op{Con}_\gamma:=\Big\{\,m\in(0,+\infty)\ \Big|\ &\frac{1}{m}\in\op{Con}^+(g_\gamma,\mu_\gamma)\,\Big\}\cup\{0\}\,.
\end{align*}

\section{The geometry of a surface of revolution}

To construct a surface of revolution, take a function $\gamma:[0,\ell_\gamma]\rightarrow \R$ and consider the conditions:
\begin{enumerate}[1)]
 \item $\gamma(t)=0$ if $t=0$ or $t=\ell_\gamma$ and is positive otherwise,
 \item $\dot{\gamma}(0)=1$, $\dot{\gamma}(\ell_\gamma)=-1$ and $|\dot{\gamma}(t)|<1$ for $t\in(0,\ell_\gamma)$,
 \item all even derivatives of $\gamma$ vanish for $t\in\{0,\ell_\gamma\}$,
 \item the following equality is satisfied
\begin{equation}\label{norminto}
\int_0^{\ell_\gamma}\!\gamma(t)\,dt=2.
\end{equation}
\end{enumerate}

\begin{dfn}
A function $\gamma$ satisfying the first three hypotheses of the list is called a \textit{profile function}. If also the fourth one holds, we say that $\gamma$ is \textit{normalised}. 
\end{dfn}

Let $S^2_\gamma$ be the quotient of $[0,\ell_\gamma]\times \T_{2\pi}$ with respect to the equivalence relation that collapses the set $\{0\}\times\T_{2\pi}$ to a point and the set $\{\ell_\gamma\}\times\T_{2\pi}$ to another point. We call these points the \textit{south} and the \textit{north pole}. Outside the poles the smooth structure is given by the coordinates $(t,\theta)\in(0,\ell_\gamma)\times \T_{2\pi}$, which also determine a well-defined orientation on $S^2_\gamma$.

On $S^2_\gamma$ we put the Riemannian metric $g_\gamma$, defined in the $(t,\theta)$ coordinates by the formula $g_\gamma=dt^2+\gamma(t)^2d\theta^2$. This metric extends smoothly to the poles because of conditions $2)$ and $3)$ listed before. Moreover, condition $4)$ yields the normalisation $\op{vol}_{g_\gamma}(S^2_\gamma)=4\pi$.

Let us denote by $(t,\theta,v_t,v_\theta)$ the associated coordinates on the tangent bundle. If $\varphi:=\varphi_{\partial_t}:S\big(S^2_\gamma\setminus\{\mbox{south pole, north pole}\}\big)\rightarrow\T_{2\pi}$ is the angular function associated to the section $\partial_t$, we have the relations 
\begin{equation}
v_t=\cos\varphi,\quad\quad v_\theta=\frac{\sin\varphi}{\gamma(t)}.
\end{equation}
As a consequence, $(t,\varphi,\theta)$ are coordinates on $SS^2_{\gamma}$, which are compatible with the orientation $\mathfrak O_{SS^2}$ defined in Section \ref{sec_geo}. Using the coframe $(\lambda,\tau,\eta)$, we can express the coordinate frame $(\widetilde{\partial}_t,\partial_\varphi,\widetilde{\partial}_\theta)$ in terms of the frame $(X,V,H)$ and vice versa:
\begin{equation}\label{relaz2}
\left\{\begin{array}{rcl}
\widetilde{\partial}_t&=&\displaystyle\cos\varphi X-\sin\varphi H\\
\partial_\varphi&=&V\\
\widetilde{\partial}_\theta&=&\displaystyle\gamma\sin\varphi X+\gamma\cos\varphi H+\dot{\gamma}V.
\end{array}\right.
\end{equation}
\begin{equation}\label{relaz1}
\left\{\begin{array}{rcl}
X&=&\displaystyle\cos\varphi \widetilde{\partial}_t-\frac{\dot{\gamma}\sin\varphi}{\gamma}\partial_\varphi+\frac{\sin\varphi}{\gamma}\widetilde{\partial}_\theta\\
V&=&\displaystyle\partial_\varphi\\
H&=&\displaystyle-\sin\varphi \widetilde{\partial}_t-\frac{\dot{\gamma}\cos\varphi}{\gamma}\partial_\varphi+\frac{\cos\varphi}{\gamma}\widetilde{\partial}_\theta,
\end{array}\right. 
\end{equation}
We have put a tilde on $\widetilde{\partial}_t$ and $\widetilde{\partial}_\theta$ to distinguish them from the coordinate vectors $\partial_t$ and $\partial_\theta$ associated to the coordinates $(t,\theta)$ on $S^2_\gamma$.

As anticipated at the beginning of this chapter, we consider as the magnetic form on the surface $SS^2_\gamma$, the Riemannian area form $\mu_\gamma$. This is a symplectic form which satisfies the normalisation $[\mu_\gamma]=4\pi$ introduced in Section \ref{sec_ne}. In coordinates $(t,\theta)$ we have $\mu_\gamma=\gamma dt\wedge d\theta$. With this choice $f\equiv1$ and $\mu_\gamma-\sigma_{g}=(1-K)\mu_\gamma$. We recall that the Gaussian curvature for surfaces of revolution is $K=-\frac{\ddot{\gamma}}{\gamma}$.

If we define
\begin{equation*}
m_\gamma:=m(g_\gamma,\mu_\gamma)=\inf_{\beta\in\pri^{(1-K)\mu_\gamma}}\Vert\beta\Vert,
\end{equation*}
we have the formula
\begin{equation*}
m_{\pm,\gamma}=\frac{m_\gamma\pm\sqrt{m_\gamma^2-4}}{2}. 
\end{equation*}
In particular, the length of the interval $[m_{-,\gamma},m_{^+,\gamma}]$ is equal to $\sqrt{m_\gamma^2-4}$, a monotone increasing function of $m_\gamma$. This observation readily gives a sufficient condition for having the contact property at every energy level.
\begin{cor}\label{cor_allcon}
If $m_\gamma<2$, then $\op{Con}_\gamma=[0,+\infty)$.
\end{cor}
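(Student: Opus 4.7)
The plan is to produce, for every $m \geq 0$, an explicit contact primitive of $\omega_{m,\gamma}'$ by exploiting the fact that when $m_\gamma<2$ the relevant quadratic inequality from Section \ref{sec_ne} is strict for all values of the parameter.

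Since $m_\gamma = \inf_{\beta \in \pri^{(1-K)\mu_\gamma}}\|\beta\|$ is strictly less than $2$, I would first pick any $\beta_0 \in \pri^{(1-K)\mu_\gamma} = \pri^{\mu_\gamma - \sigma_{g_\gamma}}$ with $\|\beta_0\| < 2$. Such a $\beta_0$ exists by the definition of infimum, whether or not the infimum is attained. With the normalisation $[\mu_\gamma] = 4\pi = 2\pi e_{S^2}$, the family of primitives \eqref{fun_nexa} specialises (with $f \equiv 1$) to
\begin{equation*}
\lambda^{g_\gamma}_{s,\beta_0}(X^{s})_{(x,v)} = 1 - (\beta_0)_x(v)\,s + s^2,
\end{equation*}
where $s = 1/m$. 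Bounding the middle term as in Section \ref{sec_ne}, one obtains the uniform lower estimate
\begin{equation*}
\lambda^{g_\gamma}_{s,\beta_0}(X^{s})_{(x,v)} \;\geq\; 1 - \|\beta_0\|\,s + s^2.
\end{equation*}

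The right-hand side is a quadratic polynomial in $s$ with discriminant $\|\beta_0\|^2 - 4 < 0$, hence strictly positive for every $s \in \R$. Consequently $\lambda^{g_\gamma}_{s,\beta_0}$ is a positive contact primitive of $\omega'_{s\mu_\gamma}$ on $SS^2_\gamma$ for every $s > 0$, so $(0,+\infty) \subset \op{Con}^+(g_\gamma,\mu_\gamma)$. Translating back via $m = 1/s$, this gives $(0,+\infty) \subset \op{Con}_\gamma$. Since $0 \in \op{Con}_\gamma$ holds by definition, the conclusion $\op{Con}_\gamma = [0,+\infty)$ follows.

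There is no real obstacle in this argument: the only point to verify is that the strict inequality $m_\gamma < 2$ allows us to choose a single $\beta_0$ whose norm is admissible uniformly in $s$, which is why the same primitive works simultaneously at all energies.
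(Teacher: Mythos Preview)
Your proof is correct and follows essentially the same approach as the paper. The paper does not give a separate proof for this corollary; it is stated as an immediate consequence of the formula $m_{\pm,\gamma}=\frac{m_\gamma\pm\sqrt{m_\gamma^2-4}}{2}$, which shows that the gap $[m_{-,\gamma},m_{+,\gamma}]$ collapses when $m_\gamma<2$. Your argument simply unwinds this by going back to \eqref{fun_nexa} and checking directly that the quadratic lower bound $1-\Vert\beta_0\Vert s+s^2$ has negative discriminant, which is precisely the mechanism behind the paper's observation.
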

In the next section we compute $m_\gamma$, showing that the infimum in its definition is actually a minimum. Namely, there exists $\beta^\gamma\in\mathcal P^{(1-K)\mu_\gamma}$ such that $m_\gamma=\Vert\beta^\gamma\Vert$.

\section{Estimating the set of energy levels of contact type}\label{sub_inv}
Consider an arbitrary closed Riemannian manifold $(M,g)$. Let $Z\in\Gamma(M)$ be a vector field that generates a $2\pi$-periodic flow of isometries on $M$. The projection operator on the space of $Z$-invariant $k$-forms $\Pi^k_{Z}:\Omega^k(M)\rightarrow \Omega^k_{Z}(M)$ is defined as
\begin{equation}\label{invdef}
\forall \tau\in\Omega^k(M),\quad\Pi^k_Z(\tau):=\frac{1}{2\pi}\int_{0}^{2\pi}(\Phi^Z_{\theta'})^*\tau\, d{\theta'}.
\end{equation}
\begin{prp}\label{inva}
The operators $\Pi^k_Z$ commute with exterior differentiation:
\begin{equation}
d\circ \Pi^k_Z=\Pi^{k+1}_Z\circ d.
\end{equation}
The projection $\Pi^1_Z$ does not increase the norm $\Vert\cdot\Vert$ defined in \eqref{norms}:
\begin{equation}
\forall \beta\in\Omega^1(M),\quad \Vert \Pi^1_Z(\beta)\Vert\leq\Vert\beta\Vert. 
\end{equation}
\end{prp}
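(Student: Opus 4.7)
The plan is to prove the two claims separately by reducing them to standard properties of the pull-back maps $(\Phi^Z_{\theta'})^*$, which serve as the building blocks of the averaging operator $\Pi^k_Z$.

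For the first identity, I would exploit the naturality of the exterior derivative. Since $\Phi^Z_{\theta'}$ is a smooth family of diffeomorphisms, $(\Phi^Z_{\theta'})^*$ commutes with $d$ for every fixed $\theta'$. Moreover, for a fixed form $\tau \in \Omega^k(M)$, the map $\theta'\mapsto (\Phi^Z_{\theta'})^*\tau$ is a smooth family of $k$-forms with compactly supported dependence on $\theta'\in[0,2\pi]$, so differentiation under the integral sign is justified. Concretely,
\begin{equation*}
d\Pi^k_Z(\tau)=\frac{1}{2\pi}\int_0^{2\pi}d\big((\Phi^Z_{\theta'})^*\tau\big)d\theta'=\frac{1}{2\pi}\int_0^{2\pi}(\Phi^Z_{\theta'})^*(d\tau)d\theta'=\Pi^{k+1}_Z(d\tau).
\end{equation*}

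For the norm estimate, the crucial input is that $\Phi^Z_{\theta'}$ is an isometry of $(M,g)$ for every $\theta'$, since $Z$ generates a flow of isometries by hypothesis. Fix $x\in M$ and a unit vector $v\in T_xM$. Then $d_x\Phi^Z_{\theta'}(v)\in T_{\Phi^Z_{\theta'}(x)}M$ is also a unit vector, so
\begin{equation*}
\Big|\big((\Phi^Z_{\theta'})^*\beta\big)_x(v)\Big|=\Big|\beta_{\Phi^Z_{\theta'}(x)}\big(d_x\Phi^Z_{\theta'}(v)\big)\Big|\leq \big|\beta_{\Phi^Z_{\theta'}(x)}\big|\leq \Vert \beta\Vert.
\end{equation*}
Averaging this pointwise bound over $\theta'\in[0,2\pi]$ gives $|\Pi^1_Z(\beta)_x(v)|\leq \Vert\beta\Vert$, and taking the supremum over unit $v\in T_xM$ and over $x\in M$ yields $\Vert\Pi^1_Z(\beta)\Vert\leq\Vert\beta\Vert$.

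Neither step presents a real obstacle: the only mild technicality is the justification of passing $d$ through the integral, which is standard since the integrand depends smoothly on the compact parameter interval $[0,2\pi]$. The essential point underlying the norm inequality is purely the fact that the averaging is performed along an isometric flow, so that the dual norm on $1$-forms is preserved by each $(\Phi^Z_{\theta'})^*$.
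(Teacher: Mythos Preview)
Your proof is correct and matches the paper's approach essentially line for line; for the norm inequality the paper performs the same pointwise estimate using that $\Phi^Z_{\theta'}$ is an isometry, and for the commutation with $d$ the paper simply cites a reference, so your explicit differentiation-under-the-integral argument is in fact more self-contained than what appears there.
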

\begin{proof}
For the proof of the first part we refer to \cite[Section: The De Rham groups of Lie groups]{boo}. For the latter statement we use that $\Phi^Z$ is a flow of isometries. We take some $(x,v)\in TM$ and compute
\begin{align*}
\Big|\Pi^1_Z(\beta)_x(v)\Big|&=\ \left|\frac{1}{2\pi}\int_{0}^{2\pi}\beta_{\Phi^Z_{\theta'}(x)}\big(d_x\Phi^Z_{\theta'}v\big) d{\theta'}\right|\\
&\leq\ \frac{1}{2\pi}\int_{0}^{2\pi}\Vert \beta\Vert\big\vert d_x\Phi^Z_{\theta'}v\big\vert\, d{\theta'}\\
&=\ \frac{1}{2\pi}\int_{0}^{2\pi}\Vert \beta\Vert\vert v\vert\, d{\theta'}\\
&=\ \Vert \beta\Vert\vert v\vert.
\end{align*}
Thus, $\vert \Pi^1_Z(\beta)_x\vert\leq\Vert \beta\Vert$. The proposition follows by taking the supremum over $x$.
\end{proof}
Let us apply this general result to $S^2_\gamma$. Consider the coordinate vector field $\partial_\theta$. This extends to a smooth vector field also at the poles and $\Phi^{\partial_\theta}$ is a $2\pi$-periodic flow of isometries on the surface. Applying Proposition \ref{inva} to this case, we get the following corollary.
\begin{cor}
The $2$-form $(1-K)\mu_\gamma$ is $\partial_\theta$-invariant. Hence, $\Pi^1_{\partial_\theta}$ sends $\mathcal P^{(1-K)\mu_\gamma}$ into itself.
\end{cor}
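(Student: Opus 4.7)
The plan is to verify the two assertions in sequence, both reducing to the fact that $\partial_\theta$ generates a flow of isometries of $(S^2_\gamma,g_\gamma)$.

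First, for the $\partial_\theta$-invariance of $(1-K)\mu_\gamma$, I would observe that the rotational flow $\Phi^{\partial_\theta}$ preserves $g_\gamma$. Consequently the Riemannian area form $\mu_\gamma$ is invariant, $(\Phi^{\partial_\theta}_{\theta'})^*\mu_\gamma = \mu_\gamma$, and the Gaussian curvature $K$, being metric-determined, satisfies $K\circ \Phi^{\partial_\theta}_{\theta'} = K$. Multiplying, $(\Phi^{\partial_\theta}_{\theta'})^*\bigl((1-K)\mu_\gamma\bigr) = (1-K)\mu_\gamma$, so averaging in $\theta'$ gives $\Pi^2_{\partial_\theta}\bigl((1-K)\mu_\gamma\bigr) = (1-K)\mu_\gamma$, which is the invariance statement. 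Alternatively one may simply note that in the $(t,\theta)$ coordinates $K = -\ddot\gamma/\gamma$ depends on $t$ only and $\mu_\gamma = \gamma\,dt\wedge d\theta$ has coefficients in $t$ only, so the Lie derivative $\mathcal L_{\partial_\theta}\bigl((1-K)\mu_\gamma\bigr)$ is zero by inspection.

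Next, for the stability of $\mathcal P^{(1-K)\mu_\gamma}$ under $\Pi^1_{\partial_\theta}$, let $\beta \in \mathcal P^{(1-K)\mu_\gamma}$, so that $d\beta = (1-K)\mu_\gamma$. Using the commutation relation $d\circ\Pi^1_{\partial_\theta} = \Pi^2_{\partial_\theta}\circ d$ from Proposition \ref{inva}, together with the invariance just established, I would compute
\begin{equation*}
d\bigl(\Pi^1_{\partial_\theta}(\beta)\bigr) \;=\; \Pi^2_{\partial_\theta}(d\beta) \;=\; \Pi^2_{\partial_\theta}\bigl((1-K)\mu_\gamma\bigr) \;=\; (1-K)\mu_\gamma,
\end{equation*}
which exhibits $\Pi^1_{\partial_\theta}(\beta)$ as a primitive of $(1-K)\mu_\gamma$ and thus an element of $\mathcal P^{(1-K)\mu_\gamma}$.

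There is no real obstacle here: both claims are essentially bookkeeping once Proposition \ref{inva} and the rotational symmetry of $g_\gamma$ are on the table. The only point worth a line of care is that $\partial_\theta$, although only smooth away from the poles in the $(t,\theta)$ chart, extends to a genuine smooth vector field on all of $S^2_\gamma$ generating a $2\pi$-periodic isometric flow, so that Proposition \ref{inva} genuinely applies to the closed surface.
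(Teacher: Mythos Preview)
Your proof is correct and follows essentially the same approach as the paper: invariance of $(1-K)\mu_\gamma$ from the fact that $\Phi^{\partial_\theta}$ is a flow of isometries, followed by the commutation $d\circ\Pi^1_{\partial_\theta}=\Pi^2_{\partial_\theta}\circ d$ from Proposition~\ref{inva} to conclude that $\Pi^1_{\partial_\theta}(\beta)$ is again a primitive. Your additional remarks (the coordinate check and the smooth extension of $\partial_\theta$ to the poles) are extra care the paper does not spell out, but the core argument is identical.
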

\begin{proof}
The first statement is true because $\Phi^{\partial_\theta}$ is a flow of isometries and thus $\mu_\gamma$ and $K$ are invariant under the flow. To prove the second statement, we observe that, if $\beta\in\mathcal P^{(1-K)\mu_\gamma}$, the previous proposition yields
\begin{equation*}
d\big(\Pi^1_{\partial_\theta}(\beta)\big)=\Pi^2_{\partial_\theta}(d\beta)=\Pi^2_{\partial_\theta}((1-K)\mu_\gamma)=(1-K)\mu_\gamma. 
\end{equation*}
Hence, $\Pi^1_{\partial_\theta}(\beta)\in\mathcal P^{(1-K)\mu_\gamma}$.
\end{proof}
Suppose now that $\beta$ lies in $\mathcal P^{(1-K)\mu_\gamma}\cap\Omega^1_{\partial_\theta}(S^2_\gamma)$. Thus, $\beta=\beta_t(t)dt+\beta_\theta(t)d\theta$. The function $\beta_\theta=\beta(\partial_\theta)$ is uniquely defined by $d\beta=(1-K)\mu_\gamma$ and the fact that it vanishes on the boundary of $[0,\ell_\gamma]$. Indeed,
\begin{equation}
d\beta=d\Big(\beta_tdt+\beta_\theta d\theta\Big)=\dot{\beta}_\theta dt\wedge d\theta=\frac{\dot{\beta}_\theta}{\gamma}\mu_\gamma
\end{equation}
Recalling the formula for $K$, we have $\dot{\beta}_\theta=\gamma+\ddot{\gamma}$. Hence, $\beta_\theta=\Gamma+\dot{\gamma}$, where $\Gamma:[0,\ell_\gamma]\rightarrow[-1,1]$ is the only primitive of $\gamma$ such that $\Gamma(0)=-1$. Notice that
\begin{enumerate}[1)]
 \item $\Gamma$ is increasing,
 \item $\Gamma(\ell_\gamma)=-1+\int_0^{\ell_\gamma}\gamma(t)\,dt=-1+2=1$,
 \item the odd derivatives of $\Gamma$ vanish at $\{0,\ell_\gamma\}$.
\end{enumerate}
Since $\beta_\theta$ and its derivatives of odd orders are zero for $t=0$ and $t=\ell_\gamma$, the $1$-form $\beta^\gamma:=\beta_\theta d\theta$ is well defined also at the poles and belongs to $\mathcal P^{(1-K)\mu_\gamma}\cap\Omega^1_{\partial_\theta}(S^2_\gamma)$. Finally, the norm of this new primitive is less than or equal to the norm of $\beta$:
\begin{equation}
\big|\beta_{(t,\theta)}\big|=\sqrt{\beta_t^2+\frac{\beta^2_\theta}{\gamma^2}}\geq \left|\frac{\beta_\theta}{\gamma}\right|=\big|\beta^\gamma_{(t,\theta)}\big|.
\end{equation}
Summing up, we have proven the following proposition.
\begin{prp}\label{estm}
There exists a unique $\beta^\gamma\in\mathcal P^{(1-K)\mu_\gamma}$ that can be written in the form $\beta^\gamma=\beta^\gamma_\theta(t)d\theta$. It satisfies $m_\gamma=\Vert \beta^\gamma\Vert$. Moreover $\beta^\gamma_\theta=\Gamma+\dot{\gamma}$ and
\begin{equation}
\Vert \beta^\gamma\Vert=\sup_{t\in[0,\ell_\gamma]}\left|\frac{\Gamma(t)+\dot{\gamma}(t)}{\gamma(t)}\right|.
\end{equation}
\end{prp}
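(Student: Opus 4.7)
The proof proposal is essentially to assemble the ingredients already laid out in the discussion preceding the statement into a clean three-step argument.

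\textbf{Step 1: Existence and explicit form.} I would define $\beta^\gamma := (\Gamma + \dot{\gamma})d\theta$ on $(0,\ell_\gamma) \times \T_{2\pi}$ and verify that it extends smoothly across the two poles. The computation $d\beta^\gamma = (\dot{\Gamma} + \ddot{\gamma}) dt \wedge d\theta = (\gamma + \ddot{\gamma}) dt \wedge d\theta = (1 - K)\mu_\gamma$ shows it is a primitive. For smoothness at the south pole, the conditions $\Gamma(0) = -1$ and $\dot{\gamma}(0) = 1$ give $\beta^\gamma_\theta(0) = 0$; moreover, all odd-order derivatives of $\Gamma$ vanish at the endpoints (the paper lists this), and all odd-order derivatives of $\dot{\gamma}$ vanish there (by property 3 of a profile function). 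This is exactly the vanishing pattern needed for $\beta^\gamma_\theta(t) d\theta$ to extend smoothly to the pole in the coordinates in which the metric extends. The same argument works at the north pole.

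\textbf{Step 2: Uniqueness.} If $\tilde{\beta} = \tilde{\beta}_\theta(t) d\theta$ is another primitive of the same form, then $d(\beta^\gamma - \tilde{\beta}) = 0$ forces $\beta^\gamma_\theta - \tilde{\beta}_\theta$ to be a constant function of $t$. Smoothness at the south pole requires both $\beta^\gamma_\theta(0) = \tilde{\beta}_\theta(0) = 0$, so this constant vanishes and $\tilde{\beta} = \beta^\gamma$.

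\textbf{Step 3: The infimum is attained at $\beta^\gamma$.} Given any $\beta \in \mathcal{P}^{(1-K)\mu_\gamma}$, apply the projection $\Pi^1_{\partial_\theta}$. Since $(1-K)\mu_\gamma$ is $\partial_\theta$-invariant, $\Pi^1_{\partial_\theta}(\beta)$ is a $\partial_\theta$-invariant primitive, and by Proposition \ref{inva} satisfies $\Vert\Pi^1_{\partial_\theta}(\beta)\Vert \leq \Vert\beta\Vert$. Writing $\Pi^1_{\partial_\theta}(\beta) = \beta_t(t)\,dt + \beta_\theta(t)\,d\theta$, note that $\beta_t(t)\,dt$ is already closed and depends only on $t$, so subtracting it produces another primitive $\beta_\theta(t)\,d\theta$. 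By Step 2, this primitive must equal $\beta^\gamma$, hence $\beta_\theta = \Gamma + \dot{\gamma}$. The pointwise inequality
\begin{equation*}
\bigl|\Pi^1_{\partial_\theta}(\beta)_{(t,\theta)}\bigr| \;=\; \sqrt{\beta_t^2 + \beta_\theta^2/\gamma^2} \;\geq\; |\beta_\theta/\gamma| \;=\; \bigl|\beta^\gamma_{(t,\theta)}\bigr|
\end{equation*}
gives $\Vert\beta\Vert \geq \Vert\Pi^1_{\partial_\theta}(\beta)\Vert \geq \Vert\beta^\gamma\Vert$, so $m_\gamma = \Vert\beta^\gamma\Vert$. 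The explicit formula then follows from $|\beta^\gamma_{(t,\theta)}| = |\beta^\gamma_\theta(t)|/\gamma(t)$ and taking the supremum over $t$.

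The only real technical point is the smoothness verification at the poles in Step 1; the rest is a straightforward bookkeeping of the averaging-and-truncation argument. I expect no serious obstacle, since the proposition essentially packages the preparatory lemmas already established in the section.
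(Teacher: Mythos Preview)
Your proposal is correct and follows essentially the same approach as the paper: average an arbitrary primitive to make it $\partial_\theta$-invariant, identify its $d\theta$-component as $\Gamma+\dot\gamma$ via the ODE $\dot\beta_\theta=\gamma+\ddot\gamma$ with boundary condition $\beta_\theta(0)=0$, verify smoothness at the poles from the vanishing of odd-order derivatives, and conclude by the pointwise norm inequality. The only minor point is that in Step~3 you invoke Step~2 for $\beta_\theta(t)\,d\theta$ before checking it is globally smooth; the paper instead reads off $\beta_\theta=\Gamma+\dot\gamma$ directly from the ODE and the boundary condition (which follows since $\partial_\theta$ vanishes at the poles), and only then checks smoothness---but this is a matter of ordering, not substance.
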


Thus, we see that we can compute $m_\gamma$ directly from the function $\gamma$. As an application, we now produce a simple case where $m_\gamma$ can be bounded from above.
\begin{prp}\label{curvin}
Suppose $\gamma:[0,\ell_\gamma]\rightarrow\mathbb R$ is a normalised profile function such that $\gamma(t)=\gamma(\ell_\gamma-t)$. If $K$ is increasing in the variable $t$, for $t\in[0,\ell_\gamma/2]$, then $m_\gamma\leq1$. Therefore, $\op{Con}_\gamma=[0,+\infty)$.
\end{prp}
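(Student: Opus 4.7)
My starting point is Proposition \ref{estm}, which gives the explicit formula $m_\gamma = \sup_{t\in[0,\ell_\gamma]}|R(t)|/\gamma(t)$ with $R(t) := \Gamma(t) + \dot\gamma(t)$. Combining $\dot\Gamma = \gamma$ with $\ddot\gamma = -K\gamma$, I will rewrite this as $R(t) = \int_0^t (1-K(s))\gamma(s)\,ds$. The symmetry $\gamma(\ell_\gamma - t) = \gamma(t)$ forces $\Gamma(\ell_\gamma - t) = -\Gamma(t)$ and $\dot\gamma(\ell_\gamma - t) = -\dot\gamma(t)$, so $R$ is anti-symmetric about $\ell_\gamma/2$ (in particular $R(\ell_\gamma/2) = 0$), while $\gamma$ is symmetric. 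It therefore suffices to prove the two-sided bound $|R(t)| \leq \gamma(t)$ on $[0,\ell_\gamma/2]$.

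The first step is to establish $R \geq 0$ on $[0,\ell_\gamma/2]$. Since $K$ is non-decreasing there, the integrand $(1-K)\gamma$ changes sign at most once (from non-negative to non-positive), so $R$ is first non-decreasing and then non-increasing on $[0,\ell_\gamma/2]$; combined with the boundary values $R(0) = R(\ell_\gamma/2) = 0$, this forces $R \geq 0$ throughout.

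The heart of the argument is the opposite bound $R \leq \gamma$. I will introduce the auxiliary function $\Lambda := R^2 - \gamma^2$, which satisfies $\Lambda(0) = 0$, and compute
\begin{equation*}
\dot\Lambda \;=\; 2R(1-K)\gamma - 2\gamma\dot\gamma \;=\; 2\gamma\bigl(R(1-K)-\dot\gamma\bigr) \;=\; 2\gamma(\Gamma - KR).
\end{equation*}
In the main case $K \geq 0$ throughout $[0,\ell_\gamma/2]$ (which is the generic situation, e.g.\ whenever the poles have non-negative curvature), the non-positivity of $\Gamma$ together with $R \geq 0$ from the previous step give $\Gamma - KR \leq 0$, hence $\dot\Lambda \leq 0$, hence $\Lambda \leq \Lambda(0) = 0$, and the estimate $R \leq \gamma$ follows. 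When instead $K(0) < 0$, the monotonicity of $K$ combined with the Gauss--Bonnet identity $\int_0^{\ell_\gamma/2} K\gamma\,dt = 1$ (obtained by integrating $\ddot\gamma = -K\gamma$) forces a unique $t^{**}\in(0,\ell_\gamma/2)$ with $K(t^{**}) = 0$. On $[t^{**},\ell_\gamma/2]$ the pointwise argument still applies, so it is enough to verify $\Lambda(t^{**}) \leq 0$; on $[0,t^{**}]$ the condition $K \leq 0$ yields $\ddot\gamma \geq 0$ and hence $\dot\gamma \geq 1$ and $\gamma(t) \geq t$, and the hypothetical equality $\Lambda(t^{**}) = 0$ would, via the Riccati equation $\dot u + u^2 + K = 0$ for $u := \dot\gamma/\gamma$, impose the matching $u(t^{**}) = 1$; a Sturm comparison with the round-sphere solution $u(t) \leq \sqrt{|K(0)|}\,\coth(\sqrt{|K(0)|}\,t)$ rules this out.

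Once $m_\gamma \leq 1$ is in place, Corollary \ref{cor_allcon} immediately yields $\op{Con}_\gamma = [0,+\infty)$. The main obstacle is the regime $K < 0$ near the poles: the identity $\dot\Lambda = 2\gamma(\Gamma - KR)$ loses its manifest sign (both $\Gamma$ and $KR$ become non-positive), so the pointwise monotonicity argument breaks down and the bound must instead be propagated across the zero set of $K$ by a global Riccati/Sturm comparison.
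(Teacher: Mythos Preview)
Your auxiliary-function argument via $\Lambda:=R^2-\gamma^2$ is correct and genuinely different from the paper's route, but you are worrying about a case that cannot occur. The profile condition $|\dot\gamma(t)|<1$ on $(0,\ell_\gamma)$ together with $\dot\gamma(0)=1$ gives
\[
1-\dot\gamma(t)\;=\;-\int_0^t\ddot\gamma(s)\,ds\;=\;\int_0^t K(s)\gamma(s)\,ds\;>\;0
\]
for every small $t>0$; if $K(0)<0$ this integral would be negative near $0$, a contradiction. Hence $K(0)\ge0$, and since $K$ is non-decreasing on $[0,\ell_\gamma/2]$ you obtain $K\ge0$ throughout that interval. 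Your ``main case'' is therefore the only case, and the monotonicity argument $\dot\Lambda=2\gamma(\Gamma-KR)\le0$ already finishes the proof. The Riccati/Sturm sketch you offer for the (impossible) regime $K(0)<0$ does not work as written---the equality $\Lambda(t^{**})=0$ does not force $\dot\gamma/\gamma=1$ at $t^{**}$, ruling out equality would not by itself rule out $\Lambda(t^{**})>0$, and the comparison bound you quote points the wrong way---but all of this is moot.

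The paper proceeds differently: it locates an interior maximiser $t_0$ of $R/\gamma$, uses the critical-point identity $(1-K(t_0))\gamma(t_0)^2=R(t_0)\dot\gamma(t_0)$ together with $\Gamma(t_0)<0$ to obtain $(\dot\gamma/\gamma)^2>1-K$ at $t_0$, and concludes $m_\gamma=(1-K(t_0))\gamma(t_0)/\dot\gamma(t_0)\le\sqrt{1-K(t_0)}\le1$. Your approach trades this pointwise algebraic manipulation at the maximum for a global differential inequality; it is arguably more elementary and avoids locating the maximiser or checking $\dot\gamma(t_0)>0$, at the price of needing the sign of $K$---which, as noted, comes for free from the profile conditions.
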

\begin{proof}
We observe that the functions $\Gamma$, $\dot{\gamma}$ and, hence, $\beta^\gamma_\theta$ are odd with respect to  $t=\ell_\gamma/2$. This means, for example, that $\beta^\gamma_\theta(t)=-\beta^\gamma_\theta(\ell_\gamma-t)$. Therefore, in order to compute $m_\gamma$, we can restrict the attention to the interval $[0,\ell_\gamma/2]$. We know that $\beta^\gamma_\theta(0)=\beta^\gamma_\theta(\ell_\gamma/2)=0$ and we claim that, if $K$ is increasing, $\beta^\gamma_\theta$ is positive in the interior. Indeed, observe that $\{t\in[0,\ell_\gamma/2]\,|\, \dot{\beta}^\gamma_\theta=0\}$ is an interval and that $\dot{\beta}^\gamma_\theta$ is positive before this interval and it is negative after. Therefore, either $\beta^\gamma_\theta$ is constantly zero or it does not have a local minimum in the interior. Thus, it cannot assume negative values.

Let us estimate $\beta^\gamma_\theta/\gamma$ at an interior absolute maximiser $t_0$. The condition $\frac{d}{dt}\big|_{t=t_0}\frac{\beta^\gamma_\theta}{\gamma}=0$ is equivalent to
\begin{equation}\label{valcrit}
\frac{\big(\Gamma(t_0)+\dot{\gamma}(t_0)\big)\dot{\gamma}(t_0)}{\gamma^2(t_0)}=1-K(t_0).
\end{equation}
Since $\Gamma(t_0)+\dot{\gamma}(t_0)=\beta^\gamma_\theta(t_0)\geq0$ and $\dot{\gamma}(t_0)>0$, we see that $1-K(t_0)\geq0$. Moreover, using that $\Gamma(t_0)<0$, we get
\begin{equation}
\left(\frac{\dot{\gamma}(t_0)}{\gamma(t_0)}\right)^2>1-K(t_0).
\end{equation}
Finally, exploiting Equation \eqref{valcrit} again, we find
\begin{equation}
m_\gamma=\frac{\beta^\gamma_\theta(t_0)}{\gamma(t_0)}=\big(1-K(t_0)\big)\frac{\gamma(t_0)}{\dot{\gamma}(t_0)}\leq \sqrt{1-K(t_0)}\leq 1.
\end{equation}
The fact that $\op{Con}_\gamma=[0,+\infty)$ now follows from Corollary \ref{cor_allcon}.
\end{proof}
To complement the previous proposition we show that if, on the contrary, we assume that the curvature of $S^2_\gamma$ is sufficiently concentrated at one of the poles, $m_\gamma$ can be arbitrarily large. We are going to prove this behaviour in the class of convex surfaces since in this case we also have a strategy, explained in Section \ref{sub_act}, to compute numerically the action of invariant measures. Using McDuff's criterion, this will enable us to estimate the gaps $[m_{-,\gamma},\overline{m}_{-,\gamma}]$ and $[\overline{m}_{+,\gamma},m_{+,\gamma}]$.

Before we need a preliminary lemma. Recall that $S^2_\gamma$ is convex, i.e.\ $K\geq0$, if and only if $\ddot{\gamma}\leq0$.
\begin{lem}\label{bigmlem}
For every $0<\delta<\frac{\pi}{2}$ and for every $\varepsilon>0$, there exists a normalised profile function $\gamma_{\delta,\varepsilon}$ such that $S^2_{\gamma_{\delta,\varepsilon}}$ is convex and
\begin{equation}\label{inelem}
\dot{\gamma}_{\delta,\varepsilon}(\delta)<\varepsilon.
\end{equation}
\end{lem}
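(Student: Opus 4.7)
The plan is to construct $\gamma_{\delta,\varepsilon}$ as a smoothed ``capsule'' profile: quarter-circle caps of a small round sphere of radius $R$ near each pole, joined through a long cylindrical middle in which $\gamma$ is constant.

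\textbf{Step 1 (piecewise-smooth capsule).} Choose $R\in(0,2\delta/\pi)$ and set $a:=\pi R/2<\delta$. For $L>0$ to be fixed in Step 3, let $\ell_0:=2a+L$ and define $\gamma_0\colon[0,\ell_0]\to\R$ by
\[
\gamma_0(t):=
\begin{cases}
R\sin(t/R)&\text{on }[0,a],\\
R&\text{on }[a,a+L],\\
R\sin((\ell_0-t)/R)&\text{on }[a+L,\ell_0].
\end{cases}
\]
Then $\gamma_0$ is $C^1$, with $\dot\gamma_0(0)=1$, $\dot\gamma_0(\ell_0)=-1$, $\dot\gamma_0\equiv0$ on $[a,a+L]$, and $\ddot\gamma_0\le0$ in the distributional sense. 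Since $\delta\in(a,a+L)$ once $L$ is taken large enough, we have $\dot\gamma_0(\delta)=0$, so inequality \eqref{inelem} will hold with room to spare.

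\textbf{Step 2 (smoothing the two interior corners).} The only failure of smoothness of $\gamma_0$ is a jump of $\ddot\gamma_0$ from $-1/R$ to $0$ (and back) at the two junctions $t=a$ and $t=a+L$. Both one-sided values are non-positive. Hence I modify $\gamma_0$ on arbitrarily small neighbourhoods of $\{a,a+L\}$ -- for instance by convolving $\dot\gamma_0$ with a non-negative even mollifier of scale $\eta\ll\delta-a$ and re-integrating -- to obtain a smooth function $\gamma_1$ that agrees with $\gamma_0$ outside those neighbourhoods, still satisfies $\ddot\gamma_1\le 0$, and still has $\dot\gamma_1(\delta)=0$. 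Concavity is preserved because convolution of a non-increasing function with a non-negative kernel is again non-increasing.

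Near the poles, $\gamma_1$ is untouched and hence coincides with the round-sphere cap $R\sin(t/R)$, whose Taylor expansion in $t$ contains only odd powers; thus all even derivatives vanish at $t=0$, and, by the symmetric construction, at $t=\ell_0$. The boundary slopes are correct and $|\dot\gamma_1|<1$ in the interior.

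\textbf{Step 3 (normalisation).} A direct computation yields $\int_0^{\ell_0}\gamma_0=2R^2+RL$. After the small smoothing in Step 2 the integral is perturbed by a quantity of order $\eta/R$, which can be made arbitrarily small. Since $L\mapsto\int_0^{\ell_0}\gamma_1$ is continuous and strictly monotone (the smoothing region being kept away from $\{t=\delta\}$), there is a unique value of $L$ near $(2-2R^2)/R$ for which $\int_0^{\ell_0}\gamma_1=2$. Setting $\gamma_{\delta,\varepsilon}:=\gamma_1$ for this choice of parameters produces the desired normalised profile.

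The only mildly technical point is verifying that the smoothing in Step 2 preserves concavity; this reduces to the elementary observation that $(\dot\gamma_0*\rho)(t_1)-(\dot\gamma_0*\rho)(t_2)=\int[\dot\gamma_0(t_1-s)-\dot\gamma_0(t_2-s)]\rho(s)\,ds\ge0$ for $t_1<t_2$ and any non-negative kernel $\rho$. Everything else is a direct check of Conditions 1)--4) in the definition of a normalised profile function.
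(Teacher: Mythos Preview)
Your capsule idea is sound and would prove the lemma, but Step~2 as written does not do what you claim. If you convolve $\dot\gamma_0$ \emph{globally} with a mollifier and re-integrate, the spherical caps are not left ``untouched'': on $[\eta,a-\eta]$ one has $\dot\gamma_0(t)=\cos(t/R)$, and $(\cos(\cdot/R)\ast\rho_\eta)(t)=C_\eta\cos(t/R)$ with $C_\eta=\int\cos(s/R)\rho_\eta(s)\,ds<1$, so after integration $\gamma_1$ is a rescaled, shifted sine near the pole; in particular $\dot\gamma_1(0)=C_\eta\neq 1$, violating the profile conditions. If instead you mollify only on small intervals around $a$ and $a+L$ and keep $\dot\gamma_0$ elsewhere, then (i) the hybrid $\dot\gamma_1$ has no reason to be smooth at the switching points, and (ii) unless the integral of the modification over each small interval is \emph{exactly} zero, re-integrating shifts $\gamma_1$ by a nonzero constant on $[a+\eta,\ell_0]$, so it neither equals $R$ on the cylinder nor vanishes at $\ell_0$. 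Both problems are repairable --- one can, for instance, prescribe $\ddot\gamma_1$ directly as a smooth non-positive function equal to $-\tfrac{1}{R}\sin(t/R)$ near the poles and compactly supported, adjusting a cutoff so that $\int\ddot\gamma_1=-2$ --- but the repair needs more than the sentence you give it.

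The paper sidesteps the smoothing issue entirely by a different construction. It takes a full round sphere of radius $a$ with $a\in(2\delta/\pi,1)$, so that $\delta$ lies on the cap and $\dot\gamma_a(\delta)=\cos(\delta/a)$; choosing $a$ close to $2\delta/\pi$ makes this $<\varepsilon$. Normalisation is then achieved by \emph{stretching} the already-smooth sphere via $\gamma_a^C:=\gamma_a\circ F_C^{-1}$, where $F_C$ is an odd diffeomorphism of $\R$ equal to a translation outside $(-(\tfrac{\pi a}{2}-\delta),\tfrac{\pi a}{2}-\delta)$ and satisfying $\dot F_C\ge1$ together with a sign condition on $\ddot F_C$. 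A chain-rule computation shows $\ddot\gamma_a^C\le0$, and since $F_C$ is the identity near the endpoints all the pole conditions are inherited automatically. The area increases to $+\infty$ with $C$, so one hits the value $2$ for some $C_2$. Your approach, once patched, yields the slightly stronger $\dot\gamma(\delta)=0$; the paper's approach buys you smoothness for free and no gluing.
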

\begin{proof}
Given $\delta$ and $\varepsilon$, we find $a\in(\frac{2\delta}{\pi},1)$ such that $0<\cos\left(\frac{\delta}{a}\right)<\varepsilon$. This is achieved by taking $\frac{\delta}{a}$ very close to $\pi/2$ from below. Consider the profile function $\gamma_{a}:[-\frac{\pi a}{2},\frac{\pi a}{2}]\rightarrow[0,a]$ of a round sphere of radius $a$, where the domain is taken to be symmetric to zero to ease the following notation. It is defined by $\gamma_a(t):=a\cos(\frac{t}{a})$. Then, $\dot{\gamma}_a(-\frac{\pi a}{2}+\delta)=\cos\left(\frac{\delta}{a}\right)<\varepsilon$ and so, up to shifting the domain again, Inequality \eqref{inelem} is satisfied. However, $\gamma_a$ is not normalised since
\begin{equation*}
\int_{-\frac{\pi a}{2}}^{\frac{\pi a}{2}}\gamma_a(t)dt=2a^2<2. 
\end{equation*}
In order to get the normalisation in such a way that Inequality \eqref{inelem} is not spoiled, we are going to stretch the sphere in the interval $\left(-(\frac{\pi a}{2}-\delta),\frac{\pi a}{2}-\delta\right)$.

We claim that, for every $C>0$ there exists a diffeomorphism $F_C:\mathbb R\rightarrow\mathbb R$ with the property that
\begin{itemize}
 \item it is odd: $\forall t\in\mathbb R,\ F_C(t)=-F_C(-t)$;
 \item for $t\geq \frac{\pi a}{2}-\delta$, $F_C(t)=t+C$ and for $t\leq -(\frac{\pi a}{2}-\delta)$, $F_C(t)=t-C$;
 \item $\dot{F}_C\geq1$;
 \item for $t\leq0$, $\ddot{F}_C(t)\geq0$ and for $t\geq0$, $\ddot{F}_C(t)\leq0$. 
\end{itemize}
Such a map can be constructed as a time $C$ flow map $\Phi^\psi_C$, where $\psi:\mathbb R\rightarrow\mathbb R$ is an odd increasing function such that, for $t\geq \frac{\pi a}{2}-\delta$, $\psi(t)=1$.

Consider the function $\gamma^C_a:[-C-\frac{\pi a}{2},C+\frac{\pi a}{2}]\rightarrow\mathbb R$, where $\gamma^C_a(s):=\gamma_a(F^{-1}_C(s))$. One readily check that $\gamma^C_a$ (up to a shift of the domain) is a profile function satisfying the convexity conditions and for which \eqref{inelem} holds. To finish the proof it is enough to find a positive real number $C_2$ such that $\int_\mathbb R\gamma^{C_2}_a=2$. Since we know that $\gamma^{0}_a=\gamma_a$ and $\int_\mathbb R\gamma_a<2$, it suffices to show that
\begin{equation*}
\lim_{C\rightarrow+\infty}\int_\mathbb R\gamma^{C}_a(s)ds=+\infty.
\end{equation*}
Observe that $b:=\gamma_a^C(-C-\frac{\pi a}{2}+\delta)=\gamma_a(-\frac{\pi a}{2}+\delta)=a\sin(\frac{\delta}{a})>0$. Then, for $s\in[-C-\frac{\pi a}{2}+\delta,C+\frac{\pi a}{2}-\delta]$, $\gamma_a^C(s)\geq b$ and we have the lower bound
\begin{equation*}
\int_\mathbb R\gamma^{C}_a(s)ds\geq \int_{-C-\frac{\pi a}{2}+\delta}^{C+\frac{\pi a}{2}-\delta}\gamma^{C}_a(s)ds\geq\int_{-C-\frac{\pi a}{2}+\delta}^{C+\frac{\pi a}{2}-\delta}\!b\, ds=2(C+\frac{\pi a}{2}-\delta)b.
\end{equation*}
The last quantity tends to infinity as $C$ tends to infinity.
\end{proof}
\begin{prp}\label{bigm}
For every $C>0$, there exists a convex surface with total area $4\pi$ such that $m_\gamma>C$.
\end{prp}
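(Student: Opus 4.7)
The plan is to apply Lemma \ref{bigmlem} with a carefully chosen pair $(\delta,\varepsilon)$ and then estimate the ratio $|\Gamma+\dot\gamma|/\gamma$ from Proposition \ref{estm} at the special point $t=\delta$. The decisive observation is that in the construction of $\gamma_{\delta,\varepsilon}$ the diffeomorphism $F_C$ acts trivially on the region $t\in[0,\delta]$ after the shift, so that on this initial interval $\gamma_{\delta,\varepsilon}$ coincides with a round sphere cap of radius $a\in\bigl(\tfrac{2\delta}{\pi},1\bigr)$. Hence all three quantities $\gamma(\delta)$, $\dot\gamma(\delta)$, and $\Gamma(\delta)=-1+\int_0^{\delta}\gamma_{\delta,\varepsilon}$ are determined by the sphere cap alone, independently of the global stretching that forces the normalisation $\int\gamma=2$.

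Using $\gamma_a(u)=a\cos(u/a)$ on $[-\pi a/2,-\pi a/2+\delta]$ (and recalling that the shift places the south pole at the origin), direct computation gives
\begin{equation*}
\gamma_{\delta,\varepsilon}(\delta)=a\sin(\delta/a),\qquad \dot\gamma_{\delta,\varepsilon}(\delta)=\cos(\delta/a),\qquad \Gamma(\delta)=-1+a^2\bigl(1-\cos(\delta/a)\bigr).
\end{equation*}
Combining these,
\begin{equation*}
\Gamma(\delta)+\dot\gamma_{\delta,\varepsilon}(\delta)= -(1-a^2)\bigl(1-\cos(\delta/a)\bigr),
\end{equation*}
and using the half-angle identity $\tfrac{1-\cos\vartheta}{\sin\vartheta}=\tan(\vartheta/2)$,
\begin{equation*}
\left|\frac{\Gamma(\delta)+\dot\gamma_{\delta,\varepsilon}(\delta)}{\gamma_{\delta,\varepsilon}(\delta)}\right|=\frac{(1-a^2)\tan\!\bigl(\delta/(2a)\bigr)}{a}.
\end{equation*}

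Given $C>0$, the plan is then to choose $a\in(0,1)$ sufficiently small so that $(1-a^2)/(a\sqrt{3})>C$, then choose $\delta>0$ with $\delta/a\in(\pi/3,\pi/2)$ (so that $\tan(\delta/(2a))>1/\sqrt{3}$ and in particular $a\in(2\delta/\pi,1)$), and finally pick $\varepsilon\in(0,1)$ with $\cos(\delta/a)<\varepsilon$; Lemma \ref{bigmlem} produces a convex normalised profile $\gamma_{\delta,\varepsilon}$ for which $S^2_{\gamma_{\delta,\varepsilon}}$ has area $4\pi$. By Proposition \ref{estm},
\begin{equation*}
m_{\gamma_{\delta,\varepsilon}}\geq\left|\frac{\Gamma(\delta)+\dot\gamma_{\delta,\varepsilon}(\delta)}{\gamma_{\delta,\varepsilon}(\delta)}\right|=\frac{(1-a^2)\tan(\delta/(2a))}{a}>\frac{1-a^2}{a\sqrt{3}}>C.
\end{equation*}

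The only subtle point, which I would make explicit, is that the stretching $F_C$ in the proof of Lemma \ref{bigmlem} is supported on $[-(\pi a/2-\delta),\pi a/2-\delta]$ in the original coordinates, so after the shift it does not alter the profile on $[0,\delta]$; this ensures that the cap-based computation above is rigorous and does not need to track the value of $C$ that realises the normalisation. Once this observation is in place, the proposition follows from a single one-point evaluation, so the main obstacle is conceptual (recognising that the ratio in Proposition \ref{estm} can be made large because concentrating curvature at the poles forces $\gamma$ to stay small relative to the ``total integrated mass'' encoded by $1-\Gamma$) rather than technical.
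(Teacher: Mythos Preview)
Your proof is correct and follows essentially the same strategy as the paper: apply Lemma~\ref{bigmlem} and evaluate the ratio from Proposition~\ref{estm} at $t=\delta$. The one noteworthy difference is that the paper avoids looking inside the construction of Lemma~\ref{bigmlem}: it uses only the conclusion $\dot\gamma_{\delta,\varepsilon_0}(\delta)<\varepsilon_0$ together with the universal profile bounds $\gamma(\delta)\leq\delta$ and $\Gamma(\delta)\leq -1+\delta^2$ (both consequences of $|\dot\gamma|\leq 1$), obtaining $m_\gamma\geq (1-\varepsilon_0)/\delta-\delta$, which blows up as $\delta\to 0$. Your version instead relies on the fact that the stretching in the lemma's proof leaves $[0,\delta]$ untouched, so the profile is exactly a spherical cap there; this gives a cleaner closed formula but makes the argument depend on the internal construction of the lemma rather than its statement alone. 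Either route is fine, and you correctly flagged the one subtle point your approach needs.
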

\begin{proof}
Fix an $\varepsilon_0<1$. Take any $\delta<\sqrt{1-\varepsilon_0}$ and consider the normalised profile function $\gamma_{\delta,\varepsilon_0}$ given by the lemma. We know that
\begin{equation*}
\gamma_{\delta,\varepsilon_0}(\delta)=\int_0^\delta\dot{\gamma}_{\delta,\varepsilon_0}(t)dt\leq\int_0^\delta1\, dt=\delta. 
\end{equation*}
In the same way we find $\Gamma_{\delta,\varepsilon_0}(\delta)\leq -1+\delta^2$. From this last inequality we get
\begin{equation*}
\Gamma_{\delta,\varepsilon_0}(\delta)+\dot{\gamma}_{\delta,\varepsilon_0}(\delta)<-1+\delta^2+\varepsilon_0<0.
\end{equation*}
This yields the following lower bound for $m_{\gamma_{\delta,\varepsilon_0}}$:
\begin{equation*}
m_{\gamma_{\delta,\varepsilon_0}}\geq\left|\frac{\Gamma_{\delta,\varepsilon_0}(\delta)+\dot{\gamma}_{\delta,\varepsilon_0}(\delta)}{\gamma_{\delta,\varepsilon_0}(\delta)}\right|\geq \frac{1-\varepsilon_0}{\delta}-\delta.
\end{equation*}
The proposition is proven taking $\delta$ small enough.
\end{proof}
To sum up, we saw that the rotational symmetry gives us a good understanding of the set $[0,m_{-,\gamma})\cup(m_{+,\gamma},+\infty)$. Understanding the set $[0,\overline{m}_{-,\gamma})\cup(\overline{m}_{-,\gamma},+\infty)$, or even better $\op{Con}_\gamma$, is more subtle. In Section \ref{sub_act} we perform this task only numerically and when the magnetic curvature $K_m=m^2K+1$ is positive.

As a first step, in the next section we will briefly study the symplectic reduction associated to the symmetry and the associated reduced dynamics (for the general theory of symplectic reduction we refer to \cite{abm}). Proposition \ref{paract} and the numerical computation outlined in Section \ref{sub_act} suggest that, if $K_m>0$, the contact property holds. In particular, if $K\geq0$, every energy level should be of contact type. To complete the picture, we show in Proposition \ref{nonnec} that the assumption on the magnetic curvature is not necessary and in Proposition \ref{noncon} that there are cases where the magnetic curvature is not positive and that are not of contact type.

\section{The symplectic reduction}
Observe that the flow $\Phi^{\partial_\theta}$ on $S^2_\gamma$ lifts to a flow $d\Phi^{\partial_\theta}$ on $SS^2_\gamma$. Since $\Phi^{\partial_\theta}$ is a flow of isometries, $d\Phi^{\partial_\theta}_{\theta'}$ in coordinates is simply translation in the variable $\theta$:
$d\Phi^{\partial_\theta}_{\theta'}(t,\varphi,\theta)=(t,\varphi,\theta+\theta')$. Hence, $d\Phi^{\partial_\theta}$ is generated by $\widetilde{\partial}_\theta$. As the flow $\Phi^{\widetilde{\partial}_\theta}=d\Phi^{\partial_\theta}$ is $2\pi$-periodic and acts freely on $SS^2_\gamma$, we can take its quotient $\widehat{SS^2_\gamma}$ with respect to this $\T_{2\pi}$-action. Furthermore, the quotient map $\widehat{\pi}:SS^2_\gamma\rightarrow \widehat{SS^2_\gamma}$ is a submersion. The variables $t$ and $\varphi$ descend to coordinates defined on $\widehat{SS^2_\gamma}$ minus two points, which are the fibres of the unit tangent bundle over the south and north pole. In these coordinates we simply have $\widehat{\pi}(t,\varphi,\theta)=(t,\varphi)$. In particular, $\widehat{SS^2_\gamma}$ is diffeomorphic to a $2$-sphere.

Any $\tau\in\Omega^k_{\widetilde{\partial}_\theta}(SS^2_\gamma)$ such that $\imath_{\widetilde{\partial}_\theta}\tau=0$ passes to the quotient and yields a well-defined form on $\widehat{SS^2_\gamma}$. The $2$-form $\imath_{\widetilde{\partial}_\theta}\nu_\gamma$ falls into this class and, hence, there exists $\Theta_\gamma\in\Omega^2(\widehat{SS^2_\gamma})$ such that $\imath_{\widetilde{\partial}_\theta}\nu_\gamma=\widehat{\pi}^*\Theta_\gamma$. Moreover, the form $\Theta_\gamma$ is symplectic on $\widehat{SS^2_\gamma}$. On the other hand, $X^m_\gamma$ is also $\widetilde{\partial}_\theta$-invariant thanks to Equation \eqref{relaz1}. So there exists $\widehat{X}^m_\gamma\in\Gamma(\widehat{SS^2_\gamma})$ such that $d\widehat{\pi}(X^m_\gamma)=\widehat{X}^m_\gamma$. We claim that this new vector field is $\Theta_\gamma$-Hamiltonian. We start by noticing that if $\beta^\gamma$ is as defined in Proposition \ref{estm} and $\lambda^\gamma_{m}:=m\lambda-\pi^*\beta^\gamma+\tau$, then $\lambda^\gamma_m\in\Omega^1_{\widetilde{\partial}_\theta}(SS^2_\gamma)$. Then, Cartan identity implies
\begin{align}\label{chain2}
\imath_{X^m_\gamma}(\imath_{\widetilde{\partial}_\theta}\nu_\gamma)=-\imath_{\widetilde{\partial}_\theta}(\imath_{X^m_\gamma}\nu_\gamma)=-\imath_{\widetilde{\partial}_\theta}\omega_{m,\gamma}=-\imath_{\widetilde{\partial}_\theta}(d\lambda^\gamma_m)\nonumber&=-\mathcal L_{\widetilde{\partial}_\theta}\lambda^\gamma_m+d\big(\imath_{\widetilde{\partial}_\theta}\lambda^\gamma_m\big)\nonumber\\
&=d\big(\lambda^\gamma_m(\widetilde{\partial}_\theta)\big).
\end{align}
Define $I_{m,\gamma}:=\lambda^\gamma_m(\widetilde{\partial}_\theta)$. Since $I_{m,\gamma}$ is $\widetilde{\partial}_\theta$-invariant, there exists $\widehat{I}_{m,\gamma}:\widehat{SS^2_\gamma}\rightarrow\mathbb R$ such that $I_{m,\gamma}=\widehat{\pi}^*\widehat{I}_{m,\gamma}$. 
Thus, reducing equality \eqref{chain2} to $\widehat{SS^2_\gamma}$, we find that $\widehat{X}^m_\gamma$ is the $\Theta_\gamma$-Hamiltonian vector field generated by $-\widehat{I}_{m,\gamma}$. Using Equation \eqref{relaz2}, we find the coordinate expression
\begin{equation}
\widehat{I}_{m,\gamma}(t,\varphi)=m\gamma(t)\sin\varphi-\Gamma(t).
\end{equation}
As a by-product, we also observe that $I_{m,\gamma}$ is an integral of motion for $X^m_\gamma$.

Let us consider now the two auxiliary functions $\widehat{I}^\pm_{m,\gamma}:[0,\ell_\gamma]\rightarrow \R$ defined by $\widehat{I}^\pm_{m,\gamma}(t):=\widehat{I}_{m,\gamma}(t,\pm \pi/2)=\pm m\gamma(t)-\Gamma(t)$. We know that
\begin{equation}
\widehat{I}^-_{m,\gamma}(t)\leq \widehat{I}_{m,\gamma}(t,\varphi)\leq \widehat{I}^+_{m,\gamma}(t),
\end{equation}
with equalities if and only if $\varphi=\pm\pi/2$. On the one hand, we have $\widehat{I}^+_{m,\gamma}\geq-1$ and $\widehat{I}^+_{m,\gamma}(t)=-1$ if and only if $t=\ell_\gamma$. On the other hand, $\widehat{I}^+_{m,\gamma}$ attains its maximum in the interior. Indeed,
\begin{equation}\label{derI}
\frac{d}{dt}\widehat{I}^+_{m,\gamma}=m\dot{\gamma}-\gamma.
\end{equation}
and so $\frac{d}{dt}\widehat{I}^+_{m,\gamma}(0)=m>0$. Since $\widehat{I}^+_{m,\gamma}(0)=1$, the maximum is also strictly bigger than $1$. A similar argument tells us that the maximum of $\widehat{I}^-_{m,\gamma}$ is $1$ and it is attained at $0$ and the minimum of $\widehat{I}^-_{m,\gamma}$ is strictly less than $-1$ and it is attained in $(0,\ell_\gamma)$. 
As a consequence, $\max \widehat{I}_{m,\gamma}=\max \widehat{I}^+_{m,\gamma}>1$ and $\min \widehat{I}_{m,\gamma}=\min \widehat{I}^-_{m,\gamma}<-1$.
\medskip

In the next proposition we deal with the critical points $\widehat{\op{Crit}}_{m,\gamma}$ of $\widehat{I}_{m,\gamma}$ and study their relations with the critical points $\widehat{\op{Crit}}^\pm_{m,\gamma}$ of $\widehat{I}^\pm_{m,\gamma}$. We are going to show that, if $K_m>0$, the only elements in $\widehat{\op{Crit}}_{m,\gamma}$ are the (unique) maximiser and the (unique) minimiser. In this case the dynamics of $\widehat{X}^m_\gamma$ is very simple: besides the two rest points, all the other orbits are periodic and wind once in the complement of these two points.
\begin{prp}\label{reddyn}
There holds
\begin{equation*}
\widehat{\op{Crit}}_{m,\gamma}=\widehat{\op{Crit}}^-_{m,\gamma}\times\left\{-\frac{\pi}{2}\right\}\bigcup\, \widehat{\op{Crit}}^+_{m,\gamma}\times\left\{+\frac{\pi}{2}\right\}. 
\end{equation*}

Moreover, $t_0\in \widehat{\op{Crit}}^\pm_{m,\gamma}$ if and only if $\pm m\dot{\gamma}(t_0)=\gamma(t_0)$ and $\{(t_0,\pm\pi/2,\theta)\}$ is the support of a closed orbit for $X^m_\gamma$. All the periodic orbits, whose projection to $S^2_\gamma$ is a latitude, arise in this way. Every regular level of $\widehat{I}_{m,\gamma}$ is the support of a closed orbit of $\widehat{X}^m_\gamma$ and its preimage in $SS^2_\gamma$ is an $X^m_\gamma$-invariant torus. 

Finally, if $K_m>0$, $\widehat{\op{Crit}}^\pm_{m,\gamma}$ contains only the absolute maximiser (respectively minimiser) of $\widehat{I}^\pm_{m,\gamma}$. We denote this unique element by $t^\pm_{m,\gamma}$. 
\end{prp}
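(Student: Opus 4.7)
The plan is to work in the coordinates $(t,\varphi)$ on the open part of $\widehat{SS^2_\gamma}$ (the complement of the two fibres over the poles of $S^2_\gamma$) and exploit the explicit formula $\widehat I_{m,\gamma}(t,\varphi)=m\gamma(t)\sin\varphi-\Gamma(t)$. Since $\partial_\varphi\widehat I_{m,\gamma}=m\gamma\cos\varphi$ and $\gamma>0$ on $(0,\ell_\gamma)$, an interior critical point must satisfy $\varphi=\pm\pi/2$, and the remaining equation $0=\partial_t\widehat I_{m,\gamma}=\pm m\dot\gamma-\gamma$ is precisely the critical-point equation for $\widehat I^\pm_{m,\gamma}$. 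To exclude the two ``pole'' points of $\widehat{SS^2_\gamma}$ from $\widehat{\op{Crit}}_{m,\gamma}$, I note from \eqref{relaz2} that $\gamma(0)=0$ and $\dot\gamma(0)=1$ force $\widetilde\partial_\theta=V$ at the south pole; hence the fibre of $\widehat\pi$ over that pole is generated by $V$, while $X^m_\gamma=mX+V$ has a non-trivial horizontal component, giving $\widehat X^m_\gamma\neq 0$ there (the north pole is analogous). Since $\widehat X^m_\gamma$ is the Hamiltonian vector field of $-\widehat I_{m,\gamma}$, this non-vanishing is exactly the non-criticality of these two points.

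For the statement about periodic orbits, let $(t_0,\pm\pi/2)\in\widehat{\op{Crit}}_{m,\gamma}$. Its $\widehat\pi$-preimage is the latitude $\{(t_0,\pm\pi/2,\theta):\theta\in\T_{2\pi}\}$, and the vanishing of $\widehat X^m_\gamma$ at $(t_0,\pm\pi/2)$ means exactly that $X^m_\gamma$ is tangent to the $\widetilde\partial_\theta$-fibre along this latitude, so the latitude is a closed orbit of $X^m_\gamma$. Conversely, if a closed orbit of $X^m_\gamma$ projects onto a latitude $\{t=t_0\}$, then $v_t=\cos\varphi\equiv 0$ along it, so $\varphi\equiv\pm\pi/2$; the orbit is then the whole $S^1$-fibre of $\widehat\pi$ over $(t_0,\pm\pi/2)$, whose $X^m_\gamma$-invariance forces $\widehat X^m_\gamma$ to vanish at that point, i.e.\ $(t_0,\pm\pi/2)\in\widehat{\op{Crit}}_{m,\gamma}$. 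For a regular value $c$ of $\widehat I_{m,\gamma}$, the level $\widehat I_{m,\gamma}^{-1}(c)$ is a smooth compact $1$-manifold on which the Hamiltonian vector field $\widehat X^m_\gamma$ is non-vanishing and tangent, so each connected component is a closed orbit; its $\widehat\pi$-preimage is a principal $S^1$-bundle over a circle, hence a torus, invariant under $X^m_\gamma$.

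The remaining statement is the only one that requires a genuine idea: I would perform a second-derivative test at a critical point $t_0$ of $\widehat I^\pm_{m,\gamma}$. Differentiating $(\widehat I^\pm_{m,\gamma})'(t)=\pm m\dot\gamma(t)-\gamma(t)$ and substituting the critical-point relation $\pm m\dot\gamma(t_0)=\gamma(t_0)$ together with $K=-\ddot\gamma/\gamma$, one obtains
\begin{equation*}
(\widehat I^\pm_{m,\gamma})''(t_0)\;=\;\pm m\ddot\gamma(t_0)-\dot\gamma(t_0)\;=\;\mp\frac{\gamma(t_0)}{m}\bigl(m^2K(t_0)+1\bigr)\;=\;\mp\frac{\gamma(t_0)}{m}K_m(t_0).
\end{equation*}
Under the assumption $K_m>0$, every critical point of $\widehat I^+_{m,\gamma}$ is thus a strict local maximum and every critical point of $\widehat I^-_{m,\gamma}$ is a strict local minimum. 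If $\widehat I^+_{m,\gamma}$ had two distinct critical points, it would have to attain a local minimum between them; but that minimum would itself be a critical point with non-negative second derivative, contradicting what we just showed. The same argument applies to $\widehat I^-_{m,\gamma}$. Combined with the fact, already observed before the proposition, that $\widehat I^+_{m,\gamma}$ (resp.\ $\widehat I^-_{m,\gamma}$) attains its absolute maximum (resp.\ minimum) in the interior, this identifies the unique critical point as $t^\pm_{m,\gamma}$. The main subtlety is recognising that pointwise positivity of the magnetic curvature translates into a sign condition on $(\widehat I^\pm_{m,\gamma})''$ that rules out saddle-type critical points for the reduced one-variable functions.
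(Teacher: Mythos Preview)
Your argument is correct and follows essentially the same route as the paper: the identification of critical points via $\partial_\varphi\widehat I_{m,\gamma}=0$, the link between critical points and latitude orbits, and the second-derivative test $(\widehat I^\pm_{m,\gamma})''(t_0)=\mp\frac{\gamma(t_0)}{m}K_m(t_0)$ are all exactly what the paper does. You add a little extra care (the pole points of $\widehat{SS^2_\gamma}$, the explicit max--min argument excluding two critical points), but the substance is the same.
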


\begin{proof}
The first statement follows from the fact that $\partial_\varphi \widehat{I}_{m,\gamma}=0$ if and only if $\varphi=\pm\pi/2$. Recalling \eqref{derI} we see that $\pm m\dot{\gamma}(t)=\gamma(t)$ is exactly the equation for the critical points of $\widehat{I}^\pm_{m,\gamma}$. The statement about the relation between closed orbits of $X^m_\gamma$ and latitudes follows from the fact that at a critical point of $\widehat{I}_{m,\gamma}$, $\widehat{X}^m_\gamma=0$. Hence, on its preimage $X^m_\gamma$ is parallel to $\widetilde{\partial}_\theta$. By the implicit function theorem the regular level sets of $\widehat{I}_{m,\gamma}$ and $I_{m,\gamma}$ are closed submanifolds of codimension $1$. In the latter case they are tori since $X^m_\gamma$ is tangent to them and nowhere vanishing. 

We prove now uniqueness under the hypothesis on the curvature. We carry out the computations for $\widehat{I}^+_{m,\gamma}$ only. To prove that the absolute maximiser is the only critical point, we show that if $t_0$ is critical, the function is concave at $t_0$. Indeed,
\begin{align*}
\frac{d^2}{dt^2}\widehat{I}^+_{m,\gamma}(t_0)\ =\ m\ddot{\gamma}(t_0)-\dot{\gamma}(t_0)&=\ m\gamma(t_0)\left(\frac{\ddot{\gamma}(t_0)}{\gamma(t_0)}-\frac{\dot{\gamma}(t_0)}{m\gamma(t_0)}\right)\\
&=\ -m\gamma(t_0)\left(K(t_0)+\frac{1}{m^2}\right)<0.\qedhere
\end{align*}
\end{proof}
The picture above shows qualitatively $\widehat{I}^-_{m,\gamma}$ and $\widehat{I}^+_{m,\gamma}$ when $K_m$ is positive.
\begin{figure}
\includegraphics[width=4in]{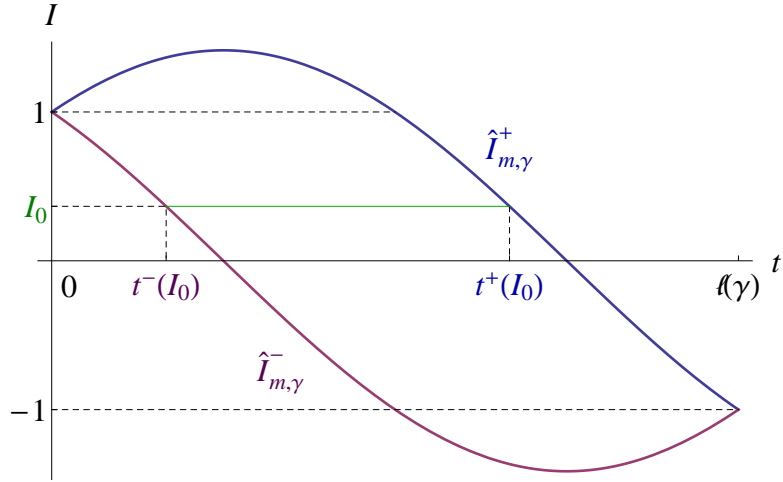}
\caption{Graphs of the functions $\widehat{I}^-_{m,\gamma}$ and $\widehat{I}^+_{m,\gamma}$}\label{pic}
\end{figure} 
\medskip

In order to decide whether $\omega_{m,\gamma}'$ is of contact type or not, the first thing to do is to compute the action of latitudes.
\begin{prp}\label{paract}
Take $t_0\in(0,\ell_\gamma)$ such that $\dot{\gamma}(t_0)\neq0$ and let $m_{t_0}:=\left|\frac{\gamma(t_0)}{\dot{\gamma}(t_0)}\right|$. The lift of the latitude curve $\{t=t_0\}$ parametrised by arc length and oriented by $\big(\op{sign}\dot{\gamma}(t_0)\big)\partial_\theta$ is the support of a periodic orbit for $X^{m_{t_0}}_\gamma$. We call $\zeta_{t_0}$ the associated invariant probability measure. Its action is given by
\begin{equation}\label{acpar}
\mathcal A^{\omega_{m_{t_0},\gamma}'}_{X^{m_{t_0}}_\gamma}(\zeta_{t_0})=\frac{\gamma(t_0)^2-\dot{\gamma}(t_0)\Gamma(t_0)}{\dot{\gamma}(t_0)^2}
\end{equation}
and $I_{m_{t_0},\gamma}\big|_{\op{supp}\zeta_{t_0}}=\dot{\gamma}(t_0)\mathcal A(\zeta_{t_0})$. As a result, if $K_{m_{t_0}}>0$, then $\mathcal A(\zeta_{t_0})>0$.
\end{prp}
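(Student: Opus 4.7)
My plan splits into three stages: identifying the orbit and computing the restriction of the Hamiltonian vector field; then evaluating the action with the explicit primitive $\lambda^\gamma_m=m\lambda-\pi^*\beta^\gamma+\tau$ of $\omega_{m,\gamma}$; and finally translating the curvature hypothesis through Proposition \ref{reddyn}. Set $\epsilon:=\op{sign}\dot\gamma(t_0)$; the arc-length parametrization of $\{t=t_0\}$ oriented by $\epsilon\partial_\theta$ lifts to the circle $\{(t_0,\epsilon\pi/2,\theta):\theta\in\T_{2\pi}\}\subset SS^2_\gamma$. Expanding $X^m_\gamma=mX+V$ in the frame $(\widetilde{\partial}_t,\partial_\varphi,\widetilde{\partial}_\theta)$ by means of \eqref{relaz1} and evaluating at $\varphi=\epsilon\pi/2$, the $\widetilde{\partial}_t$-component vanishes and the $\partial_\varphi$-coefficient reduces to $1-m\epsilon\dot\gamma(t_0)/\gamma(t_0)$. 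Requiring the latter to vanish recovers $m=m_{t_0}$ together with the correct value of $\epsilon$, in agreement with the criterion of Proposition \ref{reddyn}; on that orbit one reads off $X^{m_{t_0}}_\gamma=\dot\gamma(t_0)^{-1}\widetilde{\partial}_\theta$.

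For the action, I exploit that $H_1(SS^2_\gamma,\R)=H_1(\R\Pro^3,\R)=0$, which makes every invariant measure automatically null-homologous, so $\mathcal A^{\omega'_{m_{t_0},\gamma}}_{X^{m_{t_0}}_\gamma}(\zeta_{t_0})$ is well defined and may be evaluated with the primitive $\lambda^\gamma_{m_{t_0}}$. On the orbit
\[
\lambda^\gamma_{m_{t_0}}\big(X^{m_{t_0}}_\gamma\big)=\dot\gamma(t_0)^{-1}\,\lambda^\gamma_{m_{t_0}}(\widetilde{\partial}_\theta)=\dot\gamma(t_0)^{-1}\,I_{m_{t_0},\gamma},
\]
and since $I_{m_{t_0},\gamma}$ is a first integral (as already noted after Proposition \ref{reddyn}), this function is constant along the orbit and therefore equals $\mathcal A(\zeta_{t_0})$. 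Substituting the coordinate expression $I_{m_{t_0},\gamma}(t_0,\epsilon\pi/2)=m_{t_0}\gamma(t_0)\epsilon-\Gamma(t_0)$ and using $m_{t_0}\epsilon=\gamma(t_0)/\dot\gamma(t_0)$ produces formula \eqref{acpar}; the auxiliary identity $I_{m_{t_0},\gamma}|_{\op{supp}\zeta_{t_0}}=\dot\gamma(t_0)\mathcal A(\zeta_{t_0})$ then follows on inspection.

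For the positivity, assuming $K_{m_{t_0}}>0$, Proposition \ref{reddyn} ensures that the only critical points of $\widehat I^\pm_{m_{t_0},\gamma}$ are the absolute extrema, so $t_0$ must be either the unique absolute maximizer of $\widehat I^+_{m_{t_0},\gamma}$ (when $\epsilon=+$) or the unique absolute minimizer of $\widehat I^-_{m_{t_0},\gamma}$ (when $\epsilon=-$). Combined with the earlier inequalities $\max\widehat I^+>1$ and $\min\widehat I^-<-1$, this forces $I_{m_{t_0},\gamma}|_{\op{supp}\zeta_{t_0}}$ and $\dot\gamma(t_0)$ to share the same sign, hence $\mathcal A(\zeta_{t_0})>0$. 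The only real concern throughout is keeping careful track of the sign $\epsilon$; modulo that, the proof is direct substitution plus appeals to Proposition \ref{reddyn}.
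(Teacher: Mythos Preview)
Your proof is correct and follows essentially the same route as the paper's. The only cosmetic difference is the order of the computation: you first identify $X^{m_{t_0}}_\gamma=\dot\gamma(t_0)^{-1}\widetilde\partial_\theta$ on the orbit and deduce $\mathcal A=\dot\gamma(t_0)^{-1}I_{m_{t_0},\gamma}$ from the very definition $I_{m,\gamma}=\lambda^\gamma_m(\widetilde\partial_\theta)$, then substitute the coordinate formula for $I$, whereas the paper computes $\lambda^\gamma_{m_{t_0}}(X^{m_{t_0}}_\gamma)$ directly from the action formula \eqref{fun_nexa} (with $f\equiv 1$) and then checks the relation to $I$ separately; the positivity argument via Proposition~\ref{reddyn} is identical.
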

\begin{proof}
The curve $u\mapsto\left(t_0,\op{sign}\dot{\gamma}(t_0)\frac{\pi}{2},\frac{m\op{sign}\dot{\gamma}(t_0)}{\gamma(t_0)}u\right)$ is a periodic orbit by the previous proposition. On the support of this orbit we have
\begin{equation}
m_{t_0}v_\theta=\left|\frac{\gamma(t_0)}{\dot{\gamma}(t_0)}\right|\frac{\op{sign}\dot{\gamma}(t_0)}{\gamma(t_0)}=\frac{1}{\dot{\gamma}(t_0)} 
\end{equation}
and, as a consequence,
\begin{equation}
\lambda^\gamma_{m_{t_0}}(X^{m_{t_0}}_\gamma)=\frac{\gamma(t_0)^2}{\dot{\gamma}(t_0)^2}-\beta^\gamma_\theta(t_0)\frac{1}{\dot{\gamma}(t_0)}+1=\frac{\gamma(t_0)^2-\dot{\gamma}(t_0)\Gamma(t_0)}{\dot{\gamma}(t_0)^2}.
\end{equation}
Since this is a constant, we get Identity \eqref{acpar} for the action.

The second identity is proved using the definition of $I_{m_{t_0},\gamma}$:
\begin{equation*}
I_{m_{t_0},\gamma}\big|_{\op{supp}\zeta_{t_0}}=\left|\frac{\gamma(t_0)}{\dot{\gamma}(t_0)}\right|\gamma(t_0)\op{sign}\dot{\gamma}(t_0)-\Gamma(t_0)=\dot{\gamma}(t_0)\frac{\gamma(t_0)^2-\dot{\gamma}(t_0)\Gamma(t_0)}{\dot{\gamma}(t_0)^2}.
\end{equation*}
Under the curvature assumption, $I_{m_{t_0},\gamma}$ is maximised or minimised at $\op{supp}\zeta_{t_0}$ according to the sign of $\dot{\gamma}(t_0)$. In both cases $I_{m_{t_0},\gamma}\big|_{\op{supp}\zeta_{t_0}}$ and $\dot{\gamma}(t_0)$ have the same sign. Hence, also the third statement is proved. 
\end{proof}
This proposition shows that, when $K_m>0$, the action of the periodic orbits that project to latitudes is not an obstruction for $\omega_{m,\gamma}'$ to be of contact type. Thus, as we also discuss in the next subsection, one could conjecture that under this hypothesis $\omega_{m,\gamma}'$ is of contact type. On the other hand, we claim that having $K_m>0$ is not a necessary condition for the contact property to hold. For this purpose it is enough to exhibit a non-convex surface for which $m_\gamma<2$. This can be achieved as a consequence of the fact that the curvature depends on the second derivative of $\gamma$, whereas $m_\gamma$ depends only on the first derivative.

We can start from $S^2_{\gamma_0}$, the round sphere of radius $1$, and find a non-convex surface of revolution $S^2_\gamma$, which is $C^1$-close to the sphere and coincides with it around the poles. This implies that $m_\gamma=\sup_{t\in[0,\ell_\gamma]}\left|\frac{\Gamma(t)+\dot{\gamma}(t)}{\gamma(t)}\right|$ can be taken smaller than $2$ since it is close as we like to $m_{\gamma_0}=0$. Hence, every energy level of $(S^2_\gamma,g_\gamma,\mu_\gamma)$ is of contact type and the following proposition is proved.
\begin{prp}\label{nonnec}
The condition $K_m>0$ is not necessary for $\Sigma_m$ to be of contact type.
\end{prp}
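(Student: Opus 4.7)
The strategy is exactly the one outlined before the statement: exhibit a non-convex surface of revolution $S^2_\gamma$ for which the quantity $m_\gamma$ of Proposition \ref{estm} is strictly less than $2$, so that by Corollary \ref{cor_allcon} we have $\op{Con}_\gamma=[0,+\infty)$, yet $K$ becomes negative somewhere, which gives $K_m=m^2K+1<0$ on $\Sigma_m$ for $m$ large enough. The point is that $m_\gamma$ depends only on $C^1$ data of the profile function (since $\Gamma$ is determined by $\gamma$ via integration and the supremum in Proposition \ref{estm} involves $\Gamma+\dot\gamma$), whereas convexity depends on $\ddot\gamma$, so a $C^1$-small perturbation of a convex $\gamma$ can be made non-convex while barely changing $m_\gamma$.

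Concretely, I would start from the round sphere $\gamma_0(t)=\sin t$ on $[0,\pi]$, for which $K\equiv 1$, $\Gamma_0+\dot\gamma_0\equiv 0$ and hence $m_{\gamma_0}=0$. The first step is to pick any smooth $\gamma_1$ supported in a compact subinterval of $(0,\pi)$ (so that $\gamma_0+\varepsilon\gamma_1$ still satisfies the normalisation and smoothness conditions at the poles) chosen so that $\ddot\gamma_1$ is strictly positive at some interior point; then for every $\varepsilon>0$, $\gamma_\varepsilon:=\gamma_0+\varepsilon\gamma_1$ has $\ddot\gamma_\varepsilon>0$ somewhere in $(0,\pi)$, i.e.\ $S^2_{\gamma_\varepsilon}$ is non-convex. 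The normalisation $\int\gamma_\varepsilon=2$ can be restored by a small time reparametrisation supported away from the poles (or by choosing $\gamma_1$ with vanishing integral from the start), without affecting the $C^1$-convergence argument below.

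The second and key step is to show that $m_{\gamma_\varepsilon}\to 0$ as $\varepsilon\to 0$. Since $\gamma_\varepsilon=\gamma_0$ on neighbourhoods of $\{0,\ell_\gamma\}$, the function $\gamma_\varepsilon$ equals $\sin t$ there, so the quotient $(\Gamma_\varepsilon+\dot\gamma_\varepsilon)/\gamma_\varepsilon$ is harmless near the poles; this is where coinciding with the round sphere around the poles is essential. On the complement, which is a compact subinterval of $(0,\ell_\gamma)$ where $\gamma_\varepsilon\geq c>0$ uniformly, the map $\gamma\mapsto\sup|(\Gamma+\dot\gamma)/\gamma|$ is continuous in the $C^1$ topology, so from $m_{\gamma_0}=0$ we deduce $m_{\gamma_\varepsilon}\to 0$, in particular $m_{\gamma_\varepsilon}<2$ for $\varepsilon$ small. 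Corollary \ref{cor_allcon} then gives $\op{Con}_{\gamma_\varepsilon}=[0,+\infty)$, so every $\Sigma_m$ is of contact type despite $K$ being negative somewhere, which yields $K_m<0$ on the corresponding latitudes for $m$ large. The main obstacle is precisely the uniform control of the quotient $(\Gamma_\varepsilon+\dot\gamma_\varepsilon)/\gamma_\varepsilon$ near the poles where the denominator vanishes; the device of forcing $\gamma_\varepsilon\equiv\gamma_0$ on pole neighbourhoods sidesteps this difficulty cleanly.
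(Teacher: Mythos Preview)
Your approach is the same as the paper's: perturb the round sphere in $C^1$ (keeping it unchanged near the poles) to make it non-convex while keeping $m_\gamma$ close to $m_{\gamma_0}=0$, hence below $2$, so that Corollary~\ref{cor_allcon} applies.

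There is one slip in your execution. With a \emph{fixed} $\gamma_1$ having $\ddot\gamma_1>0$ at some point $t_0$, it is not true that ``for every $\varepsilon>0$'' the function $\gamma_\varepsilon=\gamma_0+\varepsilon\gamma_1$ is non-convex: since $\ddot\gamma_0(t_0)=-\sin t_0<0$, for $\varepsilon$ small enough you have $\ddot\gamma_\varepsilon<0$ everywhere and the surface is convex again. Your limiting procedure $\varepsilon\to 0$ therefore destroys the very feature you want to exhibit. The fix is immediate and is exactly what the paper does implicitly: you do not need a one-parameter family at all, you need a single $\gamma$ that is simultaneously $C^1$-close to $\gamma_0$ and non-convex. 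Since $m_\gamma$ depends only on $\gamma$ and $\dot\gamma$ (via $\Gamma$ and the quotient in Proposition~\ref{estm}), you may choose a perturbation $\gamma_1$ with $\Vert\gamma_1\Vert_{C^1}$ as small as you please while $\ddot\gamma_1$ is large enough at one interior point to force $\ddot\gamma_0+\ddot\gamma_1>0$ there (a high-frequency bump does this). Then $\gamma:=\gamma_0+\gamma_1$ is non-convex, and your $C^1$-continuity argument for $m_\gamma$ near the poles and on the compact interior interval goes through unchanged to give $m_\gamma<2$.
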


On the other hand, we now show that is not true, in general, that the contact property holds on every energy level.
\begin{prp}\label{noncon}
There exists a symplectic magnetic system $(S^2,g,\sigma)$ that has an energy level not of contact type.
\end{prp}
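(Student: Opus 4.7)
The plan is to stay within the class of surfaces of revolution $(S^2_\gamma,g_\gamma,\mu_\gamma)$ and exploit Proposition \ref{paract} together with McDuff's criterion (Proposition \ref{mcdcri}). Since $SS^2 \cong \mathbb{R}P^3$ has $H_1(SS^2;\mathbb{R})=0$, every $X^m_\gamma$-invariant probability measure is automatically null-homologous in the sense required by McDuff. Combined with Corollary \ref{cor_act}, which forbids negative contact type on $S^2$, this means that to prove $\omega'_{m_{t_0},\gamma}$ is not of contact type it suffices to produce a single invariant measure of nonpositive action. Proposition \ref{paract} reduces the whole problem to finding a normalized profile $\gamma$ and a point $t_0\in(0,\ell_\gamma)$ with $\dot\gamma(t_0)\neq 0$ such that
\[
\gamma(t_0)^2 - \dot\gamma(t_0)\Gamma(t_0) \;\leq\; 0.
\]

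A preliminary sign analysis pins down the regime to aim for. Since $|\Gamma|\leq 1$ and $|\dot\gamma|\leq 1$ always, the inequality forces $\gamma(t_0)$ to be small and $\dot\gamma(t_0)\Gamma(t_0)>0$. Recalling that $\dot\gamma(0)=+1$, $\dot\gamma(\ell_\gamma)=-1$, and that $\Gamma$ increases monotonically from $-1$ to $+1$, the most convenient regime is $\dot\gamma(t_0)<0$ together with $\Gamma(t_0)<0$, i.e.\ $\gamma$ is already decreasing at $t_0$ while the bulk of its integral still lies beyond $t_0$. Concretely, the plan is to take $\gamma$ that (i) on an initial interval of length $\sim a$ behaves like the profile of a spherical cap of small radius $a$, rising to height $\sim a$; (ii) then descends approximately linearly with slope $-q$ for some fixed $q\in(0,1)$ on an interval of length $\sim a/q$; (iii) only after this dip accumulates the remaining area, by means of one large wide bump smoothly closing at the north pole. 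Choosing $t_0$ in the middle of the descending segment yields $\gamma(t_0)=O(a)$, $\dot\gamma(t_0)=-q$, and $\Gamma(t_0)=-1+O(a^2)$ (because only the initial cap and the short descent have contributed area), so $\dot\gamma(t_0)\Gamma(t_0)=q+O(a^2)$ against $\gamma(t_0)^2=O(a^2)$, and the desired inequality is satisfied for all sufficiently small $a$.

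The main obstacle is not conceptual but a matter of bookkeeping: one must verify that the piecewise model above can be smoothed to a genuine normalized profile in the sense of Section 5.1, keeping the pole conditions, the strict bound $|\dot\gamma|<1$ in the open interior, and the normalization $\int_0^{\ell_\gamma}\gamma\,dt=2$. The pole conditions and the slope bound are preserved by a small mollification at the transitions, at the cost of shrinking $a$ and $q$ by negligible amounts; the normalization is secured by independently tuning the height and width of the northern bump, which is an entirely free parameter because it affects neither $\gamma(t_0)$, $\dot\gamma(t_0)$ nor $\Gamma(t_0)$. Since each of these adjustments perturbs the relevant quantities by amounts much smaller than the $O(1)$ gap $q$ established above, the strict inequality $\gamma(t_0)^2<\dot\gamma(t_0)\Gamma(t_0)$ persists, and the energy level $\Sigma_{m_{t_0}}$ of the resulting symplectic magnetic system $(S^2_\gamma,g_\gamma,\mu_\gamma)$ fails to be of contact type.
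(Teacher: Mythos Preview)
Your proof is correct and follows essentially the same approach as the paper: both use McDuff's criterion (Proposition~\ref{mcdcri}) together with the latitude-action formula (Proposition~\ref{paract}) to produce a null-homologous invariant measure with negative action, by constructing a normalised profile in which $\gamma$ is small, $\dot\gamma$ is negative and bounded away from zero, and $\Gamma$ is still close to $-1$ at some point $t_0$ near the south pole. The only differences are cosmetic: the paper realises this by taking a small round sphere (so that $\dot\gamma_a(\delta)<-\varepsilon$ occurs past the tiny equator) and then stretching it beyond $t_0$ to achieve the normalisation, whereas you build the profile from a cap, a linear descent, and a large northern bump; and you make explicit the observation $H_1(SS^2;\R)=0$ to justify that the latitude measure is automatically null-homologous.
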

\begin{proof}
We will achieve this goal by finding $m$ and $\gamma$ such that $X^m_\gamma$ has a closed orbit projecting to a latitude with negative action. Then, the proof is complete applying Proposition \ref{mcdcri}. 

Fix some $\varepsilon\in(0,1)$. We claim that, for every $\delta\in(0,\pi/2)$, there exists a normalised profile function $\gamma_{\delta,\varepsilon}$ such that
\begin{equation}\label{strein}
\dot{\gamma}_{\delta,\varepsilon}(\delta)<-\varepsilon.
\end{equation}
Such profile function can be obtained as in Lemma \ref{bigmlem}. Take an $a\in\left(\frac{\delta}{\pi},\frac{\delta}{\pi/2}\right)$ (this is equivalent to $\delta\in\left(\frac{\pi a}{2},\pi a\right)$) such that $-\sin\left(\frac{\delta}{a}-\frac{\pi}{2}\right)<-\varepsilon$. Consider the profile function $\gamma_a:[-\pi a/2,\pi a/2]\rightarrow\mathbb R$ of a round sphere of radius $a$. Thanks to the last inequality, $\dot{\gamma}_a(\delta-\frac{\pi a}{2})<-\varepsilon$ and, therefore, $\gamma_a$ satisfies \eqref{strein} (up to a shift of the domain). Now we stretch an interval compactly supported in $(\delta-\frac{\pi a}{2}, \pi a/2)$ by a family of diffeomorphisms $F_C$ as we did in Lemma \ref{bigmlem} (though here the condition on the second derivative of $F_C$ is not necessary). In this way we obtain a family of profile functions $\gamma^C_a$ satisfying \eqref{strein}. Since the area diverges with $C$, we find $C_2>0$ such that $\gamma^{C_2}_a$ is normalised. This finishes the proof of the claim.

Since $\big|\dot{\gamma}_{\delta,\varepsilon}(\delta)\big|\leq1$, we have that $\gamma_{\delta,\varepsilon}(\delta)\leq\delta$ and $\Gamma_{\delta,\varepsilon}(\delta)\leq-1+\delta^2$. The latitude at height $t=\delta$ of such surface is a closed orbit for $X^{m_{\delta,\varepsilon}}_{\gamma_{\delta,\varepsilon}}$, where $m_{\delta,\varepsilon}:=\frac{\gamma_{\delta,\varepsilon}(\delta)}{|\dot{\gamma}_{\delta,\varepsilon}(\delta)|}$. Using formula \eqref{acpar} we see that the action of the corresponding invariant measure $\zeta_{\delta,\varepsilon}$ is negative for $\delta$ small enough:
\begin{equation*}
\mathcal A(\zeta_{\delta,\varepsilon})\leq\frac{\delta^2-(-\varepsilon)(-1+\delta^2)}{\varepsilon^2}=-\frac{1}{\varepsilon}+o(\delta)<0.\qedhere
\end{equation*}
\end{proof}

\section{Action of ergodic measures}\label{sub_act}
When $K_m>0$, we also have a way to compute numerically the action of ergodic invariant measures. We consider only ergodic measures since they are the extremal points of the set of probability invariant measures by Choquet's Theorem and, therefore, it is enough to check the positivity of the action of these measures, in order to apply Proposition \ref{mcdcri}. Every ergodic measure $\zeta$ is concentrated on a unique level set $\{I_{m,\gamma}=I(\zeta)\}$, for some $I(\zeta)\in\R$. Moreover, if $I(\zeta)=I(\zeta')$ there exists a rotation $\Phi^{\widetilde{\partial}_\theta}_{\theta'}$ such that $\big(\Phi^{\widetilde{\partial}_\theta}_{\theta'}\big)_*\zeta=\zeta'$. Since the action is $\widetilde{\partial}_\theta$-invariant, we deduce that it is a function of $I(\zeta)$ only and we can define $\mathcal A:[\min I_{m,\gamma},\max I_{m,\gamma}]\rightarrow\R$. We already have an expression for the action at the minimum and maximum of $I_{m,\gamma}$. We now give a formula for the action when $I\in(\min I_{m,\gamma}
,\max I_{m,\gamma})$.

Every integral line $z:\R\rightarrow SS^2$ of $X^m_\gamma$, such that $I_{m,\gamma}(z)=I$ oscillates between the latitudes at height $t^-(I)$ and $t^+(I)$. Their numerical values can be easily read off from the graphs of $\widehat{I}^\pm_{m,\gamma}$ drawn in Figure \ref{pic}. If we take $z$ with $t(z(0))=t^-(I)$,  there exists a smallest $u(I)>0$ such that $t\big(z(u(I))\big)=t^+(I)$. By Birkhoff's ergodic theorem
\begin{equation}
\mathcal A(I)=\frac{1}{u(I)}\int_0^{u(I)}\left(m^2-m\frac{\beta_\theta(t)\sin\varphi}{\gamma(t)}(z(u))+1\right)du.
\end{equation}
Using this identity, we computed with \textit{Mathematica} the action when $S^2_\gamma$ is an ellipsoid. We found that is positive on every energy level in the interval $[m_{-,\gamma},m_{+,\gamma}]$. By Proposition \ref{mcdcri}, this shows numerically that $\op{Con}_{\gamma}=[0,+\infty)$, hence corroborating the conjecture that $\Sigma_{m}$ is of contact type if $K_m>0$. On the other hand, we know that, when the ellipsoid is very thin, its curvature is concentrated on its poles and hence, by Proposition \ref{bigm}, the set $[m_{-,\gamma},m_{+,\gamma}]$ is not empty. Therefore, these data would also show numerically that the inclusion $[0,m_{-,\gamma})\cup(m_{+,\gamma},+\infty)\subset\op{Con}_\gamma$ can be strict.
\chapter{Dynamically convex Hamiltonian structures}\label{cha_dc}
In this chapter we introduce dynamically convex Hamiltonian structures, for which we have sharper results on the set of periodic orbits. They were introduced by Hofer, Wysocki and Zehnder in \cite{hwz1} as a way of generalising the contact structures arising on the boundary of convex domains in $\C^2$ (see Example \ref{exa_conhyp} below) with a notion which makes sense in the contact category. Later on, this notion has been extended by Hryniewicz, Licata and Salom\~ao \cite{hls} to lens spaces and, at the moment this thesis is being written, Abreu and Macarini \cite{am} are developing a general theory of dynamical convexity, which will eventually embrace also non-exact magnetic systems on surfaces of higher genus (see Section \ref{gengen}).

The abstract results we present below will play a crucial role in Chapter \ref{cha_le}, when we will analyse symplectic magnetic systems on $S^2$.

\begin{dfn}\label{dfn_dc}
Let $\omega$ be a HS of contact type on $L(p,q)$. We say that $\omega$ is \textit{dynamically convex} if, for every contractible periodic orbit $\gamma$, $\mu^l_{\op{CZ}}(\gamma)\geq3$. By abuse of terminology, we will call a contact primitive of $\omega$ dynamically convex, as well.
\end{dfn}
\begin{rmk}\label{rmk_ind}
Thanks to \eqref{rmkdyn} the condition on the index is equivalent to $I(\Psi^{C,\Upsilon}_\gamma)\subset(1,+\infty)$ (see \eqref{psigamma}).
\end{rmk}

\begin{rmk}\label{rmk_cov}
If $\pi:S^3\rightarrow L(p,q)$ is the quotient map, then $\omega$ is dynamically convex if and only if $\pi^*\omega$ is dynamically convex. 
\end{rmk}
\begin{exa}\label{exa_conhyp}
If $\Sigma\hookrightarrow \C^2$ is a closed hypersurface bounding a convex domain, then $\lambda_{\op{st}}\big|_\Sigma$ is dynamically convex. 
\end{exa}
\begin{exa}\label{exa_geodc}
We remarked in Example \ref{exa_consm} that, if $(M,g)$ is a Riemannian surface, $\lambda\big|_{SM}$ is a contact form. Let $M=S^2$ and notice that $SM\simeq L(2,1)$. Harris and G. Paternain showed in \cite{pathar} that, if $\frac{1}{4}\leq K<1$, then $\lambda\big|_{SS^2}$ is dynamically convex.
\end{exa}
For further examples of dynamical convexity, in the context of the circular planar restricted three-body problem and of the rotating Kepler problem, we refer the reader to \cite{affhk} and \cite{affk}, respectively.

As we show in the next two sections, dynamical convexity has two main consequences for the associated Reeb flow: the existence of a symplectic Poincar\'e section and the existence of an elliptic periodic orbit.

\section{Poincar\'e sections}

The main result on the dynamics of dynamically convex Hamiltonian Structures reads as follows.

\begin{thm}[\cite{hwz1},\cite{hls}]\label{thm_dcg}
Let $\omega$ be a HS on $L(p,q)$ and suppose, furthermore, that, if $p>1$, $\omega$ is non-degenerate. If $\omega$ is dynamically convex, then there exists a Poincar\'e section of disc-type $D^2\rightarrow L(p,q)$ (which is a $p$-sheeted cover on $\partial D^2$) for the characteristic distribution of $\omega$. 
\end{thm}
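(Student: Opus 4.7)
The plan is to reduce everything to the $S^3$-case and then descend via the covering map $\pi:S^3\to L(p,q)$, using the $\Z/p\Z$-equivariance afforded by Remark \ref{rmk_cov}.

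First I would lift the contact form. Choose a contact primitive $\tau$ of $\omega$ on $L(p,q)$ and consider $\tilde\tau:=\pi^*\tau$ on $S^3$. By Remark \ref{rmk_cov}, $\tilde\tau$ is a dynamically convex contact form on $S^3$, and the deck group $G\simeq \Z/p\Z$ acts by strict contactomorphisms. Every contractible periodic orbit of $R^{\tilde\tau}$ projects to a contractible orbit of $R^\tau$, and, conversely, every contractible orbit of $R^\tau$ lifts to a closed orbit of $R^{\tilde\tau}$ (which is still contractible since $\pi_1(S^3)=0$). Thus dynamical convexity is preserved under lifting, and when $p>1$ the non-degeneracy hypothesis guarantees that all lifted orbits are also non-degenerate.

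The second step is to invoke the Hofer--Wysocki--Zehnder theorem for $S^3$. Their construction produces a stable finite energy foliation of the symplectization $\R\times S^3$ whose projection to $S^3$ is a disc-like global surface of section: a single distinguished binding orbit $P$ with $\mu_{\op{CZ}}(P)=3$, together with a smooth open-book-type decomposition of $S^3\setminus P$ by holomorphic planes asymptotic to $P$. The input from dynamical convexity enters here in two crucial places: (i) it rules out planes asymptotic to orbits of index $\leq 2$ via the Fredholm-index count, forcing the binding orbit to have index $3$ and the pages to be embedded discs; (ii) it prevents bubbling off of planes with negative punctures during the compactness argument, by the index inequality $\mu_{\op{CZ}}\geq 3$ on every contractible closed orbit. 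For $p=1$ this already completes the proof.

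For $p>1$ I would follow the equivariant scheme of Hryniewicz--Licata--Salom\~ao. The idea is to produce a $G$-equivariant finite energy foliation on $\R\times S^3$; the binding orbit $P$ is then a $G$-invariant embedded Reeb orbit (equivalently, the lift of a Reeb orbit of $\tau$ whose image wraps $p$ times around the lifted generator), and the family of holomorphic pages is permuted by $G$. Passing to the quotient, the binding descends to an embedded periodic orbit of $R^\tau$ and the pages descend to a smooth family of embedded discs in $L(p,q)$ whose common boundary is this orbit and which covers it $p$-to-$1$; this is precisely the claimed disc-type Poincar\'e section. The $G$-equivariant foliation is built by the same Bishop-family/compactness procedure as in HWZ but run $G$-equivariantly: one starts from a $G$-invariant almost complex structure $J$ on $\R\times S^3$ compatible with $\tilde\tau$, together with a $G$-equivariant choice of binding orbit, and applies equivariant transversality to a $G$-invariant perturbation class.

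The main obstacle will be exactly this equivariant perturbation and compactness argument. Transversality for $J$-holomorphic curves is delicate under a symmetry, because a $G$-invariant curve fixed by some subgroup need not satisfy the linearized equation transversely within the $G$-invariant class. Non-degeneracy of $\tau$ on contractible orbits is used precisely to avoid these symmetric degeneracies: it ensures that the asymptotic operators at $G$-invariant binding orbits are invertible within each isotypical component, so that $G$-equivariant automatic transversality (as in the Wendl/HLS framework for low-index punctured spheres) applies and the entire equivariant Bishop family extends to a global foliation without breaking. Apart from this equivariant bookkeeping, the remaining ingredients---absence of bubbling, uniqueness of the binding orbit in its homotopy class, and the identification of the projected foliation with a global surface of section---carry over verbatim from the $S^3$ case.
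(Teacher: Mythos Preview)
The paper does not prove this theorem at all: it is quoted as an external result, attributed in the subsequent remark to Hofer--Wysocki--Zehnder for $p=1$ and to Hryniewicz--Licata--Salom\~ao for $p>1$, and used as a black box. There is therefore no proof in the paper to compare your proposal against.

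That said, your sketch is a reasonable high-level outline of the strategy behind those cited works: lift to $S^3$, apply the HWZ finite-energy foliation machinery, and for $p>1$ run the construction $G$-equivariantly to descend to $L(p,q)$. You correctly locate the main technical difficulty in the equivariant transversality step and the role of non-degeneracy there. But this remains a sketch, not a proof: the actual arguments involve substantial analytic work (compactness, automatic transversality for fast planes, the structure of the open book near the binding, and the verification that every Reeb orbit intersects the pages), none of which you carry out. As a summary of the literature it is fine; as a self-contained proof it is not, and the paper itself makes no attempt to supply one.
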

\begin{rmk}
The case of $S^3$, namely $p=1$, is due to Hofer, Wysocki and Zehnder, while the case $p>1$ is due to Hryniewicz, Licata and Salom\~ao. It is likely that the non-degeneracy condition for $p>1$ can be dropped by repeating the lengthy argument contained in \cite[Section 6--8]{hwz1}. However, for the applications to periodic orbits of a degenerate system we can simply apply Theorem \ref{thm_dcg} after we have lifted the problem from $L(p,q)$ to $S^3$. The details of such argument will be explained below.
\end{rmk}
For the convenience of the reader we now recall the notion of Poincar\'e section.

\begin{dfn}\label{dfn_ret}
Let $N$ be a closed $3$-manifold. A \textit{Poincar\'e section} for $Z\in\Gamma(N)$ is a compact surface $i:S\rightarrow N$ such that
\begin{itemize}
 \item it is an embedding on the interior $\mathring{S}:=S\setminus \partial S$;
 \item $i(\partial S)$ is the disjoint union of a finite collection of embedded loops $\{\gamma_k\}$ and  $i\big|_{i^{-1}(\gamma_k)}: i^{-1}(\gamma_k)\rightarrow \gamma_k$ is a finite cover;
 \item the vector field $Z$ is transverse to $i(\mathring{S})$ and every $\gamma_k$ is the support of a periodic orbit for $Z$;
 \item every flow line of $Z$ hits the surface in forward and backward time.
\end{itemize}
If $z\in\mathring{S}$, let $t(z)$ be the smallest positive number such that $F_{\mathring{S}}(z):=\Phi^Z_{t(z)}(z)\in\mathring{S}$. The map $F_{\mathring{S}}:\mathring{S}\rightarrow\mathring{S}$ defines a diffeomorphism called the \textit{Poincar\'e first return map}. 

Finally, if $Z$ is a positive section for $\ker\omega$, where $\omega$ is a HS on $N$, a map $i$ satisfying the requirements above is called a Poincar\'e section for $\omega$.
\end{dfn}

If $Z$ and $N$ are as in Definition \ref{dfn_ret}, the discrete dynamical system $(\mathring{S},F_{\mathring{S}})$ carries important information about the qualitative dynamics on $N$, since periodic points of $F_{\mathring{S}}$ correspond to the periodic orbits of $Z$ different from the $\gamma_k$'s.

When $Z$ is a positive section of $\ker\omega$, with $\omega\in\Omega^2(N)$ a Hamiltonian Structure, $(\mathring{S},i^*\omega)$ is a symplectic manifold with finite area and $F_{\mathring{S}}$ is a symplectomorphism. 

If we suppose, in addition, that $S$ is a disc, then $N\simeq L(p,q)$ (see \cite{hls}) and Proposition 5.4 in \cite{hwz1} implies that $F_{\mathring{S}}$ is $C^0$-conjugated to a homeomorphism of the disc preserving the standard Lebesgue measure. The work of Brouwer (\cite{bro}) and Franks (\cite{fra2} and \cite{fra3}) on area-preserving homeomorphisms of the disc imply that either $F_{\mathring{S}}$ has only a single periodic point, which is a fixed point for $F_{\mathring{S}}$, or there are infinitely many periodic points.
\begin{cor}\label{hwzcor}
Suppose $\omega\in\Omega^2(L(p,q))$ is a HS having a Poincar\'e section of disc-type. Then, $\ker\omega$ has $2$ periodic orbits $\gamma_1$ and $\gamma_2$ whose lifts to $S^3$, $\widehat{\gamma}_1$ and $\widehat{\gamma}_2$, form a Hopf link (namely they are unknotted and $|\op{lk}(\widehat{\gamma},\widehat{\gamma}')|=1$). Either these are the only periodic orbits of $\ker\omega$ or there are infinitely many of them. The second alternative holds if there exists a periodic orbit whose lift to $S^3$ is knotted or if there are two periodic orbits $\gamma$ and $\gamma'$ such that $|\op{lk}(\widehat{\gamma},\widehat{\gamma}')|\neq 1$.
\end{cor}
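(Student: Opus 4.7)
The plan is to first extract the two periodic orbits $\gamma_{1},\gamma_{2}$ directly from the disc Poincar\'e section, then exhibit the Hopf link structure of their lifts via a covering-space and intersection argument, and finally deduce the dichotomy and the closing observation from classical results on area-preserving homeomorphisms of the disc. Throughout I would write $S=D^{2}$ and $F:=F_{\mathring{D}^{2}}$, and use a positive section $Z$ of $\ker\omega$ as in Definition \ref{def_non}.

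To produce the orbits, I would set $\gamma_{1}:=i(\partial D^{2})$, which is a periodic orbit of $Z$ by the defining properties of a Poincar\'e section. By construction $F$ is a symplectomorphism of $(\mathring{D}^{2},i^{*}\omega)$, whose symplectic form has finite total area because $D^{2}$ is compact; Proposition 5.4 of \cite{hwz1} then gives that $F$ is $C^{0}$-conjugate to an area-preserving homeomorphism of the open disc which extends continuously to the closed disc. The area-preserving refinement of Brouwer's translation theorem furnishes an interior fixed point $z_{0}\in\mathring{D}^{2}$, and the $Z$-orbit through $i(z_{0})$ yields a second periodic orbit $\gamma_{2}$, disjoint from $\gamma_{1}$. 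To get the Hopf link, I would exploit that $D^{2}$ is simply connected to lift $i$ through the covering $\pi:S^{3}\to L(p,q)$ to a map $\tilde{\imath}:D^{2}\to S^{3}$ with $\pi\circ\tilde{\imath}=i$; the disc-type structure produced in Theorem \ref{thm_dcg} ensures that $\tilde{\imath}$ is an embedded disc in $S^{3}$ whose boundary is a simple closed curve $\widehat{\gamma}_{1}\subset\pi^{-1}(\gamma_{1})$, so $\widehat{\gamma}_{1}$ bounds an embedded disc in $S^{3}$ and is unknotted. Since $\gamma_{2}$ meets $i(\mathring{D}^{2})$ transversally once per period (being a fixed point of $F$), its lift $\widehat{\gamma}_{2}$ meets $\tilde{\imath}(\mathring{D}^{2})$ transversally at a single point, which computes $|\op{lk}(\widehat{\gamma}_{1},\widehat{\gamma}_{2})|=1$. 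A simple closed curve meeting an embedded disc once is ambient isotopic to a meridian of its boundary, hence $\widehat{\gamma}_{2}$ is also unknotted and $\widehat{\gamma}_{1}\cup\widehat{\gamma}_{2}$ is a Hopf link.

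For the dichotomy, I would invoke Franks' theorem \cite{fra2,fra3}: an area-preserving homeomorphism of the open disc admitting a fixed point has either a single periodic orbit or infinitely many; applied to the conjugate of $F$ with fixed point $z_{0}$, this gives either $\{\gamma_{1},\gamma_{2}\}$ as the full set of periodic orbits of $\ker\omega$ or infinitely many periodic orbits. For the concluding observation, any periodic orbit $\gamma$ whose lift $\widehat{\gamma}$ is knotted must be distinct from $\gamma_{1}$ and $\gamma_{2}$, whose lifts are unknots, so it forces the infinite alternative; similarly, two orbits $\gamma,\gamma'$ with $|\op{lk}(\widehat{\gamma},\widehat{\gamma}')|\neq 1$ cannot both lie in $\{\gamma_{1},\gamma_{2}\}$ (whose lifts have linking $\pm 1$), producing a third orbit and hence infinitely many. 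The main obstacle I anticipate is the lifting step: checking that $\tilde{\imath}$ yields an embedded disc with simple closed boundary $\widehat{\gamma}_{1}$ uses the fine structure of the Poincar\'e section constructed in \cite{hwz1,hls} rather than just the abstract Definition \ref{dfn_ret}; once that is granted, everything else reduces to transversality and standard facts in area-preserving surface dynamics.
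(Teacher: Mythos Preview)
Your proposal is correct and follows the same approach as the paper, which does not give a separate proof for this corollary but rather states it as a consequence of the paragraph immediately preceding it: the boundary of the disc gives $\gamma_1$, Proposition~5.4 of \cite{hwz1} together with Brouwer's theorem yields the interior fixed point and hence $\gamma_2$, and Franks' theorem \cite{fra2,fra3} supplies the dichotomy. You supply more detail than the paper on the Hopf link structure (unknottedness via the bounding disc, linking number via the single transverse intersection) and on the concluding observation, and you correctly flag that the lifting step relies on the specific construction in \cite{hwz1,hls} rather than on the bare Definition~\ref{dfn_ret}.
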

\begin{exa}
Let us verify the above general results in a concrete example. For any pair $(p,q)$ of positive real numbers, consider the ellipsoid
\begin{equation*}
\Sigma_{p,q}:=\left\{(z_1,z_2)\in\C^2\ \Big|\ \frac{|z_1|^2}{2p}+\frac{|z_2|^2}{2q}=1\right\}\subset \C^2. 
\end{equation*}
Since $\Sigma_{p,q}$ is convex, we know by Example \ref{exa_conhyp} that it is also dynamically convex and, hence, by Theorem \ref{thm_dcg}, its Reeb flow has a Poincar\'e section of disc-type. We can construct explicitly one such disc as follows. First, using the Hamilton equation for the Hamiltonian $(z_1,z_2)\mapsto\frac{|z_1|^2}{p}+\frac{|z_2|^2}{q}$, we get $R^{\lambda}=\frac{\partial_{\theta_1}}{p}+\frac{\partial_{\theta_2}}{q}$, where $\theta_i$ is the angular coordinate in the $z_i$-plane. Thus, $\Phi^{R^\lambda}_t(z_1,z_2)=(e^{\frac{it}{p}}z_1,e^{\frac{it}{q}}z_2)$ and we see that, for any $\theta\in \T_{2\pi}$, $S_\theta:=\{(z_1,z_2)\in\Sigma_{p,q}\,\big|\, \theta_1=\theta\}$ is a Poincar\'e section of disc-type parametrised by $z_2$. The return map is $F_{\mathring{S}_\theta}(z_2)=e^{2\pi i\frac{p}{q}}z_2$, namely a rotation by the angle $2\pi\frac{p}{q}$. Such rotation has the origin as the unique fixed point if and only if $\frac{p}{q}$ is irrational. When $\frac{p}{q}$ is rational, every orbit is periodic. 
\end{exa}

If $\omega$ is dynamically convex and, when $p>1$, it is also assumed to be non-degenerate, Theorem \ref{thm_dcg} directly implies that the hypotheses of Corollary \ref{hwzcor} are satisfied and, thus, we have a dichotomy between two and infinitely many periodic orbits for $\ker\omega$. We now aim to prove that the dichotomy still holds for a dynamically convex HS, without any non-degeneracy assumption.

We restrict to $p=2$, since $SS^2\simeq L(2,1)$ and we will need only this case in the applications.
 
Regard $L(2,1)$ as the quotient of $S^3$ by the antipodal map $A:S^3\rightarrow S^3$. The quotient map $\pi:S^3\rightarrow L(2,1)$ is a double cover with $A$ as the only non-trivial deck transformation. There is a bijection $Z\mapsto \widehat{Z}$ between $\Gamma(L(2,1))$ and $\Gamma_A(S^3)\subset\Gamma(S^3)$ the subset of $A$-invariant vector fields. The antipodal map permutes the flow lines of $\widehat{Z}$. Moreover, a lift of a trajectory for $Z$ is a trajectory for $\widehat{Z}$ and the projection of a trajectory for $\widehat{Z}$ is a trajectory for $Z$. In the next lemma we restrict this correspondence to prime contractible periodic orbits of $Z$. We are grateful to Marco Golla for communicating to us the elegant proof about the parity of the linking number presented below.
\begin{lem}\label{lemlin}
There is a bijection between contractible prime orbits $z$ of $Z$ and pairs of antipodal prime orbits $\{\widehat{z},A(\widehat{z})\}$ of $\widehat{Z}$ such that $\widehat{z}$ and $A(\widehat{z})$ are disjoint. Furthermore, the linking number $lk(\widehat{z},A(\widehat{z}))$ between them is even.
\end{lem}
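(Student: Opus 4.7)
For the bijection I plan to use elementary covering-space theory. Because $z$ is contractible, it lifts to a closed curve $\hat z$ in $S^3$; the only other lift is $A\circ\hat z$, and together these exhaust $\pi^{-1}(z)$. Primality of $z$ transfers to $\hat z$, because a coincidence $\hat z(t_1)=\hat z(t_2)$ projects to $z(t_1)=z(t_2)$ and hence $t_1=t_2$. The disjointness of $\hat z$ and $A(\hat z)$ follows from the same idea: if $\hat z(t_1)=A(\hat z(t_2))$ then the projection forces $t_1=t_2$, producing a fixed point of $A$, contradicting freeness. The converse — that any pair of disjoint antipodal prime orbits of $\widehat Z$ projects to a contractible prime orbit of $Z$ — follows by the same reasoning: embeddedness of the projection is a consequence of disjointness, and contractibility is automatic since a closed lift exists.

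For the parity assertion, the plan is to exploit a Seifert surface and the freeness of $A$. Let $\Sigma\subset S^3$ be a Seifert surface for $\hat z$, perturbed to be transverse to both $A(\hat z)$ and $A(\Sigma)$; then $A(\Sigma)$ is a Seifert surface for $A(\hat z)$, and
\[
lk(\hat z,A(\hat z))\ =\ A(\hat z)\cdot\Sigma,
\]
whose parity coincides with that of the unsigned cardinality $|\hat z\cap A(\Sigma)|$. The crucial observation is that $\Sigma\cap A(\Sigma)$ is a compact $1$-manifold whose boundary is the disjoint union $\bigl(\hat z\cap A(\Sigma)\bigr)\sqcup\bigl(\Sigma\cap A(\hat z)\bigr)$. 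The involution $A$ preserves this $1$-manifold setwise (since $A(\Sigma\cap A(\Sigma))=A(\Sigma)\cap\Sigma$), acts freely on it, and interchanges the two boundary sets, sending $x\in\hat z\cap A(\Sigma)$ to $A(x)\in A(\hat z)\cap\Sigma$.

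The remaining step is purely combinatorial. A free involution of a closed interval would have a fixed point, so $A$ cannot preserve any arc component; hence the arcs come in distinct pairs $\{I,A(I)\}$. For any such pair, $A$ bijects the two endpoints of $I$ onto those of $A(I)$ while swapping the two boundary sets, so regardless of how many of the endpoints of $I$ lie in $\hat z\cap A(\Sigma)$ — call this number $k\in\{0,1,2\}$ — the endpoints of $A(I)$ supply exactly $2-k$ more, and each pair of arcs contributes precisely two points to $\hat z\cap A(\Sigma)$. Therefore $|\hat z\cap A(\Sigma)|$ is even, and so is $lk(\hat z,A(\hat z))$. The only delicate point in the argument is to secure the transversality so that $\Sigma\cap A(\Sigma)$ has the stated boundary structure and no spurious contributions from circle components, but this is a routine genericity argument.
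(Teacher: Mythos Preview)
Your bijection argument matches the paper's essentially verbatim, so nothing to add there.

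For the parity of the linking number, your proof is correct but follows a genuinely different route from the paper's. The paper (crediting Marco Golla) works in one dimension higher: it views $S^3=\partial B^4$, chooses a surface $S_1\subset B^4$ with $\partial S_1=\widehat z_1$ avoiding the origin, sets $S_2=A(S_1)$, and computes $lk(\widehat z_1,\widehat z_2)$ as the algebraic intersection $S_1\cdot S_2$ inside $B^4$. After a perturbation (which is local because an intersection point $z$ and its antipode $A(z)$ are distinct, since $0\notin S_1$), the intersection points pair off under $A$ as $\{z,A(z)\}$, so the count is even. Your argument stays inside $S^3$: you intersect a Seifert surface $\Sigma$ with its antipodal image to obtain a compact $1$-manifold, and then run a parity count on the arc components, using that a free involution of an arc is impossible. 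The paper's $4$-dimensional argument is shorter---the pairing $z\leftrightarrow A(z)$ on $S_1\cap S_2$ is immediate once the transversality is secured---while your $3$-dimensional argument avoids the passage to $B^4$ at the cost of the extra combinatorial step with the arc pairs. Both are clean; the only point to keep an eye on in your version is the simultaneous transversality of $\Sigma$ with $A(\Sigma)$ and with $A(\widehat z)$, which, as you note, is routine because any intersection point $p=i(x)=A(i(y))$ has $x\neq y$ (freeness of $A$), so a local perturbation near $x$ does not disturb the picture near $y$.
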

\begin{proof} 
Associate to a contractible periodic orbit $z$ its two distinct lifts $\widehat{z}_1$ and $\widehat{z}_2=A(\widehat{z}_1)$. Since $z$ is contractible both lifts are closed. They are also prime since a lift of an embedded path is still embedded. Suppose that the two lifts intersect. This implies that there exist points $t_1$ and $t_2$ such that $\widehat{z}_1(t_1)=\widehat{z}_2(t_2)$. Applying $\pi$ to this equality, we find $z(t_1)=z(t_2)$ and so $t_1=t_2$ modulo the period of $z$. Hence, $\widehat{z}_1=\widehat{z}_2$ contradicting the fact that the two lifts are distinct.

For the inverse correspondence, associate to two antipodal disjoint prime periodic orbits $\{\widehat{z},A(\widehat{z})\}$ their common projection $\pi(\widehat{z})$. The projected curve is contractible since its lifts are closed. Furthermore, it is also prime since, if $\pi(\widehat{z})(t_1)=\pi(\widehat{z})(t_2)$, either $\widehat{z}(t_1)=A(\widehat{z})(t_2)$ and $\widehat{z}\cap A(\widehat{z})\neq \emptyset$ or $\widehat{z}(t_1)=\widehat{z}(t_2)$ and $t_1=t_2$ modulo the period of $\widehat{z}$.

We now compute the linking number between the two knots. Consider $S^3$ as the boundary of $B^4$ the unit ball inside $\mathbb R^4$ and denote still by $A$ the antipodal map on $B^4$, which extends the antipodal map on $S^3$. Take an embedded surface $S_1\subset B^4$ such that $\partial S_1=\widehat{z}_1$ and transverse to the boundary of $B^4$. By a small perturbation we can also assume that $0\in B^4$ does not belong to the surface. The antipodal surface $S_2:= A(S_1)$ has the curve $\widehat{z}_2$ as boundary and $lk(\widehat{z}_1,\widehat{z}_2)$ is equal to the intersection number between $S_1$ and $S_2$. By perturbing again $S_1$ we can suppose that all the intersections are transverse. Indeed, if we change $S_1$ close to a point $z$ of intersection, this will affect $S_2=A(S_1)$ only near the antipodal point $A(z)=-z$, which is different from $z$ since the origin does not belong to $S_1$. Now that transversality is achieved, we claim that the number of intersections is even. This stems from the fact that, if $z\in S_1\cap S_2$, then $A(z)\in A(S_1)\cap A(S_2)=S_2\cap S_1$ and $z$ and $A(z)$ are different since $z\neq0$. As a consequence, the algebraic intersection number between the two surfaces is even as well and the lemma follows.\end{proof}
\begin{rmk}
In the proof of the lemma, the sign of the intersection between $S_1$ and $S_2$ at $z$ is the same as the sign at $A(z)$, since $A$ preserves the orientation. Thus, we cannot conclude that the total intersection number is zero. Indeed, for any $k\in\mathbb Z$ one can find a pair of antipodal knots, whose linking number is $2k$.

\end{rmk}

\begin{prp}\label{covthe}
If $\omega$ is a dynamically convex HS on $L(2,1)$, then $\ker \omega$ has either two or infinitely many periodic orbits. In the first case, the two orbits are non-contractible.
\end{prp}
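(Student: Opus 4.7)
The plan is to lift the problem from $L(2,1)$ to its double cover $S^3$, apply the $S^3$-version of Theorem \ref{thm_dcg} (which does not require non-degeneracy), and then transfer the resulting dichotomy back down using Lemma \ref{lemlin}. By Remark \ref{rmk_cov}, the pulled-back HS $\pi^*\omega$ on $S^3$ is again dynamically convex, so Theorem \ref{thm_dcg} with $p=1$ yields a Poincar\'e section of disc-type for $\ker(\pi^*\omega)$. Corollary \ref{hwzcor} then gives a clean dichotomy on $S^3$: either $\ker(\pi^*\omega)$ has exactly two periodic orbits whose supports form a Hopf link, or it has infinitely many periodic orbits.

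Now I transfer this information to $L(2,1)$ using the orbit correspondence underlying Lemma \ref{lemlin}. Every prime periodic orbit $z$ of $\ker\omega$ on $L(2,1)$ pulls back to an $A$-invariant set in $S^3$, which is either a single $A$-invariant prime orbit $\widehat{z}$ (when $z$ is non-contractible) or a disjoint pair $\{\widehat{z},A(\widehat{z})\}$ of antipodal prime orbits of $\ker(\pi^*\omega)$ (when $z$ is contractible). In particular, each orbit on $L(2,1)$ accounts for at most two orbits on $S^3$. If $\ker(\pi^*\omega)$ has infinitely many orbits, then so does $\ker\omega$, and we are done.

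It remains to treat the case in which $\ker(\pi^*\omega)$ has exactly two orbits $\widehat{\gamma}_1,\widehat{\gamma}_2$ forming a Hopf link. Because $A$ sends orbits to orbits, it induces a permutation of $\{\widehat{\gamma}_1,\widehat{\gamma}_2\}$, and there are only two possibilities. If $A$ swaps the two orbits, then by Lemma \ref{lemlin} they descend to a single contractible orbit on $L(2,1)$, and their linking number must be even; but a Hopf link has linking number $\pm 1$, a contradiction. Hence both $\widehat{\gamma}_1$ and $\widehat{\gamma}_2$ are $A$-invariant, so each descends to a non-contractible prime orbit $\gamma_i$ on $L(2,1)$, and these $\gamma_1,\gamma_2$ are the only orbits of $\ker\omega$ (for any further orbit would lift to yet another orbit on $S^3$). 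This yields exactly two orbits on $L(2,1)$, both non-contractible, completing the dichotomy.

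The only subtle step is the parity argument ruling out the $A$-swapped sub-case; this is where Lemma \ref{lemlin} is essential, and it is precisely what forces the two $L(2,1)$-orbits in the finite case to be non-contractible rather than contractible. Everything else is bookkeeping about the two-to-one cover $\pi$ and a direct invocation of the known results on $S^3$.
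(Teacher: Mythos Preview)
Your proof is correct and follows essentially the same approach as the paper: lift to $S^3$ via Remark~\ref{rmk_cov}, apply Theorem~\ref{thm_dcg} and Corollary~\ref{hwzcor} there, and use the even-linking-number part of Lemma~\ref{lemlin} to rule out that the two Hopf-linked orbits are antipodal to each other. The only cosmetic difference is organisational: you phrase the key step as a dichotomy on how $A$ permutes $\{\widehat{\gamma}_1,\widehat{\gamma}_2\}$, whereas the paper first shows the projections $\pi(\widehat{z}_1),\pi(\widehat{z}_2)$ are distinct and then separately argues that any contractible orbit on $L(2,1)$ would lift to a third orbit upstairs; both routes hinge on the same parity obstruction.
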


\begin{proof}
By Corollary \ref{hwzcor} there exist two prime closed orbits $\widehat{z}_1$ and $\widehat{z}_2$ of $\ker \pi^*\omega$ forming a Hopf link and if there is any other periodic orbit geometrically distinct from these two, $\ker \pi^*\omega$ has infinitely many periodic orbits.

We claim that $z_1:= \pi(\widehat{z}_1)$ and $z_2:= \pi(\widehat{z}_2)$ are geometrically distinct closed orbits for $\ker \omega$ on $L(2,1)$. If, by contradiction, $z_1$ coincides with $z_2$, by Lemma \ref{lemlin}, $\widehat{z}_1$ and $\widehat{z}_2$ are antipodal and their linking number is even. This is a contradiction since $|lk(\widehat{z}_1,\widehat{z}_2)|=1$. Therefore, we conclude that $z_1$ and $z_2$ are distinct. On the other hand, if $\ker \pi^*\omega$ has infinitely many periodic orbits the same is true for $\ker\omega$. Hence, also $\ker\omega$ has either $2$ or infinitely many distinct periodic orbits.

If $\ker\omega$ has a prime contractible periodic orbit $w$, its lifts $\widehat{w}_1$ and $\widehat{w}_2$ are disjoint, antipodal and prime periodic orbits for $\ker \pi^*\omega$ by Lemma \ref{lemlin}. Since $lk(\widehat{w}_1,\widehat{w}_2)$ is even, $\{\widehat{w}_1,\widehat{w}_2\}\neq \{\widehat{z}_1,\widehat{z}_2\}$
and, therefore, there are at least three distinct periodic orbits for $\ker \pi^*\omega$. So there are infinitely many periodic orbits for $\ker \pi^*\omega$ and, hence, also for $\ker\omega$.
\end{proof}

\section{Elliptic periodic orbits}

Generally speaking, once we have established the existence of a periodic orbit $(\gamma,T)$ for a flow $\Phi^Z$, we can use it to understand the behaviour of the dynamical system in a neighbourhood of $\gamma$ by looking at the spectral properties of $t\mapsto d_{\gamma(0)}\Phi^Z_t$, the linearisation of the flow at the periodic orbit.

We have seen in Definition \ref{def_non} that when $\gamma$ is a non-degenerate periodic orbit of a Hamiltonian Structure $\omega$, we have only two possibilities: either the transverse spectrum lies on the real line ($\gamma$ hyperbolic) or it lies on the unit circle of the complex plane ($\gamma$ elliptic). 

As a sample of the applications one might get in the first case we mention the work of Hofer, Wysocki and Zehnder in \cite{hwz2}. The authors proved in Theorem 1.9 that a \textit{generic} Reeb flow $R^\tau$ on $S^3$ without Poincar\'e sections of disc-type, has a hyperbolic orbit whose stable and unstable manifolds intersect in a homoclinic orbit. Following \cite[Chapter III]{mos}, this yields the existence of a \textit{Bernoulli shift} embedded in the flow of $R^\tau$ through the construction of local Poincar\'e sections. Since a Bernoulli shift has infinitely many periodic points, $\Phi^{R^\tau}$ has infinitely many periodic orbits as well.

In contrast to the chaotic behaviour caused by the presence of the Bernoulli-shift, when $\gamma$ is elliptic the flow is expected to be stable and quasi-periodic close to the periodic orbit. As explained in \cite[Section 2.4.d]{mos}, if the transverse spectrum does not contain any root of unity, the KAM theorem implies the existence of a fundamental system of open neighbourhoods of the orbit, whose boundaries are invariant under the flow. In particular, each of these neighbourhoods is $R^\tau$-invariant. Hence, $\gamma$ is a stable periodic orbit and, as a result, the Reeb flow is non-ergodic.

Existence of elliptic periodic orbits for the Reeb flow on the boundary of convex domains in $\C^2$ has been proved in particular cases and it is an open problem raised by Ekeland \cite[page 198]{eke2} to determine whether an elliptic orbit is to be found on every system of this kind. Its existence has been showed if
\begin{itemize}
 \item the curvature satisfies a suitable pinching condition \cite{eke1};
 \item the hypersurface is symmetric with respect to the origin \cite{dadoe}.
\end{itemize}
The latter case is the most interesting to us since, in view of the double cover $S^3\rightarrow SS^2$, lifted contact forms will automatically be invariant with respect to the antipodal map.

Recently, Abreu and Macarini have announced an extension of the results contained in \cite{dadoe} to dynamically convex symmetric systems.
\begin{thm}[\cite{am}]
If $\omega$ is a dynamically convex Hamiltonian Structure on $L(p,q)$, with $p>1$, then it has an elliptic periodic orbit.
\end{thm}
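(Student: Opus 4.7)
The plan is to reduce the statement to a $\Z/p\Z$-equivariant problem on $S^3$ and then to deploy an equivariant minimax argument extending the one of Dell'Antonio-D'Onofrio-Ekeland from \cite{dadoe}. Let $\pi:S^3\to L(p,q)$ be the universal cover and set $\widehat\omega:=\pi^*\omega$. By Remark \ref{rmk_cov} the pulled-back form $\widehat\omega$ is a dynamically convex Hamiltonian structure on $S^3$ invariant under the deck $\Z/p\Z$-action $\rho(z_1,z_2)=(e^{2\pi i/p}z_1,e^{2\pi iq/p}z_2)$. A periodic orbit $\gamma$ of $\ker\omega$ lifts to a $\Z/p\Z$-orbit of periodic orbits of $\ker\widehat\omega$ whose common linearised return map is an $n$-th root of that of $\gamma$, where $n\in\{1,\dots,p\}$ is the order of the stabiliser of a lift. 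Since taking roots preserves the spectral property of being elliptic, it is enough to produce an elliptic periodic orbit of $\ker\widehat\omega$.

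Next, I would pick out a distinguished orbit through a $\Z/p\Z$-equivariant spectral invariant. Choose a $\Z/p\Z$-invariant star-shaped filling $W\subset\C^2$ of $(S^3,\widehat\omega)$ and consider the positive $S^1$-equivariant symplectic cohomology of $W$ split according to the characters of $\Z/p\Z$. A non-trivial character singles out, via a Viterbo-type spectral invariant, a periodic orbit $\widehat\gamma_\ast$ of $\ker\widehat\omega$ whose projection represents the corresponding generator of $\pi_1(L(p,q))\cong\Z/p\Z$, and at the same time prescribes a specific value for the mean Conley-Zehnder index $\Delta(\widehat\gamma_\ast)$. By dynamical convexity one has $\mu^l_{\op{CZ}}(\widehat\gamma_\ast^{\,p})\geq 3$, which together with the homogeneity $\Delta(\widehat\gamma_\ast^{\,p})=p\,\Delta(\widehat\gamma_\ast)$ provides a two-sided sandwich on $\Delta(\widehat\gamma_\ast)$. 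The general inequality $|\mu_{\op{CZ}}-\Delta|\leq 1$, which saturates precisely when the orbit is hyperbolic, then forces $\widehat\gamma_\ast$ to be elliptic, provided the character is chosen so that $\Delta(\widehat\gamma_\ast)$ is neither an integer nor a half-integer. The degenerate case is handled by approximation: perturb $\omega$ to a sequence of non-degenerate $\Z/p\Z$-equivariant dynamically convex structures, extract elliptic orbits with uniformly bounded period, and pass to a limit whose transverse Floquet multipliers remain on $S^1$ by continuity.

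The main obstacle is the precise construction of the $\Z/p\Z$-equivariant spectral invariant and the identification of the mean index it selects with a non-integral value of the required size. The unequivariant version on $S^3$ is essentially the first Ekeland-Hofer capacity used in \cite{hwz1}, and the antipodally symmetric case $p=2$ is treated variationally in \cite{dadoe}; for a cyclic symmetry of arbitrary order one needs either a symplectic-cohomological incarnation within the framework of positive $S^1$-equivariant symplectic cohomology in the spirit of Bourgeois-Oancea, or an adapted Clarke-dual variational argument on a convex $\rho$-equivariant generating hypersurface in $\C^2$. Once this invariant is in place and the corresponding mean-index value is computed, the index sandwich described above yields the desired elliptic orbit and completes the proof.
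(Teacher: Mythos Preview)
The paper does not prove this theorem. It is quoted as an announced result of Abreu and Macarini \cite{am}, with no argument supplied; the surrounding text merely records that their work extends \cite{dadoe} to the dynamically convex setting. There is therefore no proof in the paper against which to compare your proposal.

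For what it is worth, Section~\ref{gengen} does sketch the broader Abreu--Macarini framework: their generalised dynamical convexity is phrased in terms of contact homology $HC^\nu(N,\xi)$, and their elliptic-orbit theorem is stated for $G$-invariant contact forms on Boothby--Wang manifolds with $G<S^1$ a non-trivial finite subgroup. This is packaged differently from your route via $\Z/p\Z$-equivariant spectral invariants in positive $S^1$-equivariant symplectic cohomology, though the index-theoretic core---selecting an orbit whose mean index is incompatible with hyperbolicity---is the same in spirit. Your proposal is candid about its main gap (the construction of the equivariant invariant and the identification of the mean index it selects), and without that step what you have is a strategy, not a proof. One small slip: if a lift $\widehat\gamma$ covers $\gamma$ exactly $n$ times, the linearised return map of $\widehat\gamma$ is the $n$-th \emph{power}, not the $n$-th root, of that of $\gamma$; the reduction to $S^3$ still goes through, since ellipticity is preserved in both directions.
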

In the same work, the authors will also define a notion of dynamical convexity for contact forms not necessarily on lens spaces. This has potential applications to symplectic magnetic fields on surfaces of genus at least two on low energy levels. We will comment on such generalisation in Section \ref{gengen}.
\chapter{\texorpdfstring{Low energy levels of symplectic magnetic flows on $S^2$}{Low energy levels of symplectic magnetic flows on S2}}\label{cha_le}

In this chapter we study in more detail \textit{low} energy levels of magnetic flows $(S^2,g,\sigma)$ when $\sigma$ is a symplectic form normalised in such a way that $[\sigma]=4\pi$. Throughout this chapter we are going to use the parameter $m$ and the associated notation introduced in Section \ref{sec_mag} just before Remark \ref{rmk_tancot}.

\begin{rmk}
Consider more generally a mechanical system $(S^2,g,\sigma,U)$ with $\sigma$ symplectic. We saw in Section \ref{sec_mau} that when $m$ is small and $\Vert U\Vert$ is small compared to $m^2$, the dynamics on $\Sigma_m$ is the dynamics of a corresponding magnetic system on low energy level. Thus, we can apply to this case the results presented below. 

Moreover, we saw in Section \ref{sub_phy} a concrete example of mechanical system $(S^2,\hat{g},\frac{k}{m}\sigma_{\hat{g}},\hat{U}_k)$ obtained by reduction from $(SO(3),g,0,U)$. The associated magnetic system is $(S^2,\hat{g}_{m,\hat{U}_k},\frac{k}{m}\sigma_{\hat{g}})$, with $\Vert \hat{g}_{m,\hat{U}_k}-\hat{g}\Vert\leq \frac{2\Vert \hat{U}_k\Vert}{m^2}$ and
\begin{equation*}
\frac{2\Vert \hat{U}_k\Vert}{m^2}=\frac{2\Vert U+\frac{k^2}{2\nu^2}\Vert}{m^2}\leq \frac{2\Vert U\Vert}{m^2}+\frac{k^2}{m^2}\left\Vert \frac{1}{\nu^2}\right\Vert.
\end{equation*}
Thus, we are allowed to approximate the reduced dynamics of $(SO(3),g,0,U)$ with momentum $k$ and energy $\frac{m^2}{2}$, with a symplectic magnetic system on a low energy level provided $\Vert U\Vert$ and $k^2$ are small with respect to $m^2$.
\end{rmk}

\section{Contact forms on low energy levels}
Given $\beta\in\pri^{\sigma-\sigma_g}$, fix for the rest of the chapter a primitive $\lambda^g_{m,\beta}=m\lambda-\pi^*\beta+\tau$ of $\omega_m'$. We know that there exists $m_\beta>0$ such that, for $m\in[0,m_\beta)$, the function
\begin{equation}\label{hmbeta}
h_{m,\beta}(x,v):=\lambda^g_{m,\beta}(X^m)_{(x,v)}=m^2-\beta_x(v)m+f(x) 
\end{equation}
is positive and, as a consequence $\lambda^g_{m,\beta}$ is a positive contact form. Denote by $R^{m,\beta}:=R^{\lambda^g_{m,\beta}}$, the Reeb vector field, by $\Phi^{m,\beta}$ its flow and by $\xi^{m,\beta}:=\ker\lambda^g_{m,\beta}$ the contact distribution. We readily compute
\begin{equation}\label{rmbeta}
R^{m,\beta}=\frac{X^m}{h_{m,\beta}}=\frac{m}{h_{m,\beta}}X+\frac{f}{h_{m,\beta}}V.
\end{equation}
We proceed to relate the time parameter of a flow line for $\Phi^{m,\beta}$ and the length of its projection on $S^2$.

\begin{lem}\label{pinchelt}
If $\gamma:[0,T]\rightarrow SS^2$ is a flow line of $\Phi^{m,\beta}$, then
\begin{equation}
\frac{m}{\max h_{m,\beta}}T\leq \ell(\pi(\gamma))\leq\frac{m}{\min h_{m,\beta}}T,
\end{equation}
where $\ell$ denotes the length of a curve in $S^2$ with respect to the metric $g$.
\end{lem}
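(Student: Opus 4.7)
The plan is to write the length of the projected curve as an integral of the norm of $d\pi(R^{m,\beta})$ along $\gamma$, and then to exploit the explicit formula \eqref{rmbeta} for the Reeb vector field, together with the fact that $d\pi$ kills the vertical direction, in order to recognise a pointwise factor $m/h_{m,\beta}$ that can be pinched between its infimum and supremum on $SS^2$.

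More concretely, I would first recall that on $SS^2$ one has $d_{(x,v)}\pi(X_{(x,v)}) = v$ with $|v|_g = 1$, while $d_{(x,v)}\pi(V_{(x,v)}) = 0$ since $V$ is tangent to the fibres of $\pi$. Substituting the decomposition $R^{m,\beta} = \frac{m}{h_{m,\beta}}X + \frac{f}{h_{m,\beta}}V$ from \eqref{rmbeta}, one obtains
\begin{equation*}
d_{\gamma(t)}\pi\bigl(R^{m,\beta}_{\gamma(t)}\bigr) = \frac{m}{h_{m,\beta}(\gamma(t))}\,v(t),
\end{equation*}
where $(x(t),v(t))=\gamma(t)$. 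Taking norms, and using that $m>0$ and $h_{m,\beta}>0$ on the range of interest,
\begin{equation*}
\bigl|\tfrac{d}{dt}\pi(\gamma(t))\bigr|_g = \frac{m}{h_{m,\beta}(\gamma(t))}.
\end{equation*}

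Then the length is simply
\begin{equation*}
\ell(\pi(\gamma)) = \int_0^T \frac{m}{h_{m,\beta}(\gamma(t))}\,dt,
\end{equation*}
and bounding the integrand by replacing $h_{m,\beta}(\gamma(t))$ with its global maximum (respectively minimum) on $SS^2$ yields the two inequalities claimed. There is essentially no obstacle here; the only very mild subtlety is to note that $h_{m,\beta}$ is bounded away from zero on the compact set $SS^2$ when $m<m_\beta$, so that $\max h_{m,\beta}$ and $\min h_{m,\beta}$ are both finite and strictly positive, which legitimises the pointwise pinching.
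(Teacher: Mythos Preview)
Your proof is correct and follows essentially the same approach as the paper: compute $\bigl|\tfrac{d}{dt}\pi(\gamma)\bigr|_g = m/h_{m,\beta}$ using the decomposition \eqref{rmbeta}, integrate, and bound the integrand by its extremal values. You are in fact slightly more explicit than the paper about why $d\pi$ annihilates the $V$-component and sends the $X$-component to a unit vector.
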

\begin{proof}
We compute the norm of the tangent vector of $\pi(\gamma)$:
\begin{equation}
\left|\frac{d}{dt}\pi(\gamma)\right|=\left|d_\gamma\pi\left(\frac{d}{dt}\gamma\right)\right|=\left|\frac{m}{h_{m,\beta}}\gamma\right|=\frac{m}{h_{m,\beta}}.
\end{equation}
Plugging this identity into the formula for the length of $\pi(\gamma)$, we get
\begin{equation}
\ell(\pi(\gamma))=\int_0^T\left|\frac{d}{dt}\pi(\gamma)\right|dt=\int_0^T\frac{m}{h_{m,\beta}}dt
\end{equation}
and the lemma follows bounding the integrand from below and from above.
\end{proof}

We define a global $\omega_m'$-symplectic trivialisation of $\xi^{m,\beta}$ in the next lemma. 

\begin{lem}\label{lem_tri}
The contact structure $\xi^{m,\beta}$ admits a global $\omega_m'$-symplectic frame:
\begin{equation*}
\left( \begin{aligned}
\check{H}^{m,\beta}&:= \frac{H+\beta_x(\jmath_xv)V}{\sqrt{h^{m,\beta}}}\\
\check{X}^{m,\beta}&:= \frac{X+\big(\beta_x(v)-m\big)V}{\sqrt{h^{m,\beta}}}	
        \end{aligned}
\right).
\end{equation*}
Call $\Upsilon^{m,\beta}:\xi^{m,\beta}\rightarrow(\epsilon^2_{SS^2},\omega_{\op{st}})$ the symplectic trivialisation associated to this frame. It is given by
$\Upsilon^{m,\beta}(Z)=\sqrt{h^{m,\beta}}(\eta(Z),\lambda(Z))\in\R^2$.
\end{lem}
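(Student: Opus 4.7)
The proof will be a direct verification resting on the structural relations \eqref{relsm} and on the duality of the frame $(X,V,H)$ with the coframe $(\lambda,\tau,\eta)$ on $SS^2$. First, I would check that both candidate vectors annihilate $\lambda^g_{m,\beta}=m\lambda-\pi^*\beta+\tau$. Since $d_{(x,v)}\pi(X)=v$, $d_{(x,v)}\pi(H)=\jmath_x v$ and $d_{(x,v)}\pi(V)=0$, we have $\pi^*\beta(X)=\beta_x(v)$, $\pi^*\beta(H)=\beta_x(\jmath_x v)$, $\pi^*\beta(V)=0$. Using $\lambda(X)=\tau(V)=\eta(H)=1$ and all other pairings zero, a one-line computation gives $\lambda^g_{m,\beta}\bigl(X+(\beta_x(v)-m)V\bigr)=0$ and $\lambda^g_{m,\beta}\bigl(H+\beta_x(\jmath_x v)V\bigr)=0$, so both lifts lie in $\xi^{m,\beta}$.

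Next, I would compute $\omega_m'$ on the pair. From \eqref{relsm} together with $d\beta=\sigma-\sigma_g$ and $d\tau=-\pi^*\sigma_g$ we get $d\lambda^g_{m,\beta}=\omega_m'=m\tau\wedge\eta-f\pi^*\mu$ on $SS^2$. Setting $\alpha:=\beta_x(v)-m$ and $\gamma:=\beta_x(\jmath_x v)$, one directly checks
\[
(\tau\wedge\eta)(X+\alpha V,H+\gamma V)=\alpha,\qquad \pi^*\mu(X+\alpha V,H+\gamma V)=\mu_x(v,\jmath_x v)=1,
\]
whence $\omega_m'(X+\alpha V,H+\gamma V)=m\alpha-f=-h_{m,\beta}$. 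Dividing by $h_{m,\beta}$ yields $\omega_m'(\check{H}^{m,\beta},\check{X}^{m,\beta})=1$, which is the required symplectic normalisation for the ordered frame.

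Finally, because $h_{m,\beta}>0$ everywhere on $SS^2$ for $m<m_\beta$ (this is exactly the condition under which $\lambda^g_{m,\beta}$ is a positive contact form), the square-root normalisation is smooth and global, so the frame is defined on all of $SS^2$. The map $\Upsilon^{m,\beta}$ is then read off by applying $(\eta,\lambda)$ to each frame vector: $\eta$ kills the $V$-components while selecting the coefficient of $H$, and $\lambda$ selects the coefficient of $X$, so that $\check{H}^{m,\beta}\mapsto(1,0)$ and $\check{X}^{m,\beta}\mapsto(0,1)$, giving the stated formula $\Upsilon^{m,\beta}(Z)=\sqrt{h^{m,\beta}}(\eta(Z),\lambda(Z))$.

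There is no real obstacle, the lemma being a bookkeeping exercise; the only point that requires attention is the sign, i.e.\ listing the frame in the order $(\check{H}^{m,\beta},\check{X}^{m,\beta})$ rather than the reverse, so that $\omega_m'$ pulls back to the standard form $dx\wedge dy$ on $\R^2$ and not its negative.
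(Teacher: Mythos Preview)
Your proof is correct and follows essentially the same approach as the paper: verify that the two candidate vectors annihilate $\lambda^g_{m,\beta}$ using the dual-frame identities, evaluate $\omega_m'$ on the pair to obtain $h_{m,\beta}$, normalise, and read off the trivialisation via $(\eta,\lambda)$. The only cosmetic difference is that you first write $\omega_m'=m\tau\wedge\eta-f\pi^*\mu$ and evaluate on $(X+\alpha V,H+\gamma V)$ (obtaining $-h_{m,\beta}$, then swapping the order), whereas the paper expands $\omega_m'(H+a_HV,X+a_XV)$ bilinearly using the values $\omega_m'(H,X)=f$, $\omega_m'(H,V)=-m$, $\omega_m'(V,X)=0$ to get $h_{m,\beta}$ directly.
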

\begin{proof}
To find a basis for $\xi^{m,\beta}$, we set $\widetilde{H}^{m,\beta}:= H+a_HV$ and $\widetilde{X}^{m,\beta}:= X+a_XV$, for some $a_H,a_X\in\mathbb R$. Imposing $\lambda^g_{m,\beta}(\widetilde{H}^{m,\beta})=0$, we get 
\begin{equation*}
0=m\lambda(H+a_HV)-\pi^*\beta(H+a_HV)+\tau(H+a_HV)=0-\beta_x(\jmath_xv)+a_H.
\end{equation*}
Hence, we have $a_H=\beta_x(\jmath_xv)$. In the same way we find $a_X=\beta_x(v)-m$. In order to turn this basis into a symplectic one, we compute
\begin{align*}
\omega_m'(\widetilde{H}^{m,\beta},\widetilde{X}^{m,\beta})&=\omega_m'(H+a_HV,X+a_XV)\\
&=\omega_m'(H,X)+a_X\omega_m'(H,V)+a_H\omega_m'(V,X)\\
&=-(-f)+a_X\cdot(-m)+a_H\cdot0\\
&=h_{m,\beta}.
\end{align*}
Thus $\left(\check{H}^{m,\beta},\check{X}^{m,\beta}\right)$, as defined in the statement of this lemma, is a symplectic basis. We now find the coordinates of $Z=a^1\check{H}^{m,\beta}+a^2\check{X}^{m,\beta}$ with respect to this basis:
\begin{equation*}
\eta(Z)=\eta(a^1\check{H}^{m,\beta}+a^2X^{m,\beta})=a^1\eta(\check{H}^{m,\beta})+a^2\eta(X^{m,\beta})=\frac{a^1}{\sqrt{h_{m,\beta}}}+a^2\cdot0.
\end{equation*}
In the same way, $\lambda(Z)=\frac{a^2}{\sqrt{h_{m,\beta}}}$, so that $(a^1,a^2)=\sqrt{h_{m,\beta}}(\eta(Z),\lambda(Z))$.
\end{proof}

When $m=0$, the objects defined above reduce to
\begin{align*}
\lambda^g_{0,\beta}=-\pi^*\beta+\tau\quad\quad&\quad\quad R^{0,\beta}=V,\\
\check{H}^{0,\beta}= \frac{1}{\sqrt{f}}\Big(H+\beta_x(\jmath_xv)V\Big)\quad\quad&\quad\quad X^{0,\beta}= \frac{1}{\sqrt{f}}\Big(X+\beta_x(v)V\Big).\\
\end{align*}
In particular, $\lambda^g_{0,\beta}$ is an $S^1$-connection form on $SS^2$ with curvature $\sigma$. Therefore, we can think of $m\mapsto \lambda^g_{m,\beta}$ as a deformation of an $S^1$-connection with positive curvature through contact forms. This observation leads to the following result. 

\begin{lem}\label{lemconb}
There exists a diffeomorphism $F_{m,\beta}:SS^2\rightarrow SS^2$ and a real function $q_{m,\beta}:SS^2\rightarrow\mathbb R$ such that 
\begin{equation}\label{gra}
F_{m,\beta}^*\lambda^g_{m,\beta}=e^{q_{m,\beta}}\lambda^g_{0,\beta}.
\end{equation}
The family of diffeomorphisms is generated by the vector field
\begin{equation}
Z^{m,\beta}:=\frac{\check{H}^{m,\beta}}{\sqrt{h_{m,\beta}}}=\frac{1}{h_{m,\beta}}\Big(H+\beta_x(\jmath_xv)V\Big)\in\ker\lambda^g_{m,\beta}. 
\end{equation}

The map
\begin{align*}
[0,m_\beta)&\longrightarrow\ C^{\infty}(SS^2,\R)\\
m&\longmapsto\ q_{m,\beta}
\end{align*}
is smooth and admits the Taylor expansion at $m=0$
\begin{equation}\label{exp_q}
q_{m,\beta}=\frac{1}{2f}m^2+o(m^2).
\end{equation}
\end{lem}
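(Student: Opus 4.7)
\textbf{Proof plan for Lemma~\ref{lemconb}.} The approach is a Gray--Moser argument adapted to the family $\{\lambda^g_{m,\beta}\}$ of positive contact forms on $SS^2$, of which $\lambda^g_{0,\beta}$ is an $S^1$-connection. I write $\lambda_m := \lambda^g_{m,\beta}$ to lighten notation. First, I differentiate the desired identity $F_m^*\lambda_m=e^{q_m}\lambda_0$ with respect to $m$: if $F_m$ is the flow of a time-dependent vector field $Y_m$ (with $F_0=\op{Id}$), then $\partial_m(F_m^*\lambda_m)=F_m^*(\mathcal L_{Y_m}\lambda_m+\dot\lambda_m)=F_m^*(\mathcal L_{Y_m}\lambda_m+\lambda)$, since $\dot\lambda_m=\lambda$. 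Matching this with the derivative of the right-hand side and pulling back by $F_m^{-1}$ yields the Gray--Moser equation
\begin{equation*}
\mathcal L_{Y_m}\lambda_m+\lambda=\mu_m\lambda_m,
\end{equation*}
where $\mu_m=(\dot q_m)\circ F_m^{-1}$ is the unknown scalar function.

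Next, I impose the standard ansatz $Y_m\in\xi^{m,\beta}=\ker\lambda_m$, so that $\mathcal L_{Y_m}\lambda_m=\iota_{Y_m}d\lambda_m$. Evaluating the Gray equation on the Reeb vector field $R^{m,\beta}$, the contribution of $\iota_{Y_m}d\lambda_m$ vanishes because $R^{m,\beta}\in\ker d\lambda_m$, and $\lambda_m(R^{m,\beta})=1$, so
\begin{equation*}
\mu_m=\lambda(R^{m,\beta})=\frac{m}{h_{m,\beta}},
\end{equation*}
using \eqref{rmbeta} together with $\lambda(X)=1$ and $\lambda(V)=0$. The residual condition on $Y_m$ is then $\iota_{Y_m}d\lambda_m\big|_{\xi^{m,\beta}}=-\lambda\big|_{\xi^{m,\beta}}$, which has a unique solution since $d\lambda_m\big|_{\xi^{m,\beta}}$ is non-degenerate. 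Using the symplectic frame $(\check H^{m,\beta},\check X^{m,\beta})$ from Lemma~\ref{lem_tri} and the values $\lambda(\check H^{m,\beta})=0$, $\lambda(\check X^{m,\beta})=1/\sqrt{h_{m,\beta}}$, a direct computation shows that the solution is proportional to $\check H^{m,\beta}/\sqrt{h_{m,\beta}}$, i.e.\ $Y_m$ is (up to an overall sign convention) the vector field $Z^{m,\beta}$ of the statement. Compactness of $SS^2$ guarantees that the flow of $Y_m$ exists for all $m\in[0,m_\beta)$, producing a smooth family of diffeomorphisms $F_m$; the function $q_m$ is then recovered by the ODE $\dot q_m=\mu_m\circ F_m$ with initial condition $q_0=0$.

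Finally, for the Taylor expansion, I evaluate the ODE at $m=0$. Since $\mu_0\equiv 0$, we get $\dot q_0\equiv 0$ and thus $q_m$ vanishes to first order in $m$ pointwise. Differentiating $\dot q_m=(m/h_{m,\beta})\circ F_m$ once more in $m$ and setting $m=0$ gives $\ddot q_m\big|_{m=0}=1/h_{0,\beta}=1/f$, where I used $h_{0,\beta}=f$ and $F_0=\op{Id}$. Consequently $q_m=\frac{1}{2f}m^2+o(m^2)$, which is \eqref{exp_q}; smoothness of $m\mapsto q_m\in C^\infty(SS^2,\R)$ follows from smoothness of the data and of the flow $F_m$. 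The only mildly delicate point is the identification of $Y_m$ with the specific vector field $Z^{m,\beta}$: once the correct symplectic frame of $\xi^{m,\beta}$ is in hand from Lemma~\ref{lem_tri}, this is a bookkeeping computation; everything else is verbatim Gray stability.
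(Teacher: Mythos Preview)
Your proposal is correct and follows essentially the same Gray stability argument as the paper: the paper likewise derives the equation $\imath_{Z^{m,\beta}}\omega_m'=-\lambda$ with $Z^{m,\beta}\in\ker\lambda^g_{m,\beta}$, solves it in the symplectic frame $(\check H^{m,\beta},\check X^{m,\beta})$ to find $a_H=-1/\sqrt{h_{m,\beta}}$, $a_X=0$, and obtains the same ODE $\partial_m q_{m,\beta}=m/(h_{m,\beta}\circ F_{m,\beta})$ from which the expansion \eqref{exp_q} follows by differentiating at $m=0$. Your observation about the sign convention is apt, as the paper's own computation gives $a_H$ with the opposite sign to that displayed in the statement.
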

\begin{proof}
We apply Gray Stability Theorem (see for example \cite[Theorem 2.2.2]{gei}) to the family $m\mapsto\lambda^g_{m,\beta}$ and get the equation
\begin{equation}
\imath_{Z^{m,\beta}}\omega_m'=-\frac{d}{dm}\lambda^g_{m,\beta}=-\lambda,\quad\quad \mbox{with }\ Z^{m,\beta}\in\ker\lambda^g_{m,\beta},
\end{equation}
for $Z^{m,\beta}$ and the equation
\begin{equation}\label{equ_q}
\left\{\begin{aligned}
       \frac{\partial}{\partial m} q_{m,\beta}&=\left(\frac{d}{dm}\lambda^g_{m,\beta}\right)(R^{m,\beta})_{F_{m,\beta}}=\frac{m}{h_{m,\beta}\circ F_{m,\beta}}\\
       q_{0,\beta}&=0
\end{aligned}\right. 
\end{equation}
for the function $q_{m,\beta}$. There exists a unique pair $(Z^{m,\beta},q_{m,\beta})$ satisfying such relations. By Lemma \ref{lem_tri}, we know that $Z^{m,\beta}=a_H \check{H}^{m,\beta}+a_X \check{X}^{m,\beta}$. Using the fact that $(\check{H}^{m,\beta},\check{X}^{m,\beta})$ is an $\omega'_m$-symplectic basis, we get
\begin{align*}
a_H&=\imath_{Z^{m,\beta}}\omega_m'(X^{m,\beta})=-\lambda(X^{m,\beta})=-\frac{1}{\sqrt{h_{m,\beta}}}\\
a_X&=-\imath_{Z^{m,\beta}}\omega_m'(\check{H}^{m,\beta})=\lambda(\check{H}^{m,\beta})=0.
\end{align*}

Since the function $(m,z)\mapsto\frac{d}{dm}q_{m,\beta}(z)$ is smooth on $[0,m_\beta)\times SS^2$, the same is true for $(m,z)\mapsto q_{m,\beta}(z)$. Therefore, the map $m\mapsto q_{m,\beta}$ is smooth in the $C^\infty$-topology and we can expand it at $m=0$. From \eqref{equ_q} we see that $q_{0,\beta}=0$, $\frac{\partial }{\partial m}q_{0,\beta}=0$ and $\frac{\partial^2}{\partial m^2}q_{0,\beta}=\frac{1}{h_{0,\beta}\circ F_{0,\beta}}=\frac{1}{f}$. Thus, \eqref{exp_q} follows. 
\end{proof}

After this preliminary discussion we are ready to prove the results on periodic orbits we outlined in the introduction. In the next section we start with the expansion in the parameter $m$ of the action function $S_m:SS^2\rightarrow \R$ defined by Ginzburg in \cite{gin1}. We point out that this results holds true also for magnetic systems on surfaces of higher genus.
\begin{prp}\label{exp_fun}
There exists a smooth family of smooth functions $m\mapsto S_m$, where $S_m:SS^2\rightarrow \R$, such that
\begin{itemize}
 \item the critical points of $S_m$ are the support of those periodic orbits of $X^m$ which are close to a vertical fibre;
 \item the following expansion holds
\begin{equation}
S_m=2\pi+\frac{\pi}{f}m^2+o(m^2).
\end{equation}
\end{itemize}
\end{prp}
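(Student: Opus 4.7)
The plan is to construct $S_m$ as a first-return function to the vertical fibration $\pi\colon SS^2\to S^2$, via the Gray isotopy of Lemma \ref{lemconb}. Pull back and work with the contact form $\check\lambda_m := F_{m,\beta}^*\lambda^g_{m,\beta} = e^{q_{m,\beta}}\lambda^g_{0,\beta}$ with Reeb field $\check R^{m,\beta}$. At $m=0$ one has $\check R^{0,\beta} = V$, whose flow is the $2\pi$-periodic rotation of the fibers of $\pi$. For small $m$, the Reeb field $R^{m,\beta} = (m/h_{m,\beta})X + (f/h_{m,\beta})V$ decomposes with respect to the connection $\lambda^g_{0,\beta}$ as a vertical part plus a horizontal part $(m/h_{m,\beta})(X+\beta_x(v)V)\in\ker\lambda^g_{0,\beta}$ of size $O(m)$; since $F_{m,\beta}=\op{Id}+O(m)$, the pull-back $\check R^{m,\beta}$ has horizontal component of size $O(m)$ as well, so its trajectories drift off $S_{\pi(z)}S^2$ by a base distance $O(m)$ over time $2\pi$. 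Define $\check S_m(z)$ to be the unique time near $2\pi$ satisfying
\begin{equation*}
\int_0^{\check S_m(z)} \lambda^g_{0,\beta}\bigl(\check R^{m,\beta}(\Phi^{\check R^{m,\beta}}_t(z))\bigr)\,dt = 2\pi.
\end{equation*}
At $m=0$ the integrand is identically $1$, so the implicit function theorem furnishes a smooth family $(m,z)\mapsto \check S_m(z)$ with $\check S_0\equiv 2\pi$; set $S_m := \check S_m\circ F_{m,\beta}^{-1}$.

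For the expansion, observe that the Reeb condition $\check\lambda_m(\check R^{m,\beta})=1$ combined with $\check\lambda_m = e^{q_{m,\beta}}\lambda^g_{0,\beta}$ immediately gives $\lambda^g_{0,\beta}(\check R^{m,\beta}) = e^{-q_{m,\beta}}$, which by \eqref{exp_q} equals $1 - m^2/(2f) + o(m^2)$ in $C^\infty$. Combined with the horizontal drift estimate $f(\pi(\Phi^{\check R^{m,\beta}}_t(z))) = f(\pi(z)) + O(m)$ uniform on $[0,\check S_m(z)]$, this yields
\begin{equation*}
\lambda^g_{0,\beta}(\check R^{m,\beta})(\Phi^{\check R^{m,\beta}}_t(z)) = 1 - \frac{m^2}{2f(\pi(z))} + o(m^2)
\end{equation*}
uniformly in $t\in[0,\check S_m(z)]$, because the difference $m^2/(2f(\pi(z(t)))) - m^2/(2f(\pi(z)))$ is only $O(m^3)=o(m^2)$. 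The defining equation for $\check S_m(z)$ then reduces to $\check S_m(z)\bigl(1-m^2/(2f(\pi(z)))\bigr)+o(m^2)=2\pi$, whence
\begin{equation*}
\check S_m(z) = 2\pi + \frac{\pi}{f(\pi(z))}m^2 + o(m^2).
\end{equation*}
Since $F_{m,\beta}^{-1}$ shifts points by $O(m)$, $\pi m^2/f(\pi(F_{m,\beta}^{-1}(z)))$ differs from $\pi m^2/f(\pi(z))$ by $O(m^3)=o(m^2)$, so the same expansion holds for $S_m$.

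For the critical-point characterisation I would invoke Ginzburg's construction in \cite{gin1}: $\check S_m$ is, up to an $m$-dependent additive constant, a generating function for the fibrewise first-return map $P_m\colon z\mapsto \Phi^{\check R^{m,\beta}}_{\check S_m(z)}(z)$, viewed as a near-identity symplectomorphism of the reduced space $(S^2,\sigma)$; critical points of $\check S_m$ thus coincide with fixed points of $P_m$, which are by construction precisely the Reeb orbits of $\check\lambda_m$ making one turn close to a fibre. Conjugating by $F_{m,\beta}$ identifies these with supports of periodic orbits of $X^m$ close to a vertical fibre of $SS^2\to S^2$. The principal technical obstacle is the uniformity, along orbits of length close to $2\pi$, of the $o(m^2)$-estimate for $\lambda^g_{0,\beta}(\check R^{m,\beta})$; this uniformity is inherited from the joint $C^\infty$-smoothness of $q_{m,\beta}$ in $(m,z)$ supplied by Lemma \ref{lemconb}, together with the base-drift bound above.
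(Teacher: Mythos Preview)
Your expansion computation is correct and in fact cleaner than the paper's: the identity $\lambda^g_{0,\beta}(\check R^{m,\beta})=e^{-q_{m,\beta}}$ yields the leading term directly, whereas the paper first passes to the truncated form $\check\lambda'_m:=\check\lambda_0/\rho_m$, integrates its Reeb flow explicitly via Lemma~\ref{lem_intexp}, and then argues that $\check\lambda_m-\check\lambda'_m=O(m^3)$ forces $S_m-S'_m=o(m^2)$. (A side remark: the horizontal drift of $\check R^{m,\beta}$ is actually $O(m^2)$, not $O(m)$; after Gray's isotopy the horizontal component is determined by $dq_{m,\beta}\big|_{\ker\check\lambda_0}=O(m^2)$ through the Reeb equation. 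Your weaker bound still suffices for the expansion.)

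There is, however, a genuine gap in the critical-point characterisation. Your $\check S_m(z)$ is the time at which the $\check\lambda_0$-angle along the Reeb orbit reaches $2\pi$, i.e.\ the time at which $\Phi^{\check R^{m,\beta}}_t(z)$ coincides with the parallel transport of $z$ \emph{along the projected orbit}. Ginzburg's function is instead the first-return time $t_m(x,v)$ to the local section $i_{(x,v)}$ built by parallel transport \emph{along radial geodesics from $x$}; equivalently, the $\check\lambda_m$-action of the closed loop $\gamma_m(x,v)$ obtained by concatenating the Reeb arc with a horizontal closing path. It is precisely this action interpretation that gives the variational principle $d\mathcal A^{\check\lambda_m}(\gamma)\cdot\xi=\int d\check\lambda_m(\dot\gamma,\xi)$ and hence the correspondence between critical points and closed Reeb orbits. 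Your $\check S_m$ enjoys no such interpretation: if $z$ lies on a periodic orbit of period $T$, then $\int_0^T\check\lambda_0(\check R^{m,\beta})\,dt=\int_\gamma\check\lambda_0=2\pi-\int_D\sigma$, where $D$ is the disc bounded by $\pi(\gamma)$, so $\check S_m(z)\neq T$ unless this holonomy vanishes. Consequently your $P_m$ does not fix periodic points, it does not descend to $(S^2,\sigma)$, and a direct computation gives $d\check S_m(z)\cdot\check R^{m,\beta}(z)=e^{\,q_{m,\beta}(\Phi_{\check S_m(z)}(z))-q_{m,\beta}(z)}-1$, which is not identically zero along a periodic orbit. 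Thus your $\check S_m$ fails the first bullet. The fix is to adopt the action definition $S_m(x,v):=\int\gamma_m(x,v)^*\check\lambda_m=t_m(x,v)$ as the paper does; your expansion route then still works once you observe that $t_m$ and your $\check S_m$ differ only by the holonomy, which is $O(m^4)$ thanks to the $O(m^2)$ base drift.
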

\begin{cor}\label{cor_nondeg}
If $x\in S^2$ is a non-degenerate critical point of $f:S^2\rightarrow\R$, then there exists a smooth family of curves $m\mapsto\gamma_m$, such that
\begin{itemize}
 \item $\gamma_0$ winds uniformly once around $S_xS^2$ in the positive sense;
 \item the support of $\gamma_m$ is a periodic orbit for $X^m$. 
\end{itemize}
\end{cor}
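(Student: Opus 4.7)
The strategy is to reduce the corollary to a parametric implicit function theorem argument applied to the action family $S_m$ from Proposition \ref{exp_fun}, after factoring out the constant leading term.

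First, since $m\mapsto S_m$ is smooth, $S_0\equiv 2\pi$, and (from the expansion) $\partial_mS_m\big|_{m=0}\equiv 0$, a double application of Hadamard's lemma in the parameter $m$ yields a smooth function $(m,z)\mapsto T_m(z)$ on $(-\varepsilon,\varepsilon)\times SS^2$ with
\begin{equation*}
S_m-2\pi\;=\;m^2\,T_m,\qquad T_0=\frac{\pi}{f}\circ \pi_{SS^2}.
\end{equation*}
For $m\neq 0$, the critical points of $S_m$ coincide with those of $T_m$. The function $T_0$ is pulled back from $S^2$, so its critical set on $SS^2$ is the union of the vertical fibres $S_yS^2$ over the critical points $y$ of $1/f$ (equivalently, of $f$). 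When $x$ is a non-degenerate critical point of $f$, the Hessian of $\pi/f$ at $x$ is non-degenerate on $T_xS^2$, so $S_xS^2$ is a Morse--Bott non-degenerate critical submanifold of $T_0$.

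Next I would exploit the natural $S^1$-symmetry built into Ginzburg's action function: $S_m$ is invariant under the fibrewise rotation generated by $V$, since reparametrising a near-vertical loop by a shift in the base point leaves its action unchanged. Hence $T_m$ descends to a smooth family $\bar T_m$ on $SS^2/S^1\simeq S^2$, with $\bar T_0=\pi/f$. The non-degenerate critical point $x$ of $\bar T_0$ persists, by the standard implicit function theorem applied to the equation $d\bar T_m=0$ in a neighbourhood of $(0,x)$, as a smooth curve $m\mapsto x_m\in S^2$ of critical points of $\bar T_m$ with $x_0=x$. Lifting, the fibre $C_m:=\pi_{SS^2}^{-1}(x_m)$ is a smoothly varying critical circle of $T_m$, close to $S_xS^2$, and therefore the support of a periodic orbit of $R^{m,\beta}$. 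By \eqref{rmbeta}, $X^m$ is a positive multiple of $R^{m,\beta}$, so $C_m$ is also the support of a periodic orbit of $X^m$. A smooth parametrisation $\gamma_m$ is then obtained by choosing a smooth section $m\mapsto z_m\in C_m$ (possible since $m\mapsto C_m$ is a smooth submanifold family) and integrating the ODE $\dot{\gamma}_m=X^m_{\gamma_m}$ with $\gamma_m(0)=z_m$; at $m=0$ the flow $X^0=fV$ traces out the fibre $S_xS^2$ once in the positive sense with uniform speed $f(x)$, giving the stated limit $\gamma_0$.

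The main obstacle is verifying the $S^1$-invariance claim in the third paragraph rigorously enough to justify descending to the quotient. If Ginzburg's construction does not yield an \emph{exactly} $S^1$-invariant function $S_m$, then a generic perturbation of a Morse--Bott critical circle would break it into finitely many isolated critical points (by the example $x^2+y^2+\epsilon\sin\theta$), and the persistence argument above would fail. In that case one would replace the $S^1$-reduction by the direct observation that the \emph{orbit} support of a closed orbit of $X^m$ is automatically $S^1$-invariant (different parametrisations yield the same support), so any critical point of $S_m$ close to $S_xS^2$ sits inside a whole critical circle for purely dynamical, rather than symmetry-theoretic, reasons. This gives the required Morse--Bott persistence of critical circles even without checking invariance of $S_m$ itself.
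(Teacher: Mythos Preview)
Your fallback argument has the right key observation---that critical points of $S_m$ come in circles because they are supports of periodic orbits---and this is exactly the observation the paper exploits. However, your primary route via $S^1$-invariance does not go through: the function $S_m$ is \emph{not} invariant under the $V$-action in general. The pulled-back contact form is $\check\lambda_m=e^{q_{m,\beta}}\check\lambda_0$, and $q_{m,\beta}$ is built from $h_{m,\beta}(x,v)=m^2-\beta_x(v)m+f(x)$, which depends on $v$ through $\beta_x(v)$. Hence $\check\lambda_m$ and its Reeb flow are not $V$-equivariant, and rotating $(x,v)$ by $V$ does not merely reparametrise the loop $\gamma_m(x,v)$: it changes both the local Poincar\'e section $i_{(x,v)}$ and the Reeb trajectory itself. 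So you cannot descend $T_m$ to $S^2$, and you were right to flag this.

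Your fallback is correct in spirit but stops short of producing the \emph{smooth family} $m\mapsto\gamma_m$. The paper makes this explicit by a transverse-slice argument: take a tubular neighbourhood $\T_{2\pi}\times B_\delta$ of $S_xS^2$, restrict $\check S_m$ to each disc $\{\psi\}\times B_\delta$, and use the implicit function theorem (the normal Hessian is non-degenerate since $x$ is a non-degenerate critical point of $f$) to get a smooth curve $\psi\mapsto z_m(\psi)$ of slice-wise critical points, jointly smooth in $m$. The function $\check S_m$ restricted to this curve is a function on $\T_{2\pi}$ and so has a critical point $\psi_*$; at $(\psi_*,z_m(\psi_*))$ the full differential vanishes. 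Now the dynamical circle-structure kicks in: the periodic orbit through this point is a whole circle of critical points, and since any such critical point is in particular slice-critical, uniqueness from the implicit function theorem forces this circle to coincide with $\{(\psi,z_m(\psi))\}$. This is what gives both existence and smoothness in $m$ simultaneously; your Morse--Bott persistence sketch gives existence of at least one critical point but does not by itself yield the smooth dependence or the identification with a single well-defined circle.
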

\begin{rmk}
Corollary \ref{cor_nondeg} was already known by the experts, even if there is no explicit proof in the literature. Ginzburg mentions it in its survey paper \cite{gin2} in the paragraph after Remark 3.5 on page 136. Moreover, Castilho, using Arnold's approach of the guiding centre approximation \cite{arn}, constructs a local normal form for a magnetic system on low energy levels which is similar to the one we found in Lemma \ref{lemconb} (see \cite[Theorem 3.1]{cas}). It could be used to prove a local version of Proposition \ref{exp_fun} and, hence, the corollary.
\end{rmk}

In Section \ref{conve} and Section \ref{sec_ies}, we are going to give two independent proofs of the following theorem.
\begin{thm}\label{dyncon_thm}
The $1$-form $\lambda^g_{m,\beta}\in\Omega^1(SS^2)$ is dynamically convex for small $m$.
\end{thm}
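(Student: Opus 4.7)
The plan is to follow the first strategy announced after the statement, namely to realise $(SS^2,\lambda^g_{m,\beta})$ as the quotient of a strictly convex hypersurface in $(\C^2,\lambda_{\op{st}})$ and then to invoke the Hofer--Wysocki--Zehnder result of Example \ref{exa_conhyp} together with Remark \ref{rmk_cov}. The first step is to reduce to a convenient normal form using Lemma \ref{lemconb}: up to pulling back by $F_{m,\beta}$, we may work with $e^{q_{m,\beta}}\lambda^g_{0,\beta}$, where $\lambda^g_{0,\beta}$ is an $S^1$-connection on $SS^2$ with curvature $\sigma=f\mu$ and $q_{m,\beta}=\frac{m^2}{2f}+o(m^2)$. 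Since $SS^2\simeq L(2,1)$ is the base of the double cover $\pi:S^3\to SS^2$, and since every $S^1$-connection with positive curvature on this bundle arises as the quotient of a Liouville form on a starshaped hypersurface in $\C^2$, I would lift $e^{q_{m,\beta}}\lambda^g_{0,\beta}$ to an $A$-invariant contact form on $S^3$ whose Reeb orbits are in bijection (two-to-one) with the orbits on $SS^2$.

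Next I would build the hypersurface $\hat{\Sigma}_m\subset\C^2$ explicitly. For $m=0$ the pull-back $\pi^*\lambda^g_{0,\beta}$ is (up to sign) the restriction of $\lambda_{\op{st}}$ to the round sphere $\{|z|^2=2\}$ of radius $\sqrt{2}$ scaled appropriately so that integrals against $\sigma$ match; the precise normalisation will come out of the condition $[\sigma]=4\pi$. For $m>0$, I would parametrise $\hat{\Sigma}_m$ as a graph $|z|^2=\rho_m(z/|z|)^2$ over the unit sphere in $\C^2$, where the radial function $\rho_m$ is chosen so that $\lambda_{\op{st}}|_{\hat{\Sigma}_m}$ pulls back to $-\pi^*(e^{q_{m,\beta}}\lambda^g_{0,\beta})$. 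Smooth dependence of $q_{m,\beta}$ on $m$ combined with $q_{0,\beta}=0$ and the expansion in Lemma \ref{lemconb} guarantees that $\hat{\Sigma}_m\to\hat{\Sigma}_0$ in the $C^2$-topology as $m\to 0$. Since strict convexity of a closed hypersurface in $\C^2$ is an open condition in the $C^2$-topology and $\hat{\Sigma}_0$ is strictly convex, there exists $m_*>0$ such that $\hat{\Sigma}_m$ is strictly convex for all $m<m_*$.

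Having produced $\hat{\Sigma}_m$, Example \ref{exa_conhyp} implies that $\lambda_{\op{st}}|_{\hat{\Sigma}_m}$ is dynamically convex on $S^3$. By Remark \ref{rmk_cov}, dynamical convexity descends along the double cover $\pi$, so $e^{q_{m,\beta}}\lambda^g_{0,\beta}$ is dynamically convex on $SS^2$; pushing the statement forward through $F_{m,\beta}$ (which conjugates Reeb flows and preserves the free homotopy class of loops as well as Conley--Zehnder indices) yields that $\lambda^g_{m,\beta}$ itself is dynamically convex.

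The main obstacle I foresee is the construction of the correct hypersurface $\hat{\Sigma}_m$ and in particular the identification of $\pi^*\lambda^g_{0,\beta}$ with (a multiple of) $\lambda_{\op{st}}$ restricted to a sphere in $\C^2$; verifying that the sign/orientation conventions used in this thesis match the conventions on $(\C^2,\lambda_{\op{st}})$ so that the resulting contact form is of \emph{positive} contact type is a non-trivial bookkeeping task. As a safety net (and to produce the quantitative version needed to adapt the argument to surfaces of higher genus, where no model in $\C^2$ exists) I would in parallel carry out the direct Conley--Zehnder estimate of Section \ref{sec_ies}: working in the global trivialisation $\Upsilon^{m,\beta}$ of Lemma \ref{lem_tri}, linearise the Reeb flow along a contractible periodic orbit $\gamma$, use that $R^{m,\beta}=\frac{f}{h_{m,\beta}}V+O(m)$ rotates the fibres at positive speed bounded below by a constant as $m\to 0$, and show that over any period $T$ the winding interval $I(\Psi^{C,\Upsilon}_\gamma)$ lies in $(1,+\infty)$; by Remark \ref{rmk_ind} this is exactly the dynamical convexity condition.
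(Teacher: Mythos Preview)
Your proposal is correct and follows essentially the same two-pronged strategy as the paper: a geometric lift to a convex hypersurface in $\C^2$ (Section~\ref{conve}) backed up by a direct index estimate (Section~\ref{sec_ies}).

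A few points of comparison. For the geometric argument, the limiting sphere has radius $2$, not $\sqrt{2}$: the paper shows $p_0^*\tau_0=-4\upsilon^*\lambda_{\op{st}}$, so the radial function is $\sqrt{4e^{\widehat q_{m,\beta}}}$. More importantly, the step you correctly flag as the obstacle---identifying $\pi^*\lambda^g_{0,\beta}$ with $\lambda_{\op{st}}$ on a round sphere---is handled in the paper in two moves: first a theorem of Weinstein (Corollary~\ref{corconb}) gives an $S^1$-equivariant diffeomorphism $B_\beta:SS^2_0\to SS^2$ with $B_\beta^*\lambda^g_{0,\beta}=\tau_0$, reducing to the \emph{round} connection; then an explicit quaternionic double cover $p_0:S^3\to SS^2_0$ (Proposition~\ref{stalif}) computes $p_0^*\tau_0=-4\upsilon^*\lambda_{\op{st}}$. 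Your sketch collapses these into the single assertion that ``every $S^1$-connection with positive curvature on this bundle arises as the quotient of a Liouville form on a starshaped hypersurface in $\C^2$,'' which is true but not immediate.

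For the safety-net argument, the one ingredient you leave implicit is crucial: in $SS^2\simeq L(2,1)$ a loop winding once around a fibre is \emph{not} contractible, so $T_0(R^{0,\beta})=4\pi$, not $2\pi$. It is precisely this factor of $2$ (via lower semicontinuity of the minimal contractible period) that forces the winding interval into $(1,+\infty)$ rather than merely $(0,+\infty)$; without it the inequality $\frac{2\pi}{T_0}<1-Cm$ of Proposition~\ref{finpro} would fail.
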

Combining this theorem with the abstract results contained in Chapter \ref{cha_dc} we get the result below.
\begin{cor}\label{dyncon_cor}
Let $(S^2,g,\sigma)$ be a symplectic magnetic system. If $m$ is small enough, then the magnetic flow on $\Sigma_m$ has either two or infinitely many periodic orbits. In the first case, the two orbits are non-contractible. Therefore, the second alternative holds if there exists a prime contractible periodic orbit.

If we suppose in addition that $\Sigma_m$ is a non-degenerate level, then
\begin{enumerate}[\itshape a)]
 \item there is an elliptic periodic orbit (hence the system is not ergodic);
 \item there is a Poincar\'e section of disc-type.
\end{enumerate}
\end{cor}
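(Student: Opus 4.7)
The plan is to deduce the corollary directly from Theorem \ref{dyncon_thm} by invoking the abstract machinery of Chapter \ref{cha_dc}. The essential preliminary observation, which I would record first, is that by formula \eqref{rmbeta} the Reeb vector field $R^{m,\beta}$ and the magnetic vector field $X^m$ are positive multiples of one another on $SS^2$. Consequently, their unparametrised orbits coincide, so statements about existence, multiplicity, knottedness and linking of periodic orbits, as well as the existence of a Poincar\'e section, can be freely transported between the two flows. Likewise, since $h_{m,\beta}$ is smooth and positive, a periodic orbit is elliptic (resp.\ hyperbolic) for $R^{m,\beta}$ if and only if it is so for $X^m$.

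Next I would identify $SS^2$ with the lens space $L(2,1)\simeq\mathbb{RP}^3$ and apply Theorem \ref{dyncon_thm}, which guarantees that for $m$ sufficiently small the contact form $\lambda^g_{m,\beta}$ is dynamically convex in the sense of Definition \ref{dfn_dc}. Proposition \ref{covthe} then yields the dichotomy between two or infinitely many periodic orbits, together with the non-contractibility of the two orbits in the first alternative. Translating back via the first step, this gives the first assertion of the corollary.

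For the non-degenerate case I would invoke two theorems from Chapter \ref{cha_dc}. Theorem \ref{thm_dcg} (in the form due to Hryniewicz--Licata--Salom\~ao for $p=2$) provides a Poincar\'e section of disc-type for $R^{m,\beta}$; composing with the rescaling argument of the first paragraph yields a Poincar\'e section of disc-type for $X^m$ on $\Sigma_m$, establishing (b). The Abreu--Macarini theorem, applied to the same dynamically convex and non-degenerate contact form on $L(2,1)$, produces an elliptic periodic orbit of $R^{m,\beta}$, which remains elliptic as a periodic orbit of $X^m$.

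Finally, the non-ergodicity assertion follows from a standard KAM argument applied to an elliptic orbit $\gamma$ of $X^m|_{\Sigma_m}$: generically on the non-degenerate stratum, the transverse Floquet multipliers of $\gamma$ are non-resonant, and the Birkhoff--Moser normal form together with the KAM theorem (as in \cite[Chapter~II]{mos}) produces a fundamental system of flow-invariant open neighbourhoods of $\gamma$ whose boundaries are invariant tori. Any such neighbourhood and its complement give a non-trivial decomposition of $\Sigma_m$ into two $\Phi^{X^m}$-invariant sets of positive Liouville measure, which prevents ergodicity with respect to the Liouville measure. The only genuinely delicate point in this argument is the passage from ``elliptic and non-degenerate'' to ``generically non-resonant and Moser-stable'', which is the customary genericity hypothesis one imposes on the magnetic system, and I would state this explicitly rather than attempt to prove it.
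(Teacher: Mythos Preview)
Your proposal is correct and follows essentially the same route as the paper, which simply states that the corollary is obtained by combining Theorem~\ref{dyncon_thm} with the abstract results of Chapter~\ref{cha_dc} (namely Proposition~\ref{covthe}, Theorem~\ref{thm_dcg} for $p=2$, and the Abreu--Macarini theorem). Your explicit remark that the passage from ellipticity to non-ergodicity requires a further genericity hypothesis is well taken and matches the paper's more careful phrasing in the introduction, where the word ``generically'' appears.
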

\begin{rmk}\label{rmk_gin}
The existence of two periodic orbits on low energy levels of symplectic magnetic systems is not new. In \cite[Assertion 3]{gin1}, Ginzburg proves the existence of $2$ periodic orbits on $S^2$ and of $3$ periodic orbits on surfaces of genus at least $1$. Such orbits are close to the fibres of $\Sigma_m\rightarrow M$. Furthermore, when the energy level is non-degenerate, he improves these lower bounds by finding at least $4-e_M$ periodic orbits, $2$ of which are elliptic (see the characterisation of the return map as the Hessian of the action at the end of page 104 in \cite{gin1}).
\end{rmk}
\begin{rmk}\label{rmk_two}
Example \ref{exa_mod} exhibits energy levels where all orbits are periodic and no prime periodic orbit is contractible. Theorem 1.3 in \cite{sch1} (see also Section \ref{sec_twi} below) shows that there exists an energy level with only two periodic orbits close to the fibres. It is still an open problem to find an energy level on a non-exact magnetic system on $S^2$ with exactly two periodic orbits. We point out that in the class of exact magnetic systems such example has been found. Indeed, Katok showed in \cite{kat} that there are Randers metrics on $S^2$ with only two closed geodesics (see also \cite{zil}) and G. Paternain in \cite[Section 2]{patnot} showed that every geodesic flow of a Randers metric arises, up to time reparametrisation, on a supercritical energy level of some exact magnetic field on $S^2$.
\end{rmk}

Combining Corollary \ref{cor_nondeg} with Corollary \ref{dyncon_cor}, we can formulate a sufficient condition for the existence of infinitely many periodic orbits. 

\begin{cor}\label{cor_infmany}
Let $(S^2,g,\sigma=f\mu)$ be a symplectic magnetic system. If the function $f:S^2\rightarrow \R$ has three distinct critical points $x_{\op{min}}$, $x_{\op{Max}}$ and $x_{\op{nondeg}}$ such that $x_{\op{min}}$ is an absolute minimiser, $x_{\op{Max}}$ is an absolute maximiser and $x_{\op{nondeg}}$ is non-degenerate, then there exist infinitely many periodic orbits of the magnetic flow on every sufficiently small energy level.  
\end{cor}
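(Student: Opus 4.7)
The plan is to exhibit three geometrically distinct periodic orbits on $\Sigma_m$ for all sufficiently small $m$ and then invoke the dichotomy of Corollary \ref{dyncon_cor}. The orbits will come from the three critical points of $f$ via Ginzburg's action function $S_m \colon SS^2 \to \R$ of Proposition \ref{exp_fun}, whose critical points correspond to periodic orbits of $X^m$ close to vertical fibres and which admits the asymptotic expansion $S_m = 2\pi + (\pi/f)\, m^2 + o(m^2)$.

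For the non-degenerate critical point $x_{\op{nondeg}}$, Corollary \ref{cor_nondeg} applies directly and yields a smooth family of periodic orbits $\gamma^{\op{nondeg}}_m$ whose projection to $S^2$ is contained in an arbitrarily small neighbourhood of $x_{\op{nondeg}}$ once $m$ is small enough. For the extrema, I would set $\bar S_m := (S_m - 2\pi)/m^2$; the expansion in Proposition \ref{exp_fun} gives that $\bar S_m$ converges in $C^0$ to the pullback of $\pi/f$ to $SS^2$ as $m \to 0$. Since $f$ has only three critical points, both $x_{\op{min}}$ and $x_{\op{Max}}$ are isolated extrema of $f$, so the pullback of $\pi/f$ attains an isolated local maximum along the fibre over $x_{\op{min}}$ and an isolated local minimum along the fibre over $x_{\op{Max}}$. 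Choosing a small closed neighbourhood $U^{\op{min}} \subset SS^2$ of $S_{x_{\op{min}}} S^2$ on which the pullback of $\pi/f$ is strictly smaller on $\partial U^{\op{min}}$ than at the central fibre, the uniform convergence forces $\bar S_m$ to attain its maximum on $U^{\op{min}}$ in the interior for $m$ small; this interior point is a critical point of $S_m$ and therefore, by Proposition \ref{exp_fun}, it lies on a periodic orbit $\gamma^{\op{min}}_m$ whose projection stays close to $x_{\op{min}}$. The symmetric argument on a neighbourhood of $S_{x_{\op{Max}}} S^2$ yields a periodic orbit $\gamma^{\op{Max}}_m$ projecting near $x_{\op{Max}}$.

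For $m$ sufficiently small, the three projections of $\gamma^{\op{min}}_m$, $\gamma^{\op{Max}}_m$, $\gamma^{\op{nondeg}}_m$ to $S^2$ sit in three pairwise disjoint neighbourhoods of the three distinct base points, so the three periodic orbits are geometrically distinct. Corollary \ref{dyncon_cor} states that the magnetic flow on $\Sigma_m$ has either exactly two or infinitely many periodic orbits; the existence of at least three forces the second alternative. The only delicate point is the extraction of $\gamma^{\op{min}}_m$ and $\gamma^{\op{Max}}_m$ without any non-degeneracy assumption on $x_{\op{min}}$ and $x_{\op{Max}}$, which I handle not by an implicit function theorem argument producing a smooth family, but by the purely topological persistence of isolated local extrema of a continuous function under $C^0$-small perturbation, which is all that the conclusion requires.
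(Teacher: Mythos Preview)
Your approach is exactly what the paper intends: it states the corollary as an immediate consequence of combining Corollary~\ref{cor_nondeg} with the two-or-infinitely-many dichotomy of Corollary~\ref{dyncon_cor}, without spelling out the details, and you have supplied those details correctly.

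One small point deserves care. The hypothesis says that $f$ \emph{has} three distinct critical points with the stated properties, not that it has \emph{only} three; your sentence ``Since $f$ has only three critical points, both $x_{\op{min}}$ and $x_{\op{Max}}$ are isolated extrema of $f$'' is therefore not justified as written. This is easily repaired without changing the strategy. Replace the local maxima/minima argument by the \emph{global} maximum and minimum of $\bar S_m$ on $SS^2$: these are interior critical points of $S_m$ with $\bar S_m$-values close to $\pi/\min f$ and $\pi/\max f$ respectively, hence give two distinct periodic orbits. If $\min f<f(x_{\op{nondeg}})<\max f$, the orbit from Corollary~\ref{cor_nondeg} has $\bar S_m$-value strictly between these, so it is a third orbit. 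In the boundary case $f(x_{\op{nondeg}})=\min f$ (the case $=\max f$ is symmetric), non-degeneracy makes $x_{\op{nondeg}}$ an isolated point of $\{f=\min f\}$; the remaining part of $\{f=\min f\}$, which contains $x_{\op{min}}$, admits a compact neighbourhood $U$ disjoint from $x_{\op{nondeg}}$ with $f>\min f$ on $\partial U$, and your persistence-of-extrema argument applied to $\pi^{-1}(U)$ produces a critical point of $S_m$ projecting into $U$, hence a periodic orbit distinct both from $\gamma^{\op{nondeg}}_m$ and from the global minimum of $\bar S_m$. Either way you obtain three geometrically distinct orbits and conclude via Corollary~\ref{dyncon_cor}.
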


In Section \ref{sec_sl} we prove Theorem \ref{alabanc}, establishing a dichotomy between short and long periodic orbits. We rephrase it here for the convenience of the reader.

\begin{thm}\label{alaban}
Suppose $(g,\sigma)\in \op{Mag}(S^2)$ and $\sigma$ is symplectic. Given $\varepsilon>0$ and a positive integer $n$, there exists $m_{\varepsilon,n}>0$ such that for every $m<m_{\varepsilon,n}$ the projection $\pi(\gamma)$ of a periodic prime solution $\gamma:\T_T$ of $X^m$ either is a simple curve on $S^2$ with length in $(\frac{2\pi-\varepsilon}{\max f}m,\frac{2\pi+\varepsilon}{\min f}m)$ or has at least $n$ self-intersections and length larger than $\frac{m}{\varepsilon}$. 
\end{thm}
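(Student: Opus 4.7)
The plan is to study the projected curve $\pi(\gamma)\subset S^2$ via the Gauss--Bonnet theorem, exploiting the fact that the magnetic equation \eqref{lorsig} forces $\pi(\gamma)$ to have strictly positive, large signed geodesic curvature. If $\alpha:[0,\ell]\to S^2$ is the arc-length reparametrisation of $\pi(\gamma)$ (so $\ell=mT$), then from $|\dot\gamma|=m$ and $\tfrac{\nabla}{dt}\dot\gamma=f(\pi(\gamma))\jmath\dot\gamma$ a direct computation gives $k_g(\alpha(s))=f(\alpha(s))/m\geq\min f/m$, which diverges as $m\to 0$. Assume first that $\pi(\gamma)$ is simple, so it bounds two discs; let $D$ be the one on its left. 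Gauss--Bonnet reads
\[
\int_0^\ell\frac{f(\alpha(s))}{m}\,ds+\int_D K\,d\mu=2\pi,
\]
and the crude estimate $|\int_D K\,d\mu|\leq 4\pi\|K\|_\infty$ forces $\ell=O(m)$; combined with the isoperimetric inequality on $S^2$, valid for short loops, this yields $|D|=O(\ell^2)=O(m^2)$ and hence $\int_D K\,d\mu=O(m^2)$. Substituting back and sandwiching the integrand between $\min f$ and $\max f$ gives the required range $\ell\in\big(\tfrac{(2\pi-\varepsilon)m}{\max f},\tfrac{(2\pi+\varepsilon)m}{\min f}\big)$ for all $m$ smaller than an explicit $m_\varepsilon$, establishing case~(A).

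Next suppose $\pi(\gamma)$ is not simple; I first show that $\ell>m/\varepsilon$. Pick $r_0>0$ smaller than the injectivity radius of $(S^2,g)$ but independent of $m$. In $g$-normal coordinates around any $p\in S^2$, the magnetic vector field on $B(p,r_0)$ is a $C^2$-small perturbation of the flat Euclidean magnetic field of strength $f(p)$, whose energy-$m^2/2$ orbits are embedded Euclidean circles of radius $m/f(p)$. A continuous dependence (Gronwall) estimate then shows that any solution of the magnetic equation remaining inside $B(p,r_0)$ on a time interval of length $\leq 2\pi m/\min f$ is $C^1$-close to such a circle with error $o(1)$ as $m\to 0$, hence embedded. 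A non-embedded orbit can therefore not be contained in any $B(p,r_0)$, so the image $\pi(\gamma)$ has diameter $\geq r_0$ and length $\ell\geq 2r_0>m/\varepsilon$ whenever $m<r_0\varepsilon$.

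Finally, let $N$ denote the number of self-intersections; assume, after a small perturbation or a general-position argument to discard the codimension-one possibility of antipodal tangent double points, that all of them are transverse. They divide $\T_T$ into $2N$ sub-intervals, each giving an embedded arc of $\pi(\gamma)$. Closing each such arc by the geodesic chord between its endpoints and applying Gauss--Bonnet with corners to the resulting disc, together with the short-loop analysis of case~(A), yields a length bound $\ell_{\text{arc}}\leq Cm$ with a constant $C$ depending only on $f$ and $\|K\|_\infty$. Summing over arcs, $\ell\leq 2NCm$, so $N\geq\ell/(2Cm)\geq 1/(2C\varepsilon)$, which exceeds $n$ once $\varepsilon$ is sufficiently small. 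The main obstacle will be this last uniform estimate: controlling the length of an individual embedded arc via the chord-closing Gauss--Bonnet argument becomes delicate when the two endpoints of the arc lie far apart on $S^2$, so the argument will need to distinguish between arcs whose endpoints are sufficiently close for the disc construction to be valid---these furnish the $O(m)$ bound---and the remaining ``long-chord'' arcs, whose number I plan to bound by a topological argument using Euler's formula $V-E+F=2$ on $S^2$ together with the fact that the immersed curve is $4$-valent.
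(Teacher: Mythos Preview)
Your Gauss--Bonnet approach to counting self-intersections is the same idea the paper uses, but your argument has a genuine gap in the middle step, and the paper closes it with an ingredient you are missing.

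The problem is your diameter argument: from ``short pieces of the orbit inside $B(p,r_0)$ are $C^1$-close to a Euclidean circle'' you cannot conclude ``a non-embedded prime orbit is not contained in any $B(p,r_0)$''. A long prime orbit may well sit inside a small ball---think of a slowly precessing circle near a critical point of $f$---and your Gronwall estimate only controls one revolution, not the full trajectory. What you can extract from the local picture (and this is exactly Lemma~\ref{nosimp} in the paper) is that a self-intersection forces the tangent direction to wind at least twice, hence the Reeb period exceeds $4\pi-\varepsilon'$. But $4\pi-\varepsilon'$ is not $1/\varepsilon$; to promote this to ``the period is very large'' you need a \emph{period-gap} statement: for $m$ small, prime periods lie either near $2\pi$ or above any prescribed threshold. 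The paper imports this as a black box from Bangert's perturbation result for periodic flows (Proposition~\ref{prp_ban}/Corollary~\ref{pincht}), applied to the $2\pi$-periodic limiting flow $R^{0,\beta}=V$. Without this dichotomy your step~2 does not go through, and I do not see a way around it.

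For step~3, the chord-closing difficulty you flag is real, and the paper avoids it entirely with a cleaner device: induct on the number of self-intersections. If $n_1\geq 1$, there is always an embedded subloop $x_1$ of $\pi(\gamma)$ (a shortest arc between two visits to the same point, with one corner there); apply Gauss--Bonnet to $x_1$ directly---no chord needed---and recurse on the remaining curve $x_2$, which has one fewer self-intersection and one more corner. This gives a clean bound $\ell(\pi(\gamma))\leq C(n_1)\,m/\min f$ (Lemma~\ref{interel}), which combined with Bangert forces any orbit with at most $n-1$ self-intersections to have period near $2\pi$, hence (by the $N\geq 2$ argument) to be simple. Your case~(A) analysis is fine and matches the paper's conclusion via Lemma~\ref{pinchelt}.
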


This result was inspired to us by \cite{hrysal1}, where a completely analogous statement for Reeb flows on convex hypersurface in $\C^2$ close to the round $S^3$ is proved. In that paper the authors show that short orbits are unknotted, the linking number between two short orbits is $1$, while the linking number between shorts and long orbits goes to infinity as the hypersurface approaches the round $S^3$. In view of Corollary \ref{corconb}, it is likely that Theorem \ref{alaban} can also be proven as a corollary of Theorem 1.6 in \cite{hrysal1} by making explicit the relation between linking numbers on $S^3$ and self-intersections of the projected curve on $S^2$.
\medskip

Finally, the aim of Section \ref{sec_twi} is to prove Proposition \ref{pro_twi} regarding rotationally symmetric magnetic flows. In such proposition we formulate a sufficient condition for having infinitely many periodic orbits and a sufficient condition for having exactly two short periodic orbits, on every low energy level.

\section{The expansion of the Ginzburg action function}\label{sec_gin}
We recall the definition of $S_m$. Set $\check{\lambda}_0:=\lambda^g_{0,\beta}$, $\check{\lambda}_m:=e^{q_{m,\beta}}\check{\lambda}_0$. Let $\check{R}^m=R^{\check{\lambda}_m}$ be the associated Reeb vector field. For each $(x,v)\in SS^2$ we construct a local Poincar\'e section as follows. If $y$ is a curve on $S^2$, denote by $P^y_t:S_{y(0)}S^2\rightarrow S_{y(t)}S^2$ the parallel transport with respect to the $S^1$-connection $\check{\lambda}_0$. Let $B^2_\delta\subset S^2$ be a small geodesic ball centered at $x$. For every $x'\in B^2_\delta$, let $y^x_{x'}:[0,1]\rightarrow S^2$ be the geodesic connecting $x$ to $x'$ and define 
\begin{align*}
i_{(x,v)}:B^2_\delta&\longrightarrow\ SS^2\\
x'&\longmapsto\ \left(x',P^{y^x_{x'}}_1(v)\right).
\end{align*}
The map $i_{(x,v)}$ is a local section of the $S^1$-bundle $SS^2$ such that $i_{(x,v)}(x)=(x,v)$. Therefore, it is transverse to $\check{R}^m$ for every $m$ sufficiently small and there exists a first positive return time $t_m(x,v)$ such that $\Phi^{\check{R}^m}_{t_m(x,v)}=i_{(x,v)}(x'_m(x,v))$, for some $x'_m(x,v)\in B^2_\delta$. Define the path $\check{\gamma}_m(x,v):[0,1]\rightarrow SS^2$ by 
\begin{equation*}
\check{\gamma}_m(x,v)(r):= \left(y^x_{x'_m(x,v)}(r),P^{y^x_{x'_m(x,v)}}_{r}(v)\right) 
\end{equation*}
and define $\overline{\check{\gamma}_m}(x,v)$ to be the inverse path. Notice that $\frac{d}{dr}\check{\gamma}_m(x,v)\in\ker\check{\lambda}_0=\ker\check{\lambda}_m$. The loop $\gamma_m(x,v):[0,2]\rightarrow SS^2$ is obtained by concatenation
\begin{equation}
\gamma_m(x,v)(r)=
\begin{cases}
\Phi^{R^m}_{t_m(x,v)r}(x,v)&\mbox{ for }r\in[0,1],\\
\overline{\check{\gamma}_m}(x,v)(r-1)&\mbox{ for }r\in[1,2].
\end{cases}
\end{equation}
Denote by $\check{C}^\infty(\T_2,SS^2)$ the space of piecewise smooth loops of period $2$ in $SS^2$ and notice that the family of maps $m\mapsto \Big(\gamma_m:SS^2\rightarrow \check{C}^\infty(\T_2,SS^2)\Big)$ is smooth. Finally, define $S_m$ as the composition of the action functional associated with $\check{\lambda}_m$ and the above embedding of $SS^2$ inside the space of loops:
\begin{align*}
S_m:=\mathcal A^{\check{\lambda}_m}\circ \gamma_m:SS^2&\longrightarrow\ \R\\
(x,v)&\longmapsto\ \begin{aligned}[t]
                    \int_{\T_2}\gamma_m(x,v)^*\check{\lambda}_m&=\ t_m(x,v)+\int_{\T_1}\overline{\check{\gamma}_m}(x,v)^*\check{\lambda}_m\\
                    &=\ t_m(x,v). 
                   \end{aligned}
\end{align*}
It is proven in \cite[Lemma 3, page 104]{gin2} that, if $(x,v)$ is a critical point of $S_m$, then $\Phi^{\check{R}^m}_{t_m(x,v)}=(x,v)$ and, therefore, $(x,v)$ is in the support of a periodic orbit.

Notice that since the family $m\mapsto S_m$ is smooth, it admits an approximating Taylor expansion truncated at any order. To find such expansion, we observe that $\check{\lambda}_m=\frac{\check{\lambda}_0}{\pi^*\rho_m}+o(m^2)$ by Lemma \ref{lemconb}, where $\rho_m:S^2\rightarrow \R$ is given by $\rho_m(x):=1-\frac{m^2}{2f(x)}$. Thus, we prove Proposition \ref{exp_fun} in two steps:
\begin{enumerate}[{\itshape i)}]
 \item we find an expansion for $S'_m:SS^2\rightarrow\R$, the action function for the $1$-form $\check{\lambda}'_m:=\frac{\check{\lambda}_0}{\pi^*\rho_m}$ and we see that is equal to the desired expansion of $S_m$;
 \item we show that $S_m-S'_m=o(m^2)$. 
\end{enumerate}
For the first step we need the lemma below.
\begin{lem}\label{lem_intexp}
Let $\pi:E\rightarrow M$ be an $S^1$-bundle over a closed orientable surface. Let $\tau$ be an $S^1$-connection on $E$ with positive curvature form $\sigma$. Fix $\rho:M\rightarrow(0,+\infty)$ a positive function and define the contact form $\tau_\rho:=\frac{\tau}{\pi^*\rho}\in\Omega^1(E)$. Then, the Reeb vector field of $\tau_\rho$ splits as
\begin{equation}
R^{\tau_\rho}_{(x,v)}=-L^{\mathcal H}_{(x,v)}\Big((X_\rho)_x\Big)+\rho(x) V_{(x,v)},
\end{equation}
where $L^{\mathcal H}$ is the horizontal lift with respect to $\ker\tau$ and $X_\rho\in\Gamma(M)$ is the $\sigma$-Hamiltonian vector field associated to $\rho$ (namely, $\imath_{X_\rho}\sigma=-d\rho$).
\end{lem}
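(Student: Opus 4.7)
The plan is to verify directly that the vector field
\[
R := -L^{\mathcal H}(X_\rho) + (\pi^*\rho)\, V
\]
satisfies the two defining equations of the Reeb field, $\tau_\rho(R)=1$ and $\imath_R d\tau_\rho=0$. Since Reeb vector fields are uniquely determined by these conditions, this will identify $R$ with $R^{\tau_\rho}$ and prove the lemma.

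First I would check the normalisation. Because horizontal lifts lie in $\ker\tau$ and $\tau(V)=1$, we have $\tau(R)=\pi^*\rho$, so $\tau_\rho(R)=\tau(R)/\pi^*\rho=1$. Next I would compute
\[
d\tau_\rho \;=\; \frac{d\tau}{\pi^*\rho} \;-\; \frac{\pi^*d\rho\wedge\tau}{(\pi^*\rho)^2} \;=\; -\frac{\pi^*\sigma}{\pi^*\rho} \;-\; \frac{\pi^*d\rho\wedge\tau}{(\pi^*\rho)^2},
\]
using $d\tau=-\pi^*\sigma$. Contracting with $R$ and analysing the two summands separately is the heart of the argument.

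For the first summand, the vertical part of $R$ is killed by $\pi^*\sigma$, so only the horizontal part contributes; using $\pi_*L^{\mathcal H}(X_\rho)=X_\rho$ and the defining relation $\imath_{X_\rho}\sigma=-d\rho$ one gets
\[
\imath_R(\pi^*\sigma) \;=\; -\,\pi^*\bigl(\imath_{X_\rho}\sigma\bigr) \;=\; \pi^*d\rho.
\]
For the second summand I would expand
\[
\imath_R\bigl(\pi^*d\rho\wedge\tau\bigr) \;=\; (\pi^*d\rho)(R)\,\tau \;-\; \tau(R)\,\pi^*d\rho.
\]
Here $(\pi^*d\rho)(R)=-d\rho(X_\rho)=\sigma(X_\rho,X_\rho)=0$, while $\tau(R)=\pi^*\rho$ by the first step. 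Combining everything,
\[
\imath_R d\tau_\rho \;=\; -\frac{\pi^*d\rho}{\pi^*\rho} \;+\; \frac{\pi^*\rho\cdot \pi^*d\rho}{(\pi^*\rho)^2} \;=\; 0,
\]
which completes the verification.

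The computation is essentially mechanical once the right splitting is in place; there is no genuine obstacle. The only subtle point to keep in mind is the sign convention for the Hamiltonian vector field ($\imath_{X_\rho}\sigma=-d\rho$, consistent with \eqref{hameq}) and the fact that $\pi^*\sigma$ annihilates the vertical direction $V$, which is what forces the horizontal component of $R$ to be determined by $X_\rho$ rather than by $\rho$ alone.
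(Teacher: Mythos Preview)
Your proof is correct and follows essentially the same approach as the paper: both compute $d\tau_\rho=-\frac{\pi^*\sigma}{\pi^*\rho}-\frac{\pi^*d\rho\wedge\tau}{(\pi^*\rho)^2}$ and check the two Reeb conditions, using that $\pi^*\sigma$ kills $V$ and that $d\rho(X_\rho)=0$. The only cosmetic difference is that the paper writes the Reeb field with unknown horizontal and vertical components and solves for them, whereas you plug in the stated formula and verify it directly.
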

\begin{proof}
Since $\sigma$ is positive, $\tau$ is a contact form. Hence, also $\tau_\rho$, which is obtained multiplying $\tau$ by a positive function, is a contact form as well. Without loss of generality we write $R^{\tau_\rho}_{(x,v)}=L^{\mathcal H}_{(x,v)}(Z_{(x,v)})+a(x,v) V_{(x,v)}$, where $Z(x,v)\in T_xM$ and $a(x,v)\in \R$ are to be determined. Imposing that $1=\tau_\rho(R^{\tau_\rho})$, we obtain that $a(x,v)=\rho(x)$. Imposing that $0=\imath_{R^{\tau_\rho}}d\tau_\rho$, we get
\begin{align*}
0&=\imath_{L^{\mathcal H}(Z)+\pi^*\rho V}\left(\frac{-\pi^*\sigma}{\pi^*\rho}-\pi^*\left(\frac{d\rho}{\rho^2}\right)\wedge\tau\right)\\
&=\pi^*\left(\frac{-\imath_Z\sigma+d\rho}{\rho}\right)-\pi^*\left(\frac{d\rho(Z)}{\rho^2}\right)\tau.
\end{align*}
This implies that $-\imath_Z\sigma+d\rho=0$ and $\frac{d\rho(Z)}{\rho^2}=0$. The first condition implies that $-Z$ is the $\sigma$-Hamiltonian vector field with Hamiltonian $\rho$. Since the Hamiltonian function is a constant of motion, the second condition is also satisfied and the lemma is proven.
\end{proof}
Let us proceed to the proof of the proposition following steps \textit{i)} and \textit{ii)}.
\begin{proof}[Proof of Proposition \ref{exp_fun}]
Let us find the expansion for $m\rightarrow S'_m$. Since we already know that this expansion will be uniform, we can fix the point $(x,v)\in SS^2$ and expand the function $m\mapsto S'_m(x,v)$. We denote with a prime all the objects associated with $\check{\lambda}'_m$. Denote by $(\widetilde{x}(t),\widetilde{v}(t))$ the flow line of $R'^m$ going through $(x,v)$ at time $0$. Consider $P^{\widetilde{x}}_{t'_m(x,v)}(v)\in S_{x'_m(x,v)}S^2$, the parallel transport of $v$ along $\widetilde{x}$. The angle between $P^{\widetilde{x}}_{t'_m(x,v)}(v)$ and $\check{\gamma}'_m(x,v)(1)$ is equal to the integral of the curvature $\sigma$ on a disc bounding the concatenated loop $\widetilde{x}\ast\pi\left(\overline{\check{\gamma}'_m}(x,v)\right)$. Since this loop is contained inside a ball of radius $d(x,x'_m(x,v))$, we can bound the area of the disc by some constant times $d(x,x'_m(x,v))^2$. However, $\frac{d}{dt}\widetilde{x}=X_{\rho_m}=-m^2X_{2/f}$ implies that $d(x,x'_m(x,v))\leq t_m(x,v)\cdot m^2\left\Vert X_{2/f}\right\Vert$. Hence, the area of the disc goes to zero faster than $m^3$. As a consequence, the angle between $P^{\widetilde{x}}_{t'_m(x,v)}(v)$ and $\check{\gamma}'_m(x,v)(1)$ is of order $o(m^3)$. Consider the continuous loop $\mathring{\gamma}'_m(x,v)$ obtained by the concatenation of three paths:
\begin{itemize}
 \item the path $(\widetilde{x}(t),P^{\widetilde{x}}_{t}(v))$;
 \item a path contained in the fibre over $x'_m(x,v)$ and connecting $P^{\widetilde{x}}_{t'_m(x,v)}(v)$ with $\check{\gamma}'_m(x,v)(1)$ following the shortest angle;
 \item the path $\overline{\check{\gamma}'_m}(x,v)$.
\end{itemize}
Since $\pi(\mathring{\gamma}'_m(x,v))=\pi(\gamma'_m(x,v))$, we can consider the increment in the angle between $\mathring{\gamma}'_m(x,v)$ and $\gamma'_m(x,v)$ along this loop. As $\mathring{\gamma}'_m(x,v)(0)=\gamma'_m(x,v)(0)$, this angle will be a multiple of $2\pi$. For $m=0$ we readily see that the angle is $2\pi$, hence it remains $2\pi$ by continuity. However, the derivative of the increment of this angle is
\begin{itemize}
 \item $t'_m(x,v)\rho_m(x)$, over $\widetilde{x}$;
 \item of order $o(m^3)$, over the path contained in the fibre of $x'_m(x,v)$;
 \item zero, over $\pi(\overline{\check{\gamma}'_m}(x,v))$. 
\end{itemize}
Putting things together, we find $2\pi=t'_m(x,v)\rho_m(x)+o(m^3)$, which implies Step \textit{i)}:
\begin{equation*}
S'_m(x,v)=t'_m(x,v)=2\pi\left(1-\frac{m^2}{2f(x)}\right)+o(m^3).
\end{equation*}

For Step \textit{ii)} we observe that there exists a smooth path $m\mapsto \check{\lambda}''_m\in\Omega^1(SS^2)$ such that $\check{\lambda}_m=\check{\lambda}'_m+m^3\check{\lambda}''_m$. As a consequence, $R^{\check{\lambda}_m}=R^{\check{\lambda}'_m}+m^3R''_m$, for some smooth family $m\mapsto R''_m\in\Gamma(SS^2)$. This implies that $t_m(x,v)-t_m'(x,v)=o(m^2)$ and, hence, $S_m=t_m=t'_m+o(m^2)=S'_m+o(m^2)$. This establish Step \textit{ii)} and the entire proposition.
\end{proof}
We now have all the tools to prove the corollary.
\begin{proof}[Proof of Corollary \ref{cor_nondeg}]
Applying the proposition we just proved, we find that $S_m=2\pi+m^2\check{S}_m$, where $\check{S}_0=\frac{\pi}{f}$. Therefore, the critical points of $S_m$ are the same as the critical points of $\check{S}_m$. Notice that the critical points of $\check{S}_0$ are the vertical fibres $S_xS^2$, where $x\in S^2$ is a critical point of $f$. Let $x$ be a non-degenerate critical point of $f$ and consider a tubular neighbourhood $\T_{2\pi}\times B_\delta$ of $S_xS^2$ with coordinates $(\psi,z)$. For every $\psi\in\T_{2\pi}$ consider the restriction of $\check{S}_m$ to $\{\psi\}\times B_\delta$ and call it $\check{S}_m^\psi$. Since $x$ is a non-degenerate critical point, the functions $\check{S}_0^\psi$ have $0\in B_\delta$ as non-degenerate critical point. Therefore, by the inverse function theorem, for every sufficiently small $m$ there is a path $z_m:\T_{2\pi}\rightarrow B_\delta$ such that $z_m(\psi)$ is the unique critical point of $S_m^\psi$. We claim that $\gamma_m(\psi):=(\psi,z_m(\psi))\in\T_{2\pi}\times B_\delta$ is a critical point for $\check{S}_m$. Indeed, the function $S_m\circ \gamma_m:\T_{2\pi}\rightarrow \R$ has at least one critical point $\psi_*$, so that $d_{\gamma_m(\psi_*)}S_m\big(\frac{d}{d\psi}\gamma_m(\psi_*)\big)=0$. Since $\frac{d}{d\psi}\gamma_m(\psi_*)$ is transverse to $\{0\}\times T_{z_m(\psi_*)} B_\delta\subset T_{\gamma_m(\psi_*)}\T_{2\pi}\times B_\delta$ and \begin{equation*}
d_{\gamma_m(\psi_*)}S_m\big|_{\{0\}\times T_{z_m(\psi_*)}B_\delta}=d_{z_m(\psi_*)}S^{\psi_*}_m=0,
\end{equation*}
we have that $\gamma_m(\psi_*)$ is a critical point for $S_m$. However, the critical points for $S_m$ comes in $S^1$-families since they correspond to periodic orbits of $\check{R}^m$. This implies the claim and finishes the proof of the corollary.  
\end{proof}
\begin{rmk}
The approximation of the magnetic flow with the Reeb flow of $R^{\check{\lambda}_m'}$ is called the \textit{guiding centre approximation}. A precise formulation can be found in \cite{arn2}. It was used by Castilho in \cite{cas} to prove a theorem on region of stabilities for the magnetic flow via Moser's invariant curve theorem \cite{mos3}. Recently, Raymond and V{\~u} Ng{\d{o}}c \cite{rasa} employed this approach to study the semiclassical limit of a magnetic flow with low energy close to a non-degenerate minimum of $f$.
\end{rmk}

\section{Contactomorphism with a convex hypersurface}\label{conve}
The main goal of this section is to construct a convex hypersurface in $\C^2$ which is a contact double cover of $(SS^2,\lambda^g_{m,\beta})$. Example \ref{exa_conhyp} will then imply that $\lambda^g_{m,\beta}$ is dynamically convex and give a first proof of Theorem \ref{dyncon_thm}.

\begin{prp}\label{convi}
If $m\in[0,m_\beta)$, there exists a double cover $p_{m,\beta}:S^3\rightarrow SS^2$ and an embedding $\upsilon_{m,\beta}:S^3\rightarrow \mathbb C^2$ bounding a region starshaped around the origin and such that $p_{m,\beta}^*\lambda^g_{m,\beta}=-\upsilon_{m,\beta}^*\lambda_{\op{st}}$. Furthermore, as $m$ goes to zero, $\upsilon_{m,\beta}$ tends in the $C^2$-topology to the embedding of $S^3$ as the Euclidean sphere of radius $2$. In particular, $\upsilon_{m,\beta}$ is a convex embedding for $m$ sufficiently small.
\end{prp}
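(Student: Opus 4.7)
The strategy is to first settle the case $m=0$ by an explicit construction producing an embedding whose image is exactly the round sphere of radius $2$, and then to propagate the construction to small $m>0$ via the Gray stability already furnished by Lemma \ref{lemconb}.

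For $m=0$: Identify $SS^2\cong SO(3)=SU(2)/\{\pm I\}$ and let $q:S^3=SU(2)\to SO(3)\simeq SS^2$ be the double cover. Both $\lambda^g_{0,\beta}$ on $SS^2$ and the form $4\bar\alpha_{\op{st}}$ obtained by descent of $\alpha_{\op{st}}$ to $SO(3)$ are $S^1$-connection forms on $S^1$-bundles of Euler number $2$ over $S^2$, with curvatures of total integral $4\pi$. Moser's theorem supplies a diffeomorphism $\phi:S^2\to S^2$ intertwining those curvatures, which lifts to a bundle isomorphism $\iota:SS^2\to SO(3)$ covering $\phi$. The two connections $\iota^*(4\bar\alpha_{\op{st}})$ and $-\lambda^g_{0,\beta}$ then have identical curvature on $SS^2$; their difference is the pullback of a closed $1$-form from $S^2$, which is exact since $H^1(S^2)=0$, and can therefore be absorbed by a fiberwise gauge transformation of $\iota$. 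After choosing orientations consistently---the minus sign in the statement of the proposition reflects a reversal of the $S^1$-action in the identification---the double cover $p_0:=\iota^{-1}\circ q$ satisfies $p_0^*\lambda^g_{0,\beta}=-4\alpha_{\op{st}}$. Setting $\upsilon_{0,\beta}(\theta):=2\theta$ and using the elementary identity $\psi_\rho^*\lambda_{\op{st}}=\rho^2\alpha_{\op{st}}$ for any radial embedding $\psi_\rho(\theta)=\rho(\theta)\theta$---which follows from $g_{\op{st}}(J\theta,\theta)=0$---yields $\upsilon_{0,\beta}^*\lambda_{\op{st}}=4\alpha_{\op{st}}$, so that $p_0^*\lambda^g_{0,\beta}=-\upsilon_{0,\beta}^*\lambda_{\op{st}}$ and the image of $\upsilon_{0,\beta}$ is the Euclidean sphere of radius $2$.

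For small $m>0$: By Lemma \ref{lemconb}, there are smooth families $F_{m,\beta}:SS^2\to SS^2$ and $q_{m,\beta}:SS^2\to\R$, with $F_{0,\beta}=\op{Id}$, $q_{0,\beta}=0$, and $F_{m,\beta}^*\lambda^g_{m,\beta}=e^{q_{m,\beta}}\lambda^g_{0,\beta}$. I set
\begin{equation*}
p_{m,\beta}:=F_{m,\beta}\circ p_0,\qquad\upsilon_{m,\beta}(\theta):=2\,e^{q_{m,\beta}(p_0(\theta))/2}\,\theta.
\end{equation*}
The image of $\upsilon_{m,\beta}$ is the starshaped radial graph of $\theta\mapsto 2e^{q_{m,\beta}(p_0(\theta))/2}$. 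Reapplying $\psi_\rho^*\lambda_{\op{st}}=\rho^2\alpha_{\op{st}}$ gives $\upsilon_{m,\beta}^*\lambda_{\op{st}}=(e^{q_{m,\beta}}\circ p_0)\,\upsilon_{0,\beta}^*\lambda_{\op{st}}$, while $p_{m,\beta}^*\lambda^g_{m,\beta}=(e^{q_{m,\beta}}\circ p_0)\,p_0^*\lambda^g_{0,\beta}$; combining with the $m=0$ identity yields $p_{m,\beta}^*\lambda^g_{m,\beta}=-\upsilon_{m,\beta}^*\lambda_{\op{st}}$. Because $m\mapsto q_{m,\beta}$ is smooth in $C^\infty$ and vanishes at $m=0$, the embedding $\upsilon_{m,\beta}$ converges to $\upsilon_{0,\beta}$ in every $C^k$-topology; in particular, $C^2$-convergence, combined with the strict convexity of the round sphere of radius $2$ and the fact that convexity of a closed embedded hypersurface is an open condition in $C^2$, implies that $\upsilon_{m,\beta}(S^3)$ bounds a convex region for all sufficiently small $m$.

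The delicate step is pinning down the absolute constant in $p_0^*\lambda^g_{0,\beta}=-4\alpha_{\op{st}}$: Gray stability alone produces such an identity only up to a conformal factor varying over $S^3$, whereas the conclusion of the proposition demands the exact equality with constant factor $4$---and hence radius exactly $2$---which forces the combined use of Moser's theorem on the base to match curvatures and a fiberwise gauge transformation to kill the remaining exact-form discrepancy. Once Stage~1 is secured, the extension to $m>0$ together with the asymptotic convergence and convexity reduce to routine calculations.
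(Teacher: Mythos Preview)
Your proof is correct and follows essentially the same route as the paper's: establish the $m=0$ identity $p_0^*\lambda^g_{0,\beta}=-4\alpha_{\op{st}}$, then propagate via Lemma~\ref{lemconb} and the radial-graph formula $\upsilon_{\sqrt{\rho}}^*\lambda_{\op{st}}=\rho\,\alpha_{\op{st}}$. The only difference is in how Stage~1 is executed: the paper factors it as an explicit quaternionic double cover $S^3\to SS^2_0$ (Proposition~\ref{stalif}, where the constant $-4$ is read off directly from the computation at $\mathbf{1}\in S^3$) followed by Weinstein's equivariant-diffeomorphism result (Corollary~\ref{corconb}) to pass from $\tau_0$ on $SS^2_0$ to $\lambda^g_{0,\beta}$ on $SS^2$, whereas you collapse these into a single Moser-plus-gauge argument comparing $\lambda^g_{0,\beta}$ with the descent $4\bar\alpha_{\op{st}}$. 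Your argument is slightly more abstract and leaves the verification that $4\bar\alpha_{\op{st}}$ is genuinely an $S^1$-connection form (with the correct normalisation) implicit; the paper's quaternionic calculation makes this explicit and pins down the sign and the factor $4$ without appeal to orientation conventions.
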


We construct the double cover $p_{m,\beta}$ in three steps. 
Let $(S^2,g_0,\mu_0)$ be the magnetic system on the round sphere of radius $1$ given by the area form. We denote by $SS^2_0$ the unit sphere bundle, by $\jmath_0$ the rotation by $\pi/2$ and by $\tau_0$ the vertical form associated with the metric $g_0$. We have already seen in Lemma \ref{lemconb} that there exists a contactomorphism $F_{m,\beta}$ between $e^{q_{m,\beta}}\lambda^g_{0,\beta}$ and $\lambda^g_{m,\beta}$. Our next task is to relate $\lambda^g_{0,\beta}$ with $\tau_0$. For this purpose, we need the following proposition due to Weinstein \cite{wei3}. For a proof we refer to \cite[Appendix B]{gui}.
\begin{prp}
Suppose $E_i\rightarrow S^2$, with $i=0,1$, are two $S^1$-bundles endowed with $S^1$-connection forms $\tau_i\in\Omega^1(E_i)$. Call $\sigma_i\in \Omega^2(S^2)$ their curvature forms and suppose they are both symplectic and such that 
\begin{equation}\label{intsi}
\left|\int_{S^2}\sigma_0\right|=\left|\int_{S^2}\sigma_1\right|,
\end{equation}
Then, there is an $S^1$-equivariant diffeomorphism $B:E_0\rightarrow E_1$ such that $B^*\tau_1=\tau_0$.
\end{prp}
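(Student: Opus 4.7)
The plan is to construct the equivariant diffeomorphism $B$ as a composition $B = \Psi \circ T$ of two naturally occurring pieces: a bundle map $\Psi\colon E_0 \to E_1$ covering a diffeomorphism of the base that pulls $\sigma_1$ back to $\sigma_0$, followed by a vertical translation $T$ that kills the residual difference between connection forms with the same curvature.

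The first step is to identify the two bundles. Since $d\tau_i = -\pi^*\sigma_i$, the form $\sigma_i$ (up to a sign) represents the Euler class of $E_i$ via Chern--Weil, so the hypothesis $|\int_{S^2}\sigma_0|=|\int_{S^2}\sigma_1|$ means the Euler numbers of $E_0$ and $E_1$ agree up to sign. If they agree, $E_0$ and $E_1$ are isomorphic as oriented $S^1$-bundles; if they differ by a sign, we first compose with fibrewise inversion (which reverses the sign of any connection form and of its curvature) to reduce to the matching case. Fixing any such bundle isomorphism identifies $E_0$ and $E_1$ with a single $S^1$-bundle $E$, on which we now have two $S^1$-connection forms with positive symplectic curvatures of equal total area.

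The second step uses Moser's theorem on $S^2$ applied to the path $\sigma_t = (1-t)\sigma_0 + t\sigma_1$ (which is a symplectic form for every $t$ thanks to positivity and equal integral) to produce an isotopy $\psi_t\colon S^2\to S^2$ with $\psi_1^*\sigma_1 = \sigma_0$. Lifting the generating time-dependent vector field to its horizontal lift with respect to $\tau_1$ produces an $S^1$-equivariant isotopy $\Psi_t\colon E\to E$ covering $\psi_t$, so that $\Psi := \Psi_1$ is an $S^1$-equivariant bundle automorphism with $d(\Psi^*\tau_1) = -\pi^*\sigma_0 = d\tau_0$.

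At this point $\Psi^*\tau_1$ and $\tau_0$ are both $S^1$-connection forms on $E$ with identical curvature, hence their difference is horizontal and basic, so $\Psi^*\tau_1 - \tau_0 = \pi^*\alpha$ for a closed $\alpha\in\Omega^1(S^2)$. Since $H^1(S^2,\R)=0$ we may write $\alpha = df$. The third step then is to cancel this exact form by a fibrewise rotation: let $T_{-f}\colon E\to E$ be the map that rotates the fibre over $x$ by $-f(x)$. A direct computation in a local trivialisation (where any connection form reads $\pi^*\beta + d\theta$) shows $T_g^*\eta = \eta + \pi^*dg$ for every connection form $\eta$, whence
\begin{equation*}
T_{-f}^*(\Psi^*\tau_1) \;=\; T_{-f}^*\tau_0 + \pi^*df \;=\; \tau_0 - \pi^*df + \pi^*df \;=\; \tau_0.
\end{equation*}
Thus $B := \Psi \circ T_{-f}$ is the desired $S^1$-equivariant diffeomorphism.

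The main subtlety is the orientation bookkeeping in the first step: the statement uses absolute values of the integrals of the $\sigma_i$, so one must verify that the composition with fibre inversion (when the Euler numbers have opposite sign) is compatible with the sign convention $d\tau_i = -\pi^*\sigma_i$ fixed in the paper. The rest of the argument is standard Moser theory plus the elementary observation that two connections with the same curvature differ by the pullback of a closed $1$-form on the base.
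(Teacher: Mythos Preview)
The paper does not give its own proof of this proposition: it attributes the result to Weinstein and refers the reader to Appendix~B of Guillemin's book for a proof. So there is nothing in the paper to compare your argument against, and your task is really to supply a self-contained argument.

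Your overall strategy is the standard and correct one: identify the bundles, use Moser's theorem on the base to match the curvatures, lift the Moser isotopy horizontally to an $S^1$-equivariant bundle automorphism, and then kill the residual exact $1$-form by a gauge transformation (fibrewise rotation by a function). Steps~2 and~3 are carried out correctly.

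There is, however, a genuine gap in Step~1. Your ``fibrewise inversion'' is not a well-defined map on a nontrivial principal $S^1$-bundle: the fibres are $S^1$-torsors, not groups, so inversion requires a choice of identity in each fibre, i.e.\ a global section, which does not exist when $e_1\neq 0$. More conceptually, an anti-equivariant self-map of $E_1$ covering the identity on $S^2$ would give a bundle isomorphism $E_1\cong\bar E_1$ and force $e_1=-e_1$. The correct way to handle the case $\int\sigma_0=-\int\sigma_1$ is not fibrewise inversion but rather an orientation-reversing diffeomorphism of the base: pick any orientation-reversing $a\colon S^2\to S^2$ and an $S^1$-equivariant bundle map $E_0\to E_1$ covering $a$ (this exists because $a^*E_1$ has Euler number $-e_1=e_0$). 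Then $a^*\sigma_1$ is symplectic, has the same integral as $\sigma_0$, and---because $a$ reverses orientation---also has the same sign as $\sigma_0$, so your Moser argument in Step~2 applies verbatim to $\sigma_0$ and $a^*\sigma_1$. With this correction the proof is complete.
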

Thanks to the normalisation $[\sigma]=4\pi$ and the fact that $\lambda^g_{0,\beta}$ is an $S^1$-connection form on $SS^2$, we get the following corollary.
\begin{cor}\label{corconb}
There is an $S^1$-equivariant diffeomorphism $B_\beta:SS^2_0\rightarrow SS^2$ such that $B_\beta^*\lambda^g_{0,\beta}=\tau_0$.
\end{cor}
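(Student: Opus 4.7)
The plan is to verify directly that the hypotheses of the Weinstein proposition quoted above are satisfied with $E_0 = SS^2_0$ equipped with $\tau_0$ and $E_1 = SS^2$ equipped with $\lambda^g_{0,\beta}$. Everything reduces to checking the connection condition, computing the two curvature forms, and confirming equality \eqref{intsi} for their integrals; no genuine obstacle stands in the way, so this is essentially a bookkeeping exercise.

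First I would check that $\lambda^g_{0,\beta} = -\pi^*\beta + \tau$ is an $S^1$-connection on $\pi : SS^2 \to S^2$. Since $V$ is vertical, $\pi^*\beta(V) = 0$; while from the dual coframe in Proposition \ref{prphv} one has $\tau(V) = 2E$, which equals $1$ on $SS^2 = \Sigma_1$. Hence $\lambda^g_{0,\beta}(V) = 1$. Because $\beta \in \mathcal P^{\sigma - \sigma_g}$, the structural equation \eqref{equifour} restricted to $SS^2$ gives
\begin{equation*}
d\lambda^g_{0,\beta} = -\pi^* d\beta + d\tau = -\pi^*(\sigma - \sigma_g) - \pi^*\sigma_g = -\pi^*\sigma,
\end{equation*}
so $\sigma$ is the curvature form of $\lambda^g_{0,\beta}$. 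The analogous computation for $\tau_0$ on $SS^2_0$ shows that it is an $S^1$-connection with curvature form $\sigma_{g_0} = K_{g_0}\mu_0 = \mu_0$, since $g_0$ has constant Gaussian curvature $1$.

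Finally, both curvature forms are symplectic ($\sigma = f\mu$ with $f > 0$, and $\mu_0$ is a Riemannian area form on $S^2$), and both integrate to $4\pi$: the area of the unit round sphere is $4\pi$, while the normalisation $[\sigma] = 4\pi$ was fixed at the beginning of the chapter. Thus condition \eqref{intsi} is satisfied, and the Weinstein proposition delivers the desired $S^1$-equivariant diffeomorphism $B_\beta : SS^2_0 \to SS^2$ with $B_\beta^*\lambda^g_{0,\beta} = \tau_0$, which is the claim.
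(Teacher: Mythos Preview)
Your proof is correct and follows the same approach as the paper: the paper simply remarks that the corollary follows from the normalisation $[\sigma]=4\pi$ together with the already-established fact (stated just before Lemma~\ref{lemconb}) that $\lambda^g_{0,\beta}$ is an $S^1$-connection on $SS^2$ with curvature $\sigma$, and then invokes the Weinstein proposition. You have spelled out exactly those verifications in full detail.
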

What we have found so far tells us that we only need to study the pull-back of $\tau_0$ to $S^3$. This will be our next task. The ideas we use are taken from \cite{conoli,pathar}.

We identify $\mathbb C^2$ with the space of quaternions by setting $\mathbf1:= (1,0)$, $\mathbf i:= (i,0)$, $\mathbf j:= (0,1)$ and $\mathbf k:= (0,i)$. With this choice left multiplication by $\mathbf i$ corresponds to the action of $J_{\op{st}}$. Let $\upsilon:S^3\rightarrow \mathbb C^2$ be the inclusion of the unit Euclidean sphere. Identify the Euclidean space $\mathbb R^3$ with the vector space spanned by $\mathbf i,\mathbf j,\mathbf k$ endowed with the restricted inner product. We think the round sphere $(S^2,g_0)$ as embedded in this version of the Euclidean space. Thus, the unit sphere bundle $SS^2_0$ is embedded in $\mathbb R^3\times\mathbb R^3$ as the pair of vectors $(u_1,u_2)$ such that $u_1,u_2\in S^2$ and $g_{\op{st}}(u_1,u_2)=0$.

If $z=(u_1,u_2)\in SS^2_0\subset\mathbb R^3\times\mathbb R^3$ and $Z=(v_1,v_2)\in T_zSS^2_0\subset\mathbb R^3\times\mathbb R^3$, then
\begin{equation}\label{formucon}
(\tau_0)_{z}(Z)=g_0\Big(v_2-g_{\op{st}}\big(v_2,u_1\big)u_1,{\jmath_0}_{u_1}(u_2)\Big)=g_{\op{st}}\big(v_2,{\jmath_0}_{u_1}(u_2)\big)
\end{equation}
as a consequence of the relation between the Levi-Civita connections on $S^2$ and $\mathbb R^3$.

For any $U\in S^3$, we define a map $C_U:\mathbb R^3\rightarrow\mathbb R^3$ using quaternionic multiplication and inverse by $C_U(U')=U^{-1}U'U$. The quaternionic commutation relations and the compatibility between the metric and the multiplication tell us that $C_U$ restricts to an isometry of $S^2$. Hence, $dC_U$ yields a diffeomorphism of the unit sphere bundle onto itself given by $(u_1,u_2)\mapsto d_{u_1}C_U(u_2)=(C_U(u_1),C_U(u_2))$. Moreover, since $C_U$ is an isometry, $(dC_U)^*\tau_0=\tau_0$. 

We are now ready to define the covering map $p_0:S^3\rightarrow SS^2_0$. It is given by $p_0(U):= d_{\mathbf i}C_U(\mathbf j)$. Let us compute the pull-back of $\tau_0$ by $p_0$.
\begin{prp}\label{stalif}
The covering map $p_0$ relates $\tau_0$ and $\lambda_{\op{st}}$ in the following way:
\begin{equation}\label{equicon}
p_0^*\tau_0=-4\upsilon^*\lambda_{\op{st}}. 
\end{equation}
\end{prp}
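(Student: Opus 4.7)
The plan is to work entirely in the quaternionic picture and reduce the identity \eqref{equicon} to a short computation in $\operatorname{Im}(\mathbb{H})=\mathbb{R}^3$. First I parametrise tangent vectors along $S^3\subset\mathbb{H}$ by left translation: every $W\in T_US^3$ may be written $W=U\alpha$ with
\[
\alpha:=U^{-1}W=\bar U W\in\operatorname{Im}(\mathbb{H})=\mathbb{R}^3,
\]
since $\operatorname{Re}(\bar U W)=g_{\op{st}}(U,W)=0$. I will show that both $(p_0^*\tau_0)_U(W)$ and $-4(\upsilon^*\lambda_{\op{st}})_U(W)$ equal $-2\langle C_U(\mathbf{i}),\alpha\rangle$, where $\langle\cdot,\cdot\rangle$ denotes $g_{\op{st}}$ restricted to $\mathbb{R}^3$.

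Next I compute $d_Up_0$. Differentiating $C_{U(t)}(U')=U(t)^{-1}U'U(t)$ at $t=0$ via $\frac{d}{dt}U(t)^{-1}=-U^{-1}\dot U\,U^{-1}$ yields
\[
d_U(C_\bullet(U'))(W)\;=\;-\alpha\,C_U(U')+C_U(U')\,\alpha\;=\;[C_U(U'),\alpha].
\]
Because $C_U$ preserves $\operatorname{Im}(\mathbb{H})$, both $C_U(U')$ (for $U'\in\{\mathbf{i},\mathbf{j},\mathbf{k}\}$) and $\alpha$ are imaginary quaternions, and the identity $ab=-\langle a,b\rangle+a\times b$ valid for $a,b\in\mathbb{R}^3$ gives $[a,b]=2\,a\times b$. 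Hence
\[
d_Up_0(W)\;=\;\bigl(2\,C_U(\mathbf{i})\times\alpha,\;2\,C_U(\mathbf{j})\times\alpha\bigr).
\]

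The third step applies \eqref{formucon} with $z=p_0(U)=(C_U(\mathbf{i}),C_U(\mathbf{j}))$ and $Z=d_Up_0(W)$. Since $C_U$ is an orientation-preserving isometry of $(S^2,g_0)$, the complex structure $\jmath_0$ acts on orthogonal unit vectors by $\jmath_0|_{u_1}(u_2)=u_1\times u_2$, so
\[
\jmath_0|_{C_U(\mathbf{i})}(C_U(\mathbf{j}))\;=\;C_U(\mathbf{i})\times C_U(\mathbf{j})\;=\;C_U(\mathbf{i}\times\mathbf{j})\;=\;C_U(\mathbf{k}).
\]
Using the scalar triple product identity $\langle a\times b,c\rangle=-\langle b\times a,c\rangle=-\langle b,a\times c\rangle$ together with the fact that $\{C_U(\mathbf{i}),C_U(\mathbf{j}),C_U(\mathbf{k})\}$ is an oriented orthonormal frame of $\mathbb{R}^3$, a one-line computation gives
\[
(p_0^*\tau_0)_U(W)\;=\;g_{\op{st}}\bigl(2\,C_U(\mathbf{j})\times\alpha,\;C_U(\mathbf{k})\bigr)\;=\;-2\,\langle C_U(\mathbf{i}),\alpha\rangle.
\]

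Finally, from $g_{\op{st}}(a,b)=\operatorname{Re}(\bar a\, b)$ and $\bar{\mathbf{i}}=-\mathbf{i}$ one computes
\[
g_{\op{st}}(\mathbf{i}U,W)\;=\;-\operatorname{Re}(\bar U\,\mathbf{i}\,U\,\alpha)\;=\;-\operatorname{Re}\bigl(C_U(\mathbf{i})\,\alpha\bigr)\;=\;\langle C_U(\mathbf{i}),\alpha\rangle,
\]
so that $(\upsilon^*\lambda_{\op{st}})_U(W)=\tfrac12\langle C_U(\mathbf{i}),\alpha\rangle$, and comparison with the previous display establishes $p_0^*\tau_0=-4\,\upsilon^*\lambda_{\op{st}}$. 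The only non-routine point is bookkeeping: the sign conventions for $\jmath_0$ (Riemannian vs.\ cross-product), for $\lambda_{\op{st}}$ (the factor $\tfrac12$), for $g_{\op{st}}$ expressed through quaternionic conjugation, and for the commutator-to-cross-product relation all have to line up, and once they do the coefficient $-4$ appears automatically.
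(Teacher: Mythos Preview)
Your proof is correct. The paper takes a slightly different, more symmetry-based route: it first shows that both $p_0^*\tau_0$ and $\upsilon^*\lambda_{\op{st}}$ are invariant under right multiplication $R_U:S^3\to S^3$, $R_U(U')=U'U$ (using $p_0\circ R_U=dC_U\circ p_0$ and the fact that $dC_U$ preserves $\tau_0$), and then checks \eqref{equicon} only at the single point $\mathbf 1$, where the computation is a short coordinate verification. You instead compute both sides at a general point $U$ by parametrising $T_US^3$ via $W=U\alpha$ and using the quaternion/cross-product dictionary $[a,b]=2a\times b$, $\operatorname{Re}(ab)=-\langle a,b\rangle$ on $\operatorname{Im}(\mathbb H)$. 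In effect your left-translation $\alpha=\bar UW$ plays the same role as the paper's reduction to the identity, but you never isolate the invariance step explicitly. The paper's organisation separates the conceptual input (right-invariance) from the numerics; your approach is more direct and slightly more algebraic, and has the minor advantage that the identity $\jmath_0\big|_{u_1}(u_2)=u_1\times u_2$ makes the computation transparent at every point at once.
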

\begin{proof}
First of all we prove that both sides of \eqref{equicon} are invariant under right multiplication. For every $U\in S^3$, we define $R_U:S^3\rightarrow S^3$ as $R_U(U'):= U'U$. Thus, the identity $p_0\circ R_U=dC_U\circ p_0$ holds. Let us show that $p_0^*\tau_0$ is right invariant:
\begin{equation*}
R_U^*\big(p_0^*\tau_0\big)=(p_0\circ R_U)^*\tau_0=(dC_U\circ p_0)^*\tau_0=p_0^*\big((dC_U)^*(\tau_0)\big)=p_0^*\tau_0.
\end{equation*}
On the other hand, $\upsilon^*\lambda_{\op{st}}$ is also right invariant:
\begin{align*}
\big(R_U^*(\upsilon^*\lambda_{\op{st}})\big)_{U'}(W)=\big(\upsilon^*\lambda_{\op{st}}\big)_{R_U(U')}(dR_UW)&=\frac{1}{2}g_{\op{st}}\big((\mathbf iU')U,WU\big)\\
&=\frac{1}{2}g_{\op{st}}\big(\mathbf iU',W\big)\\
&=\big(\upsilon^*\lambda_{\op{st}}\big)_{U'}(W),
\end{align*}
where we used that $R_U:\mathbb C^2\rightarrow\mathbb C^2$ is an isometry.

Therefore, it is enough to check equality \eqref{equicon} only at the point $\mathbf 1$. A generic element $W$ of $T_{\mathbf 1}S^3$ can be written as $s\mathbf i+w\mathbf j=s\mathbf i+\mathbf j\overline{w}$, where $w:= w_1\mathbf 1+w_2\mathbf i$ and $\overline{w}:= w_1\mathbf 1-w_2\mathbf i$ with $s,w_1 $ and $w_2$ real numbers. On the one hand,
\begin{equation}\label{ess1}
(\upsilon^*\lambda_{\op{st}})_{\mathbf 1}(W)=\frac{1}{2}g_{\op{st}}\big(\mathbf i\mathbf 1,W\big)=\frac{1}{2}g_{\op{st}}\big(\mathbf i,s\mathbf i+w\mathbf j\big)=\frac{s}{2}.
\end{equation}
On the other hand, we have that $d_{\mathbf 1}p_0(W)=(\mathbf iW-W\mathbf i,\mathbf jW-W\mathbf j)$. From the definition of $\tau_0$ we see that we are only interested in the second component:
\begin{equation*}
\mathbf jW-W\mathbf j=\mathbf j(s\mathbf i+\mathbf j\overline{w})-(s\mathbf i+w\mathbf j)\mathbf j=-2s\mathbf k+(w-\overline{w})=-2s\mathbf k+2w_2\mathbf i.
\end{equation*}
Now we apply formula \eqref{formucon} with $(u_1,u_2)=(\mathbf i,\mathbf j)$ and $v_2=-2s\mathbf k+2w_2\mathbf i$. In this case ${\jmath_0}_{u_1}$ is left multiplication by $\mathbf i$, so that ${\jmath_0}_{u_1}(u_2)=\mathbf i\mathbf j=\mathbf k$ and we find that
\begin{equation}\label{ess2}
g_{\op{st}}\big(-2s\mathbf k+2w_2\mathbf i,\mathbf k\big)=-2s.
\end{equation}
Comparing \eqref{ess1} with \eqref{ess2} we finally get $(p_0^*\tau_0)_{\mathbf 1}=-4(\upsilon^*\lambda_{\op{st}})_{\mathbf 1}$.
\end{proof}
Putting things together, we arrive at the following intermediate step. 
\begin{prp}\label{intermi}
There exists a covering map $p_{m,\beta}:S^3\rightarrow SS^2$ and a real function $\widehat{q}_{m,\beta}:S^3\rightarrow \mathbb R$ such that
\begin{equation*}
p_{m,\beta}^*\lambda^g_{m,\beta}=-4e^{\widehat{q}_{m,\beta}}\upsilon^*\lambda_{\op{st}}.
\end{equation*}
Moreover the function $\widehat{q}_{m,\beta}$ tends to $0$ in the $C^\infty$-topology as $m$ goes to zero.
\end{prp}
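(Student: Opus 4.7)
The proof is essentially an assembly of three ingredients already established in the excerpt, so the plan is short and the main task is to compose them correctly.

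First, I would define the covering map as the composition
\begin{equation*}
p_{m,\beta} := F_{m,\beta} \circ B_\beta \circ p_0 : S^3 \longrightarrow SS^2,
\end{equation*}
where $p_0:S^3 \to SS^2_0$ is the double cover of Proposition \ref{stalif}, $B_\beta:SS^2_0 \to SS^2$ is the $S^1$-equivariant diffeomorphism of Corollary \ref{corconb}, and $F_{m,\beta}:SS^2 \to SS^2$ is the Gray-stability diffeomorphism of Lemma \ref{lemconb}. Since $p_0$ is a double cover and the other two factors are diffeomorphisms, $p_{m,\beta}$ is automatically a double cover. Note that for $m=0$, $F_{0,\beta}$ is the identity, so $p_{0,\beta} = B_\beta \circ p_0$.

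Next, I would compute the pull-back step by step, reading off the factors supplied by the three previous results. Applying Lemma \ref{lemconb}, then Corollary \ref{corconb} (which pulls the conformal factor back as $q_{m,\beta}\circ B_\beta$), and finally Proposition \ref{stalif}, one gets
\begin{equation*}
p_{m,\beta}^* \lambda^g_{m,\beta} = p_0^* B_\beta^* F_{m,\beta}^* \lambda^g_{m,\beta} = p_0^* B_\beta^* \bigl(e^{q_{m,\beta}} \lambda^g_{0,\beta}\bigr) = e^{q_{m,\beta}\circ B_\beta \circ p_0}\, p_0^* \tau_0 = -4\,e^{\widehat{q}_{m,\beta}}\, \upsilon^*\lambda_{\op{st}},
\end{equation*}
where I set $\widehat{q}_{m,\beta} := q_{m,\beta} \circ B_\beta \circ p_0$. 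This is the desired identity.

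For the convergence claim, Lemma \ref{lemconb} already states that $m \mapsto q_{m,\beta}$ is smooth as a map into $C^\infty(SS^2,\mathbb{R})$ with $q_{0,\beta}=0$; composing with the fixed smooth map $B_\beta \circ p_0$ preserves this convergence, so $\widehat{q}_{m,\beta} \to 0$ in the $C^\infty$-topology as $m \to 0$. There is no genuine obstacle here: the content of the proposition is entirely contained in the three cited results, and the only thing to verify is that the compositions are well-defined and that the conformal factors pull back correctly, which is immediate from naturality of the pull-back.
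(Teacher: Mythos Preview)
Your proof is correct and follows essentially the same approach as the paper: define $p_{m,\beta}=F_{m,\beta}\circ B_\beta\circ p_0$, pull back through the three results Lemma~\ref{lemconb}, Corollary~\ref{corconb}, and Proposition~\ref{stalif} to obtain $\widehat{q}_{m,\beta}=q_{m,\beta}\circ B_\beta\circ p_0$, and conclude convergence from that of $q_{m,\beta}$.
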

\begin{proof}
Lemma \ref{lemconb} gives us $F_{m,\beta}:SS^2\rightarrow SS^2$ and $q_{m,\beta}:SS^2\rightarrow \mathbb R$. \ref{corconb} gives us $B_\beta:SS^2_0\rightarrow SS^2$. If we set $p_{m,\beta}:= F_{m,\beta}\circ B_\beta\circ p_0:S^3\rightarrow SS^2$, then
\begin{align*}
p_{m,\beta}^*\lambda^g_{m,\beta}=p_0^*\big(B_\beta^*(F_{m,\beta}^*\lambda^g_{m,\beta})\big)&=p_0^*\big(B_\beta^*(e^{q_{m,\beta}}\lambda^g_{0,\beta})\big)\\
&=p_0^*(e^{q_{m,\beta}\circ B_\beta}\tau_0)\\
&=-4e^{q_{m,\beta}\circ B_\beta\circ p_0}\upsilon^*\lambda_{\op{st}}.
\end{align*}
Defining $\widehat{q}_{m,\beta}:= q_{m,\beta}\circ B_\beta\circ p_0$ we only need to show that $\widehat{q}_{m,\beta}$ goes to $0$ in the $C^\infty$ topology. This is true since, by Lemma \ref{lemconb}, the same holds for $q_{m,\beta}$. 
\end{proof}

\begin{proof}[Proof of Proposition \ref{convi}]
The final step in the proof is to notice that contact forms of the type $\rho\upsilon^*\lambda_{\op{st}}\in\Omega^1(S^3)$ with $\rho:S^3\rightarrow(0,+\infty)$ arise from embeddings of $S^3$ in $\mathbb C^2$ as the boundary of a star-shaped domain. To see this, define $\upsilon_{\sqrt{\rho}}:S^3\hookrightarrow \C^2$ as $\upsilon_{\sqrt{\rho}}(z):= \sqrt{\rho(z)}\upsilon(z)$. A computation shows that  $\upsilon_{\sqrt{\rho}}^*\lambda_{\op{st}}=\rho\upsilon^*\lambda_{\op{st}}$.

Using this observation we see that $\widehat{p}_{m,\beta}^*\lambda^g_{m,\beta}=-\upsilon_{m,\beta}^*\lambda_{\op{st}}$ with $\upsilon_{m,\beta}:= \upsilon_{\sqrt{\rho_{m,\beta}}}$ and $\rho_{m,\beta}:= 4e^{\widehat{q}_{m,\beta}}$. For small $m$, $\rho_{m,\beta}$ is $C^2$-close to the constant $4$ and, therefore, the embedding $\upsilon_{m,\beta}$ is $C^2$-close to the sphere of radius $2$. This shows that $\upsilon_{m,\beta}(S^3)$ for small $m$ and the proof is complete.
\end{proof}
\begin{rmk}
In general the convexity of the embedding $\upsilon_\rho$ can be verified defining the function $Q_{\rho}:\mathbb C^2\rightarrow[0,+\infty)$, where $Q_{\rho}(z):= |z|^2/\rho(\frac{z}{|z|})$. Then, $\upsilon_{\sqrt{\rho}}(S^3)=\{Q_{\rho}=1\}$ and $\upsilon_{\sqrt{\rho}}(S^3)$ is convex if and only if the Hessian of $Q_{\rho}$ is positive.
\end{rmk}

\section{A direct estimate of the index}\label{sec_ies}
In this subsection we present an alternative proof of Theorem \ref{dyncon_thm} showing that $\lambda^g_{m,\beta}$ is dynamically convex via a direct estimate of the index, as prescribed by Definition \ref{dfn_dc}. The advantage of this method is that it generalises to systems on surfaces of genus at least two as we discuss in Subsection \ref{gengen}. We can think of this proof as the magnetic analogue of what Harris and Paternain did for the geodesic flow, where the pinching condition on the curvature plays the same role as assuming $m$ small and $\sigma$ symplectic. After writing this proof, we discovered a similar argument in \cite[Section 3.2]{hrysal1}. 

To compute the index, we consider for each $z=(x,v)\in SS^2$ the path
\begin{equation*}
\Psi^{m,\beta}_z(t):= \Upsilon^{m,\beta}_{\Phi^{m,\beta}_t(z)}\circ d_z\Phi^{m,\beta}_t\circ \left(\Upsilon^{m,\beta}_z\right)^{-1}\in \operatorname{Sp}(1).
\end{equation*}
We define the auxiliary path $B^{m,\beta}_z(t):= \dot{\Psi}^{m,\beta}_z(\Psi^{m,\beta}_z)^{-1}\in\mathfrak{gl}(2,\R)$. The bracket relations \eqref{relsm} for $(X,V,H)$ allow us to give the following estimate for this path.
\begin{lem}\label{linea}
We can write $B^{m,\beta}_z=J_{\op{st}}+\rho^{m,\beta}_z$, where $\rho^{m,\beta}_z:\mathbb R\rightarrow \mathfrak{gl}(2,\R)$ is a path of matrices, whose supremum norm is of order $O(m)$ uniformly in $z$ as $m$ goes to zero. In other words, there exist $\overline{m}>0$ and $C>0$ not depending on $z$, but only on $\sup f$, $\inf f$, $\Vert df\Vert$ and $\Vert\beta\Vert$, such that
\begin{equation}\label{odim}
\forall m\leq \overline{m},\quad\Vert\rho^{m,\beta}_z\Vert\leq Cm.
\end{equation}
\end{lem}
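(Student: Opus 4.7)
My strategy will proceed in three steps: first, reduce the question to a pointwise infinitesimal calculation via the cocycle property of $\Psi^{m,\beta}_z$; second, compute the leading value at $m=0$ explicitly; and third, exploit smooth dependence on $m$ to obtain the $O(m)$ estimate on a compact manifold.

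Differentiating the cocycle identity $\Psi^{m,\beta}_z(t+s)=\Psi^{m,\beta}_{\Phi^{m,\beta}_s(z)}(t)\,\Psi^{m,\beta}_z(s)$ at $s=0$ yields $B^{m,\beta}_z(t)=\dot\Psi^{m,\beta}_{\Phi^{m,\beta}_t(z)}(0)$, so it is enough to control $\dot\Psi^{m,\beta}_w(0)$ uniformly in $w\in SS^2$. Since $\mathcal L_{R^{m,\beta}}\lambda^g_{m,\beta}=0$, the Reeb flow preserves $\xi^{m,\beta}$, and the Lie brackets $[R^{m,\beta},\check H^{m,\beta}]$ and $[R^{m,\beta},\check X^{m,\beta}]$ are again sections of $\xi^{m,\beta}$. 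A standard computation using the trivialisation $\Upsilon^{m,\beta}$ of Lemma \ref{lem_tri} then identifies
\[
\dot\Psi^{m,\beta}_w(0)=-(A^{m,\beta}_w)^T,
\]
where $A^{m,\beta}_w\in\mathfrak{gl}(2,\R)$ is the matrix recording the coefficients of $[R^{m,\beta},\check H^{m,\beta}]_w$ and $[R^{m,\beta},\check X^{m,\beta}]_w$ in the frame $(\check H^{m,\beta}_w,\check X^{m,\beta}_w)$.

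At $m=0$ the Reeb vector field is $V$, with $\check H^{0,\beta}=(H+\beta_x(\jmath_xv)V)/\sqrt{f}$ and $\check X^{0,\beta}=(X+\beta_x(v)V)/\sqrt{f}$. Using $V(\pi^*f)=0$, the fibrewise identities $V(\beta_x(\jmath_xv))=-\beta_x(v)$ and $V(\beta_x(v))=\beta_x(\jmath_xv)$ (derived by parametrising the fibre by $\varphi$), and the bracket relations $[V,H]=-X$, $[V,X]=H$ from \eqref{relsm}, I would verify directly that
\[
[V,\check H^{0,\beta}]=-\check X^{0,\beta}, \qquad [V,\check X^{0,\beta}]=\check H^{0,\beta},
\]
whence $A^{0,\beta}_w=\bigl(\begin{smallmatrix}0 & -1\\ 1 & 0\end{smallmatrix}\bigr)$ and $B^{0,\beta}_z(t)\equiv J_{\op{st}}$ for every $z$ and $t$. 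For the final step I would observe that $h_{m,\beta}$ is smooth and strictly positive on $[0,m_\beta)\times SS^2$, so $R^{m,\beta}$, $\check H^{m,\beta}$, $\check X^{m,\beta}$ depend smoothly on $m$ as elements of $\Gamma(SS^2)$ in the $C^1$-topology. The entries of $A^{m,\beta}_w$ are polynomial in these vector fields and their first derivatives evaluated at $w$, and hence $A^{m,\beta}_w-A^{0,\beta}_w$ is smooth in $m$ with derivative in $m$ bounded uniformly in $w$. Taylor's theorem at $m=0$ then gives $\|A^{m,\beta}-A^{0,\beta}\|_{C^0(SS^2)}\leq Cm$ for $m\leq\bar m$, which combined with the cocycle reduction produces $B^{m,\beta}_z(t)=J_{\op{st}}+\rho^{m,\beta}_z(t)$ with $\|\rho^{m,\beta}_z\|\leq Cm$.

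The main obstacle I anticipate is tracking the precise dependence of $C$ on the geometric data listed in \eqref{odim}. The bracket $[X,H]=KV$ of \eqref{relsm} injects the Gaussian curvature $K$ into the $O(m)$ correction of $A^{m,\beta}$ through terms like $[(m/h_{m,\beta})X,\check H^{m,\beta}/\sqrt{h_{m,\beta}}]$, and one must verify, via the identity $d\beta=(f-K)\mu$ coming from $\beta\in\mathcal P^{\sigma-\sigma_g}$, that $K$—together with all other quantities appearing when $[R^{m,\beta},\check H^{m,\beta}]$ and $[R^{m,\beta},\check X^{m,\beta}]$ are expanded to first order in $m$—can ultimately be bounded in terms of the quantities $\sup f$, $\inf f$, $\|df\|$ and $\|\beta\|$ listed in the statement.
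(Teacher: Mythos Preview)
Your approach is essentially the same as the paper's. Both proofs reduce $B^{m,\beta}_z$ to the matrix of the Lie brackets $[R^{m,\beta},\check H^{m,\beta}]$ and $[R^{m,\beta},\check X^{m,\beta}]$ in the frame $(\check H^{m,\beta},\check X^{m,\beta})$, compute these brackets explicitly at $m=0$ to obtain $J_{\op{st}}$, and then invoke regularity in $m$ for the $O(m)$ remainder. The paper derives the ODE for $\vec a(t)=\Psi^{m,\beta}_z\vec a_0$ directly from differentiating $d\Phi^{m,\beta}_{-t}Z^{\vec a_0}_z(t)$, while you phrase the same reduction via the cocycle identity; the resulting bracket computation and conclusion are identical.

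Two remarks. First, your use of Taylor's theorem from smooth dependence is in fact slightly more careful than the paper's wording, which argues only that $m\mapsto[R^{m,\beta},\check H^{m,\beta}]$ is $C^0$-continuous and then asserts an $O(m)$ bound; smoothness (which you correctly invoke) is what actually yields $O(m)$ rather than merely $o(1)$. Second, the concern you raise about the constants is legitimate: the bracket $[X,H]=KV$ brings in the Gaussian curvature, and bounding $K$ purely in terms of $\sup f$, $\inf f$, $\|df\|$, $\|\beta\|$ via $d\beta=(f-K)\mu$ would require control of $\|d\beta\|$, not just $\|\beta\|$. The paper does not address this point either, and the list of quantities in the statement should probably be read as indicative rather than exhaustive.
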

\begin{proof}
For any point $z\in SS^2$ and for any vector $\overrightarrow{a_0}=(a^1_0,a^2_0)\in\mathbb R^2$ we have a path $\overrightarrow{a}=(a^1,a^2):\mathbb R\rightarrow\mathbb R^2$ defined by the relation
\begin{equation}\label{lintri}
\overrightarrow{a}=\Psi^{m,\beta}_z\overrightarrow{a}_0.
\end{equation}
Using the definition of $\Psi^{m,\beta}_z$, we see that $\overrightarrow{a}$ satisfies
\begin{equation*}
Z^{\overrightarrow{a_0}}_z(t):= d_z\Phi^{m,\beta}_t\left(a^1_0\check{H}^{m,\beta}_z+a^2_0\check{X}^{m,\beta}_z\right)=a^1(t)\check{H}^{m,\beta}_{\Phi^{m,\beta}_t(z)}+a^2(t)X^{m,\beta}_{\Phi^{m,\beta}_t(z)}.
\end{equation*} 
If we differentiate with respect to $t$ the identity
\begin{equation}\label{equadif}
a^1_0\check{H}^{m,\beta}_z+a^2_0\check{X}^{m,\beta}_z=d_{\Phi^{m,\beta}_t(z)}\Phi^{m,\beta}_{-t}Z^{\overrightarrow{a_0}}_z(t),
\end{equation}
we get the following differential equation for $\overrightarrow{a}$:
\begin{equation}\label{linsym}
\begin{aligned}
0&=\dot{a}^1(t)\check{H}^{m,\beta}_{\Phi^{m,\beta}_t(z)}+\dot{a}^2(t)\check{X}^{m,\beta}_{\Phi^{m,\beta}_t(z)}\\
&\quad+a^1(t)\left[R^{m,\beta},\check{H}^{m,\beta}\right]_{\Phi^{m,\beta}_t(z)}+a^2(t)\left[R^{m,\beta},\check{X}^{m,\beta}\right]_{\Phi^{m,\beta}_t(z)}.
\end{aligned}
\end{equation}

To estimate the first Lie bracket, we observe that $m\mapsto [R^{m,\beta},\check{H}^{m,\beta}]$ is a $\Gamma(SS^2)$-valued map which is continuous in the $C^0$-topology since the maps $m\mapsto R^{m,\beta}$ and $m\mapsto \check{H}^{m,\beta}$ are continuous in the $C^1$-topology. As a result, we have that $\left[R^{m,\beta},\check{H}^{m,\beta}\right]=\left[R^{0,\beta},\check{H}^{0,\beta}\right]+\rho$, where $\rho$ is an $O(m)$ in the $C^0$-topology, as $m$ goes to zero. Furthermore,
\begin{align*}
\left[R^{0,\beta},\check{H}^{0,\beta}\right]=\left[V,\frac{1}{\sqrt{f}}\big(H+(\beta\circ\jmath) V\big)\right]&=\frac{1}{\sqrt{f}}\Big([V,H]+V(\beta\circ\jmath)V\Big)\\
&=\frac{1}{\sqrt{f}}\Big(-X-\beta V\Big)\\
&=-X^{0,\beta}\\
&=-X^{m,\beta}+\rho'.
\end{align*}
Putting things together, $\left[R^{m,\beta},\check{H}^{m,\beta}\right]=-X^{m,\beta}+\rho_1$, where $\rho_1$ is an $O(m)$ in the $C^0$-topology. In a similar way we find that $\left[R^{m,\beta},X^{m,\beta}\right]=\check{H}^{m,\beta}+\rho_2$. Substituting these expressions for the Lie brackets inside \eqref{linsym}, we find
\begin{equation*}
\dot{a}^1\check{H}^{m,\beta}+\dot{a}^2\check{X}^{m,\beta}=a^1\check{X}^{m,\beta}-a^2\check{H}^{m,\beta}-a^1\rho_1-a^2\rho_2.
\end{equation*}
Applying the trivialisation $\Upsilon^{m,\beta}$, we get
\begin{equation}\label{equadifa}
\dot{\overrightarrow{a}}=(J_{\op{st}}+\rho^{m,\beta}_z)\overrightarrow{a},
\end{equation}
where $\rho^{m,\beta}_z\in\mathfrak{gl}(2,\R)$ is of order $O(m)$.

On the other hand, differentiating Equation \eqref{lintri}, we have
\begin{equation}\label{equadifb}
\dot{\overrightarrow{a}}=\dot{\Psi}^{m,\beta}_z\overrightarrow{a}_0=\dot{\Psi}^{m,\beta}_z\left(\Psi^{m,\beta}_z\right)^{-1}\overrightarrow{a}=B^{m,\beta}_z\overrightarrow{a}.
\end{equation}
Thus, comparing \eqref{equadifa} and \eqref{equadifb}, we finally arrive at $B^{m,\beta}_z=J_{\op{st}}+\rho^{m,\beta}_z$.
\end{proof}

The previous lemma together with the following proposition reduces dynamical convexity to a condition on the period of Reeb orbits. First, we need the following notation. For each $Z\in\Gamma(N)$ we call $T_0(Z)$ the minimal period of a closed contractible orbit of $\Phi^Z$. We set $T_0(Z)=0$, if $\Phi^Z$ has a rest point. We remark that the map $Z\mapsto T_0(Z)$ is lower semicontinuous with respect to the $C^0$-topology.

\begin{prp}\label{finpro}
Let $C$ be the constant contained in \eqref{odim}. If the inequality 
\begin{equation}\label{dincopera}
 \frac{2\pi}{T_0(R^{m,\beta})}<1-Cm,
\end{equation}
is satisfied, then $\lambda^g_{m,\beta}$ is dynamically convex.
\end{prp}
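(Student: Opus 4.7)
The plan is to exploit Lemma \ref{linea}, which says that $B^{m,\beta}_z = J_{\op{st}} + \rho^{m,\beta}_z$ with $\|\rho^{m,\beta}_z\| \leq Cm$, together with Remark \ref{rmk_ind}, which reduces dynamical convexity to showing $I(\Psi^{C,\Upsilon}_\gamma) \subset (1,+\infty)$ for every contractible periodic orbit $\gamma$. The heuristic is clear: $J_{\op{st}}$ generates rotation at unit angular speed, so an $O(m)$ perturbation of the linearised flow rotates every transverse vector at angular speed at least $1-Cm$; hypothesis \eqref{dincopera} then forces every contractible periodic orbit, whose period is at least $T_0(R^{m,\beta})$, to make strictly more than one full turn of every transverse direction.

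The first step is to make the heuristic precise. Fix a contractible periodic orbit $\gamma$ with period $T$, a non-zero $u_0 \in \R^2$, and let $u(t) := \Psi^{m,\beta}_{\gamma(0)}(t)\, u_0$. Writing $u(t) = |u(t)|(\cos\theta(t),\sin\theta(t))$ as in \eqref{teta} and differentiating, I would use $\dot u = B^{m,\beta}_{\gamma(0)} u = (J_{\op{st}}+\rho^{m,\beta}_{\gamma(0)})u$ to get
\begin{equation*}
\dot\theta(t) \;=\; \frac{\langle \dot u(t),\,J_{\op{st}} u(t)\rangle}{|u(t)|^{2}} \;=\; 1 + \frac{\langle \rho^{m,\beta}_{\gamma(0)}(t)\,u(t),\,J_{\op{st}} u(t)\rangle}{|u(t)|^{2}} \;\geq\; 1 - Cm,
\end{equation*}
where I used that $J_{\op{st}}$ is an isometry with $\langle J_{\op{st}} w, J_{\op{st}} w\rangle = |w|^2$ and the Cauchy–Schwarz bound $|\langle \rho u, J_{\op{st}} u\rangle| \leq \|\rho\|\, |u|^2$.

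The second step is to integrate this pointwise lower bound over $[0,T]$. Since $\gamma$ is a contractible periodic orbit of the Reeb flow $R^{m,\beta}$, its period satisfies $T \geq T_0(R^{m,\beta})$; combining this with the hypothesis $2\pi/T_0(R^{m,\beta}) < 1-Cm$ yields $(1-Cm)T > 2\pi$. Therefore
\begin{equation*}
\Delta\theta(\Psi^{m,\beta}_{\gamma(0)}\big|_{[0,T]},\,u_0) \;=\; \theta(T)-\theta(0) \;\geq\; (1-Cm)\,T \;>\; 2\pi,
\end{equation*}
so every element of $I(\Psi^{m,\beta}_{\gamma(0)}\big|_{[0,T]})$ is strictly greater than $1$.

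The final step is a trivialisation comparison. The path appearing in the definition of the Conley–Zehnder index is $\Psi^{C,\Upsilon}_\gamma$ of \eqref{psigamma}, built from a cylinder $C$ bounding $\gamma$ together with a compatible symplectic trivialisation. Because $\gamma$ is contractible and $\xi^{m,\beta}$ is symplectically trivialised globally by $\Upsilon^{m,\beta}$ of Lemma \ref{lem_tri}, any cylinder extension of a constant reference trivialisation is homotopic to the restriction of $\Upsilon^{m,\beta}$, so $\Psi^{C,\Upsilon}_\gamma$ and $\Psi^{m,\beta}_{\gamma(0)}\big|_{[0,T]}$ are conjugate in $\op{Sp}_T(1)$, hence have the same interval $I$. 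By Remark \ref{rmk_ind} we conclude $\mu^l_{\op{CZ}}(\gamma) \geq 3$, proving dynamical convexity. The only non-routine point in this plan is verifying that the global trivialisation $\Upsilon^{m,\beta}$ may legitimately be used in place of the cylinder-extended trivialisation; this follows from the vanishing of $c_1(\omega_m)$ established in Proposition \ref{che}, which makes the index independent of homotopically distinct trivialisations along a contractible loop.
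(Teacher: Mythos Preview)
Your proof is correct and follows essentially the same route as the paper: bound $\dot\theta\geq 1-Cm$ via Lemma \ref{linea} and Cauchy--Schwarz, integrate over $[0,T]$, use $T\geq T_0(R^{m,\beta})$ together with the hypothesis to force $I(\Psi)\subset(1,+\infty)$, and conclude by Remark \ref{rmk_ind}. The only difference is that you add a closing paragraph justifying why the global trivialisation $\Upsilon^{m,\beta}$ computes the correct Conley--Zehnder index for contractible orbits; the paper simply takes this as given, having defined $\Psi^{m,\beta}_z$ via $\Upsilon^{m,\beta}$ from the outset. One terminological quibble: the two paths are not literally \emph{conjugate} in $\op{Sp}_T(1)$ but rather related by a null-homotopic gauge change, which is what guarantees the indices agree.
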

\begin{proof}
Let $\gamma$ be a contractible periodic orbit for $R^{m,\beta}$ with period $T$ and such that $\gamma(0)=z$. Consider $\Psi^{m,\beta}_z\big|_{[0,T]}\in\op{Sp}_T(1)$ defined as before and fix a $u\in\R^2\setminus\{0\}$. If $\theta_u^{\Psi^{m,\beta}_z}$ is defined by Equation \eqref{teta}, we can bound its first derivative by means of Lemma \ref{linea}, as follows:
\begin{align*}
\dot{\theta}_u^{\Psi^{m,\beta}_z}=\frac{g_{\op{st}}\big(\dot{\Psi}^{m,\beta}_zu,J_{\op{st}}\Psi^{m,\beta}_zu\big)}{|\Psi^{m,\beta}_zu|^2}&=\frac{g_{\op{st}}\big(B^{m,\beta}_z\Psi^{m,\beta}_zu,J_{\op{st}}\Psi^{m,\beta}_zu\big)}{|\Psi^{m,\beta}_zu|^2}\\
&=\frac{g_{\op{st}}\big((J_{\op{st}}+\rho^{m,\beta}_z)\Psi^{m,\beta}_zu,J_{\op{st}}\Psi^{m,\beta}_zu\big)}{|\Psi^{m,\beta}_zu|^2}\\
&=\frac{|\Psi^{m,\beta}_zu|^2+g_{\op{st}}\big(\rho^{m,\beta}_z\Psi^{m,\beta}_zu,J_{\op{st}}\Psi^{m,\beta}_zu\big)}{|\Psi^{m,\beta}_zu|^2}\\
&\geq1-\Vert \rho^{m,\beta}_z\Vert.
\end{align*}
Hence, we can estimate the normalised increment in the interval $[0,T]$ by
\begin{equation*}
\Delta(\Psi^{m,\beta}_z\big|_{[0,T]},u)=\frac{1}{2\pi}\int_0^T\dot{\theta}_u^{\Psi^{m,\beta}_z}(t)dt\geq (1-Cm)\frac{T}{2\pi}.
\end{equation*}
Therefore, by Remark \eqref{rmk_ind}, $\mu^l_{\op{CZ}}(\gamma)\geq 3$ provided $(1-Cm)\frac{T}{2\pi}>1$. Asking this condition for every contractible periodic orbit is the same as asking Inequality \eqref{dincopera} to hold. The proposition is thus proved.
\end{proof}
We are now ready to reprove Theorem \ref{dyncon_thm}.
\begin{proof}[Second proof of \ref{dyncon_thm}]

Thanks to Proposition \ref{finpro}, it is enough to show that Inequality \eqref{dincopera} holds for small $m$. First, we compute the periods of contractible orbits for $R^{0,\beta}=V$. A loop going around the vertical fibre $k$ times with unit angular speed has period $2\pi k$ and is contractible if and only if $k$ is even. Hence, $T_0(R^{0,\beta})=4\pi$.

Using the lower semicontinuity of the minimal period, we find that
\begin{equation*}
 \limsup_{m\rightarrow 0}\left(\frac{2\pi}{T_0(R^{m,\beta})}+Cm\right)\leq \frac{2\pi}{4\pi}+0=\frac{1}{2}<1
\end{equation*}
and, therefore, the inequality is still true for $m$ small enough.
\end{proof}
\subsection{Generalised dynamical convexity}\label{gengen}

Abreu and Macarini \cite{am} have recently defined a generalisation of dynamical convexity to arbitrary contact manifolds. We give a brief sketch of it in this subsection since it applies to low energy values of symplectic magnetic systems on surfaces of genus at least two.

Let $(N,\xi)$ be a closed contact manifold and let $\nu$ be a free homotopy class of loops in $N$. Suppose that the contact homology $HC^\nu(N,\xi)$ of $(N,\xi)$ in the class $\nu$ is well defined and that $c_1(\xi)$ is $\nu$-atoroidal, so that the homology is also $\Z$-graded.
\begin{dfn}\label{dfn_gdc}
Define $k_-$ and $k_+$ in $\Z\cup\{-\infty,+\infty\}$ by
\begin{equation}
\bullet\ k_-:=\inf_{k\in\Z}\Big\{HC^\nu_k(N,\xi)\neq0\Big\},\quad\quad\bullet\ k_+:=\sup_{k\in\Z}\Big\{HC^\nu_k(N,\xi)\neq0\Big\}.
\end{equation}
A $\nu$-nondegenerate contact form $\alpha$ supporting $\xi$ is called \textit{positively $\nu$-dynamically convex} if $k_-\in\Z$ and $\mu_{\op{CZ}}(\gamma)\geq k_-$ for every Reeb orbit of $\alpha$ in the class $\nu$. Similarly, $\alpha$ is called \textit{negatively $\nu$-dynamically convex} if $k_+\in\Z$ and $\mu_{\op{CZ}}(\gamma)\leq k_+$ for every Reeb orbit of $\alpha$ in the class $\nu$.
\end{dfn}
Notice that a contact form $\alpha$ supporting $(S^3,\xi_{\op{st}})$, that is dynamically convex according to Definition \ref{dfn_dc}, is indeed positively dynamically convex according to Definition \ref{dfn_gdc}, with $k_-=3$. 
The new notion of convexity allows Abreu and Macarini to prove the existence of an elliptic orbit for Reeb flows on Boothby-Wang manifolds.
\begin{dfn}
A contact manifold $(N,\xi)$ is called \textit{Boothby-Wang} if it admits a supporting contact form $\beta$ whose Reeb flow gives a free $S^1$-action.
\end{dfn}
\begin{thm}[\cite{am}]
Let $(N,\xi=\ker\beta)$ be a Boothby-Wang contact manifold and let $\nu$ be the free homotopy class of the simple closed orbits of $R^\beta$. Assume that one of the following hypotheses holds:
\begin{itemize}
 \item $M/S^1$ admits a Morse function whose critical points have all even index;
 \item all the iterates of $\nu$ are non-contractible.
\end{itemize}
If $G< S^1$ is a non-trivial finite subgroup, then the Reeb flow of every $G$-invariant positively (respectively, negatively) $\nu$-dynamically convex contact form $\alpha$ supporting $\xi$ up to isotopy has an elliptic closed orbit $\gamma$ representing $\nu$ such that $\mu_{\op{CZ}}(\gamma)=k_-$ (respectively, $\mu_{\op{CZ}}(\gamma)=k_+$).
\end{thm}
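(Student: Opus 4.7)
The plan is to argue by contradiction: assuming every closed Reeb orbit in the class $\nu$ with Conley--Zehnder index equal to $k_-$ is hyperbolic (the case $k_+$ being symmetric), I would derive an algebraic obstruction in the equivariant/cylindrical contact homology $HC^{\nu}(N,\xi)$. The first step is to set up the Floer-theoretic framework. Because $(N,\xi=\ker\beta)$ is Boothby--Wang, the Reeb flow of $\beta$ gives a free $S^1$-action with quotient the symplectic manifold $(M/S^1,\omega_\beta)$. I would take a small, $G$-invariant Morse--Bott perturbation of $\beta$ adapted to a $G$-invariant Morse function on $M/S^1$, yielding a model contact form whose closed Reeb orbits in the class $\nu$ are in bijection with critical points of the Morse function (each with iteration contribution controlled by an index shift coming from the mean index of the fibre orbits). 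With the even-index hypothesis this model computation gives $HC^\nu_k(N,\xi)$ concentrated in a prescribed range, with $k_-$ realized and $HC^\nu_{k_-}\ne0$. Under the second hypothesis (no contractible iterates of $\nu$) a similar Morse--Bott computation works because bubbling and recurrence on iterates do not obstruct well-definedness of $HC^\nu$.

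The second step is to use dynamical convexity to force periodic orbits to sit exactly at the bottom of the index spectrum. Positive $\nu$-dynamical convexity means $\mu_{\op{CZ}}(\gamma)\ge k_-$ for every Reeb orbit of $\alpha$ in the class $\nu$; combined with $HC^\nu_{k_-}\ne 0$ and the fact that only orbits of index in $\{k_-,k_-+1\}$ can generate a nonzero class in $HC^\nu_{k_-}$, I would conclude that there exists a good Reeb orbit $\gamma$ in $\nu$ with $\mu_{\op{CZ}}(\gamma)=k_-$, and any such $\gamma$ represents a nonzero cycle (because no boundary can land on it, as there are no generators one degree higher than $k_-$ that can cancel). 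In particular $\gamma$ is not a boundary in $HC^\nu$.

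The third step, which is the core of the argument, is to exclude that every such index-$k_-$ orbit be hyperbolic. Suppose by contradiction that every good Reeb orbit of index $k_-$ in the class $\nu$ is hyperbolic. Using the iteration formula $\mu_{\op{CZ}}(\gamma^p)=p\cdot \mu_{\op{CZ}}(\gamma)$ for hyperbolic orbits, together with the $G$-action (of order $p:=|G|\ge 2$) which sends $\gamma$ to a geometrically distinct orbit $g\cdot\gamma$ whenever $\gamma$ is not $G$-fixed, one obtains a $p$-fold covered orbit whose index is $p\,k_-$; more importantly, the $G$-symmetry forces good orbits of index $k_-$ to appear in $G$-orbits, giving a chain-level relation in the $G$-equivariant chain complex that forces their class to be a boundary, contradicting the previous step. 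Concretely, I would deform the perturbed contact form $G$-equivariantly, track Floer cascades by a $G$-equivariant Morse-Bott differential, and use the fact that a $G$-orbit of $p$ hyperbolic orbits realizes, via the connecting covering orbit $\gamma^p$ of index $pk_-$ and the covering/transfer map associated with the $\mathbb Z/p$-action on $\gamma^p$'s moduli, a primitive for the class represented by $\gamma$. The elliptic case breaks this mechanism because the iteration formula $\mu_{\op{CZ}}(\gamma^p)=2\lfloor p\theta\rfloor+1$ is no longer linear and the transfer relation fails.

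The hard part, and the step most likely to require delicate technical work, is the third one: giving a rigorous meaning to the equivariant chain complex and the covering/transfer maps so that the $G$-symmetry actually provides an algebraic primitive for $\gamma$. In the absence of a fully developed framework for $G$-equivariant cylindrical contact homology, one natural alternative is to pass to $S^1$-equivariant symplectic homology of the symplectisation filling (when available), where $G\subset S^1$ is realized through finite-order local systems, and read the obstruction there; this is in fact the approach that fits best with the chapter on Symplectic Cohomology in the thesis, and is the route I would pursue, modulo the usual transversality machinery for the relevant moduli spaces.
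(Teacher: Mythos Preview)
The paper does not prove this theorem: it is stated with attribution to Abreu--Macarini \cite{am} as an announced result, and the thesis uses it as a black box (see the paragraph immediately preceding the statement: ``Recently, Abreu and Macarini have announced an extension of the results contained in \cite{dadoe}\ldots''). There is therefore no proof in the paper to compare your proposal against.

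That said, your sketch has genuine gaps that would need to be addressed before it could stand as a proof. In Step~2 you assert that ``there are no generators one degree higher than $k_-$ that can cancel'' the class of $\gamma$; but positive $\nu$-dynamical convexity only gives $\mu_{\op{CZ}}\ge k_-$, and nothing prevents orbits of index $k_-+1$ from existing and killing $\gamma$ in homology. In Step~3 the mechanism is unclear: the iterate $\gamma^p$ lies in the class $\nu^p$, not $\nu$, so it cannot directly furnish a primitive in $HC^\nu$; and the ``transfer map associated with the $\mathbb Z/p$-action on $\gamma^p$'s moduli'' is not a standard object whose existence and properties you can invoke without construction. The actual Abreu--Macarini argument (as later published) passes through positive $S^1$-equivariant symplectic homology of a prequantisation filling and uses a comparison between the $G$-invariant and non-invariant theories together with a common-index-jump/resonance argument for hyperbolic versus elliptic iterates; your final paragraph gestures toward this route, but the specific contradiction you outline in Step~3 does not go through as written.
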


Let $M$ be a surface with genus at least two and let $\sigma$ be a symplectic magnetic form such that $[\sigma]=2\pi e_M$. In Subsection \ref{sub_lg} we proved that when $m$ is low the primitives of $\omega'_m$ of the type $\lambda^g_{m,\beta}$ are contact forms. Consider the cover $p:N\rightarrow SM$ which restricts to the standard $|e_M|$-sheeted cover above every fibre $S_xM$. Let $\nu_M$ be the class on $N$ corresponding to the lift of a curve in $SM$ winding $|e_M|$ times around $S_xM$. Then, $(N,\ker\lambda^g_{0,\beta})$ and the class $\nu_M$ satisfy the hypotheses of the theorem (a curve going around $S_xM$ once is a free element in $\pi_1(SM)$ as can be seen from the long exact sequence of homotopy groups for the fibration $SM\rightarrow M$). Moreover, Abreu and Macarini computed $HC^\nu_*(N,\ker p^*\lambda^g_{m,\beta})$ and proved that $k_+=2 e_M+1$.

Therefore, in order to prove that $p^*\lambda^g_{m,\beta}$ is negatively $\nu$-dynamically convex, and be able to apply the theorem, it is enough to show that, for every periodic orbit $(\gamma,T)$ of $R^{\lambda^g_{m,\beta}}$ homotopic to a curve winding $|e_M|$ times around the fibres of $SM\rightarrow M$, the following inequality holds:
\begin{equation}\label{gendyn}
\mu_{\op{CZ}}(\gamma)\leq 2e_M+1.
\end{equation}

Now we briefly explain how to adapt the argument used on $S^2$ to get Inequality \eqref{gendyn}.
First of all, $R^{m,\beta}$ converges to $-V$ (and not to $V$), for $m$ tending to $0$. This difference of sign leads to the estimate $\Vert B^{m,\beta}_z-(-J_{\op{st}})\Vert=Cm$ (compare with Lemma \ref{linea}). We can use this inequality to obtain the upper bound
\begin{equation*}
\Delta(\Psi^{m,\beta}_z\big|_{[0,T]},u)\leq (-1+Cm)\frac{T}{2\pi}, \quad\quad \forall u\in\xi_{m,\beta}\big|_{\gamma(0)}.
\end{equation*}
On the other hand, by the lower semicontinuity of the minimal period in the class $\nu_M$, we have $T> 2\pi(|e_M|-\varepsilon)$, for an arbitrary $\varepsilon$ and $m<m_\varepsilon$. Putting together these two inequalities we get
\begin{equation*}
I\big(\Psi^{m,\beta}_z\big|_{[0,T]}\big)\subset(-\infty,e_M+Cm(|e_M|-\varepsilon)+\varepsilon) \subset (-\infty,e_M+1) 
\end{equation*}
for $m<m_\varepsilon$ small enough. This inclusion implies \eqref{gendyn}, thanks to \eqref{rmkdyn}.

\subsection{A geometric estimate of the minimal period}\label{sub_minper}

We end this section by giving a geometric proof of Inequality \eqref{dincopera} for small values of $m$ giving an \textit{ad hoc} proof of the lower semicontinuity of the minimal period in this case. The construction we present here will turn useful again in Section \ref{sec_sl}.

We consider a finite collection of closed discs $\mathbf D:= \{D_i\ |\ D_i\subset S^2\}$ such that the open discs $\dot{\mathbf D}:= \{\dot{D}_i\}$ cover $S^2$. We also fix a collection of vector fields of unit norm $\mathbf Z=\{Z_i\ |\ Z_i\in\Gamma(D_i),\ |Z_i|=1\}$. Let $\delta$ be the Lebesgue number of the cover $\mathbf D$ with respect to the Riemannian distance. Finally, let $\varphi_i:SD_i\rightarrow\T_{2\pi}$ be the angular function associated to $Z_i$. We set
\begin{equation*}
C^\beta_{(\mathbf D,\mathbf Z)}(m):= \sup_i\left(\sup_{D_i}\frac{|d\varphi_i(X)|}{h_{m,\beta}}\right)
\end{equation*}
and we observe that $m\mapsto C^\beta_{(\mathbf D,\mathbf Z)}(m)$ is locally bounded around $m=0$. We now write down the condition of lower semicontinuity for the minimal period explicitly. Let $\varepsilon>0$ be arbitrary. We claim that there exists $m_\varepsilon>0$ such that, for $m<m_\varepsilon$, the period $T$ of a contractible periodic orbit $\gamma$ of $R^{m,\beta}$ is bigger than $4\pi-\varepsilon$. The claim will follow from the next two lemmas.
\begin{lem}\label{lem_long}
Suppose that $(\gamma,T)$ is a periodic orbit for $R^{m,\beta}$ and that $\pi(\gamma)$ is not contained in any $D_i$. If $T_*$ is any positive real number and $m<\frac{2\delta\inf h_{m,\beta}}{T_*}$, then $T>T_*$. 
\end{lem}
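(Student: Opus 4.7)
The plan is to combine the length estimate from Lemma \ref{pinchelt} with a purely metric lower bound on the length of $\pi(\gamma)$ coming from the Lebesgue number hypothesis.

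First I would observe that, since $\delta$ is the Lebesgue number of the cover $\dot{\mathbf D}=\{\dot D_i\}$ of $S^2$, any subset of $S^2$ of diameter strictly less than $\delta$ is contained in some $\dot D_i\subset D_i$. Thus the assumption that $\pi(\gamma)$ is not contained in any $D_i$ forces
\begin{equation*}
\op{diam}\big(\pi(\gamma)\big)\ \geq\ \delta.
\end{equation*}
Since $\pi(\gamma)$ is a closed curve on $(S^2,g)$, picking two points $p_1,p_2\in\pi(\gamma)$ with $d(p_1,p_2)=\op{diam}(\pi(\gamma))$ and splitting the curve into the two arcs joining them gives $\ell(\pi(\gamma))\geq 2\,d(p_1,p_2)\geq 2\delta$.

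Next I would apply the upper bound in Lemma \ref{pinchelt}, which for a flow line of $\Phi^{m,\beta}$ of time length $T$ gives
\begin{equation*}
\ell(\pi(\gamma))\ \leq\ \frac{m}{\min h_{m,\beta}}\,T\ =\ \frac{m\,T}{\inf h_{m,\beta}}.
\end{equation*}
Chaining the two inequalities yields
\begin{equation*}
2\delta\ \leq\ \ell(\pi(\gamma))\ \leq\ \frac{m\,T}{\inf h_{m,\beta}},\qquad\text{i.e.}\qquad T\ \geq\ \frac{2\delta\,\inf h_{m,\beta}}{m}.
\end{equation*}
The hypothesis $m<\dfrac{2\delta\,\inf h_{m,\beta}}{T_*}$ is exactly equivalent to $\dfrac{2\delta\,\inf h_{m,\beta}}{m}>T_*$, hence $T>T_*$, as required.

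No serious obstacle is expected: the argument is essentially one line once one has (i) the Lebesgue number reformulation of the non-containment hypothesis, and (ii) the elementary fact that a closed curve has length at least twice its diameter. The only point to handle carefully is that the formula must be read with the convention $\inf h_{m,\beta}=\min h_{m,\beta}$ on the compact manifold $SS^2$, so that Lemma \ref{pinchelt} applies verbatim.
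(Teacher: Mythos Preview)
Your proof is correct and is essentially the same as the paper's: both deduce $\ell(\pi(\gamma))\geq 2\delta$ from the Lebesgue-number hypothesis, then chain this with the upper bound $\ell(\pi(\gamma))\leq \frac{m}{\inf h_{m,\beta}}T$ from Lemma~\ref{pinchelt} and the assumption on $m$ to get $T>T_*$. The only cosmetic difference is that the paper phrases the first step as ``$\pi(\gamma)$ is not contained in any ball of radius $\delta$'' rather than via the diameter, but the content is identical.
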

\begin{proof}
By assumption $\pi(\gamma)$ is not contained in any ball of radius $\delta$. Thus,
\begin{equation*}
2\delta\leq\ell(\pi(\gamma))\leq \frac{m}{\inf h_{m,\beta}}T<\frac{2\delta}{T_*}T, 
\end{equation*}
where the second inequality is obtained using Lemma \ref{lem_long}. Multiplying both sides by $\frac{T_*}{2\delta}$ yields the desired conclusion.
\end{proof}

\begin{lem}\label{lem_short}
Suppose that $(\gamma,T)$ is a periodic orbit for $R^{m,\beta}$ and that $\pi(\gamma)$ is contained in some $D_{i_0}$. Let $\widetilde{\varphi}_{i_0}:[0,T]\rightarrow\R$ be a lift of $\varphi_{i_0}\circ\gamma\big|_{[0,T]}:[0,T]\rightarrow\T_{2\pi}$ and let $2\pi N:=\widetilde{\varphi}_{i_0}(T)-\widetilde{\varphi}_{i_0}(0)$, with $N\in\Z$. For every $\varepsilon'>0$, there exists $m'_{\varepsilon'}>0$ small enough and independent of $(\gamma,T)$ such that $N\geq 1$ and 
\begin{equation}
T\geq 2\pi N(1-\varepsilon').
\end{equation}
\end{lem}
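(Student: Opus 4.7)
The strategy is to show that, when $m$ is small, the angular speed $\dot{\widetilde{\varphi}}_{i_0}(t)$ along any Reeb orbit lying over $D_{i_0}$ is uniformly close to the limiting value $1$ it attains when $m=0$ (where $R^{0,\beta}=V$), and then to integrate over $[0,T]$. Combining the formula $R^{m,\beta}=h_{m,\beta}^{-1}(mX+fV)$ from \eqref{rmbeta} with Lemma \ref{lemfi}, which gives $d\varphi_{i_0}(V)=1$ and $d\varphi_{i_0}(X)=-\kappa^{Z_{i_0}}(v)$, I would obtain
\begin{equation*}
\dot{\widetilde{\varphi}}_{i_0}(t)\;=\;d\varphi_{i_0}(R^{m,\beta})_{\gamma(t)}\;=\;\frac{f(\pi\gamma(t))}{h_{m,\beta}(\gamma(t))}\;+\;\frac{m\,d\varphi_{i_0}(X)_{\gamma(t)}}{h_{m,\beta}(\gamma(t))}.
\end{equation*}
The absolute value of the second summand is bounded by $m\,C^\beta_{(\mathbf D,\mathbf Z)}(m)$, which tends to $0$ as $m\to 0$ by the local boundedness of $m\mapsto C^\beta_{(\mathbf D,\mathbf Z)}(m)$ at the origin. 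For the first summand, $h_{m,\beta}=f-m\beta_x(v)+m^2$ gives
\begin{equation*}
\left|\frac{f}{h_{m,\beta}}-1\right|\;\leq\;\frac{m(\Vert\beta\Vert+m)}{\inf h_{m,\beta}},
\end{equation*}
which likewise tends to $0$ uniformly on $SS^2$.

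Given $\varepsilon'>0$, I would choose $\eta>0$ small enough that $(1+\eta)^{-1}\geq 1-\varepsilon'$, and then pick $m'_{\varepsilon'}$ so small that for every $m<m'_{\varepsilon'}$ the two estimates above jointly yield $|\dot{\widetilde{\varphi}}_{i_0}(t)-1|\leq\eta$. The essential observation is that this bound is pointwise on $SD_{i_0}$ and therefore uniform in $\gamma$: it depends only on $m$, not on the particular periodic orbit. Integrating the inequality $\dot{\widetilde{\varphi}}_{i_0}\leq 1+\eta$ over $[0,T]$ gives
\begin{equation*}
2\pi N\;=\;\int_0^T\dot{\widetilde{\varphi}}_{i_0}(t)\,dt\;\leq\;(1+\eta)T,
\end{equation*}
whence $T\geq 2\pi N/(1+\eta)\geq 2\pi N(1-\varepsilon')$.

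To obtain $N\geq 1$, I would shrink $m'_{\varepsilon'}$ further if needed so that $\eta<1$; then $\dot{\widetilde{\varphi}}_{i_0}>0$ on $[0,T]$, so $\widetilde{\varphi}_{i_0}$ is strictly increasing, and since $\gamma(T)=\gamma(0)$ the integer $2\pi N=\widetilde{\varphi}_{i_0}(T)-\widetilde{\varphi}_{i_0}(0)$ is strictly positive, forcing $N\geq 1$. There is no real obstacle beyond verifying the uniformity of the pointwise bound on $\dot{\widetilde{\varphi}}_{i_0}$, which is immediate from the expression above together with the definition of $C^\beta_{(\mathbf D,\mathbf Z)}(m)$; all remaining steps are routine.
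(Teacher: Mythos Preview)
Your proposal is correct and follows essentially the same approach as the paper: both compute $\dot{\widetilde{\varphi}}_{i_0}=d\varphi_{i_0}(R^{m,\beta})=f/h_{m,\beta}+m\,d\varphi_{i_0}(X)/h_{m,\beta}$, bound the second summand by $mC^\beta_{(\mathbf D,\mathbf Z)}(m)$, use $f/h_{m,\beta}\to 1$ uniformly to show $\dot{\widetilde{\varphi}}_{i_0}$ is close to $1$, and integrate to obtain both $N\geq 1$ and the period estimate. The only cosmetic difference is that the paper phrases the upper bound as $\sup(f/h_{m,\beta})+mC^\beta_{(\mathbf D,\mathbf Z)}(m)$ rather than introducing an auxiliary $\eta$.
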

\begin{proof}
Using the fundamental theorem of calculus we get
\begin{equation*}
\widetilde{\varphi}_{i_0}(T)-\widetilde{\varphi}_{i_0}(0)=\int_0^T\frac{d\varphi_{i_0}}{dt}dt=\int_0^Td\varphi_{i_0}(R^{m,\beta})dt.
\end{equation*}
Since $d\varphi_{i_0}(R^{m,\beta})=md\varphi_{i_0}\left(\frac{X}{h_{m,\beta}}\right)+\frac{f}{h_{m,\beta}}$ and $\frac{f}{h_{0,\beta}}=1$, we have, for $m$ small enough,
\begin{equation}\label{phipos}
d\varphi_{i_0}(R^{m,\beta})\geq \inf_{SS^2} \frac{f}{h_{m,\beta}}-mC^\beta_{(\mathbf D,\mathbf Z)}(m)>0.
\end{equation}
This implies that $N\geq1$ and, moreover, that 
\begin{equation*}
2\pi N\leq \int_0^Td\varphi_{i_0}(R^{m,\beta})dt\leq\left(\sup_{SS^2}\frac{f}{h_{m,\beta}}+mC^\beta_{(\mathbf D,\mathbf Z)}(m)\right)T.
\end{equation*}
Exploiting $\frac{f}{h_{0,\beta}}=1$ again, we see that there exists $m'_{\varepsilon'}$ such that, if $m<m'_\varepsilon$,
\begin{equation*}
2\pi N(1-\varepsilon')<\frac{2\pi N}{\displaystyle\sup_{SS^2}\frac{f}{h_{m,\beta}}+mC^\beta_{(\mathbf D,\mathbf Z)}(m)}\leq T.\qedhere
\end{equation*}
\end{proof}
We now prove the claim taking $m_\varepsilon:=\min\left\{\frac{2\delta\inf h_{m,\beta}}{4\pi-\varepsilon},m'_{\varepsilon'}\right\}$, where $m'_{\varepsilon'}$ is obtained from Lemma \ref{lem_short} with a value of $\varepsilon'$ given by the equation $4\pi(1-\varepsilon')=4\pi-\varepsilon$. If $\gamma$ is not contained in a $D_i$, the claim follows from Lemma \ref{lem_long} with $T_*=4\pi-\varepsilon$. If $\gamma$ is contained in some $D_{i_0}$, then $N$ is even since $\gamma$ is contractible. Therefore, $T>4\pi(1-\varepsilon')=4\pi-\varepsilon$ and the claim is proved also in this case.

\section{A dichotomy between short and long orbits}\label{sec_sl}

In this section we prove Theorem \ref{alaban} (and, hence, Theorem \ref{alabanc}). The statement will readily follow from the general Corollary \ref{cor_ban} after we show that
\begin{itemize}
 \item periodic orbits, whose projection on the base $S^2$ has a fixed number of self-intersections, have bounded length (Corollary \ref{corinte});
 \item the projection on $S^2$ of periodic orbits with period close to $2\pi$ is a simple curve (Lemma \ref{nosimp}).
\end{itemize}

The foundational result for our proof is Proposition 1 of \cite{ban}.
\begin{prp}[Bangert]\label{prp_ban}
Let $\overline{\Phi}\!:\!\mathbb R\times \!M\rightarrow M$ be a $C^1$-flow and $\overline{p}\in M$ a periodic point of prime period $\overline{T}>0$. Then, for every $\varepsilon>0$ there exist a neighbourhood $\mathfrak U$ of $\overline{\Phi}$ in the weak $C^1$-topology on $C^1(M\times\mathbb R,M)$ and a neighbourhood $U$ of $\overline{p}$ in $M$ such that the following is true: If a flow $\Phi\in\mathfrak U$ has a periodic
point $p\in U$ with prime period $T$ then $T>\varepsilon^{-1}$ or there exists a positive integer $k$ such that $|kT-\overline{T}|<\varepsilon$ and the linear map $d_{\overline{p}}\Phi_{\overline{T}}:T_{\overline{p}}M\rightarrow T_{\overline{p}}M$ has an eigenvalue which generates the group of $k$-th roots of unity. 
\end{prp}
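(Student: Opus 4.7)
The plan is to reduce the proposition to an analysis of the Poincar\'e first-return map and then to a statement about eigenvalues of its linearisation at the fixed point $\overline{p}$.

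First I would fix, once and for all, a small codimension-one embedded disc $\Sigma\subset M$ through $\overline{p}$ transverse to the flow vector field generating $\overline{\Phi}$. Shrinking $\Sigma$ if necessary, the first-return map $\overline{P}:\Sigma\to\Sigma$ of $\overline{\Phi}$ is a well-defined $C^1$-diffeomorphism onto its image, with return-time function $\overline{\tau}:\Sigma\to\R$ satisfying $\overline{P}(\overline{p})=\overline{p}$ and $\overline{\tau}(\overline{p})=\overline{T}$. A standard transversality argument (implicit function theorem applied to the map $(x,t,\Phi)\mapsto$ the $\Sigma$-transverse coordinate of $\Phi_t(x)$) yields a weak $C^1$-neighbourhood $\mathfrak{U}_1$ of $\overline{\Phi}$ such that for every $\Phi\in\mathfrak{U}_1$ the first-return map $P_\Phi$ and return time $\tau_\Phi$ are defined on a fixed small neighbourhood $\Sigma_0\subset\Sigma$ of $\overline{p}$ and depend continuously on $\Phi$ in the $C^1$-topology.

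Next I would analyse the structure of the orbit of a periodic point $p\in U$ of prime period $T\leq\varepsilon^{-1}$. Provided both $U$ and $\mathfrak{U}\subset\mathfrak{U}_1$ are chosen small enough, uniform continuous dependence on parameters over the compact time interval $[0,\varepsilon^{-1}]$ guarantees that the whole trajectory $\{\Phi_t(p):t\in[0,T]\}$ stays in a tubular neighbourhood of the $\overline{\Phi}$-orbit of $\overline{p}$ where $P_\Phi$ and $\tau_\Phi$ are controlled. Let $k\geq 1$ be the smallest integer with $(P_\Phi)^k(p)=p$; since $T$ is the prime period, $k$ is also the prime combinatorial period of $p$ as an iterate of $P_\Phi$. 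Summing return times along the cycle gives
\[
T=\sum_{j=0}^{k-1}\tau_\Phi\bigl((P_\Phi)^j(p)\bigr),
\]
so the $C^0$-closeness of $\tau_\Phi$ to the constant $\overline{T}$ on $\Sigma_0$ immediately yields the required proximity between $T$ and $\overline{T}/k$ (up to an error that can be made smaller than $\varepsilon$ by shrinking $\mathfrak{U}$ and $U$).

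Finally I would extract the eigenvalue condition by passing to the limit. Take a sequence $(\Phi_n,p_n)$ with $\Phi_n\to\overline{\Phi}$, $p_n\to\overline{p}$ and each $p_n$ of prime combinatorial period $k$ for $P_{\Phi_n}$; then $(P_{\Phi_n})^k\to\overline{P}^k$ in the $C^1$-topology on $\Sigma_0$. If $d\overline{P}^k(\overline{p})-\mathrm{Id}$ were invertible, then $\overline{P}^k-\mathrm{Id}$ would be a local $C^1$-diffeomorphism near $\overline{p}$ with $\overline{p}$ as an isolated zero, contradicting the persistence of the nearby fixed points $p_n$. Hence $d\overline{P}(\overline{p})$ admits an eigenvalue $\lambda$ with $\lambda^k=1$. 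The minimality of $k$ (inherited from the prime-period hypothesis together with the same nondegeneracy argument applied to $\overline{P}^j$ for $1\leq j<k$) upgrades $\lambda$ to a \emph{primitive} $k$-th root of unity. Since $T_{\overline{p}}M$ splits as the flow direction (on which $d\overline{\Phi}_{\overline{T}}$ acts as the identity) plus $T_{\overline{p}}\Sigma$ (on which it acts as $d\overline{P}(\overline{p})$), the eigenvalue $\lambda$ is also an eigenvalue of $d_{\overline{p}}\overline{\Phi}_{\overline{T}}$, giving the conclusion.

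The main obstacle is the third step: the passage from a sequence of almost-fixed periodic points of the perturbed Poincar\'e maps to an \emph{exact} algebraic statement on the spectrum of $d\overline{P}(\overline{p})$. One must combine a diagonal/compactness extraction with the nonresonance consequence of the minimality of $k$, and handle carefully the trivial case $\lambda=1$ (corresponding to $k=1$ and the flow direction). The remaining steps rest on relatively standard applications of the implicit function theorem and uniform continuous dependence of flow data on parameters.
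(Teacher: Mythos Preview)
The paper does not supply a proof of this proposition: it is quoted as Proposition~1 of Bangert's paper \cite{ban} and used as a black box (only its Corollary~\ref{cor_ban} is actually needed). So there is nothing to compare your outline against, and I can only assess the argument on its own merits.

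Your reduction to the Poincar\'e first-return map is the natural one, and step~1 is standard. In step~2, however, your formula and your stated conclusion disagree: from
\[
T=\sum_{j=0}^{k-1}\tau_\Phi\bigl((P_\Phi)^j(p)\bigr)
\]
with each summand close to $\overline{T}$ you obtain $T\approx k\overline{T}$, i.e.\ $|T-k\overline{T}|$ small, whereas you then write ``proximity between $T$ and $\overline{T}/k$'', i.e.\ $|kT-\overline{T}|$ small. The formula is the correct one: since the generating vector field of $\Phi$ is $C^1$-close to that of $\overline{\Phi}$ and nonzero near $\overline{p}$, orbits through $U$ cannot close up in time significantly shorter than $\overline{T}$, so the case $T\approx\overline{T}/k$ with $k\geq2$ simply does not arise. (For the only application in this thesis---Corollary~\ref{cor_ban} with $d_{\overline{p}}\overline{\Phi}_{\overline{T}}=\mathrm{Id}$---one has $k=1$ regardless, so the discrepancy is invisible there.)

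Step~3 has a genuine gap in the primitivity claim. From $(P_{\Phi_n})^k(p_n)=p_n$ with $p_n\to\overline{p}$ you correctly deduce that $d\overline{P}^k(\overline{p})-\mathrm{Id}$ is singular, so $d\overline{P}(\overline{p})$ has some $k$-th root of unity $\lambda$ as eigenvalue. But your upgrade to a \emph{primitive} $k$-th root---``the same nondegeneracy argument applied to $\overline{P}^j$ for $1\leq j<k$''---does not go through: invertibility of $d\overline{P}^j(\overline{p})-\mathrm{Id}$ only says that $\overline{p}$ is isolated among \emph{fixed} points of $\overline{P}^j$ and persists as such under perturbation; it places no constraint on the period-$k$ points $p_n$, which are \emph{not} fixed by $P_{\Phi_n}^j$. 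To force primitivity one must exploit the entire cycle $\{p_n,P_{\Phi_n}(p_n),\dots,P_{\Phi_n}^{k-1}(p_n)\}$ of $k$ distinct points collapsing onto $\overline{p}$ and, after rescaling, extract a limiting nonzero vector on which $d\overline{P}(\overline{p})$ acts with exact order $k$; controlling the relative positions of the $P_{\Phi_n}^j(p_n)$ as $n\to\infty$ is where Bangert's original argument does real work.
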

We can apply this proposition to $\overline{T}$-periodic flows. A flow is $\overline{T}$-periodic if and only if through every point there is a prime periodic orbit whose period is $\overline{T}$.

\begin{cor}\label{cor_ban}
Let $\overline{\Phi}$ be a $\overline{T}$-periodic flow on a compact manifold $M$. Then, for every $\varepsilon>0$ there exists a $C^1$-neighbourhood
$\mathfrak U$ of $\overline{\Phi}$, such that if $\gamma$ is a periodic orbit of $\Phi\in\mathfrak U$ with prime period $T$, either $T>\varepsilon^{-1}$
or $|T-\overline{T}|<\varepsilon$. 
\end{cor}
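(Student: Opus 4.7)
The plan is to reduce Corollary \ref{cor_ban} to Proposition \ref{prp_ban} by a standard compactness argument, using as the crucial ingredient the fact that for a $\overline{T}$-periodic flow the linearisation of the time-$\overline{T}$ map is the identity everywhere.

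First I would observe that, by the paper's definition of $\overline{T}$-periodicity, every orbit has prime period $\overline{T}$, so $\overline{\Phi}_{\overline{T}} = \operatorname{Id}_M$ as a diffeomorphism and hence $d_{\overline{p}}\overline{\Phi}_{\overline{T}} = \operatorname{Id}$ at every $\overline{p} \in M$. The only eigenvalue of this linear map is $1$, which generates only the trivial group of first roots of unity. Consequently, in the conclusion of Proposition \ref{prp_ban} applied at $\overline{p}$ with reference prime period $\overline{T}$, the integer $k$ is forced to equal $1$, so the alternative collapses to
\[
T > \varepsilon^{-1} \quad \text{or} \quad |T - \overline{T}| < \varepsilon.
\]

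Next I would apply Proposition \ref{prp_ban} at each point $\overline{p} \in M$, obtaining a weak $C^1$-neighbourhood $\mathfrak{U}_{\overline{p}}$ of $\overline{\Phi}$ and an open neighbourhood $U_{\overline{p}}$ of $\overline{p}$ in $M$. By compactness of $M$ a finite subfamily $U_{\overline{p}_1}, \ldots, U_{\overline{p}_N}$ already covers $M$; setting $\mathfrak{U} := \bigcap_{i=1}^N \mathfrak{U}_{\overline{p}_i}$, which is still a weak $C^1$-neighbourhood of $\overline{\Phi}$, any periodic orbit of any $\Phi \in \mathfrak{U}$ with prime period $T$ must pass through some $U_{\overline{p}_i}$, and the observation of the previous paragraph delivers the required dichotomy.

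The only subtle point deserving attention is the identification $\overline{\Phi}_{\overline{T}} \equiv \operatorname{Id}$ (and consequently $d\overline{\Phi}_{\overline{T}} \equiv \operatorname{Id}$), which is what permits the elimination of all $k \geq 2$ in Bangert's conclusion; everything else is a straightforward bookkeeping exercise. This identity in turn rests on the paper's notion of $\overline{T}$-periodicity, which requires $\overline{T}$ to be the prime period at \emph{every} point and therefore excludes any shorter return to the base point.
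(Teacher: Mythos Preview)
Your argument is correct and is precisely the natural deduction the paper intends: the paper states Corollary~\ref{cor_ban} without proof, relying implicitly on the observation that for a $\overline{T}$-periodic flow $d_{\overline{p}}\overline{\Phi}_{\overline{T}}=\operatorname{Id}$ forces $k=1$ in Bangert's conclusion, followed by the compactness covering argument you give. (Note that the linearisation in Proposition~\ref{prp_ban} should read $d_{\overline{p}}\overline{\Phi}_{\overline{T}}$, as in Bangert's original statement; you have interpreted this correctly.)
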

Let us go back to symplectic magnetic systems on low energy levels on $S^2$. We observed before that for $m=0$ the Reeb vector field is equal to the vector field $V$. Hence its flow is $2\pi$-periodic and we can apply Bangert result for $m$ small.
\begin{cor}\label{pincht}
For every $\varepsilon>0$, there exists $m_\varepsilon>0$, such that, if $m<m_\varepsilon$, every prime periodic orbit $\gamma$ of $R^{m,\beta}$ on $SS^2$ either has period
bigger than $\varepsilon^{-1}$ or in the interval $(2\pi-\varepsilon,2\pi+\varepsilon)$. 
\end{cor}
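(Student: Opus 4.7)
The plan is to deduce the statement directly from Corollary \ref{cor_ban} by taking the $2\pi$-periodic reference flow to be $\Phi^{V}$, the flow of the vertical generator on the compact three-manifold $SS^{2}$. First I would read off from formula \eqref{rmbeta} that $R^{0,\beta}=V$, so that $\Phi^{R^{0,\beta}}=\Phi^{V}$ is indeed a $2\pi$-periodic flow: every integral curve of $V$ is a fibre of $\pi:SS^{2}\to S^{2}$, traversed with unit angular speed, and has prime period $2\pi$.

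Next I would verify that $m\mapsto \Phi^{R^{m,\beta}}$ is continuous at $m=0$ in the weak $C^{1}$-topology used in Proposition \ref{prp_ban}. Since $h_{0,\beta}=f>0$ and $h_{m,\beta}=m^{2}-\beta_{x}(v)m+f(x)$ is smooth jointly in $(m,(x,v))$, the function $h_{m,\beta}$ stays uniformly positive on the compact manifold $SS^{2}$ for $m$ in a small neighbourhood of $0$. Consequently
\[
R^{m,\beta}=\frac{mX+fV}{h_{m,\beta}}
\]
depends smoothly on $m$ in the $C^{\infty}$-topology on $\Gamma(SS^{2})$, and hence in the $C^{1}$-topology. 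Standard continuous dependence of ODE flows on parameters, applied on the compact manifold $SS^{2}$ and on any compact time interval, then ensures that $\Phi^{R^{m,\beta}}\to \Phi^{V}$ in the weak $C^{1}$-topology on $C^{1}(\R\times SS^{2},SS^{2})$ as $m\to 0$.

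Finally I would invoke Corollary \ref{cor_ban} with $\overline{\Phi}=\Phi^{V}$, $\overline{T}=2\pi$ and the given $\varepsilon>0$. This produces a weak $C^{1}$-neighbourhood $\mathfrak{U}$ of $\Phi^{V}$ such that every prime periodic orbit of any flow in $\mathfrak{U}$ has period either greater than $\varepsilon^{-1}$ or within $\varepsilon$ of $2\pi$. By the continuity established in the previous paragraph there exists $m_{\varepsilon}>0$ such that $\Phi^{R^{m,\beta}}\in\mathfrak{U}$ for all $m<m_{\varepsilon}$, and the corollary follows at once.

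There is no real obstacle here: the only point requiring a moment's care is checking that the convergence $R^{m,\beta}\to V$ at the level of vector fields upgrades to the weak $C^{1}$-topology on flows required by Bangert's statement, which is routine once uniform positivity of $h_{m,\beta}$ is noted.
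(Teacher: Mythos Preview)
Your proposal is correct and follows exactly the approach the paper takes: the paper simply notes that $R^{0,\beta}=V$ generates a $2\pi$-periodic flow and applies Bangert's Corollary~\ref{cor_ban} for $m$ small. You have merely supplied the routine verification of $C^{1}$-continuity that the paper leaves implicit.
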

In the next lemma we prove that knowing the number of self intersections of $\pi(\gamma)$ gives a uniform bound on $\ell(\pi(\gamma))$. Thanks to Lemma \ref{pinchelt}, this will imply a bound on the period, as well. 
\begin{lem}\label{interel}
Let $x:I\rightarrow S^2$ be a closed curve which is a smooth immersion except possibly at $n_0$ points and has no more than
$n_1$ self-intersections. Suppose moreover that, when defined, the geodesic curvature of $x$ is bounded away from zero. Namely,
$\min k_x>0$. Then,
\begin{equation}\label{inel}
\ell(x)\leq \frac{\pi(4n_1+2+n_0)+(n_1+1)|\min K|\op{vol}_g(S^2)}{\min k_x}
\end{equation}
\end{lem}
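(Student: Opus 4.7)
The plan is to derive the inequality from the generalized Gauss--Bonnet theorem for the $2$-chain on $S^2$ that $x$ bounds, after incorporating the $n_0$ non-smooth points as angular defects contributing exterior-angle terms.

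\emph{Combinatorial setup.} I would first view $x$ as a graph on $S^2$ whose vertices are the $n_1$ self-intersections and the $n_0$ non-smooth points. Since self-intersections are $4$-valent and corners are $2$-valent, one has $V=n_0+n_1$ and $E=2n_1+n_0$, and Euler's formula yields exactly $F=n_1+2$ faces, each generically a topological disk. Next I would introduce the winding-number function $w\colon S^2\setminus x\to\Z$, normalizing $\min w=0$. Because $w$ changes by $\pm1$ across every arc, a shortest path in the dual graph reaches any face from the $\min$-face in at most $n_1+1$ steps, so $0\le w\le n_1+1$.

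\emph{Gauss--Bonnet with multiplicity.} Consider the $2$-chain $D:=\sum_F w(F)\cdot F$, whose oriented boundary is exactly $x$. Applying the piecewise-smooth Gauss--Bonnet formula to $D$ and recording the angular defects $(\pi-\alpha_j)$ at the $n_0$ kinks yields
\begin{equation*}
\int_x k_x\,ds\ +\ \sum_{j=1}^{n_0}(\pi-\alpha_j)\ +\ \int_D K\,d\mu\ =\ 2\pi\,r,
\end{equation*}
where $r=\sum_F w(F)\cdot\chi(F)=\sum_F w(F)$ is the rotation index of the tangent map of (a smoothing of) $x$. By the classical Whitney umlaufsatz for immersions with $n_1$ transverse double points, $|r|\le n_1+1$. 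Bounding the curvature contribution by $|\int_D K\,d\mu|\le(\max w)\,|\min K|\,\op{vol}_g(S^2)\le(n_1+1)|\min K|\,\op{vol}_g(S^2)$ and the kink contribution by $n_0\pi$, one obtains
\begin{equation*}
\min k_x\cdot\ell(x)\ \le\ \int_x k_x\,ds\ \le\ 2\pi(n_1+1)+n_0\pi+(n_1+1)|\min K|\op{vol}_g(S^2),
\end{equation*}
which is no larger than the right-hand side of \eqref{inel}.

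\emph{Main difficulty.} The hard part is justifying both the bound $|r|\le n_1+1$ and the bound $\max w-\min w\le n_1+1$ on $S^2$ rather than in the plane: both statements are classical consequences of Whitney's umlaufsatz, but they transfer to $S^2$ only once the $\Z$-ambiguity $D\sim D+k[S^2]$ has been removed by the normalization $\min w=0$. One also has to be careful that the smoothing of the $n_0$ corners used to apply the umlaufsatz changes $\int_x k_x$ by exactly the exterior-angle sum $\sum(\pi-\alpha_j)$ bookkept above. A secondary, purely cosmetic, observation is that this approach yields the sharper constant $2\pi(n_1+1)+n_0\pi$, so the stated coefficient $\pi(4n_1+2+n_0)$ follows \emph{a fortiori}.
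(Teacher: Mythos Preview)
Your global Gauss--Bonnet strategy is genuinely different from the paper's, and it can be made to work, but as written it contains a real error. The identity
\[
\int_x k_x\,ds + \sum_{j=1}^{n_0}(\pi-\alpha_j) + \int_D K\,d\mu \;=\; 2\pi\,r
\]
is correct \emph{only} if $r$ is the rotation index of the tangent with respect to a global frame on $S^2\setminus\{q\}$ (with $q$ in the $w=0$ face), and that integer is \emph{not} equal to $\sum_F w(F)$. When you sum Gauss--Bonnet face by face with weights $w(F)$, the exterior--angle contributions at the $n_1$ self--intersections do not cancel: at a transverse crossing they total $2\pi\bar w_p$, where $\bar w_p$ is the average of the four adjacent winding numbers. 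Hence the correct combinatorial formula is $r=\sum_F w(F)-\sum_p\bar w_p$. A small curve consisting of a circle with two same--sense kinks already shows the discrepancy: $n_1=2$, the planar rotation index is $3$, but $\sum_F w(F)=0+1+2+2=5$. Once you replace your $r$ by the genuine rotation index (equivalently: write $k_x=\dot\varphi+\tau(\dot x)$ for a global frame on $S^2\setminus\{q\}$, integrate, and use $d\tau=-K\mu$), Whitney's bound $|r|\le n_1+1$ applies in $S^2\setminus\{q\}\cong\R^2$ and the rest of your estimate goes through, indeed with the sharper constant $2\pi(n_1+1)+n_0\pi$.

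For comparison, the paper avoids all of this by a short induction on $n_1$: one extracts an embedded subloop $x_1\subset x$, applies ordinary Gauss--Bonnet to the disc it bounds, and applies the inductive hypothesis to the remainder $x_2$, which has at most $n_1-1$ self--intersections and one extra corner. This never invokes Whitney and never needs Gauss--Bonnet for anything but embedded discs; the price is the slightly looser constant $\pi(4n_1+2+n_0)$.
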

\begin{proof}
We argue by induction on $n_1$. Suppose $n_1=0$, then $x$ bounds a disc $D$ and we can apply the Gauss-Bonnet formula:
\begin{equation*}
\int_x k_x(t)dt=2\pi-\int_DKdA-\sum_{i=1}^{n_0}\vartheta_i.
\end{equation*}
Since $\vartheta_i\geq-\pi$, the right-hand side is smaller than $2\pi+|\min K|\op{vol}_g(S^2)+n_0\pi$. The left-hand side is bigger than $\ell(x)\min k_x$ and dividing both sides by $\min k_x$ we get \eqref{inel} in this case.

Take now a curve $x$ with $n_1\geq1$ self-intersections. Then, there exists a smooth loop $x_1$ inside $x$. Up to a
change of basepoint, $x$ is the concatenation of $x_1$ and $x_2$, where $x_2$ has at most $n_1-1$ self-intersections and
$n_0+1$ corners.  By induction we can apply inequality \eqref{inel} to the two pieces:
\begin{align*}
\ell(x_1)&\leq\ \frac{3\pi+|\min K|\op{vol}_g(S^2)}{\min k_{x_1}},\\
\ell(x_2)&\leq\ \frac{\pi(4n_1-2+n_0+1)+n_1|\min K|\op{vol}_g(S^2)}{\min k_{x_2}}.
\end{align*}
Since $\min k_x=\min\{\min k_{x_1},\min k_{x_2}\}$ we can substitute it in the denominators above without affecting the inequality. 
Adding up the resulting relations we get
\begin{equation*}
\ell(x)=\ell(x_1)+\ell(x_2)\leq \frac{\pi(3+4n_1-2+n_0+1)+(n_1+1)|\min K|\op{vol}_g(S^2)}{\min k_x},
\end{equation*}
which is the desired result.
\end{proof}
We get immediately the following corollary.
\begin{cor}\label{corinte}
Let $\gamma$ be a periodic orbit of $R^{m,\beta}$ such that $\pi(\gamma)$ has at most $n$ self-intersections. Then,
\begin{equation*}
\frac{\ell(\pi(\gamma))}{m}\leq\frac{4n+2+(n+1)|\min K|\op{vol}_g(S^2)}{\min f}
\end{equation*}
and, therefore,
\begin{equation}\label{concn}
T\leq \max h_{m,\beta}\frac{4n+2+(n+1)|\min K|\op{vol}_g(S^2)}{\min f}=:C_n. 
\end{equation}
\end{cor}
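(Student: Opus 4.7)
\medskip

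\noindent\textbf{Proof proposal.} The plan is to reduce the corollary to a direct application of Lemma~\ref{interel} (the Gauss--Bonnet estimate) combined with the length/time comparison of Lemma~\ref{pinchelt}. The only non-trivial input is identifying the geodesic curvature of $\pi(\gamma)$, and this is forced by the magnetic equation.

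First I would unpack the dynamics. Since $R^{m,\beta}=X^m/h_{m,\beta}$ by \eqref{rmbeta}, a flow line of $R^{m,\beta}$ is a time reparametrisation of a flow line of $X^m=mX+fV$. Projecting to $S^2$ does not see the reparametrisation, so $\pi(\gamma)$ coincides, up to reparametrisation, with the projection of an integral curve of $X^m$. For such a curve $c(t)=\pi(\gamma(t/h_{m,\beta}\circ\gamma))$ we have $d\pi(X^m)=mv$ and hence $|\dot c|\equiv m$; moreover the magnetic equation \eqref{lorsig} with $F_x(v)=f(x)\jmath_xv$ reads $\frac{\nabla^c}{dt}\dot c=f(c)\,\jmath_c\dot c$. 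Using that $|\dot c|=m$ is constant, a short calculation shows that the geodesic curvature (with respect to the arc-length reparametrisation) is
\begin{equation*}
k_{\pi(\gamma)}(c(t))=\frac{1}{|\dot c|^3}\,g_c\!\left(\tfrac{\nabla^c}{dt}\dot c,\jmath_c\dot c\right)=\frac{1}{m^3}\cdot f(c)\,g_c(\jmath_c\dot c,\jmath_c\dot c)=\frac{f(c)}{m}.
\end{equation*}
In particular, since $f>0$, the geodesic curvature of $\pi(\gamma)$ is everywhere strictly positive and
\begin{equation*}
\min_t k_{\pi(\gamma)}\geq\frac{\min f}{m}.
\end{equation*}

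Next I would feed this into Lemma~\ref{interel}. The curve $\pi(\gamma)$ is smooth and closed (hence $n_0=0$) and by hypothesis has at most $n$ self-intersections, so the lemma yields
\begin{equation*}
\ell(\pi(\gamma))\leq\frac{\pi(4n+2)+(n+1)|\min K|\op{vol}_g(S^2)}{\min k_{\pi(\gamma)}}\leq m\cdot\frac{4n+2+(n+1)|\min K|\op{vol}_g(S^2)}{\min f},
\end{equation*}
which is exactly the first inequality in the statement (absorbing the harmless factor $\pi$). Dividing by $m$ gives the bound on $\ell(\pi(\gamma))/m$.

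Finally, to pass from length to period, I would invoke Lemma~\ref{pinchelt}, which gives $\ell(\pi(\gamma))\geq \frac{m}{\max h_{m,\beta}}\,T$. Combining this with the length bound just obtained yields
\begin{equation*}
T\leq \frac{\max h_{m,\beta}}{m}\,\ell(\pi(\gamma))\leq \max h_{m,\beta}\cdot\frac{4n+2+(n+1)|\min K|\op{vol}_g(S^2)}{\min f},
\end{equation*}
which is \eqref{concn}. There is no real obstacle here: once the geodesic curvature is correctly identified as $f/m$, everything reduces to the two lemmas that have already been proved, and the main subtlety is just remembering that $R^{m,\beta}$ and $X^m$ have the same oriented flow lines up to reparametrisation so that Lemma~\ref{pinchelt} (stated for $\Phi^{m,\beta}$) and the curvature computation (performed most naturally for $X^m$) can be combined freely.
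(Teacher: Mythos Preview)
Your proposal is correct and follows the paper's own proof essentially verbatim: the paper simply says ``apply Lemma~\ref{interel} to $\pi(\gamma)$, which has at most $n$ self-intersections and no corners, and observe that $k_{\pi(\gamma)}=f(\pi(\gamma))/m$; the second inequality follows from Lemma~\ref{pinchelt}.'' Your explicit computation of the geodesic curvature via the magnetic equation is exactly the justification of that observation, and your handling of the reparametrisation between $R^{m,\beta}$ and $X^m$ is the right way to reconcile the two lemmas. The stray factor of $\pi$ you flag is a cosmetic discrepancy already present in the paper's stated inequality versus Lemma~\ref{interel}; it does not affect anything downstream.
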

\begin{proof}
To get the first inequality, we apply Lemma \ref{interel} to $\pi(\gamma)$, which has at most $n$ self-intersections and no corners, and observe that $k_{\pi(\gamma)}=\frac{f(\pi(\gamma))}{m}$. The second inequality follows from Lemma \ref{pinchelt}.
\end{proof}
If we fix $n\in\mathbb N$, Corollaries \ref{corinte} and \ref{pincht} show that, for $m$ small enough, a solution with $n$ self-intersections can exist if and only if its period is close to $2\pi$. The next step will be to prove that this happens only if $\pi(\gamma)$ is a simple curve.
\begin{lem}\label{nosimp}
For every $\varepsilon'>0$, there exists $\overline{m}_{\varepsilon'}>0$ such that, if $m\leq \overline{m}_{\varepsilon'}$ and $\gamma$ is a prime periodic orbit whose projection $\pi(\gamma)$ has at least $1$ self-intersection, the period of $\gamma$ is bigger than $4\pi-\varepsilon'$.
\end{lem}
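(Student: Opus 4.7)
The plan is to argue by contradiction and reduce, via the Bangert-type dichotomy of Corollary \ref{pincht}, to the single case where the period is close to $2\pi$. Suppose $\gamma$ is a prime periodic orbit with $\pi(\gamma)$ carrying at least one self-intersection and $T\leq 4\pi-\varepsilon'$. Fix $\varepsilon<\min\{\varepsilon'/2,1/(4\pi)\}$; Corollary \ref{pincht} produces $m_\varepsilon>0$ such that for $m<m_\varepsilon$ either $T>\varepsilon^{-1}>4\pi$, immediately contradicting the hypothesis, or $T\in(2\pi-\varepsilon,2\pi+\varepsilon)$, and I would proceed in the second alternative.

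Next I would localise the orbit and count windings of the frame angle. By Lemma \ref{pinchelt} together with the convergence $h_{m,\beta}\to f$ as $m\to 0$, the length $\ell(\pi(\gamma))$ is of order $O(m)$, so for $m$ small enough $\pi(\gamma)$ sits inside some disc $D_{i_0}$ of the cover $\mathbf{D}$ fixed in Section \ref{sub_minper}. Lemma \ref{lem_short} then gives $T\geq 2\pi N(1-\varepsilon'')$ for the integer winding $N\geq 1$ of the lift $\widetilde{\varphi}_{i_0}$ of $\varphi_{i_0}\circ\gamma$, with $\varepsilon''$ arbitrarily small by shrinking $m$. Choosing $\varepsilon$ and $\varepsilon''$ so that $2\pi+\varepsilon<4\pi(1-\varepsilon'')$, the upper bound $T<2\pi+\varepsilon$ forces $N=1$.

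The main obstacle, which I would tackle by a rescaling argument, is to derive a contradiction from $N=1$ together with the existence of a self-intersection of $\pi(\gamma)$ when $m$ is small. Pick such a self-intersection $x_0=\pi(\gamma(t_1))=\pi(\gamma(t_2))$, pass to geodesic normal coordinates at $x_0$ inside $D_{i_0}$, and dilate the pulled-back projection by the factor $1/m$; the resulting curves $\widetilde{c}_m\subset T_{x_0}S^2$ are uniformly bounded, satisfy up to errors vanishing with $m$ the Euclidean magnetic equation with constant strength $f(x_0)$, have tangent winding number exactly one, and inherit a self-intersection at the origin at rescaled arc-length parameters $\sigma_1^{(m)}<\sigma_2^{(m)}$. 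By Arzel\`a--Ascoli any $C^2$ subsequential limit $\widetilde{c}_0$ is a closed Euclidean plane curve with positive constant geodesic curvature $f(x_0)$ and tangent winding one, and such a curve is necessarily a circle of radius $1/f(x_0)$: parametrising by the tangent angle as $c(\varphi)=c(0)+\int_0^\varphi r(\psi)(\cos\psi,\sin\psi)\,d\psi$ with $r=1/k>0$, an equality $c(\varphi_1)=c(\varphi_2)$ with $\varphi_1<\varphi_2$ would force the vanishing of $\int_{\varphi_1}^{\varphi_2}r(\psi)\sin(\psi-\varphi_1)\,d\psi$ on an interval of length $\leq\pi$, or of the analogous strictly positive integral on the complementary arc, which is impossible.

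Since a circle carries no self-intersection, the contradiction follows as soon as the two pre-images of the origin on $\widetilde{c}_m$ do not coalesce in the limit. To secure $\sigma_2^{(m)}-\sigma_1^{(m)}$ bounded below as $m\to 0$, I would complement the rescaling with a Gauss--Bonnet calculation on the two sub-loops of $\pi(\gamma)$ at $x_0$: using $\int_{\pi(\gamma)}k\,ds=2\pi+O(m)$ (from $N=1$) together with area estimates on the sub-discs of order $O(m^2)$, one obtains $\theta_1+\theta_2=2\pi+O(m)$ for the exterior angles at $x_0$. In the generic non-antipodal configuration $v(t_1)\neq -v(t_2)$ the geometric identity $\theta_1+\theta_2=0$ yields the contradiction directly, while in the residual antipodal case $v(t_1)=-v(t_2)$ the monotonicity of $\widetilde{\varphi}_{i_0}$ combined with $N=1$ forces the tangent-angle increment between $t_1$ and $t_2$ to equal $\pi$, hence $t_2-t_1\approx\pi$ and $\sigma_2^{(m)}-\sigma_1^{(m)}\approx\pi/f(x_0)$ stays bounded away from zero. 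This closes the argument and establishes $T>4\pi-\varepsilon'$.
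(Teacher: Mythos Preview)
Your overall strategy is sound and would eventually succeed, but it is considerably more elaborate than the argument in the paper. Both proofs share the first reduction: localise $\pi(\gamma)$ in some $D_{i_0}$ and conclude from Lemma~\ref{lem_short} that $T\geq 2\pi N(1-\varepsilon'')$, so the whole lemma follows once one rules out $N=1$. The divergence is in how $N=1$ is excluded.

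You rule out $N=1$ indirectly: rescale by $1/m$, pass to an Arzel\`a--Ascoli limit, identify the limit as a round circle, and then work to prevent the self-intersection from collapsing in the limit via a Gauss--Bonnet computation on the two sub-loops plus a separate treatment of the antipodal case. This is plausible, but several steps are left sketchy. In particular, the Gauss--Bonnet identity $\theta_1+\theta_2=2\pi+O(m)$ presupposes that \emph{both} sub-loops are simple so that each bounds a disc; when $\pi(\gamma)$ has several self-intersections this need not hold for the pair you pick, and you would need to pass to an innermost simple sub-loop and redo the bookkeeping. The antipodal case also carries a hidden circularity risk: you use monotonicity of $\widetilde{\varphi}_{i_0}$ to pin down $t_2-t_1$, but that very monotonicity is the mechanism by which the paper dispenses with the whole rescaling apparatus.

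The paper's proof is a one-paragraph direct argument that the self-intersection already forces $N\geq 2$. Choose the discs $D_i$ to lie in \emph{orthogonal} charts $(x^1_i,x^2_i)$ and take $Z_i=\partial_{x^1_i}/|\partial_{x^1_i}|$. Arrange the self-intersection at $t=0$ with $\varphi_{i_0}(0)=0$, and look at the real function $x^2_{i_0}\circ\gamma$. From $\dot\varphi_{i_0}>0$ (your inequality~\eqref{phipos}) one reads off, via the derivative formula $\frac{d}{dt}(x^2_{i_0}\circ\gamma)=\frac{|\frac{d}{dt}\pi(\gamma)|}{|\partial_{x^2_{i_0}}|}\sin\varphi_{i_0}$, that the critical points of $x^2_{i_0}\circ\gamma$ are exactly the instants where $\varphi_{i_0}\in\{0,\pi\}$, strict minima at $0$ and strict maxima at $\pi$. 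Since $t=0$ is a strict minimum and the value there is reattained at $t_*$, there is a strict maximum in $(0,t_*)$ and another in $(t_*,T)$; hence $\varphi_{i_0}=\pi$ at two distinct times and $N\geq 2$. No rescaling, no Bangert dichotomy (Corollary~\ref{pincht} is not used here), no Gauss--Bonnet on sub-loops, and no case split is needed.
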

\begin{proof}
We use the same strategy of Section \ref{sub_minper} and we consider a collection of discs and unit vector field $(\mathbf D,\mathbf Z)$. The lemma is proven once we show that, if $\pi(\gamma)\subset D_i$, for some $D_i$, then $N\geq2$. We achieve this goal by making a particular choice of $(\mathbf D,\mathbf Z)$. Namely, we take $D_i$ contained in some orthogonal chart $(x^1_i,x^2_i)$ and we take $Z_i:=\partial_{x^1_i}/|\partial_{x^1_i}|$. 

Suppose without loss of generality that the self-intersection is happening at $t=0$. Thus, we know that there
exists $t_*\in(0,T)$ such that $\pi(\gamma(0))=\pi(\gamma(t_*))$. By precomposing the coordinate chart with an orthogonal linear map we can assume that the tangent vector at zero is parallel to $\partial_{x^1_i}$, namely that $\varphi_i(0)=0$. We consider the function $x^2_i\circ \gamma:\T_T\rightarrow \R$ and compute its derivative. Since the parametrisation is orthogonal we know that
\begin{equation*}
\frac{d}{dt}\pi(\gamma)=\ \left|\frac{d}{dt}\pi(\gamma)\right|\left(\cos\varphi_i\frac{\partial_{x^1_i}}{|\partial_{x^1_i}|}+\sin\varphi_i\frac{\partial_{x^2_i}}{|\partial_{x^2_i}|}\right).
\end{equation*}
However, in coordinates we always have
\begin{equation*}
\frac{d}{dt}\pi(\gamma)=\ \frac{d}{dt}(x^1_i\circ \gamma)\partial_{x^1_i}+\frac{d}{dt}(x^2_i\circ \gamma)\partial_{x^2_i}.
\end{equation*}

Comparing the two expressions we find
\begin{equation}\label{compari}
\frac{d}{dt}(x^2_i\circ \gamma)=\frac{\left|\frac{d}{dt}\pi(\gamma)\right|\sin\varphi_i}{|\partial_{x^2_i}|}.
\end{equation}
A point $t_0$ is critical for $x^2_i\circ \gamma$ if and only if $\varphi_i(t_0)=0$ or $\varphi_i(t_0)=\pi$. Computing the second derivative at a critical point we get
\begin{equation}
\frac{d^2}{dt^2}\Big|_{t=t_0}(x^2_i\circ \gamma)=\frac{\left|\frac{d}{dt}\pi(\gamma)\right|}{|\partial_{x^2_i}|}\cos\varphi_i\dot{\varphi}_i\Big|_{t=t_0}.
\end{equation}
Since $\dot{\varphi}_i>0$ by \eqref{phipos}, we know that $t_0$ is a strict local minimum if $\varphi_i(t_0)=0$ and a strict local maximum if $\varphi_i(t_0)=\pi$.
 From the previous discussion we know that $t=0$ is a strict minimum and that $x^2_i\circ \gamma(0)=x^2_i\circ \gamma(t_*)$. Hence, there must exist two strict maxima $t_1\in(0,t_*)$ and $t_2\in(t_*,T)$. At these points $\varphi_i=\pi$ and, thus, $N\geq2$.
\end{proof}
We are now ready to prove Theorem \ref{alaban}.
\begin{proof}[Proof of Theorem \ref{alaban}]
Given $\varepsilon$ and $n$, take any $\varepsilon'_{\varepsilon,n}$ such that the following inequalities hold for small $m$:
\begin{equation*}
\bullet\ \ \varepsilon'_{\varepsilon,n}<\min\{1,\varepsilon\},\quad\ \bullet\ \ C_{n-1}<\frac{1}{\varepsilon'_{\varepsilon,n}},\quad\ \bullet\ \ \varepsilon'_{\varepsilon,n}\max h_{m,\beta}<\varepsilon,
\end{equation*}
where $C_{n-1}$ is the constant defined in \eqref{concn}. Define $\widetilde{m}_{\varepsilon,n}=\min\{m_{\varepsilon'_{\varepsilon,n}},\overline{m}_{\varepsilon'_{\varepsilon,n}}\}$, where $m_{\varepsilon'_{\varepsilon,n}}$ and $\overline{m}_{\varepsilon'_{\varepsilon,n}}$ are given by Corollary \ref{pincht} and Lemma \ref{nosimp}, respectively.

If $m<\widetilde{m}_{\varepsilon,n}$ and $\gamma$ is a periodic orbit with period smaller than $\frac{1}{\varepsilon'_{\varepsilon,n}}$, using Corollary \ref{pincht}, we find that $T\in(2\pi-\varepsilon'_{\varepsilon,n},2\pi+\varepsilon'_{\varepsilon,n})$. Thus $T<4\pi-\varepsilon'_{\varepsilon,n}$ and, by Lemma \ref{nosimp}, $\pi(\gamma)$ is a simple curve in $S^2$.
We use Lemma \ref{pinchelt} to estimate the length of $\pi(\gamma)$ in this case:
\begin{equation}\label{sand}
\frac{2\pi-\varepsilon'_{\varepsilon,n}}{\max h_{m,\beta}}m\leq\ell(\pi(\gamma))\leq\frac{2\pi+\varepsilon'_{\varepsilon,n}}{\min h_{m,\beta}}m.
\end{equation}
Shrinking the interval $[0,\widetilde{m}_{\varepsilon,n}]$ if necessary, we find some $m_{\varepsilon,n}\leq\widetilde{m}_{\varepsilon,n}$, such that, if $m\in[0,m_{\varepsilon,n}]$, inequalities \eqref{sand} imply
\begin{equation*}
\frac{2\pi-\varepsilon}{\max f}m\leq\ell(\pi(\gamma))\leq\frac{2\pi+\varepsilon}{\min f}m,
\end{equation*}
as required. If the period $T$ is bigger than $\frac{1}{\varepsilon'_{\varepsilon,n}}>\frac{1}{\varepsilon}$, then $\pi(\gamma)$ has at least $n$ self-intersections by the condition $C_{n-1}<\frac{1}{\varepsilon'_{\varepsilon,n}}$ and Corollary \ref{corinte}. In this case we also have
\begin{equation*}
\ell(\pi(\gamma))\geq \frac{m}{\varepsilon'_{\varepsilon,n}\max h_{m,\beta}}\geq\frac{m}{\varepsilon}.
\end{equation*}
This finishes the proof of the theorem. 
\end{proof}

\section{A twist theorem for surfaces of revolution}\label{sec_twi}

As observed in Remark \ref{rmk_two}, we do not have any example of a contact-type energy level for a non-exact magnetic system on $S^2$ with exactly two periodic orbits. This is due to the fact that the only case where we can compute all the trajectories of the magnetic flow is Example \ref{exa_mod}, where all the orbits are shown to be periodic.

Observe, indeed, that knowing that the flow of $X^m\in\Gamma(SS^2)$ is a perturbation of a flow with exactly two periodic orbits, whose iterations are assumed to be non-degenerate, is not enough to deduce that the magnetic flow has only two periodic orbits. The only thing that we can deduce is that $X^m$ has exactly two \textit{short} periodic orbits \cite{sch1}. However, \textit{long} periodic orbits may appear in the perturbed system.

On the other hand, if the perturbation satisfies a suitable \textit{twist condition}, we can prove the existence of infinitely many long periodic orbits for the new flow.

In this section we are going to use such perturbative approach on surfaces of revolution. Unlike Chapter \ref{cha_rev}, we consider more general systems of the kind $(S^2_\gamma,g_\gamma,f\mu_\gamma)$, where $f:S^2_\gamma\rightarrow(0,+\infty)$ is any positive function invariant under the rotations around the axis of symmetry. This means that $f$ depends on the $t$-variable only and we can write $f:[0,\ell_\gamma]\rightarrow(0,+\infty)$.

We know that for $m$ small, $\lambda^g_{m,\beta^\gamma}$ is a dynamically convex contact form and, hence, at least in the non degenerate case, $\Phi^{m,\beta^\gamma}$ has a Poincar\'e section of disc-type $i_{m,\beta^\gamma}:D\rightarrow SS^2_\gamma$. Moreover, since $\Phi^{0,\beta^\gamma}$ is $2\pi$-periodic, we know that the return map $F_{m,\beta^\gamma}:\mathring{D}\rightarrow\mathring{D}$ for $m=0$ is the identity. Then, if we could find explicitly a \textit{smooth} isotopy $m\mapsto i_{m,\beta^\gamma}$ of Poincar\'e sections, we could try to expand the maps $F_{m,\beta^\gamma}$ in the parameter $m$ around zero to get some information on the dynamics. However, it is in general a difficult task to construct by hand a Poincar\'e section and the rotational symmetry does not seem to help finding a disc in this case. On the other hand, we claim that such symmetry allows to find a Poincar\'e section of \textit{annulus} type, as we will show in the next subsection.
\medskip

Historically, annuli were the first kind of Poincar\'e sections to be studied. The discovery of such section in the restricted $3$-body problem \cite{poi1} led Poincar\'e to formulate his Last Geometric Theorem \cite{poi2}. It asserts the existence of infinitely many periodic points of the return map $F$, if such map satisfies the so-called \textit{twist condition}: namely, $F$ extends continuously to the boundary of the annulus and it rotates the inner and outer circle by different angles. One year later Birkhoff proved Poincar\'e's Theorem \cite{bir1} and in subsequent research found a section of annulus type for the geodesic flow of a convex two-sphere \cite[Chapter VI.10]{bir4}.
\medskip

In the last subsection we are going to give a condition on $\gamma$ and $f$ that ensures that the Poincar\'e maps for small values of $m$ are twist. By contrast, the complementary condition will single out a class of magnetic systems whose long periodic orbits have period or order $O(m^{-2})$ or higher. It would be interesting to compare this estimate with the abstract divergence rate coming from Bangert's proof of Proposition \ref{prp_ban}. A direction of future research would be to look for systems with exactly $2$ periodic orbits within this class.

From now on, we consider a fixed profile function $\gamma$ and the rotationally invariant primitive $\beta^\gamma=\beta^\gamma_\theta d\theta$, so that we can safely suppress the symbols $\gamma$ and $\beta$ from the subscripts and superscripts.
\subsection{Definition of the Poincar\'e map}

Take the loop $c:[-\ell,\ell]/_\sim\rightarrow SS^2$, where $\sim$ is the relation that identifies the boundary of the interval. It is defined in the coordinates $(t,\varphi,\theta)$, as
\begin{equation*}
c(u):=
\begin{cases}
(-u,-\pi/2,0), &\mbox{if }u<0,\\
(u,\pi/2,\pi), &\mbox{if }u>0.
\end{cases}
\end{equation*}
We extend it smoothly for $u\in\{-\ell,0,\ell\}$ and we observe that $t(c(u))=|u|$. Using the $\T_{2\pi}$-action given by the rotational symmetry we can move transversally $c$ in $SS^2$ and form an embedded $2$-torus $C:[-\ell,\ell]/_{\sim}\times \T_{2\pi}\rightarrow SS^2$:
\begin{equation*}
C(u,\psi):=
\begin{cases}
(-u,-\pi/2,\psi),&\mbox{if }u<0,\\
(u,\pi/2,\psi+\pi), &\mbox{if }u>0.
\end{cases}
\end{equation*}
In particular, notice that $\psi=\theta$, for $u<0$ and $\psi=\theta+\pi$, for $u>0$.
Since $m$ is sufficiently small, Proposition \ref{reddyn} implies that $X^m_\gamma:=mX+fV$ has only two periodic orbits $\zeta^-_m$ and $\zeta^+_m$ supported on latitudes $t^-_m$ and $t^+_m$.

The sign tells us whether the projection of $\zeta^\pm_m$ rotates in the same direction as $\partial_\theta$ or not. The numbers $t^\pm_m$ satisfy the equation 
\begin{equation*}
\pm m\dot{\gamma}(t)=f(t)\gamma(t).  
\end{equation*}
With our choices we have $t^+_m<t^-_m$. These two orbits lie inside the image of the torus $C$. If we define $u^-_m:=-t^-_m$ and $u^+_m:=t^+_m$, then $\psi\mapsto C(u^\pm_m,\psi)$ is a reparametrisation of $\zeta^\pm_m$. Hence, the $2$-torus is divided into the union of two closed cylinders $C^-_m$ and $C^+_m$ with the two orbits as common boundary. Each of the cylinders is a Poincar\'e section for $X^m$ and we would like to compute its first return map $F^\pm_m:\mathring{C}^\pm_m\rightarrow\mathring{C}^\pm_m$. If we look at the first return map $F_m:[-\ell,\ell]/_{\sim}\setminus\{u^-_m,u^+_m\}\times \T_{2\pi}\rightarrow [-\ell,\ell]/_{\sim}\times \T_{2\pi}$, we see that this map swaps the cylinders and $F^\pm_m=F_m^2\big|_{C^\pm_m}$. We have $F_0(u,\psi)=(-u,\psi+\pi)$ and, as we expected, $F^\pm_0(u,\psi)=(u,\psi)$. We know that $F_m$ is a \textit{smooth} family of \textit{smooth} maps and we claim that it extends on the whole torus to a \textit{smooth} family of \textit{continuous} maps.

\begin{prp}\label{sm_prp}
The family $m\mapsto F_m$ admits an extension to a smooth family of continuous maps $m\mapsto\widehat{F}_m:[-\ell,\ell]/_{\sim}\times \T_{2\pi}\rightarrow[-\ell,\ell]/_{\sim}\times \T_{2\pi}$.
\end{prp}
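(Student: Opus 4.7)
The proof naturally splits into a structural reduction using the rotational symmetry and a local linear analysis near each of the orbits $\zeta^\pm_m$.

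First, because $X^m$ commutes with $\widetilde\partial_\theta$ and the embedding satisfies $C(u,\psi+\psi_0)=\Phi^{\widetilde\partial_\theta}_{\psi_0}\bigl(C(u,\psi)\bigr)$, the return map is $\T_{2\pi}$-equivariant and therefore has the form
\begin{equation*}
F_m(u,\psi)=\bigl(G_m(u),\,\psi+H_m(u)\bigr),
\end{equation*}
for smooth functions $G_m,H_m$ defined on $[-\ell,\ell]/{\sim}\setminus\{u^-_m,u^+_m\}$. The proposition therefore reduces to proving that $G_m$ and $H_m$ admit continuous extensions to $u^\pm_m$ depending smoothly on $m$. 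By a symmetric argument I focus on the extension at $u^+_m$.

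Second, I linearise $X^m$ near $\zeta^+_m$. In the transverse coordinates $(\tilde t,\tilde\varphi)=(t-t^+_m,\varphi-\pi/2)$, the decomposition \eqref{relaz1} combined with the defining equation $f(t^+_m)\gamma(t^+_m)=m\dot\gamma(t^+_m)$ of $\zeta^+_m$ makes the $\partial_\varphi$-component of $X^m$ vanish to linear order, yielding the transverse linear model
\begin{equation*}
\dot{\tilde t}=-m\tilde\varphi,\qquad\dot{\tilde\varphi}=\alpha^+_m\tilde t,\qquad\dot\theta=\frac{m}{\gamma(t^+_m)},
\end{equation*}
with $\alpha^+_m=\dot f(t^+_m)-m\ddot\gamma(t^+_m)/\gamma(t^+_m)+m\dot\gamma(t^+_m)^2/\gamma(t^+_m)^2$. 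A short computation using the parity conditions on $\gamma$ shows that $\alpha^+_m\sim f(0)^2/m>0$ as $m\to 0^+$, so $\zeta^+_m$ is transversally elliptic with frequency $\omega^+_m=\sqrt{m\alpha^+_m}$. The point of $C$ with $(\tilde t,\tilde\varphi)=(r,0)$ is mapped by the linearised flow to $(-r,0)$ after time $\pi/\omega^+_m$, and during this time $\theta$ advances by $\Delta\theta^+_m:=m\pi/(\gamma(t^+_m)\omega^+_m)$. By Gronwall estimates the actual nonlinear flow differs from the linearised one by $O(r^2)$ on the bounded time interval $[0,\pi/\omega^+_m]$, so the implicit function theorem gives $G_m(u^+_m\pm r)=u^+_m\mp r+O(r^2)$ and $H_m(u^+_m\pm r)=\Delta\theta^+_m+O(r)$. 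The natural extension is therefore $\widehat F_m(u^+_m,\psi):=(u^+_m,\psi+\Delta\theta^+_m)$, and the symmetric limits from both sides of $u^+_m$ make it continuous.

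The main obstacle is the smooth dependence on $m$, which is immediate away from the boundary circles but delicate near them, and especially as $m\to 0^+$, where the chart $(t,\varphi)$ becomes singular at the pole and the orbits $\zeta^\pm_m$ themselves degenerate. The cleanest way I see to handle this uniformly is to construct a smooth family of normal-form coordinates for $X^m$ near $\zeta^+_m$ using the two commuting symmetries available—the vector field $\widetilde\partial_\theta$ and the integral of motion $I_m=m\gamma\sin\varphi-\Gamma$ of Section \ref{sub_inv}—conjugating $X^m$ to the integrable model $a_m(I)\partial_\theta+b_m(I)\partial_\phi$. In this chart the return map $F_m$ admits an explicit integral expression whose continuous extension at $I=I^+_m$ is transparent, and the smoothness in $m$ is inherited from that of the normal form. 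The asymptotic values $\omega^+_m\to f(0)$ and $\Delta\theta^+_m\to\pi$ that this analysis produces in the limit $m\to 0^+$ agree with the explicit formula $\widehat F_0(u,\psi)=(-u,\psi+\pi)$ obtained directly at $m=0$, ensuring smoothness of the family through the endpoint.
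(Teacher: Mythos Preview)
Your outline captures the right intuition --- the orbit $\zeta^+_m$ is transversally elliptic, and the linearised half-rotation governs the extension --- but the execution leaves a genuine gap precisely where you flag it: the smooth dependence on $m$, and especially the passage through $m=0$.

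The trouble is that your linearisation is carried out in the $(\tilde t,\tilde\varphi)$ coordinates, which are a translate of $(t,\varphi)$. These coordinates are singular at the poles, and since $t^+_m\to 0$ as $m\to 0$, the domain on which your linear model is valid shrinks with $m$. Your Gronwall estimate ``$O(r^2)$ on the bounded interval $[0,\pi/\omega^+_m]$'' is therefore not uniform in $m$: the higher-order terms of $X^m$ in these coordinates blow up near the pole. The normal-form fix you sketch via $I_m$ and $\widetilde\partial_\theta$ is plausible in principle, but it is not carried out; conjugating smoothly in $m$ through $m=0$ to an integrable model is exactly the nontrivial step, and asserting it sidesteps the work rather than doing it.

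The paper resolves this by never using $(t,\varphi)$. Instead, it builds a tubular chart around $\zeta^+_m$ by flowing the torus $C$ along a vector field $\mathring X^m=b_m\check X^m$, where $\check X^m$ is part of the globally defined symplectic frame $(\check H^m,\check X^m)$ of $\xi^m$. Since both $C$ and $\check X^m$ are smooth on all of $SS^2$ (including over the poles) and depend smoothly on $m$, this chart is automatically uniform in $m$ down to $m=0$. The paper then computes the transverse linearisation of $R^m$ in this chart from the Lie brackets $[\check H^m,R^m]$, $[\check X^m,R^m]$ and tunes the scalar $b_m$ so that the resulting $2\times 2$ matrix is exactly $a_m J_{\op{st}}$ with $a_m>0$. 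This feeds into a clean abstract lemma: if a vector field on a solid torus vanishes on the core circle with antisymmetric linearisation there, then the polar angular velocity $d\theta(Z_m^{\R^2})$ extends \emph{continuously} across $r=0$ (to the value $a_m$), whence the return time is determined by $\int_0^{t_m}\widehat a_m\,dt=\pi$ and inherits continuity in the section variable and smoothness in $m$ directly from $\widehat a_m$. This angular-velocity argument replaces your Gronwall-plus-IFT step and eliminates the need for any normal form.
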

For the proof of the proposition, we need first to compute the projection of the differential of the Reeb vector field $R^{m}$ at $\zeta^\pm_m$ on the contact distribution $\xi_m$.
\begin{lem}
At a point $(t^\pm_m,\pm\frac{\pi}{2},m\frac{\theta}{\gamma(t^\pm_m)})$ we have the identity
\begin{equation}
\left(\begin{array}{c}
\mbox{}[\check{H}^{m},R^{m}]\\
\mbox{}[\check{X}^{m},R^{m}]
\end{array}\right)=\left(\begin{array}{cc}
0&\frac{f}{h_{m}}-m \frac{H(h_{m})}{h^2_{m}}\\
-1&0
\end{array}\right)\left(\begin{array}{c}
\check{H}^{m}\\
\check{X}^{m}
\end{array}
\right).
\end{equation}
\end{lem}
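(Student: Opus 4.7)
The strategy is a direct computation using the bracket relations on $SS^2$ recorded in Proposition \ref{prphv}, namely $[V,X]=H$, $[H,V]=X$, $[X,H]=KV$. Writing
\[
R^m=\frac{mX+fV}{h_m},\qquad \check H^m=\frac{H+\beta(\jmath v)V}{\sqrt{h_m}},\qquad \check X^m=\frac{X+(\beta(v)-m)V}{\sqrt{h_m}},
\]
I would first check that both brackets lie in $\xi_m$. Applying Cartan's formula to $\lambda^g_m$ gives
\[
d\lambda^g_m(\check Z,R^m)=\check Z\bigl(\lambda^g_m(R^m)\bigr)-R^m\bigl(\lambda^g_m(\check Z)\bigr)-\lambda^g_m([\check Z,R^m])=-\lambda^g_m([\check Z,R^m]),
\]
for $\check Z\in\{\check H^m,\check X^m\}$, and since $d\lambda^g_m=\omega'_m$ vanishes on pairs containing $R^m$, we deduce $\lambda^g_m([\check Z,R^m])=0$. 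Hence the brackets have no component along $R^m$ and it suffices to write $[\check Z,R^m]=b\check H^m+c\check X^m$ and identify $b,c$.

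The coefficients $b$ and $c$ can be extracted symplectically as $b=\omega'_m([\check Z,R^m],\check X^m)$ and $c=\omega'_m(\check H^m,[\check Z,R^m])$, since $\omega'_m(\check H^m,\check X^m)=1$. I would then expand $[\check H^m,R^m]$ and $[\check X^m,R^m]$ using the Leibniz rule $[aA,bB]=ab[A,B]+a(Ab)B-b(Ba)A$ twice, producing a sum of terms of the form (a scalar function of $h_m,\beta,f,K$ and their $X$-, $V$-, $H$-derivatives)\,$\times$\,(one of $X,V,H$), together with the core contributions $[H,X]=-KV$, $[H,V]=X$, $[V,X]=H$, $[V,fV]=V(f)V=0$, $[X,fV]=fH+X(f)V$, etc. Pairing with $\omega'_m$ in the symplectic frame $(\check H^m,\check X^m)$ isolates $b$ and $c$ as concrete expressions involving $V(h_m)$, $X(h_m)$, $H(h_m)$, $X(\beta(v))$, $V(\beta(v))$, $H(\beta(\jmath v))$ and $H(f)=df(\jmath v)$.

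The final step is to restrict to the point $p^\pm=(t^\pm_m,\pm\tfrac{\pi}{2},m\theta/\gamma(t^\pm_m))$. There the tangent vector reduces to $v=\pm\partial_\theta/\gamma$ and $\jmath v=\mp\partial_t$, so $\beta(\jmath v)=\beta_\theta d\theta(\mp\partial_t)=0$ (simplifying $\check H^m$ to $H/\sqrt{h_m}$ at $p^\pm$), $\beta(v)=\pm\beta_\theta(t^\pm_m)/\gamma(t^\pm_m)$, and $df(v)=0$ because $f$ is a function of $t$ only and $v\perp\partial_t$. Rotational invariance forces every $\widetilde{\partial_\theta}$-derivative of the rotationally invariant quantities $f$, $h_m$, $\beta_\theta$ to vanish, which kills $V(h_m)$ and the tangential pieces of $X(\beta(v))$, $X(h_m)$. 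The defining relation $\pm m\dot\gamma(t^\pm_m)=f(t^\pm_m)\gamma(t^\pm_m)$ then produces the required cancellations, leaving $b=0$, $c=f/h_m-mH(h_m)/h_m^2$ for $[\check H^m,R^m]$, and $b=-1$, $c=0$ for $[\check X^m,R^m]$, which is precisely the matrix in the statement.

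The main obstacle will be the sheer bookkeeping: the double Leibniz expansion produces a large number of terms, and the desired coefficient $f/h_m-mH(h_m)/h_m^2$ appears only after combining the geometric derivative $H(f)=df(\jmath v)=\mp\dot f(t^\pm_m)$ with contributions coming from $H(\beta(\jmath v))$ and from the derivatives of $1/\sqrt{h_m}$, which then recombine thanks to the orbit equation. A useful discipline is to postpone the simplification $\beta(\jmath v)=0$ until after all derivatives have been taken, since $H(\beta(\jmath v))$ contributes non-trivially at $p^\pm$ even though the function itself vanishes there.
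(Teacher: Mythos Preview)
Your proposal is correct and follows essentially the same approach as the paper: expand the Lie brackets using the relations $[V,X]=H$, $[H,V]=X$, $[X,H]=KV$ and read off coefficients in the frame $(\check H^m,\check X^m)$. Two remarks are worth making.

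First, the paper streamlines the bookkeeping considerably by working ``modulo $V$'': since the trivialisation satisfies $\Upsilon^m(Z)=\sqrt{h_m}\big(\eta(Z),\lambda(Z)\big)$ and $\eta(V)=\lambda(V)=0$, every term that is a multiple of $V$ can be discarded immediately. This replaces your symplectic extraction via $\omega'_m$ by the equivalent but lighter use of $(\eta,\lambda)$, and collapses the double Leibniz expansion to a handful of lines. Your explicit Cartan argument showing $[\check Z,R^m]\in\xi^m$ is a pleasant addition that the paper leaves implicit (it is the standard fact that the Reeb flow preserves the contact distribution).

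Second, you will find that the orbit relation $\pm m\dot\gamma(t^\pm_m)=f(t^\pm_m)\gamma(t^\pm_m)$ is \emph{not} needed. At $\varphi=\pm\pi/2$ one has $\cos\varphi=0$, which forces $X(h_m)=V(h_m)=0$ and $X(f)=0$ directly from the coordinate expressions; the remaining cancellation in the $[\check X^m,R^m]$ computation comes purely from the algebraic identity $m^2-m\beta+f=h_m$. So the ``main obstacle'' you anticipate is milder than you expect.
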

\begin{proof}
Thanks to Lemma \ref{lem_tri}, the matrix above is given by
\begin{equation}
\sqrt{h_{m}}\left(\begin{array}{cc}
                        \eta\big([\check{H}^{m},R^{m}]\big)&\lambda\big([\check{H}^{m},R^{m}]\big)\vspace{5pt}\\ 
			\eta\big([\check{X}^{m},R^{m}]\big)&\lambda\big([\check{X}^{m},R^{m}]\big)
                        \end{array}
\right).
\end{equation}
Thus, we have to compute the Lie brackets only up to multiples of $V$. Below we use the symbol $\doteq$ between two vectors that are equal up to a multiple of $V$. Observe that on the support of $\zeta^\pm_m$ we have the identities
\begin{align*}
\bullet\ \beta=\pm\frac{\beta_\theta}{\gamma},\quad\ \bullet \ \beta\circ\jmath=0,\quad\ \bullet\ V(\beta)=\beta\circ\jmath=0,\quad\ \bullet\ H=-\pm\widetilde{\partial}_t,\quad\ \bullet \ X=\pm\frac{\widetilde{\partial}_\theta}{\gamma}.
\end{align*}
We compute
\begin{align*}
\mbox{}[\check{H}^{m},R^{m}]&=\left[\check{H}^{m},\frac{m}{h_{m}}X+\frac{f}{h_{m}}V\right]\\
&\doteq m \check{H}^{m}\left(\frac{1}{h_{m}}\right)X+\frac{m}{h_{m}}[\check{H}^{m},X]+\frac{f}{h_{m}}[\check{H}^{m},V]\\
&\doteq m \frac{1}{\sqrt{h_{m}}}H\left(\frac{1}{h_{m}}\right)X+\frac{m}{h_{m}\sqrt{h_{m}}}[H,X]+\frac{f}{h_{m}\sqrt{h_{m}}}[H,V]\\
&\doteq m \frac{1}{\sqrt{h_{m}}}H\left(\frac{1}{h_{m}}\right)X+\frac{f}{h_{m}\sqrt{h_{m}}}X.
\end{align*}
Similarly,
\begin{align*}
[\check{X}^{m},R^{m}]&=\left[\check{X}^{m},\frac{m}{h_{m}}X+\frac{f}{h_{m}}V\right]\\
&\doteq m \check{X}^{m}\left(\frac{1}{h_{m}}\right)X+\frac{m}{h_{m}}[\check{X}^{m},X]+\frac{f}{h_{m}}[\check{X}^{m},V]\\
&\doteq 0+\frac{m}{h_{m}\sqrt{h_{m}}}(\beta-m)[V,X]+\frac{f}{h_{m}\sqrt{h_{m}}}[X,V]\\
&=-\frac{m^2-m\beta+f}{h_{m}\sqrt{h_{m}}}H\\
&=-\frac{1}{\sqrt{h_{m}}}H.
\end{align*}
Applying the trivialisation $\Upsilon^{m}$, we get the desired formula.
\end{proof}
We now prove a lemma which yields a local version of the proposition, under a further assumption on the differential of the vector field.
\begin{lem}\label{lem_extloc}
Consider coordinates $(z=(x,y),\varphi)$ on $\R^2\times\T_{2\pi}$ and let $(r,\theta)$ be polar coordinates on the first factor. Let $B_\delta:=\{r<\delta\}\subset \R^2$ be the open disc of radius $\delta$ and $\T^3_\delta:=B_\delta\times\T_{2\pi}$ be the solid torus with section $B_\delta$. Suppose that $m\mapsto Z_m\in\Gamma(\T^3_\delta)$ is a smooth family of smooth vector fields and decompose $Z_m=Z_m^{\R^2}+d\varphi(Z_m)\partial_\varphi$. Assume that 
\begin{enumerate}[\itshape a)]
 \item $\forall\,(m,\varphi)$, $(Z_m^{\R^2})_{(0,\varphi)}=0$ and the endomorphism $d^{\R^2}_{(0,\varphi)}Z_m^{\R^2}$ is antisymmetric;
 \item the smooth family of functions $a_m:\T_{2\pi}\rightarrow\R$ defined by $d^{\R^2}_{(0,\varphi)}Z_m^{\R^2}=a_m J_{\op{st}}$ is uniformly bounded from below by a positive constant.
\end{enumerate}
Under these hypotheses, there exists some $\delta'<\delta$ and a smooth family of first return maps on the set $\big\{(0,y,\varphi)\ \big| \ 0<|y|<\delta'\big\}$. Such family extends to a smooth family of continuous maps $P_m:\big\{(0,y,\varphi)\ \big| \ |y|<\delta'\big\}\rightarrow\big\{(0,y,\varphi)\ \big| \ |y|<\delta'\big\}$.
\end{lem}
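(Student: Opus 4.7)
The plan is to reparametrise trajectories by the angular coordinate $\theta$ on the $\R^2$-factor, exploiting hypothesis $(b)$ to ensure that $\dot\theta$ stays uniformly bounded below by a positive constant on a full one-sided neighbourhood of the axis $\{r=0\}$. The section $\{x=0\}\cap \T^3_{\delta'}$ then decomposes as two half-lines $\{\theta=\pi/2\}\cup\{\theta=-\pi/2\}$ glued along the circle $\{r=0\}$, and the first return to the section is the half-rotation $\theta\mapsto\theta+\pi$.

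Writing $z=r(\cos\theta,\sin\theta)$ and applying the Taylor expansion $Z_m^{\R^2}(z,\varphi)=a_m(\varphi)J_{\op{st}}z+O(|z|^2)$ coming from $(a)$--$(b)$, I would first compute
\[
\dot r = r^{2}f_m(r,\theta,\varphi),\quad \dot\theta = a_m(\varphi)+r\,g_m(r,\theta,\varphi),\quad \dot\varphi = b_m(\varphi)+r\,h_m(r,\theta,\varphi),
\]
for smooth remainders $f_m,g_m,h_m$ depending smoothly on $m$, with $b_m(\varphi):=d\varphi(Z_m)(0,\varphi)$. The decisive input is the antisymmetry in $(a)$, which forces $\dot r$ to be of order $r^{2}$ rather than $r$ (since $z^{\top}J_{\op{st}}z=0$). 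Setting $a_{\ast}:=\inf_{m,\varphi}a_m(\varphi)>0$ thanks to $(b)$ and compactness of $\T_{2\pi}$, I choose $\delta'\in(0,\delta/2)$ small enough that $\dot\theta\geq a_{\ast}/2$ on $\{0<r\leq 2\delta'\}\times\T_{2\pi}$, uniformly on compact $m$-intervals. Dividing by $\dot\theta$ turns the $\R^{2}$-part of the flow into the $\theta$-parametrised system
\[
\frac{dr}{d\theta}=r^{2}\tilde f_m(r,\theta,\varphi),\qquad \frac{d\varphi}{d\theta}=\tilde g_m(r,\theta,\varphi),
\]
whose coefficients extend smoothly in $(m,r,\varphi)$ across $r=0$, with $\tilde g_m(0,\theta,\varphi)=b_m(\varphi)/a_m(\varphi)$ independent of $\theta$; the slice $\{r=0\}$ is invariant, and the associated $\theta$-flow $\Psi^{m}_{\theta_0,\theta_1}$ is smooth on $[0,\delta')\times\T_{2\pi}$ and smooth in $m$.

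I would then define
\[
P_m(y_0,\varphi_0):=\begin{cases}(-R,\Phi),\ (R,\Phi)=\Psi^{m}_{\pi/2,\,3\pi/2}(y_0,\varphi_0),&y_0\geq 0,\\(R,\Phi),\ (R,\Phi)=\Psi^{m}_{-\pi/2,\,\pi/2}(-y_0,\varphi_0),&y_0\leq 0,\end{cases}
\]
and verify that the two branches match at $y_0=0$: on the invariant slice $\{r=0\}$ the $\varphi$-equation $d\varphi/d\theta=b_m/a_m$ is autonomous, so advancing $\theta$ by $\pi$ from either $\pi/2$ or $-\pi/2$ produces the same $\bar\varphi(m,\varphi_0)$, and both prescriptions yield $(0,\bar\varphi)$. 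Strict monotonicity of $\theta$ along the flow makes $\theta=3\pi/2$ (resp.\ $\theta=\pi/2$) the \emph{first} return to the section, the $O(r^{2})$ bound on $dr/d\theta$ keeps the orbit inside $\{r<2\delta'\}$ over a half-turn, and smoothness of $\Psi^{m}$ in $(m,r,\varphi)$ gives smoothness of $P_m$ off the circle $\{y_0=0\}$ together with joint continuity on $(-\delta',\delta')\times\T_{2\pi}$ and smooth dependence on $m$.

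The main obstacle is the tension between polar coordinates (singular on the axis) and the section $\{x=0\}$ (which crosses the axis), and the heart of the matter is the upgrade from the generic estimate $\dot r=O(r)$ to $\dot r=O(r^{2})$: only then does $dr/d\theta$ extend smoothly to $r=0$ and make the whole ODE-based argument go through. Without the antisymmetry in $(a)$ one would only have $O(r)$ there, the two one-sided limits of $P_m$ at $y_0=0$ would generically disagree, and continuity would fail.
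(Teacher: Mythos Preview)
Your argument is correct and shares the same core insight as the paper: the antisymmetry in $(a)$ forces the angular velocity $d\theta(Z_m^{\R^2})$ to extend continuously across the axis with value $a_m(\varphi)$, and this lets one define a half-turn return map that is continuous through $r=0$. The executions differ, however. You reparametrise by $\theta$, derive the smooth non-autonomous $(r,\varphi)$-system $dr/d\theta=r^2\tilde f_m$, $d\varphi/d\theta=\tilde g_m$, treat the two half-sections $\theta=\pm\pi/2$ separately via the $\theta$-flow, and then verify that the two branches agree at $y_0=0$ because $d\varphi/d\theta\big|_{r=0}=b_m/a_m$ is autonomous. The paper instead stays in the original time variable: it observes that $\widehat a_m:=d\theta(Z_m^{\R^2})=a_m+o(1)$ extends to a continuous function on all of $\T^3_\delta$, defines the return time implicitly by $\pi=\int_0^{t_m}\widehat a_m(\Phi^{Z_m}_s(0,y,\varphi))\,ds$ (which makes sense even at $y=0$ since the axis is invariant and $\widehat a_m=a_m$ there), and sets $P_m=\Phi^{Z_m}_{t_m}$. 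This sidesteps the polar-coordinate singularity entirely, since $\Phi^{Z_m}$ is smooth in Cartesian coordinates and $t_m$ is continuous; no branch-matching is needed.

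One small comment on your concluding paragraph: the decisive consequence of antisymmetry is not so much that $\dot r=O(r^2)$ rather than $O(r)$ (even $\dot r=O(r)$ would give $dr/d\theta\to 0$ and keep $\{r=0\}$ invariant), but rather that $\dot\theta$ has a limit at $r=0$ that is \emph{independent of the direction of approach}. Without antisymmetry, $\omega_{\op{st}}(z,Az)/r^2$ picks up a $\theta$-dependent term from the symmetric part of $A$, so $d\varphi/d\theta\big|_{r=0}$ would depend on $\theta$, and your matching argument for the two branches would indeed fail—but for this reason, not because of the order of $\dot r$.
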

\begin{proof}
We claim that the family of functions $m\mapsto d\theta(Z_m^{\R^2})$ extends to a smooth family of continuous functions $m\mapsto \widehat{a}_m$ on the whole $\T^3_\delta$. Indeed, observe that $d_{(z,\varphi)}\theta(\cdot)=\omega^{\R^2}_{\op{st}}(z,\cdot)/r^2$. As a consequence,
\begin{align*}
d_{(z,\varphi)}\theta(Z_m^{\R^2})=\frac{\omega^{\R^2}_{\op{st}}(z,Z_m^{\R^2})}{r^2}&=\frac{\omega_{\op{st}}^{\R^2}\big(z,d^{\R^2}_{(0,\varphi)}Z_m^{\R^2}z+o(r)\big)}{r^2}\\
&=\frac{g_{\op{st}}\big(z,-J_{\op{st}}d^{\R^2}_{(0,\varphi)}Z_m^{\R^2}z\big)}{r^2}+\frac{\omega^{\R^2}_{\op{st}}\big(z,o(r)\big)}{r^2}\\
&=a_m+o(1).
\end{align*}
Since the term $o(1)$ is uniform in $m$ and $\varphi$, the claim follows.

From the fact that $a_m$ is bounded away from zero, we deduce that on some small $\T^3_{\delta'}$, $d_{(z,\varphi)}\theta(Z_m)=d_{(z,\varphi)}\theta(Z_m^{\R^2})$ is also bounded away from zero and, hence, there is a well defined first return time $t_m(y,\varphi)$ for the flow of $Z_m$ on $\{(0,y,\varphi))\ \big| \ 0<|y|<\delta'\}$. It is uniquely determined by the equation
\begin{equation*}
\pi=\int_0^{t_m(y,\varphi)}d_{\Phi^{Z_m}_t(0,y,\varphi)}\theta(Z_m^{\R^2})\, dt=\int_0^{t_m(y,\varphi)}\widehat{a}_m(\Phi^{Z_m}_t(0,y,\varphi))\, dt.
\end{equation*}
Hence, $t_m(y,\varphi)$ is defined also for $z=0$ and $m\mapsto t_m$ is a smooth family of continuous functions because the same is true for $m\mapsto \widehat{a}_m$. The required extension of the Poincar\'e map is given by $P_m(y,\varphi)=\Phi^{Z_m}_{t_m(y,\varphi)}(0,y,\varphi)$.
\end{proof}
We are now ready to prove the proposition.
\begin{proof}[Proof of Proposition \ref{sm_prp}]
We need only to consider the system close to the latitudes $\zeta^\pm_m$. Since $m$ is small, $\zeta^-_m$ and $\zeta^+_m$ will be near the north and south pole, respectively. We only analyse the case of the south pole, since for the north pole we can use a similar argument. Consider the vector field $\mathring{X}^{m}:=b_m \check{X}^{m}$, where $b_m:SS^2\rightarrow(0,+\infty)$ is a rotationally invariant function to be determined later. Notice that such vector field is transverse to $C$ at the south pole, namely when $u=0$. Therefore, the map $(x_m,u_m,\psi)\mapsto \Phi^{\mathring{X}^{m}}_{x_{m}}(C(u_m+u^+_m,\psi))$ yields a well-defined smooth family of diffeomorphisms between a neighbourhood of the fibre over the south pole and $B_\delta\times\T_{2\pi}\subset\R^2\times\T_{2\pi}$. We use these maps to pull back the Reeb vector field to $B_\delta\times\T_{2\pi}$. We still denote by $R^{m}$ the pull-back.

We show that $R^m$ satisfies the hypotheses of Lemma \ref{lem_extloc}. On $x_m=0$, we have \begin{align*}
\bullet\ \partial_{x_m}=\mathring{X}^{m}=b_m \check{X}^{m} \quad\quad\quad\bullet\ \partial_{u_m}=\widetilde{\partial}_t=-H=-\sqrt{h_m}\check{H}^m, 
\end{align*}
so that $(\partial_{x_m},\partial_{u_m})$ is a basis of $\xi_m\big|_{\{x_m=0\}}$. Denote by
\begin{equation*}
A:=\left(\begin{array}{cc}
b_m&0\\
0&\sqrt{h_m}
\end{array}\right) 
\end{equation*}
the change of basis matrix. We also have dual relations on $\xi_m^*$: 
\begin{align*}
\bullet\ dx_m=\frac{\sqrt{h_m}}{b_m}\lambda \quad\quad\quad\bullet\ du_m=-\frac{\sqrt{h_m}}{\sqrt{h_m}}\eta. 
\end{align*}
Notice that, since $b_m$ and $\sqrt{h_m}$ are rotationally invariant, at $\{x_m=0\}$, we have $[\partial_{x_m},R^m]=b_m[\check{X}^m,R^m]$ and $[\partial_{u_m},R^m]=-\sqrt{h_m}[\check{H}^m,R^m]$. Then, we compute the matrix of $d_{(0,\psi)}(R^m)^{\R^2}$ in the $(x_m,u_m)$ coordinates
\begin{align*}
d_{(0,\psi)}(R^m)^{\R^2}&=\left(\begin{array}{cc}
dx_m\big([\partial_{x_m},R^m]\big)&dx_m\big([\partial_{u_m},R^m]\big)\vspace{2pt}\\
du_m\big([\partial_{x_m},R^m]\big)&du_m\big([\partial_{u_m},R^m]\big)
\end{array}\right)\\
&=A^{-1}\left(\begin{array}{cc}
        \sqrt{h_m}\lambda\big([\check{X}^{m},R^m]\big)&\sqrt{h_m}\lambda\big([\check{H}^{m},R^m]\big)\vspace{2pt}\\
	\sqrt{h_m}\eta\big([\check{X}^{m},R^m]\big)&\sqrt{h_m}\eta\big([\check{H}^{m},R^m]\big)
        \end{array}\right)A\\
&=A^{-1}\left(\begin{array}{cc}
        0&\frac{f}{h_{m}}-m \frac{H(h_{m})}{h^2_{m}}\\
	-1&0
        \end{array}\right)A\\
&=\left(\begin{array}{cc}
        0&-\frac{\sqrt{h_m}}{b_m}\left(\frac{f}{h_{m}}-m \frac{H(h_{m})}{h^2_{m}}\right)\\
	\frac{b_m}{\sqrt{h_m}}&0
        \end{array}\right)
\end{align*}
Therefore, we need $\frac{b_m}{\sqrt{h_m}}=\frac{\sqrt{h_m}}{b_m}\left(\frac{f}{h_{m}}-m \frac{H(h_{m})}{h^2_{m}}\right)$. Namely,
\begin{equation*}
b_m=+\sqrt{f-m\frac{H(h_{m})}{h_{m}}}.
\end{equation*}
In this case we have
\begin{equation*}
d_{(0,\psi)}(R^m)^{\R^2}=\sqrt{\frac{f}{h_m}-m\frac{H(h_{m})}{h^2_{m}}}J_{\op{st}}
\end{equation*}
which gives at once that both hypothesis \textit{a)} and \textit{b)} are satisfied. Thanks to Lemma \ref{lem_extloc}, the Reeb flow admits a smooth family of continuous first return maps on $\{(0,u_m,\psi)\ |\ |u_m|<\delta'\}\subset C$. Since this set is an open neighbourhood of $\zeta^+_m$ in $C$, the proposition follows.
\end{proof}
\begin{rmk}
We do not know if the extended return maps can be taken to be smooth.
\end{rmk}

The rotational symmetry of the system implies that $F_m$ can be written in the form $F_m(u,\psi)=(u_m(u),\psi+\psi_m(u))$. When $u$ and $u_m(u)$ are not in $\{-\ell,0,\ell\}$, then we can also define the longitude $\theta$ of the starting point and the longitude $\theta+\theta_m(u)$ of the ending point. If $u$ and $u_m(u)$ have the same sign, then $\theta_m(u)=\psi_m(u)$, while if $u$ and $u_m(u)$ have opposite sign $\theta_m(u)=\psi_m(u)+\pi$.

The system has also another type of symmetries given by the reflections along planes in $\R^3$ containing the axis of rotation. In coordinates these maps can be written as $(t,\theta)\mapsto (t,2\theta_0-\theta)$, for some $\theta_0\in\T_{2\pi}$. Trajectories of the flow are sent to trajectories of the flow traveled in the opposite direction. Therefore, if $u$ and $u_m(u)$ are not in $\{-\ell,0,\ell\}$, then $u_m(u_m(u))=u$ and $\theta_m(u_m(u))=\theta_m(u)$. This implies that $\psi_m(u_m(u))=\psi_m(u)$ for \textit{every} $u\in[-\ell,+\ell]/_\sim$. Thus, $F_m^2(u,\psi)=(u,\psi+2\psi_m(u))$.

When $|u|$ is different from $0$ and $\ell$, and $m$ is suitably small ($m$ will be smaller and smaller as $|u|$ is close to $0$ or $\ell$), $u$ and $u_m(u)$ have different signs and we can write an integral formula for $\psi_m(u)=\theta_m(u)+\pi$. Without loss of generality, we fix some $u<0$ for the forthcoming computation. Let $z^u_m:[0,R]\rightarrow SS^2$ be the only solution of the magnetic flow, passing through $C(u,0)$ at time $0$ and through $C(u_m(u),\psi_m(u))$ at time $R$. Then, $\varphi\circ z^u_m:[0,R]\rightarrow [-\pi/2,\pi/2]$ is a monotone increasing function, such that $\varphi(z^u_m(0))=-\pi/2$ and $\varphi(z^u_m(R))=\pi/2$. Indeed, thanks to Equation \eqref{relaz1}, its derivative satisfies the equation
\begin{equation*}
\frac{d}{dr}\varphi=f-\frac{\dot{\gamma}}{\gamma}m\sin\varphi
\end{equation*}
(where we dropped the composition with $z^u_m$ in the notation). Hence, we can reparametrise $z^u_m$ using the variable $\varphi$ and get the following expression for $\theta_m(u)$:
\begin{equation}\label{eq_th}
\theta_m(u)=\int_{-\frac{\pi}{2}}^{\frac{\pi}{2}}\frac{d\theta}{d\varphi}\,d \varphi=\int_{-\frac{\pi}{2}}^{\frac{\pi}{2}}\frac{d\theta}{dr}\frac{dr}{d\varphi}\,d \varphi=\int_{-\frac{\pi}{2}}^{\frac{\pi}{2}}\frac{m\sin\varphi}{\gamma f-\dot{\gamma}m\sin\varphi}\,d \varphi,
\end{equation}
where we substituted $\frac{d\theta}{dr}=\frac{m\sin\varphi}{\gamma}$, using Equation \eqref{relaz1} again.

In the next section we expand $\theta_m(u)$ with respect to the variable $m$ around $m=0$. We aim at computing the first non-zero term.
\subsection{The Taylor expansion of the return map}
For small $m$ we have that $t(z^u_m)=|u|+O_u(m)$ (the size of $O_u(m)$ depending on $u$), hence $\gamma f$ is big compared to $\dot{\gamma}m\sin\varphi$. Thus, we expand the denominator in the integrand in \eqref{eq_th}, up to a term of order $m^2$, using the formula for the geometric series:
\begin{align}
\frac{m\sin\varphi}{\gamma f-\dot{\gamma}m\sin\varphi}&=\frac{m\sin\varphi}{\gamma f}\left(1+\frac{\dot{\gamma}m\sin\varphi}{\gamma f}\right)+o_u(m^2)\nonumber\\
&=m\frac{1}{\gamma f}\sin\varphi+m^2\frac{\dot{\gamma}}{(\gamma f)^2}\sin^2\varphi+o_u(m^2).\label{inte}
\end{align}
Discarding the $o_u(m^2)$ remainder, we plug \eqref{inte} into \eqref{eq_th} and compute the two resulting integrals separately. 

We use integration by parts for the first one
\begin{align*}
\int_{-\frac{\pi}{2}}^{\frac{\pi}{2}}m\frac{1}{\gamma f}\sin\varphi\, d \varphi&=m\int_{-\frac{\pi}{2}}^{\frac{\pi}{2}}\left(\frac{d}{dt}\frac{1}{\gamma f}\right)\frac{dt}{d\varphi}\cos\varphi\, d \varphi\\
&=m\int_{-\frac{\pi}{2}}^{\frac{\pi}{2}}\left(\frac{d}{dt}\frac{1}{\gamma f}\right)\frac{m\cos\varphi}{f}\cos\varphi\, d \varphi+o_u(m^2)\\
&=m^2\left(\frac{d}{dt}\frac{1}{\gamma f}\right)\frac{1}{f}\big(|u|\big)\int_{-\frac{\pi}{2}}^{\frac{\pi}{2}}\cos^2\varphi\, d \varphi+o_u(m^2)\\
&=\frac{m^2\pi}{2}\left(\frac{d}{dt}\frac{1}{\gamma f}\right)\frac{1}{f}\big(|u|\big)+o_u(m^2).
\end{align*}
For the second integral we readily find
\begin{align*}
\int_{-\frac{\pi}{2}}^{\frac{\pi}{2}}m^2\frac{\dot{\gamma}}{(\gamma f)^2}\sin^2\varphi\, d\varphi&=m^2\frac{\dot{\gamma}}{(\gamma f)^2}\big(|u|\big)\int_{-\frac{\pi}{2}}^{\frac{\pi}{2}}\sin^2\varphi\,  d\varphi+o_u(m^2)\\
&=\frac{m^2\pi}{2}\frac{\dot{\gamma}}{(\gamma f)^2}\big(|u|\big)+o_u(m^2).
\end{align*}
Putting things together we get
\begin{align*}
\theta_m(u)&=\frac{m^2\pi}{2}\left(\left(\frac{d}{dt}\frac{1}{\gamma f}\right)\frac{1}{f}+\frac{\dot{\gamma}}{(\gamma f)^2}\right)\big(|u|\big)+o_u(m^2)\\
&=\frac{m^2\pi}{2}\left(-\left(\frac{\dot{\gamma}}{\gamma^2f}+\frac{\dot{f}}{\gamma f^2}\right)\frac{1}{f}+\frac{\dot{\gamma}}{(\gamma f)^2}\right)\big(|u|\big)+o_u(m^2)\\
&=\frac{m^2\pi}{2}\left(-\frac{\dot{f}}{\gamma f^3}\right)\big(|u|\big)+o_u(m^2).
\end{align*}
By Proposition \ref{sm_prp}, we know that $\psi_m(u)=\theta_m(u)+\pi$ is a smooth family of functions. Hence, the expansion above translates in an expansion for $\psi_m$ that holds on the whole torus and such that the remainder is of order $o(m^2)$ uniformly in $u$. Indeed, observe that $\dot{f}(0)=\dot{f}(\ell)=0$ and, therefore, the function $-\dot{f}/(\gamma f^3)$ extends smoothly at the poles, taking the value $-\ddot{f}/f^3$ at the south pole and $\ddot{f}/f^3$ at the north pole. Call this extension $\Omega_f:S^2\rightarrow \R$ and set $\Omega_{f}^-:=\inf|\Omega_f|$. We have arrived at the final result for this section.
\begin{prp}
The family $F^2_m:[-\ell,+\ell]/_\sim\times\T_{2\pi}\rightarrow [-\ell,+\ell]/_\sim\times\T_{2\pi}$ admits the expansion
\begin{equation}\label{eq_ret}
F^2_m(u,\psi)=\left(u,\psi+\pi\Omega_fm^2+o(m^2)\right).
\end{equation}
\end{prp}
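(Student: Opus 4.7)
The bulk of the work is already present in the preceding paragraphs: the integral formula \eqref{eq_th}, the geometric-series expansion \eqref{inte}, and the computation of the two resulting integrals by integration by parts and leading-order evaluation, respectively. These together yield, for every fixed interior $u\in(-\ell,0)\cup(0,\ell)$ with $m$ sufficiently small (so that $u$ and $u_m(u)$ have opposite signs), the pointwise expansion
\begin{equation*}
\theta_m(u)\ =\ \frac{m^{2}\pi}{2}\left(-\frac{\dot f}{\gamma f^{3}}\right)(|u|)\ +\ o_u(m^{2}).
\end{equation*}
The plan is therefore to assemble these pieces and then promote this pointwise statement to a uniform one on the closed torus $[-\ell,\ell]/_{\sim}\times\T_{2\pi}$.

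The first task is to convert $\theta_m$ into $\psi_m$, and then into $F^{2}_m$. Since for small $m$ we have $u<0\Leftrightarrow u_m(u)>0$, the relation $\theta_m(u)=\psi_m(u)+\pi$ applies in the interior, so that $2\psi_m(u)=2\theta_m(u)-2\pi$, which on $\T_{2\pi}$ is congruent to $2\theta_m(u)$. Combining this with the symmetry identity $F^{2}_m(u,\psi)=(u,\psi+2\psi_m(u))$ derived from the reflection symmetry of the surface of revolution gives, at interior $u$,
\begin{equation*}
F^{2}_m(u,\psi)\ =\ \bigl(u,\ \psi+\pi\Omega_f(|u|)\,m^{2}+o_u(m^{2})\bigr).
\end{equation*}

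The main obstacle — and the heart of the proof — is upgrading $o_u(m^{2})$ to a uniform $o(m^{2})$ valid on all of $[-\ell,\ell]/_{\sim}\times\T_{2\pi}$, including the boundary latitudes $u=u^{\pm}_m$, the value $u=0$, and the points $u=\pm\ell$ (the poles after reparametrization). Two ingredients take care of this. First, Proposition \ref{sm_prp} furnishes a smooth family $m\mapsto\widehat{F}_m$ of continuous maps extending $F_m$ to the whole torus; by rotational invariance $\widehat{F}^{2}_m$ still has the form $(u,\psi)\mapsto(u,\psi+\Psi_m(u))$ for a smooth family $m\mapsto \Psi_m$ of continuous functions with $\Psi_0\equiv 0$, so the Taylor expansion in $m$ at $m=0$ holds uniformly in $u$ with some coefficient function $\Omega(u)$ independent of $\psi$. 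Second, the function $\Omega_f=-\dot f/(\gamma f^{3})$ extends smoothly to the poles: rotational invariance of $f$ on $S^{2}$ forces $\dot f(0)=\dot f(\ell)=0$ together with the vanishing of all odd derivatives there, so by L'H\^opital applied twice the limits $\mp\ddot f/f^{3}$ at the south and north poles exist and give a smooth function on $S^{2}$. By identifying $\Omega(u)$ with $\pi\Omega_f(|u|)$ on the dense open set of interior points where the integral formula is valid, continuity forces the same identity on the closure, proving the expansion of the proposition.

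A minor technical point to address in executing the plan is ensuring that the remainder $o_u(m^{2})$ in \eqref{inte} is locally uniform in $u$ away from the poles; this follows from the fact that $\gamma f$ is bounded below by a positive constant on any compact subset of $(0,\ell)$, so the geometric series expansion converges uniformly on such subsets. Combined with the smoothness provided by Proposition \ref{sm_prp}, this local uniformity together with the continuous extension of $\Omega_f$ to the poles is what closes the argument globally.
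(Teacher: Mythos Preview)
Your proposal is correct and follows essentially the same approach as the paper: both rely on the preceding computation of $\theta_m(u)$, convert to $\psi_m$, and then invoke Proposition \ref{sm_prp} to upgrade the pointwise remainder $o_u(m^2)$ to a uniform $o(m^2)$, using the smooth extension of $\Omega_f$ to the poles. Your write-up is slightly more explicit about why the uniform coefficient must agree with $\pi\Omega_f(|u|)$ (by density and continuity), whereas the paper states this more tersely; one tiny slip is that only one application of L'H\^opital is needed at the poles, not two.
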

\begin{cor}
If $-\frac{\dot{f}}{\gamma f^3}$ is not constant, the magnetic flow has infinitely many periodic orbits on every low energy level. Such condition is satisfied if, for example, $\frac{\ddot{f}}{f^3}(0)\neq-\frac{\ddot{f}}{f^3}(\ell)$ and in this case both $F^+_m$ and $F^-_m$ are twist maps for small $m$.

If $\Omega_f^->0$ (namely $\dot{f}=0$ only at the poles and $\ddot{f}\neq0$ there), the period $T$ of an orbit different from the latitudes $\zeta^\pm_m$ satisfies
\begin{equation}
T\geq \frac{2}{\Omega_f^-m^2}+O\left(\frac{1}{m^3}\right).
\end{equation}
In particular, $\zeta^+_m$ and $\zeta^-_m$ are the only two short orbits.
\end{cor}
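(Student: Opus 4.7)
The starting point is the exact identity $F_m^2(u,\psi) = (u, \psi + 2\psi_m(u))$ derived just above, together with the uniform expansion $\psi_m(u) = \tfrac{\pi}{2}\Omega_f(u)m^2 + o(m^2)$ inherited from the preceding proposition. Since $F_m^2$ preserves every $u$-circle and rotates it rigidly by the angle $2\psi_m(u)$, its periodic orbit structure is completely transparent: the whole circle $\{u = u_*\}\subset C$ consists of period-$q$ points of $F_m^2$ exactly when $\psi_m(u_*) \in \tfrac{\pi}{q}\mathbb{Z}$.

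Assume first that $-\dot f/(\gamma f^3)$ is not constant. Then, for $m$ small enough, $\psi_m$ is a non-constant smooth function on the circle $[-\ell,+\ell]/_\sim$, and its image is an open interval containing infinitely many rationals of the form $\pi p/q$. Each such value produces a full circle of $F_m^2$-periodic points, and therefore an $X^m$-invariant torus filled with closed orbits of $X^m$ on $\Sigma_m$. Distinct values of $u_*$ lie on distinct level sets of the integral of motion $I_m$ from Chapter \ref{cha_rev}, so the resulting tori, and hence the corresponding families of closed orbits, are geometrically distinct, yielding infinitely many periodic orbits. For the explicit sufficient condition, the extension formulas give $\Omega_f(0) = -\ddot f(0)/f(0)^3$ and $\Omega_f(\ell) = \ddot f(\ell)/f(\ell)^3$, so the hypothesis $\ddot f(0)/f^3(0) \neq -\ddot f(\ell)/f^3(\ell)$ is precisely $\Omega_f(0) \neq \Omega_f(\ell)$. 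Moreover, the critical-point equation $m\dot\gamma(u_m^\pm) = \pm f(u_m^\pm)\gamma(u_m^\pm)$ combined with $\dot\gamma(0) = 1$ and $\dot\gamma(\ell) = -1$ forces $u_m^+ \to 0$ and $u_m^- \to -\ell$ as $m \to 0$; by continuity of $\psi_m$, the rotation angles $2\psi_m(u_m^\pm)$ on the two boundary circles $\zeta_m^\pm$ of the Poincar\'e annulus $C_m^\pm$ become distinct for small $m$, so each $F_m^\pm$ is a genuine twist map.

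For the period estimate, assume $\Omega_f^- > 0$. A closed Reeb orbit different from $\zeta_m^\pm$ meets $C$ at some $(u_*,\psi_*)$ with $u_* \neq u_m^\pm$ and yields a fixed point of some power $(F_m^2)^j$; the identity $2j\psi_m(u_*) \in 2\pi\mathbb{Z}$ forces $j \geq \pi/|\psi_m(u_*)|$. The expansion of $\psi_m$ together with $|\Omega_f(u_*)| \leq \sup|\Omega_f|$ gives $j \geq c_1/m^2$ for some $c_1>0$ and $m$ small. The Reeb-time length of one $F_m^2$-iteration, obtained by integrating $h_m/(f - m\dot\gamma\sin\varphi/\gamma)\,d\varphi$ along a half-trajectory from $\varphi = -\pi/2$ to $\varphi = +\pi/2$ and doubling, tends to $2\pi$ as $m \to 0$. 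Multiplying the two estimates yields $T \geq c/m^2$ for some $c > 0$ and $m$ sufficiently small, so any Reeb orbit whose period is bounded independently of $m$ must project to a latitude, and Proposition \ref{reddyn} then identifies it as $\zeta_m^+$ or $\zeta_m^-$.

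The main technical point is bookkeeping: verifying that the $o(m^2)$ remainder in the expansion of $\psi_m$ is truly uniform in $u$ on all of $[-\ell,+\ell]/_\sim$ (not merely pointwise), so that both the count of rational values of $\psi_m$ in the first claim and the uniform bound $j \geq c_1/m^2$ in the period estimate survive near the poles $u = 0$ and $u = \pm\ell$. Once this uniformity is established, matching the resulting factors of $\pi$ and $2\pi$ against the Reeb-time per iteration should recover the precise constant appearing in the stated lower bound.
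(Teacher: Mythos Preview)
The paper does not supply a proof of this corollary; it is stated as an immediate consequence of the expansion \eqref{eq_ret}, and your argument furnishes exactly the natural details the author has in mind. The mechanism you describe --- that $F_m^2$ rigidly rotates each circle $\{u=u_*\}$ by $2\psi_m(u_*)$, so that non-constancy of $\psi_m$ forces its range to hit infinitely many rational multiples of $\pi$ --- is the intended one, and the verification of the boundary twist via $u_m^+\to 0$, $u_m^-\to -\ell$ and $\Omega_f(0)\neq\Omega_f(\ell)$ is correct.

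Two small remarks. First, the appeal to the integral $I_m$ from Chapter~\ref{cha_rev} is slightly off target, since that integral was written down only for $f\equiv 1$; but the argument does not need it, because $F_m^2$ preserves the $u$-coordinate \emph{exactly}, so periodic points at distinct values of $u_*$ already yield geometrically distinct closed orbits. Second, your caution about the constant in the period bound is justified. The route $j\geq \pi/|\psi_m(u_*)|$ together with the upper bound $|\psi_m(u_*)|\leq \tfrac{\pi}{2}(\sup|\Omega_f|)\,m^2+o(m^2)$ produces a lower bound of the form $T\geq c/m^2$ with $c$ governed by $\sup|\Omega_f|$ rather than $\Omega_f^-=\inf|\Omega_f|$. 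The hypothesis $\Omega_f^->0$ is what ensures $\psi_m(u_*)\neq 0$ for \emph{every} $u_*$ and small $m$, so that the estimate applies uniformly and the conclusion that $\zeta_m^\pm$ are the only short orbits follows; the displayed constant in the statement should be read with this in mind.
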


From the expansion \eqref{eq_ret}, it follows that a necessary condition for having $\psi_m$ constant for some small $m$ is to require that
\begin{equation*}
-\frac{\dot{f}}{\gamma f^3}=2k,
\end{equation*}
for some $k\in\R$. We are interested in this condition, because the only way of having exactly two closed periodic orbits for $X^m$ is to have $\psi_m=a_m\in\R/\Q$. The equation above can be rewritten as
\begin{equation*}
\frac{d}{dt}\frac{1}{f^2}=k\gamma
\end{equation*}
and integrating in the variable $t$ we find the solution
\begin{equation}\label{mag_nec}
f=\frac{1}{\sqrt{k\Gamma+h}},
\end{equation}
where $\Gamma:[0,\ell]\rightarrow \R$ is a primitive of $\gamma$ such that $\Gamma(0)=0$, $\Gamma(\ell)=\frac{\op{vol}}{2\pi}$ and $h\in\R$ is any constant such that 
\begin{equation*}
\begin{cases}
h>-\frac{k\op{vol}}{2\pi},&\mbox{ if }k<0,\\
h>0,&\mbox{ if }k\geq0,
\end{cases} 
\end{equation*}
so that the quantity under square root is strictly positive.

A direction of future research would be to study magnetic systems with $f$ given by \eqref{mag_nec} for suitable choices of $\gamma$, $k$ and $h$, to see if the flow of any of them can be written down explicitly. In this way one could check if $\psi_m= a_m\in\R/\Q$, for some value of $m$.

\backmatter

\bibliographystyle{amsalpha}
\bibliography{thesisdraft}
\end{document}